\documentclass[11pt]{amsart}
\usepackage{amssymb}
\usepackage{amsfonts}
\usepackage{amsmath}
\usepackage{graphicx}
\usepackage{xcolor}
\usepackage{amsmath}
\usepackage{mathrsfs}
\usepackage{stmaryrd}
\usepackage{epsfig,color}
\usepackage{blindtext}
\usepackage{enumerate}
\usepackage{hyperref}
\usepackage{url}
\usepackage{bbm}
\usepackage{nicefrac,mathtools}
\usepackage{bm}   
\usepackage{tabularx}
\DeclareGraphicsExtensions{.pdf,.jpeg,.png}
\usepackage{epstopdf}
\usepackage{cancel} 
\usepackage[normalem]{ulem} 
\usepackage{verbatim} 
\usepackage{scalerel}
\usepackage{enumitem}

\usepackage{cases}

\usepackage{subcaption}
\usepackage{soul}

\usepackage{color}
\usepackage[msc-links, lite]{amsrefs}
\usepackage{geometry}
\usepackage{stackengine,scalerel}

\newtheorem{theorem}{Theorem}[section]

\newtheorem{proposition}[theorem]{Proposition}
\newtheorem{lemma}[theorem]{Lemma}
\newtheorem{corollary}[theorem]{Corollary}
\newtheorem{question}[theorem]{Question}

\newtheorem{claim}[]{Claim}
\newtheorem*{acknowledgements}{Acknowledgements}
\theoremstyle{definition}
\newtheorem{definition}[theorem]{Definition}
\theoremstyle{remark}
\newtheorem{remark}[theorem]{Remark}
\newtheorem{example}[]{Example}

\numberwithin{equation}{section}

\newcommand{\mf}{\mathbf}
\newcommand{\mb}{\mathbb}
\newcommand{\mc}{\mathcal}
\newcommand{\ms}{\mathscr}
\newcommand{\mk}{\mathfrak}

\newcommand{\wti}{\widetilde}
\newcommand{\res}{\scaleobj{1.75}{\llcorner}}

\newcommand{\dist}{\operatorname{dist}}
\newcommand{\inj}{\operatorname{inj}}
\newcommand{\codim}{\operatorname{codim}}
\newcommand{\Cohom}{\operatorname{Cohom}}
\newcommand{\bd}{\partial}

\newcommand{\rom}[1]{\expandafter\romannumeral #1}
\newcommand{\Rom}[1]{\uppercase\expandafter{\romannumeral #1}}

\DeclareMathOperator{\genus}{genus}
\DeclareMathOperator{\Ric}{Ric}

\DeclareMathOperator{\spt}{spt}
\DeclareMathOperator{\interior}{int}
\DeclareMathOperator{\closure}{Clos}

\DeclareMathOperator{\Graph}{Graph}

\newcommand{\cohom}{\operatorname{Cohom}}
\newcommand{\bmu}{\boldsymbol \mu}
\newcommand{\btau}{\boldsymbol \tau}
\newcommand{\bleta}{\boldsymbol \eta}
\newcommand{\whC}{\widehat{C}}
\newcommand{\whD}{\widehat{D}}

\newcommand{\whH}{\widehat{H}}
\newcommand{\whU}{\widehat{U}}
\newcommand{\cDG}{\mathcal{D}^G}
\newcommand{\cKG}{\mathcal{K}^G}
\newcommand{\cSG}{\mathcal{S}^G}
\newcommand{\cCG}{\mathcal{C}^G}
\newcommand{\cAG}{\mathcal{A}^G}
\newcommand{\cBG}{\mathcal{B}^G}
\newcommand{\whcD}{\widehat{\mathcal{D}}}
\newcommand{\whcK}{\widehat{\mathcal{K}}}
\newcommand{\whcC}{\widehat{\mathcal{C}}}
\newcommand{\whcS}{\widehat{\mathcal{S}}}
\newcommand{\whcA}{\widehat{\mathcal{A}}}
\newcommand{\whcB}{\widehat{\mathcal{B}}}

\newcommand{\sreduc}{\underset{(G,\gamma)}{<}}

\newcommand{\red}{\color{red}}
\newcommand{\blue}{\color{blue}}

\title{Equivariant min-max theory and the spherical Bernstein problem in $\mathbb{S}^4$}

\author{Tongrui Wang}
\address{School of Mathematical Sciences, Shanghai Jiao Tong University, 800 Dongchuan RD, Minhang District, Shanghai, 200240, China}
\email{wangtongrui@sjtu.edu.cn}

\author{Zhichao Wang}
\address{Shanghai Center for Mathematical Sciences, 2005 Songhu Road, Fudan University, Shanghai, 200438, China}
\email{zhichao@fudan.edu.cn}

\author{Xin Zhou}
\address{Department of Mathematics, 531 Malott Hall, Cornell University, Ithaca, NY 14853, USA}
\email{xinzhou@cornell.edu}

\begin{document}

\begin{abstract}
	We construct an embedded non-equatorial minimal hypersphere in the unit $4$-sphere $\mathbb{S}^4$, which provides a new resolution of Chern's spherical Bernstein problem in $\mathbb{S}^4$. 
	The construction is based on our equivariant min-max theory for $G$-invariant minimal hypersurfaces with reduced genus bound, where $G$ is a compact Lie group acting by isometries on a closed Riemannian manifold with $3$-dimensional orbit space. 
    This confirms an assertion made by Pitts-Rubinstein in 1986. 
	We also establish the regularity of solutions to the $G$-equivariant Plateau problem and the $G$-equivariant isotopy area minimization problem. 
\end{abstract}

\maketitle

\section{Introduction}

The classical Bernstein problem concerns whether every entire solution of the minimal equation in $\mb R^n$ is necessarily a linear function, which has been affirmed for dimensions $n\leq 7$ by De Giorgi \cite{degiorgi1965bernstein} ($n=3$), Almgren \cite{almgren1966bernstein} ($n=4$), and Simons \cite{simons1968minimal} ($n\leq 7$). 
For dimensions $n\geq 8$, the answer was proven to be negative by Bombieri, De Giorgi, and Giusti \cite{bombieri1969minimal}. 
Recently, there have been several breakthroughs (\cite{chodosh2024stable}\cite{chodosh2024stableR5}\cite{mazet2024stable}) on the stable Bernstein problem, which investigates whether the rigidity holds under the condition of stability.  

The investigation of the Bernstein problem is closely related to the theory of minimal cones and the analysis of singularities in minimal hypersurfaces. 
Since the link of a minimal hypercone in $\mb R^{n+1}$ defines a minimal hypersurface in the unit sphere $\mb S^{n}$, it is rather natural to study minimal hypersurfaces in $\mb S^{n}$ that are diffeomorphic to $\mb S^{n-1}$ (the link of a hyperplane). 
In 1969, Professor S. S. Chern \cite{chern1969survey} proposed the following outstanding problem in differential geometry, and later highlighted it at the 1970 International Congress of Mathematicians in Nice \cite{chern1971differential}. 

\begin{question}[Spherical Bernstein Problem]
	Let the $(n-1)$-sphere be embedded as a minimal hypersurface in the unit $n$-sphere $\mb S^{n}$. Is it (necessarily) an equator?
\end{question}

For the case $n=3$, the answer was proven to be affirmative by a theorem of Almgren \cite{almgren1966bernstein} and Calabi \cite{calabi1967minimal} under the weaker immersed assumption. 
However, in contrast to the Bernstein problem in Euclidean spaces, the spherical rigidity fails at a much lower dimension. 
In a series of celebrated works, Hsiang \cite{hsiang1983sphericalI}\cite{hsiang1983sphericalII} constructed infinitely many distinct new examples of non-equatorial minimal hyperspheres in $\mb S^n$ for $n\in \{4,5,6,7,8,10,12,14\}$, thereby settling the spherical Bernstein problem in the negative for low dimensions. 
Subsequently, Tomter \cite{tomter1987spherical} significantly extended the existence of non-equatorial minimal hyperspheres in $\mb S^n$ for all even dimensions $n=2m\geq 4$.
Moreover, if the condition is relaxed from embedding to immersion, then the answer to the spherical Bernstein problem is negative in $\mb S^{n}$ for every $n\geq 4$ by Hsiang-Sterling \cite{hsiang1986sphericalIII}, who also constructed embedded non-equatorial minimal hyperspheres as links of stable minimal hyper-cones. 

The constructions of Hsiang and Tomter rely on the framework of {\em equivariant differential geometry}. 
This approach enables the reduction of the underlying non-linear parametric problem to a more tractable analysis of global ordinary differential equations.
Namely, by imposing suitable orthogonal group actions on $\mb S^n$, the problem of constructing symmetric minimal hyperspheres is transformed into finding geodesic curves in the associated orbit space. 
Consequently, the existence of such geodesics is established via a dynamical systems approach using the oscillatory behavior.

In this paper, we instead use an equivariant version of min-max theory to give a new construction of the embedded non-equatorial minimal hypersphere in $\mb S^4$, which also shows the existence of a minimal hypertorus.   
\begin{theorem}\label{Main thm: spherical bernstein in S4}
	Let $S^1$ act on $\mb S^4=\{(x,z_1,z_2)\in \mb R\times\mb C^2 : |x|^2+|z_1|^2+|z_2|^2=1 \}$ by the suspension of the Hopf action, i.e. $e^{i\theta}\cdot (x,z_1,z_2):=(x, e^{i\theta}z_1, e^{i\theta}z_2)$ for all $e^{i\theta}\in S^1$ and $(x,z_1,z_2)\in \mb S^4$. 
	Then there exist an embedded non-equatorial minimal hypersphere $S^3$,  an embedded minimal hypertorus $T^3=S^1\times S^1\times S^1$, and an embedded minimal $S^1\times S^2$ in $\mb S^4$ that are invariant under the $S^1$-action. 
\end{theorem}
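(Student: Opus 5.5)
Here is how I would prove Theorem~\ref{Main thm: spherical bernstein in S4}. The idea is to reduce to the orbit space of the $S^1$-action and run the equivariant min-max theory with reduced genus bound that the abstract announces. The action $e^{i\theta}\cdot(x,z_1,z_2)=(x,e^{i\theta}z_1,e^{i\theta}z_2)$ is free away from the two poles $(\pm1,0,0)$. The orbit space $\mb S^4/S^1$ is the suspension of $\mb S^3/S^1=\mb{CP}^1\cong S^2$, so it is a topological $3$-sphere with two singular points, and it carries the orbital metric, weighted by orbit length, for which $S^1$-invariant minimal hypersurfaces correspond to weighted minimal surfaces. An $S^1$-invariant closed hypersurface $\Sigma\subset\mb S^4$ projects to a surface $\Sigma/S^1$ in this orbit space. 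Its topology is determined by the reduced surface and by whether the surface passes through the poles:
\begin{itemize}
\item $\Sigma/S^1\cong S^2$ avoiding both poles gives the Hopf-type circle bundle. Since the Euler class of the Hopf bundle restricted to an essential sphere is $\pm1$, this is $S^3$.
\item $\Sigma/S^1\cong S^2$ null-homotopic in the punctured orbit space gives the trivial bundle $S^1\times S^2$.
\item $\Sigma/S^1\cong T^2$ gives $T^3$.
\item A sphere through a pole gives the equatorial type near that pole.
\end{itemize}
So the plan is to produce three invariant minimal hypersurfaces whose reduced surfaces are, respectively, a genus-$0$ surface separating the poles and not totally geodesic, a genus-$1$ surface, and a genus-$0$ surface bounding a region containing neither or both poles.

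For each topology I would build a one-parameter $S^1$-equivariant sweepout of $\mb S^4$ whose reduced slices have that genus.
\begin{itemize}
\item For $S^3$, a natural family is the level sets of $x$ twisted by a function of $|z_1|^2-|z_2|^2$. These have reduced genus $0$ and separate the poles.
\item For $T^3$, use the level sets of $|z_1|^2-|z_2|^2$. These have reduced slices that are tori (the Clifford-type family $S^1\times S^1\times S^1$ degenerating to the circles $z_2=0$ and $z_1=0$).
\item For $S^1\times S^2$, use the level sets of a function of $(x,|z_1|^2-|z_2|^2)$ whose regular level sets are spheres avoiding the poles in the orbit space.
\end{itemize}
I would then apply the equivariant min-max theorem with reduced genus bound, which the abstract announces and the paper proves, to the saturations of these families. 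This gives an $S^1$-invariant minimal hypersurface $\Gamma$, realized as a varifold limit, with $\sum$ of (reduced genus) $\le$ the genus of the sweepout, together with the positivity width $W>0$ from the equivariant isoperimetric inequality.

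The hardest step is to rule out degenerate limits: the limit could be an equator, a union of components with multiplicity, or a surface of smaller genus. I would use a width comparison. An equator $\{x=0\}$ or any totally geodesic $S^3$ has area $|S^3|=2\pi^2$, and the Clifford-type invariant torus has area $|S^1(1/\sqrt2)|^2\cdot|S^1|$-type explicit value. I would first try to show the following for the $S^3$ sweepout:
\begin{itemize}
\item its width strictly exceeds $2\pi^2$, using that the equators are the equivariant "index-one" critical points and that any invariant sweepout separating the poles must cross one of them with positive excess; alternatively, use a Lusternik--Schnirelmann argument with a two-parameter family of equators to force a distinct critical point;
\item its width is strictly less than twice the area of the smallest invariant minimal hypersurface and less than the area of the $T^3$ and $S^1\times S^2$ candidates.
\end{itemize}
Together these exclude multiplicity and other topologies, and the genus bound then forces the reduced surface to be a non-totally-geodesic separating sphere, hence a non-equatorial $S^3$. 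For $T^3$ and $S^1\times S^2$ the arguments are similar, with the separation property and genus bound pinning the topology, and the claim that genus cannot drop obtained from the fact that a drop would produce a reduced sphere of one of the other classified types with area above the computed width.

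If the direct width estimate for $S^3$ fails, my fallback is the following. Restrict to a further discrete symmetry, such as $x\mapsto -x$ combined with swapping $z_1,z_2$, under which no equator is invariant. The equivariant min-max in that smaller class then cannot return an equator, and the genus bound still yields the $S^3$.
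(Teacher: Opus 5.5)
Your argument breaks at exactly the step you flagged, and none of the three proposed remedies works. First, $E_0=\{x=0\}$ is the \emph{only} $S^1$-invariant equator: the normal line of an invariant equator must be the fixed line through $(\pm1,0,0)$. So there is no invariant family of equators for a Lusternik--Schnirelmann argument. Second, your pole-to-pole family of twisted level sets of $x$ is, after undoing the twist, the family $\{x=c\}$, whose largest slice is $E_0$ itself. These are also Almgren--Pitts $1$-sweepouts, so the width is exactly $2\pi^2$ and min-max simply returns $E_0$; there is no ``positive excess''. Third, extra symmetry cannot exclude $E_0$. Any isometry normalizing the $S^1$-action preserves the fixed set $\{(\pm1,0,0)\}$ and hence $E_0$; your map $(x,z_1,z_2)\mapsto(-x,z_2,z_1)$ sends $\{x=0\}$ to itself. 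Finally, comparing the width with areas of ``candidate'' invariant hypersurfaces cannot pin the topology, since that would require lower area bounds for \emph{every} invariant minimal hypersurface of a given type.

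The paper instead excludes $E_0$ quantitatively and by parity. It lifts the antipodal map of $\mathbb{S}^4/S^1$ to $\sigma(x,z_1,z_2)=(-x,-\bar z_2,\bar z_1)$, giving $G=(S^1\rtimes\mathbb{Z}_4)/\mathbb{Z}_2$ with $\mathbb{S}^4/G\cong\mathbb{RP}^3$. It then runs a relative Haslhofer--Ketover sweepout by $G$-hypersurfaces with quotient $\mathbb{RP}^2$, starting at $E_0$ and ending at $E_0$ plus curves.
\begin{itemize}
\item Such a nontrivial $1$-sweepout cannot stay close to $E_0$ as mod $2$ cycles, so the width exceeds $\mathcal{H}^3(E_0)$.
\item Catenoid estimates keep the width below $3\mathcal{H}^3(E_0)$, so the multiplicity is $1$ or $2$.
\item $2|E_0|$ is impossible: each slice bounds two halves interchanged by $\sigma$, so the mod $2$ limit is $E_0$ with multiplicity one, which forces odd varifold multiplicity.
\item Genus zero of $\Gamma/S^1$, antipodal symmetry and Laudenbach's theorem show $\Gamma$ is a hypersphere, and it is non-equatorial because $\Gamma\neq E_0$.
\end{itemize}

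The other two constructions have similar problems. In $\mathbb{S}^4$ the level set $\{|z_1|^2-|z_2|^2=c\}$, $0<c<1$, is the circle bundle $|z_1|^2=(1-x^2+c)/2>0$ over the sphere $\{x^2+2|z_2|^2=1-c\}$. So it is an $S^1\times S^2$ whose reduced slice is a sphere not separating the poles, and the slice $c=0$ runs through the poles; you have no torus sweepout. Your dictionary also misses a case. A reduced torus separating the poles maps with degree $\pm1$ onto the $S^2$ factor, so the restricted Hopf bundle has Euler class $\pm1$ and the hypersurface is a nilmanifold, not $T^3$. Beyond that, the min-max genus bound is only an inequality, and nothing in your plan controls multiplicity or separation.

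For $T^3$, the paper uses the reduced antipodal torus sweepout $\{x_1^2+x_2^2=\cos^2t\}$ of $\mathbb{S}^4/S^1\cong S^3$.
\begin{itemize}
\item Multiplicity one comes from the Wang--Zhou PMC argument in the principal part of the orbit space.
\item Genus $0$ is excluded because an antipodal reduced sphere would give $\Gamma/G\cong\mathbb{RP}^2$ while the slices $\Sigma_t/G$ are orientable, which forces even multiplicity.
\item The $\mathbb{Z}_2$-invariance of the two complementary regions puts both poles on one side, so the circle bundle over the other side is trivial.
\end{itemize}

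For $S^1\times S^2$, your limit sphere could a priori separate the poles; for instance it could be $E_0$. The paper rules this out with the extension $G'=S^1\rtimes\mathbb{Z}_2$ generated by $\tilde s(x,z_1,z_2)=(-x,\bar z_1,\bar z_2)$, which swaps the poles. Multiplicity one gives a $G'$-invariant limit Caccioppoli set bounded by $\Gamma$, and it must contain both poles or neither. (For the round metric alone this third claim is elementary: $\{|z_1|^2=1/3\}\cap\mathbb{S}^4=S^1(\sqrt{1/3})\times S^2(\sqrt{2/3})$ is an $S^1$-invariant minimal Clifford hypersurface. The min-max route matters for other invariant metrics.)
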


In the constructions, we indeed used the extensions $G\cong (S^1\rtimes \mb Z_4) / \mb Z_2$ and $G'=S^1\rtimes \mb Z_2$ of $\mb Z_2$ by $S^1$ as our isometric group actions (Lemma \ref{Lem: extend action}, \eqref{Eq: extension of S1 by double reflection}). 
	Hence, our minimal hypersurfaces have more symmetries. 
	
\begin{remark}
	While Hsiang's dynamical approach relies heavily on the round metric of the unit sphere, our min-max variational method offers greater flexibility regarding the ambient geometry. 
	In particular, our methods imply the existence of at least four distinct minimal $S^1$-invariant hyperspheres for a family of $S^1$-symmetric Riemannian $S^4$ with positive Ricci curvature (Remark \ref{Rem: valid for other metric}). 
	Moreover, in light of the generic resolution of Yau's conjecture on the existence of four minimal $S^2$ in $S^3$ \cite{wangzc2023four-sphere} (see also \cite{haslhofer2019sphere}\cite{li2025RP2} for minimal $RP^2$ in $RP^3$), we would expect to have four distinct $S^1$-invariant minimal hyperspheres in $S^4$ under a generic $S^1$-invariant metric. 
\end{remark}

\begin{remark}
    In \cite{carlotto2023hypertori}, Carlotto-Schulz also constructed an embedded minimal $S^{n-1}\times S^{n-1}\times S^1$ in $ \mb S^{2n}$ by a method similar to Hsiang's \cite{hsiang1983sphericalI}, which relies on the round metric.  
    In contrast, our construction remains valid for a family of $S^1$-invariant Riemannian $S^4$ of positive Ricci curvature to produce at least four distinct minimal $T^3$ (possibly congruent in the round metric) and one minimal $S^1\times S^2$ (see Remark \ref{Rem: valid for other metric}, \ref{Rem: valid of S1 times S2 for other metrics}). 
\end{remark}

The proof of Theorem \ref{Main thm: spherical bernstein in S4} 
is mainly based on our newly established equivariant min-max theory for minimal hypersurfaces with reduced topological control. 
As intermediate products, we also show the regularity of the solutions to the equivariant Plateau problem and the equivariant isotopy area minimization problems. 
We will discuss these aspects in more detail below.

\subsection{Equivariant min-max theory}

The strategy of min-max theory traces back to Birkhoff, who first utilized a min-max method to find simple closed geodesics on spheres. 
In higher-dimensional closed manifolds, the min-max theory was established to construct minimal hypersurfaces by Almgren \cite{almgren1962homotopy}\cite{almgren1965theory} and Pitts \cite{pitts2014existence} (see also \cite{schoen1981regularity}). 
Although the celebrated Almgren-Pitts theory is a general and powerful existence mechanism, it generally provides no topological information about the min-max minimal hypersurfaces. 
Nevertheless, in $3$-dimensional manifolds, Simon and Smith \cite{smith1983existence} refined the min-max approach to control the genus of the minimal surface, and proved the existence of an embedded minimal $2$-sphere in any Riemannian 3-sphere. 
We refer to \cite{colding2002minmax}\cite{delellis2010genus}\cite{ketover2019genus} for the Simon-Smith min-max theory in any closed Riemannian $3$-manifold and the (weighted) genus upper bound for min-max minimal surfaces.

The Simon-Smith min-max theory relies on the regularity of solutions to Plateau's problem and minimizers in isotopy classes (cf. \cite{almgren79plateau}\cite{meeks82exotic}). 
These regularity results are restricted to surfaces in three-manifolds. 
Consequently, the theory cannot be directly generalized to higher dimensions to yield minimal hypersurfaces with topological control. 
Against this backdrop, using symmetry to perform dimensional reduction is an effective method.

By imposing symmetry constraints on the ambient manifold, Pitts and Rubinstein \cite{pitts1987applications}\cite{pitts1988equivariant} first proposed the {\em equivariant min-max theory}. 
 In the context of finite group actions on 3-manifolds, they stated that this approach could construct minimal surfaces with estimates on the index and topology. 
The first rigorous min-max construction of such equivariant minimal surfaces was achieved by Ketover \cite{ketover2016equivariant} for a finite group acting by orientation-preserving isometries (see also \cite{ketover2016free} for the free boundary version). 
We also refer to \cite{franz2021equivariant}\cite{franz2023equivariant} for related results on equivariant min-max theory of finite group actions.

In this paper, one of our main results is the following equivariant min-max theory for symmetric minimal hypersurfaces with reduced topological control in a symmetric closed Riemannian manifold with three-dimensional orbit space. 
We mention that there is a mild assumption on the $G$-hypersurfaces (abbreviation for $G$-invariant hypersurfaces) in our variational constructions. 
Namely, the $G$-hypersurfaces in sweepouts are {\em locally $G$-boundary-type} in the sense that $\Sigma$ locally lies in the reduced boundary of some $G$-invariant Caccioppoli set (see Definition \ref{Def: local G-boundary}). 
This condition is analogous to the fact that every embedded hypersurface is locally a boundary by the simple connectedness of geodesic balls. 

\begin{theorem}[Equivariant Min-max Theory]\label{Main Thm: equivariant min-max}
	Let $(M^{n+1}, g_{_M})$ be a closed $(n+1)$-dimensional Riemannian manifold, and $G$ be a compact Lie group acting isometrically on $M$ so that $M$ contains no special exceptional orbit (cf. Definition \ref{Def: special exceptional orbit} or \cite{bredon1972introduction}*{P. 185}), and 
	\begin{align}\label{Eq: main thm - dimension assumption}
		3=\min_{x\in M}\codim(G\cdot x) \leq \max_{x\in M}\codim(G\cdot x) \leq 7. 
	\end{align}
	Suppose $\Sigma_0\subset M$ is an embedded closed $G$-hypersurface of locally $G$-boundary-type so that $\Sigma_0/G$ is an orientable surface of genus $\mk g_0$. 
	Then for any homotopy class of smooth $G$-equivariant sweepouts of $\Sigma_0$, there exists an associated min-max $G$-invariant varifold $V\in\mc V^G_n(M)$ so that 
	\[V=\sum_{j=1}^R m_j|\Gamma_j|, \]
	for some disjoint, closed, smoothly embedded, minimal $G$-hypersurfaces $\{\Gamma_j\}_{j=1}^R$ in $M$, ($R\in\mb N$), 
    with multiplicities $\{m_j\}_{j=1}^R\subset \mb N $. 
	Moreover, denote by $(M^{prin})^*$ the regular part of $M/G$, and by $\mk g(\Gamma_j):= \genus(\Gamma_j/G\cap (M^{prin})^*)$ the reduced genus of $\Gamma_j$. 
	Then, 
	\begin{align}\label{Eq: main thm - min-max genus control}
		\sum_{j\in\mc O} m_j \mk g(\Gamma_j) + \sum_{j\in\mc U}\frac{1}{2} m_j(\mk g (\Gamma_j)-1)   \leq \mk g_0,
	\end{align}
	where $\mc O$ (resp. $\mc U$) is the set of $j\in \{1,\dots, R\}$ so that $\Gamma_j/G$ is orientable (resp. non-orientable). 
\end{theorem}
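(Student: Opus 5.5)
The plan is to run the Simon--Smith min-max scheme in the class of smooth $G$-equivariant sweepouts and then transfer regularity and genus control through the orbit space $M/G$, which is $3$-dimensional.

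\textbf{Existence of an almost minimizing min-max varifold.} Fix a homotopy class $\Pi$ of $G$-equivariant sweepouts of $\Sigma_0$; its width is positive by the isoperimetric inequality applied to the $G$-invariant Caccioppoli sets that the sweepouts locally bound (this uses the locally $G$-boundary-type assumption). Run the tightening procedure using only $G$-invariant vector fields; by the Palais principle of symmetric criticality, a $G$-invariant varifold that is stationary for $G$-invariant deformations is stationary. This gives a min-max sequence converging to a $G$-stationary varifold $V\in\mc V^G_n(M)$. A combinatorial argument in the style of Almgren--Pitts, carried out with $G$-invariant annuli (tubular neighborhoods of orbits), then shows that $V$ is $G$-almost minimizing in small $G$-annuli with respect to $G$-isotopies.

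\textbf{Regularity.} Take replacements in $G$-annuli. This requires the regularity of solutions to the $G$-equivariant isotopy minimization problem announced in the abstract. The idea is to pass to $M/G$ with the weighted metric $\mathrm{vol}(G\cdot x)^{2/2}g^*$, roughly speaking so that area becomes a weighted area of a surface in the $3$-dimensional orbit space. On the principal part one then applies the Meeks--Simon--Yau argument, and near singular orbits one applies Schoen--Simon curvature estimates together with the codimension bound $\le 7$. The absence of special exceptional orbits is what prevents one-sided quotient pieces whose lifts fail to be locally boundaries. Gluing replacements in the standard way, via unique continuation on overlapping annuli, shows that $V$ is an integer combination of smooth embedded minimal $G$-hypersurfaces $\Gamma_j$.

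\textbf{Genus bound.} Follow De Lellis--Pellandini and Ketover, but on the surfaces $\Sigma_t/G$ restricted to $(M^{prin})^*$. The approach is to show that the $G$-isotopy minimizers converge to $\Gamma_j/G$ with multiplicity $m_j$ after performing neck-pinches, which are $G$-equivariant surgeries since they take place in the quotient. Lifting loops in the quotient, as in Ketover's weighted genus bound, then gives $m_j\mk g(\Gamma_j)$ for orientable quotients. For non-orientable quotients, the surface converging with even multiplicity is the double cover, whose genus is $\mk g-1$; this produces the factor $\tfrac12 m_j(\mk g(\Gamma_j)-1)$.

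\textbf{Main obstacle.} The hardest step is the equivariant Meeks--Simon--Yau regularity, with its genus control, near non-principal orbits. There the quotient is an orbifold-like stratified space, and the $\gamma$-reduction must be done equivariantly. I would first try to show that minimizers avoid or cross lower strata transversally, using the monotonicity formula in $M$ together with the codimension-$3$ minimum.
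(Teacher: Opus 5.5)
Your architecture matches the paper's: pull-tight with $G$-invariant fields, $G$-almost minimizing in small $G$-annuli, replacements from equivariant isotopy minimization, and Ketover's argument on compact subsets of $(M^{prin})^*$ with the weighted metric. The regularity step, however, has a genuine gap.

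Solutions of the equivariant (constrained) isotopy problem, and hence your replacements, are only \emph{$G$-stable}, i.e.\ stable for $G$-invariant variations. There are no curvature estimates or compactness for merely $G$-stable hypersurfaces. So invoking Schoen--Simon near singular orbits does not give you compactness of replacements, the classification of tangent cones as stable cones, or removal of singularities at isolated orbits.

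The missing idea is to upgrade $G$-stability to stability by producing a $G$-invariant unit normal (Lemma~\ref{Lem: stability and G-stability}). The paper does this in two stages.

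\textbf{Orthogonality to the strata.} Limits meet every non-principal stratum off $\mc S$ orthogonally.
\begin{itemize}
\item For isotopy minimizers, this comes out of the equivariant Plateau regularity. There, tangent planes containing $P(x_0)$ are excluded by the filigree lemma and comparison with cylindrical regions; monotonicity does not suffice.
\item For replacements, the absence of special exceptional orbits excludes Proposition~\ref{Prop: local structure of Sigma/G 1}(ii) and Proposition~\ref{Prop: local structure of Sigma/G 2}(2). De Lellis--Pellandini boundary regularity on $\bd B_t(G\cdot p)$ excludes Proposition~\ref{Prop: local structure of Sigma/G 2}(4).
\end{itemize}
Orthogonality makes each quotient component separate any simply $G$-connected set, e.g.\ $A_{s,t}(G\cdot p)$ with $p\notin\mc S_{n.m.}$ (Corollary~\ref{Cor: stability of minimizers}).

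\textbf{Orbits in $\mc S_{n.m.}$.} Here $A^*_{s,t}\cong\mb{RP}^2\times(s,t)$ and one-sidedness is possible, so extra arguments are needed.
\begin{itemize}
\item For isotopy minimizers, the paper uses the Meeks--Simon--Yau parallel-sheet structure of the minimizing sequence (Theorem~\ref{Thm: G-isotopy minimizer}, Step 3).
\item For constrained minimizers, it uses De Lellis--Pellandini boundary regularity, the maximum principle, and the fact that the mod~$2$ limit of the $G$-boundaries $\Sigma_k$ is again a $G$-boundary (Proposition~\ref{Prop: constrained minimizer}, Step 3).
\end{itemize}
Without this, the replacement argument yields regularity only away from $\mc S_{n.m.}$.

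Two further corrections.

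First, the no-special-exceptional-orbit hypothesis is used first in the equivariant isotopy approximation (Lemma~\ref{Lem: G-isotopy approximation}, Remark~\ref{Rem: no special and approximation}). $G$-isotopies preserve orbit-type strata, so Li's outer isotopies are unavailable. A disk-type piece can be $G$-isotoped close to an annular region of $\bd B$ only because the weighted metric degenerates on $\bd(M/G)$ when $\dim(M\setminus M^{prin})\le n-1$. This makes the surgered pieces minimizing for the equivariant Plateau problem with no constraint on the number of boundary components, which the exclusion of tangent planes containing $P(x_0)$ requires.

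Second, for an arbitrary homotopy class the width need not be positive, nor exceed its values on $Z$, and the isoperimetric inequality gives nothing here. The paper instead assumes $\mf L(\Pi)>\max\{\sup_{x\in Z}\mc H^n(\Sigma^0_x),0\}$ in Theorem~\ref{Thm: G-min-max}.
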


\begin{remark}\label{Rem: main thm - min-max}
	We make the following comments and explanations for the above theorem. 
	\begin{itemize}
		\item[(1)] In the dimension assumption \eqref{Eq: main thm - dimension assumption}, $\min_{x\in M}\codim(G\cdot x)$ is also known as the {\em cohomogeneity} $\cohom(G)$ of the $G$-action on $M$. In particular, $\cohom(G)=3$ indicates that the orbit space $M/G$ is a topological $3$-manifold (possibly with boundary) away from at most finitely many points $\mc S_{n.m.}/G$ (\cite{bredon1972introduction}*{P.187, Theorem 4.3}), which is used to obtain the topological control. 
        Additionally, the assumption $\codim(G\cdot x)\leq 7$ in \eqref{Eq: main thm - dimension assumption} is also required due to the regularity theory for minimal hypersurfaces (cf. \cite{schoen1981regularity}). 
        \item[(2)] An orbit $G\cdot p$ in $M$ is said to be a {\em special exceptional orbit} if $\codim(G\cdot p)=\Cohom(G)$ and there is a codimension-one subspace $P$ in the normal vector space $N_p(G\cdot p)$ of $G\cdot p$ so that $P$ is the fixed point set under the action of $G_p:=\{g\in G: g\cdot p=p\}$. 
        Roughly speaking, $G$ locally looks like a reflection near a special exceptional orbit (\cite{bredon1972introduction}*{P. 185}). 
		The assumption on the non-existence of special exceptional orbits is due to the regularity theory for $G$-isotopy area minimizing problems, which will be illustrated further after Theorem \ref{Main thm: isotopy minimizing}. 
        In particular, for orientation-preserving finite $G$-actions on $M^3$, there is no special exceptional orbit, and $\codim(G\cdot p)=3$ for all $p\in M$. 
        Hence, Theorem \ref{Main Thm: equivariant min-max} can construct smoothly embedded minimal $G$-surfaces in $M^3$ as in Ketover \cite{ketover2016equivariant}. 
		\item[(3)] For an embedded closed $G$-hypersurface $\Sigma\subset M$, it follows from \cite{bredon1972introduction}*{P.187, Lemma 4.1} that $\Sigma/G$ is a topological $2$-manifold (possibly with boundary). 
		Hence, the genus of the surface $\Sigma/G$ is naturally defined. 
		In particular, for an orientable (resp. non-orientable) closed surface, the genus is defined by $(2-\chi)/2$ (resp. $2-\chi$), where $\chi$ denotes the Euler characteristic. 
		If the surface is compact with boundary, then the genus is defined by the genus of its natural compactification. 
		\item[(4)] It is well known that there is an open dense subset $M^{prin}\subset M$ (called the {\em principal orbit type stratum}) so that $\pi: M^{prin}\to  (M^{prin})^*:=M^{prin}/G$ is a smooth submersion. 
			Additionally, since removing the boundary and finitely many points does not affect the genus of a surface, we have $\mk g(\Sigma)=\genus(\Sigma/G)$ provided that $\Sigma\cap M^{prin}\neq\emptyset$; and $\mk g(\Sigma)=0$ if $\Sigma\subset M\setminus M^{prin}$. 
            In particular, the non-existence of special exceptional orbits implies $\dim(M\setminus M^{prin})\leq n-1$, and a $G$-hypersurface cannot be contained in $M\setminus M^{prin}$. 
		\item[(5)] Although $\Gamma_j/G$ may be a surface with boundary, we do not have an upper bound on the number of its boundary components in analogy with \cite{franz2023equivariant}. 
		This is mainly because the reduced metric (see \eqref{Eq: weighted metric in M/G}) on the boundary of $M/G$ may be degenerate. 
	\end{itemize}
\end{remark}

In \cite{pitts1987applications}*{Theorem 6}, Pitts and Rubinstein first announced a similar equivariant min-max theory adopting the same definition for the reduced genus $\mk g(\Gamma)$. 
Notably, their $G$-actions are assumed to be orientation-preserving so that there is no special exceptional orbit (cf. \cite{pitts1987applications}*{P.157}). 
Although a detailed proof was not provided in \cite{pitts1987applications}, relying on this assertion, they proposed a construction for minimal hypersurfaces $M_i\subset \mb S^n$ diffeomorphic to $\#_{2i}S^1\times S^{n-2}$ for sufficiently large $i\in\mb N$. 
The topological characterization of $M_i$ in \cite{pitts1987applications}*{\S 5} is only valid for $i$ large enough, for reasons similar to Remark \ref{Rem: main thm - min-max}(5) (cf. \cite{pitts1987applications}*{P.163}). 
Apart from the claim concerning regularity up to a singular set of codimension $7$, our work (Theorem \ref{Main Thm: equivariant min-max}) provides a detailed proof and confirms the assertion of Pitts-Rubinstein \cite{pitts1987applications}*{Theorem 6} in the most general setting. 

During the preparation of this manuscript, we became aware of the recent work by Ko \cite{ko2025}, who developed an equivariant min-max theory in simply connected closed Riemannian manifolds (e.g., $S^n$) under the action of the compact product group $G_c\times G_f$, where $G_c$ is a continuous Lie group with connected principal orbits and $G_f$ is a finite group acting by orientation-preserving isometries on $M/G_c$ with certain assumptions on singular loci. 
There are notable differences between our construction and that of Ko \cite{ko2025}. 
Specifically, Ko's variational construction is carried out on the orbit space, which requires additional assumptions to ensure the manifold structure of $M/G$. 
In contrast, our variational problem is formulated directly on the ambient manifold subject to equivariant constraints with only certain topological arguments applied in $M/G$. 
Additionally, we also introduce a novel technique to address the analytical challenges arising near the non-manifold singularities of the orbit space (cf. {\bf Step 2, 3} in Theorem \ref{Thm: G-isotopy minimizer}). 
In particular, we can use the convergence result \cite{meeks82exotic}*{Remark 3.27} to locally upgrade the $G$-stability of the $G$-isotopy area minimizer to stability, and extend the regularity to the orbits where $M/G$ fails to be a $C^0$ $3$-manifold by \cite{schoen1981regularity}. 
In the regularity theory for our equivariant min-max, we also use the non-existence of special exceptional orbits and the boundary regularity result \cite{delellis2010genus}*{Lemma 8.1} to upgrade the $G$-stability of constrained $G$-isotopy area minimizers to stability (cf. Proposition \ref{Prop: constrained minimizer}). 
These enable us to establish the theory in full generality. 
In particular, our result accommodates more general non-product group actions (e.g., $G=(S^1\rtimes \mb Z_4)/\mb Z_2$ in Theorem \ref{Main thm: spherical bernstein in S4}) and is used to produce minimal hypersurfaces with simple topological types.

In \cite{wangzc2023four-sphere}, the second and the third authors showed a multiplicity one theorem for Simon-Smith min-max theory, and applied it to confirm a conjecture of Yau in the generic sense. 
Their multiplicity one theorem is based on a Simon-Smith min-max theory for surfaces with prescribed mean curvature (see also \cite{zhou2020multiplicity}\cite{zhou2018existence}). 
We expect a similar result to hold for Theorem \ref{Main Thm: equivariant min-max}.

Under the weaker assumption $3\leq\Cohom(G)$, Liu \cite{liu2021existence} developed an equivariant min-max construction for compact connected Lie group actions in the setting of \cite{de2013existence}. 
In the Almgren-Pitts setting, the multi-parameter min-max theory was adapted to an equivariant version by the first author in \cite{wang2022min}\cite{wang2023free}, without requiring the connectedness of $G$.

In recent years, we have witnessed the tremendous breakthroughs in Almgren-Pitts min-max theory, including the resolution of the Willmore conjecture \cite{marques2014min}, Yau's abundance conjecture on the existence of infinitely many minimal surfaces \cite{marques2017existence}\cite{song2018existence} and the progress on the spatial distribution of minimal hypersurfaces \cite{liokumovich2018weyl}\cite{irie2018density}\cite{marques2019equidistribution}\cite{song2021scarring}. 
Moreover, based on the third author's resolution \cite{zhou2020multiplicity} (relying on \cite{zhou2019constant}\cite{zhou2018existence}) of the multiplicity one conjecture, Marques and Neves \cite{marques2016morse}\cite{marques2021morse} complete their program on the Morse theory for the area functional; see also the local version \cite{marques2023morseinequalities}. We refer to the survey article \cite{Zhou-ICM22} for more detailed histories.

\subsection{Equivariant isotopy area minimizing}

Given a uniformly convex smooth open set $B\subset \mb R^3$, the classical {\em Plateau problem} is to find an area-minimizing disk spanning a fixed boundary $\Gamma\subset\bd B$ in $\mb R^3$, where $\Gamma$ is a simple closed curve. 
This celebrated problem was rigorously solved independently by Douglas and Rad{\' o} in 1930 by mapping methods; the solutions are shown to be immersions by Osserman \cite{osserman1970plateau} and Gulliver \cite{gulliver1973regularity}. 
In 1976, Almgren-Simon \cite{almgren79plateau} and Meeks-Yau \cite{meeks1982plateau} constructed embedded solutions to Plateau's problem, which can be generalized to minimal surfaces of high topological type. 

As a key ingredient, we also show the following regularity result for the {\em equivariant Plateau problem}. 
Specifically, for $r_0>0$ small enough, suppose $A:=B_{r}(G\cdot a)\subset M$ is the $r$-neighborhood of $G\cdot a$ with $r\in (0,r_0)$. 
Let $\Gamma\subset \bd A$ be an $(n-1)$-dimensional $G$-submanifold so that $\Gamma/G$ is connected, and $(\bd A\setminus\Gamma)/G$ has two components. 
Denote by $\mc M^G_{0,\Gamma}$ the set of locally $G$-boundary-type (Definition \ref{Def: local G-boundary}) $G$-hypersurfaces in $B_{r_0}(G\cdot a)$ with $\genus(\Sigma/G)=0$ and $\bd \Sigma=\Gamma$. 
Assume $\mc M^G_{0,\Gamma}\neq\emptyset$ and consider the area minimizing sequence $\{\Sigma_k\}_{k\in\mb N}\subset \mc M^G_{0,\Gamma}$ with 
\[ \mc H^n(\Sigma_k) \leq  \inf_{\Sigma'\in \mc M^G_{0,\Gamma}} \mc H^n(\Sigma') + \epsilon_k ,\]
for some positive numbers $\epsilon_k\to 0$. 
Then we have the following interior regularity.

\begin{theorem}\label{Main thm: plateau problem}
	Let $(M^{n+1}, g_{_M})$ be a closed $(n+1)$-dimensional Riemannian manifold, and $G$ be a compact Lie group acting isometrically on $M$ so that \eqref{Eq: main thm - dimension assumption} is satisfied.
    Then, using the above notations,
    the minimizing sequence $\Sigma_k\in \mc M^G_{0,\Gamma}$ converges in the varifold sense to  $V\in \mc V_n^G(M)$ so that for any $p_0\in \spt(\|V\|)\setminus \Gamma$, 
    there exist $m\in\mb N$, $\rho>0$, and an embedded minimal $G$-hypersurface $\Sigma\subset M$ satisfying 
	\[V\llcorner B_\rho(G\cdot p_0)=m |\Sigma|.\]
\end{theorem}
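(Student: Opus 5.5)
We will follow the Almgren--Simon strategy for the Plateau problem, adapted to the equivariant setting. The key point is that all constructions are done $G$-equivariantly, and topological arguments (genus, cutting and pasting) are carried out in the $3$-dimensional orbit space $M/G$.

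\textbf{Step 1: Compactness and stationarity.} After passing to a subsequence, $\Sigma_k\to V$ as varifolds, and $V$ is $G$-invariant. Fix $p_0\in\spt\|V\|\setminus\Gamma$. We first show that $V$ is $G$-stationary in $B_{r_0}(G\cdot a)\setminus\Gamma$ by deforming with $G$-equivariant isotopies fixing $\Gamma$; such isotopies preserve $\mc M^G_{0,\Gamma}$. By the symmetric criticality principle (averaging vector fields over the Haar measure), $V$ is then stationary. A monotonicity formula then gives density bounds.

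\textbf{Step 2: $\gamma$-reduction and replacements.} Following Almgren--Simon and Meeks--Simon--Yau, we introduce the equivariant $\gamma$-reduction. Since $\Sigma_k$ is almost minimizing in $\mc M^G_{0,\Gamma}$, it is almost minimizing under equivariant isotopies in small $G$-annuli $A_{s,t}(G\cdot p)$ around each orbit. This holds for all orbits except a finite set where the curvature concentrates, which we absorb by a covering argument. For a $G$-tubular ball $B_\rho(G\cdot p)$, we then minimize area among $G$-isotopic competitors. Here we use the regularity of $G$-isotopy minimizers (Theorem~\ref{Main thm: isotopy minimizing}, or its local version Theorem~\ref{Thm: G-isotopy minimizer}, which we assume from the paper). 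This shows that the limit of the minimizing sequence is a smooth embedded minimal $G$-hypersurface in a smaller tube, possibly with integer multiplicity. Genus zero of $\Sigma_k/G$ is preserved under $\gamma$-reduction (surgery along discs in $M/G$ does not increase genus), so the reduced surfaces stay in the class.

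\textbf{Step 3: Gluing replacements.} We show that the replacements on concentric tubes coincide with $V$ on annuli and glue smoothly across, using the unique continuation/maximum principle for minimal hypersurfaces. This yields $V\llcorner B_\rho(G\cdot p_0)=m|\Sigma|$, where the codimension $\le 7$ assumption rules out interior singularities via Schoen--Simon compactness of stable hypersurfaces.

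\textbf{Main obstacle.} The hardest step is near orbits where $M/G$ is not a $C^0$ $3$-manifold, or where the reduced metric degenerates. At such orbits, the disc-cutting topology in $M/G$ breaks down. There we would first establish that $V$ is $G$-stable, in fact stable, in a punctured neighbourhood. We would then use Meeks--Simon--Yau convergence to upgrade $G$-stability to genuine stability near the orbit, and apply Schoen--Simon regularity to remove the isolated singular orbit.
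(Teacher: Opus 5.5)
\textbf{There is a genuine gap: your Step 2 is circular.} You import the regularity of $G$-isotopy minimizers (Theorem \ref{Thm: G-isotopy minimizer}), but in the paper that theorem is proved \emph{using} the statement you are trying to prove.

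\begin{itemize}
\item After the $(G,\gamma)$-reduction, the pieces $D_k^{(j)}$ form a minimizing sequence among genus-zero $G$-hypersurfaces with fixed boundary.
\item The regularity of their limit is then obtained by applying Theorem \ref{Thm: plateau problem} to them.
\item This is the same logical order as in Meeks--Simon--Yau: isotopy-minimizer regularity is deduced from Almgren--Simon's Plateau regularity, never the reverse.
\end{itemize}

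There is also a hypothesis mismatch. Theorem \ref{Thm: G-isotopy minimizer} assumes that $M$ has no special exceptional orbits. That assumption is what lets any genus-zero competitor with the same boundary be approximated by $G$-isotopies (Lemma \ref{Lem: G-isotopy approximation}). The Plateau statement assumes only the dimension condition, so even a non-circular appeal would not cover the general case. Separately, the ``finite set where the curvature concentrates'' is a min-max phenomenon and plays no role for a minimizing sequence.

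What you must supply instead is a direct Almgren--Simon analysis at non-principal orbits.
\begin{itemize}
\item An equivariant convexity lemma, replacement theorem and filigree lemma, giving density lower bounds. Near the degenerate boundary of $M/G$ these need extra care: you must compare with disk-type rather than annular regions.
\item A First Regularity Theorem: a planar tangent cone forces $V=m|\Sigma|$ near the orbit, and the plane cannot contain $P(x_0)$.
\item A classification of tangent cones at non-principal orbits:
\begin{itemize}
\item for orbits of codimension $3$, by the half-disk counting argument, using $\mc H^2(\mb S^2_+)=2\mc H^2(\mb D^2)$;
\item for codimension between $4$ and $7$, by dimension reduction to cones that are stable and smooth away from $P(x_0)$.
\end{itemize}
\end{itemize}

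Your Step 3 also has a gap. Schoen--Simon needs genuine stability, but minimization only yields $G$-stability. The upgrade goes through Lemma \ref{Lem: stability and G-stability}, which requires a $G$-invariant unit normal, and you must produce it.
\begin{itemize}
\item Away from the centre orbit, it comes from the orthogonal-meeting property (itself an output of the tangent cone analysis), together with simple connectivity of the relevant annular region in $M/G$.
\item At a centre orbit lying in $\mc S_{n.m.}$, it comes from the mod $2$ current limit, which retains the nontrivial boundary, combined with the locally $G$-boundary-type condition. Meeks--Simon--Yau's convergence result is not what does this here.
\end{itemize}
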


\begin{remark}
    Indeed, we will show that $\Sigma$ is a minimal $G$-hypersurface that is of {\em locally $G$-boundary-type} (Definition \ref{Def: local G-boundary}) except possibly at finitely many orbits. 
    This implies that $\Sigma/G$ can only meet $\bd (M/G)$ orthogonally (Lemma \ref{Lem: hypersurface position}), which is similar to the {\em proper} embeddedness of locally area minimizing free boundary minimal hypersurfaces. 
    This property also helps in the regularity theory (cf. Section \ref{Subsec: plateau} {\bf Step 2, 3}). 
    In addition, note that the non-existence of special exceptional orbits is not required in this theorem. 
    Moreover, 
    the boundary regularity is valid on $\Gamma\cap M^{prin}$ by Almgren-Simon \cite{almgren79plateau} and the equivariant reduction of Hsiang-Lawson \cite{hsiang71cohom}. 
\end{remark}

In closed manifolds, a natural analogue is to minimize area subject to topological constraints for closed (hyper)surfaces. 
This leads to the {\em isotopy area minimization problem}, which seeks a surface of least area within a fixed isotopy class.
Based on the embedded resolution of Plateau's problem, Meeks-Simon-Yau \cite{meeks82exotic} established the regularity and topological control for isotopy minimizers. In this paper, we present an equivariant generalization of their results.

\begin{theorem}\label{Main thm: isotopy minimizing}
    Let $M^{n+1}$ and $G$ be given as in Theorem \ref{Main Thm: equivariant min-max} so that \eqref{Eq: main thm - dimension assumption} is satisfied and $M$ has no special exceptional orbit (Definition \ref{Def: special exceptional orbit}). 
    Suppose $\{\Sigma_k:=\varphi^k_1(\Sigma)\}_{k\in \mb N}$ with $\{\{\varphi^k_t\}_{t\in [0,1]}\}_{k\in\mb N}\subset\mk {Is}^G(M)$ is a minimizing sequence for the minimization problem $(\Sigma, \mk {Is}^G(M))$, i.e. 
    \[ \lim_{k\to\infty} \mc H^n(\Sigma_k) = \inf_{\{\phi_t\}_{t\in [0,1]}\in \mk {Is}^G(M)} \mc H^n(\phi_1(\Sigma)).  \] 
	Then, up to a subsequence, $\Sigma_k$ converges to a $G$-varifold $V\in \mc V^G_n(M)$ so that 
	\[
		V=\sum_{j=1}^R m_j |\Sigma^{(j    
        )}|, 
	\]
	where $R, m_1,\dots,m_R\in\mb N$, and $\Sigma^{(j)}$ are pairwise disjoint, smoothly embedded, closed, minimal $G$-hypersurfaces that are stable with respect to $G$-equivariant variations (i.e. $G$-stable). 
	Additionally, if the initial $\Sigma/G$ is orientable, then for all $k\in\mb N$ sufficiently large,
	\begin{align}\label{Eq: main thm - isotopy regularity - genus}
		 \sum_{j\in\mc O} m_j \mk g(\Sigma^{(j)})  + \sum_{j\in\mc U} \frac{1}{2}m_j(\mk g(\Sigma^{(j)})-1)\leq \genus(\Sigma_k/G),
	\end{align}
	where $\mc O$ (resp. $\mc U$) is the set of $j\in \{1,\dots, R\}$ so that $\Sigma^{(j)}/G$ is orientable (resp. non-orientable), and $\mk g(\Sigma^{(j)}):=\genus(\Sigma^{(j)}/G\cap (M^{prin})^*)$ is defined as in \eqref{Eq: main thm - min-max genus control}. 
\end{theorem}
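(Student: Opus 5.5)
We follow the strategy of Meeks--Simon--Yau \cite{meeks82exotic}, carried out $G$-equivariantly, with the equivariant Plateau regularity (Theorem \ref{Main thm: plateau problem}) as the local input.

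\textbf{Step 1: $\gamma$-reduction and replacements.} After passing to a subsequence, $\Sigma_k\to V\in\mc V^G_n(M)$ by compactness. Following \cite{meeks82exotic}, we first apply an equivariant $\gamma$-reduction to each $\Sigma_k$. Take $\delta>0$ small and a $G$-tubular neighborhood $B_r(G\cdot p)$. Whenever a $G$-isotopy-invariant curve, or a $G$-invariant $(n-1)$-cycle in the orbit space picture, bounds a "small" region, we surgically replace the corresponding piece. We then discard components of small area, using only $G$-equivariant surgeries. Because $\Sigma_k/G$ is a surface in the $3$-dimensional orbit space, the surgeries can be done on $\Sigma_k/G$ (compressions along discs in $(M/G)$) and lifted. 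Each surgery either reduces $\genus(\Sigma_k/G)$ or splits off a sphere-type or small piece. This is where orientability of $\Sigma/G$ enters: one-sided compressions produce the weight $\frac12(\mk g-1)$ for non-orientable limit components, as in \cite{ketover2019genus}.

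After finitely many reductions, $\Sigma_k\cap B_r(G\cdot p)$ is, for generic $r$, a union of $G$-hypersurfaces with genus-zero quotient and boundary on $\bd B_r$. Within $B_r(G\cdot p)$ we then replace $\Sigma_k$ by a minimizing sequence for the equivariant Plateau problem in $\mc M^G_{0,\Gamma}$, one for each boundary component. Because the isotopy minimizing property says nearby $G$-isotopies cannot decrease area by much, $V$ is "$G$-almost minimizing" in small annuli around every orbit. By Theorem \ref{Main thm: plateau problem}, $V$ is locally $m|\Sigma'|$ for an embedded minimal $G$-hypersurface $\Sigma'$ away from $\Gamma$. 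Varying $r$, as in \cite{meeks82exotic}*{\S3}, removes the boundary restriction, giving smoothness of $V$ near every orbit in $M^{prin}$ and near generic orbits.

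\textbf{Step 2: regularity at singular orbits and stability.} The main obstacle is extending regularity across orbits where $M/G$ is not a manifold, or where $\Sigma'/G$ hits $\bd(M/G)$. The plan has three parts.

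First, show that the limit is $G$-stable: any $G$-equivariant variation decreasing area can be realized by a $G$-isotopy applied to $\Sigma_k$, which contradicts minimality.

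Second, upgrade $G$-stability to genuine stability near each singular orbit. Here we use the no-special-exceptional-orbit hypothesis, which rules out reflection-type actions that would allow one-sided non-equivariant destabilization. Together with the convergence statement \cite{meeks82exotic}*{Remark 3.27}, this lets us compare with a locally stable (indeed locally isotopy minimizing) hypersurface.

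Third, apply Schoen--Simon \cite{schoen1981regularity} using $\codim\le7$, which yields smoothness across the finitely many exceptional orbits, since the singular set has codimension $\ge 7$.

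Embeddedness and disjointness of the components $\Sigma^{(j)}$ come from the maximum principle together with the local $m|\Sigma'|$ structure.

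\textbf{Step 3: genus bound.} Lower semicontinuity follows the argument of \cite{meeks82exotic}*{\S3} and \cite{ketover2019genus}, working in $(M^{prin})^*$. Each handle of $\Sigma^{(j)}/G\cap (M^{prin})^*$ is detected by a pair of curves with nonzero intersection number. By the multiplicity $m_j$ graphical convergence of the surgered $\Sigma_k$ away from finitely many points, these curves lift to $m_j$ disjoint handle-carrying pieces of $\Sigma_k/G$.

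For non-orientable components, the $m_j$ sheets form a double cover over a one-sided surface. An orientable double cover of a non-orientable genus-$\mk g$ surface has genus $\mk g-1$, which gives $\frac12 m_j(\mk g-1)$.

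Since the $\gamma$-reductions never increase genus, summing over components yields \eqref{Eq: main thm - isotopy regularity - genus} for $k$ large.
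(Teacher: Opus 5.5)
Your outline has the same architecture as the paper: equivariant $\gamma$-reduction away from $\mc S$, reduction to the equivariant Plateau problem, separate treatment of the isolated orbits in $\mc S$ via \cite{schoen1981regularity}, and the genus bound obtained by running the argument of \cite{meeks82exotic} in $(U^*,\wti g_{_{M^*}})$ for $U\subset\subset M^{prin}$. However, two load-bearing steps are missing, and in both you have put the no-special-exceptional-orbit hypothesis in the wrong place.

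\textbf{Reduction to Plateau.} Theorem \ref{Main thm: plateau problem} applies to minimizing sequences in the full class $\mc M^G_{0,\Gamma}$. Your $\Sigma_k$ only minimizes among its own $G$-isotopic images. ``$G$-almost minimizing in small annuli'' is a min-max notion and does not let you invoke that theorem. Replacing $\Sigma_k\cap B_r(G\cdot p)$ by Plateau minimizers changes the limit unless you already know that no area is lost. What you need is the equivariant version of \cite{meeks82exotic}*{Theorem 2}, namely Theorem \ref{Thm: MSY thm2 gamma reduction}. For strongly $(G,\gamma)$-irreducible $\Sigma_k$ and good radii $\rho_k$, it covers $\Sigma_k\cap B_{\rho_k}(G\cdot x_0)$ by genus-zero pieces $D_k^{(j)}\subset\Sigma_k$ of small total area, which can be $G$-isotoped into the tube up to area $\alpha$. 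Together with Lemma \ref{Lem: G-isotopy approximation}, any $\Gamma'\in\mc M^G_0$ with $\bd\Gamma'=\bd D_k^{(j)}$ can then be approximated in area by $G$-isotopic images of $D_k^{(j)}$, and this is what makes each $\{D_k^{(j)}\}_k$ a Plateau minimizing sequence.

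This is the genuinely new difficulty. $G$-isotopies preserve orbit-type strata \cite{schwarz1980lifting}, so the outer isotopies of \cite{li2015general} are unavailable. A genus-zero piece can be moved onto, say, an annular region of $\bd B$ adjacent to $\bd_0B^*$ only by squeezing it toward $M\setminus M^{prin}$. That costs no area when $\dim(M\setminus M^{prin})\le n-1$, which is exactly what the absence of special exceptional orbits guarantees (Remark \ref{Rem: no special and approximation}). This, not a stability upgrade, is where the hypothesis is used. Orthogonal meeting with the strata of $M\setminus(M^{prin}\cup\mc S)$ then comes out of the Plateau theorem, so behavior at $\bd(M/G)$ is not a stability issue either.

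\textbf{Stability at $\mc S$.} At the isolated orbits the hypothesis gives nothing. Take $\{\pm\mathrm{id}\}$ acting on $\mb R^3$: it has no special exceptional orbit, but a punctured neighborhood of the origin has quotient $\mb{RP}^2\times(0,1)$. A plane through $0$ has no $G$-invariant unit normal, and $G$-stability only tests $u\nu$ with $u$ odd, so Lemma \ref{Lem: stability and G-stability} does not apply. Calling the limit ``locally isotopy minimizing'' does not help, since minimality holds only among $G$-isotopies. The paper splits into two cases.

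\emph{Orbits in $\mc S\setminus\mc S_{n.m.}$.} Here $M^*$ is a $C^0$ manifold at $[x_0]$, so the punctured neighborhood of $[x_0]$ is simply connected. Orthogonal meeting then forces each component of the quotient to separate, which gives a $G$-invariant normal and hence stability.

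\emph{Orbits in $\mc S_{n.m.}$ (Definition \ref{Def: isolated orbits}).} A real argument is needed. First pass to a double cover so that $\wti\Sigma$ has a unit normal $\nu$ whose stabilizer $G^+$ has index two. By \cite{meeks82exotic}*{Remark 3.27}, the surgered $\Sigma_k$ are, up to a small-area set, $G$-isotopic to stacks $\bigcup_{r}\bd B_{r/k}(\wti\Sigma)$, together with $\wti\Sigma$ itself if $m$ is odd. Each $\Sigma_k$ does carry a $G$-invariant normal, since it is locally $G$-boundary-type. Combining these shows that $m$ is even and that $\Sigma_k$ splits into two $G^+$-invariant halves interchanged by $G^-$. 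A $G$-invariant field equal to $\pm X_1$ on the two halves, where $X_1$ extends $u_1\nu$ for the first Jacobi eigenfunction $u_1$, then shows that $\wti\Sigma$ is stable.

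Only after this, together with a logarithmic cut-off across the finitely many orbits where $\wti\Sigma$ meets $M\setminus M^{prin}$, can you apply \cite{schoen1981regularity}. Note also that the orbit has codimension $\codim(G\cdot x_0)-1\in[2,6]$ in the hypersurface, not $\ge 7$.
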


\begin{remark}
    In the above theorem, the multiplicity $m_j$ is even whenever $j\in \mc U$. 
    Additionally, we will use \cite{meeks82exotic}*{Remark 3.27} and the non-existence of special exceptional orbits to extend the regularity of $V$ to the orbits where $M/G$ fails to be a $C^0$ $3$-manifold. 
\end{remark}

In Theorem \ref{Main thm: isotopy minimizing}, the assumption that special exceptional orbits do not exist is for technical reasons. 
Note that, roughly speaking, $G$ locally acts by reflection near a special exceptional orbit (cf. Proposition \ref{Prop: local structure of Sigma/G 1}(ii), \cite{bredon1972introduction}*{P.185}). 
Hence, one can show that the quotient of the union of all special exceptional orbits forms a minimal boundary portion $A\subset \bd (M/G)$. 
A natural idea is to transform the equivariant area minimization problem in $M$ into a non-equivariant free boundary version in $M/G$ near $A$ by the theory of Hsiang-Lawson \cite{hsiang71cohom} (cf. \eqref{Eq: area in orbit space}). 
However, near $A\subset M/G$, one cannot directly apply the regularity theory for isotopy minimizing/min-max minimal surfaces with free boundary established by Li \cite{li2015general}. 
This is because the isotopies in \cite{li2015general} can move points across the boundary $A$ to the outside of $M/G$ (but not into $M/G$), while such an outer isotopy is not allowed in the equivariant setting since the equivariant isotopy cannot move $p\in M^{prin}$ to $M\setminus M^{prin}$ (cf. \cite{schwarz1980lifting}*{Corollary 1.7}). 
Nevertheless, if $M$ admits no special exceptional orbit, then $\dim(M\setminus M^{prin})\leq n-1$ so that the weighted metric on $M/G$ given by Hsiang-Lawson \cite{hsiang71cohom} (cf. \ref{Eq: weighted metric in M/G}) is degenerate on $\bd (M/G)\subset (M\setminus M^{prin})/G$, and the area of a $G$-hypersurface tends to zero by approximating $M\setminus M^{prin}$ through $G$-isotopies. 
This is analogous to pushing a surface out of the manifold via outer isotopies to contribute no area. 

As in Theorem \ref{Main thm: plateau problem}, every $\Sigma^{(j)}$ in the above theorem is indeed locally $G$-boundary-type (possibly except at finitely many orbits). 
This phenomenon plays an important role in our equivariant min-max theory. 
Specifically, if a minimal $G$-hypersurface $\Sigma$ is stable with respect to all $G$-equivariant variations (i.e. $G$-stable), it may not be stable with respect to non-equivariant variations. 
Hence, we cannot directly obtain the regularity and curvature estimates of $\Sigma$ from its $G$-stability using \cite{schoen1981regularity}. 
Nevertheless, it has been noticed that if $\Sigma$ admits a $G$-invariant unit normal, then the $G$-stability of $\Sigma$ is equivalent to the classical stability (Lemma \ref{Lem: stability and G-stability}). 
In particular, the locally $G$-boundary-type property of $\Sigma$ indicates the existence of a $G$-invariant unit normal in any open $G$-set $U\subset M$ with simply connected $U/G$. 
Hence, we would have curvature estimates and a compactness theorem for equivariant isotopy area minimizers (Corollary \ref{Cor: stability of minimizers}), which are crucial in our equivariant min-max (cf. Proposition \ref{Prop: constrained minimizer}).

\subsection{Outline}
We now describe the outline of the paper and also the outline of the proof. 

In Section \ref{Sec: preliminary}, we first collect some notations and review fundamental aspects of Lie group actions, including the orbit types and the stratification. 
To provide a concrete geometric intuition, we also give a detailed description of the local structures of the orbit space. 
Specifically, we completely classify the local behaviors for the ambient quotient $M/G$ into $5$ distinct types (Proposition \ref{Prop: local structure of M/G}), and show $6$ types of local structures for the quotient hypersurface $\Sigma/G$ (Proposition \ref{Prop: local structure of Sigma/G 1}, \ref{Prop: local structure of Sigma/G 2}). 
Subsequently, we utilize the coarea formula to perform the equivariant reduction of the area functional. 
Another important technical step here is the stratified uniform radius (Lemma \ref{Lem: uniform constants}), which is designed to overcome the analytical difficulty posed by the lack of a uniform positive lower bound for the orbits' injectivity radius (i.e. $\inf_{p\in M}\inj(G\cdot p)=0$ in general). 
Finally, we collect necessary notations from geometric measure theory and explain the distinction/relation between equivariant variations and general variations.

\medskip
As a key ingredient of the regularity theory for isotopy minimizers, Meeks-Simon-Yau \cite{meeks82exotic} introduced the method of {\em $\gamma$-reduction}, which utilizes surgery to locally transform the isotopy minimization problem into Plateau's problem. 
In the free boundary setting, Li \cite{li2015general} adapted this technique by extending the Riemannian manifold $M$ with boundary into a closed one $\wti M$ and employing the concept of {\em outer isotopy}, i.e. isotopies $\{\varphi_t\}$ in $\wti M$ with $M\subset \varphi_t(M)$. 

To establish our results, we develop an {\em equivariant $\gamma$-reduction} in Section \ref{Sec: gamma reduction}. 
Roughly speaking, using the local classification of the orbit space (Proposition \ref{Prop: local structure of M/G}, \ref{Prop: local structure of Sigma/G 1}, and \ref{Prop: local structure of Sigma/G 2}), we perform the necessary topological constructions within the quotient space $M/G$, while the area computations are performed in $M$ with the equivariant reduction formula in Section \ref{Subsec: preliminary - equivariant reduction}. 
One of the primary challenges here is that the orbit space $M/G$ may possess a boundary. In this context, the outer isotopy construction used by Li \cite{li2015general}*{\S 7} is inapplicable for two reasons: first, there is no natural closed extension of the singular orbit space; second, equivariant isotopies are constrained to preserve the orbit type stratification (\cite{schwarz1980lifting}*{Corollary 1.7}), thus precluding the boundary-contacting deformations characteristic of outer isotopies. 
Nevertheless, we overcome this obstacle by invoking isotopy lifting theory \cite{schwarz1980lifting}. 
Through a delicate construction, we first have an orbit-type-preserving isotopy in $M/G$, and lift it by \cite{schwarz1980lifting} to obtain the desired equivariant isotopy approximation (Lemma \ref{Lem: G-isotopy approximation}) in analogy to \cite{li2015general}*{Lemma 7.11}.
Note that the orbit space $M/G$ has a nice manifold structure away from a finite set $\mc S^*$ (Proposition \ref{Prop: local structure of M/G}). Consequently, our implementation of the $(G,\gamma)$-reduction is confined to the manifold locus $(M/G)\setminus \mc S^*$.

Another subtlety lies in the `isoperimetric disk selection' in the constructions of Meeks-Simon-Yau \cite{meeks82exotic} (and also in \cite{almgren79plateau}), that is, to split a sphere by a closed curve into two disks and choose the one with smaller area. 
In our equivariant setting, however, the presence of a boundary in $M/G$ introduces a new scenario: a closed curve within a hemisphere may divide the surface into a disk and a cylindrical region.
Critically, near special exceptional orbits, the equivariant isotopy approximation lemma (Lemma \ref{Lem: G-isotopy approximation}) may fail if the cylindrical region is chosen, which is different from Li \cite{li2015general}*{Lemma 7.11} due to the orbit type preserving condition for equivariant isotopies. 
To overcome this, we refined the selection criteria to explicitly mandate the choice of the disk component in such cases (see Theorem \ref{Thm: MSY thm2 gamma reduction} \eqref{Eq: MSY thm2 assumption - disk instead of cylinder}). 
Nevertheless, our analysis of the local structure confirms that this additional constraint is always achievable (Remark \ref{Rem: compare disk instead cylinder}). 

\medskip
Section \ref{Sec: plateau} is dedicated to the regularity theory of the equivariant Plateau problem. 
While our main strategy parallels the work of Almgren-Simon \cite{almgren79plateau}, the intricate local structure of $M/G$ requires a delicate case-by-case treatment. 
Additionally, the issue of `isoperimetric disk selection' also occurs in this context; for instance, in the Filigree Lemma \ref{Lem: filigree}, we impose a selection constraint \eqref{Eq: compare to disk} which can be relaxed in the absence of restrictions on the number of boundary components (see Remark \ref{Rem: filigree lemma statements}).
Subsequently, provided that the limit varifold admits a planar tangent cone, we establish the First Regularity Theorem \ref{Thm: first regularity} analogous to \cite{almgren79plateau}*{Theorem 2}. 
This relies on the splitting properties of equivariant varifold tangent cones; in particular, via equivariant blow-up analysis, we also obtain equivariant `tangent bundles' (Claim \ref{Claim: tangent bundle}). 
However, due to the potential degeneration of the metric along the boundary of $M/G$, certain technical conclusions from Almgren-Simon (e.g., \cite{almgren79plateau}*{(5.10)}) fail in our setting. 
To address this, we introduce a novel topological construction by adjoining a small neighborhood of the degenerate boundary to the cylindrical region (see $\cCG_{\rho,\sigma,\tau}$ in Theorem \ref{Thm: first regularity} {\bf Case I}).

Moreover, we emphasize that the assumption of dimension three in \cite{almgren79plateau} is critical not only for the topological arguments but also for the area comparison estimates in the characterization of tangent cones (cf. \cite{almgren79plateau}*{Theorem 3 {\it Case III} and Corollary 2}). 
Consequently, our construction yields the characterization of tangent cones (and thus the regularity of $V$ in Theorem \ref{Main thm: plateau problem}) near an orbit of codimension three (Theorem \ref{Thm: interior regularity k0=n-2}). 
For higher codimensional orbits, we employ a dimension reduction argument to overcome these difficulties (Theorem \ref{Thm: interior regularity k_0<n-2}).

\medskip
In Section \ref{Sec: G-isotopy minimizing}, we establish the regularity theory and topological control for equivariant isotopy minimizers. 
In addition to the difficulties shared with the equivariant Plateau problem, one should also note that our proof is initially confined to the manifold part of $M/G$ since the topological arguments are all based on the manifold structure. 
Noting that $M/G$ fails to be a manifold at only finitely many points, we can thus extend the regularity across these orbits by the regularity theory for stable minimal hypersurfaces (\cite{schoen1981regularity}). Indeed, this analysis relies on the convergence result of Meeks-Simon-Yau (\cite{meeks82exotic}*{Remark 3.27, (3.29)}), which illustrates the relation between the minimizing sequence obtained via surgery and the resulting limit varifolds. 
Combined with our previous constructions, this convergence result can be used to derive the stability of the limit minimal $G$-hypersurface (Theorem \ref{Thm: G-isotopy minimizer} {\bf Step 2, 3}) and allow us to apply the regularity result in \cite{schoen1981regularity}. 
Note that the codimension assumption \eqref{Eq: main thm - dimension assumption} is also used, and a similar trick is also applied in Section \ref{Subsec: plateau}.

\medskip
Section \ref{Sec: G-min-max} is devoted to the equivariant min-max theory. 
We provide a self-contained exposition including all essential details. 
In the regularity theory (Section \ref{Subsec: regularity min-max}), we introduce the {\em $G$-replacement chain property} (Definition \ref{Def: G-replacement chain}) analogous to \cite{wangzc2023four-sphere}*{Definition 3.6}, which is also very similar to the good $G$-replacement property in \cite{wang2024index}*{Definition 4.14}. 
However, the construction of $G$-replacements requires delicate handling (see Proposition \ref{Prop: constrained minimizer}, \ref{Prop: amv and replacement}). 
A primary technical obstacle is the absence of a general compactness theorem for $G$-stable minimal $G$-hypersurfaces. 
As we mentioned after Theorem \ref{Main Thm: equivariant min-max}, we shall use the non-existence of special exceptional orbits and the boundary regularity result \cite{delellis2010genus}*{Lemma 8.1} to derive the stability from $G$-stability of the constrained $G$-isotopy area minimizers (Proposition \ref{Prop: constrained minimizer}). 

\medskip
Given the theoretical framework, we present in Section \ref{Sec: spherical Bernstein problem} a new solution to the Spherical Bernstein Problem in $\mb S^4$. 
As previously noted, controlling the number of boundary components of the quotient $\Sigma/G$ for a min-max $G$-hypersurface $\Sigma$ currently poses a technical challenge. 
Nevertheless, we can combine the $\mb S^3$-suspension structure of $\mb S^4$ and the Hopf fibration of $\mb S^3$ to construct the suspended Hopf $S^1$-action, which yields a boundaryless orbit space $\mb S^4/S^1$. 
Indeed, $\mb S^4/S^1$ looks like an American football, i.e. $S^3$ with two singular points. 
Next, by a group extension using the antipodal symmetry on $\mb S^4/S^1$, we obtain a non-product Lie group $G=(S^1\rtimes \mb Z_4)/\mb Z_2$ that further reduces the orbit space to $RP^3=\mb S^4/G$. 
In this setting, we find that there exists exactly one $G$-invariant equatorial hypersphere $E_0$ as the unique area minimizer among all embedded $G$-hyperspheres $\Sigma$ with $\Sigma/G\cong RP^2$. 
Additionally, we conclude from \cite{laudenbach1974topologie} that a $G$-hypersurface $\Gamma$ is a hypersphere provided that $\Gamma/G\cong RP^2$ is embedded in the regular part of $\mb S^4/G$.
Combined with the min-max construction for minimal $RP^2$ in $RP^3$ (\cite{haslhofer2019sphere}\cite{li2025RP2}), we can apply the equivariant min-max theory (Theorem \ref{Main Thm: equivariant min-max}) to obtain a minimal $G$-hypersphere distinct from $E_0$. 
By a similar strategy, we also find a minimal hypertorus in $\mb S^4$. 
The group extension from $S^1$ to $G$ is very helpful since the topology of an $S^1$-hypersurface $\Sigma\subset \mb S^4$ not only depends on $\genus(\Sigma/S^1)$ but also is related to whether $\Sigma/S^1$ separates the two singular points in $\mb S^4/S^1$. 
Given this observation, we can have another group extension $G'=S^1\rtimes \mb Z_2$ so that $\mb S^4/G' = (\mb S^4/S^1)/\mb Z_2$ is a topological $3$-sphere, and the two singular points of $\mb S^4/S^1$ are glued together in $(\mb S^4/S^1)/\mb Z_2$. 
Combined with the min-max construction for minimal $S^2$ in $S^3$ (\cite{smith1983existence}), we can apply Theorem \ref{Main Thm: equivariant min-max} to have a minimal $S^1\times S^2$ in $\mb S^4$. 

\begin{acknowledgements}
    T.W. thanks Zhihan Wang and Xingzhe Li for some helpful discussions. 
    Part of this work was done when T.W. visited Prof. Xin Zhou at Cornell University; he thank them for their hospitality. 
    T.W. is supported by the National Natural Science Foundation of China 12501076 and the Natural Science Foundation of Shanghai 25ZR1402252. 
    X.Z. acknowledges the support by NSF grant DMS-2506717 and a grant from the Simons Foundation.
\end{acknowledgements}

\section{Preliminary}\label{Sec: preliminary}

\subsection{Group actions}

Let $(M,g_{_M})$ be a closed connected Riemannian $(n+1)$-manifold ($n+1\geq 3$) and let $G$ be a compact Lie group acting isometrically on $M$. 
Let $\mu$ be a bi-invariant Haar measure on $G$ with $\mu(G)=1$. 
Without loss of generality, we always assume the $G$-action is effective on $M$. 
We now collect some basic definitions and notations about the action of Lie groups and refer \cite{berndt2016submanifolds}\cite{bredon1972introduction}\cite{wall2016differential} for more details. 

By \cite{moore1980equivariant}, $M$ can be $G$-equivariantly isometrically embedded into some $\mb R^L$. 
Namely, there exist an orthogonal representation $\rho:G\to O(L)$ and an isometric embedding $i$ from $M$ to $\mb R^L$, which is $G$-equivariant, i.e. $i(g\cdot x) = \rho(g)\cdot i(x) $. 
For simplicity, we always denote the action of $g$ on $p$ by $g\cdot p:=\rho(g)(p)$ for every $p\in\mb R^L,g\in G$. 

For any $p\in M$, we define the {\em isotropy group} of $p$ in $G$ by 
\[G_p := \{g\in G: g\cdot p = p\},\] 
and denote by $(G_p)=\{g\cdot G_p\cdot g^{-1}:g\in G\}$ the conjugacy class of $G_p$ in $G$. 
Then we say $p\in M$ has the {\em $(G_p)$ orbit type}. 
Let 
\[M_{(G_p)}:= \{q\in M: (G_q)=(G_p)\}\]
be the union of points with $(G_p)$ orbit type, which is a disjoint union of smooth embedded submanifolds of $M$ (\cite{bredon1972introduction}*{Chapter 6, Corollary 2.5}). 
Additionally, there exists a (unique) minimal conjugacy class of isotropy groups $(P)$ such that 
\[M^{prin}:=M_{(P)}\] forms an open dense submanifold of $M$, which is known as the {\em principal orbit type}. 
The {\em cohomogeneity} ${\rm Cohom}(G)$ of $G$ is defined as the codimension of a principal orbit, i.e. 
\begin{align}\label{Eq: cohomogeneity}
	\Cohom(G):=\min\{\codim(G\cdot p): p\in M \}=\codim(G\cdot p_0)\quad\mbox{for $p_0\in M^{prin}$}.
\end{align}
Given an orbit $G\cdot p\subset M\setminus M^{prin}$, we say 
\begin{itemize}
	\item $G\cdot p$ is an {\em exceptional orbit} if $\dim(G\cdot p)=n+1-\cohom(G)=\dim(G\cdot p_0)$ for $p_0\in M^{prin}$; 
	\item $G\cdot p$ is a {\em singular orbit} if $\dim(G\cdot p)<n+1-\cohom(G)=\dim(G\cdot p_0)$ for $p_0\in M^{prin}$. 
\end{itemize}
An orbit type $(H)$ is said to be exceptional (resp. singular) if $G\cdot p\subset M_{(H)}$ is exceptional (resp. singular). 
For simplicity, we also introduce the following definition. 

\begin{definition}\label{Def: G-connected}
	For any $G$-invariant subset $U\subset M$ with connected components $\{U_i\}_{i=1}^I$, we say $U$ is {\em $G$-connected} if for any $i,j\in\{1,\dots,I\}$, there is $g_{i,j}\in G$ with $g_{i,j}\cdot U_j=U_i$. 
	We say $U$ is a {\em $G$-component} of $V$ if $U$ is a $G$-connected union of components of $V$. 
\end{definition}

Let $\pi: M\to M/G$ be the natural quotient map. 
In the rest of this paper, we always assume that the $G$-action on $M$ has
\begin{align}\label{Eq: cohomogeneity assumption}
	\cohom(G) = 3 \qquad{\rm and}\qquad 3\leq \codim(G\cdot p)\leq 7,~\forall p\in M.
\end{align}
We also use the following notations in this paper:
\begin{itemize}
    \item $\mb B^k,\mb B^k_+$: $k$-dimensional Euclidean ball and half-ball (containing the flat boundary);
    \item $\mb D^2=\mb B^2, \mb D^2_+=\mb B^2_+$; 
    \item $T_p(G\cdot p), N_p(G\cdot p)$: the space of vectors in $T_pM$ that are tangent/normal to $G\cdot p$;  
    \item $T(G\cdot p), N(G\cdot p)$: the tangent/normal vector bundles over $G\cdot p$;  
    \item $\exp_{G\cdot p}^\perp$: the normal exponential map from $N(G\cdot p)$ to $M$; 
    \item $\inj(G\cdot p)$: the injectivity radius of $\exp_{G\cdot p}^\perp$;
    \item $S_r(p):=\exp_{G\cdot p}^\perp(\mb B_r(0)\cap N_p(G\cdot p))$: the (geodesic) $r$-ball in the slice of $G\cdot p$ at $p$; 
    \item $B_r(G\cdot p)$: the geodesic $r$-tube centered at $G\cdot p$;
    \item $A_{s,t}(G\cdot p):=B_{t}(G\cdot p)\setminus \closure(B_{s}(G\cdot p))$ for $0<s<t$;
    \item $T(p,s,t):=\{x\in B_t(G\cdot p): \dist_{G\cdot p}(\mf n(x), p)<s \}$ is the part of tube $B_t(G\cdot p)$ centered at an $s$-neighborhood of $p$ in $G\cdot p$, where $\mf n: B_t(G\cdot p) \to G\cdot p$ is the nearest projection;
    \item $[p]:=\pi(G\cdot p)$;
    \item  $B_r^*([p]):=\pi(B_r(G\cdot p))$;
    \item $A^*:=\pi(A)$ for any $G$-subset $A\subset M$. 
\end{itemize}
In any open $G$-set $U\subset M$, we denote by 
\begin{itemize}
	\item $\mk X(U)$ the space of smooth vector fields supported in $U$;
	\item $\mk X^G(U):=\{X\in \mk X(U): dg(X)=X,~\forall g\in G\}$ the space of $G$-vector fields in $U$;
	\item $\mk {Is}(U)$ the space of isotopies supported in $U$;
	\item $\mk {Is}^G(U):=\{\{\varphi_t\}_{t\in [0,1]}\in \mk {Is}(U): \varphi_t\circ g=g\circ \varphi_t ~\mbox{for all } t\in [0,1] \mbox{ and } g\in G\}$ the space of $G$-isotopies in $U$;
	\item $J^G_U(\Sigma):=\{\varphi_1(\Sigma): \{\varphi_t\}_{t\in [0,1]}\in \mk {Is}^G(U)\}$ for any $G$-hypersurface $\Sigma$.
\end{itemize}
We also have the following definition and basic results for Lie group actions.

\begin{definition}[Special exceptional orbits \cite{bredon1972introduction}*{P. 185}]\label{Def: special exceptional orbit}
    An exceptional orbit $G\cdot p\subset M$ is said to be a {\em special exceptional orbit} in $M$ if the space of fixed vectors $\{v\in N_p(G\cdot p): G_p\cdot v=\{v\}\} $ has codimension one in $N_p(G\cdot p)$, where $g\in G_p$ acts on $v\in N_p(G\cdot p)$ by $g\cdot v:= dg(v)$. 
\end{definition}

\begin{lemma}\label{Lem: prin orbit stratum}
	For the slice $S_r(p)$, $S_r(p)/G_p$ is homeomorphic to $B_r(G\cdot p)/G$. 
	Moreover,
    \begin{itemize}
        \item[(i)] $M^{prin}$ (resp. $\pi(M^{prin})$) is an open dense subset in $M$ (resp. $\pi(M)$).
        \item[(ii)] For each orbit type stratum $M_{(H)}$, $\pi:M_{(H)}\to\pi(M_{(H)})$ is a smooth submersion. 
        \item[(iii)] $\pi(M\setminus M^{prin})$ does not locally disconnect the orbit space $\pi(M)=M/G$, i.e. a connected neighborhood of $[p]\in M/G$ remains connected after removing all $\pi(M\setminus M^{prin})$. 
        \item[(iv)] If ${\cohom}(G)=2$, then $\pi(M)$ is a topological $2$-manifold (possibly with boundary); if $\cohom(G)=3$, then $M^*$ is a topological $3$-manifold (possibly with boundary) away from at most finitely many points. 
    \end{itemize}
\end{lemma}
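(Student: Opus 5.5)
The plan is to reduce everything to the slice theorem. Fix $p\in M$ and $0<r<\inj(G\cdot p)$. By the slice theorem, the map $G\times_{G_p} S_r(p)\to B_r(G\cdot p)$, $[g,s]\mapsto g\cdot s$, is a $G$-equivariant diffeomorphism. Passing to $G$-orbits on both sides gives $(G\times_{G_p}S_r(p))/G\cong S_r(p)/G_p$. Concretely, the map $S_r(p)\to B_r(G\cdot p)/G$ is continuous and surjective, since every orbit in the tube meets the slice. Two points $s,s'$ of the slice lie in the same $G$-orbit iff they lie in the same $G_p$-orbit, because $g\cdot s=s'$ forces $g\in G_p$ by uniqueness of the nearest-point projection. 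The induced bijection is continuous and open, since $\pi$ is open, and is therefore a homeomorphism. This settles the first assertion, and each of (i)--(iv) then becomes a local statement about the linear slice representation of $G_p$ on $N_p(G\cdot p)$.

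For (i) and (ii) I will quote the principal orbit type theorem (\cite{bredon1972introduction}, Ch.~IV, Thm.~3.1): $M_{(P)}$ is open and dense, and since $\pi$ is open and surjective, $\pi(M^{prin})$ is open and dense in $M/G$. For (ii), on $M_{(H)}$ the slice representation restricted to the $H$-fixed directions is trivial. Hence locally $M_{(H)}\cap B_r(G\cdot p)\cong G/H\times (S_r(p))^{H}$, and $\pi$ is the projection onto the second factor, which is a smooth submersion. For (iii) I will use that $M\setminus M^{prin}$ is a union of strata each of codimension at least $2$ in $M$, except for special exceptional orbits, which have codimension $1$. Their images are either of codimension at least $2$ in the quotient or lie in $\partial(M/G)$, as in the reflection picture. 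In either case removing them does not disconnect a connected neighborhood. The cleanest route is to quote \cite{bredon1972introduction}, Ch.~IV, Thm.~3.1, which proves that $M^{prin}/G$ is connected and locally connected in this sense. Applied to the tube $B_r(G\cdot p)$, which is itself a $G$-manifold, this gives the local statement.

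For (iv), by the first assertion the orbit space near $[p]$ is $S_r(p)/G_p\cong N_p(G\cdot p)/G_p$, the cone over $\mathbb{S}(N_p)/G_p$. The unit sphere $\mathbb{S}(N_p)$ carries a $G_p$-action of cohomogeneity $\cohom(G)-1$. If $\cohom(G)=2$, then $\mathbb{S}(N_p)/G_p$ is one-dimensional and compact: a circle or a closed interval (\cite{bredon1972introduction}, Ch.~IV, Thm.~8.1 on cohomogeneity-one actions). Its cone is a disc or a half-disc, so $M/G$ is a $2$-manifold with boundary. If $\cohom(G)=3$, then $\mathbb{S}(N_p)/G_p$ is a compact cohomogeneity-two quotient, hence a $2$-manifold with boundary by the $\cohom=2$ case applied one dimension down. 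A cone over a compact surface $\Sigma$ is a $3$-manifold (possibly with boundary) at the vertex iff $\Sigma$ is a $2$-sphere or a $2$-disc. Away from the vertex the cone is a $3$-manifold by induction, since points of $\mathbb{S}(N_p)$ have smaller slice representations. So the non-manifold points are cone vertices, and the bad points $[p]$ are isolated. Compactness of $M/G$ then makes them finitely many.

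The main obstacle is the identification of the link, and this is where \cite{bredon1972introduction}, P.~187, Theorem~4.3 is the intended citation. Isolatedness follows because nearby points of the cone have links equal to spheres or discs, and that part is clean. The hard step is showing the link in the cohomogeneity-three case is a surface at all, which requires the cohomogeneity-two case. I will present (iv) by this induction and cite Bredon for the cohomogeneity-one and cohomogeneity-two classification rather than reprove it.
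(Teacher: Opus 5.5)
Your proposal is correct. For the first assertion, (i), (ii) and (iv) it follows the paper, which simply cites Bredon. Your cone-over-link induction on cohomogeneity for (iv) is the content of the cited Lemma~4.1/Theorem~4.3.

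Two small tightenings:
\begin{itemize}
\item In the first assertion, openness of $S_r(p)/G_p\to B_r(G\cdot p)/G$ does not follow from openness of $\pi$ alone. You also need that $G\cdot U$ is open in the tube whenever $U$ is open in the slice, which is the slice theorem again.
\item In (iv), once the link $\mathbb S(N_p)/G_p$ is known to be a compact surface, the cone minus its vertex is that surface times $(0,\infty)$. So no further induction is needed there.
\end{itemize}

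The genuine difference is in (iii). The paper gives its own induction on $\dim N_p(G\cdot p)$. It reduces to the linear slice, uses that $v$ and $tv$ have the same orbit type to pass to the unit sphere, and starts from $O(1)$ acting on $\mathbb R$. You instead quote the connectedness clause of the principal orbit theorem: $M_{(H)}/G$ is connected whenever $M/G$ is.

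Your route is cleaner. Apply it to $\pi^{-1}(W)$ for an arbitrary connected open $W\subset M/G$; its principal type is that of $M$, since $M^{prin}$ is open and dense. This yields (iii) for every connected neighborhood, not only for tubes. The paper's argument is self-contained, but it is essentially a compressed form of Bredon's proof of that very clause.

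Do not rely on the codimension heuristic you state before the citation. The dichotomy ``codimension at least $2$ in the quotient, or contained in $\partial(M/G)$'' is asserted rather than proved. Moreover, $\partial(M/G)$ is not even defined at points where $M/G$ fails to be a manifold. It is the citation that actually carries (iii).
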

\begin{proof}
	The first statement comes from \cite{bredon1972introduction}*{P. 84, Proposition 4.7}. 
	(i) and (ii) follow from \cite{bredon1972introduction}*{P. 179, Theorem 3.1} and \cite{bredon1972introduction}*{P.182, Theorem 3.3}. 
    (iv) follows from \cite{bredon1972introduction}*{Page 187, Lemma 4.1, Theorem 4.3}. 
    The statement in (iii) follows easily from an induction argument. 
    Firstly, note that $B_r(G\cdot p)/G$ is homeomorphic to $S_r(p)/G_p$ and $S_r(p)\cong N_p(G\cdot p)$ with $G_p$ acting orthogonally. 
    Hence, it is sufficient to show $ (N_p(G\cdot p)\setminus (N_p(G\cdot p))^{prin} )/G_p$ cannot locally disconnect $(N_p(G\cdot p))/G_p$. 
    If $N_p(G\cdot p)\cong \mb R$, then the only non-trivial choice of $G_p$ is $O(1)=\mb Z_2$. 
    Hence, $(\mb R\setminus \mb R^{prin})/\mb Z_2=\{0\}$ cannot disconnect $\mb R/\mb Z_2=[0,\infty)$. 
    Suppose the claim has been verified in all dimensions at most $n$. 
    Then for $N_p(G\cdot p)\cong \mb R^{n+1}$, since $v$ and $tv$ with $t>0$ have the same orbit type, it is sufficient to show that $(\mb S^n_1 \setminus (\mb S^n_1)^{prin} )/G_p$ cannot locally disconnect $\mb S^n_1/G_p$, which is true by the induction assumption. 
\end{proof}

\subsection{Local structure of $M/G$}\label{Subsect: preliminary - M/G}

In this subsection, we classify the local structure of the orbit space $M/G$ using the tangent cones. 
Recall that $N_p(G\cdot p):=\{v\in T_pM: v\perp T_p(G\cdot p)\}$ is the normal vector space of $G\cdot p$ at $p$. 
Let 
\begin{align}\label{Eq: P}
	P(p):=N_p(G\cdot p)\cap T_pM_{(G_p)}.
\end{align}
One can easily verify (by the slice theorem \cite{bredon1972introduction}*{P.82, Theorem 4.2}) that
\begin{align}\label{Eq: P is fixed vectors}
	P(p)=\{v\in N_p(G\cdot p): dg(v)=v,~ \forall g\in G_p \}.
\end{align}
Then,  we can write
\begin{align}\label{Eq: P^perp}
	N_p(G\cdot p)=P(p)\oplus P^\perp(p)
\end{align}
where $P^\perp(p)$ is the orthogonal complement of $P(p)\subset N_p(G\cdot p)$. 
For $p\in M$ and $0<r<\inj(G\cdot p)$, $\exp_{G\cdot p}^\perp$ is a $G$-equivariant diffeomorphism in $B_r(G\cdot p)$, which indicates the homeomorphisms:
\begin{align}\label{Eq: product represent orbit space}
    B_r^*([p])=B_r(G\cdot p)/G ~\overset{\rm homeo}{\cong}~ N_p(G\cdot p)/G_p ~\overset{\rm homeo}{\cong}~ P(p) \times (P^\perp(p)/G_p).
\end{align}
Hence, we can use the cone $P^\perp(p)/G_p$ to classify the local structure $B_r^*([p])$ in $M/G$. 

\begin{remark}\label{Rem: actions in P^perp}
    We also make the following remarks on the $G_p$-action on $P^\perp(p)$:
    \begin{itemize}
        \item[(i)] $0\in N_p(G\cdot p)$ is the only $G_p$-fixed point in $P^\perp(p)$;
        \item[(ii)] for any $x\in N_p(G\cdot p)$ and $t>0$, $x$ and $tx$ have the same isotropy group in $G_p$, and thus $P^\perp(p)/G_p$ is a family of rays;
        \item[(iii)] if $\dim(N_p(G\cdot p))=3$ and $\dim(P(p))=2$, then $G\cdot p$ is a special exceptional orbit in $M$. 
    \end{itemize}
\end{remark}

\begin{proposition}\label{Prop: local structure of M/G}
	Given a $G$-manifold $M$ with $\cohom(G)=3$, let $p\in M$, $0<r<\inj(G\cdot p)$, $P(p)$, and $P^\perp(p)$ be defined as above. Then, we can classify $M/G$ in $B_r^*([p])$ by four cases:
	\begin{itemize}
		\item[(0)] if $\dim(P(p))=3$, then $M/G$ is a smooth $3$-manifold in $B_r^*([p])$;
		\item[(1)] if $\dim(P(p))=2$, then $M/G$ is a $3$-manifold with boundary $M_{(G_p)}^*$ in $B_r^*([p])$ so that $B_r^*([p])\setminus M_{(G_p)}^*\subset (M^{prin})^*$;
		\item[(2)] if $\dim(P(p))=1$, then either
			\begin{itemize}
				\item[(a)] $B_r^*([p])$ is homeomorphic to a $3$-dimensional Euclidean wedge with edge $\bd^EB_r^*([p])$ given by $ M_{(G_p)}^* \cap B_r^*([p])$ and two faces $\bd^F_\pm B_r^*([p])$ given by $M_\pm^*\cap B_r^*([p])$ respectively so that $B_r^*([p])\setminus (\bd^E B_r^*([p])\cup \bd^F B_r^*([p]))\subset (M^{prin})^*$, where $M_\pm\subset B_{r}(G\cdot p)\setminus (M^{prin}\cup M_{(G_p)})$ are two $G$-components of non-principal orbit type strata; or
				\item[(b)] $B_r^*([p])$ is homeomorphic to a $3$-dimensional closed cone with a $1$-dimensional spine given by $M_{(G_p)}^*$ so that $B_r^*([p])\setminus M_{(G_p)}^*\subset (M^{prin})^*$, and thus $B_r^*([p])$ is homeomorphic to $\mb R^3$; 
                moreover, $G\cdot p$ is an exceptional orbit in $M$, i.e. $\codim(G\cdot p)=3 $; 
			\end{itemize}
		\item[(3)] if $\dim(P(p))=0$, 
        then $B_r^*([p])$ is homeomorphic to either $\mb B^3$, $\mb B^3_+$, or a $3$-dimensional cone over $\mb {RP}^2$.
	\end{itemize}
	In particular, $M^*=\pi(M)$ is a topological $3$-manifold away from a finite set $\{[p_i]\}$ where $p_i$ satisfies (3) and $B_r^*([p_i])$ is a cone over $\mb {RP}^2$. 
\end{proposition}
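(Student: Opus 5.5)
The plan is to work entirely through the homeomorphism \eqref{Eq: product represent orbit space}, $B_r^*([p])\cong P(p)\times (P^\perp(p)/G_p)$, and to classify the cone $C:=P^\perp(p)/G_p$. By Remark \ref{Rem: actions in P^perp}(ii), $C$ is the cone over $\Lambda:=\mb S(P^\perp(p))/G_p$, the quotient of the unit sphere of $P^\perp(p)$. Let $k:=\dim P^\perp(p)$ and $m:=\dim N_p(G\cdot p)=\codim(G\cdot p)\geq 3$. Since $G_p$ acts on $N_p(G\cdot p)$ as the slice representation, the principal orbits of $G_p$ on $N_p$ have codimension $3$. Because $G_p$ fixes $P(p)$ pointwise, the same holds on $P^\perp(p)$, so $\Cohom(G_p, P^\perp(p)) = 3-\dim P(p)$ and $\dim\Lambda = 2-\dim P(p)$. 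By Lemma \ref{Lem: prin orbit stratum}(i), the principal stratum of $N_p$ projects to $(M^{prin})^*$.

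\textbf{Cases (0) and (1).} If $\dim P=3$, then $P^\perp=0$, so $G\cdot p$ is principal and $B_r^*([p])\cong \mb R^3$ is smooth, since $\pi$ is a submersion on $M^{prin}$ (Lemma \ref{Lem: prin orbit stratum}(ii)). If $\dim P=2$, then $\Lambda$ is $0$-dimensional, and it is connected by Lemma \ref{Lem: prin orbit stratum}(iii) applied inductively (or directly: if $k\geq2$ the sphere is connected; if $k=1$ we get $\mb Z_2$ acting by $-1$). Hence $\Lambda$ is a point, $C=[0,\infty)$, and $B_r^*\cong\mb R^2\times[0,\infty)$. Its boundary is $P\times\{0\}$, which is exactly $M_{(G_p)}^*$ by \eqref{Eq: P is fixed vectors}. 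Points with nonzero $P^\perp$-component lie over the interior of the ray; they are principal because $\Lambda$ is a single orbit type, and the open ray is dense and forced to be of principal type.

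\textbf{Case (2).} Here $\Lambda$ is a compact $1$-dimensional orbit space of a sphere. By the $\Cohom=1$ classification (Bredon, Ch. IV), $\Lambda$ is either a circle or a closed interval. If $\Lambda$ is an interval, $C$ is a planar wedge. Its endpoints are nonprincipal orbit types $M_\pm$ and its interior is principal, which gives (a), with the edge equal to $P\times\{0\}$. If $\Lambda$ is a circle, then $C\cong\mb R^2$ and $B_r^*\cong\mb R^3$ with spine $P\times\{0\}$, which gives (b). In (b), every point of $\Lambda$ is principal for $G_p$ on the sphere. If $k\ge 3$, the $G_p$-orbits in the sphere would then be of dimension $k-2\geq1$, whereas $0$ has the zero-dimensional orbit; so I must show that $G\cdot p$ has the same dimension as a principal orbit. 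My route is this: since $\Lambda$ is a circle with no boundary or singular points, the identity component $G_p^0$ acts with orbits of codimension $2$ in $P^\perp$ and a circle quotient. I then argue by Bredon's classification that either $k=2$ with $G_p$ finite (a rotation group $\mb Z_\ell$), or the action is transitive on spheres, contradicting $\dim\Lambda=1$. Finiteness of $G_p$ gives $\dim G\cdot p=\dim G$, which equals the principal orbit dimension. So the orbit is exceptional and $\codim=3$.

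\textbf{Case (3).} Here $\Lambda$ is a $2$-dimensional orbit space of $\mb S^{k-1}$ by $G_p$, which is compact and has $\Cohom=2$ on the sphere. By Lemma \ref{Lem: prin orbit stratum}(iv), $\Lambda$ is a topological surface, possibly with boundary. It is simply connected modulo finite covering: orbit spaces of simply connected spaces by compact groups have $\pi_1$ a quotient of $\pi_0(G_p)$, and its Euler characteristic is positive. So $\Lambda\in\{S^2,\mb D^2,\mb{RP}^2\}$, and the cone is $\mb B^3$, $\mb B^3_+$, or the cone over $\mb{RP}^2$ respectively. The \textbf{main obstacle} is ruling out other surfaces. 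I would try to prove $\chi(\Lambda)>0$ via the fact that $\mb S^{k-1}\to\Lambda$ has connected fibers on the principal part, so that $\pi_1$ of the principal part of $\Lambda$ is a quotient of $\pi_0$ of the principal isotropy. Alternatively, I would use the finite-group case ($k=3$, $G_p\subset O(3)$ finite, where $\mb S^2/G_p$ is a spherical orbifold) together with a reduction of the general case to it via the identity component.

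\textbf{Finiteness.} Points of type (3) with an $\mb{RP}^2$ link have $\dim P=0$, so they are isolated in their stratum. Compactness of $M^*$ then gives a finite set $\{[p_i]\}$, and every other point has a Euclidean or half-space neighborhood.
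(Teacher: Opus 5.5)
Your cases (0), (1) and (2a) follow the paper. In (1), your connectedness argument (the unit sphere of $P^\perp(p)$ is connected when $k\ge 2$, and for $k=1$ the group acts by $-1$) is a slightly more direct substitute for the paper's argument that two principal rays would force $\dim(P^\perp(p)/G_p)\ge 2$.

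You genuinely depart from the paper in (2b) and (3). The paper settles (2b) by noting that $SP^\perp(p)$ consists of principal $G_p$-orbits, so it fibers over the circle $\Lambda$; no $\mathbb{S}^{k-1}$ with $k\ge 3$ does this. It settles (3) geometrically: $\mathbb{S}^{k-1}/G_p$ is an Alexandrov surface of curvature $\ge 1$, which it doubles if there is boundary, and the universal cover must then be $S^2$.

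Your route is purely topological, and it is more complete than you seem to think. The fact you quote in (3) already finishes both cases: for $k\ge 3$, $\pi_1(\mathbb{S}^{k-1}/G_p)$ is a quotient of $\pi_0(G_p)$, hence finite. This follows from simple connectivity of $\mathbb{S}^{k-1}/G_p^0$ (Bredon, Ch.~II, \S 6), followed by Armstrong's theorem for the finite group $G_p/G_p^0$.

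In (3) you have $k=\codim(G\cdot p)\ge 3$. A compact surface with finite fundamental group is $S^2$, $\mathbb{D}^2$ or $\mathbb{RP}^2$: with nonempty boundary $\pi_1$ is free, and every other closed surface has infinite $\pi_1$. So the positivity of $\chi(\Lambda)$, which you call the main obstacle, is never needed, and both of your tentative routes to it can be dropped.

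In (2b), the dichotomy you attribute to ``Bredon's classification'' ($k=2$ with $G_p$ finite, or transitivity on spheres) is not an actual theorem. Replace it with this argument: if $k\ge 3$ then $\pi_1(\Lambda)$ is finite, contradicting $\Lambda\cong S^1$. Hence $k=2$ and $\codim(G\cdot p)=\dim P(p)+k=3=\Cohom(G)$, so $G\cdot p$ is exceptional. No separate argument about finiteness of $G_p$ is needed.

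The trade-off between the two approaches is as follows. The paper's (3) uses the round metric on the slice sphere and a comparison-geometry theorem, and its (2b) fibration argument is self-contained. Your version is metric-free and handles (2b) and (3) with a single lemma.

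One small correction in the finiteness step: ``isolated in their stratum'' is not by itself enough. What you want is that for every $v\neq 0$ in the slice, the radial vector $v$ lies in $P$ at $\exp^\perp_{G\cdot p}(v)$. Then every orbit in $B_r(G\cdot p)\setminus G\cdot p$ has $\dim P\ge 1$, so type-(3) orbits are isolated in $M$ and finite in number by compactness.
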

\begin{proof}
	By Lemma \ref{Lem: prin orbit stratum}, $\dim(P(p))\in\{0,1,2,3\}$. 
	If $\dim(P(p))=3$, then we have $p\in M^{prin}$ by $\Cohom(G)=3$. 
	Thus, $P(p)=N_p(G\cdot p)$ and $\pi:B_r(G\cdot p)\to B_r^*([p])$ is a Riemannian submersion. 
	
	If $\dim(P(p))=2$, then by Remark \ref{Rem: actions in P^perp}(ii), $P^\perp(p)/G_p$ is formed by a family of rays $\{l_i\}_{i\in I}$. 
	Suppose there are two different rays $l_1,l_2$ with  $(l_1\cup l_2)\setminus\{0\}\subset (P^\perp(p))^{prin}/G_p$. Then, $v_1\in l_1\setminus\{0\}$ and $v_2\in l_2\setminus\{0\}$ can be connected by a curve in $(P^\perp(p))^{prin}/G_p$ by Lemma \ref{Lem: prin orbit stratum}(iii), which implies $\dim(P^\perp(p)/G_p)\geq 2$. 
	This contradicts the assumption 
	\[3={\rm Cohom}(G) = \dim((N_p(G\cdot p))^{prin}/G_p) = \dim(P(p))+ \dim((P^\perp(p))^{prin}/G_p)\geq 4 . \]
	Therefore, $(P^\perp(p))^{prin}/G_p\cong (0,\infty)$ is a single (open) ray. 
	By Lemma \ref{Lem: prin orbit stratum}(i) and (\ref{Eq: product represent orbit space}),  
	\begin{align}\label{Eq: local model 1}
	    B_r(G\cdot p)/G\overset{\rm homeo}{\cong} P(p)\times [0,\infty),
	\end{align}
	i.e. in a neighborhood of $G\cdot p$, $M_{(G_p)}^*$ is the boundary of the topological manifold $M^*$. 
    In particular, for any $v\in P^\perp(p)\setminus\{0\}$, $(0,v)\in P(p)\times P^\perp(p)$ is contained in $(N_p(G\cdot p))^{prin}$. 
    Noting $dg(u,v)=(u,dg(v))$ for all $g\in G_p$ and $(u,v)\in P(p)\times P^\perp(p)$, we conclude that $(u,v)\in (N_p(G\cdot p))^{prin}$ whenever $v\neq 0$, and thus $B_r^*([p])\setminus M_{(G_p)}^*\subset (M^{prin})^*$. 
	
	If $\dim(P(p))=1$, then by a similar argument to the previous case, we know $P^\perp(p)/G_p$ is either a cone over an arc (i.e. a $2$-dimensional Euclidean wedge) or a closed cone over a circle (i.e. $S^1\times[0,\infty)/(p,0)\sim(q,0)$ $\overset{\rm homeo}{\cong} \mb R^2 $), which implies (2). 
	One can also refer to  \cite{bredon1972introduction}*{P.206-207, Proposition 8.2, 8.3} for the classification of $P^\perp(p)/G_p$. 
    In particular, in Case (2.b), $P^\perp(p)/G_p$ is a cone over a circle $SP^\perp(p)/G_p$, where $SP^\perp(p)$ is the unit sphere in $P^\perp(p)$. 
    By Lemma \ref{Lem: prin orbit stratum}(iii), $SP^\perp(p)$ contains only principal $G_p$-orbits, which implies $SP^\perp(p)$ is a fiber bundle over $S^1\cong SP^\perp(p)/G_p$. 
    Since none of $\mb S^{n\geq 2}$ can fiber over a circle, we know $SP^\perp(p)=\mb S^1$, and thus $\codim(G\cdot p)=\dim(P(p))+ \dim(P^\perp(p))=1+(1+\dim(SP^\perp(p)))=3$. 
    Indeed, $G_p$ is a cyclic group of rotations in this case by the arguments before \cite{bredon1972introduction}*{P. 207, Theorem 8.2}. 
	
	If $\dim(P(p))=0$, 
    then $B_r^*([p])$ is homeomorphic to a $3$-dimensional cone over $SN_p(G\cdot p)/G_p$, where $SN_p(G\cdot p)$ is the Euclidean unit sphere in $N_p(G\cdot p)$. 
    Since the $G_p$ action on $SN_p(G\cdot p)$ has cohomogeneity $2$, we know $SN_p(G\cdot p)/G_p$ is a topological $2$-surface (Lemma \ref{Lem: prin orbit stratum}). 
    Additionally, by \cite{Burago_1992}*{P.16 Corollary}, $SN_p(G\cdot p)/G_p$ is also an Alexandrov space with curvature $\geq 1$.  If $SN_p(G\cdot p)/G_p$ has boundary, one can obtain a positively curved Alexandrov space without boundary by the doubling theorem \cite{Burago_1992}*{P.54 Theorem d)}.  
    Hence, its universal cover is a positively curved closed surface, which can only be $S^2$. 
    In particular, $SN_p(G\cdot p)/G_p$ is homeomorphic to either $\mb S^2$, $\mb D^2$, or $\mb {RP}^2$. 
    This implies (3) and the last statement (cf. \cite{burago2001course}*{Exercise 10.10.4}). 
\end{proof}

\begin{example}
    We list some examples of the situations described in Proposition \ref{Prop: local structure of M/G}.
    \begin{itemize}
        \item For $m\geq 2$, if $g\in SO(m)$ acts on $(x,y)\in \mb R^2\times \mb R^{m}$ by $g\cdot (x,y)=(x, g\cdot y)$, then every $p\in \{(x,0)\in \mb R^{2+m}\}$ satisfies Proposition \ref{Prop: local structure of M/G}(1). 
        \item For $k,l\geq 2$, if $(g,h)\in SO(k)\times SO(l)$ acts on $(x,y,z)\in \mb R \times \mb R^{k}\times \mb R^l$ by $(g,h)\cdot (x,y,z):=(x,g\cdot y, h\cdot z)$, then every $p\in \{(x,0,0)\in \mb R^{1+k+l}\}$ satisfies Proposition \ref{Prop: local structure of M/G}(2.a). 
        \item For $k\geq 2$, if the generator $[1]\in \mb Z_k$ acts on $(x,z)\in \mb R\times \mb C$ by $[1]\cdot (x,z)=(x, e^{2\pi i/k}\cdot z)$, then every $p\in \mb R\times \{0\}$ satisfies Proposition \ref{Prop: local structure of M/G}(2.b). 
        \item If $\mb Z_2$ acts on $\mb R^3$ by $[0]=id$ and $[1]\cdot x=-x$, then $0\in \mb R^3$ satisfies Proposition \ref{Prop: local structure of M/G}(3). 
        Note that $\mb R^3/\mb Z_2$ is a cone over $\mb {RP}^2$, which fails to be a $C^0$ manifold at the vertex. 
        \item If $S^1$ acts on $\mb R^4$ by the Hopf action on each $\mb S^3_r(0)$, then $0\in \mb R^4$ satisfies Proposition \ref{Prop: local structure of M/G}(3). 
        Now, $\mb R^4/S^1$ is a cone over $S^2$, which is a $C^0$ manifold at the vertex. 
    \end{itemize}
    In particular, if $p\in M$ satisfies Proposition \ref{Prop: local structure of M/G}(3), $M^*$ may still be a $C^0$ manifold at $[p]$.
\end{example}

\begin{definition}\label{Def: isolated orbits}
    An orbit $G\cdot p\subset M$ is said to be an {\em isolated non-principal orbit} if $N_p(G\cdot p)\cap T_pM_{(G_p)}=\{0\}$, i.e. $p$ satisfies Proposition \ref{Prop: local structure of M/G}(3). 
    Denote by 
    \begin{itemize}
        \item $\mc S\subset M$ the (finite) union of isolated non-principal orbits;
        \item $\mc S_{n.m.}\subseteq \mc S$ the (finite) union of isolated non-principal orbits $G\cdot p$ so that $M^*$ fails to be a topological $3$-manifold at $[p]$, i.e. $B_r^*([p])$ is homeomorphic to a cone over $\mb {RP}^2$. 
    \end{itemize}
\end{definition}
Although $M^*$ is a $C^0$ manifold at $[p]\in \mc S^*\setminus \mc S_{n.m.}^*$, the local structure of $M$ near $G\cdot p$ is generally hard to analyze. 
Hence, we will often first work in $M\setminus \mc S$, and then consider the orbits in $\mc S$. 
Additionally, in our applications, we will choose group actions with $\mc S_{n.m.}=\emptyset$.

\begin{lemma}\label{Lem: local structure of M/G with codim 3}
	Suppose $B_r(G\cdot p)$ satisfies Proposition \ref{Prop: local structure of M/G}(1) or (2.a). 
	Suppose also that there exists $x\in B_r(G\cdot p)\setminus M^{prin}$ with $\codim(G\cdot x)=3=\Cohom(G)$. 
	Then $N_p(G\cdot p)$ is equally separated by $N_{n.p.}$ into $k$ chambers for some $k\in \{2,3,...\}$, where $N_{n.p.}$ is the union of non-principal $G_p$-orbits in $N_p(G\cdot p)$. 
	In particular, in the slice $S_r(p)$ with $G_p$-actions, $(S_r(p))^{prin}$ has $k\geq 2$ connected components. 
\end{lemma}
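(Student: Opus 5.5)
The plan is to pass to the slice and reduce to a reflection-group statement. By the slice theorem, $B_r(G\cdot p)/G\cong N_p(G\cdot p)/G_p$. By Proposition \ref{Prop: local structure of M/G}(1) or (2.a), this quotient is $P(p)\times C$, where $C=P^\perp(p)/G_p$ is a ray $[0,\infty)$ (case (1)) or a $2$-dimensional Euclidean wedge (case (2.a)). Moreover, all points whose image lies off the boundary (resp. off the faces and edge) are principal.

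Now take $x$ as in the statement. It is non-principal and $\codim(G\cdot x)=3$, so its orbit is exceptional. Its image $[x]$ must lie in the boundary stratum: $M^*_{(G_p)}$ in case (1), or a face $M_\pm^*$ or the edge in case (2.a). Transport $x$ into the slice at a point $v\in N_p(G\cdot p)$. Then $G_v\subset G_p$, and the $G_p$-orbit of $v$ has codimension $3$ in $N_p(G\cdot p)$, i.e. the same dimension as a principal $G_p$-orbit. The local quotient near $[v]$ is a half-space. By Remark \ref{Rem: actions in P^perp}(iii), applied to the $G_v$-action on the normal space of $G_p\cdot v$, the orbit $G_p\cdot v$ is special exceptional. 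So $G_v$ acts on its $3$-dimensional normal slice with a $2$-dimensional fixed space, and hence $G_v/(G_v)_{prin}\cong\mathbb Z_2$ acts as a reflection across a hyperplane.

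The key consequence is that the non-principal set $N_{n.p.}$ near $v$ is a smooth hypersurface of $N_p(G\cdot p)$ (the orbit of the reflection wall). By Remark \ref{Rem: actions in P^perp}(ii) and the product structure $P(p)\times P^\perp(p)$, this hypersurface is a cone, and it is invariant under translations in $P(p)$. The whole face (or boundary) consists of orbits of the same type, so $N_{n.p.}$ is a union of codimension-one linear pieces through $P(p)$. Hence $N_{n.p.}$ is the set of walls of a group of reflections, i.e. the normal subgroup of $G_p$ generated by these local reflections acting on $P^\perp(p)$. The lower-dimensional strata (the edge in case (2.a), or $0$) cannot disconnect anything, by Lemma \ref{Lem: prin orbit stratum}(iii) in codimension $\ge2$.

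Next I count the chambers. Since $(P^\perp)^{prin}/G_p$ is connected (an open ray or an open wedge), $G_p$ permutes the components of $N_p(G\cdot p)\setminus N_{n.p.}$ transitively. Isometries in $G_p$ carry chambers to chambers, so all chambers are congruent; this is the "equally separated" claim. Every chamber is bounded by a reflection wall, and the reflection swaps the two sides of that wall. Hence there are at least two chambers, giving $k\ge 2$. Finiteness of $k$ follows because $N_{n.p.}$ is closed, conic and a locally finite union of walls on the unit sphere. The final statement follows by applying $\exp^\perp$, which identifies $(S_r(p))^{prin}$ with $N_p(G\cdot p)^{prin}\cap\mathbb B_r$.

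I expect the main obstacle to be identifying $N_{n.p.}$ near a point $v$ over a face or boundary as a genuine linear hyperplane, i.e. a reflection wall, rather than just a topological hypersurface. The first thing I would try is to apply Remark \ref{Rem: actions in P^perp}(iii) together with linearity of the $G_v$-action, which gives local flatness, and then use conicity and $P(p)$-invariance to globalize it. In case (2.a) one also has to check that both faces give walls, or that exactly one does; either way the count $k\ge2$ only needs one wall.
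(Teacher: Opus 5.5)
Your local analysis is correct. At a point $v$ over a face, $G_p\cdot v$ is a special exceptional orbit of the $G_p$-action on $N_p(G\cdot p)$, and near $v$ the non-principal set is a smooth conic hypersurface. The gap is the globalization step, where you conclude that $N_{n.p.}$ is a union of linear hyperplanes, i.e.\ the walls of a reflection group of $N_p(G\cdot p)$. The element of $G_v$ that reflects the slice $N_v(G_p\cdot v)$ may also act nontrivially on $T_v(G_p\cdot v)$, so it need not be a reflection of $N_p(G\cdot p)$. Conicity plus $P(p)$-invariance does not make the stratum flat.

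Here is a counterexample to the flatness claim. Let $G=T^2\rtimes\mathbb{Z}_2$ act on $\mathbb{R}\times\mathbb{C}^2$ by $(a,b)\cdot(t,z_1,z_2)=(t,az_1,bz_2)$ and $\tau\cdot(t,z_1,z_2)=(t,z_2,z_1)$, and take $p=0$.
\begin{itemize}
\item Principal orbits are pairs of $2$-tori, so $\Cohom(G)=3$.
\item $P(p)=\mathbb{R}\times\{0\}$, and $P^\perp(p)/G_p\cong\{r_1\geq r_2\geq 0\}$ with $r_i=|z_i|$, which is a wedge. So this is case (2.a).
\item Points with $|z_1|=|z_2|\neq 0$ have isotropy $\{1,(z_1/z_2,z_2/z_1)\tau\}$ and codimension-$3$ orbits, so the hypotheses of the lemma hold.
\end{itemize}
However, the face preimage $\mathbb{R}\times\{|z_1|=|z_2|\}$ is $\mathbb{R}$ times the cone over a Clifford torus. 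That cone is a quadric of signature $(2,2)$ in $\mathbb{C}^2$ and contains no $3$-plane. Also, the local involution $(z_1/z_2,z_2/z_1)\tau$ fixes only a $3$-dimensional subspace of $\mathbb{R}^5$.

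So your step ``the reflection swaps the two sides of its wall, hence $k\geq 2$'' is unsupported. A local reflection swapping the local sides of a hypersurface gives two chambers only if the hypersurface separates globally, and that is exactly what needs proving. The lemma itself holds in this example: the two chambers are $\mathbb{R}\times\{|z_1|>|z_2|>0\}$ and $\mathbb{R}\times\{|z_2|>|z_1|>0\}$, swapped by $\tau$.

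The paper's proof takes the same route: a dimension count showing the face preimage has codimension one, then ``a union of subspaces, hence it contains a hyperplane''. That inference has the same defect. Here the face preimage is the union of the $3$-planes $\mathbb{R}\times\{z_1=\lambda z_2\}$, $|\lambda|=1$, yet contains no hyperplane. The heuristic you proposed for this step (Remark~\ref{Rem: actions in P^perp}(iii), linearity of the $G_v$-action, conicity, $P(p)$-invariance) cannot close the gap, since the example satisfies all of these.

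The missing idea is a separation argument on the unit sphere $S$ of $P^\perp(p)$. Let $m=\dim P^\perp(p)$.
\begin{itemize}
\item \emph{Case (1).} All non-principal points of $B_r(G\cdot p)$ have orbit type $(G_p)$. Hence $\codim(G\cdot p)=3$, $m=1$, $N_{n.p.}=P(p)$, and $k=2$.
\item \emph{Case (2.a) with $m=2$.} $G_p$ acts on $P^\perp(p)$ through a finite subgroup of $O(2)$ with wedge quotient, i.e.\ a dihedral group. Here your reflection-wall picture is correct.
\item \emph{Case (2.a) with $m\geq 3$.} The point $x$ cannot lie on the edge, since edge orbits have codimension $m+1\geq 4$; so it lies over a face. $G_p$ acts on $S\cong\mathbb{S}^{m-1}$ with cohomogeneity one, and the quotient is an arc with principal interior. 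The orbit over the endpoint corresponding to that face has codimension one in $S$. It is a compact embedded hypersurface in the simply connected $S$, so it separates $S$. Removing the orbit over the other endpoint, which has empty interior, cannot merge components. Thus $S^{prin}$ is disconnected, and so is $N_p(G\cdot p)^{prin}\cong P(p)\times(0,\infty)\times S^{prin}$.
\end{itemize}
Your counting paragraph then finishes the proof: $G_p$ permutes the chambers transitively because $N_p(G\cdot p)^{prin}/G_p$ is connected, and the chambers are congruent. Finiteness should come from the fact that the stabilizer of a chamber is open and contains the identity component of $G_p$, not from local finiteness of walls.
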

\begin{proof}
	Since we are in the case of Proposition \ref{Prop: local structure of M/G}(1) or (2.a), the codimension assumption implies the existence of a plane $W^*$ or half-plane $W_+^*$ on the boundary of $N_p(G\cdot p)/G_p$ so that $\codim(G_p\cdot x)=3=\Cohom(G_p)$ in $N_p(G\cdot p)$ whenever $[x]\in W^*$ or $W^*_+$. 
	Let $\widehat W, \widehat W_+\subset N_p(G\cdot p)$ be the pre-images  of $W^*,W_+^*$, respectively. 
	Then $\dim(\widehat W)=\dim(W^*)+\dim(G_p\cdot x)=2+(n+1-\dim(G\cdot p)-\Cohom(G_p))=n-\dim(G\cdot p)$, and similarly $\dim(\widehat{W}_+ )=n-\dim(G\cdot p)$, so $\widehat W$ and $\widehat W_+$ have codimension $1$ in $N_p(G\cdot p)$. 
	Note that $\widehat W,\widehat W_+$ are the unions of some subspaces in $N_p(G\cdot p)$ since $G_p$ acts orthogonally on $N_p(G\cdot p)$.  
	Therefore, $\widehat W$ and $\widehat W_+$ contain at least a hyperplane in $N_p(G\cdot p)$, and thus $N_p(G\cdot p)\setminus N_{n.p.}$ has $k$ components for some $k\geq 2$. 
	Clearly, these components are isometric to each other by the orthogonal $G_p$-actions. 
\end{proof}

\subsection{Local structure of $\Sigma/G$ and notations for $G$-hypersurfaces}\label{Subsec: preliminary - Sigma/G}
Given $p\in M$ and $0<r<\inj(G\cdot p)$, let $\Sigma$ be an embedded $G$-hypersurface in $B_r(G\cdot p)$ with $\bd\Sigma\cap B_r(G\cdot p)=\emptyset$. 
We classify the local structure of $\Sigma^*=\Sigma/G$ near $[p]$ using the tangent cones. 
If $p\in M^{prin}$, then $\Sigma^*$ is a smooth surface in the smooth $3$-manifold $B_r^*([p])$ by Lemma \ref{Lem: prin orbit stratum}(i). 
Hence, we only consider $p\in M\setminus M^{prin}$. 
To begin with, the following lemma classifies the intersection between $\Sigma$ and the non-principal orbit type stratum $M_{(G_p)}$. 

\begin{lemma}\label{Lem: hypersurface position}
    Given $p\in M\setminus M^{prin}$ and $\Sigma\subset B_r(G\cdot p)$ as above, if $\Sigma\cap M_{(G_p)}\neq \emptyset$, then either
    \begin{itemize}
        \item $\Sigma$ meets $M_{(G_p)}\cap B_r(G\cdot p)$ orthogonally, or
        \item $M_{(G_p)}\cap B_r(G\cdot p) \subset \Sigma$. 
    \end{itemize}
\end{lemma}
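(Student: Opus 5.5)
We work at a point $q\in\Sigma\cap M_{(G_p)}$ and use $G_q$-invariance of the tangent space. Since $\Sigma$ is $G$-invariant and $G_q$ fixes $q$, the differential $dg$ for $g\in G_q$ preserves $T_q\Sigma$. It also preserves $T_q(G\cdot q)\subset T_q\Sigma$, since the orbit lies in $\Sigma$. Hence $G_q$ preserves the line $\nu_q:=(T_q\Sigma)^\perp\subset N_q(G\cdot q)$. Because $G_q$ acts orthogonally, each $g\in G_q$ acts on $\nu_q$ by $\pm1$. Recall from \eqref{Eq: P is fixed vectors} that $P(q)$ is the fixed subspace of $G_q$ in $N_q(G\cdot q)$, and $P(q)=N_q(G\cdot q)\cap T_qM_{(G_q)}$. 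Since $(G_q)=(G_p)$, this is the tangent space of the stratum $M_{(G_p)}$ in the normal direction.

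We first treat the case where $G_q$ acts trivially on $\nu_q$, i.e.\ $\nu_q\subset P(q)$. Then the normal of $\Sigma$ at $q$ is tangent to $M_{(G_p)}$. Together with $T_q(G\cdot q)\subset T_q\Sigma\cap T_qM_{(G_p)}$, this gives that $\Sigma$ and $M_{(G_p)}$ meet orthogonally at $q$: $T_qM_{(G_p)}$ contains the normal line of $\Sigma$, so $T_qM_{(G_p)}=(T_qM_{(G_p)}\cap T_q\Sigma)\oplus\nu_q$.

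In the other case some $g\in G_q$ acts by $-1$ on $\nu_q$. Then $\nu_q\perp P(q)$, since any $G_q$-fixed vector has zero component along a line on which some element acts by $-1$. Hence $T_qM_{(G_p)}=T_q(G\cdot q)\oplus P(q)\subset T_q\Sigma$. We upgrade this to a local containment. By the slice theorem, near $q$ the stratum $M_{(G_p)}$ is the $G$-saturation of $\exp_q(P(q))$, and on $\exp_q(P(q))$ the isotropy group is exactly $G_q$. Write $\Sigma$ near $q$ as a graph $u$ over $T_q\Sigma$ in the normal direction $\nu$, with the normal field chosen consistently. Then $g$ maps $\Sigma$ to itself and reverses $\nu$ along the fixed set $\exp_q(P(q))$.

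For $x\in\exp_q(P(q))$ near $q$, we want $x\in\Sigma$. Consider the nearest point projection of $x$ to $\Sigma$. The fixed set of the isometry $g$ is totally geodesic, and $g$ preserves $\Sigma$. If $x\notin\Sigma$, the unique closest point $y\in\Sigma$ is $g$-fixed, so $y\in\mathrm{Fix}(g)$ and the segment $xy$ is $g$-invariant. At $y$ the geodesic to $x$ is normal to $\Sigma$ and $g$-fixed. But $g$ acts by $-1$ on the normal at $y$: this holds by continuity from $q$, since the sign is locally constant along fixed points in $\Sigma$. This is a contradiction, so $x\in\Sigma$. Thus $\exp_q(P(q))\cap B\subset\Sigma$ locally, and by $G$-invariance $M_{(G_p)}\subset\Sigma$ near $q$.

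It remains to make the dichotomy global within $B_r(G\cdot p)$. By the slice theorem and \eqref{Eq: product represent orbit space}, $M_{(G_p)}\cap B_r(G\cdot p)$ is the $G$-saturation of the connected set $\exp(P(p))\cap S_r(p)$, so its quotient is connected. The set of points where the second alternative holds is open, by the local containment above. It is also closed, since $\Sigma$ is closed in $B_r(G\cdot p)$ and the sign character of $G_q$ on $\nu_q$ varies continuously. The first alternative is then exactly the complement.

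The main obstacle is this global step: we must ensure the sign of the character on $\nu_q$ is constant along the stratum component. We need a uniform argument that does not rely on $\Sigma$ meeting the whole stratum. We would first try conjugating all $G_q$ to $G_p$ along $\exp(P(p))$, where the isotropy is literally $G_p$, and using connectedness. If $\Sigma$ does not meet some part of the stratum, the orthogonal alternative should be read only at intersection points.
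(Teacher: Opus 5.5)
Your proof is correct and follows the paper's route. The $\pm1$ action of $G_q$ on the normal line gives either $\nu_q\in P(q)$ (orthogonal meeting) or $T_qM_{(G_p)}\subset T_q\Sigma$; a reflection argument then gives local containment of the stratum, and an open--closed argument on the $G$-connected stratum finishes. The paper does the local step more directly: it writes $\Sigma$ in the slice as a graph $x+f(x)\nu$ and observes that $dg_0(x+f(x)\nu)=x-f(x)\nu$ forces $f=0$ on $P(q)$, instead of your nearest-point projection.

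The global step you flag as the main obstacle is already settled by your own open--closed argument. Apply it to the $G$-invariant set of non-orthogonal intersection points; closedness follows by passing $T_qM_{(G_p)}\subset T_q\Sigma$ to limits. So you need no separate constancy of the character along the stratum and no conjugation of isotropy groups.
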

\begin{proof}
	Suppose $\Sigma$ does not meet $M_{(G_p)}$ orthogonally at $G\cdot q\subset \Sigma\cap M_{(G_p)}$. 
	Then we have $\nu\notin T_qM_{(G_p)}$, where $\nu$ is a unit normal of $\Sigma$ at $q$. 
	By combining \eqref{Eq: P is fixed vectors} with the $G$-invariance of $\Sigma$, there exists $g_0\in G_q$ so that $\nu\neq dg_0(\nu) = -\nu $. 
	Decompose $\nu=\nu_\parallel +\nu_\perp$ with $\nu_\parallel\in T_qM_{(G_p)}$ and $\nu_\perp\in (T_q M_{(G_p)})^\perp $. 
	Then $dg_0(\nu_\perp)=dg_0(\nu-\nu_\parallel)=-\nu-\nu_\parallel$. 
	Together with $|\nu_\perp|=|dg_0(\nu_\perp)|$, we know $\langle\nu, \nu_\parallel\rangle=0$, and thus $\nu=\nu_\perp\in (T_q M_{(G_p)})^\perp$, which implies $\Sigma$ meets $M_{(G_p)}$ tangentially along $G\cdot q$. 
	In particular, $T_qM_{(G_p)}\subseteq T_q\Sigma$. 
	
	For $r>0$ small enough, let $S_q:=S_r(q)$ be the slice of $G\cdot q$ at $q$, and let $\Sigma_q:=\Sigma\cap S_q$, $\Sigma_q':=\exp_q^{-1}(\Sigma_q)$, $A_q:=M_{(G_p)}\cap S_q$, $A_q':=\exp_q^{-1}(A_q)$. 
	Since $A_q'\subset   T_qS_q=N_q(G\cdot q)$ is the $G_q$-fixed-point set, we know 
	\[A_q'\subset P(q)= T_qM_{(G_p)}\cap N_q(G\cdot q)\subseteq T_0\Sigma_q'.\] 
	Additionally, $\Sigma_q'=\Graph(f):=\{x+f(x)\nu: x\in T_0\Sigma_q'\}$ near $0\in N_q(G\cdot q)$ for some $f\in C^\infty(T_0\Sigma_q')$. 
	Hence, by the $G_q$-invariance of $\Sigma_q'$, $dg_0(x+f(x)\nu)= x- f(x)\nu\in \Sigma_q'$ for any $x\in A_q'$, which implies $f\llcorner A_q'=0$ and $A_q'\subset \Sigma_q'$ in a neighborhood of $0$. 
	Therefore, $A_q\subset \Sigma_q$ in a neighborhood of $q$, and $M_{(G_p)}\cap \Sigma$ is open in $M_{(G_p)}$. 
	Note that $M_{(G_p)}\cap \Sigma$ is also closed in $M_{(G_p)}$ by continuity, and $M_{(G_p)}\cap B_r(G\cdot p)$ is $G$-connected. 
	Hence, $M_{(G_p)}\cap B_r(G\cdot p)\subset \Sigma$. 
\end{proof}

\begin{proposition}\label{Prop: local structure of Sigma/G 1}
	Given $p\in M\setminus M^{prin}$ with $P(p)$ and $P^\perp(p)$ defined in \eqref{Eq: P}\eqref{Eq: P^perp} respectively, let $\Sigma\subset B_r(G\cdot p)$ be a $G$-connected embedded $G$-hypersurface and $G\cdot q\subset \Sigma\cap M_{(G_p)}$. 
	Suppose \[\dim(P(p))=2.\]
	Then, in a small $\epsilon$-neighborhood $B_\epsilon^*([q])$ of $[q]$, $\Sigma^*=\Sigma/G$ is a surface in $B_\epsilon^*([q])$ so that either
	\begin{itemize}
		\item[(i)] $\Sigma^*$ has free boundary on $M_{(G_p)}^*\cap B_\epsilon^*([q])$; or 
		\item[(ii)] $\Sigma^*=M_{(G_p)}^* \cap B_\epsilon^*([q])$.  
	\end{itemize}
	In case (ii), we have $\dim(P^\perp(p))=1$, $\dim(G\cdot p)=n+1-\cohom(G)=n-2$, $\Sigma\cap B_\epsilon(G\cdot q)=M_{(G_p)}\cap B_\epsilon(G\cdot q)$ is a smooth minimal $G$-hypersurface, and $G_q$ acts on $N_q(G\cdot q)$ by reflection across $P(q)$, i.e. $G_q\cdot (u,v)=\{(u,v), (u,-v)\}$, where $u\in P(q)\cong \mb R^2$ and $v\in P^\perp(q)\cong\mb R$. 
\end{proposition}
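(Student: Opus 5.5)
The plan is to reduce to Lemma \ref{Lem: hypersurface position} and the local model \eqref{Eq: local model 1} from the proof of Proposition \ref{Prop: local structure of M/G}(1). Since $\dim(P(p))=2$, that proof shows that near $[q]$ the orbit space is homeomorphic to $P(q)\times[0,\infty)$, with boundary $M_{(G_p)}^*$ and interior contained in $(M^{prin})^*$. Note that $q\in M_{(G_p)}$ gives $P(q)\cong P(p)$. Lemma \ref{Lem: hypersurface position}, applied near $G\cdot q$, leaves two possibilities: either $\Sigma$ meets $M_{(G_p)}$ orthogonally, or $M_{(G_p)}\cap B_\epsilon(G\cdot q)\subset\Sigma$. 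I will treat these as cases (i) and (ii).

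\textbf{Case (i): orthogonal intersection.} I will work in the slice $S_\epsilon(q)$ with its $G_q$-action, identified via $\exp^\perp_{G\cdot q}$ with $N_q(G\cdot q)=P(q)\oplus P^\perp(q)$. Orthogonality gives $\nu\perp T_qM_{(G_p)}$, so the unit normal $\nu$ of $\Sigma$ at $q$ lies in $P(q)$ and is $G_q$-fixed. Hence $T_q\Sigma\cap N_q(G\cdot q)=\ell\oplus P^\perp(q)$, where $\ell\subset P(q)$ is a line. Near $0$, the slice $\Sigma\cap S_\epsilon(q)$ is the graph over this tangent space of a $G_q$-invariant function $f$ with values in $\mathbb R\nu$. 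Its quotient by $G_q$ is therefore the graph of $f$ over $\ell\times(P^\perp(q)/G_q)\cong\ell\times[0,\infty)$, which is a surface meeting the boundary $P(q)\times\{0\}$ transversally along the curve $\Sigma^*\cap M_{(G_p)}^*$. The word ``free boundary'' will be justified by the fact that the normal is tangent to $M_{(G_p)}$, so $\Sigma^*$ meets $\partial M^*$ orthogonally in the orbit-space metric on the principal part.

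\textbf{Case (ii): containment.} By dimension count, $M_{(G_p)}$ has dimension $\dim(G\cdot p)+2$, while $\Sigma$ has dimension $n$. Since $\dim(G\cdot p)\le n+1-3$, containment forces $\dim(G\cdot p)=n-2$. Then $\dim N_q=3$ and $\dim P^\perp=1$, so $M_{(G_p)}\cap B_\epsilon(G\cdot q)$ is an open piece of the hypersurface $\Sigma$, and by $G$-connectedness the two coincide near $G\cdot q$. Since $P^\perp(q)\cong\mathbb R$ contains no nonzero $G_q$-fixed vector (Remark \ref{Rem: actions in P^perp}(i)), $G_q$ acts on it as $O(1)$, i.e.\ by reflection across $P(q)$. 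Minimality will follow from $M_{(G_p)}$ being the fixed-point-type set of an isometric reflection. Concretely, the second fundamental form $A(X,Y)\nu$ satisfies $dg_0(A(X,Y)\nu)=A(X,Y)\,dg_0(\nu)$ for $X,Y\in T_q\Sigma$ fixed by $g_0$, which forces $A$ to vanish at $q$. This makes $\Sigma$ totally geodesic at $G\cdot q$, and hence along all of $M_{(G_p)}$; the orbit-type stratum near it is totally geodesic, so in particular minimal.

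\textbf{Main obstacle.} The hardest step is the quotient-surface structure in case (i): checking that $\Sigma^*$ is genuinely a topological surface with boundary and that the boundary behaviour qualifies as free boundary even though the reduced metric may degenerate at $\partial M^*$. I will handle this through the explicit graph description over $\ell\times[0,\infty)$ rather than through the metric.
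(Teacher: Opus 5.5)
Your case (ii) has a genuine gap at the step ``containment forces $\dim(G\cdot p)=n-2$.'' The containment $M_{(G_p)}\cap B_\epsilon(G\cdot q)\subset\Sigma$ only gives $\dim(G\cdot p)+2\le n$, i.e.\ $\dim(G\cdot p)\le n-2$. That is the same inequality you already have from $\cohom(G)=3$. Both bounds point the same way, so dimensions alone never prevent a lower-dimensional stratum from lying inside a hypersurface. Everything afterwards depends on this step: $\dim P^\perp(q)=1$, $\Sigma=M_{(G_p)}$ near $G\cdot q$, and the reflection.

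The missing idea is the $G_q$-symmetry of the normal line. Since $P(q)\subset T_q\Sigma$, the unit normal $\nu$ of $\Sigma\cap S_\epsilon(q)$ at $q$ lies in $P^\perp(q)\setminus\{0\}$, so it is a principal vector. $G$-invariance of $\Sigma$ together with $g\cdot q=q$ gives $dg(\nu)=\pm\nu$ for every $g\in G_q$, so $G_q\cdot\nu\subset\{\pm\nu\}$ is finite. Hence principal $G_q$-orbits in the slice are finite, principal $G$-orbits have the same dimension as $G\cdot q$, and $\dim(G\cdot q)=n+1-\cohom(G)=n-2$. Equivalently: by the proof of Proposition \ref{Prop: local structure of M/G}(1), $P^\perp(q)/G_q$ is a single ray, so $G_q$ acts transitively on the unit sphere of $P^\perp(q)$, and an orbit contained in $\{\pm\nu\}$ forces that sphere to be $S^0$. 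This is how the paper argues, and only after it does your dimension count give $\dim M_{(G_p)}=n$.

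Your minimality argument also overreaches. The element $g_0$ acts on $T_q\Sigma=T_q(G\cdot q)\oplus P(q)$ through the isotropy representation on the first summand, which need not be trivial. So your identity only kills the second fundamental form on the $dg_0$-fixed subspace of $T_q\Sigma$, not on all of it, and such a stratum need not be totally geodesic. For instance, let $SO(3)$ act on $\mathbb{RP}^4\times T^2$ through the conjugation action on traceless symmetric matrices in the first factor. The exceptional stratum is then the image of Cartan's minimal isoparametric hypersurface times $T^2$; this is an instance of case (ii) with nonzero second fundamental form.

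What does work is the trace. The mean curvature vector at $q$ is invariant under the isometry $dg_0|_{T_q\Sigma}$, whatever that isometry does to individual vectors, so $dg_0\vec H(q)=\vec H(q)$. Since $dg_0\nu=-\nu$, this forces $\vec H(q)=0$; this is the content of the Hsiang--Lawson corollary the paper cites.

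Finally, ``orthogonality gives $\nu\perp T_qM_{(G_p)}$'' should read $\nu\in T_qM_{(G_p)}$. That is the convention in the proof of Lemma \ref{Lem: hypersurface position}, and it is what your next sentence actually uses. With that correction, your graph description of case (i) over $\ell\times[0,\infty)$ is fine. It is more explicit than the paper, which simply identifies (i) with the orthogonal case.
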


In case (ii), $G\cdot q$ and thus $G\cdot p$ are {\em special exceptional orbits} of $M$ in Definition \ref{Def: special exceptional orbit}.

\begin{proof}
	The result in (i) corresponds to the cases in which $\Sigma$ meets $M_{(G_p)}$ orthogonally. 
	By Lemma \ref{Lem: hypersurface position}, the remaining situation is $M_{(G_p)}\cap B_r(G\cdot p)\subset\Sigma$. 
	Consider the slice $S_{\epsilon}(q)\cong \mb B_{\epsilon}^{\codim(G\cdot q)}(0)$ of $G\cdot q$ at $q$. 
	Since $(M\setminus M^{prin})\cap B_r(G\cdot p)=M_{(G_p)}\cap B_r(G\cdot p)$, we know $\Sigma_{q}:=\Sigma\cap S_{\epsilon}(q)$ has a unit normal $\nu$ at $q$ which points into $S_\epsilon(q)\cap M^{prin}$. 
	Therefore, $1=\#(G_{q}\cdot q)< \#(G_{q}\cdot \nu)$. 
	Since $G_{q}\cdot \nu\subset \{\pm\nu\} $, we know $\#(G_{q}\cdot x)=2$ for any $x\in S_{\epsilon}(q)\cap M^{prin}$, and thus $\dim(G\cdot x)=\dim(G\cdot q)$ for $x\in M^{prin}$, i.e. $\dim(G\cdot q)=\dim(G\cdot p)=n+1-\cohom(G)$. 
	
	In addition, $(\exp_{G\cdot q}^\perp)^{-1}(M_{(G_p)}\cap S_\epsilon(q))=\{v\in N_{q}(G\cdot q): G_{q}\cdot v=\{v\}, |v|<\epsilon\}$ is contained in the $2$-plane $P(q)\subset N_{q}(G\cdot q)$ (since $\dim(P(q))=2$). 
	Hence, $\dim(M_{(G_p)})=\dim(G\cdot q)+ \dim(M_{(G_p)}\cap S_\epsilon(q)) = \dim(G\cdot q) + 2 = n=\dim(\Sigma)$, which implies $\dim(P^\perp(p))=\dim(P^\perp(q))=1$ and $\Sigma$ coincides with $M_{(G_p)}$ near $G\cdot q$. 
	Additionally, $M_{(G_p)}$ is minimal due to \cite[Corollary 1.1]{hsiang71cohom}. 
	Finally, since $P^\perp(q)\cong \mb R$ and $G_{q}\cdot \nu=\{\pm\nu\} $, we know the orthogonal $G_q$-action on $N_q(G\cdot q)$ is given by $G_q\cdot (u,v)=\{(u,v), (u,-v)\}$, where $u\in P(q)\cong \mb R^2$ and $v\in P^\perp(q)\cong\mb R$. 
\end{proof}

\begin{proposition}\label{Prop: local structure of Sigma/G 2}
	Given $p\in M\setminus M^{prin}$ with $P(p),P^\perp(p)$ defined in \eqref{Eq: P}\eqref{Eq: P^perp} respectively, let $\Sigma\subset B_r(G\cdot p)$ be a $G$-connected embedded $G$-hypersurface and $G\cdot q\subset \Sigma\cap M_{(G_p)}$. 
	Suppose \[\dim(P(p))=1.\] 
	\begin{itemize}
	    \item If $B_r^*([p])$ is homeomorphic to a $3$-dimensional Euclidean wedge (i.e. Proposition \ref{Prop: local structure of M/G}(2.a)), then in a small $\epsilon$-neighborhood $B_\epsilon^*([q])$ of $[q]$, either 
	    \begin{itemize}
		  \item[(1)] $\Sigma^*$ is a $2$-dimensional wedge with edge $\bd^E\Sigma^* =[q]=\Sigma^*\cap M_{(G_p)}^*\cap B_\epsilon^*([q])$ and two faces $\bd^F_\pm\Sigma^*=\bd^F_\pm B_\epsilon^*([q])\cap\Sigma^*$; or
		  \item[(2)] $\Sigma^*$ coincides with one face (say $\bd^F_-  B_\epsilon^*([q])$) of the wedge $B_\epsilon^*([q])$, and $\Sigma=\closure(M_-)$ in $B_\epsilon(G\cdot q)$ is a minimal $G$-hypersurface,
	   \end{itemize}
	   where $\bd^F_\pm B_\epsilon^*([q])$ and $M_\pm$ are defined as in Proposition \ref{Prop: local structure of M/G}(2.a) for $B_\epsilon^*([q])$. 
	   Additionally, in case (2), we have $\dim(G\cdot x)= \dim(G\cdot q)$ for all $x\in M_+$, $T_qM_+\cap P^\perp(q)$ is orthogonal to $\Sigma$ at $q$, and $M_-$ satisfies Proposition \ref{Prop: local structure of Sigma/G 1}(ii). 
    \item If $B_r^*([p])$ is homeomorphic to a $3$-dimensional cone with a $1$-dimensional spine (i.e. Proposition \ref{Prop: local structure of M/G}(2.b)), 
	   then in a small $\epsilon$-neighborhood $B_\epsilon^*([q])$ of $[q]$, either 
	   \begin{itemize}
		  \item[(3)] $\Sigma^*$ is a $2$-dimensional cone with vertex $[q]=\Sigma^*\cap M_{(G_p)}^* \cap B_\epsilon^*([q])$; or 
		  \item[(4)] $\Sigma^*$ is a $2$-dimensional half-plane with boundary $\Sigma^*\cap M_{(G_{p})}^*\cap B_\epsilon^*([q])$.
	   \end{itemize}
	   In case (4), we have $\dim(G\cdot q)=n-2$, and $G_q$ acts on $N_q(G\cdot q)$ by $G_q\cdot (u,v)=\{(u,v), (u,-v)\}$, where $u\in P(q)\cong \mb R$ and $v\in P^\perp(q)\cong\mb R^2$. 
	\end{itemize}
\end{proposition}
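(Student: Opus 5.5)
We reduce to the slice, as in the proof of Proposition \ref{Prop: local structure of Sigma/G 1}. Since $G\cdot q\subset M_{(G_p)}$, we have $\dim P(q)=\dim P(p)=1$, and $B^*_\epsilon([q])\cong P(q)\times (P^\perp(q)/G_q)$ by \eqref{Eq: product represent orbit space}. The cone $P^\perp(q)/G_q$ is a $2$-dimensional wedge in case (2.a) and a cone over a circle in case (2.b). Set $\Sigma_q:=(\exp^\perp_{G\cdot q})^{-1}(\Sigma\cap S_\epsilon(q))$. This is a $G_q$-invariant hypersurface of the slice through $0$ with $\Sigma^*\cong \Sigma_q/G_q$ near $[q]$. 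To first order it is modelled on $T_0\Sigma_q$, a $G_q$-invariant hyperplane of $N_q(G\cdot q)$ with unit normal $\nu$.

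By Lemma \ref{Lem: hypersurface position}, either $\Sigma$ meets $M_{(G_p)}$ orthogonally, or $M_{(G_p)}\cap B_r(G\cdot p)\subset\Sigma$. In the second alternative $\nu\perp P(q)$, so $\nu\in P^\perp(q)$. In the first alternative $P(q)\not\subset T_0\Sigma_q$, so $T_0\Sigma_q\supset P^\perp(q)$, which means $\Sigma_q$ is transverse to the $P(q)$-line. In that case $\Sigma^*$ is locally a graph over the cross-section $\{0\}\times P^\perp(q)/G_q$. This gives the $2$-dimensional wedge of (1) or the $2$-dimensional cone of (3), with vertex $[q]$ being the unique point of $\Sigma^*\cap M^*_{(G_p)}$ nearby. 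The faces in (1) lie on $\bd^F_\pm$, since $\Sigma_q$ meets each non-principal stratum $M_\pm$, again by Lemma \ref{Lem: hypersurface position} applied to $M_\pm$. To make the graph statement rigorous, I would use the implicit function theorem on the $P(q)$-coordinate of $\Sigma_q$ and then pass to the quotient.

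Now consider the tangential alternative, $\nu\in P^\perp(q)$ with $P(q)\subset T_0\Sigma_q$. Here $G_q\cdot\nu\subset\{\pm\nu\}$, and $\nu$ is not fixed because $\nu\notin P(q)$, so some $g_0\in G_q$ has $dg_0\nu=-\nu$.

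In case (2.b), $P^\perp(q)\cong\mb R^2$, $\codim(G\cdot q)=3$, and $G_q$ acts on $P^\perp(q)$ with quotient homeomorphic to $\mb R^2$. The line $T_0\Sigma_q\cap P^\perp(q)$ is then $G_q$-invariant. Off $0$ this line lies in the principal part, since all of $P^\perp(q)\setminus\{0\}$ is principal. Its image is therefore a ray or a line in the circle-cone, and (4) is the ray case. For the action: $g_0$ reverses $\nu$ and preserves the invariant line, hence acts on $P^\perp(q)$ as $\pm$ a reflection. Since $P^\perp(q)/G_q$ is a cone over a circle, $G_q$ restricted to $P^\perp(q)$ is a cyclic rotation group (as noted in the proof of Proposition \ref{Prop: local structure of M/G}), so $g_0$ acts as the rotation $-1$ and $G_q|_{P^\perp(q)}=\mb Z_2$. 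Hence $G_q\cdot(u,v)=\{(u,\pm v)\}$. Then $\dim(G\cdot q)=n+1-3=n-2$ by the codimension count, and $\Sigma^*$ is a half-plane bounded by $M^*_{(G_p)}$. I would also check that a cone with vertex $[q]$ results when the cyclic group is larger, which puts that situation back in (3).

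In case (2.a), the $G_q$-invariant hyperplane $T_0\Sigma_q$ contains $P(q)$. I would show, by Lemma \ref{Lem: hypersurface position} applied to the strata $M_\pm$ of the same local type, that $\Sigma$ contains one of $\closure(M_\pm)$, say $M_-$. Then $M_-$ has codimension one, so it is locally a special exceptional stratum, and Proposition \ref{Prop: local structure of Sigma/G 1}(ii) applies along $M_-$. That gives the minimality of $\Sigma=\closure(M_-)$ near $G\cdot q$ and the reflection structure. The claim $\dim(G\cdot x)=\dim(G\cdot q)$ for $x\in M_+$, together with the orthogonality of $T_qM_+\cap P^\perp(q)$ to $\Sigma$, should follow from comparing isotropy dimensions in the slice: $M_+$ sits in the $g_0$-fixed directions orthogonal to $T_0\Sigma_q$.

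I expect this last case to be the main obstacle. The difficulty is excluding the configuration in which $\Sigma$ is tangent to the edge but contains neither face, and establishing the dimension statement on $M_+$. For this I would use the explicit classification of $2$-dimensional wedge cones $P^\perp(q)/G_q$ from \cite{bredon1972introduction}*{Proposition 8.2, 8.3}.
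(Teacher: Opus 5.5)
Your treatment of the orthogonal alternative, which gives (1) and (3), and of case (4) is fine, apart from one confused aside. In the tangential alternative, $G_q\cdot\nu\subset\{\pm\nu\}$ already forces the rotation group $G_q|_{P^\perp(q)}$ to be $\{\pm1\}$, so only the ray occurs, never a line. Moreover $\Sigma^*$ then contains the whole spine $M^*_{(G_p)}$. So a larger cyclic group does not ``put the situation back in (3)''; it simply rules the tangential alternative out.

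The genuine gap is case (2): the wedge case with $M_{(G_p)}\cap B_r(G\cdot p)\subset\Sigma$. There you give only a plan, and it does not work as stated. Lemma \ref{Lem: hypersurface position} applied to $M_\pm$ says nothing until you know that $\Sigma$ meets a face away from the edge. Even then it permits orthogonal meeting, which you can only exclude once you know $T_q\closure(M_-)\subset T_q\Sigma$, and that is the very point at issue. Your justification of the $M_+$ statements is also backwards: the slice direction orthogonal to $T_0\Sigma_q$ is $\mb R\nu$, which $g_0$ reverses, so $M_+$ does not lie in $g_0$-fixed directions.

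The missing idea is to exploit $G_q\cdot\nu=\{\pm\nu\}$ directly; no classification from Bredon is needed.
\begin{enumerate}
\item $\nu$ cannot be principal. Otherwise every principal $G_q$-orbit in the slice has two points, so a principal isotropy group has index two in $G_q$. Then every non-principal point is $G_q$-fixed, i.e.\ lies in $P(q)$, giving $M\setminus M^{prin}=M_{(G_p)}$ near $G\cdot q$, which contradicts the wedge shape.
\item Hence $\nu$ points into a face, say $\nu\in T_q\closure(M_+)$. Since $G_q\cdot\nu$ is finite, $\dim G_\nu=\dim G_q$, so $\dim(G\cdot x)=\dim(G\cdot q)$ on $M_+$. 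Also the preimage of the $M_+$-ray in $P^\perp(q)$ is $G_q\cdot\mb R_{>0}\nu=\mb R\nu\setminus\{0\}$, which is orthogonal to $\Sigma$.
\item $\nu^\perp$ carries no principal point. The element $g_0$ swaps the two half-spaces of $P^\perp(q)\setminus\nu^\perp$, which are connected since $\dim P^\perp(q)\geq 2$. So the complement of the image of $\nu^\perp\cap P^\perp(q)$ in the $2$-dimensional wedge $P^\perp(q)/G_q$ is connected. That image therefore contains no interior ray and lies in the boundary rays; since it misses $\mb R\nu$, it lies in the $M_-$-ray.
\item Consequently $\closure(M_-)=\nu^\perp$ in the linear slice, because the preimage of that ray is $\mb R_{\geq0}\,G_q\cdot w$ for any $w\in\nu^\perp$ on it. So $T_q\Sigma=T_q\closure(M_-)$ and $M_-$ has codimension one.
\item Each $x\in\nu^\perp\setminus P(q)$ has an isotropy element reversing $\nu$. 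Otherwise $x+t\nu$ would have isotropy $G_x$ and lie in $M_-$ off $\nu^\perp$. The graph argument of Lemma \ref{Lem: hypersurface position} then forces $\Sigma=\closure(M_-)$ near $G\cdot q$.
\end{enumerate}
After that, Proposition \ref{Prop: local structure of Sigma/G 1}(ii) and minimality follow as you intended.
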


In case (4), $G\cdot p$ is a special exceptional orbit as a $G$-orbit in $\Sigma$ (not $M$). 
Note that $M_{(G_p)}^*\cap B_\epsilon^*([q])$ lies in the boundary of the $2$-manifold $\Sigma^*$ in cases (2)(4) (\cite{bredon1972introduction}*{P.207, Proposition 8.3}).

\begin{proof}
	Cases (1) and (3) correspond to the cases in which $\Sigma$ meets $M_{(G_p)}$ orthogonally at $G\cdot q$. 
	By Lemma \ref{Lem: hypersurface position}, the remaining situation is $M_{(G_p)}\cap B_r(G\cdot p)\subset\Sigma$. 
	In this case, the unit normal $\nu\in P^\perp(q)$ of $\Sigma$ at $q$ satisfies $G_q\cdot \nu\subset \{\nu,-\nu\}$. 
	
	If $B_\epsilon^*([q])$ is a $3$-dimensional wedge, then $\nu$ must point into $M\setminus M^{prin}$. (Otherwise, $\#(G_q\cdot \nu)\leq 2$ implies $(M\setminus M^{prin})\cap B_r(G\cdot p)= M_{(G_p)}\cap B_r(G\cdot p)$, which contradicts the wedge-shape assumption on $B_\epsilon^*([q])$.) 
	Hence, we know $G_q\cdot \nu=\{\nu,-\nu\}$ and $\nu$ is in $T_q\closure(M_\pm)$. 
	Suppose $\nu\in T_q\closure(M_+)$. 
    Then by $\dim(G_q\cdot \nu)=0$, we know $\dim(G\cdot x)=\dim(G\cdot q)$ for all $x\in M_+$. 
	 In addition, since $G_q$ acts orthogonally on $N_q(G\cdot q)$, every principal $G_q$-orbit in $N_q(G\cdot q)$ has two parts lying in $\{u:u\cdot \nu >0\}$ and $\{u:u\cdot \nu <0\}$ respectively. 
	 Hence, $\{u\in N_q(G\cdot q) : u\cdot \nu=0 \}=T_q\Sigma\cap N_q(G\cdot q) $ is in the set of non-principal $G_q$-orbits, which implies $T_q\Sigma=T_q\closure( M_-)$.
     Thus, $\Sigma=\closure( M_-)$ near $q$ by arguments similar to those in the proofs of Lemma \ref{Lem: hypersurface position} and Proposition \ref{Prop: local structure of Sigma/G 1}. 
	 This shows (2). 
	 
	 If $B_\epsilon^*([q])$ is a closed $3$-dimensional cone, 
	 then the set of non-principal $G_q$-orbits in $N_q(G\cdot q)$ is $P(q)$. 
	 Thus, $G_q\cdot\nu$ is principal in $N_q(G\cdot q)$, which implies every principal $G_q$-orbit in $N_q(G\cdot q)$ has $2$ points. 
	 In particular, we know $G\cdot q$ has the same dimension $n-2$ as the principal orbits, and $\dim(P^\perp(q))=2$. 
	 Note that $G_q$ acts on $\mb R^2\cong P^\perp(q)$ orthogonally with a single fixed point $0$ and $\#(G_q\cdot v)=2$ for all $v\in P^\perp(q)\setminus \{0\}$. 
	 Hence, $G_q$ must be the antipodal action on $\mb R^2\cong P^\perp(q)$. 
	 Combined with Lemma \ref{Lem: prin orbit stratum}(iv), we obtained $(4)$ and the last statements. 
\end{proof}

For any open $G$-set $U\subset M$ and a $G$-hypersurface $\Sigma\subset U$ with $\bd\Sigma\subset\bd U$, we define the following notations that are mainly used in $M\setminus \mc S$: 
\begin{itemize}
    \item $\bd_0U^* := \pi(U\setminus M^{prin})$, which is $\bd M^*\cap U^*$ in Proposition \ref{Prop: local structure of M/G}(1)(2.a);
    \item $\bd_1U^*:= \pi(\bd U\cap M^{prin})$, which is the relative boundary of $U^*$ in Proposition \ref{Prop: local structure of M/G}(1)(2.a);
    \item $\bd_0\Sigma^*:= \pi(\Sigma\setminus\Sigma^{prin})$;
    \item $\bd_1\Sigma^* := \pi(\bd\Sigma\cap\Sigma^{prin})$;
    \item $\bd_0\mb B^k_+:=\{x=(x_1,\dots,x_k)\in\mb R^k: |x|\leq 1,x_k=0\}$ is the flat boundary portion;
    \item $\bd_1\mb B^k_+:=\{x=(x_1,\dots,x_k)\in\mb R^k: |x|=1,x_k>0\}$ is the round boundary portion. 
\end{itemize}
We mention that $\Sigma^{prin}$ may not be in $M^{prin}$, but we still have
\[\bd_0 \Sigma^*\subset \bd_0 U^*,\quad{\rm and}\quad\bd_1 \Sigma^*\subset\closure(\bd_1 U^*).\]
Note that if $\dim(\bd_0B_r^*([p]))=2$ (resp. $= 1$), then $[p]=\pi(G\cdot p)$ is a boundary point (resp. interior point) of the topological manifold $M^*\setminus\mc S^*$, where $\mc S$ is defined in Definition \ref{Def: isolated orbits}. 
Hence, we define 
\begin{itemize}
    \item the {\em ball-type $G$-neighborhood $U$ of $p$}: \\
    a $G$-neighborhood of $G\cdot p$ in $M$ so that $\dim(\bd_0 U^*)\leq 1$ and $(U^*, \closure(\bd_1 U^*))$ is homeomorphic to $(\mb B^3, \bd\mb B^3)$;
    \item the {\em half-ball-type $G$-neighborhood $U$ of $p$}: \\
    a $G$-neighborhood of $G\cdot p$ in $M$ so that $\dim(\bd_0 U^*)=2$ and $(U^*, \bd_0 U^* ,\bd_1 U^*)$ is homeomorphic to $(\mb B^3_+, \bd_0\mb B^3_+, \bd_1\mb B^3_+)$;
    \item a {\em disk-type $G$-hypersurface}:\\
    an embedded $G$-hypersurface $\Sigma\subset M$ so that  $\dim(\bd_0\Sigma^*)=0$ or $\bd_0\Sigma^*=\emptyset$, and $(\Sigma^*,\closure(\bd_1\Sigma^*))$ is homeomorphic to $(\mb D^2, \bd\mb D^2)$;
    \item a {\em half-disk-type $G$-hypersurface}:\\
    an embedded $G$-hypersurface $\Sigma\subset M$ so that  $\dim(\bd_0\Sigma^*)=1$ and $(\Sigma^*, \bd_0\Sigma^*, \bd_1\Sigma^*)$ is homeomorphic to $(\mb D^2_+, \bd_0\mb D^2_+, \bd_1\mb D^2_+)$.
\end{itemize}

\begin{remark}
	We mention that for $U=B_r(G\cdot p)$, $U$ is a ball-type (resp. half-ball-type) $G$-neighborhood of $G\cdot p$ if it satisfies Proposition \ref{Prop: local structure of M/G} (0)(2.b) (resp. (1)(2.a)). 
	Additionally, a $G$-hypersurface $\Sigma\subset B_r(G\cdot p)$ that intersects $M_{(G_p)}$ at $G\cdot q$ is a disk-type (resp. half-disk-type) $G$-hypersurface near $G\cdot q$ if it satisfies Proposition \ref{Prop: local structure of Sigma/G 1}(ii) or Proposition \ref{Prop: local structure of Sigma/G 2}(3) (resp. Proposition \ref{Prop: local structure of Sigma/G 1}(i) or Proposition \ref{Prop: local structure of Sigma/G 2}(1)(2)(4)). 
\end{remark}

We also make the following definitions.
\begin{definition}\label{Def: local G-boundary}
	Let $\Sigma\subset M$ be an embedded $G$-hypersurface. 
	If for any $p\in M$, there exists $r\in(\inj(G\cdot p)/2, \inj(G\cdot p))$ so that $\Sigma\llcorner B_r(G\cdot p)\subset\bd\Omega$ for some $G$-invariant Caccioppoli set $\Omega\subset B_r(G\cdot p)$ with reduced boundary $\bd\Omega$, then we say $\Sigma$ is {\em locally $G$-boundary-type} in $M$. 
	Denote by $\mc {LB}^G$ the set of locally $G$-boundary-type compact embedded $G$-hypersurfaces in $M$.  
\end{definition}

\begin{remark}
	The locally $G$-boundary-type condition is a mild assumption, which is analogous to the fact that every embedded hypersurface is locally a boundary. 
	Under this condition, the local structures in Proposition \ref{Prop: local structure of Sigma/G 1}(ii) and Proposition \ref{Prop: local structure of Sigma/G 2}(2)(4) cannot occur. 
	However, after taking the varifold limit, the limit varifold may not be locally $G$-boundary-type even when it is a smooth $G$-hypersurface. 
	Nevertheless, we will show that the locally $G$-boundary-type condition remains valid at least in $M\setminus \mc S_{n.m.}$ for certain area minimizers (Theorem \ref{Thm: plateau problem}, \ref{Thm: G-isotopy minimizer}). 
    Also, Definition \ref{Def: local G-boundary} can be localized to any open $G$-set $U\subset M$ by using $U$ in place of $M$.
\end{remark}

By Lemma \ref{Lem: prin orbit stratum}(iv), we can define the genus of $\Sigma^*$ in the usual way for any $G$-hypersurface $\Sigma$. 
 Given any open $G$-set $U\subset M$, we also use the following notations for simplicity:
\begin{itemize}
	\item $\mc M^G_0:=\{\Sigma\in \mc {LB}^G: ~\genus(\Sigma^*)=0\}$; 
	\item $  \mc M^G_0(U)$ is the space of $\Sigma\in \mc M^G_0$ with $\Sigma\subset U, \bd\Sigma\subset\bd U$; 
	\item $\wti {\mc D}^G:= \mc D^G\cup \mc D^G_+\subset \mc M^G_0$, where $\mc D^G$ and $\mc D^G_+$ are the sets of disk-type and half-disk-type $G$-hypersurfaces $\Sigma\in\mc {LB}^G$ respectively;
	\item $\wti{\mc D}^G(U):=\{\Sigma\in \wti{\mc D}^G: \mbox{$\Sigma\subset U, \bd\Sigma\subset\bd U$}\}$, and $\mc D^G(U),\mc D^G_+(U)$ are defined similarly. 
\end{itemize} 
Note that if $U$ is a half-ball-type open $G$-set, then for $G$-connected $\Sigma\in \mc M^G_0(U)$ with $G$-connected $\bd\Sigma\neq\emptyset$, $\Sigma^*$ is homeomorphic to a disk or a half-disk with finitely many holes, i.e. $\bd_0\Sigma^*$ may have multiple (but finitely many) components in $ \bd_0 U^*$. 
\subsection{Equivariant reduction of area}\label{Subsec: preliminary - equivariant reduction}

Given $G\cdot p\subset M$, take $r\in (0,\inj(G\cdot p))$ and set 
\[k_0:=\dim(G\cdot p).\] 
Then the nearest projection $\mf n:B_r(G\cdot p)\to G\cdot p$ is a smooth submersion. 
For any $G$-hypersurface $\Sigma$ and open $G$-set $U$ in $B_r(G\cdot p)$, it follows from the co-area formula that 
\begin{align}
    \mc H^n(\Sigma)&= \int_{G\cdot p}\int_{\mf n^{-1}(p')\cap\Sigma} \frac{1}{J^{\Sigma,*}_{\mf n}(q)} d\mc H^{n-k_0}(q) d\mc H^{k_0}(p') \label{Eq: area in slice}
    \\ &= \mc H^{k_0}(G\cdot p)\int_{S_r(p)\cap\Sigma} \vartheta_\Sigma(q)d\mc H^{n-k_0}(q), \nonumber
    \\
    \mc H^{n+1}(U)&= \mc H^{k_0}(G\cdot p)\int_{S_r(p)\cap U} \vartheta(q)d\mc H^{n+1-k_0}(q), \label{Eq: volume in slice}
\end{align}
where $\vartheta_\Sigma(q)=1/J^{\Sigma,*}_{\mf n}(q)$ and $\vartheta(q)=1/J^{*}_{\mf n}(q)$. 
Since $d\exp_{G\cdot p}^\perp$ is the identity on the $0$-section of $N(G\cdot p)$, we know $\vartheta_\Sigma$ and $\vartheta$ tend to $1$ uniformly as $r\to 0$ (independent of $q$ and $\Sigma$). 

Hence, for $r>0$ sufficiently small, we have $\vartheta_\Sigma,\vartheta \in [1/2, 2]$. 
Let $\nu_r(p), C_r(p)$ be the constants in the $\mf M$-isoperimetric lemma with respect to $S_r(p)$ (see \cite{almgren1962homotopy}*{Proposition 1.11}). 
Then for any $G$-hypersurface $\Sigma\subset B_r(G\cdot p)$ with $\bd\Sigma\subset \bd B_r(G\cdot p)$, 
\begin{align}\label{Eq: isoperimetric in slice}
    \mc H^n(\Sigma)\leq \frac{\nu_r(p)}{2\mc H^{k_0}(G\cdot p)} ~\Rightarrow~\exists U\subset B_{r}(G\cdot p) {\rm ~s.t.~} 
    \left\{
    \begin{array}{l}
        \bd_{rel}U=\Sigma, \\
        \mc H^{n+1}(U)\leq C(p,r) \left(\mc H^n(\Sigma)\right)^{\frac{n+1-k_0}{n-k_0}},
    \end{array}
    \right.
\end{align}
where $C(p,r):=C_r(p)\cdot 2^{\frac{2n+1-2k_0}{n-k_0}}\cdot (\mc H^{k_0}(G\cdot p))^{-\frac{1}{n-k_0}}$. 

Note that $\inj(G\cdot p)$ generally does not have a uniform positive lower bound. 
Nevertheless, we can still obtain some uniform constants for a compact set in an orbit type stratum. 
\begin{lemma}\label{Lem: uniform constants}
    Fix an orbit type stratum $M_{(H)}\subset M$ and a compact subset $K_0\subset M_{(H)}$. 
    Let $k_0:=\dim(G\cdot p)$ for $p\in M_{(H)}$. 
    Then we have uniform constants $\rho_0,\alpha_0>0,C_0>1$ depending only on $K_0$, $M$, and $G$, so that for any $p\in K_0$, 
    \begin{itemize}
        \item[(i)] $\rho_0<\frac{1}{4}\inj(G\cdot p)$, and for any $r\in (0,4\rho_0]$, $\bd B_{r}(G\cdot p)$ is strictly mean convex, and $S_{r}(p)$ is uniformly convex;
        \item[(ii)] $\frac{1}{2}\mc H^{k_0}(G\cdot p)\leq \mc H^{k_0}(G\cdot q)\leq 2 \mc H^{k_0}(G\cdot p)$, for all $q\in B_{4\rho_0}(G\cdot p)\cap M_{(H)}$; 
        \item[(iii)] $|d\exp_{G\cdot p}^\perp|, \vartheta_{\Sigma},\vartheta\in [\frac{1}{2}, 2]$ in $B_{4\rho_0}(G\cdot p)$;
        \item[(iv)] for any $r\in (0,4\rho_0)$, $\nu_r(p)\geq 2\mc H^{k_0}(G\cdot p)\cdot \alpha_0 r^{n-k_0}$ and $C(p,r)\leq C_0$; in particular, if $\Sigma\subset B_r(G\cdot p)$ is a $G$-hypersurface with $\bd\Sigma\subset \bd B_r(G\cdot p)$ and $\mc H^n(\Sigma)<\alpha_0 r^{n-k_0}$, then there exists an open $G$-set $U\subset B_r(G\cdot p)$ so that $\bd_{rel} U=\Sigma$ and $\mc H^{n+1}(U)\leq C_0(\mc H^n(\Sigma))^{\frac{n+1-k_0}{n-k_0}}$;
        \item[(v)] $1/C_0\leq \inf_{p\in K_0}\mc H^{k_0}(G\cdot p)\leq \sup_{p\in K_0}\mc H^{k_0}(G\cdot p)\leq C_0$.
    \end{itemize}
\end{lemma}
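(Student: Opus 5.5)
The argument is a compactness argument on $K_0$, combined with the continuity of the relevant geometric quantities along the stratum $M_{(H)}$. The key input is that the orbits in $M_{(H)}$ all have the same dimension $k_0$ and the same orbit type. By the slice theorem, a neighborhood of $G\cdot p$ in $M_{(H)}$ is $G$-diffeomorphic to $G\times_{G_p}P(p)$, i.e.\ $G/G_p\times P(p)$ locally. So the orbits $G\cdot q$, $q\in M_{(H)}$ near $p$, form a smooth family of embedded compact submanifolds that vary smoothly with $q$ in the $C^\infty$ topology. Note that $\inj(G\cdot p)$ need not be bounded below on $M$, since it degenerates as $p$ approaches lower strata. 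On the compact set $K_0\subset M_{(H)}$, however, the smooth dependence makes $p\mapsto \inj(G\cdot p)$ positive and lower semicontinuous, so it is bounded below by some $i_0>0$.

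I would proceed as follows. First, for each $p\in K_0$, the normal exponential map $\exp^\perp_{G\cdot q}$ depends smoothly on $q\in M_{(H)}$ near $p$; we may use $G$-equivariance to reduce to $q$ in the slice through $p$. Hence there is a neighborhood $W_p\subset M_{(H)}$ of $p$ and a radius $\rho_p>0$ such that for all $q\in W_p$ the following hold:
\begin{itemize}
\item $4\rho_p<\frac14\inj(G\cdot q)$;
\item the tubes $\bd B_r(G\cdot q)$, $r\le 4\rho_p$, have second fundamental form $\approx \frac1r(\text{id on normal sphere})$ plus a bounded term coming from the second fundamental form of $G\cdot q$, so they are strictly mean convex;
\item the slices $S_r(q)$, being small geodesic balls in the totally geodesic-at-$q$ exponential image, are uniformly convex;
\item $|d\exp^\perp|,\vartheta_\Sigma,\vartheta\in[\frac12,2]$.
\end{itemize}
The $\vartheta$ bounds come from the uniform $C^1$ closeness of $\exp^\perp$ to the identity on the zero section, with constants controlled by curvature bounds of $M$ and the second fundamental forms of the orbits. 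These bounds are uniform over $W_p$ because the orbit geometry varies continuously. Covering $K_0$ by finitely many $W_{p_i}$ and taking $\rho_0$ to be the minimum of the $\rho_{p_i}$, possibly shrunk further, gives (i) and (iii).

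For (ii) and (v), I use that $q\mapsto \mc H^{k_0}(G\cdot q)$ is continuous and positive on $M_{(H)}$, again by the smooth local product structure. It is therefore bounded between $1/C_0$ and $C_0$ on a compact neighborhood $K_1\subset M_{(H)}$ of $K_0$, which gives (v). It is also uniformly continuous on $K_1$, so after shrinking $\rho_0$ we get $B_{4\rho_0}(G\cdot p)\cap M_{(H)}\subset K_1$ and the ratio bound in (ii).

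For (iv), I apply the $\mf M$-isoperimetric lemma of Almgren on the slices $S_r(p)$. Scaling $S_r(p)$ by $1/r$ yields a family of uniformly convex domains in $\mb R^{n+1-k_0}$ with metrics $C^2$-close to Euclidean, uniformly in $p\in K_0$ and $r<4\rho_0$. By the scaling behavior of Almgren's constants, $\nu_r(p)\ge c\,r^{n-k_0}$ and $C_r(p)\le C$ with $c,C$ uniform. Combining this with the lower bound $\mc H^{k_0}(G\cdot p)\ge 1/C_0$ from (v) bounds $C(p,r)$, and we set $\alpha_0:=c/(2\sup\mc H^{k_0})$. The consequence in (iv) then follows from \eqref{Eq: isoperimetric in slice}. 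Here $U$ can be taken $G$-invariant because it is obtained from the slice region by saturating under $G$. Its relative boundary is $\Sigma$ because $\Sigma\cap S_r(p)$ determines $\Sigma$ via the $G$-action, and the volume estimate follows from \eqref{Eq: volume in slice}.

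The main obstacle is justifying uniformity of the injectivity radius and of Almgren's isoperimetric constants across $K_0$. For the former, I would argue by contradiction with a sequence $p_j\to p\in K_0$ with $\inj(G\cdot p_j)\to0$: focal points or self-intersections of the normal exponential map would pass to the limit orbit $G\cdot p$ via smooth convergence of the orbits inside $M_{(H)}$, which is impossible. For the latter, the rescaling argument above is what gives the uniform constants.
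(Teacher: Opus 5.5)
Your proposal is correct and follows the same route as the paper, whose proof is only the one-line remark that the lemma follows from a compactness argument. You supply the details that remark leaves implicit: smooth variation of orbits along $M_{(H)}$ via the slice theorem, a uniform positive lower bound for $\inj(G\cdot p)$ on $K_0$, and the scale-invariance and bi-Lipschitz stability of Almgren's constants on the slices $S_r(p)$.

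Two small points should be made explicit. First, the small-volume filling $\widehat{U}\subset S_r(p)$ of $\Sigma\cap S_r(p)$ is unique, hence automatically $G_p$-invariant, and this is exactly what makes its saturation $G\cdot\widehat{U}$ have relative boundary $\Sigma$. Second, $K_1$ should be a compact neighborhood of $G\cdot K_0$ (not just of $K_0$) in $M_{(H)}$, since $B_{4\rho_0}(G\cdot p)\cap M_{(H)}$ contains the whole orbit $G\cdot p$.
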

\begin{proof}
	The lemma follows easily from a compactness argument. 
\end{proof}

In \cite{hsiang71cohom}, Hsiang-Lawson introduced a weighted metric on $M^*=M/G$. 
Specifically, by Lemma \ref{Lem: prin orbit stratum}, one can define a Riemannian metric $g_{_{M^*}}$ on each orbit type stratum $M_{(H)}^*$ so that $\pi: M_{(H)}\to M_{(H)}^*$ is a Riemannian submersion. 
Consider the orbit-volume function $V$ defined by 
\begin{align}\label{Eq: orbits volume}
	V([x])=\left\{\begin{array}{ll}\operatorname{Vol}\left(\pi^{-1}([x])\right), & \text { if } \pi^{-1}([x]) \text { is a principal orbit } \\ m \cdot \operatorname{Vol}\left(\pi^{-1}([x])\right), & \text { if } \pi^{-1}([x]) \text { is an exceptional orbit } \\ 0, & \text { otherwise }\end{array}\right. ,\quad [x]\in M^*,
\end{align}
where $m=[G_x:G_{p_0}]$ for $x\in\pi^{-1}([x])$ and an appropriate $p_0\in M^{prin}$ with $G_{p_0}\subset G_x$. 
Then,
\begin{align}\label{Eq: weighted metric in M/G}
	\wti g_{_{M^*}}:=Vg_{_{M^*}}
\end{align}
is a continuous metric on $M^*$ that is smooth and non-degenerate in $(M^{prin})^*$. 

Using the rescaled metric $\wti g_{_{M^*}}$ on $M^*$, we can express the area of a $G$-hypersurface in the orbit space. 
Specifically, given an embedded $G$-connected $G$-hypersurface $\Sigma$ with $\Sigma\cap M^{prin}\neq \emptyset$, it follows from Proposition \ref{Prop: local structure of Sigma/G 1} and \ref{Prop: local structure of Sigma/G 2} that $\mc H^n(\Sigma)=\mc H^n(\Sigma\cap M^{prin})$. 
By the co-area formula, 
\begin{align}\label{Eq: area in orbit space}
	\mc H^n(\Sigma)=\int_{\Sigma^*\cap (M^{prin})^*} \mc H^{n-2}(\pi^{-1}([q])) d\mc H^2_{g_{_{M^*}}}([q]) = \mc H^{2}_{\wti g_{_{M^*}}}(\Sigma^*). 
\end{align}
Combined with Lemma \ref{Lem: prin orbit stratum}(ii), we directly have the following result.
\begin{lemma}[\cite{hsiang71cohom}*{Theorem 2}]\label{Lem: minimal in M^prin}
	Suppose $\Sigma$ is an embedded $G$-hypersurface with $\Sigma\cap M^{prin}\neq \emptyset$, and $g_{_{M^*}}$ is the metric on $(M^{prin})^*$ induced by the Riemannian submersion $\pi$. 
	Then $\Sigma$ is minimal if and only if $\Sigma^*\llcorner(M^{prin})^*$ is minimal with respect to the rescaled metric $\wti g_{_{M^*}}$. 
\end{lemma}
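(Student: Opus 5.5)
The plan is to reduce the equivalence to a first-variation computation, using the area identity \eqref{Eq: area in orbit space} together with the submersion structure from Lemma \ref{Lem: prin orbit stratum}(ii). The work splits into two steps. First, minimality of $\Sigma$ is equivalent to $G$-stationarity, i.e.\ vanishing first variation only along $G$-invariant vector fields. Second, $G$-stationarity of $\Sigma$ is equivalent to stationarity of $\Sigma^*\cap (M^{prin})^*$ for the weighted area $\mc H^2_{\wti g_{_{M^*}}}$.

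For the first step, one direction is trivial. For the other, take $X\in \mk X(M)$ and average it against the Haar measure, $\bar X:=\int_G dg^{-1}(X\circ g)\,d\mu(g)\in \mk X^G(M)$. Since $\Sigma$ is $G$-invariant and $G$ acts by isometries, its mean curvature vector $H$ is $G$-equivariant. Hence
\[\delta\Sigma(X)=-\int_\Sigma\langle H,X\rangle=-\int_\Sigma\langle H,\bar X\rangle=\delta\Sigma(\bar X).\]
So vanishing of $\delta\Sigma$ on $\mk X^G(M)$ already forces $H\equiv 0$. In fact it is enough to test with $G$-invariant fields compactly supported in $M^{prin}$, because $H$ is smooth and $\Sigma\cap M^{prin}$ is dense in $\Sigma$.

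For the second step, work in $M^{prin}$, where $\pi$ is a Riemannian submersion onto $((M^{prin})^*,g_{_{M^*}})$. By \cite{schwarz1980lifting} (or directly by horizontal lifting over the principal stratum), $G$-invariant vector fields compactly supported in $M^{prin}$ correspond to vector fields on $(M^{prin})^*$. Their flows are $G$-isotopies $\varphi_t$ covering isotopies $\varphi^*_t$ of the quotient. Applying \eqref{Eq: area in orbit space} to $\varphi_t(\Sigma)$, whose area only changes inside the support, gives
\[\mc H^n(\varphi_t(\Sigma))=\mc H^2_{\wti g_{_{M^*}}}(\varphi^*_t(\Sigma^*)).\]
Differentiating at $t=0$ shows that $\Sigma$ is $G$-stationary in $M^{prin}$ if and only if $\Sigma^*\cap(M^{prin})^*$ is stationary for the rescaled metric $\wti g_{_{M^*}}$. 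Here I use that $\wti g_{_{M^*}}$ is smooth and nondegenerate on $(M^{prin})^*$, and that conformally rescaling a $2$-dimensional metric by $V$ multiplies surface area by $V$, which matches the orbit-volume weight in the coarea formula. Combining the two steps gives the lemma.

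The main point needing care is the correspondence between $G$-vector fields and their quotient fields. A $G$-invariant field may have a vertical component tangent to the orbits, but such a component moves $\Sigma$ within itself, since $\Sigma$ is a union of orbits, so it contributes no variation. Conversely, every quotient field has a smooth $G$-invariant horizontal lift. An alternative route that avoids variations is to compute directly that $H_\Sigma$ is horizontal and equals the horizontal lift of the mean curvature of $\Sigma^*$ in $(M^{prin})^*$ minus $\nabla^\perp\log V$, which is the mean curvature of $\Sigma^*$ for $V g_{_{M^*}}$ up to a positive factor. This is Hsiang–Lawson's computation, and I would cite \cite{hsiang71cohom}*{Theorem 2} for it.
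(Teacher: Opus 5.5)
Your proposal is correct and follows essentially the paper's route: the paper obtains the lemma directly from the area identity \eqref{Eq: area in orbit space} together with the submersion structure of Lemma \ref{Lem: prin orbit stratum}(ii), citing Hsiang--Lawson. Your two steps just make explicit the reduction to $G$-invariant variations (which is the paper's Lemma \ref{Lem: first variation and G-variation}) and the correspondence between $G$-invariant fields on $M^{prin}$ and fields on $(M^{prin})^*$. The one point you assert without proof is the density of $\Sigma\cap M^{prin}$ in $\Sigma$; this is the paper's observation $\mc H^n(\Sigma)=\mc H^n(\Sigma\cap M^{prin})$ from Propositions \ref{Prop: local structure of Sigma/G 1} and \ref{Prop: local structure of Sigma/G 2}, and it needs $\Sigma$ to be $G$-connected, as in the setting of \eqref{Eq: area in orbit space}. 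Without $G$-connectedness, a $G$-component lying along a special exceptional stratum is still minimal, but by Hsiang--Lawson's Corollary 1.1 rather than by your density argument.
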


\subsection{Geometric measure theory}

In this subsection, we introduce some notations for geometric measure theory and also some useful lemmas.  

Let $\mc V_k(M)$ be the closure (in the weak topology) of rectifiable $k$-varifolds supported in $M$. 
Define the $\mf F$-metric on $\mc V_k(M)$ as in  \cite{pitts2014existence}*{2.1(19)}. 
Then $\mf F$ induces the weak topology on any mass-bounded subset of $\mc V_k(M)$. 
Given any $V \in \mc V_k(M)$, we denote by $\|V\| $ the weight measure induced by $V$ on $M$. 
Additionally, for any hypersurface $\Sigma$, we denote by $|\Sigma|$  the integer rectifiable codimension-one varifold induced by $\Sigma$. 

Next, we say $V \in \mc V_k(M)$ is $G$-invariant if $g_\#V= V$ for all $g \in  G$. 
Denote by 
\begin{itemize}
	\item $\mc V^G_k (M)$ the space of $G$-invariant $k$-varifolds in $M$. 
\end{itemize}
Given a varifold $V\in\mc V_n(M)$, we say $V$ is {\em stationary} in an open set $U\subset M$ if 
\[ \delta V(X):=\left.\frac{d}{d t}\right|_{t=0}\left\|\left(\phi_{t}\right)_{\#} V\right\|(M)=\int \operatorname{div}_{S} X(x) d V(x, S)=0,\]
for all $X\in \mk X(U)$, where $\{\phi_t\}$ is the flow generated by $X$. 
Similarly, if $V\in \mc V^G_n(M)$ and $\delta V(X)=0$ with respect to all $G$-invariant vector fields supported in an open $G$-set $U$, then we say $V$ is {\em $G$-stationary} in $U$.  
The following lemma indicates that it is sufficient to consider $G$-invariant vector fields for the first variation of $G$-varifolds. 

\begin{lemma}[\cite{liu2021existence}*{Lemma 2.2}]\label{Lem: first variation and G-variation}
	For any $V\in\mc V^G_n(M)$ and an open $G$-set $U\subset M$, $V$ is stationary in $U$ if and only if $V$ is $G$-stationary in $U$. 
	In particular, $\delta V(X)=\delta V(X_G)$ for any $X\in\mk X(U)$ and $X_G:=\int_G dg^{-1}(X) d\mu(g)$. 
\end{lemma}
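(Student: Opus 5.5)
The approach is to average an arbitrary vector field over $G$ and use the $G$-invariance of $V$. Since $G$ is compact and $\mu$ is the normalized bi-invariant Haar measure, for any $X\in\mk X(U)$ the field $X_G:=\int_G dg^{-1}(X)\,d\mu(g)$ is smooth and supported in $U$ (because $U$ is $G$-invariant). It is also $G$-invariant: for $h\in G$, $dh(X_G)=\int_G d(hg^{-1})(X)\,d\mu(g)=X_G$ by the right-invariance of $\mu$. So $X_G\in\mk X^G(U)$, and once we show $\delta V(X)=\delta V(X_G)$, the equivalence between stationarity and $G$-stationarity follows. One direction is trivial, since $\mk X^G(U)\subset\mk X(U)$. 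For the other, $G$-stationarity gives $\delta V(X)=\delta V(X_G)=0$ for every $X$.

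To prove $\delta V(X)=\delta V(X_G)$, we first show that $\delta V$ is invariant under pushing $X$ forward by an isometry $g\in G$. Write $g^*X:=dg^{-1}(X\circ g)$, the pullback field; this is the meaning of ``$dg^{-1}(X)$'' at the point $x$. If $\phi_t$ is the flow of $X$, then $g^{-1}\circ\phi_t\circ g$ is the flow of $g^*X$. Hence
\[
\|(g^{-1}\phi_t g)_\# V\|(M)=\|(g^{-1})_\#(\phi_t)_\# g_\# V\|(M)=\|(\phi_t)_\# V\|(M).
\]
Here we used $g_\#V=V$, together with the fact that the isometry $g^{-1}$ preserves mass. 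Differentiating at $t=0$ gives $\delta V(g^*X)=\delta V(X)$. Equivalently, one can compute directly from $\int\operatorname{div}_S X\,dV$: the identity $\operatorname{div}_{S}(g^*X)(x)=\operatorname{div}_{dg(S)}X(gx)$ holds for an isometry, so a change of variables using $g_\#V=V$ gives the same conclusion.

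Next we use linearity and continuity of $\delta V$ to pass the integral over $G$ inside:
\[
\delta V(X_G)=\int\operatorname{div}_S\Big(\int_G g^*X\,d\mu(g)\Big)\,dV=\int_G\int \operatorname{div}_S(g^*X)\,dV\,d\mu(g)=\int_G\delta V(X)\,d\mu(g)=\delta V(X).
\]
The last equality uses $\mu(G)=1$. Interchanging the integral over $G$ with the derivatives defining $\operatorname{div}_S$, and with the $V$-integral, is justified because $(g,x)\mapsto g^*X(x)$ is smooth on the compact set $G\times M$ (the action is smooth), and $V$ has finite mass. Fubini then applies to the continuous integrand $(g,x,S)\mapsto\operatorname{div}_S(g^*X)(x)$.

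The only real care point is bookkeeping: the convention for $dg^{-1}(X)$ must be read as the pullback $g^*X$, so that $X_G$ is genuinely smooth and $G$-invariant and the flows conjugate as claimed. Beyond that the argument is routine, and no step poses a serious obstacle.
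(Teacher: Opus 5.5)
Your proposal is correct. The paper gives no proof of its own and simply cites Liu's Lemma 2.2. Your Haar-averaging argument is the standard proof of that fact and is complete: conjugating the flow by the isometry $g$ and using $g_\#V=V$ gives $\delta V(g^*X)=\delta V(X)$, and integrating over $G$ with linearity and $C^1$-continuity of $\delta V$ gives $\delta V(X_G)=\delta V(X)$.
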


Suppose $\Sigma\subset M$ is a closed minimal hypersurface, i.e. $|\Sigma|$ is stationary in $M$. 
Then for any $X\in\mk X(M)$, let $X^\perp$ be the normal component of $X\llcorner\Sigma$. The second variation of $\Sigma$ is given by 
\[ \delta^2\Sigma(X):=\left.\frac{d^{2}}{d t^{2}}\right|_{t=0} \operatorname{Area}\left(\phi_{t}(\Sigma)\right)=-\int_{\Sigma}\left\langle L_{\Sigma} X^\perp, X^\perp\right\rangle, \]
where $L_\Sigma$ is the Jacobi operator of $\Sigma$. 
Then, we say $\Sigma$ is {\em stable} in an open set $U\subset M$ if $\delta^2\Sigma(X)\geq 0$ for all $X\in \mk X(U)$. 
Similarly, if $\Sigma$ is a minimal $G$-hypersurface and $U$ is an open $G$-set, then we say $\Sigma$ is {\em $G$-stable} in $U$ provided that $\delta^2\Sigma(X)\geq 0$ for all $X\in \mk X^G(U)$. 

In general, the stability is stronger than the $G$-stability since $\mk X^G(U)\subset \mk X(U)$. 
Nevertheless, the following lemma indicates that the stability is equivalent to the $G$-stability given the existence of a unit $G$-normal. 

\begin{lemma}[\cite{wang2022min}*{Lemma 7}]\label{Lem: stability and G-stability}
	Let $\Sigma\subset M$ be a compact smooth embedded minimal $G$-hypersurface and $U\subset M$ be an open $G$-set. 
	If $\Sigma$ admits a $G$-invariant unit normal in $U$, then $\Sigma$ is $G$-stable in $U$ if and only if it is stable in $U$. 
\end{lemma}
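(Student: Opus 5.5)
The direction ``stable $\Rightarrow$ $G$-stable'' is immediate, since $\mk X^G(U)\subset\mk X(U)$. The work is in the converse, and the plan there is to use the $G$-invariant unit normal $\nu$ to turn the quadratic form into one acting on functions, and then average over $G$ using the Haar measure $\mu$.

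Write $Q(f,f):=\int_\Sigma |\nabla f|^2-(|A|^2+\Ric(\nu,\nu))f^2$ for $f\in C^\infty_c(\Sigma\cap U)$, so that $\delta^2\Sigma(X)=Q(\langle X,\nu\rangle,\langle X,\nu\rangle)$. Every such $f$ arises as $\langle X,\nu\rangle$ for some $X\in\mk X(U)$: extend $f\nu$ off $\Sigma$ and cut it off. The key point is that if $f$ is $G$-invariant, then $X$ can be chosen $G$-invariant. Indeed, $f\nu$ is a $G$-invariant section along $\Sigma$, because $dg(\nu)=\nu\circ g$. Extend it to some $X\in\mk X(U)$ and replace $X$ by $X_G=\int_G dg^{-1}(X)\,d\mu(g)$. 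Along $\Sigma$ this average is still $f\nu$, by invariance of $f$ and $\nu$, and its support stays in $U$ because $U$ is a $G$-set. Hence $G$-stability says exactly that $Q(f,f)\ge 0$ for every $G$-invariant $f\in C^\infty_c(\Sigma\cap U)$.

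It then suffices to show that the infimum of $Q(f,f)/\int f^2$ over all $f$ is attained by a $G$-invariant function. Let $\lambda_1$ be the first Dirichlet eigenvalue of $L_\Sigma$ on each $G$-component of $\Sigma\cap U$, or on a compact $G$-invariant exhaustion $\Omega$ of it; such exhaustions exist by averaging a distance function. The potential $|A|^2+\Ric(\nu,\nu)$ is $G$-invariant, since $G$ acts by isometries preserving $\nu$. Therefore $L_\Sigma$ commutes with the action $f\mapsto f\circ g$, and each eigenspace is $G$-invariant. On a connected component the first eigenfunction is unique up to scaling and can be taken positive. So if $u>0$ is a first eigenfunction on $\Omega$, the function $u\circ g$ is a positive first eigenfunction as well, hence a positive multiple of $u$ with the same $L^2$ norm, i.e.\ equal to $u$ when $g$ preserves the component.

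When $\Omega$ is $G$-connected but not connected, I will proceed in one of two equivalent ways. One option is to sum the translates of the positive first eigenfunction of one component over the components. The simpler option is to take any first eigenfunction $u\ge 0$ and form $u_G:=\int_G u\circ g\,d\mu(g)$. This $u_G$ is a nonzero nonnegative eigenfunction with the same eigenvalue and it is $G$-invariant. So $\lambda_1(\Omega)$ is realized by a $G$-invariant test function (after approximating by compactly supported smooth ones), and $G$-stability gives $\lambda_1(\Omega)\ge 0$ for every such $\Omega$. Letting $\Omega$ exhaust $\Sigma\cap U$ gives $Q(f,f)\ge0$ for all $f$, which is stability.

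The main obstacle I expect is justifying that the first eigenfunction is positive and simple, or at least that averaging it does not vanish, on domains in $\Sigma\cap U$ that may touch non-principal orbits. This is handled by standard elliptic theory on the smooth compact manifold-with-boundary $\overline\Omega\subset\Sigma$, where $\Sigma$ itself is smooth everywhere; the orbit-space singularities play no role because we work on $\Sigma$, not on $\Sigma/G$. The other point needing care is the identification of normal variations with functions, which is where the existence of the invariant unit normal is used. Without it, $f\nu$ would fail to be $G$-invariant even for invariant $f$: this is the reflection phenomenon in Proposition \ref{Prop: local structure of Sigma/G 1}(ii).
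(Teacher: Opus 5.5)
Your proposal is correct and follows essentially the standard argument behind this cited lemma; the paper uses the same mechanism in Step~3 of the proof of Theorem~\ref{Thm: G-isotopy minimizer}, where the positive first eigenfunction $u_1$ of the Jacobi operator is invariant and $u_1\nu$ is extended to an invariant vector field. Your averaging of a nonnegative first eigenfunction (obtainable from $|u|$) neatly avoids the simplicity issue on domains that are $G$-connected but disconnected; just make sure the $C^\infty_c$ approximants of $u_G$ are themselves averaged over $G$, so that $G$-stability actually applies to them.
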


\section{Equivariant $\gamma$-reduction}\label{Sec: gamma reduction}

In this section, we show the equivariant $\gamma$-reduction generalizing \cite{meeks82exotic}*{\S 3}.

To begin with, we have the following lemma generalizing \cite{meeks82exotic}*{Lemma 1}. 
One should note that there is no positive lower bound for $\inj(G\cdot p)$ and $M^*=M/G$ is a $3$-manifold away from a finite set $\mc S^*$. 
Hence, some estimates and topological arguments in \cite{meeks82exotic}*{Lemma 1} cannot be directly generalized to the whole manifold. 
Nevertheless, we can use Lemma \ref{Lem: uniform constants} in each orbit type stratum in place of \cite{meeks82exotic}*{(1.1)(1.2)}. 
Recall that $\mc S$ is defined in Definition \ref{Def: isolated orbits}. 

\begin{lemma}\label{Lem: isoperimetric lemma in MSY}
	For any $\epsilon>0$, there are $\delta=\delta(M,G,\epsilon)\in (0,1)$ and $\rho_0=\rho_0(M,G,\epsilon) \in (0,1)$ so that if $\Sigma\subset M$ is a $G$-hypersurface with $\bd\Sigma\cap (M\setminus B_{\epsilon}(\mc S))=\emptyset$ and
	\begin{align}\label{Eq: MSY lemma1 2.1}
		\mc H^n(\Sigma\cap B_{\rho_0}(x_0))<(\delta\rho_0)^{n-\dim(G\cdot x_0)} \qquad \forall x_0\in M\setminus B_{\epsilon/2}(\mc S) , 
	\end{align}
	then there is a unique compact $G$-set $K_\Sigma\subset M\setminus B_\epsilon(\mc S)$ satisfying $\bd K_\Sigma=\Sigma$ in $M\setminus B_\epsilon(\mc S)$, 
	\begin{align}\label{Eq: MSY lemma1 2.2}
		\mc H^{n+1}(K_\Sigma\cap B_{\rho_0}(x_0)) \leq \delta^{n-\dim(G\cdot x_0)}\rho_0^{n+1-\dim(G\cdot x_0)} \qquad \forall x_0\in M\setminus B_{\epsilon}(\mc S), 
	\end{align}
	and 
	\begin{align}\label{Eq: MSY lemma1 2.3}
		\mc H^{n+1}(K_\Sigma) \leq c (\mc H^n(\Sigma))^{\frac{n+1}{n}},
	\end{align}
	where $c=c(M,G)>0$. 
\end{lemma}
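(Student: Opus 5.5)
The plan is to follow the proof of \cite{meeks82exotic}*{Lemma 1}, substituting the stratified uniform constants of Lemma \ref{Lem: uniform constants} for the global isoperimetric constants \cite{meeks82exotic}*{(1.1)(1.2)}. The first step is a finite cover. The compact set $M\setminus B_{\epsilon/2}(\mc S)$ meets only finitely many orbit type strata, and each stratum's closure there is a union of lower strata. By induction on the dimension of orbits (starting from the smallest orbit dimension, whose stratum is closed), we choose compact sets $K_{(H)}\subset M_{(H)}$ whose $\rho_{(H)}$-tubes, with $\rho_{(H)}$ from Lemma \ref{Lem: uniform constants}(i), cover $M\setminus B_{\epsilon/2}(\mc S)$. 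The part of a stratum not yet covered near the lower strata is compact in $M_{(H)}$ after removing the tubes around the lower strata. We take $\rho_0$ smaller than all the finitely many $\rho_{(H)}$, and also smaller than $\epsilon/4$. By Lemma \ref{Lem: uniform constants}(ii), orbit volumes are then comparable within each tube. The number $N$ of tubes $B_{\rho_0}(G\cdot x_i)$ needed to cover $M\setminus B_{\epsilon}(\mc S)$ depends only on $M,G,\epsilon$.

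The second step is local filling. For each $x_0\in M\setminus B_{\epsilon/2}(\mc S)$ with $k_0=\dim(G\cdot x_0)$, the hypothesis \eqref{Eq: MSY lemma1 2.1} with $\delta$ small gives $\mc H^n(\Sigma\cap B_{\rho}(G\cdot x_0))<\alpha_0\rho^{n-k_0}$ for a slightly smaller radius $\rho\in(\rho_0/2,\rho_0)$. We choose this radius by the coarea formula so that $\Sigma$ meets $\bd B_\rho(G\cdot x_0)$ transversally with small slice area. Lemma \ref{Lem: uniform constants}(iv) applied in the slice (equivalently \eqref{Eq: isoperimetric in slice}) then gives a $G$-set $U_{x_0}$ with $\bd_{rel}U_{x_0}=\Sigma\cap B_\rho$ and
\[ \mc H^{n+1}(U_{x_0})\leq C_0\,\mc H^n(\Sigma\cap B_\rho)^{\frac{n+1-k_0}{n-k_0}}\leq C_0\,\delta^{n+1-k_0}\rho_0^{n+1-k_0}, \]
which is at most $\delta^{n-k_0}\rho_0^{n+1-k_0}$ once $\delta<1/C_0$. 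Of the two complementary candidates, $U_{x_0}$ is the unique one with small volume, since the other has volume at least $c\rho_0^{n+1-k_0}$ by Lemma \ref{Lem: uniform constants}(iii)(v).

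The third step, which I expect to be the main obstacle, is gluing. We need to show that on overlapping tubes the small-volume choices agree, i.e.\ that they are restrictions of a single mod-$2$ class. This uses uniqueness of the small side together with a lower volume bound for any nonempty difference region. The difficulty is that overlapping tubes can be centered on different strata, with different exponents $n-k_0$. I would handle it by taking the smaller radius of the two and comparing inside $B_{\rho_0/2}$ of the point of the higher stratum. At that point the tube of the lower-stratum center still contains a definite tube by item (ii), so the $\delta$-smallness of both candidates contradicts the volume lower bound unless they coincide. Having $\bd\Sigma$ avoid $M\setminus B_\epsilon(\mc S)$ makes $\Sigma$ a relative cycle there. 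Gluing produces $K_\Sigma$ with $\bd K_\Sigma=\Sigma$ in $M\setminus B_\epsilon(\mc S)$ satisfying \eqref{Eq: MSY lemma1 2.2}, and uniqueness follows from the same local uniqueness. $G$-invariance follows from uniqueness, since $g\cdot K_\Sigma$ has the same properties.

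Finally, for \eqref{Eq: MSY lemma1 2.3}, I would first treat the case $\mc H^n(\Sigma)$ large compared with a constant depending on $\epsilon$. There $\mc H^{n+1}(K_\Sigma)\leq \mc H^{n+1}(M)$ trivially gives the bound. Otherwise $K_\Sigma$ has small volume in every tube. By the relative isoperimetric inequality in $M$, which holds for sets of small volume in each ball and does not need equivariance, summed over the finite cover, we get $\mc H^{n+1}(K_\Sigma)\leq c(\mc H^n(\Sigma))^{\frac{n+1}{n}}$. The constant $c$ can be made independent of $\epsilon$ by using the global isoperimetric inequality of $M$ for the small-volume side.
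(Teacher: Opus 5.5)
Your route differs from the paper's at the key step, and as written it has a genuine gap near $\bd B_\epsilon(\mathcal S)$.

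\emph{Comparison.} The paper uses the same stratified cover. It then lifts a Lipschitz triangulation of $M^*$ to an equivariant triangulation subordinate to that cover. Using \eqref{Eq: MSY lemma1 2.1} in slices, it perturbs the triangulation so that $\Sigma^*$ misses the $1$-skeleton. It obtains $K_\Sigma$ from the Meeks--Simon--Yau topological argument in the $3$-manifold with boundary $M^*\setminus B_\epsilon(\mathcal S^*)$. Finally it proves \eqref{Eq: MSY lemma1 2.2}--\eqref{Eq: MSY lemma1 2.3} simplex by simplex from slice isoperimetric inequalities. You instead glue the canonical small fillings of Lemma~\ref{Lem: uniform constants}(iv) by uniqueness. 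That strategy is sound in principle. Each local choice is canonical, so pairwise agreement suffices, and the fact that $\Sigma$ bounds at all comes out for free. You also never need equivariant triangulations or the $3$-dimensionality of $M^*$. Besides the gap, the gluing needs a different mechanism from the one you cite.

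\emph{The gap.} Lemma~\ref{Lem: uniform constants}(iv) requires $\Sigma\cap B_\rho(G\cdot x_0)$ to be a relative cycle in the tube. The hypothesis only keeps $\bd\Sigma$ out of $M\setminus B_\epsilon(\mathcal S)$. So every tube reaching into $B_\epsilon(\mathcal S)$ can contain pieces of $\bd\Sigma$. This includes all your tubes centered in $B_\epsilon(\mathcal S)\setminus B_{\epsilon/2}(\mathcal S)$, and those centered outside but within $\rho_0$ of $\bd B_\epsilon(\mathcal S)$. In such tubes no filling need exist, so your gluing produces $K_\Sigma$ only on roughly $M\setminus B_{\epsilon+\rho_0}(\mathcal S)$.

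You need a relative filling lemma in $B_\rho(G\cdot x_0)\setminus\overline{B_\epsilon(\mathcal S)}$, with free boundary on $\bd B_\epsilon(\mathcal S)$. For $\epsilon$ small and $\rho_0\ll\epsilon$, its slice is a ball cut by an almost flat hypersurface. Hence it is $G$-connected and has a uniform relative isoperimetric inequality. The paper's triangulation of the manifold with boundary $M^*\setminus B_\epsilon(\mathcal S^*)$ absorbs this automatically.

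The same point defeats your argument that $c$ is independent of $\epsilon$. In general $K_\Sigma$ has boundary on $\bd B_\epsilon(\mathcal S)$, so its perimeter in $M$ is not $\mathcal H^n(\Sigma)$. Therefore the global isoperimetric inequality of $M$ does not give \eqref{Eq: MSY lemma1 2.3}. Summing the local relative inequalities over the cover does work. Your case split is unnecessary, since \eqref{Eq: MSY lemma1 2.2} already gives smallness in every piece. But the resulting constant depends on the overlap number of the cover, hence on $\epsilon$. This is harmless for Section~\ref{Sec: gamma reduction}, where $\epsilon$ is fixed.

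\emph{The gluing.} Item (ii) of Lemma~\ref{Lem: uniform constants} compares orbit volumes within a single stratum. It gives no containment between tubes centered on different strata. What works is the following.
Cover by tubes of radius $\rho_0/8$ and fill in tubes of radius at least $\rho_0/2$. Since $\dist(g\cdot x,G\cdot y)=\dist(G\cdot x,G\cdot y)$ for all $g\in G$, overlapping cover tubes give $B_{\rho_0/4}(G\cdot x_i)\subset B_{\rho_0/2}(G\cdot x_j)$. Both fillings are $G$-invariant and $B_{\rho_0/4}(G\cdot x_i)$ is $G$-connected, so their difference there is empty or everything. To exclude ``everything'' with $\delta$ independent of $\rho_0$, take $x_i$ to be the center of larger orbit dimension. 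Otherwise you would need $C_0(\delta\rho_0)^{n+1-k_j}\ll\rho_0^{n+1-k_i}$ with $k_j>k_i$, which does not follow from $\delta<1/C_0$.

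Two smaller points. Step 2 must be restricted to cover centers. Lemma~\ref{Lem: uniform constants} gives no uniform constants at radius $\rho_0$ for, e.g., principal orbits near a singular stratum. Also, if $B_{\rho_0}(x_0)$ in \eqref{Eq: MSY lemma1 2.1} is read literally as a geodesic ball, converting it to a tube bound costs a factor of order $\rho_0^{-k_0}$. That factor can only be absorbed by choosing $\delta$ after $\rho_0$.
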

\begin{proof}
	Without loss of generality, assume $r_0=\epsilon$ is the radius $\rho_0$ associated with $\mc S$ given in Lemma \ref{Lem: uniform constants}. 
	Let $\{M_{(H_i)}\}_{i=1}^I$ be the finite set of orbit type strata in $M\setminus\mc S$. 
	Take the most singular stratum, say $M_{(H_1)}$, in the sense that there is no $i\geq 2$ with $(H_1)\leq (H_i)$. 
	Then, every $G$-component of $M_{(H_1)}$ is a $G$-submanifold that is either closed or has boundary in $\Omega_0:=\mc S$. 
	For the compact $G$-set $\Omega_1= M_{(H_1)}\setminus B_{r_0/2}(\Omega_0)= M_{(H_1)}\setminus B_{\epsilon/2}(\mc S)$, let $r_1>0$ be the radius $\rho_0$ given in Lemma \ref{Lem: uniform constants} w.r.t. $\Omega_1$. 
	In $\{M_{(H_i)}\}_{i=2}^I$, let $M_{(H_2)}$ be the most singular stratum so that there is no $i\geq 3$ with $(H_2)\leq (H_i)$. 
	Similarly, every $G$-component of $M_{(H_2)}$ is a $G$-submanifold that is either closed or has boundary in $M_{(H_1)}\cup\mc S$. 
	Let $r_2>0$ be the radius $\rho_0$ given in Lemma \ref{Lem: uniform constants} associated with $\Omega_2:=M_{(H_2)}\setminus (\cup_{j=0}^1 B_{r_j/2}(\Omega_j))$. 
	After repeating this construction $I$ times, we obtain compact $G$-sets $\Omega_0$ and $\{\Omega_i\subset M_{(H_i)}\}_{i=1}^I$, together with constants $r_0>0$ and $\{r_i>0\}_{i=1}^I$ so that $B_{r_i}(G\cdot p)$ satisfies the estimates in Lemma \ref{Lem: uniform constants} for all $p\in\Omega_i$ and $M\subset \cup_{i=0}^IB_{r_i/2}(\Omega_i)$. 
	Then we take $0<\rho_0< \min\{r_i:0\leq i\leq I\}$. 
	
	Let $\mc K^*$ be a Lipschitz triangulation of the orbit space $M^*$ (under the induced metric), which generates a $G$-equivariant triangulation $\mc K$ of $M$ in the sense of \cite{illman1983equivariant}. 
	By subdivision, we can assume that for any $K,K'\in \mc K$ with a common cell, there exist $0\leq i\leq I$ and $p\in \Omega_{i}$ so that $K\cup K' \subset B_{\rho_0}(G\cdot p)$. 
	We can further assume that $\mc K$ also gives an equivariant triangulation on $M\setminus B_{\epsilon/2}(\mc S)$. 
	Then for any $K\in\mc K$ with $K\subset B_{\rho_0}(G\cdot p)$, we can\ use \eqref{Eq: area in slice}\eqref{Eq: MSY lemma1 2.1} in place of \cite{meeks82exotic}*{(2.1)}, and apply the proof of \cite{meeks82exotic}*{Lemma 1} in the slice $S_{\rho_0}(p)$ to show $\mc K$ can be perturbed so that $\Sigma^*\setminus B_{\epsilon}(\mc S^*)$ does not intersect the $1$-skeleton of $\mc K^*$. 
	Using the same topological argument of \cite{meeks82exotic}*{Lemma 1} in the $3$-manifold $M^*\setminus B_\epsilon(\mc S^*)$, we know there is a compact $G$-set $K_\Sigma\subset M\setminus B_\epsilon(\mc S)$ bounded by $\Sigma \setminus B_\epsilon(\mc S)$. 
	Then, using the isoperimetric inequality in the slice $S_{\rho_0}(p)$, the proof in \cite{meeks82exotic}*{P. 627} would carry over in $S_{\rho_0}(p)$ to find a small $\delta>0$ so that \cite{meeks82exotic}*{(2.9)(2.10)} are valid with $(\delta\rho_0)^{n+1-\dim(G\cdot x_0)}$ and $(\mc H^n(\Sigma\cap K))^{\frac{n+1}{n}}$ in place of $(\delta\rho_0)^3$ and $|\Sigma\cap K|^{\frac{3}{2}}$ respectively.  
	This proves the lemma. 
\end{proof}

In the rest of this section, we fix $\epsilon>0$ and take $\delta,\rho_0>0$ as in Lemma \ref{Lem: isoperimetric lemma in MSY}. 
Let 
\[0<\gamma< (\delta\rho_0)^n/9.\] 

\begin{definition}\label{Def: gamma reduction}
	Given two closed $G$-hypersurfaces $\Sigma_1,\Sigma_2\in \mc {LB}^G$ in $M$ and an open $G$-set $U\subset M\setminus B_\epsilon(\mc S)$, we say $\Sigma_2$ is a {\em $(G,\gamma)$-reduction} of $\Sigma_1$ in $U$, denoted by 
	\[\Sigma_2 \underset{(G,\gamma)}{\ll} \Sigma_1 \qquad\mbox{in $U$}, \]
	if the following conditions are satisfied:
	\begin{enumerate}[label=(\roman*)]
		\item $A=\closure(\Sigma_1\setminus\Sigma_2)$ and $D_1\sqcup D_2=\closure(\Sigma_2\setminus\Sigma_1)$ are $G$-hypersurfaces in $U$ so that either
			\begin{itemize}
				\item[(a)] $A^*$ is homeomorphic to the closed annulus $\{x\in \mb R^2: \frac{1}{2}\leq |x|\leq 1\}$, and $D_1,D_2\in \mc D^G(U)$ are disk-type ( $D_1^*,D_2^*$ are homeomorphic to $\mb D^2$); or
				\item[(b)] $A^*$ is homeomorphic to the half-annulus $\{x\in \mb R^2: \frac{1}{2}\leq |x|\leq 1, x_1\geq 0\}$, and $D_1,D_2\in \mc D_+^G(U)$ are half-disk-type ($D_1^*,D_2^*$ are homeomorphic to $\mb D^2_+$); 
			\end{itemize}
		\item $\bd A=\bd D_1\cup \bd D_2$, and $\mc H^n(A)+\mc H^n(D_1)+\mc H^n(D_2)<2\gamma$;
		\item there exists an open $G$-set $Y\subset U$ with $\bd Y=A\cup D_1\cup D_2=\Sigma_1\triangle \Sigma_2$ so that $Y$ is ball-type in case (i.a) and half-ball-type in case (i.b);
		\item for the $G$-component $\wti\Sigma_1$ of $\Sigma_1$ containing $A$, if $\wti\Sigma_1\setminus A$ is not $G$-connected, then each $G$-component $\Gamma$ of $\wti\Sigma_1\setminus A$ satisfies either 
			\begin{itemize}
				\item $\Gamma\in \mc M^G_0(U)$ (i.e. $\Gamma^*$ is a $0$-genus surface in $U^*$) with $\mc H^n(\Gamma)\geq (\delta\rho_0)^n/2$, or
				\item $\Gamma\notin \mc M^G_0(U)$, 
			\end{itemize}
			where $\mc M^G_0(U)$ is replaced by $\mc D^G(U)$ if $U$ is a ball-type open $G$-set. 
	\end{enumerate}
	We say $\Sigma$ is $(G,\gamma)$-irreducible in $U$ if there is no $\wti\Sigma$ with $\wti\Sigma \underset{(G,\gamma)}{\ll} \Sigma$ in $U$. 
\end{definition}

Note that the above definition generalizes the definitions in \cite{meeks82exotic}*{\S 3}\cite{li2015general}*{Definition 7.6}. 

\begin{remark}\label{Rem: MSY 3.6 irreducible}
	By definition, a $G$-hypersurface $\Sigma$ is $(G,\gamma)$-irreducible in $U$ if and only if for any $\Delta\in \wti{\mc D}^G$ with $\Delta\subset U$, $\bd\Delta=\Delta\cap\Sigma$, and $\mc H^n(\Delta)<\gamma$, there exists a $G$-hypersurface $D\subset \Sigma$ with $D\in \mc M^G_0(U)$ (or $D\in \mc D^G(U)$ if $U$ is ball-type) and $\bd D=\bd\Delta$ so that $\mc H^n(D)<(\delta\rho_0)^n/2$. 
\end{remark}

Similarly, we also define the strong $(G,\gamma)$-reduction in $U$ as follows:
\begin{definition}\label{Def: gamma reduction strong}
	Given two closed $G$-hypersurfaces $\Sigma_1,\Sigma_2\in \mc {LB}^G$ in $M$ and an open $G$-set $U\subset M\setminus B_\epsilon(\mc S)$, we say that $\Sigma_2$ is a {\em strong $(G,\gamma)$-reduction} of $\Sigma_1$ in $U$ and write
	\[ \Sigma_2 \underset{(G,\gamma)}{<} \Sigma_1\qquad \mbox{in $U$}, \]
	if there exists a $G$-equivariant isotopy $\{\psi_t\}_{t\in [0,1]}\in \mk {Is}^G(U)$ so that 
	\begin{itemize}
		\item $\Sigma_2$ is a $(G,\gamma)$-reduction of $\psi_1(\Sigma_1)$ in $U$;
		\item $\Sigma_2=\Sigma_1$ in $M\setminus U$;
		\item $\mc H^n(\psi_1(\Sigma_1)\triangle\Sigma_2)<\gamma$. 
	\end{itemize}
	We say $\Sigma$ is {\em strongly $(G,\gamma)$-irreducible} in $U$ if there is no $\wti\Sigma$ with $\wti\Sigma \underset{(G,\gamma)}{<} \Sigma$ in $U$. 
\end{definition}

The following lemma generalizes \cite{meeks82exotic}*{Remark 3.1}. 
\begin{lemma}\label{Lem: MSY 3.1 finite reduction}
	Given a closed $G$-hypersurface $\Sigma\in\mc {LB}^G$ in $M$ and an open $G$-set $U\subset M\setminus B_\epsilon(\mc S)$, there exists a sequence of closed $G$-hypersurfaces $\{\Sigma_i\}_{i=1}^k\subset \mc {LB}^G$ so that 
	\[\Sigma_k \sreduc \Sigma_{k-1} \sreduc\cdots\sreduc\Sigma_1=\Sigma \qquad\mbox{in $U$},\]
	and $\Sigma_k$ is strongly $(G,\gamma)$-irreducible in $U$. 
	Moreover, there is a constant $c$ depending only on the genus of $\Sigma^*$ in $(M\setminus B_\epsilon(\mc S))^*$ and $\mc H^n(\Sigma)/(\delta\rho_0)^n$ so that $k\leq c$ and 
	\[\mc H^n(\Sigma\triangle\Sigma_k) \leq 3c\gamma. \]
\end{lemma}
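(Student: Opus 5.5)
The plan is to follow the proof of \cite{meeks82exotic}*{Remark 3.1}: attach a complexity to $\Sigma$, show that each strong $(G,\gamma)$-reduction decreases it by a definite amount, and note that it is bounded by data depending only on $\genus(\Sigma^*\setminus B_\epsilon(\mc S^*))$ and $\mc H^n(\Sigma)/(\delta\rho_0)^n$. The sequence is then built greedily. If $\Sigma_i$ is not strongly $(G,\gamma)$-irreducible in $U$, pick any $\Sigma_{i+1}$ with $\Sigma_{i+1}\sreduc\Sigma_i$ in $U$. The bound on the number of steps forces the process to stop at a strongly irreducible $\Sigma_k$.

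\textbf{Step 1: a single step.} Write $\Sigma_{i+1}\underset{(G,\gamma)}{\ll}\psi_1(\Sigma_i)$ with $\{\psi_t\}\in\mk{Is}^G(U)$. Then
\[\mc H^n(\Sigma_i\triangle\Sigma_{i+1})\le \mc H^n(\psi_1(\Sigma_i)\triangle\Sigma_{i+1})+\mc H^n(\Sigma_i\triangle\psi_1(\Sigma_i)).\]
The first term on the right is less than $\gamma$ by Definition \ref{Def: gamma reduction strong}. To handle the second term I would redefine the bookkeeping as MSY do: measure area changes directly. Condition (ii) of Definition \ref{Def: gamma reduction} gives $\mc H^n(\Sigma_{i+1})\le\mc H^n(\psi_1(\Sigma_i))+2\gamma$. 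Summing the symmetric differences via the triangle inequality should give the stated bound $\mc H^n(\Sigma\triangle\Sigma_k)\le 3c\gamma$, where each step contributes at most $3\gamma$: $\gamma$ from the isotopy-symmetric-difference condition and $2\gamma$ from $A\cup D_1\cup D_2$. Read literally, $\Sigma\triangle\Sigma_k$ includes the isotopy displacement, so I expect the intended reading is the MSY one, where the isotopy is absorbed into the surgery region. I would state this convention explicitly.

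\textbf{Step 2: complexity.} The surgery replaces $A$ by $D_1\sqcup D_2$, where $A^*$ is an annulus or half-annulus and $D_j^*$ are disks or half-disks. In the $3$-manifold $(M\setminus B_\epsilon(\mc S))^*$ this is a compression of $\Sigma^*$ along a disk $Y^*$, which is ball-type or half-ball-type by (iii). By Propositions \ref{Prop: local structure of Sigma/G 1} and \ref{Prop: local structure of Sigma/G 2}, the quotient surgery is a genuine $2$-dimensional cut-and-paste on $\Sigma^*$, so there are two outcomes. If $\partial A^*$ is nonseparating on the $G$-component $\wti\Sigma^*$, the genus drops by at least one. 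Cutting along a half-annulus whose ends lie on $\bd_0 U^*$ lowers the genus of the compactified surface or splits it, so the same conclusion holds in case (b). If $\partial A^*$ separates, condition (iv) says that each new piece either has positive genus or is genus zero with area at least $(\delta\rho_0)^n/2$.

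Take the complexity $\sum_\Gamma(2\genus(\Gamma^*)+1)$, summed over $G$-components $\Gamma$ of positive genus or of area at least $(\delta\rho_0)^n/2$. The number of such components is at most $2\mc H^n(\Sigma_i)/(\delta\rho_0)^n+\genus$, and a separating step strictly increases this count while preserving total genus. Since areas stay at most $\mc H^n(\Sigma)+2\gamma k$, I would make the bound self-consistent using $\gamma<(\delta\rho_0)^n/9$. A lexicographic pair (genus, negative count) then strictly decreases, bounded in terms of the data. This gives $k\le c$.

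\textbf{Step 3: membership in $\mc{LB}^G$.} We must check that each $\Sigma_{i+1}$ stays in $\mc{LB}^G$. Locally, $\Sigma_{i+1}=\bd(\Omega\triangle Y)$ with $Y$ the $G$-set from (iii), and $\Omega\triangle Y$ is a $G$-invariant Caccioppoli set, so this follows directly.

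\textbf{Main obstacle.} The main obstacle is Step 2 in the half-annulus case and near $\bd_0U^*$, where one must verify carefully that the genus or count really changes. Here I would rely on the case analysis of Propositions \ref{Prop: local structure of M/G}, \ref{Prop: local structure of Sigma/G 1} and \ref{Prop: local structure of Sigma/G 2}, and argue on the doubled surface across $\bd_0$.
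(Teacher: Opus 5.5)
Your strategy is the intended one: the paper's proof is only ``as in \cite{meeks82exotic}*{Remark 3.1}''. However, Step 2 has a genuine gap in the half-annulus case.

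In case (b) of Definition \ref{Def: gamma reduction}, $A^*$ is a band whose two short sides lie on $\bd_0$. The surgery therefore amounts to cutting $\wti\Sigma^*$ along an arc with endpoints on $\bd_0\wti\Sigma^*$. Suppose these endpoints lie on \emph{different} components of $\bd_0\wti\Sigma^*$. Then $\wti\Sigma^*\setminus A^*$ is connected, so (iv) imposes nothing. The surgery raises $\chi$ by one and lowers the number of boundary circles by one, so the compactified genus and the number of components are both unchanged. Your claim that the genus drops or the surface splits is therefore false, and your complexity does not register the step.

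This case does occur. Take a quotient annulus with both boundary circles on $\bd_0U^*$, i.e.\ a thin arch over $\bd_0U^*$. A longitudinal band reduces it to a half-disk-type cap. For $SO(m)$ acting on the last factor of $\mb R^2\times\mb R^m$, this is the surgery $S^1\times S^m\to S^{m+1}$ in $M$, which $\genus(\Sigma^*)$ does not detect. A surface with $N$ tiny disjoint such components admits $N$ successive reductions of this kind. So the length of your greedy sequence is not controlled by $\genus(\Sigma^*)$ and $\mc H^n(\Sigma)/(\delta\rho_0)^n$ alone.

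You must also track boundary data. For example, let $\beta:=\sum_C(b_C-1)_+$, summed over the components $C$ of $\Sigma^*$, where $b_C$ is the number of components of $\bd_0C$. A merging step lowers $\beta$ by one, separating steps of either kind do not raise it, and only a genus-reducing step can raise it, by one. The constant $c$ then also depends on the number of components of $\bd_0\Sigma^*$, and you should state this explicitly. It is harmless for the use in Theorem \ref{Thm: G-isotopy minimizer}, since $G$-isotopies preserve orbit-type strata and hence preserve this number.

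The area bookkeeping also does not close as written. Let $\theta:=(\delta\rho_0)^n/2$. The number of components of area at least $\theta$ is not monotone, because the isotopy in each strong reduction can push a component below $\theta$. The total area can grow by up to $3\gamma$ per step, not $2\gamma$, so your upper bound on that count grows linearly in $k$ and the lexicographic argument is circular. In addition, $\sum(2\genus+1)$ increases when a component splits into two positive-genus pieces.

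What works is a potential that is $1$-Lipschitz in area:
$Q:=\sum_{\genus(C)=0}\min\{\mc H^n(\pi^{-1}(C)),\theta\}$.
\begin{itemize}
\item By (iv), a separating surgery that creates a genus-zero piece raises $Q$ by at least $\theta$.
\item Each isotopy lowers $Q$ by less than $\gamma$, and each other surgery lowers it by less than $2\gamma$.
\item $Q\le\mc H^n(\Sigma_i)<\mc H^n(\Sigma)+3i\gamma$.
\end{itemize}
The number of remaining steps is bounded using $2\genus-\#\{C:\genus(C)>0\}$ together with $\beta$. Since $\gamma<(\delta\rho_0)^n/9$ gives $\theta-4\gamma>\theta/9$, this bounds $k$.

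All of this, and the estimate $\mc H^n(\Sigma\triangle\Sigma_k)\le 3c\gamma$, requires reading the last condition of Definition \ref{Def: gamma reduction strong} as $\mc H^n(\psi_1(\Sigma_1)\triangle\Sigma_1)<\gamma$, as in \cite{meeks82exotic} and \cite{li2015general}. Each step then contributes less than $\gamma$ from the isotopy and less than $2\gamma$ from (ii). Say exactly this in Step 1. The inequality you quote from (ii) bounds an area, not a symmetric difference, and the isotopy is not ``absorbed into the surgery region''.
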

\begin{proof}
	The proof is the same as \cite{meeks82exotic}*{Remark 3.1} using our $G$-equivariant objects.
\end{proof}

For any closed $G$-hypersurface $\Sigma\in\mc {LB}^G$ and open $G$-set $U\subset M\setminus B_\epsilon(\mc S)$, define
\begin{align}\label{Eq: area error}
	E(\Sigma):=\mc H^n(\Sigma) - \inf_{\Sigma'\in J_U^G(\Sigma)} \mc H^n(\Sigma'),
\end{align}
where $J^G_U(\Sigma):=\{\varphi_1(\Sigma): \{\varphi_t\}_{t\in [0,1]}\in \mk {Is}^G(U)\}$. 
Additionally, denote by 
\begin{align}\label{Eq: Sigma0}
	\Sigma_0\subset\Sigma
\end{align}
the union of all $G$-components $\Lambda\subset \Sigma\cap\closure(U)$ so that there is some ball-type or half-ball-type open $G$-set $K_\Lambda\subset U$ with $\Lambda\subset K_\Lambda$ and $\bd K_\Lambda\cap\Sigma=\emptyset$. 
The following result extends \cite{meeks82exotic}*{Theorem 2} to our equivariant setting.

\begin{theorem}\label{Thm: MSY thm2 gamma reduction}
	Let $U:=B_{\rho}(G\cdot p)\subset\subset M\setminus B_\epsilon(\mc S)$ so that the estimates in Lemma \ref{Lem: uniform constants} are satisfied, and let $B:=\closure B_{r}(G\cdot p)\subset U$ be a compact ball-type or half-ball-type $G$-neighborhood of $G\cdot p$. 
	Suppose $\Sigma\in\mc {LB}^G$ is a closed $G$-hypersurface in $M$ so that 
	\begin{enumerate}[label=(\roman*)]
		\item $\Sigma$ intersects $\bd B$ transversally;
		\item $E(\Sigma)\leq \gamma/4$, and $\Sigma$ is strongly $(G,\gamma)$-irreducible in $U$;
		\item for any $G$-component $\Gamma$ of $\Sigma\cap\bd B$, there exists a $G$-component $F_\Gamma$ of $\bd B\setminus \Gamma$ with $\bd F_\Gamma=\Gamma$ so that 
			\begin{align}\label{Eq: MSY thm2 assumption - isoperimetric}
				\mc H^{n}(F_\Gamma)\leq C\cdot \left( \mc H^{n-1}(\Gamma) \right)^{\frac{n-\dim(G\cdot p)}{n-\dim(G\cdot p)-1}}
			\end{align}
			for some $C=C(M,G,p)>0$; moreover, if $\Gamma^*\cap \bd_0 B^*=\emptyset$ (i.e. $\Gamma^*\subset \bd_1 B^*$ is a simple closed curve) and $\codim(G\cdot x)=3$ for some $x\in U\setminus M^{prin}$, then $F_\Gamma$ is disk-type, i.e. 
			\begin{align}\label{Eq: MSY thm2 assumption - disk instead of cylinder}
				(F_\Gamma^* , \Gamma^*)\overset{\rm homeo}{\cong} (\mb D^2, \bd \mb D^2) ;
			\end{align}
		\item $\sum_{j=1}^J\mc H^n(F_j)\leq \gamma/8$, where $F_j=F_{\Gamma_j}$, and $\{\Gamma_j\}_{j=1}^J$ are the $G$-components of $\Sigma\cap\bd B$. 
	\end{enumerate}
	Then $\mc H^n(\Sigma_0)\leq \mc H^n(\Sigma)$, and there are pairwise disjoint, $G$-connected, compact $G$-hypersurfaces $\{D_i\}_{i=1}^I\subset \mc M^G_0$ with $\genus(D_i^*)=0$ so that 
	\begin{enumerate}
		\item $D_i\subset (\Sigma\setminus\Sigma_0)\cap U$, $\bd D_i\subset \bd B$, and $(\cup_{i=1}^I D_i)\cap B=(\Sigma\setminus\Sigma_0)\cap B$;
		\item $\sum_{i=1}^I\mc H^n(D_i) \leq \sum_{j=1}^J\mc H^n(F_j) + E(\Sigma)$;
		\item for any given $\alpha>0$, we have 
		\[ \mc H^n\left( \left( \bigcup_{i=1}^I ( \varphi_1(D_i)\setminus \bd D_i ) \right)  \setminus \interior(B) \right) <\alpha \]
		for some $G$-equivariant isotopy $\{\varphi_t\}_{t\in [0,1]}\in \mk{Is}^G(U)$ depending on $\alpha$ that is the identity in a small $G$-neighborhood of $(\Sigma\setminus\Sigma_0)\setminus \cup_{i=1}^I(D_i\setminus \bd D_i)$. 
	\end{enumerate}
	Moreover, if $B,U$ are ball-type, then we can further choose $\{D_i\}_{i=1}^I\subset \mc D^G$ to be disk-type.  
\end{theorem}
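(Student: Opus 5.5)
We follow Meeks--Simon--Yau's proof of \cite{meeks82exotic}*{Theorem 2}: do all topological surgery in the quotient $U^*$, which is a topological $3$-ball or half-ball away from $\mc S^*$ since $U\subset\subset M\setminus B_\epsilon(\mc S)$, and do all area estimates in $M$ using the slice formulas \eqref{Eq: area in slice}, \eqref{Eq: volume in slice} together with Lemma \ref{Lem: uniform constants}.

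\textbf{Step 1: $\Sigma_0$ and the cut-and-paste surfaces.} First we show $\mc H^n(\Sigma_0)\leq\mc H^n(\Sigma)$. This is trivial, but we also need that each $G$-component of $\Sigma_0$ can be shrunk by a $G$-isotopy inside its $K_\Lambda$ to area as small as we like. To do this, we build the shrinking isotopy in $K_\Lambda^*$, keeping orbit types fixed, and lift it to $U$ by Schwarz's isotopy lifting theorem \cite{schwarz1980lifting}; this is the approximation Lemma \ref{Lem: G-isotopy approximation} announced in the outline.

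Next, for each $G$-component $\Gamma_j$ of $\Sigma\cap\bd B$, we push a parallel copy of $F_j$ slightly into $B$ and do surgery: replace a thin collar of $\Sigma$ near $\Gamma_j$ by two copies of $F_j$. Because of assumption (iii), $F_j^*$ is a disk or half-disk. By \eqref{Eq: MSY thm2 assumption - disk instead of cylinder} it is never a cylinder near codimension-$3$ non-principal orbits, so each surgery is either a $(G,\gamma)$-reduction of type (i.a) or (i.b) in Definition \ref{Def: gamma reduction}, or it is trivial. Assumption (iv) makes the total added area at most $\gamma/4<\gamma$.

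\textbf{Step 2: the surgery does not change topology, and yields $D_i$.} The key use of strong $(G,\gamma)$-irreducibility, through Remark \ref{Rem: MSY 3.6 irreducible}, is this: for each pushed-in copy $\Delta$ of $F_j$ with $\bd\Delta=\Delta\cap\Sigma$, there is a genus-zero piece $D\subset\Sigma$ (a disk-type piece if $U$ is ball-type) with $\bd D=\bd\Delta$ and small area. Inductively over $j$, as in \cite{meeks82exotic}, the components of $\Sigma\setminus\Sigma_0$ that meet $B$ are cut along the $\Gamma_j$. Each resulting piece containing $(\Sigma\setminus\Sigma_0)\cap B$ is then a genus-zero $G$-surface $D_i$ in $U$ with $\bd D_i\subset\bd B$, which gives (1). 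Genus zero is checked in $U^*$ by an Euler characteristic count in the $2$-manifold $\Sigma^*\setminus\mc S^*$; the boundary components lying on $\bd_0U^*$ are harmless here, as noted at the end of \S2.3.

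\textbf{Step 3: area bound (2) and isotopy (3).} For (2), we compare $\Sigma$ with $\Sigma'=(\Sigma\setminus\cup D_i)\cup\bigcup_j F_j$, after a small perturbation. Using Step 2 and the $G$-isotopy lifting, $\Sigma'$ is approximable by surfaces in $J^G_U(\Sigma)$. Hence
\[\mc H^n(\Sigma)-E(\Sigma)\leq\mc H^n(\Sigma')+o(1),\]
which gives $\sum_i\mc H^n(D_i)\leq\sum_j\mc H^n(F_j)+E(\Sigma)$.

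For (3), we construct in $U^*$ an isotopy that pushes each $D_i^*$ toward $F^*$ so that the part outside $\interior(B^*)$ becomes arbitrarily thin, and we lift it by \cite{schwarz1980lifting}. The estimate $\vartheta\in[1/2,2]$ then converts smallness in $U^*$ into smallness of $\mc H^n$.

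\textbf{Main obstacle.} The hard step is the equivariant isotopy approximation near $\bd_0U^*$ in Steps 1 and 3. There, outer isotopies are forbidden and the lifted isotopy must preserve orbit types. We would first try to push surfaces toward $\bd_0U^*$, where the weighted metric \eqref{Eq: weighted metric in M/G} degenerates, so that area tends to zero. The disk-instead-of-cylinder condition is exactly what rules out the bad case near special exceptional orbits. For the final ``moreover'' claim: if $B,U$ are ball-type, then $\bd_0U^*$ is at most $1$-dimensional, so every genus-zero piece produced above is a disk, and the same argument gives $D_i\in\mc D^G$.
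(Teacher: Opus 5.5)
Your overall plan (run the Meeks--Simon--Yau argument, do the topology in $U^*$, lift isotopies with \cite{schwarz1980lifting}) matches the paper's. However, Step 2, which carries the content of the theorem, does not work as written.

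Cutting $\Sigma\setminus\Sigma_0$ along the $\Gamma_j$ produces pieces of arbitrary genus. No Euler characteristic count in $\Sigma^*$ makes them planar, or gives $(\cup_i D_i)\cap B=(\Sigma\setminus\Sigma_0)\cap B$. Both are consequences of irreducibility, $E(\Sigma)\le\gamma/4$ and (iv), obtained through an induction on $J$.

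Remark \ref{Rem: MSY 3.6 irreducible} can only be applied to a curve $\Gamma_J$ with $F_J\cap\Sigma=\Gamma_J$, since the pushed-in copy must satisfy $\bd\Delta=\Delta\cap\Sigma$. For such a curve the induction runs as follows.
\begin{enumerate}
\item Irreducibility yields a small genus-zero $D\subset\Sigma$ with $\bd D=\Gamma_J$.
\item Lemma \ref{Lem: isoperimetric lemma in MSY} supplies the compact $G$-set $K$ with $\bd K=D\cup F_J$.
\item The remaining curves are reached only by applying the induction hypothesis to $\widehat\Sigma_*=(\Sigma\setminus D)\cup F_J$ (pushed in).
\end{enumerate}

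This last step is exactly where equivariance bites. In general $\widehat\Sigma_*\notin J^G_U(\Sigma)$: $G$-isotopies preserve orbit type strata, and $D^*$ may have boundary circles on $\bd_0U^*$ that $F_J^*$ lacks. So the MSY step that passes the hypotheses to $\widehat\Sigma_*$ via $\widehat\Sigma_*\in J_U(\Sigma)$ is unavailable. The paper replaces it by $E(\widehat\Sigma_*)<E(\Sigma)+\mc H^n(F_J)-\mc H^n(D)+\epsilon_0$. This comes from Lemma \ref{Lem: G-isotopy approximation}, as in \cite{li2015general}: $\widehat\Sigma_*$ and its $G$-isotopic images are area-limits of elements of $J^G_U(\Sigma)$.

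Your Step 3 comparison is essentially this inequality, but it presupposes the $D_i$, so it cannot replace the inductive version. Also, in $\Sigma'$ you should add back only those $F_j$ with $\Gamma_j=\bd D_i$; otherwise $\Sigma'$ is not closed.

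Second, hypothesis (iii) does not make every $F_j^*$ a disk or half-disk. Suppose $B$ is half-ball-type, $\Gamma_j^*\subset\bd_1B^*$ is a closed curve, and every non-principal orbit in $U$ has codimension at least $4$. Then $F_j^*$ may be the annulus between $\Gamma_j^*$ and $\bd_0B^*$, as Remark \ref{Rem: compare disk instead cylinder} states explicitly. In that case the pushed-in $F_j$ is not in $\wti{\mc D}^G$, the surgery is not of type (i.a) or (i.b), and Remark \ref{Rem: MSY 3.6 irreducible} cannot be invoked for it as you do.

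This annular case has to be treated separately. In the approximation lemma it needs its own argument: the contraction towards $\bd_0U^*$ in Step 3, Case 1 of the proof of Lemma \ref{Lem: G-isotopy approximation}, which uses $\dim(U\setminus M^{prin})<n$. Relatedly, \eqref{Eq: MSY thm2 assumption - disk instead of cylinder} is not what makes surgeries into reductions. Its role is in the isotopy approximation, as your last paragraph correctly says.
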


\begin{remark}\label{Rem: compare disk instead cylinder}
	Note that $\Gamma^*\subset \closure(\bd_1 B^*)$ can either be a Jordan arc with endpoints on $\bd_0 B^*$, or be a simple closed curve in $\bd_1 B^*$. 
	In the first case, both of the $G$-components of $\bd B\setminus \Gamma$ (and also $F_\Gamma$) are half-disk-type. 
	In the latter case, $\bd B\setminus \Gamma$ has two $G$-components $F_1,F_2$ so that 
	\begin{itemize}
		\item if $B$ is ball-type, then $F_1,F_2$ are disk-type;
		\item if $B$ is half-ball-type, then $F_1^*,F_2^*$ are homeomorphic to a disk and an annulus respectively. 
	\end{itemize}
	Additionally, combining \eqref{Eq: area in slice} with the isoperimetric inequality in the slice $\bd B\cap S_\rho(p)$, we know \eqref{Eq: MSY thm2 assumption - isoperimetric} is valid by choosing $F_\Gamma$ to satisfy the following condition (where $k_0:=\dim(G\cdot p)$):
	\begin{align}\label{Eq: MSY thm2 assumption - small area}
		\mc H^{n-k_0}(F_\Gamma\cap S_\rho(p)) = \min\{\mc H^{n-k_0}(F_\Gamma\cap S_\rho(p)), \mc H^{n-k_0}((\bd B\setminus F_\Gamma)\cap S_\rho(p) )\}. 
	\end{align} 
	Moreover, if $\Gamma^*\subset \bd_1 B^*$ is a simple closed curve and $\codim(G\cdot x)=3$ for some $x\in U\setminus M^{prin}$, then we can always pick the disk-type $G$-component $F_\Gamma$ of $\bd B\setminus\Gamma$ satisfying \eqref{Eq: MSY thm2 assumption - isoperimetric}. 
	Indeed, if $B$ is ball-type, we simply take $F_\Gamma$ satisfying \eqref{Eq: MSY thm2 assumption - small area}. 
	If $B$ is half-ball-type, then $p$ and $B$ satisfy either Proposition \ref{Prop: local structure of M/G}(1) or (2.a). 
	\begin{itemize}
		\item[(i)] In the case of Proposition \ref{Prop: local structure of M/G}(1), one can take $N_+ :=\{v=(v_1,v_2,v_3)\in N_{p}(G\cdot p)\cong\mb R^3: v_3\geq 0\} $ as a fundamental domain of $G_p$-actions, and identify $\Gamma^*, B^*, F_\Gamma^*$ with $\widehat{\Gamma}^+:= \exp_{G\cdot p}^{\perp,-1}(\Gamma)\cap N_+$, $\widehat{B}^+:=\exp_{G\cdot p}^{\perp,-1}(B)\cap N_+$, $\widehat{F}_\Gamma^+:= \exp_{G\cdot p}^{\perp,-1}(F_\Gamma)\cap N_+$ respectively. 
			Then $\widehat{\Gamma}^+$ is a simple closed curve on the relative boundary $\bd_1\widehat{B}^+$ of $\widehat{B}^+$. 
			By Lemma \ref{Lem: local structure of M/G with codim 3} and the last statement in Proposition \ref{Prop: local structure of Sigma/G 1}, we can reflect $\bd_1\widehat{B}^+\cong \mb S^2_+$ across $P=\{v: v_3=0\}$ to get $\bd\widehat{B}:=\exp_{G\cdot p}^{\perp,-1}(\bd B)\cap N_{p}(G\cdot p) \cong \mb S^2$. 
			Hence, the region $\widehat F_\Gamma^+\subset \widehat B^+$ enclosed by $\widehat \Gamma^+$ is contained in one half of $\bd \widehat B$, and thus $\mc H^2(\widehat F_\Gamma^+) = \min\{ \mc H^2(\widehat F_\Gamma^+), \mc H^2(\bd\widehat B \setminus \widehat F_\Gamma^+ )\}$. 
			Hence, for $\widehat F_\Gamma:=\exp_{G\cdot p}^{\perp,-1}(F_\Gamma)\cap N_{p}(G\cdot p)$, we can use the isoperimetric inequality on $ \bd \widehat F_\Gamma^+\subset \bd\widehat B$ to get $\mc H^2(\widehat F_\Gamma) = 2\mc H^2(\widehat F_\Gamma^+)\leq 2 c_{iso}(\mc H^{1}(\bd \widehat F_\Gamma^+))^{2} \leq c_{iso}(\mc H^{1}(\bd \widehat F_\Gamma))^{2}$ for some $c_{iso}>0$. 
		\item[(ii)] In the case of Proposition \ref{Prop: local structure of M/G}(2.a), let $\widehat B_{n.p.}$ be the union of non-principal $G_p$-orbits in $N_p(G\cdot p)$, and thus $\bd_0 \widehat B^*\subset (\widehat B_{n.p.})^*$, where $\widehat{B}:=\exp_{G\cdot p}^{\perp,-1}(B)\cap N_{p}(G\cdot p)$. 
			Then by Lemma \ref{Lem: local structure of M/G with codim 3}, the dimension assumption implies that $N_p(G\cdot p)$ can be equally divided by $\widehat B_{n.p.}$ into $k$ parts $\{\Omega_i\}_{i=1}^k$ for some $k\in \{2,3,...\}$. 
			Since $\Gamma^*\cap \bd _0 B^*=\emptyset$ and the disk $F_\Gamma^*$ satisfies $F_\Gamma^*\cap \bd_0 B^*=\emptyset$, we know each $\interior(\Omega_i)$ contains a component $\widehat \Gamma_i$ of $\widehat \Gamma:= \exp_{G\cdot p}^{\perp,-1}(\Gamma)\cap N_p(G\cdot p)$, and $\widehat{F}_\Gamma:= \exp_{G\cdot p}^{\perp,-1}(F_\Gamma)\cap N_p(G\cdot p)$ has a component $\widehat F_\Gamma^i\subset \interior(\Omega_i)$ enclosed by $\widehat \Gamma_i$. 
			In particular, $\frac{1}{k}\cdot\mc H^{n-\dim(G\cdot p)}(\widehat F_\Gamma) =\mc H^{n-\dim(G\cdot p)}(\widehat F_\Gamma^1)\leq \mc H^{n-\dim(G\cdot p)}(\bd \widehat B\setminus  \widehat F_\Gamma^1)$, where $\bd\widehat{B}:=\exp_{G\cdot p}^{\perp,-1}(\bd B)\cap N_{p}(G\cdot p) \cong \mb S^{n-\dim(G\cdot p)}$. 
			Thus, the isoperimetric inequality can be applied to $\widehat F_\Gamma^1$ in $\bd\widehat{B}$. 
	\end{itemize}
	Therefore, in both cases, we can further use \eqref{Eq: area in slice} to find a constant $C=C(M,G,p)>0$ so that \eqref{Eq: MSY thm2 assumption - isoperimetric} and \eqref{Eq: MSY thm2 assumption - disk instead of cylinder} are simultaneously valid. 
\end{remark}

Before we prove Theorem \ref{Thm: MSY thm2 gamma reduction}, we need the following lemma, which is analogous to \cite{li2015general}*{Lemma 7.11}, to construct the approximating $G$-isotopy. 
The constructions in \cite{li2015general} cannot be applied directly in $M^*$ since the $G$-isotopy cannot be reduced to an outward isotopy (cf. \cite{li2015general}).

\begin{lemma}\label{Lem: G-isotopy approximation}
	Let $G\cdot p, U,B$ be as in Theorem \ref{Thm: MSY thm2 gamma reduction}, and $\Gamma$ be a $G$-hypersurface in $\bd B$ so that $\Gamma^*$ is a simple curve on $\closure(\bd_1 B^*)$ that is either closed or has endpoints on $\bd_0 B^*$. 
	Suppose $\bd B\setminus \Gamma$ has two $G$-components, and $F\subset \bd B$ is a $G$-component of $\bd B\setminus\Gamma$ satisfying Theorem \ref{Thm: MSY thm2 gamma reduction}(iii). 
	Suppose $D\in \mc M^G_0$ is a $G$-connected $G$-hypersurface in $U$ with $\genus(D^*)=0$, $\bd D=\bd F=\Gamma$, and $D\cap F=\emptyset$. 
	Assume also that $D\cup F$ bounds a unique compact $G$-set $K\subset M$, i.e. $\bd K=D\cup F$. 
	
	Then for any $\alpha>0$, there exists a $G$-isotopy $\{\varphi_t\}_{t\in [0,1]}\in \mk{Is}^G(U)$ supported in a $G$-neighborhood of $K$ so that every $\varphi_t$ is the identity on $\Gamma$ and 
	\begin{align}\label{Eq: MSY thm2 - isotopy approximate}
		\mc H^n(\varphi_1(D) \triangle F ) <\alpha.
	\end{align}
\end{lemma}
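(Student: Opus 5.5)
The plan is to build the isotopy in the orbit space and lift it, as announced in the outline: first construct an orbit-type-preserving isotopy of $M^*$ that squeezes $D^*$ onto $F^*$, then lift it by Schwarz's isotopy lifting theorem \cite{schwarz1980lifting} to a $G$-isotopy of $M$. Area is measured with the reduction formula \eqref{Eq: area in orbit space}, $\mc H^n(\Sigma)=\mc H^2_{\wti g_{_{M^*}}}(\Sigma^*)$, so it suffices to make $\varphi_1(D)^*$ close to $F^*$ in $\wti g_{_{M^*}}$-area.

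\textbf{Step 1: topology of $K^*$.} Since $U\subset\subset M\setminus B_\epsilon(\mc S)$, $U^*$ is a topological $3$-manifold, possibly with boundary $\bd_0U^*$ in cases (1) and (2.a) of Proposition \ref{Prop: local structure of M/G}. The sphere $D^*\cup F^*$ (glued along $\Gamma^*$), or in the half-ball case the half-disk union $D^*\cup F^*$ together with a portion of $\bd_0U^*$, bounds $K^*$. Because $\genus(D^*)=0$ and $D^*$ is connected, I will show that $K^*$ is homeomorphic to a $3$-ball, or to a half-ball whose flat part lies in $\bd_0U^*$. The argument is Alexander's theorem in the manifold $U^*$, combined with the local models of Propositions \ref{Prop: local structure of Sigma/G 1} and \ref{Prop: local structure of Sigma/G 2}. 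Here $D\in\mc{LB}^G$ excludes cases (ii), (2) and (4), so $D^*$ meets $\bd_0U^*$ orthogonally, along arcs or at isolated cone points.

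Condition \eqref{Eq: MSY thm2 assumption - disk instead of cylinder} is essential in this step. If $\Gamma^*$ is a closed curve in $\bd_1B^*$ and $F^*$ were the annulus, $K^*$ would have to contain a piece of $\bd_0B^*$, and pushing $D$ onto $F$ would require crossing strata. The condition forces $F^*$ to be a disk, so $K^*$ is a genuine ball meeting $\bd_0U^*$ at most in a disk or arcs. When $D^*$ does contain holes along $\bd_0U^*$ (a half-disk with holes), these holes are filled by faces in $\bd_0$, and the region is still a half-ball.

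\textbf{Step 2: isotopy in $M^*$.} Choose a product structure $K^*\cong F^*\times[0,1]$, with $F^*\times\{1\}$ corresponding to $D^*$, that respects the stratification. This means each stratum ($\bd_0$ faces, edges, cone spine) is a union of fibers $\{x\}\times[0,1]$, which is possible because in a half-ball or wedge model the strata are flat and $K^*$ meets them in a collar-compatible way. Then take the isotopy $\psi_t$ that compresses $K^*$ toward $F^*\times[0,\eta]$ along the fibers, extended to be the identity outside a small neighborhood of $K^*$ and fixed on $\Gamma^*$. It preserves each orbit-type stratum, so \cite{schwarz1980lifting} lifts it to $\varphi_t\in\mk{Is}^G(U)$, which is supported near $K$ and fixes $\Gamma$.

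\textbf{Step 3: area estimate.} For small $\eta$, $\varphi_1(D)^*$ is the graph over $F^*$ of height $\le\eta$, plus a thin band near $\Gamma^*$. Since $\wti g_{_{M^*}}$ is continuous and degenerates only on $\bd_0$, dominated convergence gives $\mc H^2_{\wti g}(\varphi_1(D)^*)\to\mc H^2_{\wti g}(F^*)$. Hence \eqref{Eq: MSY thm2 - isotopy approximate} holds, because the symmetric difference is contained in this graph, whose area differs little from that of $F^*$.

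The main obstacle is Step 1/2: proving that $K^*$ admits a stratification-compatible product structure. This needs a case analysis over the local models (1), (2.a), (2.b). In case (2.b) the spine is an interior curve and the product can be arranged transverse to it. The point to check is that $D^*$ cannot wrap around $\bd_0$ in a way that makes $K^*$ non-product; this is exactly where the disk-type choice of $F$ and the genus-zero assumption enter.
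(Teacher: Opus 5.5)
Your overall frame (a stratum-preserving isotopy of $U^*$, lifted by \cite{schwarz1980lifting}, with area measured through \eqref{Eq: area in orbit space}) is the paper's. The argument inside it, however, has a genuine gap: the stratification-compatible product $K^*\cong F^*\times[0,1]$ of Step 2 does not exist in configurations the lemma must cover, and Step 1 is false.

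\textbf{Step 1 fails.} Let $\Gamma^*$ be an arc, and let a closed component of $\bd_0D^*$ encircle the half-disk bounded by $\bd_0F^*$ and the arc component of $\bd_0D^*$. Then $D^*$ is an annulus, and $\bd K^*=D^*\cup F^*\cup(\bd K^*\cap\bd_0U^*)$ has Euler characteristic $0$. So $K^*$ is a solid torus, although $\genus(D^*)=0$ and $F^*$ is a half-disk.

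\textbf{Step 2 fails even when $K^*$ is a ball,} because the strata need not be unions of fibers. In case (1), the disks of $\bd_0U^*$ capping the holes of $D^*$ are disjoint from a disk-type $F^*$. Compressing toward $F^*$ therefore drags them off $\bd_0$. In case (2.b), a spine arc in $K^*$ can have both endpoints on $D^*$. Taking fibers transverse to the spine, as you propose, moves spine points into $M^{prin}$, so the isotopy no longer preserves orbit types and cannot be lifted.

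\textbf{The disk-type hypothesis is conditional.} You read \eqref{Eq: MSY thm2 assumption - disk instead of cylinder} as unconditional, but Theorem \ref{Thm: MSY thm2 gamma reduction}(iii) imposes it only when $U\setminus M^{prin}$ contains a codimension-$3$ orbit. Otherwise $F^*$ may be the annulus between $\Gamma^*$ and $\bd_0B^*$. If $D^*$ is then a disk, $K^*$ is a ball, whereas a product over an annulus collapsed along $\Gamma^*$ is a solid torus.

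\textbf{The missing idea.} You should not try to push all of $D^*$ onto $F^*$. Instead, leave pieces near the non-principal strata whose preimages have small $\mc H^n$-measure; this is how the paper argues.
\begin{itemize}
\item In case (2.b), the isotopy is the identity near $\Gamma^*$ and the spine, and one uses $\mc H^n(\bd B_{2\delta}(\Gamma\cup(M\setminus M^{prin})))\to0$.
\item Holes of $D^*$ on $\bd_0$ are removed by shrinking the disks they bound inside the stratum and thinning the necks.
\item In the annulus case and in the arc cases, $D^*$ is pushed by a vertical contraction into a thin layer over $\bd K^*\cap\bd_0U^*$. In the arc cases this is followed by a horizontal contraction toward a curve in $\bd_0U^*$.
\end{itemize}
The leftover has small area either because $\dim(U\setminus M^{prin})<n$, or because it lies near a curve whose preimage has codimension at least $2$ in $M$. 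This is exactly where the conditional disk-type hypothesis enters: it guarantees $\dim(U\setminus M^{prin})<n$ whenever $F^*$ is allowed to be an annulus.

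\textbf{Step 3 does not give the estimate.} A graph of height $\eta$ over $F^*$ that is disjoint from $F^*$ has $\mc H^n(\varphi_1(D)\triangle F)\approx 2\mc H^n(F)$, not a small quantity. The bulk of $D$ must be pushed onto $F$ itself, not merely close to it.
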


\begin{proof}
	{\bf Step 1.}
	If $G\cdot p\subset M^{prin}$, then $D^*$ and $F^*$ are diffeomorphic to disks, and $K^*$ is a closed $3$-ball in the smooth manifold $U^*\subset (M^{prin})^*$. 
	Hence, $D^*$ and $F^*$ are smooth isotopic in $U^*$ with $\Gamma^*$ fixed, which can be lifted to a $G$-isotopy in $U$ by \cite{schwarz1980lifting}*{Corollary 2.4}. 
    
	Suppose next $p\in M\setminus M^{prin}$. 
	
	\medskip
	{\bf Step 2.}
	If $U$ and $B$ are ball-type (Proposition \ref{Prop: local structure of M/G}(2.b)), 
	then $P(p)\cong \mb R$, and $F^*$ is a closed disk. 
	After changing coordinates on the topological manifold $U^*$, we can assume that 
	\begin{itemize}
		\item $U^*$ is an open ball of radius $2$ in $\mb R^3$ at the origin;
		\item $B^*\subset U^*$ is the closed unit ball at the origin;
		\item the $x_3$-axis is the non-principal orbit type stratum in $U^*$ (since $P(p)\cong \mb R$); 
	\end{itemize}
	Note that in this case $\Gamma^*$ contains no point on $(M\setminus M^{prin})^*$ since otherwise Proposition \ref{Prop: local structure of Sigma/G 2}(4) would imply that $\Gamma$ does not separate $\bd B$ into two $G$-components, yielding a contradiction. 
	Since $D^*\subset U^*$ has genus zero and a connected boundary $\Gamma^*$, we see from Proposition \ref{Prop: local structure of Sigma/G 2} that $D^*$ is also a disk that intersects the $x_3$-axis orthogonally at finitely many points. 
	In particular, $K^*$ is a closed $3$-ball. 
	Therefore, we can construct a smooth isotopy $\{\phi_t\}_{t\in [0,1]}$ (in the sense of \cite{schwarz1980lifting}) that is the identity in a $\delta$-neighborhood of $\Gamma^*\cup \{x_1=x_2=0\}$ such that $\pi^{-1}(\phi_1(D^*))\triangle F$ restricted to $U\setminus B_{2\delta}(\Gamma\cup(M\setminus M^{prin})) $ has area less than $\alpha/2$. 
	Note that $\mc H^n(\bd B_{2\delta}(\Gamma\cup(M\setminus M^{prin})))\to 0 $ as $\delta\to 0$. 
	Hence, by taking $\delta>0$ small enough and applying \cite{schwarz1980lifting}*{Corollary 2.4}, $\{\phi_t\}$ can be lifted to a $G$-isotopy $\{\varphi_t\}$ in $U$ with $\Gamma $ fixed and \eqref{Eq: MSY thm2 - isotopy approximate} satisfied. 
	
	\medskip
	Suppose next that $U$ and $B$ are half-ball-type (Proposition \ref{Prop: local structure of M/G}(1) and (2.a)). 
	
	{\bf Step 3.}
	We first assume that $\dim(P(p))=2$, i.e. $U,B$ satisfy Proposition \ref{Prop: local structure of M/G}(1), and thus $P(p)\cong \mb R^2$. 
	We separate the proof into two cases.
	
	\medskip
	{\it Case 1.} $\Gamma^*\subset \bd_1 B^*$ is a simple closed curve. 
	
	In this case, $F^*$ is either a disk or an annulus (Remark \ref{Rem: compare disk instead cylinder}). 
	
	Suppose first that $F^*$ is a disk. 
	Then by changing coordinates on the topological manifold $U^*$, we can assume that 
	\begin{itemize}
		\item $U^*=\{x\in \mb R^3: |x|<2, x_3\geq 0\}$ is a relative open half-ball of radius $2$ at the origin;
		\item $B^*=\{x\in \mb R^3: |x|\leq 1, x_3\geq 0\}$ is the closed unit half-ball at the origin;
		\item the $\{x_3=0\}$-plane is the non-principal orbit type stratum in $U^*$ (since $P(p)\cong \mb R^2$); 
		\item $\Gamma^*=\{(x_1,x_2,x_3)\in\mb R^3: ~x_3=1/2, ~x_1^2+x_2^2=3/4\}$;
		\item $F^*=\{(x_1,x_2,x_3)\in \mb R^3: ~|x|=1, ~x_3\geq 1/2\}$. 
	\end{itemize}
	Note that $D^*$ is a $0$-genus surface in $U^*$ with boundary $\bd D^*=\bd_1 D^*\cup \bd_0 D^*$, where $\bd_1 D^*= \bd D^*\cap (M^{prin})^* =\Gamma^*$, and $\bd_0 D^*\subset \{x_3=0\}$ is a disjoint union of simple closed curves $\{\Gamma^*_i\}_{i=1}^m$. 
	Let $D_i^*\subset \{x_3=0\}$ be the disk enclosed by $\Gamma_i^*$. 
	Then $D^*\cup F^*\cup(D_1^*\cup\cdots\cup D_m^*)$ is homeomorphic to a $2$-sphere, and thus $K^*$ is homeomorphic to a $3$-ball with $\bd_1 K^*=D^*\cup F^*$ and $\bd_0 K^*=\cup_{i=1}^m D_i^*$. 
	Hence, up to a smooth isotopy in $U^*$ (in the sense of \cite{schwarz1980lifting}) that preserves every orbit type stratum, we can make $\cup_{i=1}^m D_i^*\subset \mb B^2_{1/2}(0)\subset \{x_3=0\}$, and take $D^*$ as the union of $m$ cylinders $\cup_{i=1}^m(\bd D_i^*\times [0, 1/2])$ and a disk with $m$ holes $\{|x|\leq 1, x_3=1/2\}\setminus (\cup_{i=1}^m D_i^*\times \{1/2\})$. 
	Using smooth isotopies in $U^*$ (in the sense of \cite{schwarz1980lifting}), we then further shrink the cylinder $\cup_{i=1}^m(D_i^*\times [0, 1/2])$ sufficiently thin, and push the disk $\{|x|\leq 1, x_3=1/2\}$ onto $F^*$. 
	Note that all these isotopies can be the identity near $\Gamma^*$. 
	Hence, we can apply \cite{schwarz1980lifting}*{Corollary 2.4} to obtain the desired $G$-isotopy with \eqref{Eq: MSY thm2 - isotopy approximate} satisfied. 
	
	Consider next that $F^*$ is a cylinder, i.e. $F^*=\{x\in \mb R^3: |x|=1, 0\leq x_3\leq 1/2\}$ in the above coordinates. 
	Then $D^*$ is still a $0$-genus surface with $\bd_1 D^*:= \bd D^*\cap (M^{prin})^* =\Gamma^*$, and $\bd_0 D^*\subset \{x_3=0\}$ is a disjoint union of simple closed curves $\{\Gamma^*_i\}_{i=1}^m$. 
	As in \cite{li2015general}*{Lemma 7.11}, all except possibly one $\Gamma_i^*$ bound disks in $K^*$. 
	Using the neck-shrinking $G$-isotopy as before, we can eliminate all the $\Gamma_i^*$ that bound a disk, and assume w.l.o.g. that $m=1$, and $\Gamma_1^*$ together with $\Gamma_0^*:=\bd_0F^*$ bounds a connected genus zero surface $\Lambda^*\subset \{x_3=0\}= (M\setminus M^{prin} )^*$. 
	After a change of coordinates, we can assume that $K^*=\Lambda^*\times [0, 1/2]$, $F^*=\Gamma_0^*\times [0, 1/2]$, and $D^*$ is the union of  $(\bd \Lambda^*\setminus\Gamma_0^*)\times [0,1/2]$ (a part of a cylinder) and $\Lambda^* \times \{1/2\}$ (a disk with holes). 
	For a small $\delta>0$, let $\{\phi_t\}_{t\in [0,1]}$ be the vertical contraction 
    \[(x_1,x_2,x_3)\mapsto (x_1,x_2, (1-(1-\delta)t)x_3)\] 
    with a cut-off at $\bd U^*\cup (\Gamma_0^*\times [0,\infty))$ so that $\{\phi_t\}_{t\in [0,1]}$ is a smooth isotopy in $U^*$ (in the sense of \cite{schwarz1980lifting}) and is the identity on $F^*\subset  (\Gamma_0^*\times [0,\infty))$ (see \cite{li2015general}*{P. 311} for the cut-off construction).  
	Note that $\phi_t$ cannot push the points in $\{x_3>0\}$ into $\{x_3=0\}$ since the smooth isotopy in $M^*$ preserves orbit-type strata (cf. \cite{schwarz1980lifting}*{Corollary 1.7(2)}). 
	Nevertheless, by our assumption in Theorem \ref{Thm: MSY thm2 gamma reduction}(iii) for \eqref{Eq: MSY thm2 assumption - disk instead of cylinder}, we know $\dim (\pi^{-1}((x_1,x_2,0)) ) < n+1-\Cohom(G)=n-2$, which implies $\dim(U\setminus M^{prin})=2+\dim (\pi^{-1}(x_1,x_2,0) ) <n$. 
	Since $\Lambda^*\times \{\delta/2\}$ can be seen as a subset of $( \bd B_{\delta/2}(U\setminus M^{prin}) )^*$, we know $\mc H^n(\pi^{-1}(\Lambda^*\times \{\delta/2\})) < \alpha/2$ for $\delta>0$ small enough. 
	After lifting $\{\phi_t\}$ to a $G$-isotopy by \cite{schwarz1980lifting} with $\delta>0$ small, we can $G$-isotope $D$ arbitrarily close to $F$ as in \cite{li2015general}*{Lemma 7.11}. 
	
	\medskip
	{\it Case 2.} $\Gamma^*\subset \closure(\bd_1 B^*)$ is a Jordan arc with free boundary on $\bd_0 B^*$.  
	
	In the coordinates in Case 1, we can assume for convenience that (up to a smooth isotopy in $U^*$), $\Gamma^*=\{x\in \mb R^3: |x|=1, x_1=0, x_3\geq 0\}\subset \closure(\bd_1 B^*)=\{x: |x|=1, x_3\geq 0\}$ and $F^*= \{x\in \closure(\bd_1 B^*): x_1\geq 0\}$. 
	Let $\Gamma_0^*=F^*\cap \{x_3=0\}$. 
	Then $D^*\cap \{x_3=0\}$ is the disjoint union of a Jordan arc $\Gamma_1^*$ and a (possibly empty) finite collection of simple closed curves $\{\Gamma_i^*\}_{i=2}^m$ so that $\bd \Gamma_1^*=\bd\Gamma_0^*$. 
	By the assumptions, there is a $0$-genus surface $\Lambda^*\subset \{x_3=0\}$ with $\bd\Lambda^*=\cup_{i=0}^m\Gamma_i^*$ so that $\bd K^*=D^*\cup F^*\cup \Lambda^*$. 
	For $2\leq i\leq m$, if $\Gamma_i^*$ encloses a disk in $\Lambda^*$, then we can apply the neck-shrinking $G$-isotopy as in Case 1, and thus we assume without loss of generality that $\Lambda^*$ is connected. 
	Denote by $D_1^*$ and $\{D_i^*\}_{i=2}^m$ the disks in $\{x_3=0\}$ enclosed by $\Gamma_0^*\cup\Gamma_1^*$ and $\{\Gamma_i^*\}_{i=2}^m$ respectively. 
	
	If $\Gamma_0^*\cup\Gamma_1^*$ is the outermost boundary of $\Lambda^*$, i.e. $\Lambda^*\subset D_1^*$. 
	Then we apply the neck-shrinking $G$-isotopy as in Case 1 to every other $\Gamma_i$, $i\geq 2$, and thus assume without loss of generality that $\Lambda^*=D_1^*$. 
	By changing coordinates, we can rewrite $F^*=\{x\in\mb R^3: x_1=0, x_3\geq 0, |x|\leq 1\}$ and $\Lambda^*=\{x\in \mb R^3: x_3=0, x_1\geq 0, |x|\leq 1\}$ so that $D^*$ is a graph over $\Lambda^*$. 
	For a small $\delta_1>0$, we apply the vertical contraction $\{\phi^{(1)}_t\}_{t\in[0,1]}$ as in Case 1 with $\bd U^*\cup F^*$ fixed so that
	\begin{itemize}
		\item in $\{x_3\geq \delta_1\}$, $\phi^{(1)}_1(D^*)$ is sufficiently close to $F^*$, and
		\item in $\{0\leq x_3\leq \delta_1\}$, $\phi^{(1)}_1(D^*)$ is the union of a cylinder $E^*$ over $\bd\Lambda^*$ with height $\leq \delta_1$, and a graph over $\Lambda^*$ with height $\leq \delta_1$.
	\end{itemize}
	Then for $\delta_2>0$ to be determined later, let $\{\phi^{(2)}_t\}_{t\in [0,1]}$ be the horizontal contraction 
    \[(x_1,x_2,x_3)\mapsto (x_1, (1-(1-\delta_2)t) x_2,x_3) \]
    along the $x_2$-axis with a cut-off so that $\{\phi^{(2)}_t\}$ is supported in the relative open set $V^*:=\{ x\in \mb R^3: 0\leq x_3<2\delta_1, x_1> 2\delta_1 \}$. 
	Hence, $\{\phi^{(2)}_t\}$ can shrink $\Lambda^*\cap V^*$ and $\phi^{(1)}_1(D^*)\cap V^*$ into the $\delta_2$-neighborhood of the $x_1$-axis. 
	Since the pre-image of the $x_1$-axis in $M$ has codimension at least $2$, we know $\pi^{-1}(\phi^{(2)}_1\circ \phi^{(1)}_1(D^*)\cap V^*)$ has area $\leq \alpha/3$ by taking $\delta_2$ small enough. 
	After taking $\delta_1$ even smaller, the pre-image of $\phi^{(2)}_1\circ \phi^{(1)}_1(D^*)\cap \{x\in \mb R^3: 0\leq x_3\leq 2\delta_1, x_1\leq 2\delta_1\}$ also has area $\leq \alpha/3$. 
	Together, $\pi^{-1}(\phi^{(2)}_1\circ \phi^{(1)}_1(D^*)\cap \{0\leq x_3\leq 2\delta_1\})$ has area $\leq 2\alpha/3$. 
	Hence, we can concatenate $\phi^{(1)}_t$ and $\phi^{(2)}_t$, and lift the resulting isotopy to a desired $G$-isotopy in $U$ by \cite{schwarz1980lifting}. 
	
	If $\Gamma_0^*\cup\Gamma_1^*$ is not the outermost boundary of $\Lambda^*$, i.e. $D_1^*\subset \Lambda^*$. 
	Suppose $\Gamma_2^*$ is the outermost boundary. 
	Then, using the neck-shrinking $G$-isotopy as in Case 1 to every other $\Gamma_i$, $i\geq 3$, we can assume without loss of generality that $\Lambda^*=D_2^*\setminus \interior(D_1^*)$. 
	By changing coordinates, we can rewrite $F^*=\{x\in\mb R^3: x_1=0, x_3\geq 0, |x|\leq 1\}$ and $\Lambda^*=\{x\in \mb R^3: x_3=0, |x|\leq 3/2\}\setminus \{x\in \mb R^3: x_3=0, x_1\geq 0, |x|\leq 1\}$ so that $D^*$ is a graph over $\Lambda^*$. 
	As above, we first apply the isotopy $\{\phi^{(1)}_t\}$ of vertical contraction along the $x_3$-axis (with $\delta_1>0$) that is the identity near $F^*$, and then apply the isotopy $\{\phi^{(2)}_t\}$ of horizontal contraction along the $x_2$-axis (with $\delta_2>0$) that is supported in $B_{2\delta_1}(\{x_3=0\})\setminus B_{2\delta_1}(F^*)$. 
	The desired $G$-isotopy is obtained by taking $\delta_1,\delta_2>0$ small enough and applying \cite{schwarz1980lifting} to the concatenated isotopy $\phi^{(2)}* \phi^{(1)}$. 
	
	\medskip
	{\bf Step 4.} 
	Finally, consider the case that $\dim(P(p))=1$, i.e. $U,B$ satisfy Proposition \ref{Prop: local structure of M/G}(2.a).  
	Thus, $P(p)\cong \mb R$, and by changing coordinates on the topological manifold $U^*$, we assume 
	\begin{itemize}
		\item $U^*=\{x\in \mb R^3: |x|<2, x_1\geq 0, x_3\geq 0\}$ is a relative open wedge in $\mb B^3_2(0)$;
		\item $B^*=\{x\in \mb R^3: |x|\leq 1, x_1\geq 0, x_3\geq 0\}$ is the closed wedge in $\closure(\mb B^3_1(0))$;
		\item the $x_2$-axis is the $(G_p)$-orbit type stratum in $U^*$ (since $P(p)\cong \mb R$); 
		\item $\bd_- B^*=\{x\in \mb R^3: |x|\leq 1, x_1= 0, x_3> 0\}$;
		\item  $\bd_+ B^*= \{x\in \mb R^3: |x|\leq 1, x_1> 0, x_3= 0\}$.
	\end{itemize}
	Note that $\bd_0 B^*=\bd_+ B^*\cup\bd_-B^*\cup \{x_1=x_3=0\}$ is the union of non-principal orbit type strata in $B^*$. 
	Since the wedge $B^*$ is homeomorphic to the half-ball $\{x\in \mb R^3: |x|\leq 1, x_3\geq 0\}$ with $\bd_0 B^*$ homeomorphic to $\{x: |x|\leq 1,x_3=0\}$, the proof in {\bf Step 3} can be adapted to this case. 
	
	To be exact, if $\Gamma^*$ is a simple closed curve and $F^*$ is a disk, then the neck-shrinking construction in {\bf Step 3} Case 1 would carry over since every neck meets $\bd_0 B^*$ orthogonally (Proposition \ref{Prop: local structure of Sigma/G 2}). 
	If $\Gamma^*$ is a simple closed curve and $F^*$ is an annulus, we can use the radial contraction $(x_1,x_2,x_3)\mapsto ((1-(1-\delta_1)t)x_1, x_2, (1-(1-\delta_1)t)x_3)$ (with an appropriate cut-off) in place of the vertical contraction. 
	Combined with the dimension assumption in Theorem \ref{Thm: MSY thm2 gamma reduction} before \eqref{Eq: MSY thm2 assumption - disk instead of cylinder}, the proof in {\bf Step 3} Case 1 would also carry over. 
	
	If $\Gamma^*$ is a Jordan arc with free boundary on $\bd_0 B^*$, then the endpoints of $\Gamma^*$ either lie in two different $\bd_\pm B^*$, or both lie in one of $\bd_\pm B^*$. 
	In the first case, we use a smooth isotopy in $U^*$ to write $F^*=\{x\in B^*: x_2=0\}$. 
	Note that the vertical contraction and the horizontal contraction in {\bf Step 3} Case 2 can be combined into our radial contraction in the previous paragraph. 
	Hence, the proof in {\bf Step 3} Case 2 would carry over. 
	In the second case, we change the coordinates on $U^*$ into the form in {\bf Step 3}, i.e. $U^*=\{|x|<2, x_3\geq 0\}$ and $B^*=\{|x|\leq 1, x_3\geq 0\}$, so that 
	\begin{itemize}
		\item the $x_2$-axis is the $(G_p)$-orbit type stratum in $U^*$ (since $P(p)\cong \mb R$); 
		\item $\bd_- B^*=\{x\in \mb R^3: |x|\leq 1, x_1< 0, x_3= 0\}$;
		\item  $\bd_+ B^*= \{x\in \mb R^3: |x|\leq 1, x_1> 0, x_3= 0\}$.
	\end{itemize}
	Then, by further changing coordinates, we can rewrite $F^*=\{x\in B^*: x_1=c\}$ for some $c\in (-1,1)\setminus\{0\}$. 
	Note that the vertical/horizontal contractions in {\bf Step 3} Case 2 still preserve the orbit type strata and give us a smooth isotopy in $U^*$ (\cite{schwarz1980lifting}*{Proposition 3.1}). 
	Therefore, the proof in {\bf Step 3} Case 2 would carry over again. 
\end{proof}

\begin{remark}\label{Rem: no special and approximation}
    We mention that the assumption \eqref{Eq: MSY thm2 assumption - disk instead of cylinder} is used in {\bf Step 3} and {\bf Step 4}, where a disk may not isotopically approximate an annulus in $B_\rho^*([p])$ if $G\cdot p$ is a special exceptional orbit in $M$. 
    One can remove the assumption \eqref{Eq: MSY thm2 assumption - disk instead of cylinder} if there is no special exceptional orbit in $M$. 
\end{remark}

We can now prove Theorem \ref{Thm: MSY thm2 gamma reduction}. 

\begin{proof}[Proof of Theorem \ref{Thm: MSY thm2 gamma reduction}]
	The proof is almost the same as in \cite{meeks82exotic}*{Theorem 2}\cite{li2015general}*{Theorem 7.10}, and we only point out the modifications. 
	Firstly, for the same reason as in \cite{meeks82exotic}*{Remark 3.14}, we can assume $\Sigma_0=\emptyset$. 
	Then we can argue by induction on $J$. 
	In the inductive argument, since $\delta,\rho_0\in (0,1)$ and $(\delta\rho_0)^n \leq (\delta\rho_0)^{n-\dim(G\cdot p)}$, we can use Remark \ref{Rem: MSY 3.6 irreducible} and Lemma \ref{Lem: isoperimetric lemma in MSY} in place of \cite{meeks82exotic}*{Remark 3.6, Lemma 1} respectively. 
	Using $G$-isotopies, we can construct a $G$-hypersurface $\widehat \Sigma_*$ as in \cite{meeks82exotic}*{P. 632} so that the statements in \cite{meeks82exotic}*{P. 632, (i)-(iii)} remain valid except for `$\widehat \Sigma_*\in J_U^G(\Sigma)$'. 
	Nevertheless, we can use Lemma \ref{Lem: G-isotopy approximation} as in \cite{li2015general}*{(7.3)} to obtain $E(\widehat \Sigma_* ) < E(\Sigma) +\mc H^n(F_J) - \mc H^n(D) +\epsilon_0$, i.e. \cite{meeks82exotic}*{P. 632, (iii)'}, where we replaced the notations $q,\epsilon$ by $J,\epsilon_0$ respectively. 
	Finally, using Lemma \ref{Lem: G-isotopy approximation} in place of \cite{li2015general}*{Lemma 7.11}, the proof in \cite{li2015general}*{P. 313-314} can be taken almost verbatim for our $G$-equivariant objects. 
	Indeed, we only need to take $\bd M=\emptyset$, compute the genus in $M^*$, and replace the notations $\mk {Is}_{out},\mc H^2, p,q,\epsilon,A$ by $\mk {Is}^G, \mc H^n, I,J,\epsilon_0, B$ respectively. 
\end{proof}

At the end of this section, we show the following lemma that is analogous to Almgren-Simon \cite{almgren79plateau}*{Lemma 2, Corollary 1}, Meeks-Simon-Yau \cite{meeks82exotic}*{Lemma 2}, and Li \cite{li2015general}*{Lemma 7.12}. 
\begin{lemma}\label{Lem: switch lemma}
    Let $p\in M\setminus \mc S$ and $B=\closure(B_r(G\cdot p))$ be a closed $G$-neighborhood of $G\cdot p$ with $r\in (0,\inj(G\cdot p))$. 
    Suppose $\Sigma_1,\dots,\Sigma_R\in\mc {LB}^G$ are $G$-connected $G$-hypersurfaces in $B$ with $\genus(\Sigma_i^*)=0$, $\Sigma_i\setminus\bd\Sigma_i\subset B\setminus\bd B$, $\bd\Sigma_i\subset\bd B$, so that for any $i\neq j \in \{1,\dots,R\}$, $\bd\Sigma_i\cap\bd\Sigma_j=\emptyset$, and either $\Sigma_i\cap\Sigma_j=\emptyset$ or $\Sigma_i$ is transversal to $\Sigma_j$. 
    
    Then there exist pairwise disjoint $\wti\Sigma_1,\dots,\wti\Sigma_R\in\mc {LB}^G$ in $B$ with $\genus(\wti\Sigma_i^*)=0$, $\wti\Sigma_i\setminus\bd\wti\Sigma_i\subset B\setminus\bd B$, $\bd\wti\Sigma_i=\bd\Sigma_i$ and $\mc H^n(\wti\Sigma_i)\leq \mc H^n(\Sigma_i)$, $i=1,\dots,R$.  
    Moreover, if $\{\Sigma_i\}_{i=1}^R\subset\mc D^G$, then $\{\wti\Sigma_i\}_{i=1}^R\subset\mc D^G$. 
\end{lemma}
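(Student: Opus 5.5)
We will follow the cut-and-paste argument of Almgren--Simon \cite{almgren79plateau}*{Lemma 2, Corollary 1} and Meeks--Simon--Yau \cite{meeks82exotic}*{Lemma 2}, carried out on the orbit space $B^*$ with areas computed via \eqref{Eq: area in orbit space}. Since $p\notin\mc S$, $B^*$ is a topological $3$-ball or half-ball (Proposition \ref{Prop: local structure of M/G}(0)--(2)). Each $\Sigma_i^*$ is a genus-zero surface with boundary in $\closure(\bd_1B^*)$, possibly with free boundary on $\bd_0B^*$. The surfaces are locally $G$-boundary-type, so by the remark after Definition \ref{Def: local G-boundary} the configurations of Proposition \ref{Prop: local structure of Sigma/G 1}(ii) and Proposition \ref{Prop: local structure of Sigma/G 2}(2)(4) do not occur. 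Hence transversal $G$-intersections project to transversal intersections of the quotient surfaces, and $\Sigma_i\cap\Sigma_j$ is a finite union of $G$-components. Each component $\gamma$ satisfies that $\gamma^*$ is either a closed curve in the interior or an arc with endpoints on $\bd_0B^*$.

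We argue by induction on the total number $N$ of $G$-components of $\bigcup_{i<j}\Sigma_i\cap\Sigma_j$. Given an intersection $G$-component $\gamma\subset\Sigma_i\cap\Sigma_j$, genus zero of $\Sigma_i^*$ and $\bd\Sigma_i\cap\bd\Sigma_j=\emptyset$ imply that $\gamma^*$ separates $\Sigma_i^*$. Let $E_i\subset\Sigma_i$ be the $G$-component of $\Sigma_i\setminus\gamma$ not containing $\bd\Sigma_i$, and define $E_j\subset\Sigma_j$ likewise. Among all such pieces over all pairs, we pick an \emph{innermost} one: an $E_i$ whose interior meets no other $\Sigma_k$. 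This is possible because the pieces $E$ are partially ordered by inclusion in each $\Sigma_i^*$, so a minimal one exists. We then exchange pieces. Without loss of generality suppose $\mc H^n(E_i)\le\mc H^n(E_j)$, and replace $\Sigma_j$ by $\Sigma_j':=(\Sigma_j\setminus E_j)\cup E_i$. This is a $G$-invariant, piecewise-smooth surface with the same boundary and no larger area. Its quotient still has genus zero, since we glue a planar piece along a separating curve or arc. After a small $G$-equivariant smoothing and pushing off near $\gamma$ and $E_i$, performed in $M^*$ and lifted by \cite{schwarz1980lifting}*{Corollary 2.4}, the intersection number drops by at least one. The new surface is transversal, and its area increases by at most an arbitrarily small amount, which is then absorbed. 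To obtain the strict inequality $\mc H^n(\wti\Sigma_i)\le\mc H^n(\Sigma_i)$ exactly, we follow \cite{almgren79plateau}: we perform all exchanges first, where the only area change is the swap itself, and smooth at the end with an error that vanishes in the limit. Alternatively, we keep surfaces piecewise smooth and only require them to be embedded and disjoint, as in \cite{meeks82exotic}*{Lemma 2}.

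We must also check the locally $G$-boundary-type condition and the disk-type preservation. Cutting and pasting along a separating $\gamma$ preserves boundaries of $G$-Caccioppoli sets locally: near $\gamma$ one takes the symmetric difference of the local Caccioppoli sets. Genus zero is preserved, and if all $\Sigma_i^*$ are disks, then an innermost $E_i^*$ is a disk or half-disk with $\gamma^*$ as its relative boundary. Replacing a disk piece of a disk keeps the result a disk.

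The main obstacle is near $\bd_0B^*$. There, arc components of $\gamma^*$ cut off half-disk pieces, and the exchanged surface must still meet $\bd_0B^*$ orthogonally (Lemma \ref{Lem: hypersurface position}) and remain smoothable by equivariant isotopies. These isotopies preserve orbit-type strata, so the push-off along an arc ending on $\bd_0B^*$ has to be constructed in the half-ball or wedge coordinates of Lemma \ref{Lem: G-isotopy approximation} Steps 3--4 and then lifted. We expect the wedge case (2.a), where the endpoints may lie on different faces, to need the most care.
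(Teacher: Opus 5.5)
\textbf{The gap: the choice of $E_i$.} You claim that because $\Sigma_i^*$ has genus zero, $\gamma^*$ separates $\Sigma_i^*$ and one side of it avoids $\partial\Sigma_i$. That is true when $\Sigma_i^*$ is a disk or half-disk. The lemma, however, is stated for arbitrary genus-zero surfaces. On a planar surface with several boundary curves, both sides of a closed $\gamma^*$ may carry boundary. In the half-ball case, an arc of $\Sigma_i^*\cap\Sigma_j^*$ joining two different components of $\partial_0\Sigma_i^*$ does not separate at all.

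Here is a concrete configuration. Take $p$ principal, so that at small scale $B^*$ is essentially the Euclidean unit ball with a constant weight. Let $\Sigma_1^*=\{x_1^2+x_2^2=\rho^2\}\cap B^*$. Let $\Sigma_2^*$ be an annulus whose two boundary circles lie on $\partial B^*$, one in the cap above the top circle of $\Sigma_1^*$ and one in the band between its two circles, crossing $\Sigma_1^*$ transversally in a single circle $\gamma^*$.
\begin{itemize}
\item Each side of $\gamma^*$, in either surface, contains a boundary circle, so there is nothing to exchange.
\item The two boundary pairs interleave on $\partial B^*$, so a mod $2$ intersection count shows that any disjoint replacements must split each $\widetilde\Sigma_i^*$ into two disks. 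Such disks cannot be assembled from pieces of $\Sigma_1^*\cup\Sigma_2^*$.
\item For $\rho^2>4/5$, the two flat disks spanning $\partial\Sigma_1^*$ have total area $2\pi\rho^2$, which exceeds the cylinder's $4\pi\rho\sqrt{1-\rho^2}$.
\end{itemize}
So no cut-and-paste of existing pieces reaches the conclusion here. In fact, $\partial\widetilde\Sigma_i=\partial\Sigma_i$ together with $\mathcal H^n(\widetilde\Sigma_1)\le\mathcal H^n(\Sigma_1)$ fails in this configuration. The paper does not argue by boundary-free exchange; it appeals to the cross-joining constructions of \cite{meeks82exotic}*{Lemma 2} and \cite{li2015general}*{Lemma 7.12}. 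The example indicates that the non-disk case needs an extra hypothesis or a weaker (regrouped-boundary or total-area) conclusion. As written, your argument covers only the case where all $\Sigma_i^*$ are disks or half-disks, in particular the ``moreover'' clause.

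\textbf{Two further repairs, even in the disk case.}

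First, the ``without loss of generality $\mathcal H^n(E_i)\le\mathcal H^n(E_j)$'' is not harmless. Only $E_i$ is innermost. If $E_j$ is the smaller piece, pasting $E_j$ into $\Sigma_i$ can create new intersections, including with $\Sigma_i\setminus E_i$ itself. The standard fix is to choose, among all pieces cut off by all intersection curves, one of least area. Such a piece is automatically innermost, since a curve in its interior would cut off a strictly smaller piece, and it is no larger than its partner.

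Second, the outputs must be smooth members of $\mathcal{LB}^G$, so piecewise smooth surfaces are not allowed. A smoothing error that only vanishes in a limit does not give $\mathcal H^n(\widetilde\Sigma_i)\le\mathcal H^n(\Sigma_i)$ exactly. What works is to first round the transverse corner along $\gamma$, which strictly lowers area by a definite amount. That saving then absorbs the subsequent small equivariant push-off of $E_i$, which, as you note, must preserve orbit types and stay orthogonal to the strata.
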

\begin{proof}
	The proof is essentially the same as in \cite{li2015general}*{Lemma 7.12} and \cite{meeks82exotic}*{Lemma 2}. 
	Indeed, let $U:=B_\rho(G\cdot p)$ with $r<\rho<\inj(G\cdot p)$. 
	If $U$ is ball-type, i.e. $G\cdot p$ satisfies Proposition \ref{Prop: local structure of M/G}(0) or (2.b). 
	Then we can adapt the constructions in \cite{meeks82exotic}*{Lemma 2} to the orbit space $U^*\cong \mb B^3$ for $\Sigma_i^*\subset B^*$. 
	If $U$ is half-ball-type, i.e. $G\cdot p$ satisfies Proposition \ref{Prop: local structure of M/G}(1) or (2.a). 
	Then we can apply the constructions of \cite{li2015general}*{Lemma 7.12} to $\Sigma_i^*\subset B^*$ in the orbit space $U^*$ with $M$ and $\bd M$ replaced by $U^*$ and $\bd_0 U^*$ respectively. 
	Additionally, although the topological constructions are made in $U^*$, the area is always computed in $U$ by $\mc H^n(\pi^{-1}(\cdot ))$. 
	Note also that the local smoothing and perturbing constructions can both be $G$-equivariant. 
	
	Moreover, we mention that the locally $G$-boundary-type assumption is used to ensure that each $\Sigma_i$ can only meet $M\setminus M^{prin}$ orthogonally (cf. Proposition \ref{Prop: local structure of Sigma/G 1}, \ref{Prop: local structure of Sigma/G 2}), and thus $(\bd\Sigma_i)^*= \closure(\bd_1\Sigma_i^* ) = \closure(\bd \Sigma_i^*\setminus \bd_0 U^*)$. 
	In particular, if $G\cdot p$ satisfies Proposition \ref{Prop: local structure of M/G}(2.b), then the curve $(\bd\Sigma_i)^*$ is simple and closed so that \cite{meeks82exotic}*{Lemma 2} can be adapted.  
	Finally, by the inductive arguments and the cross-joining constructions in \cite{meeks82exotic}*{Lemma 2}\cite{li2015general}*{Lemma 7.12}, we know every $\wti \Sigma_i$ still meets $M\setminus (M^{prin}\cup\mc S)$ orthogonally, and thus is locally $G$-boundary-type.  
\end{proof}

\section{Interior regularity of equivariant Plateau's problem}\label{Sec: plateau}

In this section, we generalize the resolution of the embedded Plateau problem (cf. \cite{almgren79plateau}\cite{meeks82exotic}) to the equivariant case. 
We mainly focus on the interior regularity for our purpose. 

Given $B_{r_0}(G\cdot a)\subset M$ satisfying the estimates in Lemma \ref{Lem: uniform constants}, let $A:=B_{r}(G\cdot a)$ be an open $G$-set with $r\in (0,r_0)$, and $\Gamma\subset \bd A$ be an $(n-1)$-dimensional $G$-connected $G$-submanifold so that $\bd A\setminus\Gamma$ has two $G$-components. 
Consider
\begin{align}
    \mc M^G_{0,\Gamma} :=\{ \Sigma\in\mc M^G_0 : ~\bd\Sigma = \Gamma, ~\Sigma\subset B_{r_0}(G\cdot a) \}. 
\end{align}
Then a sequence $\{\Sigma_k\}_{k\in\mb N}\subset \mc M^G_{0,\Gamma}$ is said to be a {\em minimizing sequence for $\mc M^G_{0,\Gamma}$} if
\begin{align}\label{Eq: plateau minimizing sequence}
    \mc H^n(\Sigma_k) \leq  \inf_{\Sigma'\in \mc M^G_{0,\Gamma}} \mc H^n(\Sigma') + \epsilon_k,
\end{align}
for some positive numbers $\epsilon_k\to 0$. 
Our main result in this section is the following theorem:

\begin{theorem}\label{Thm: plateau problem}
	Using the above notations, suppose \eqref{Eq: cohomogeneity assumption} is satisfied, $\{\Sigma_k\}_{k\in\mb N}\subset \mc M^G_{0,\Gamma}$ is a minimizing sequence for $\mc M^G_{0,\Gamma}$, and $V=\lim_{k\to\infty}|\Sigma_k|\in \mc V^G_n(M)$. 
	Then for any $p_0\in \spt(\|V\|)\setminus \Gamma$, 
    there exist an embedded $G$-hypersurface $\Sigma\subset M$ and $m\in\mb N$ so that 
	\[V\llcorner B_\rho(G\cdot p_0)=m |\Sigma|,\]
    for some $\rho>0$. 
	Moreover, for any $G$-connected component $N\subset M\setminus (M^{prin}\cup\mc S)$ of a non-principal orbit type stratum, $\Sigma$ can only meet $N$ orthogonally. 
\end{theorem}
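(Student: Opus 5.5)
We follow the strategy of Almgren--Simon \cite{almgren79plateau}, carried out equivariantly with the tools of Section \ref{Sec: gamma reduction}.

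\textbf{Step 1: local $(G,\gamma)$-irreducibility and almost-minimality.} First we replace each $\Sigma_k$ by a $G$-surgered, area-nonincreasing modification. Away from $\Gamma$, every small tube $B_\rho(G\cdot p)$ with $p\in M\setminus\mc S$ must then satisfy the hypotheses of Theorem \ref{Thm: MSY thm2 gamma reduction}. The tools are Lemma \ref{Lem: MSY 3.1 finite reduction}, whose number of reductions is bounded because genus and area are bounded, and Lemma \ref{Lem: switch lemma}. We choose radii by coarea so that $\Sigma_k$ meets $\bd B_r(G\cdot p)$ transversally with small boundary length. The isoperimetric choice of $F_\Gamma$ is given by Remark \ref{Rem: compare disk instead cylinder}. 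This yields the equivariant analogue of \cite{almgren79plateau}*{Lemma 3}: in such tubes, $\Sigma_k$ can be replaced by genus-zero pieces (disk or half-disk type) of almost least area. Consequently $V$ is stationary by Lemma \ref{Lem: first variation and G-variation}, has density bounded below, and is a limit of $G$-almost-minimizing pieces in small annular tubes.

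\textbf{Step 2: tangent cones and first regularity near codimension-three orbits.} Fix $p_0\in\spt\|V\|\setminus(\Gamma\cup\mc S)$ and blow up along $G\cdot p_0$. Because of $G$-invariance, every tangent cone splits off $T_{p_0}(G\cdot p_0)$ together with the fixed directions $P(p_0)$; we use the tangent-bundle splitting obtained by equivariant blow-up. In the slice $N_{p_0}(G\cdot p_0)$ we are left with a $G_{p_0}$-invariant cone.

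When $\codim(G\cdot p_0)=3$, we argue as in \cite{almgren79plateau}*{Theorem 3}. The disk-replacement and filigree arguments rule out non-planar cones, and they show the cone is a multiplicity-$m$ plane. Across the degenerate boundary of $M^*$, this plane must be orthogonal to $M_{(G_{p_0})}$, using Lemma \ref{Lem: hypersurface position} and the locally $G$-boundary-type property. Next, a first regularity theorem modeled on \cite{almgren79plateau}*{Theorem 2} shows that planar tangent cones give graphical sheets. For this we cover the cylindrical regions, adding a thin neighborhood of the degenerate boundary so that the area comparison survives despite the degenerate metric $\wti g_{_{M^*}}$. The sheets are ordered, $G$-invariant, and minimal by Lemma \ref{Lem: minimal in M^prin}. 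The maximum principle then makes them coincide, which gives $V\llcorner B_\rho(G\cdot p_0)=m|\Sigma|$.

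\textbf{Step 3: higher-codimension orbits and $\mc S$.} When $\codim(G\cdot p_0)>3$, we use dimension reduction: away from the orbit the previous step applies, so the singular set of the tangent cone lies in lower strata. We then upgrade $G$-stability to stability. The almost-minimizing property with disk-type replacements supplies a $G$-invariant unit normal on $U$ with $U^*$ simply connected, so Lemma \ref{Lem: stability and G-stability} applies. Since $\codim\leq 7$, \cite{schoen1981regularity} gives regularity and removes the orbit. Orbits in $\mc S$ are handled the same way: since $\mc S$ is a finite union of orbits, the hypersurface is stable on a punctured neighborhood, and the singularity is removable by \cite{schoen1981regularity}. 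Orthogonality to each stratum $N$ follows from Lemma \ref{Lem: hypersurface position}. The alternative, $N\subset\Sigma$ of reflection type, is excluded because $\Sigma$ remains locally $G$-boundary-type as a limit of boundary-type disks away from finitely many orbits.

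The main obstacle is Step 2 at half-ball-type points, meaning Proposition \ref{Prop: local structure of M/G}(1) and (2.a). There the Almgren--Simon area comparisons (e.g. \cite{almgren79plateau}*{(5.10)}) fail because of the degenerate metric. My first attempt would be the augmented cylindrical regions described above, combined with the disk-versus-annulus selection from \eqref{Eq: MSY thm2 assumption - disk instead of cylinder}.
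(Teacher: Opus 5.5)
Your outline follows the paper's Almgren--Simon scheme, but two steps rest on arguments that fail.

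\textbf{First gap: excluding a tangent plane that contains $P(p_0)$.} You rule out the configuration where the tangent plane contains $P(p_0)$ (equivalently $N\subset\Sigma$, as in Proposition \ref{Prop: local structure of Sigma/G 1}(ii) or \ref{Prop: local structure of Sigma/G 2}(2)(4)) by asserting that the limit stays locally $G$-boundary-type. That assertion is false. Suppose $G_{p_0}$ acts on the slice by the reflection across $P$, or by the $\pi$-rotation about the axis $P$ as in case (4). Two sheets interchanged by $G_{p_0}$ and collapsing onto the fixed set bound a $G$-invariant region. So each approximant is locally $G$-boundary-type, even disk-type in the quotient, and vacuously orthogonal to $K_1$. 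Yet the limit is the fixed hypersurface (or invariant plane) with multiplicity two, which is not locally $G$-boundary-type. The paper notes that boundary-type can be lost under varifold limits, and it obtains boundary-type for the Plateau limit only a posteriori (Remark \ref{Rem: local G-boundary for plateau}).

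What actually excludes this configuration is the minimizing property, through Theorem \ref{Thm: first regularity}, Case II, and its analogues in Step 2 of Theorem \ref{Thm: regularity part2}. In that configuration $(\cCG_{\rho,\sigma,0})^*$ deformation retracts onto $\bd_0(\cCG_{\rho,\sigma,0})^*$; in case (2.b) the lifted cylinder is itself a disk in the orbit space. Hence every replacement piece $P_k^i$ has a small-area competitor in $\mc M^G_0$ with the same boundary. This is exactly where minimality with no restriction on $\mk b$ is used. The filigree lemma then discards all pieces, contradicting $x_0\in\spt\|W\|$. No hypothesis rules this case out for you: Theorem \ref{Thm: plateau problem} allows special exceptional orbits, and case (4) can occur even without them.

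\textbf{Second gap: orbits in $\mc S_{n.m.}$.} These cannot be handled ``the same way''. There $B_r^*([p_0])$ is a cone over $RP^2$, so the punctured quotient neighborhood has fundamental group $\mb Z_2$. Topology therefore no longer produces a $G$-invariant unit normal. The smallest $G$-set containing a punctured ball about a point of $G\cdot p_0$ is the whole punctured tube. So $G$-stability together with Lemma \ref{Lem: stability and G-stability} does not give the stability in balls that \cite{schoen1981regularity} requires.

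The paper instead uses the global Plateau setup.
\begin{itemize}
\item Since $\mc S\cap(\closure(B_{r_0}(G\cdot a))\setminus G\cdot a)=\emptyset$, only the center $G\cdot a$ can lie in $\mc S_{n.m.}$.
\item Because $\Gamma$ $G$-separates $\bd A$, one gets $\Gamma\subset M^{prin}$, which gives boundary regularity along $\Gamma$.
\item The maximum principle shows that every $G$-component of $\spt\|V\|\setminus B_s(G\cdot a)$ reaches $\Gamma$.
\item Each such component therefore lies in the support of the mod $2$ current limit of the $\Sigma_k$. That limit is the boundary of a $G$-invariant Caccioppoli set, and this supplies the $G$-invariant normal.
\end{itemize}

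\textbf{Two smaller points.} The $(G,\gamma)$-reduction in your Step 1 is the tool for the isotopy problem (Theorem \ref{Thm: MSY thm2 gamma reduction} concerns closed hypersurfaces). It is unnecessary here, since a Plateau minimizing sequence feeds directly into Theorem \ref{Thm: replacement} and Lemma \ref{Lem: filigree}. In Step 3, the claim that ``away from the orbit the previous step applies'' is inaccurate. Points of $P\setminus\{0\}$ in the tangent cone have the same orbit type and codimension. This is why the paper takes iterated tangent cones at such points and exploits their spine (Theorem \ref{Thm: interior regularity k_0<n-2}) before invoking stability and the classification of stable cones.
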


\begin{remark}\label{Rem: local G-boundary for plateau}
	By the last statement in Theorem \ref{Thm: plateau problem} and the simple connectedness of $B_{\inj(G\cdot p)}^*([p])$ for $p\in M\setminus\mc S_{n.m.}$, we can show that $\spt(\|V\|)$ is locally $G$-boundary-type in $ M\setminus(\mc S_{n.m.}\cup\Gamma)$. 
    Furthermore, 
    the boundary regularity of $V$ is also valid at $\Gamma\cap M^{prin}$. 
\end{remark}

Note that $\pi: M^{prin}\to (M^{prin})^*$ induces a smooth Riemannian submersion. 
By \eqref{Eq: area in orbit space}, Lemma \ref{Lem: minimal in M^prin}, and the classical regularity result in \cite{almgren79plateau}\cite{meeks82exotic}, we know the above regularity theory holds for $p_0\in M^{prin}$. 
In the rest of this section, we will show the regularity and the orthogonal meeting result of $V$ near any $p_0\in M\setminus (M^{prin}\cup \mc S)$ by Theorem \ref{Thm: regularity part1} and \ref{Thm: regularity part2}, where $\mc S$ is defined in Definition \ref{Def: isolated orbits}. 
The proof for $p_0\in \mc S$ will be left to Section \ref{Subsec: plateau}.

\subsection{Regularity near $p\in M\setminus M^{prin}$ with $\dim(P(p))=2$}
Fix a non-principal orbit type stratum $M_{(G_{p_0})}$ and a compact subset $K_0\subset M_{(G_{p_0})}$ with 
\begin{align}\label{Eq: Case dimP=2 assumption}
	\dim(K_0^*)=2.
\end{align}
Set 
\[k_0:=\dim(G\cdot p) \qquad \mbox{for $p\in K_0$,}\] 
and let $\rho_0>0$ be as in Lemma \ref{Lem: uniform constants}. 
In this subsection, all the results are established near some $G\cdot p\subset K_0$. 
Hence, for simplicity, we also denote by
\begin{align}\label{Eq: non-principal part}
	K_1:= (M\setminus M^{prin}) \cap B_{\rho_0}(K_0)
\end{align} 
so that $K_1^*$ is the boundary of $M^*$ in $B_{\rho_0}^*([p])$ for $p\in K_0$. 
Note that $K_0 = K_1$ in $B_{\rho_0}(G\cdot p) $ only in this subsection. 
For simplicity, given any $G$-hypersurface $\Sigma\in\mc {LB}^G$, we denote by 
\begin{align}\label{Eq: boundary components number}
	\mk b(\Sigma):=\#\{\mbox{$G$-connected components of $\Sigma\cap K_1$}\}.
\end{align}
In particular, $\mk b(\Sigma)$ is the number of boundary components of $\Sigma^*$ that intersect $K_1^*$ (a part of the boundary of $M^*$).

Using the notations in Lemma \ref{Lem: uniform constants}, we have the following lemma generalizing \cite{meeks82exotic}*{Lemma 3}. 

\begin{lemma}\label{Lem: convex}
    Let $p_0\in K_0$ and $U\subset B_{\rho_0/2}(G\cdot p_0)$ be a half-ball-type $G$-neighborhood of $G\cdot p_0$ with $C^2$-boundary. 
    Let 
    \[d_U:=\dist(\cdot, U)\qquad{\rm  and}\qquad U(s):=\{p\in M: d_U(p)< s\},\]
    for $s>0$. 
    Suppose 
    \begin{itemize}
        \item $\Delta d_U\geq 0$ in $U(\theta\rho_0)$ for some $\theta\in (0,\frac{1}{2})$ (e.g., $U(s)$ is mean convex for all $s\in (0,\theta\rho_0)$);
        \item there exists $\beta\geq 1$ so that 
        \[\min\{\mc H^n(E), \mc H^n(\bd U(s)\setminus E)\}\leq \beta(\mc H^{n-1}(\bd E))^{\frac{n-k_0}{n-1-k_0}}\]
        for any $s\in (\theta\rho_0/2,\theta\rho_0)$ and $G$-invariant $C^2$ region $E$ in $\bd U(s)$. 
    \end{itemize}
    Then there are $C_1=C_1(K_0,M,G)>1$ and $\delta=\delta(C_1,\beta,\theta,K_0,M,G)>0$ satisfying the following property. 
    
    Given any $\Sigma\in\mc M^G_0$ so that $\bd\Sigma\subset M\setminus U$, $\Sigma$ is transversal to $\bd U$, and 
    \begin{align}\label{Eq: convex lem - small area}
        \mc H^n(\bd U) + \mc H^n(\Sigma)\leq (\delta \rho_0)^{n-k_0}/C_1,
    \end{align}
    if $\Lambda$ is a $G$-component of $\Sigma\setminus U$ with $\bd \Sigma\cap\Lambda=\emptyset$, then there is a unique compact $G$-set $K_\Lambda\subset M\setminus U$ so that 
    \begin{align}\label{Eq: convex lem - volume bound}
        \mc H^{n+1}(K_\Lambda\cap B_{\rho_0}(G\cdot p_0))\leq C_0(\delta\rho_0)^{n+1-k_0}, \quad \bd K_\Lambda = \Lambda\cup F , 
    \end{align}
    where $C_0>1$ is given in Lemma \ref{Lem: uniform constants}, $F\subset \bd U$ is a compact $G$-hypersurface with $\bd F=\bd \Lambda$ and 
    \begin{align}\label{Eq: convex lem - area bound}
        \mc H^n(F) < \mc H^n (\Lambda\cap U(\theta\rho_0)).
    \end{align}
\end{lemma}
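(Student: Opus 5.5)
We follow the proof of \cite{meeks82exotic}*{Lemma 3}, doing the area bookkeeping in $M$ through the slice formulas \eqref{Eq: area in slice}, \eqref{Eq: volume in slice} and Lemma \ref{Lem: uniform constants}, and the topology in the half-ball $B_{\rho_0}^*([p_0])$.

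\textbf{Existence of $K_\Lambda$.} We first produce a candidate region. Since $\Lambda$ is a $G$-component of $\Sigma\setminus U$ with $\bd\Lambda\subset\bd U$, the $G$-set $\Lambda\cup\bd U$ has area at most $(\delta\rho_0)^{n-k_0}/C_1$. If $C_1$ is large and $\delta$ small relative to the constants $\alpha_0,C_0$ of Lemma \ref{Lem: uniform constants}(iv), this is below the isoperimetric threshold $\alpha_0\rho_0^{n-k_0}$ in $B_{\rho_0}(G\cdot p_0)$. We then apply Lemma \ref{Lem: uniform constants}(iv) to $\Lambda$ together with the appropriate portion of $\bd U$; equivalently, we apply \eqref{Eq: isoperimetric in slice} in the slice $S_{\rho_0}(p_0)$. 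This gives an open $G$-set $K_\Lambda\subset M\setminus U$ with $\bd K_\Lambda=\Lambda\cup F$ for some $G$-hypersurface $F\subset\bd U$ with $\bd F=\bd\Lambda$, and with
\[\mc H^{n+1}(K_\Lambda\cap B_{\rho_0}(G\cdot p_0))\le C_0\big((\delta\rho_0)^{n-k_0}/C_1\big)^{\frac{n+1-k_0}{n-k_0}}\le C_0(\delta\rho_0)^{n+1-k_0}.\]
This is \eqref{Eq: convex lem - volume bound}.

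\textbf{Uniqueness.} We argue as in \cite{meeks82exotic}. $\Lambda\cup\bd U$ separates $M$, which follows from the locally $G$-boundary-type assumption and the genus-zero, half-ball topology in $M^*$. The complementary region of $\Lambda\cup F$ on the other side contains a tube $B_{\rho_0}(G\cdot q)$ of definite volume $\gtrsim\rho_0^{n+1-k_0}$. Hence only one side can satisfy the volume bound once $\delta$ is small.

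\textbf{Area comparison \eqref{Eq: convex lem - area bound}.} This is the core step. We use the slicing function $d_U$ on $K_\Lambda\cap U(\theta\rho_0)$ and set $\Lambda_s:=\Lambda\cap\bd U(s)$ and $E_s:=K_\Lambda\cap\bd U(s)$, a $G$-invariant region of $\bd U(s)$ with $\bd E_s=\Lambda_s$.

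The coarea formula together with the volume bound gives some $s_0\in(\theta\rho_0/2,\theta\rho_0)$ for which $\mc H^n(E_{s_0})$ is small. By the hypothesis on $\bd U(s)$, $E_{s_0}$ is then the smaller side, so the relative isoperimetric inequality with exponent $\frac{n-k_0}{n-1-k_0}$ applies to it. Combined with the coarea bound $\int_0^{\theta\rho_0}\mc H^{n-1}(\Lambda_s)\,ds\le \mc H^n(\Lambda\cap U(\theta\rho_0))$, this forces $\mc H^n(E_{s_0})$ to be small relative to $\mc H^n(\Lambda\cap U(\theta\rho_0))$.

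Next we apply the divergence theorem to $\nabla d_U$ on $K_\Lambda\cap\{d_U<s_0\}$. On $\bd U(s)$ the field $\nabla d_U$ is the unit outward normal, it has norm at most $1$ on $\Lambda$, and $\Delta d_U\ge0$. This yields
\[\mc H^n(F)\le \mc H^n(\Lambda\cap U(s_0))+\mc H^n(E_{s_0}).\]
The term $\mc H^n(E_{s_0})$ must then be absorbed, which is the main obstacle. We would run the Meeks–Simon–Yau ODE/iteration: set $f(s):=\mc H^{n+1}(K_\Lambda\cap U(s))$. Then $f'(s)=\mc H^n(E_s)\le\beta\,\mc H^{n-1}(\Lambda_s)^{\frac{n-k_0}{n-1-k_0}}$. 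Combining this with the divergence inequality at every $s$ gives a differential inequality. Integrating it shows that if the strict inequality in \eqref{Eq: convex lem - area bound} failed, $f$ would be bounded below by $c\,\theta^{n+1-k_0}\rho_0^{n+1-k_0}$, contradicting \eqref{Eq: convex lem - volume bound} for $\delta$ small. This fixes $\delta=\delta(C_1,\beta,\theta,K_0,M,G)$.

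Throughout, the uniformity of the constants in $p_0\in K_0$ comes from Lemma \ref{Lem: uniform constants}(ii)(iii)(v), which convert all slice quantities to $M$ with factors $2^{\pm1}$ and $C_0^{\pm1}$. The orbit-space boundary $K_1^*$ causes no extra difficulty, because every computation is done equivariantly in $M$ rather than in $M^*$.
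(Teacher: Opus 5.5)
Your existence and uniqueness steps match the paper's in substance; the paper first builds $F_0=\pi^{-1}(F_0^*)$ from the half-sphere structure of $\closure(\bd_1U^*)$, then takes the isoperimetric region $W$ of $F_0\cup\Lambda$ and sets $K_\Lambda=W\setminus U$. The genuine gap is in the area comparison \eqref{Eq: convex lem - area bound}, at exactly the step you call the main obstacle.

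\textbf{The ODE in the volume $f(s)=\mc H^{n+1}(K_\Lambda\cap U(s))$ cannot produce a contradiction.} Its derivative $f'(s)=\mc H^n(E_s)$ is itself $O\bigl((\delta\rho_0)^{n-k_0}/C_1\bigr)$, by the divergence monotonicity together with \eqref{Eq: convex lem - small area}. So no combination of your inequalities forces $f\gtrsim(\theta\rho_0)^{n+1-k_0}$. The volume bound \eqref{Eq: convex lem - volume bound} only serves to make $E_s$ the small side of $\bd U(s)$; it is not where the contradiction comes from.

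The right quantity is the area deficit $a(s):=\mc H^n(F)-\mc H^n(\Lambda\cap U(s))$, which is the paper's $A_t^{n-k_0}$. Three facts combine:
\begin{itemize}
\item the divergence inequality gives $a(s)\le\mc H^n(E_s)$;
\item the $\beta$-hypothesis gives $\mc H^n(E_s)\le\beta\,\mc H^{n-1}(\Lambda_s)^{\frac{n-k_0}{n-1-k_0}}$;
\item coarea on $\Lambda$ gives $\mc H^{n-1}(\Lambda_s)\le -a'(s)$ for a.e.\ $s$.
\end{itemize}
Hence, wherever $a>0$,
\[
-\frac{d}{ds}\,a(s)^{\frac{1}{n-k_0}}\ \ge\ \frac{1}{n-k_0}\,\beta^{-\frac{n-1-k_0}{n-k_0}}.
\]
Suppose $\mc H^n(F)>\mc H^n(\Lambda\cap U(3\theta\rho_0/4))$. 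Then $a>0$ on $[\theta\rho_0/2,3\theta\rho_0/4]$, since $a$ is nonincreasing. Integrating gives $a(\theta\rho_0/2)^{\frac{1}{n-k_0}}\ge\frac{\theta\rho_0}{4(n-k_0)}\beta^{-\frac{n-1-k_0}{n-k_0}}$. This contradicts $a(\theta\rho_0/2)\le\mc H^n(F)\le\mc H^n(\bd U)\le(\delta\rho_0)^{n-k_0}/C_1$ once $\delta$ is small in terms of $\beta,\theta,C_1$. So the contradiction is with the smallness of \emph{area}, not volume. Your single-$s_0$ estimate only gives $\mc H^n(F)\le(1+\epsilon)\mc H^n(\Lambda\cap U(\theta\rho_0))$, as you noted, so this ODE step is indispensable.

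\textbf{Two further points are missing.}
\begin{itemize}
\item The $\beta$-isoperimetric hypothesis must hold for a.e.\ $s\in[\theta\rho_0/2,3\theta\rho_0/4]$, not at one $s_0$. You therefore need $\mc H^n(E_s)\le\frac12\mc H^n(\bd U(s))$ on the whole interval. Even your one-point version needs a common $s_0$ where both $\mc H^n(E_{s_0})$ and $\mc H^{n-1}(\Lambda_{s_0})$ are small. The paper gets the interval version from three ingredients:
  \begin{itemize}
  \item Chebyshev applied to $\int_0^{\theta\rho_0}\mc H^n(E_s)\,ds\le C_0(\delta\rho_0)^{n+1-k_0}$;
  \item the monotonicity $\mc H^n(E_{s_1})\le\mc H^n(E_{s_2})+\mc H^n(\Lambda)$ for $s_1<s_2$, which also comes from $\Delta d_U\ge0$;
  \item an explicit lower bound $\mc H^n(\bd U(s))\gtrsim C_0^{-1}(\theta\rho_0)^{n-k_0}$ from the slice formulas.
  \end{itemize}
\item The strict inequality needs a case split. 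If $\mc H^n(E_{\theta\rho_0})=0$, the divergence inequality at $s=\theta\rho_0$ already gives it. Otherwise $K_\Lambda$ reaches $\{d_U=\theta\rho_0\}$, so $\Lambda$ has positive area in $\{3\theta\rho_0/4<d_U<\theta\rho_0\}$. Then $\mc H^n(F)\le\mc H^n(\Lambda\cap U(3\theta\rho_0/4))<\mc H^n(\Lambda\cap U(\theta\rho_0))$.
\end{itemize}
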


\begin{remark}\label{Rem: replacement statement}
	We will see in the proof that if $\Lambda\cap K_1 = \emptyset$, then $F\cap K_1=\emptyset$. 
	Thus, we would not increase the number of boundary components on $K_1^*$ in the replacement constructions (Theorem \ref{Thm: replacement}). 
\end{remark}


\begin{proof}
    By the locally $G$-boundary-type condition on $\Sigma$, we know either
    \begin{itemize}
        \item $\Sigma\cap K_0=\emptyset$ (then $\bd_1(\Sigma\cap U)^*$ are simple closed curves), or
        \item $\Sigma$ meets $K_0$ orthogonally (then $\bd_1(\Sigma\cap U)^*$ are simple curves with (possibly empty) free boundary in $\bd_0 U^*$). 
    \end{itemize}

    Given $\Lambda$ as above, $\bd\Lambda\subset \Sigma\cap\bd U$ is a closed $G$-invariant $(n-1)$-manifold so that $\bd_1\Lambda^*\subset \bd_1 U^*$ is a union of (properly embedded) simple curves that are either closed in (the interior of) $\bd_1U^*$ or have free boundary on $\bd_0U^*$. 
    Since $\bd_1 U^*$ is homeomorphic to a half-sphere $\bd_1\mb B^3_+$, there is a closed $F_0^*\subset\closure(\bd_1 U^*)$ with $\bd F_0^* \cap \bd_1 U^*=\bd_1\Lambda^*$. 
    Hence, $F_0:=\pi^{-1}(F_0^*)\subset \bd U$ is a $G$-hypersurface with $\bd F_0 = \bd \Lambda$. 
    Since $\mc H^n(F_0)+\mc H^n(\Lambda)\leq \mc H^n(\bd U) + \mc H^n(\Sigma)\leq (\delta\rho_0)^{n-k_0}/C_1$, we can shrink $\delta<1$ and take $C_1$ sufficiently large so that there is a unique compact $G$-set $W$ with $\bd W = F_0\cup\Lambda$ and $\mc H^{n+1}(W)\leq C_{iso}(\mc H^n(F_0)+\mc H^n(\Lambda))^{\frac{n+1}{n}}$ due to the isoperimetric inequality in $M$. 
    By the uniqueness of the relative isoperimetric choice in Lemma \ref{Lem: uniform constants}(iv), we can take $C_1>1$ even greater so that 
    \[\bd_{rel} (W\res B_{\rho_0}(G\cdot p_0))=(F_0\cup\Lambda)\cap B_{\rho_0}(G\cdot p_0), ~ ~ and ~ ~ \mc H^{n+1}(W\res B_{\rho_0}(G\cdot p_0))\leq C_0 (\delta\rho_0)^{n+1-k_0}.\]
    Then we set $K_\Lambda:=W\setminus U$ and \eqref{Eq: convex lem - volume bound} holds with either $F=F_0$ or $F=\bd U\setminus F_0$. 
    The uniqueness of $K_\Lambda$ follows from the uniqueness of the isoperimetric choice. 

    Next, we aim to show \eqref{Eq: convex lem - area bound} with $C_1>1$ fixed as above and some $\delta=\delta(C_1,\beta,\theta,K_0,M,G)>0$. 
    For $t\geq 0$, define 
    \[F_t:=K_\Lambda\cap\{x:d_U(x)=t\},\qquad E_t:=\{x\in\Lambda:d_U(x)<t\}. \]
    Since $\Delta d_U\geq 0$ in $U(\theta\rho_0)$, we know
    \[ \mc H^n(F_{t_1}) - \mc H^n(F_{t_2})\leq \int_{\closure(E_{t_2})\setminus E_{t_1}} |\langle \nu, \nabla d_U\rangle|, \qquad \forall 0\leq t_1<t_2\leq \theta\rho_0,\]
    where $\nu$ is the unit normal of $\Lambda$ pointing out of $K_{\Lambda}$. 
    Hence, 
    \begin{align}\label{Eq: convex lem - 4.7}
        \mc H^n(F_{t_1}) - \mc H^n(F_{t_2}) \leq \mc H^n(\closure(E_{t_2})) - \mc H^n(E_{t_1}), \qquad\forall 0\leq t_1<t_2\leq \theta\rho_0,
    \end{align}
    where the strict inequality holds provided $E_{t_1}\neq E_{t_2}$. 

    By \eqref{Eq: convex lem - volume bound} and the co-area formula, we see 
    \[\int_0^{\theta\rho_0} \mc H^n(F_s) ds \leq \mc H^{n+1}(K_\Lambda\cap U(\theta\rho_0))\leq C_0(\delta\rho_0)^{n+1-k_0},\]
    and thus $\mc H^n(F_t)\leq 4C_0\theta^{-1}\delta^{n+1-k_0}\rho_0^{n-k_0}$ for a set of $t\in [0,\theta\rho_0]$ of Lebesgue measure at least $\frac{3}{4}\theta\rho_0$. 
    By shrinking $\delta<\theta/(4C_0C_1)$, we see from \eqref{Eq: convex lem - small area}\eqref{Eq: convex lem - 4.7} that 
    \begin{align}\label{Eq: convex lem - 4.8}
        \mc H^n(F_t)\leq \frac{2}{C_1} \cdot \delta^{n-k_0}\rho_0^{n-k_0} \quad {\rm for~all~}t\in [0,3\theta\rho_0/4]. 
    \end{align}
    In particular, \eqref{Eq: convex lem - 4.7} implies 
    \begin{align}\label{Eq: convex lem - 4.9}
        \mc H^n(F)-\mc H^n(\closure(E_t)) < \mc H^n(F_t), \quad t\in (0,\theta\rho_0]. 
    \end{align}
    Hence, if $\mc H^n(F_{\theta\rho_0})=0$, we have \eqref{Eq: convex lem - area bound} by setting $t=\theta\rho_0$. 

    Suppose $\mc H^n(F_{\theta\rho_0})> 0$. 
    Then for any $t\in [\theta\rho_0/2,\theta\rho_0]$, we can use \eqref{Eq: volume in slice} and Lemma \ref{Lem: uniform constants} to show:
    \begin{align*}
        \mc H^n(\bd U(t)) &\geq \frac{1}{2}\mc H^{k_0}(G\cdot p_0) \mc H^{n-k_0}(\bd U(t)\cap S_{\rho_0}(p_0)) \\
        &\geq \frac{1}{2}\mc H^{k_0}(G\cdot p_0)\frac{1}{4^{n-k_0}} \left(\mc H^{n+1-k_0}(U(t)\cap S_{\rho_0}(p_0)) \right)^{\frac{n-k_0}{n+1-k_0}} \\
        &\geq \frac{1}{2^{2n-2k_0+1}}\left(\mc H^{k_0}(G\cdot p_0)\right)^{\frac{1}{n+1-k_0}} \left(\frac{1}{2}\mc H^{n+1}(B_{\theta\rho_0/2}(G\cdot x)) \right)^{\frac{n-k_0}{n+1-k_0}}\\
        &\geq \frac{1}{2^{2(n-k_0+1)+\frac{n-k_0}{n+1-k_0}}}\left(\mc H^{k_0}(G\cdot p_0)\right)^{\frac{1}{n+1-k_0}}\left(\mc H^{k_0}(G\cdot x) \cdot \mc H^{n+1-k_0}(S_{\frac{\theta\rho_0}{2}}(x))\right)^{\frac{n-k_0}{n+1-k_0}}\\
        &\geq \frac{\inf_{y\in K_0} \mc H^{k_0}(G\cdot y)}{2^{2n-2k_0+4}} \left( \mc H^{n+1-k_0}(\mb B^{n+1-k_0}_{\theta\rho_0/2}(0)) \right)^{\frac{n-k_0}{n+1-k_0}}\\
        &\geq \frac{|\mb B^{n+1-k_0}_1(0)|}{C_0 2^{3n-3k_0+4}} (\theta\rho_0)^{n-k_0},
    \end{align*}
    where $x\in U\cap K_0$, $B_{\theta\rho_0/2}(G\cdot x)\subset U(\theta\rho_0/2)$, and we used the isoperimetric inequality (in $\mb B^{n+1-k_0}\cong (\exp_{G\cdot p_0}^\perp)^{-1}(S_{\rho_0}(p_0))$ with Lipschitz constant $\in [1/2,2]$) in the second line. 
    Combining this with \eqref{Eq: convex lem - 4.8}, we can shrink $\delta<\theta\cdot (\frac{C_1\cdot|\mb B^{n+1-k_0}_1(0)|}{C_0 \cdot 2^{3n-3k_0+6}})^{\frac{1}{n-k_0}}$ so that 
    \[\mc H^n(F_t)\leq \frac{1}{2}\mc H^n(\bd U(t))\quad \mbox{for all } t\in [\theta\rho_0/2,3\theta\rho_0/4]. \]
    By the choice of $\beta$ in the second bullet, we can apply the co-area formula to show 
    \begin{align}\label{Eq: convex lem - 4.10}
        \mc H^n(F_t)\leq \beta \left(\mc H^{n-1}(\bd F_t)\right)^{\frac{n-k_0}{n-1-k_0}} = \beta \left(\mc H^{n-1}(\bd E_t)\right)^{\frac{n-k_0}{n-1-k_0}}\leq \beta\left(\frac{d}{dt} \mc H^n(E_t)\right)^{\frac{n-k_0}{n-1-k_0}}
    \end{align}
    for almost all $t\in [\theta\rho_0/2,3\theta\rho_0/4]$. 
    Thus, \eqref{Eq: convex lem - 4.9} implies that for almost all $t\in [\theta\rho_0/2,3\theta\rho_0/4]$,
    \begin{align}\label{Eq: convex lem - 4.11}
        \mc H^n(F) - \mc H^n(E_t)\leq \mc H^n(F_t)\leq \beta\left(\frac{d}{dt} [- (\mc H^n(F)-\mc H^n(E_t))] \right)^{\frac{n-k_0}{n-1-k_0}} .
    \end{align}
    If $\mc H^n(F)>\mc H^n(E_{3\theta\rho_0/4})$, then $A_t:=(\mc H^n(F)-\mc H^n(E_t))^{\frac{1}{n-k_0}} > 0$ for $t\in [\theta\rho_0/2,3\theta\rho_0/4]$. 
    Hence, 
    \[A_t^{n-1-k_0}\leq \beta^{\frac{n-1-k_0}{n-k_0}}\frac{d}{dt}(-A_t^{n-k_0}) = \beta^{\frac{n-1-k_0}{n-k_0}} (n-k_0)\cdot A_t^{n-1-k_0}\frac{d}{dt}(-A_t)\]
    and $\frac{d}{dt}(-A_t) \geq \frac{1}{n-k_0}\beta^{-\frac{n-1-k_0}{n-k_0}} $.
    After integrating over $[\theta\rho_0/2,3\theta\rho_0/4]$,
    \[A_{\theta\rho_0/2}-A_{3\theta\rho_0/4}\geq \frac{1}{n-k_0}\beta^{-\frac{n-1-k_0}{n-k_0}} \frac{\theta\rho_0}{4}.\]
    However, $A_{\theta\rho_0/2}<\left(\mc H^n(F)\right)^{\frac{1}{n-k_0}}< \delta\rho_0\cdot (\frac{2}{C_1})^{\frac{1}{n-k_0}}$, which gives a contradiction by shrinking $\delta< (\frac{C_1}{2})^{\frac{1}{n-k_0}}\beta^{-\frac{n-1-k_0}{n-k_0}}\frac{\theta}{4(n-k_0)} $ small enough. 
    Therefore, $\mc H^n(F)\leq \mc H^n(E_{3\theta\rho_0/4})<\mc H^n(E_{\theta\rho_0})$ since $\mc H^n(F_{\theta\rho_0})\neq 0$. 
\end{proof}

In the following theorem, we use the notation 
\[U_\eta:=\{x\in U: \dist(x,\bd U)>\eta\},\]
where $U$ is a $C^2$ open $G$-set and $\eta>0$. 

\begin{theorem}[Replacement Theorem]\label{Thm: replacement}
Let $p_0,\rho_0, U, \theta, \beta, C_1, \delta$ be the same as in Lemma \ref{Lem: convex}. Suppose $\eta>0$ is given and 
\begin{enumerate}[label=(\roman*)]
    \item $\Sigma\in \mc M^G_0$, $\partial \Sigma\subset M\setminus U$, and $\partial \Sigma$ does not intersect $K_\Lambda$, where $\Lambda$ and $K_\Lambda$ are as in Lemma \ref{Lem: convex};
    \item $\Sigma$ and $U$ satisfy \eqref{Eq: convex lem - small area};
    \item \label{item: transversality} $\Sigma$ intersects $\partial U$ transversally; in case $\partial \Sigma\cap \partial U\neq \emptyset$, we will always take this to mean that there is a $C^2$ (open) $G$-hypersurface $N$ with $\Sigma\subset N$, with $(N\setminus\Sigma)\cap U=\emptyset$ and with $N$ intersecting $\partial U$ transversally.
\end{enumerate}
Then there is $\wti \Sigma\in \mc M^G_0$ with $\mk b(\wti\Sigma)\leq \mk b(\Sigma)$ such that 
\begin{enumerate}[label=(\roman*), resume]
    \item $\partial \Sigma=\partial \wti\Sigma$, $\wti \Sigma\setminus U\subset \Sigma\setminus U$, $\wti \Sigma\cap U_\eta\subset \Sigma\cap U_\eta$;
    \item $\wti \Sigma$ intersects $\partial U$ transversally (in the same sense as in \ref{item: transversality});
    \item $\mc H^n(\wti \Sigma)+\mc H^n((\Sigma\setminus \wti \Sigma)\cap U_\eta)\leq \mc H^n(\Sigma)$;
    \item $\wti\Sigma\cap \closure (U)$ is a disjoint union $\cup_{j=1}^k\Gamma_j$ of elements $\Gamma_j\in \mc M^G_0$ (and consequently $\wti \Sigma\subset \closure (U)$ in case $\partial \Sigma\subset \partial U$).
\end{enumerate}
If, in addition, we have 
\begin{enumerate}[label=(\roman*), resume]
    \item $\mc H^n(\Sigma)\leq \mc H^n(P)+\epsilon$ for every $P\in \mc M^G_0$ satisfying $\partial P=\partial \Sigma$ and $\mk b(P) \leq \mk b(\Sigma)$,
\end{enumerate}
then there are nonnegative numbers $\epsilon_1,\epsilon_2,\cdots,\epsilon_k$ with $\sum_{j=1}^k\epsilon_j\leq \epsilon$ so that 
\begin{enumerate}
    \item[(ix)] $\mc H^n(\Gamma_j)\leq \mc H^n(P)+\epsilon_j$ for every $P\in \mc M^G_0$ satisfying $\partial P=\partial \Gamma_j$ and $\mk b(P) \leq \mk b(\Gamma_j)$, $j=1,\cdots, k$.
\end{enumerate}
Moreover, if (viii) is valid for all $P\in \mc M^G_0$ satisfying $\partial P=\partial \Sigma$, then (ix) is also valid for all $P\in \mc M^G_0$ satisfying $\partial P=\partial \Gamma_j$. 
\end{theorem}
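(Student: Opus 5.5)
We will follow the proof of the Replacement Theorem in Almgren--Simon \cite{almgren79plateau}*{Theorem 1} (see also \cite{meeks82exotic}*{Lemma 3 and its corollary}): the pieces of $\Sigma$ that wander outside $U$ are cut off along $\bd U$ and capped by pieces of $\bd U$. The required area control comes from Lemma \ref{Lem: convex}, and the final transversality and disjointness come from a small inward push combined with Lemma \ref{Lem: switch lemma}.

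\textbf{Construction.} Let $\Lambda_1,\dots,\Lambda_L$ be the $G$-components of $\Sigma\setminus U$ that are disjoint from $\bd\Sigma$. Order them by the inclusion of the compact $G$-sets $K_{\Lambda_l}$ given by Lemma \ref{Lem: convex}. By the uniqueness in that lemma, the sets $K_{\Lambda_l}$ are either nested or disjoint. Set $\mathcal I$ to be the indices with $K_{\Lambda_l}$ maximal. Since $\bd\Sigma\cap K_\Lambda=\emptyset$ by (i), the set $K:=\bigcup_{l\in\mathcal I}K_{\Lambda_l}$ contains every part of $\Sigma\setminus U$ that does not meet $\bd\Sigma$. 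We then define
\[
\Sigma':=\Big(\Sigma\setminus \bigcup_{l}\Lambda_l\Big)\cup\bigcup_{l\in\mathcal I}F_l,
\]
where $F_l\subset\bd U$ is the cap from \eqref{Eq: convex lem - volume bound}. By \eqref{Eq: convex lem - area bound}, $\mc H^n(F_l)<\mc H^n(\Lambda_l\cap U(\theta\rho_0))$, so $\Sigma'$ has smaller area.

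\textbf{Smoothing and making the surfaces disjoint.} Next we push each $F_l$ slightly into $U$ within the collar $U\setminus \closure(U_\eta)$, by a $G$-equivariant normal graph of small height. We smooth the corners $G$-equivariantly near $\bd F_l=\bd\Lambda_l$. The new caps may intersect $\Sigma\cap U$. Wherever they do, the intersection is transversal after a generic $G$-perturbation, and we apply the cut-and-paste argument of Lemma \ref{Lem: switch lemma} inside the collar to remove these intersections without increasing area. This gives $\wti\Sigma$ with $\wti\Sigma\cap U_\eta\subset\Sigma\cap U_\eta$ and $\wti\Sigma\setminus U\subset\Sigma\setminus U$, which are conditions (iv) and (v). The strict area inequality in \eqref{Eq: convex lem - area bound} leaves enough slack to absorb the smoothing error. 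Combined with the discarded area $\mc H^n((\Sigma\setminus\wti\Sigma)\cap U_\eta)$, this yields (vi).

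\textbf{Topological properties.} Genus zero is preserved because we only cut along curves in $\Sigma^*$ and glue disks or half-disks (the caps $F_l^*$ lie on $\closure(\bd_1U^*)$, a half-sphere). This is a surgery on $\Sigma^*$, so it cannot raise genus, and the resulting components $\Gamma_j$ of $\wti\Sigma\cap\closure(U)$ are each of genus zero, which is (vii). The locally $G$-boundary-type property persists because the caps meet $K_1$ orthogonally; this follows from Lemma \ref{Lem: hypersurface position} and the fact that $\bd U$ is orthogonal to $K_1$. The inequality $\mk b(\wti\Sigma)\le\mk b(\Sigma)$ follows from Remark \ref{Rem: replacement statement}: if $\Lambda\cap K_1=\emptyset$ then $F\cap K_1=\emptyset$. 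Otherwise each arc of $\bd_1F^*$ ending on $K_1^*$ merges boundary components of $\Sigma^*$ on $K_1^*$ rather than creating new ones. I expect this bookkeeping to be the main obstacle. In particular, I expect we must use the choice of $F$ among $F_0$ and $\bd U\setminus F_0$ forced by the isoperimetric uniqueness, and check that the pushed-in caps still meet $K_1$ orthogonally in a controlled number of components. I would treat the cases where $\bd_1\Lambda^*$ consists of closed curves or of arcs ending on $\bd_0U^*$ separately, in the coordinates used in Lemma \ref{Lem: G-isotopy approximation}, Step 3.

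\textbf{Almost-minimizing property (ix).} Suppose some $\Gamma_j$ could be replaced by a competitor $P_j$ with $\bd P_j=\bd\Gamma_j$, $\mk b(P_j)\le\mk b(\Gamma_j)$ and area smaller than that of $\Gamma_j$ by more than $\epsilon_j$. Define $\epsilon_j:=\mc H^n(\Gamma_j)-\inf\mc H^n(P_j)$, the infimum over the admissible class, so that (ix) holds by definition. Replacing every $\Gamma_j$ by near-optimal $P_j$, after making them disjoint via Lemma \ref{Lem: switch lemma}, gives a competitor $P$ for $\Sigma$. This $P$ has genus zero, $\bd P=\bd\Sigma$, and $\mk b(P)\le\sum_j\mk b(P_j)+\mk b(\wti\Sigma\setminus U)\le\mk b(\Sigma)$. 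Then (vi) and (viii) give $\sum_j\epsilon_j\le\mc H^n(\wti\Sigma)-\mc H^n(P)\le\mc H^n(\Sigma)-\mc H^n(P)\le\epsilon$. The same argument without any $\mk b$ constraint gives the final statement.
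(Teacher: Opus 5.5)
Your route is the one the paper takes: its proof only invokes \cite{almgren79plateau}*{Theorem 1}, with Lemma \ref{Lem: switch lemma} in place of \cite{almgren79plateau}*{Corollary 1}. Your execution, however, breaks down at concrete points.

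\textbf{The capping construction.} $\Sigma'=(\Sigma\setminus\bigcup_l\Lambda_l)\cup\bigcup_{l\in\mathcal I}F_l$ is not an admissible surface once some $K_{\Lambda_m}$ is nested in a maximal $K_{\Lambda_l}$. In that case $\bd\Lambda_m\subset\interior(F_l)$, so the pieces of $\Sigma\cap\closure(U)$ attached along $\bd\Lambda_m$ survive with boundary curves in the interior of the cap $F_l$. Thus $\bd\Sigma'\neq\bd\Sigma$, and $\Sigma'$ is not embedded along $\bd\Lambda_m$. Lemma \ref{Lem: switch lemma} preserves boundaries, so applying it cannot remove these extra curves.

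You have to cap every removed $\Lambda$ by its own $F$ (nested caps pushed to different depths), and then \emph{discard} every $G$-component of the result that does not contain $\bd\Sigma$. This is not optional: $F^*$ is the planar region of $\closure(\bd_1U^*)$ cut out by all of $\bd_1\Lambda^*$, not a disk or half-disk, so capping can disconnect the surface. For example, let $\Lambda_1$ be an annular handle outside $U$ whose cap consists of two disk-type pieces, and let $\Lambda_2$ lie inside $K_{\Lambda_1}$. After capping both, the part of $\Sigma\cap U$ joining $\bd\Lambda_1$ to $\bd\Lambda_2$ closes up with the caps into a closed component. That component may reach well into $U_\eta$. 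Throwing it away is what the term $\mc H^n((\Sigma\setminus\wti\Sigma)\cap U_\eta)$ in (vi) and the mere inclusion in (iv) account for; your construction never removes anything from $U_\eta$.

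Finally, $\mk b(\wti\Sigma)\le\mk b(\Sigma)$ is, by your own account, not proved. The claim that arcs of $F^*\cap\bd_0U^*$ only merge curves needs an argument. It should use that $F$ is dictated by the smallness of $K_\Lambda$ rather than freely chosen (cf.\ Remark \ref{Rem: replacement statement}), together with planarity of $\Sigma^*$ and the discarding step.

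\textbf{The derivation of (ix).} The $P_j$ are arbitrary elements of $\mc M^G_0$ with $\bd P_j=\bd\Gamma_j$. They may leave $\closure(U)$ and meet $\wti\Sigma\setminus U$, so they must first be replaced by surfaces in $\closure(U)$ before any cut-and-paste; this is what the last clause of (vii) is for.

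More seriously, your chain $\mk b(P)\le\sum_j\mk b(P_j)+\mk b(\wti\Sigma\setminus U)\le\mk b(\Sigma)$ is false. A single $G$-component of $\wti\Sigma\cap K_1$ is in general cut by $\bd U$ into several arcs, each counted separately in a different piece. Also, the reglued surface need not have genus zero. Both failures occur as soon as some $\Gamma_j$ meets $\bd U$ in more than one $G$-component. Here is an example.
\begin{itemize}
\item Take $p_1,\dots,p_4$ in cyclic order on the circle $\closure(\bd_1U^*)\cap\bd_0U^*$.
\item Let $\Gamma_j^*$ be a disk bounded by $e_1\cup b_{23}\cup e_2\cup b_{41}$. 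Here $e_1,e_2\subset\closure(\bd_1U^*)$ make up $\bd_1\Gamma_j^*$ and join $p_1,p_2$ and $p_3,p_4$ respectively, and $b_{23},b_{41}\subset\bd_0U^*$.
\item Let $\wti\Sigma^*\setminus U^*$ close $b_{23}$ and $b_{41}$ into the only two boundary curves of $\wti\Sigma^*$ on $K_1^*$.
\end{itemize}
Then $\mk b(\Gamma_j)+\mk b(\wti\Sigma\setminus U)=4$, while $\mk b(\wti\Sigma)=2$. Now let $P_j^*$ be an annulus bounded by $e_1\cup b'_{12}$ and $e_2\cup b'_{34}$, with $b'_{12},b'_{34}\subset\bd_0U^*$. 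It has genus zero and $\mk b(P_j)=\mk b(\Gamma_j)=2$. Yet $(\wti\Sigma\setminus\Gamma_j)\cup P_j$ has one more handle than $\wti\Sigma$, so (viii) says nothing about it.

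In \cite{almgren79plateau} the $\Gamma_j$ are disks glued along single Jordan curves, so regluing is harmless. Here that holds when each $\Gamma_j$ is attached along a single $G$-component: the genus is preserved and $\mk b(P)-\mk b(\wti\Sigma)=\sum_j(\mk b(P_j)-\mk b(\Gamma_j))$. In general you need an additional argument controlling how the $\Gamma_j$ are attached, and your sketch does not supply it.
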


Note that if $\Sigma$ is a disk-type $G$-hypersurface, then each $\Gamma_j$ is disk-type.

\begin{proof}
Note that the above assumptions have been modified as in \cite{meeks82exotic}*{\S 4}. 
Using Lemma \ref{Lem: switch lemma} in place of \cite{almgren79plateau}*{Corollary 1}, the proof of \cite{almgren79plateau}*{Theorem 1} (see also \cite{jost1986existenceI}*{Lemma 4.4} for the free boundary version) would carry over in our setting. 
\end{proof}

\begin{lemma}[Filigree Lemma]\label{Lem: filigree}
    Let $\{Y_t\}_{t\in[0,1]}$ be an increasing family of half-ball-type $G$-sets satisfying the assumptions for $U$ in Lemma \ref{Lem: convex} (with constants $\beta,\theta$ independent of $t$). 
    Suppose $f$ is a $G$-invariant non-negative function on $M$ so that $f$ is $C^2$ on $M\setminus\closure(Y_0)$, $\nabla f\neq 0$ on $Y_1\setminus \closure(Y_0)$, 
    \[Y_t=\{x\in M: f(x)<t\} {\rm ~for~all~} t\in (0,1],\quad{\rm and}\quad \sup_{\closure(Y_1)\setminus\closure(Y_0)}|\nabla f|\leq c_1\]
    for some $c_1>0$. 
    Suppose also that there is a constant $c_2<\infty$ so that, whenever $\gamma$ is a $G$-connected $G$-invariant $C^2$ $(n-1)$-manifold on $\bd Y_t$ such that $\gamma\not\subset K_1\cap \bd Y_t$ and 
    $\gamma^*$ is a simple curve in $\closure(\bd_1Y_t^*)$ with (possibly empty) boundary on $\closure(\bd_1Y_t^*)\cap \bd_0 Y_t^*$, there is a $G$-invariant region $E\subset \bd Y_t$ with $\bd E=\gamma$ (i.e. $\bd_1 E^*=\gamma^*\cap\bd_1 Y_t^*$), satisfying 
    \begin{align}\label{Eq: compare to disk}
    	E^*\cap \bd_0Y_t^*=\emptyset \qquad \mbox{whenever $\gamma^*\cap \bd_0 Y_t^*=\emptyset$ and $n+1-k_0=\cohom(G)=3$,}
    \end{align}
    and 
    \begin{align}\label{Eq: filigree - isoperimetric}
        \mc H^{n}(E)\leq c_2\left(\mc H^{n-1}(\bd E)\right)^{\frac{n-k_0}{n-k_0-1}}.
    \end{align}
    Suppose $\Sigma\in\mc M^G_0$ satisfies the assumptions in Theorem \ref{Thm: replacement}(i) with $Y_t$ in place of $U$ for every $t\in (0,1)$, 
    \begin{align}\label{Eq: filigree - not all non-principal assumption}
    	\Sigma\not\subset K_1\qquad {\rm in~}Y_t,
    \end{align}
    and there exists $\epsilon>0$ so that 
    \begin{align}\label{Eq: filigree - almost minimizing}
        \mc H^n(\Sigma)\leq \mc H^n(\Gamma') + \epsilon,\quad\forall \Gamma'\in\mc M^G_0 {\rm ~with~}\bd\Gamma'=\bd\Sigma {\rm ~and~} \mk b(\Gamma')\leq\mk b(\Sigma). 
    \end{align}
    Then
    \begin{align}\label{Eq: filigree}
        \mc H^n(\Sigma\cap Y_t)\leq 2\epsilon, \quad \forall 0\leq t \leq 1-(n-k_0)\cdot c_1\cdot c_2^{\frac{n-k_0-1}{n-k_0}}\left(\mc H^n(\Sigma\cap Y_1)\right)^{\frac{1}{n-k_0}} .
    \end{align}
\end{lemma}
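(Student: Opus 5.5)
Following Almgren--Simon \cite{almgren79plateau}*{Lemma 3} (the classical filigree lemma), I would derive a differential inequality for $a(t):=\mc H^n(\Sigma\cap Y_t)$ and integrate it. By the co-area formula applied to $f\llcorner\Sigma$ and the bound $|\nabla f|\leq c_1$ on $\closure(Y_1)\setminus\closure(Y_0)$, one has
\begin{align*}
\mc H^{n-1}(\Sigma\cap\bd Y_t)\leq c_1\, a'(t)
\end{align*}
for almost every $t\in(0,1)$, where $a$ is nondecreasing. For almost every $t$, Sard's theorem shows that $\Sigma$ meets $\bd Y_t$ transversally. Then $\Sigma\cap\bd Y_t$ is a finite union of $G$-connected $G$-invariant $(n-1)$-manifolds $\gamma_1,\dots,\gamma_J$. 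By the locally $G$-boundary-type property and Lemma \ref{Lem: hypersurface position}, each $\gamma_j^*$ is a simple curve in $\closure(\bd_1Y_t^*)$, either closed or with free boundary on $\bd_0Y_t^*$. Assumption \eqref{Eq: filigree - not all non-principal assumption} excludes $\gamma_j\subset K_1$. The hypothesis therefore yields regions $E_j\subset\bd Y_t$ satisfying \eqref{Eq: compare to disk} and \eqref{Eq: filigree - isoperimetric}.

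The key step is a comparison surface. Apply the Replacement Theorem \ref{Thm: replacement} with $U=Y_t$ (with the cut-and-paste of Lemma \ref{Lem: switch lemma} to make the $E_j$ disjoint or nested), or directly replace $\Sigma\cap Y_t$ by $\bigcup_j E_j$, pushed slightly into $Y_t$ and smoothed. This gives $\Gamma'\in\mc M^G_0$ with $\bd\Gamma'=\bd\Sigma$, and it must satisfy two requirements.
\begin{itemize}
\item[(a)] The reduced genus does not increase. Cutting a genus-zero surface $\Sigma^*$ along curves and capping with disks or half-disks keeps genus zero.
\item[(b)] $\mk b(\Gamma')\leq\mk b(\Sigma)$. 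Condition \eqref{Eq: compare to disk} guarantees that a closed curve $\gamma_j^*$ away from $\bd_0Y_t^*$ is capped by a disk that avoids $K_1^*$, so no new boundary component on $K_1^*$ appears (compare Remark \ref{Rem: replacement statement}). When a curve has endpoints on $\bd_0Y_t^*$, the region $E_j^*$ is a half-disk whose boundary portion on $K_1^*$ joins onto existing components, so the count does not increase.
\end{itemize}
Then \eqref{Eq: filigree - almost minimizing} gives
\begin{align*}
a(t)\leq \sum_j\mc H^n(E_j)+\epsilon\leq c_2\big(\mc H^{n-1}(\Sigma\cap\bd Y_t)\big)^{\frac{n-k_0}{n-k_0-1}}+\epsilon,
\end{align*}
using superadditivity of $x\mapsto x^{\frac{n-k_0}{n-k_0-1}}$. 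Combining with the co-area bound,
\begin{align*}
a(t)-\epsilon\leq c_2\big(c_1a'(t)\big)^{\frac{n-k_0}{n-k_0-1}}\qquad\mbox{for a.e. } t.
\end{align*}

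Now argue by contradiction. Suppose $a(t_0)>2\epsilon$ for some $t_0$ in the stated range. Then $a(t)-\epsilon\geq a(t)/2>0$ for all $t\in[t_0,1]$, since $a$ is monotone. Set $b:=a^{1/(n-k_0)}$. The inequality becomes $b'\geq \big((n-k_0)c_1c_2^{\frac{n-k_0-1}{n-k_0}}\big)^{-1}$ up to harmless constants coming from the factor $2$. Integrating over $[t_0,1]$ gives
\begin{align*}
1-t_0\leq (n-k_0)c_1c_2^{\frac{n-k_0-1}{n-k_0}}\,\mc H^n(\Sigma\cap Y_1)^{\frac{1}{n-k_0}},
\end{align*}
which contradicts the range of $t$ in \eqref{Eq: filigree}. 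This is an Almgren--Simon style computation; I would absorb the factor coming from $a-\epsilon\geq a/2$ either by adjusting $c_2$ or by using the sharper form exactly as in \cite{almgren79plateau}.

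The main obstacle is step (b): verifying that the capped competitor is a genuine element of $\mc M^G_0$ (locally $G$-boundary-type, genus zero, orthogonal at $K_1$) and that $\mk b(\Gamma')\leq\mk b(\Sigma)$. This is exactly where the selection constraint \eqref{Eq: compare to disk} is needed. Without it, choosing an annular region for a closed curve would create a new component on $\bd_0Y_t^*$. I would handle this by a case analysis over Proposition \ref{Prop: local structure of M/G}(1) and (2.a), following Remark \ref{Rem: compare disk instead cylinder}.
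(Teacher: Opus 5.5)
Your outer skeleton matches the paper: Sard, co-area with $|\nabla f|\le c_1$, the bound \eqref{Eq: filigree - isoperimetric}, superadditivity of $x\mapsto x^{p}$ with $p=\frac{n-k_0}{n-k_0-1}$, and integration of an ODE for a power of $a(t)$. However, step (b) fails, and (b) is the step that carries the $\mk b$-constraint.

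First, \eqref{Eq: compare to disk} is only assumed when $n+1-k_0=3$. When $n+1-k_0>3$, the region supplied for a closed curve $\gamma^*\subset\bd_1Y_t^*$ may be the annulus $E_c^*$ reaching down to $\bd_0Y_t^*$. Capping with it adds a component of $\Gamma'\cap K_1$, which is fatal for instance when $\mk b(\Sigma)=0$. The paper instead replaces $E_c$ by the disk-type $F$ with $F^*=(E_c^*\setminus B_\tau^*(K_1))\cup(\bd_1B_\tau^*(K_1)\cap Y_t^*)$. This costs only $\mc H^n(\bd B_\tau(K_1)\cap Y_t)\to 0$, because $\dim K_1<n$ in that case.

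Second, for arcs the reconnection at the endpoints can split a $K_1^*$-curve, so ``joins onto existing components'' is false. Let one curve $c$ of $\Sigma^*\cap K_1^*$ run $q_1\to q_2$ inside $Y_t^*$, then $q_2\to q_3$ outside, then $q_3\to q_4$ inside, then $q_4\to q_1$ outside. Let $\Sigma^*\cap Y_t^*$ be an arch over the two chords whose dome arcs are $\gamma_{23}$ and $\gamma_{41}$. Any surface with boundary exactly $\gamma_{23}$ joins $q_2$ to $q_3$ in $K_1^*$, and similarly for $\gamma_{41}$. So every competitor that caps the $\gamma_j$ separately turns $c$ into two closed curves, and $\mk b$ goes from $1$ to $2$ whichever sides $E_j$ you take. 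The cut-and-paste of Lemma \ref{Lem: switch lemma} preserves each boundary and cannot help. Hence \eqref{Eq: filigree - almost minimizing} cannot be applied to your $\Gamma'$.

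The missing idea is to localize the almost-minimality before comparing, instead of building one global competitor from the $E_j$. The paper applies Theorem \ref{Thm: replacement} in $Y_t$ to obtain $\wti\Sigma$ with:
\begin{itemize}
\item $\mk b(\wti\Sigma)\le\mk b(\Sigma)$ and $\wti\Sigma\setminus Y_t\subset\Sigma\setminus Y_t$;
\item $\wti\Sigma\cap\closure(Y_t)=\bigcup_j\Gamma_j$, where each $\Gamma_j$ satisfies (ix), whose constraint is the piece's own count $\mk b(\Gamma_j)$.
\end{itemize}
A half-disk $E$ with $\mk b(E)=1$ is then admissible against any piece whose boundary is an arc, since such a piece meets $K_1$. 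In the arch example the piece has $\mk b=2$, so both half-disks are admissible together. Summing the piecewise bounds gives $\mc H^n(\wti\Sigma\cap Y_t)\le c_2\big(\mc H^{n-1}(\Sigma\cap\bd Y_t)\big)^{p}+\epsilon$. A second use of \eqref{Eq: filigree - almost minimizing}, now with $\Gamma'=\wti\Sigma$, adds another $\epsilon$; this is where the $2\epsilon$ in the statement comes from.

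Integrating $a-2\epsilon$ from $t_0=\inf\{t: a(t)>2\epsilon\}$ then yields exactly the stated range. Your device $a-\epsilon\ge a/2$ would instead replace $c_2$ by $2c_2$. Your one-step comparison, after disjoining the $E_j$ via Lemma \ref{Lem: switch lemma}, is adequate only for the $\mk b$-free variant in Remark \ref{Rem: filigree lemma statements}(i).
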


\begin{remark}\label{Rem: filigree lemma statements}
	We make the following comments on the assumptions in the above lemma. 
	\begin{itemize}
		\item[(i)] The assumption \eqref{Eq: compare to disk} is necessary due to the assumption on $\mk b$ in \eqref{Eq: filigree - almost minimizing}. 
			One notices that if $\Sigma^*$ is a disk $D$ in the half-ball $Y_1^*\cong \mb B^3_+$ with $\bd D\subset \interior(\mb S^2_+)\subset \bd_1 \mb B^3_+$, 
			then $\mk b(\Sigma)=0$, and $\interior(\mb S^2_+)\setminus \bd D$ is the union of a spherical region $E_s^*$ and a cylindrical region $E_c^*$. 
			Hence, using \eqref{Eq: filigree - almost minimizing}, one can compare the area of $\Sigma=\pi^{-1}(D)$ with the area of $E_s$ instead of the area of $E_c$, since $\mk b(E_c)=1$. 
			Thus, we need \eqref{Eq: compare to disk} to guarantee that $E^*=E_s^*$ is a disk in certain cases.  
			Nevertheless, if we remove the requirement $\mk b(\Gamma')\leq \mk b(\Sigma)$ in \eqref{Eq: filigree - almost minimizing}, then \eqref{Eq: compare to disk} can be removed too. 
            A similar obstacle was also considered in \cite{jost1986existenceI}*{Theorem 5.2} with an additional curvature assumption on the boundary. 
		\item[(ii)] In our applications, we will take $Y_t\subset B_{2\rho_0}(G\cdot p_0)$ so that in the slice, $\bd Y_t \cap S_{2\rho_0}(p_0)$ is a portion of $(n-k_0)$-dimensional spheres, or a cylinder with $(n-k_0-1)$-dimensional circular cross-section, which have such a constant $c_2$ satisfying \eqref{Eq: filigree - isoperimetric}.  
			Moreover, similar to Remark \ref{Rem: compare disk instead cylinder}, we can also find a constant $c_2$ so that \eqref{Eq: compare to disk} and \eqref{Eq: filigree - isoperimetric} are simultaneously valid. 
		\item[(iii)] Note that \eqref{Eq: filigree - not all non-principal assumption} is listed just for clarity, and is directly guaranteed by the locally $G$-boundary-type assumption. 
        Compared with the Filigree Lemma in \cite{almgren79plateau}, the additional assumptions \eqref{Eq: compare to disk} and \eqref{Eq: filigree - not all non-principal assumption} are required because $Y_t^*$ are relative open sets near the boundary of $M^*$. 
	\end{itemize}	
\end{remark}

\begin{proof}
    By Sard's theorem, $\Sigma$ is transversal to $\bd Y_t$ for almost all $t\in (0,1)$ so that Theorem \ref{Thm: replacement} is applicable. 
    Hence, for almost all $t\in (0,1)$, there exists $\wti\Sigma\in \mc M^G_0$ with $\bd\wti\Sigma = \bd\Sigma$, $\mk b(\wti\Sigma)\leq\mk b(\Sigma)$, $\mc H^n(\wti\Sigma) \leq \mc H^n (\Sigma)$, $\wti\Sigma\cap \bd Y_t\subset\Sigma\cap\bd Y_t$, and $\wti\Sigma\cap \closure(Y_t) = \cup_{j=1}^k \Gamma_j$ for some $\{\Gamma_j\}\subset\mc M^G_0$ so that 
    \begin{align}\label{Eq: filigree - 4.4}
        \mc H^n(\Gamma_j)\leq\mc H^n(\Gamma') +\epsilon_j, \quad\forall\Gamma'\in\mc M^G_0 {\rm ~with~}\bd\Gamma'=\bd\Gamma_j {\rm ~and~} \mk b(\Gamma')\leq\mk b(\Gamma_j),
    \end{align}
    where $\sum_{j=1}^k\epsilon_j\leq\epsilon$. 
	
	\begin{claim}\label{Claim: compare to disk}
		For each $j\in\{1,\dots, k\}$, we have $\mc H^n(\Gamma_j)\leq c_2\left(\mc H^{n-1}(\Gamma_j\cap\bd Y_t)\right)^{\frac{n-k_0}{n-k_0-1}} + \epsilon_j$.
	\end{claim}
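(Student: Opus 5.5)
The idea is to compare each $\Gamma_j$ with a competitor built from a region of $\bd Y_t$, and to use the almost-minimizing property \eqref{Eq: filigree - 4.4}. Fix $j$ and write $\Gamma_j\cap\bd Y_t=\bd\Gamma_j$. This is a closed $G$-invariant $(n-1)$-manifold, and by transversality (Theorem \ref{Thm: replacement}(v)) its quotient $(\bd\Gamma_j)^*$ is a finite union of simple curves $\gamma_1^*,\dots,\gamma_m^*$ in $\closure(\bd_1 Y_t^*)$. Each of these is either closed in $\bd_1Y_t^*$ or has endpoints on $\bd_0Y_t^*$; the latter is forced by the locally $G$-boundary-type condition, which makes $\Gamma_j$ meet $K_1$ orthogonally. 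By \eqref{Eq: filigree - not all non-principal assumption}, each $G$-component $\gamma_i$ satisfies $\gamma_i\not\subset K_1$. The hypothesis of the lemma then supplies $G$-regions $E_i\subset\bd Y_t$ with $\bd E_i=\gamma_i$, with \eqref{Eq: filigree - isoperimetric}, and with $E_i^*\cap\bd_0Y_t^*=\emptyset$ whenever $\gamma_i^*$ avoids $\bd_0Y_t^*$ and $n+1-k_0=3$.

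Next I build the competitor $\Gamma'$ from the $E_i$. Since $\bd_1Y_t^*$ is a half-sphere, the curves $\gamma_i^*$ are nested or disjoint, and the regions $E_i$ can be taken nested or disjoint as well. Set $\Gamma'$ to be the $G$-hypersurface whose quotient is the mod-$2$ sum $E_1^*\triangle\cdots\triangle E_m^*$, pushed slightly into $Y_t$ along the nestings so that it becomes embedded. Pushing into the interior along the distance function is a $G$-equivariant operation, and it changes area by an arbitrarily small amount. Then $\bd\Gamma'=\bd\Gamma_j$, and $\Gamma'^*$ is a planar domain in $\bd_1Y_t^*\cong\bd_1\mb B^3_+$, so $\genus(\Gamma'^*)=0$. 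The boundary pieces of $\Gamma'$ meet $K_1$ orthogonally after smoothing, so $\Gamma'$ is locally $G$-boundary-type and hence $\Gamma'\in\mc M^G_0$. Since $\sum_i \mc H^{n-1}(\gamma_i)^{a}\le(\sum_i\mc H^{n-1}(\gamma_i))^{a}$ for $a=\frac{n-k_0}{n-k_0-1}\ge1$, we obtain
\[\mc H^n(\Gamma')\le\sum_i\mc H^n(E_i)\le c_2\big(\mc H^{n-1}(\Gamma_j\cap\bd Y_t)\big)^{\frac{n-k_0}{n-k_0-1}}+\eta\]
for any $\eta>0$.

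It remains to verify $\mk b(\Gamma')\le\mk b(\Gamma_j)$, which is needed before \eqref{Eq: filigree - 4.4} can be applied. This is the main obstacle. New components of $\Gamma'\cap K_1$ can only arise from regions $E_i$ that touch $\bd_0Y_t^*$ although $\gamma_i^*$ does not, which is the annulus scenario of Remark \ref{Rem: filigree lemma statements}(i). When $\codim=3$ for the relevant orbits, \eqref{Eq: compare to disk} excludes this. Otherwise $n+1-k_0>3$, so $K_1$ has codimension at least $2$. In that case I would isotope $E_i$ slightly off $K_1$, as in the proof of Lemma \ref{Lem: G-isotopy approximation}, Step 3 Case 1, at arbitrarily small area cost; this yields no components on $K_1$. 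For arcs $\gamma_i^*$, each component of $E_i\cap K_1$ is bounded by endpoints already belonging to components of $\Gamma_j\cap K_1$. I expect a count showing that the number of such components does not exceed $\mk b(\Gamma_j)$, possibly after choosing, among the two regions, the one adjacent to the components of $\Gamma_j$. With this in hand, \eqref{Eq: filigree - 4.4} gives $\mc H^n(\Gamma_j)\le\mc H^n(\Gamma')+\epsilon_j$, and letting $\eta\to0$ proves the claim.
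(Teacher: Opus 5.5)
Your route is the paper's: compare $\Gamma_j$ with a region of $\bd Y_t$ furnished by the hypothesis and invoke \eqref{Eq: filigree - 4.4}. You use \eqref{Eq: compare to disk} for closed curves when $n+1-k_0=3$, and a modification near $K_1$ when $n+1-k_0>3$. The paper does this for $\gamma=\bd\Gamma_j$ a single $G$-connected curve, which is the form in which the lemma's hypothesis is stated. Then only the arc and closed-curve cases arise. In the arc case $\mk b(E)=1\le\mk b(\Gamma_j)$ is immediate, because $\Gamma_j$ contains the orbits over the two endpoints of $\gamma^*$, and these lie in $K_1$.

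In your multi-component version, however, the inequality $\mk b(\Gamma')\le\mk b(\Gamma_j)$ is left as an expectation. Without it \eqref{Eq: filigree - 4.4} cannot be applied, so the proof is not finished. It does close, and no choice of regions is needed. Let $\ell$ be the number of arcs among the $\gamma_i^*$, and set $C:=\closure(\bd_1Y_t^*)\cap\bd_0Y_t^*$.
\begin{itemize}
\item The $2\ell$ endpoints on $C$ are exactly the endpoints of the arc components of the compact $1$-manifold $\Gamma_j^*\cap\bd_0Y_t^*$, so $\mk b(\Gamma_j)\ge\ell$.
\item Along $C$, membership in $E_1^*\triangle\cdots\triangle E_m^*$ flips precisely at these $2\ell$ points. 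Hence for $\ell\ge1$ this region meets $\bd_0Y_t^*$ in exactly $\ell$ arcs, whichever $E_i$ were chosen.
\end{itemize}
The only bad case is $\ell=0$ with the sum containing all of $C$. By \eqref{Eq: compare to disk} this can only happen when $n+1-k_0>3$.

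That case is the paper's case (c) with $E=\pi^{-1}(E_c^*)$, and there your mechanism is wrong as stated. You cannot isotope, or otherwise equivariantly deform, $E_c$ off $K_1$. For a $G$-equivariant map $\phi$ one has $G_x\subseteq G_{\phi(x)}$, so the orbits of $E_c$ lying in $K_1$ stay in $K_1$ and $\mk b$ stays $1$.

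What is needed is a cut-and-cap, which is what the paper does. Replace $E_c^*\cap B_\tau^*(K_1)$ by the disk $\bd_1B_\tau^*(K_1)\cap Y_t^*$. This produces a disk $F^*$ with $\bd F^*=\gamma^*$ and $\mk b(F)=0$. Also $F\in\mc M^G_0$, since it is a piece of the boundary of the $G$-set $Y_t\setminus B_\tau(K_1)$. Moreover $\mc H^n(F)\le\mc H^n(E)+\mc H^n(\bd B_\tau(K_1)\cap Y_t)$, and the last term tends to $0$ as $\tau\to0$ because $\dim(K_1\cap B_{\rho_0}(G\cdot p_0))<n$.

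The small-area fact is the one used in Lemma \ref{Lem: G-isotopy approximation}, Step 3, Case 1. But the competitor no longer meets $K_1$, so it comes from surgery, not isotopy. In your setting, apply the cap once, to the component of the mod-$2$ sum containing $C$; for small $\tau$ the cap is disjoint from the other components. With these two points supplied, \eqref{Eq: filigree - 4.4} gives the claim after letting $\tau,\eta\to0$.
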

	\begin{proof}[Proof of Claim \ref{Claim: compare to disk}]
		Let $j\in\{1,\dots, k\}$ and $\gamma=\bd \Gamma_j$. By hypothesis, we have a $G$-set $E\subset \bd Y_t$ with $\bd E=\gamma$ and \eqref{Eq: compare to disk}\eqref{Eq: filigree - isoperimetric}. 
		By Lemma \ref{Lem: hypersurface position} and Proposition \ref{Prop: local structure of Sigma/G 1}, there are three possibilities:
		\begin{itemize}
			\item[(a)] $\gamma^*\subset\bd_0Y_t^*$, then $\Sigma=\wti\Sigma=\Gamma_j\subset K_1$ in $Y_t$; this case has been ruled out by \eqref{Eq: filigree - not all non-principal assumption};
			\item[(b)] $\gamma^*\subset \closure(\bd_1Y_t^*)$ is a Jordan arc with free boundary on $\bd_0Y_t^*$;
			\item[(c)] $\gamma^*\subset \bd_1Y_t^*$ is a simple closed curve with $\gamma^*\cap \bd_0Y_t^*=\emptyset$. 
		\end{itemize}
		In case (b), we know $E$ is a half-disk-type $G$-hypersurface with $\bd E=\bd \Gamma_j$ and $\mk b(E)=1\leq \mk b(\Gamma_j)$. 
		Hence, it follows from \eqref{Eq: filigree - 4.4}\eqref{Eq: filigree - isoperimetric} that 
		\[\mc H^n(\Gamma_j)~\leq~\mc H^n(E)+\epsilon_j ~\leq~ c_2\left(\mc H^{n-1}(\bd E)\right)^{\frac{n-k_0}{n-k_0-1}} + \epsilon_j ~=~ c_2\left(\mc H^{n-1}(\Gamma_j\cap\bd Y_t)\right)^{\frac{n-k_0}{n-k_0-1}} + \epsilon_j. \]
		In case (c), we know 
		$\gamma^*$ separates $\bd_1Y_t^*$ into a disk region $E_s^*$ (with $\mk b(E_s)=0$) 
		and a cylindrical region $E_c^*$ (with $\mk b(E_c)=1$). 
		By \eqref{Eq: compare to disk}, $E=\pi^{-1}(E_s^*)$ is a disk-type $G$-hypersurface satisfying \eqref{Eq: filigree - isoperimetric} provided $n+1-k_0=\cohom(G)=3$, and thus the above inequality is still valid. 
		If $n+1-k_0>\cohom(G)=3$ and $E=\pi^{-1}(E_s^*)$, the proof is the same. 
		However, if $n+1-k_0>\cohom(G)=3$ and $E=\pi^{-1}(E_c^*)$, we cannot compare $\mc H^n(\Gamma_j)$ with $\mc H^n(E)$ directly since it may happen that $\mk b(E)=1>\mk b(\Gamma_j)$. 
		Nevertheless, since $E^*=E_c^*$ is the region bounded between $\gamma^*$ and $\bd_0Y^*_t\cap \closure(\bd_1Y^*_t)$, we can choose $\tau>0$ arbitrarily small so that $\bd B_\tau(K_1)$ is transversal to $\bd Y_t$ 
        and $\bd_1 B_\tau^*(K_1) \cap  Y_t^* $ is a disk. 
        Then the union $F^*$ of $\bd_1 B_\tau^*(K_1)\cap Y_t^*$ and the cylinder $E_c^*\setminus B_\tau^*(K_1)$ is a disk with $\bd F^*=\gamma^*$ so that 
		\[ \mc H^n(\Gamma_j)\leq \mc H^n(\pi^{-1}(F^*)) +\epsilon_j \leq \mc H^n(E) + \mc H^n(\bd B_\tau(K_1)\cap Y_t) +\epsilon_j. \]
		Note that the dimension assumption in this case indicates $\dim(K_1\cap B_{\rho_0}(G\cdot p_0))<n$, and thus $\mc H^n(\bd B_\tau(K_1)\cap Y_t)\to 0$ as $\tau\to 0$. 
		Therefore, after taking $\tau \to 0$, we still have $\mc H^n(\Gamma_j)\leq \mc H^n(E) +\epsilon_j$ to apply an argument similar to that used in the previous cases. 
	\end{proof}
	
    Next, summing the inequality in Claim \ref{Claim: compare to disk} over $j$, we obtain
    \[\mc H^n(\wti\Sigma\cap Y_t)~\leq~ c_2\Big[\sum_{j=1}^k\left(\mc H^{n-1}(\Gamma_j\cap\bd Y_t)\right)^{\frac{n-k_0}{n-k_0-1}}\Big] + \epsilon ~\leq~ c_2 \left(\mc H^{n-1}(\wti\Sigma\cap\bd Y_t)\right)^{\frac{n-k_0}{n-k_0-1}} + \epsilon .\]
    Noting $\wti\Sigma\cap\bd Y_t\subset \Sigma\cap \bd Y_t$, we see 
    \begin{align}\label{Eq: filigree - 4.5}
        \mc H^n(\wti\Sigma\cap Y_t)~\leq~ c_2 \left(\mc H^{n-1}(\Sigma\cap\bd Y_t)\right)^{\frac{n-k_0}{n-k_0-1}} + \epsilon. 
    \end{align}
    Combined with \eqref{Eq: filigree - almost minimizing} and the fact that $\wti\Sigma\setminus Y_t\subset\Sigma\setminus Y_t$, we conclude 
    \begin{align}\label{Eq: filigree - 4.6}
        \mc H^n(\Sigma\cap Y_t) ~\leq~ \mc H^n(\wti\Sigma\cap Y_t) + \epsilon ~\leq~ c_2 \left(\mc H^{n-1}(\Sigma\cap\bd Y_t)\right)^{\frac{n-k_0}{n-k_0-1}} + 2\epsilon . 
    \end{align}
    Without loss of generality, we assume $\mc H^n(\Sigma\cap Y_1)\geq 2\epsilon$ and set $t_0:=\inf\{t:\mc H^n(\Sigma\cap Y_t)>2\epsilon\}$. 
    Define $F(t):=\mc H^n(\Sigma\cap Y_t)-2\epsilon> 0$ for $t\in (t_0,1]$. 
    By the co-area formula and \eqref{Eq: filigree - 4.6}, we see $F'(t)=\int_{\{x\in\Sigma:f(x)=t\}}\frac{1}{|\nabla f|} \geq \frac{1}{c_1} \mc H^{n-1}(\Sigma\cap\bd Y_t)$ for almost all $t\in (t_0,1]$, and thus 
    \[F(t)\leq c_2 \left(\mc H^{n-1}(\Sigma\cap\bd Y_t)\right)^{\frac{n-k_0}{n-k_0-1}}\leq c_2\left( c_1 F'(t) \right)^{\frac{n-k_0}{n-k_0-1}}\quad {\rm a.e.~}t\in(t_0,1].\]
    For $A(t):=(F(t))^{\frac{1}{n-k_0}}>0$ on $(t_0,1]$, the above inequality implies $ A'(t)\geq \frac{1}{n-k_0}c_1^{-1}c_2^{-\frac{n-k_0-1}{n-k_0}}$ for almost all $t\in (t_0,1]$. 
    By integration, we see 
    \[0=A(t_0)\leq A(1) - \frac{1-t_0}{n-k_0}c_1^{-1}c_2^{-\frac{n-k_0-1}{n-k_0}}.\]
    Combined with $A(1)=(F(1))^{\frac{1}{n-k_0}}<(\mc H^n(\Sigma\cap Y_1))^{\frac{1}{n-k_0}}$, we deduce 
    \[1-t_0<(n-k_0)c_1c_2^{\frac{n-k_0-1}{n-k_0}}(\mc H^n(\Sigma\cap Y_1))^{\frac{1}{n-k_0}},\]
    which gives \eqref{Eq: filigree}. 
\end{proof}

We now show the following regularity theorem, which is analogous to \cite{almgren79plateau}*{Theorem 2}.

\begin{theorem}[First Regularity]\label{Thm: first regularity}
    Let $K_0,K_1,k_0,\rho_0$ be defined as above, $p_0\in K_0$, and $B:= B_{2\rho_0}(G\cdot p_0)$. 
    Suppose $\{\Sigma_k\}_{k\in\mb N}$ is a sequence in $\mc M^G_0$ with $\bd\Sigma_k\subset \bd B$, $\Sigma_k\setminus\bd\Sigma_k\subset B$, and 
    \[\mc H^n(\Sigma_k)\leq \mc H^n(\Gamma') + \epsilon_k,\qquad\forall\Gamma'\in\mc M^G_0 {\rm ~with~}\bd\Gamma'=\bd\Sigma_k,\]
    where $\epsilon_k\to 0$ as $k\to\infty$. 
    Suppose further that $W:=\lim_{k\to\infty}|\Sigma_k|$ exists in $\mc V^G_n(M)$. 

    Then $W$ is a stationary integral varifold in $B$ so that if $W$ has a tangent varifold $C$ at $x_0\in\spt(\|W\|)\cap B$ with $\spt(\|C\|)\subset H$ for some $n$-plane $H\subset T_{x_0}M$, then 
    \begin{itemize}
    	\item $H$ must be orthogonal to $T_{x_0}K_0$ whenever $x_0\in K_0$, and
    	\item there exists $\rho>0$ such that 
    \begin{align}\label{Eq: first regularity - regular}
        W\res G_n(B_{\rho}(G\cdot x_0)) = m |\Sigma|,
    \end{align}
    \end{itemize}
    where $m\in\mb N$ and $\Sigma$ is an embedded $G$-connected minimal $G$-hypersurface containing $G\cdot x_0$. 
\end{theorem}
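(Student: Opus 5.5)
Our plan is to follow the structure of \cite{almgren79plateau}*{Theorem 2}, working in the slice $S_{2\rho_0}(p_0)$ and the orbit space, with areas always computed in $M$ via \eqref{Eq: area in slice}.

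\textbf{Stationarity and integrality.} Stationarity of $W$ in $B$ comes from Lemma \ref{Lem: first variation and G-variation}. Equivariant isotopies supported in $B$ preserve $\mc M^G_0$ and the boundary, so the almost minimizing property gives $\delta W(X)=0$ for all $X\in\mk X^G(B)$, hence for all $X\in\mk X(B)$. Integrality and the density lower bound come from monotonicity combined with the Filigree Lemma \ref{Lem: filigree}. Applied to families $Y_t$ of small tubes (or half-ball-type $G$-neighborhoods) around $G\cdot x$, it excludes thin filigree pieces. Hence $\|W\|$ has density bounded below, and the Replacement Theorem \ref{Thm: replacement} lets us replace $\Sigma_k$ inside small balls by almost minimizing pieces in $\mc M^G_0$.

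\textbf{Orthogonality at $K_0$.} Assume $\spt\|C\|\subset H$ with $x_0\in K_0$. The cone $C$ is invariant under the isotropy action of $G_{x_0}$ on $T_{x_0}M$, and by the splitting for equivariant blow-ups it contains $T_{x_0}(G\cdot x_0)$. So $H\cap N_{x_0}(G\cdot x_0)$ is a $G_{x_0}$-invariant hyperplane of $N_{x_0}(G\cdot x_0)$. There are two cases.
\begin{enumerate}
\item[(a)] $H\supset P(x_0)=T_{x_0}K_0\cap N$. Here $G_{x_0}$ acts on the normal line by $\pm1$ and fixes $P(x_0)$, which forces the special exceptional structure of Proposition \ref{Prop: local structure of Sigma/G 1}(ii). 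We rule this out with the locally $G$-boundary-type property, passed to the limit through the replacements, together with a parity/Caccioppoli argument: $H$ would have to be a limit of boundaries of $G$-invariant sets lying on both sides.
\item[(b)] Otherwise $H$ is orthogonal to $T_{x_0}K_0$, arguing as in Lemma \ref{Lem: hypersurface position}.
\end{enumerate}

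\textbf{Regularity.} Fix small $\sigma,\tau$ and consider the cylinder-type region $\cCG_{\rho,\sigma,\tau}$ over a neighborhood of $G\cdot x_0$. Since the tangent cone is planar, for small $\rho$ the support of $W$ lies in a thin slab around $\exp(H)$. Applying Theorem \ref{Thm: replacement} to $\Sigma_k$ in this cylinder produces pieces $\Gamma_j\in\mc M^G_0$ with small excess. Using Lemma \ref{Lem: convex} and the isoperimetric choice with the disk-selection constraint \eqref{Eq: compare to disk}, each piece in the quotient half-ball is a genus-zero surface whose boundary curves lie near $\bd H^*$. The Filigree Lemma discards components of small area.

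The next step is to show that the remaining pieces are disks or half-disks, nearly graphical over $H^*$, meeting $K_1^*$ orthogonally. We then take the limit with $m$ sheets, given by the density of $C$, and apply the equivariant reduction (Lemma \ref{Lem: minimal in M^prin}) and Allard's theorem on each $G$-connected sheet away from $K_0$. Stationarity near $K_0$, together with the orthogonality from the previous step, yields a free-boundary-type regular surface in the quotient. Lifting gives a smooth embedded minimal $G$-hypersurface $\Sigma$ with \eqref{Eq: first regularity - regular}.

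\textbf{Main obstacle.} We expect the hardest step to be the cylinder argument near $K_1$, where the weighted metric $\wti g_{_{M^*}}$ degenerates. The analogue of \cite{almgren79plateau}*{(5.10)} fails because boundary curves on $K_1^*$ carry no area control. To handle this, we adjoin a $\tau$-neighborhood of $K_1$ to the side of the cylinder. This works because $\mc H^n(\bd B_\tau(K_1))\to0$ when $\dim K_1<n$, and in the codimension-three case \eqref{Eq: compare to disk} is used instead. Doing so keeps the count $\mk b$ from increasing and preserves the comparison surfaces.
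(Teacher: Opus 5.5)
Your treatment of the case $H\supset P(x_0)$ has a genuine gap, and this case is exactly the first bullet of the theorem. You correctly reduce it to the reflection picture: $G_{x_0}$ acts on $N_{x_0}(G\cdot x_0)\cong\mb R^3$ by $x_1\mapsto -x_1$, and $K_0$ is the minimal hypersurface with slice $\{x_1=0\}$. A parity/Caccioppoli argument cannot exclude it, however. Consider the $G$-hypersurfaces whose slices are $\{x_1=\pm 1/k\}$:
\begin{itemize}
\item they bound the reflection-invariant slabs $G\cdot\{|x_1|<1/k\}$;
\item they lie in $\mc M^G_0$, since their quotients are disks not touching $K_1^*$;
\item their mod $2$ current limit is $0$, while as varifolds they converge to $2|K_0|$.
\end{itemize}
So the $G$-boundary structure only forces even multiplicity, not a contradiction. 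Moreover, as the paper remarks, the locally $G$-boundary-type property is not inherited by varifold limits, so ``passing it to the limit'' is not available.

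What excludes this configuration is the almost-minimizing hypothesis, applied with competitors that \emph{increase} $\mk b$. One runs the pillar construction of Case I with $\tau=0$ in the fundamental domain $\{x_1\ge 0\}$. Since $(\cCG_{\rho,\sigma,0})^*$ deformation retracts onto $\bd_0(\cCG_{\rho,\sigma,0})^*$, no replaced piece $P_k^i$ has a nontrivial boundary curve, so $R_k=0$. Each $P_k^i$ is then compared with the thin cylindrical region between $\bd_1P_k^{i*}$ and $\bd_0(\cCG_{\rho,\sigma,0})^*$. This region lies in $\mc M^G_0$ because it meets $K_0$ orthogonally, and it has small area. The Filigree Lemma therefore discards every piece, giving $x_0\notin\spt(\|W\|)$. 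In the two-sheet example, the competitor is the cylinder $\{|x_1|\le 1/k,\ x_2^2+x_3^2=\rho^2\}$, of area $O(\rho/k)$ against $2\pi\rho^2$. This is precisely why the theorem assumes almost minimality with no restriction on $\mk b$, and the paper notes that Case II is the only place this is needed.

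Two smaller points. First, the density lower bound from the Filigree Lemma gives rectifiability, not integrality. Integrality comes from Case I: the lower and upper bounds \eqref{Eq: first regularity - 5.15}--\eqref{Eq: first regularity - 5.16} on each nontrivial piece bound $R_k$. The upper bound uses the competitor $\pi^{-1}(N_-^*)\cup\mc B^{G,-}_{\rho_k,\sigma_k',\tau_k}\cup\mc A^{G,-}_{\rho_k,\sigma_k',\tau_k}$, which is where the $\tau$-neighborhood is adjoined. Monotonicity in the Euclidean slice then yields $\Theta^n(\|W\|,x_0)=m$. Your phrase ``$m$ sheets, given by the density of $C$'' runs this backwards. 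Second, the final regularity should come from applying Allard's theorem in $M$ to each limit $W_i$, which is stationary across $K_0$ and has density ratio close to one. It should not go through a free-boundary regularity theory in $(M^*,\wti g_{_{M^*}})$, whose metric degenerates along $K_1^*$ when $\codim(G\cdot x_0)>3$. No graphicality of the pieces is needed.
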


\begin{remark}\label{Rem: regularity in Mprin}
	For any $x_0\in B\cap M^{prin}$ with $B_\rho(G\cdot x_0)\subset B$, we have a Riemannian submersion $\pi: (B_\rho(G\cdot x_0), g_{_M})\to (B^*_\rho([x_0]), g_{_{M^*}})$. 
    Then, using \eqref{Eq: area in orbit space} and Lemma \ref{Lem: minimal in M^prin}, the regularity of $W$ in $B_\rho(G\cdot x_0)\subset B\setminus K_0$ follows directly from \cite{almgren79plateau}*{Theorem 2, 4} in $(B_\rho^*([x_0]), \tilde{g}_{_{M^*}})$. 
    
    In addition to the regularity of $W$ in $B\setminus K_0$, the construction of $W$ also implies $\spt(\|W\|)$ is a $G$-stable minimal $G$-hypersurface in $B\setminus K_0$. 
    Moreover, by shrinking $B\setminus K_0$ slightly, we can obtain an open $G$-set $U\subset\subset M^{prin}$ so that $U^*$ is a ball and $\bd U$ is transversal to $\spt(\|W\|)$. 
    Thus, every component of $(\spt(\|W\|))^*\cap U^*$ separates $U^*$ into two components, which indicates that every $G$-component of $\spt(\|W\|)\cap U$ separates $U$ into two $G$-components. 
    Therefore, $\spt(\|W\|)\cap U$ admits a $G$-invariant unit normal. 
    By Lemma \ref{Lem: stability and G-stability} and the arbitrariness of $U$, we know $\spt(\|W\|)\cap (B\setminus K_0)$ is a {\em stable} minimal $G$-hypersurface with integer multiplicity. 
\end{remark}

\begin{proof}
    One easily verifies that $W$ is a $G$-varifold and is $G$-stationary in $B$, which implies $W$ is stationary in $B$ (Lemma \ref{Lem: first variation and G-variation}). 
    By Remark \ref{Rem: regularity in Mprin}, we mainly focus on $x_0\in \spt(\|W\|)\cap B\setminus M^{prin} \subset K_0$. 
    By the locally $G$-boundary-type assumption, every $\Sigma_k$ either does not intersect $K_1$ or is orthogonal to $K_1$ by Lemma \ref{Lem: hypersurface position}. 

    \begin{claim}\label{Claim: first regularity - 5.2}
        There is a constant $c>0$ so that $\Theta^n(\|W\|,x_1)\geq c$ for all $x_1\in\spt(\|W\|)\cap B$, and thus $W\res G_n(B)$ is rectifiable. 
    \end{claim}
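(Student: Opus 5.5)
The plan is to derive a lower density bound from the Filigree Lemma \ref{Lem: filigree}, in the spirit of \cite{almgren79plateau}*{(5.2)}, applied in the slice of each orbit. By Remark \ref{Rem: regularity in Mprin}, $W$ is a stable integral varifold with multiplicity away from $K_0$. Hence on $B\setminus K_0$ the monotonicity formula already gives $\Theta^n(\|W\|,x_1)\geq 1$ at every $x_1\in\spt(\|W\|)$. The density in this statement is the ambient $n$-density; since $W$ is stationary, it is upper semicontinuous. So it suffices to treat $x_1\in\spt(\|W\|)\cap K_0$. At such a point I will show a lower density bound directly, using a uniform constant obtained from Lemma \ref{Lem: uniform constants} on the compact set $K_0$.

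Fix $x_1\in K_0\cap\spt(\|W\|)$ and small $r\in(0,\rho_0)$. Take the increasing family $Y_t:=B_{(1+t)r/2}(G\cdot x_1)$ for $t\in[0,1]$, defined by $f=\dist(\cdot,G\cdot x_1)$ suitably rescaled. This family is half-ball-type (case (1) of Proposition \ref{Prop: local structure of M/G}), with mean convex boundary by Lemma \ref{Lem: uniform constants}(i). We have $c_1\sim 1/r$, and the constant $c_2$ comes from Remark \ref{Rem: filigree lemma statements}(ii). The slices $\bd Y_t\cap S(x_1)$ are round spheres up to bi-Lipschitz constant $2$, so \eqref{Eq: compare to disk} and \eqref{Eq: filigree - isoperimetric} hold with $c_2$ independent of $x_1$ and $r$.

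I would expect the main obstacle to be the scaling. After inserting the orbit volume factor $\mc H^{k_0}(G\cdot x_1)$, the isoperimetric constant in the slice is scale invariant. Then \eqref{Eq: filigree} becomes
\[
\mc H^n(\Sigma_k\cap B_{r/2}(G\cdot x_1))\leq 2\epsilon_k\quad\text{whenever}\quad \mc H^n(\Sigma_k\cap B_r(G\cdot x_1))\leq \eta\, r^{n-k_0}\mc H^{k_0}(G\cdot x_1),
\]
with $\eta$ depending only on $n,k_0,C_0$. The hypothesis \eqref{Eq: filigree - almost minimizing} is implied by the minimizing property assumed in the theorem, because that property holds against all $\Gamma'$. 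Hypothesis (i) of Theorem \ref{Thm: replacement} holds by Sard for a.e. radius, and \eqref{Eq: filigree - not all non-principal assumption} follows from the locally $G$-boundary-type condition.

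Letting $k\to\infty$, for each such $r$ either $\|W\|(B_{r}(G\cdot x_1))\geq \eta r^{n-k_0}\mc H^{k_0}(G\cdot x_1)$, or $\|W\|(B_{r/4}(G\cdot x_1))=0$. The second alternative contradicts $x_1\in\spt\|W\|$. Next I convert the tube mass to ball mass using \eqref{Eq: area in slice}. The tube $B_r(G\cdot x_1)$ is covered by about $\mc H^{k_0}(G\cdot x_1)/r^{k_0}$ balls $B_{2r}(g\cdot x_1)$, which all have equal mass by $G$-invariance. This gives $\|W\|(B_{2r}(x_1))\geq c\,\eta\, r^n$, with $c$ uniform by Lemma \ref{Lem: uniform constants}(ii)(v). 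Hence $\Theta^n(\|W\|,x_1)\geq c>0$ uniformly. Rectifiability of $W\res G_n(B)$ then follows from Allard's rectifiability theorem, since $W$ is stationary with density bounded below.
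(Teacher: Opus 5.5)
Your proposal is correct and takes essentially the same route as the paper. The paper also applies the Filigree Lemma to the tubes $Y_t=B_{t\rho}(G\cdot x_1)$ with $f=\rho^{-1}\dist(\cdot,G\cdot x_1)$ and $c_1=1/\rho$ to get $\|W\|(B_\rho(G\cdot x_1))\gtrsim\rho^{n-k_0}$, and then converts tube mass into ball mass by $G$-invariance before invoking Allard's rectifiability theorem. The differences are cosmetic: the paper uses the exact identity $\|W\|(T(x_1,\rho,\rho))=\|W\|(B_\rho(G\cdot x_1))\,\mc H^{k_0}(B^{G\cdot x_1}_\rho(x_1))/\mc H^{k_0}(G\cdot x_1)$ together with $T(x_1,\rho,\rho)\subset B_{2\rho}(x_1)$ instead of your covering count, and it absorbs the orbit-volume factor into $c_2$ rather than tracking it explicitly as you do.
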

    \begin{proof}[Proof of Claim \ref{Claim: first regularity - 5.2}]
        By the regularity results in $B\cap M^{prin}$, we see $\Theta^n(\|W\|,x_1)\geq 1$ for all $x_1\in\spt(\|W\|)\cap B\cap M^{prin}$. 
        Next, consider $x_1\in \spt(\|W\|)\cap B\setminus M^{prin}\subset K_1$ and $B_\rho(G\cdot x_1)\subset B$. 

        Combining the isoperimetric inequality in Euclidean spheres with \eqref{Eq: area in slice} and Lemma \ref{Lem: uniform constants}(iii), we know $Y_t=B_{t\rho}(G\cdot x_1)$ satisfies the assumptions in the Filigree Lemma \ref{Lem: filigree}. 
        In particular, Remark \ref{Rem: compare disk instead cylinder} indicates that \eqref{Eq: compare to disk} and \eqref{Eq: filigree - isoperimetric} can be valid simultaneously for $Y_t=B_{t\rho}(G\cdot x_1)$. 
        Therefore, whenever $\mc H^n(\Sigma_k\cap B_\rho(G\cdot x_1))\leq (\frac{\rho}{2(n-k_0) c_2})^{n-k_0}$, we can apply the Filigree Lemma \ref{Lem: filigree} with $f(x)=\frac{1}{\rho}\dist(x,G\cdot x_1)$, $c_1=\frac{1}{\rho}$, $Y_t=B_{t\rho}(G\cdot x_1)$ to get $\mc H^n(\Sigma_k\cap B_{\rho/2}(G\cdot x_1)) < 2\epsilon_k$. 
        Since $\epsilon_k\to 0$ and $x_1\in\spt(\|W\|)$, we must have 
        \begin{align}\label{Eq: first regularity - 5.3}
            \mc H^n(\Sigma_k\cap B_\rho(G\cdot x_1)) \geq \frac{\rho^{n-k_0}}{(2(n-k_0))^{n-k_0} c_2^{n-k_0}} \quad \mbox{for every $k$ large enough,}
        \end{align}
        which implies 
        \begin{align}\label{Eq: first regularity - 5.4}
            \|W\|(B_\rho(G\cdot x_1))\geq \frac{\rho^{n-k_0}}{(2(n-k_0))^{n-k_0} c_2^{n-k_0}}. 
        \end{align}
        For $s,t>0$, denote by $\mf n:B_\rho(G\cdot x_1)\to G\cdot x_1$ the nearest projection, and recall that
        \begin{align}\label{Eq: part of tube}
            T(x_1,s,t):=\{x\in B_t(G\cdot x_1): \dist_{G\cdot x_1}(\mf n(x), x_1)<s \}
        \end{align}
        is the part of the tube $B_t(G\cdot x_1)$ centered at an $s$-neighborhood of $x_1$ in $G\cdot x_1$. 
        By \eqref{Eq: first regularity - 5.4},
        \begin{align}\label{Eq: first regularity - 5.4.2}
            \|W\|(B_{2\rho}(x_1)) \geq \|W\|(T(x_1,\rho,\rho)) = \|W\|(B_{\rho}(G\cdot x_1)) \frac{\mc H^{k_0}(B^{G\cdot x_1}_\rho(x_1))}{\mc H^{k_0}(G\cdot x_1)} \geq c\rho^n,
        \end{align}
        where $B^{G\cdot x_1}_\rho(x_1)$ is the ($k_0$-dimensional) geodesic $\rho$-ball at $x_1$ in $G\cdot x_1$ with area at least $c_{k_0}\rho^{k_0}$ for some dimensional constant $c_{k_0}>0$ and every $\rho>0$ small enough, and $c>0$ is a uniform constant depending only on $c_2,n,k_0,K_0$. 
    \end{proof}

    Next, given $x_0\in \spt(\|W\|)\cap B$, we can use $(\exp_{G\cdot x_0}^{\perp})^{-1}$ to pull everything into the normal bundle $(\exp_{G\cdot x_0}^{\perp})^{-1}(B)\subset N(G\cdot x_0)$. 
    By abuse of notation, we often identify $G\cdot x_0$ with the zero section of $N(G\cdot x_0)$ and regard the ambient region $B$ as a $G$-neighborhood in $N(G\cdot x_0)$. 
    Then for any $r>0$ and $q=(g\cdot x_0, v)\in B\subset N(G\cdot x_0)$, define 
    \begin{itemize}
        \item $\btau_{g\cdot x_0} : \mb R^L\to \mb R^L $, $\btau_{g\cdot x_0}(q):=q- g\cdot x_0$ to be the translation;
        \item $\bmu_r: \mb R^L\to \mb R^L $, $\bmu_r(q)=q/r$;
        \item $\bleta_{r}= \bmu_r\circ\btau_{x_0} : B\subset \mb R^L\to \mb R^L$, $\bleta_{r}(g\cdot x_0, v):=\frac{(g\cdot x_0, v) - (x_0,0)}{r}$;
        \item $\bleta^G_{r}: N(G\cdot x_0)\to N(G\cdot x_0)$, $\bleta^G_{r}(g\cdot x_0, v):=(g\cdot x_0, v/r)$;
        \item $g_{_r}$ is the metric $(\exp_{G\cdot x_0}^\perp\circ (\bleta^G_r)^{-1})^* g_{_M}$ with a $\frac{1}{r^2}$-rescaling on the tangent space of each fiber $N_{g\cdot x_0}(G\cdot x_0)$, and $g_{_0}:=\lim_{r\to 0} g_{_r}$. 
    \end{itemize}
    Note that $(N(G\cdot x_0), g_{_0})$ is locally isometric to the product of $G\cdot x_0$ with fiber $\mb R^{n+1-k_0}$, and 
    \begin{align}\label{Eq: uniform ambient blowup}
        \bleta_{r_i}(B)\to T_{x_0}B,\quad \bleta_{r_i}(G\cdot x_0)\to T_{x_0}(G\cdot x_0), \quad \bleta_{r_i}(N_{x_0}(G\cdot x_0))\to N_{x_0}(G\cdot x_0)
    \end{align}
    locally uniformly as $r_i\to 0$. 
    
    Consider $C:=\lim_{r_i\to 0}\bleta_{r_i\#}W$ with $\spt(\|C\|)\subset H$ for some $n$-plane $H\subset T_{x_0}B$. 
    By \cite{allard1972first}*{6.5} and the proof of Claim \ref{Claim: first regularity - 5.2}, $C$ is a rectifiable cone, which is also $G_{x_0}$-invariant. 

    \begin{claim}\label{Claim: tangent bundle}
        There exists $\widehat{C}\in \mc V_{n-k_0}^{G_{x_0}}(N_{x_0}(G\cdot x_0))$ so that 
        \begin{align}\label{Eq: splitting tangent}
            C=T_{x_0}(G\cdot x_0) \times \widehat{C}\quad{\rm and}\quad \spt(\|\widehat{C}\|)\subset \widehat{H}, 
        \end{align}
        where $\widehat{H}$ is a $G_{x_0}$-invariant subspace of $N_{x_0}(G\cdot x_0)$ with $H=T_{x_0}(G\cdot x_0)\times \widehat{H}$. 
        Moreover, 
        \begin{align}\label{Eq: splitting tangent bundle}
            \lim_{r_i\to 0}\bleta^G_{r_i\#}W = G\cdot \widehat{C}:= \mf v( G\cdot \widehat H,  \theta_{G\cdot \widehat{C}}), 
        \end{align}
        where $G\cdot \widehat{C}$ denotes the rectifiable varifold supported on the bundle $G\cdot \widehat{H}$ (over $G\cdot x_0$ with fiber $\widehat{H}$) with $G$-invariant multiplicity $ \theta_{G\cdot \widehat{C}}(g\cdot v) := \theta_{\widehat{C}}(v)$.  
    \end{claim}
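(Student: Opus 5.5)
The plan is to use the $G$-invariance of $W$ to show that the tangent cone $C$ is translation invariant along $T_{x_0}(G\cdot x_0)$. The splitting \eqref{Eq: splitting tangent} then follows from the standard fact that a varifold invariant under translations along a subspace is a product with that subspace. Finally, \eqref{Eq: splitting tangent bundle} will come from comparing the two blow-ups $\bleta_{r_i}$ and $\bleta^G_{r_i}$ in the normal bundle picture.

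\textbf{Step 1: translation invariance.} Fix $v\in T_{x_0}(G\cdot x_0)$. Choose a one-parameter subgroup $g_s=\exp(s\xi)\subset G$ with $\frac{d}{ds}\big|_{s=0}g_s\cdot x_0=v$, and set $h_i:=g_{r_i}$. Since $h_{i\#}W=W$, we have $\bleta_{r_i\#}W=(\bleta_{r_i}\circ h_i)_\# W$. Write
\[\bleta_{r_i}\circ h_i(q)=\frac{h_i\cdot q - x_0}{r_i}=\rho(h_i)\,\bleta_{r_i}(q)+\frac{h_i\cdot x_0-x_0}{r_i}.\]
Here $\rho(h_i)\to \mathrm{id}$ and $(h_i\cdot x_0-x_0)/r_i\to v$. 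Because $W$ has locally bounded mass ratios (monotonicity, stationarity, Claim \ref{Claim: first regularity - 5.2}), these maps converge to the translation $\btau_{-v}$ uniformly on compact sets, uniformly in the rescaling. Passing to the limit gives $(\btau_{-v})_\#C=C$ for every $v\in T_{x_0}(G\cdot x_0)$. Together with $G_{x_0}$-invariance and conicality, the standard splitting argument (cf. Simon's book, or the proof of \cite{allard1972first}*{6.5}) yields $C=T_{x_0}(G\cdot x_0)\times \widehat C$. Here $\widehat C$ is the slice $C\llcorner N_{x_0}(G\cdot x_0)$ in the sense of the product measure, is a $G_{x_0}$-invariant cone in $N_{x_0}(G\cdot x_0)$, and has dimension $n-k_0$. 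Since $\spt\|C\|\subset H$ and $C$ contains every translate along $T_{x_0}(G\cdot x_0)$, it follows that $T_{x_0}(G\cdot x_0)\subset H$. Setting $\widehat H:=H\cap N_{x_0}(G\cdot x_0)$ then gives $H=T_{x_0}(G\cdot x_0)\times\widehat H$ with $\spt\|\widehat C\|\subset\widehat H$. The subspace $\widehat H$ is $G_{x_0}$-invariant because $C$ is, provided $\spt\|\widehat C\|$ spans $\widehat H$; otherwise we replace $H$ by the span. (The orthogonality of $H$ to $T_{x_0}K_0$ is the separate first bullet of the theorem, not part of this claim.)

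\textbf{Step 2: the bundle limit.} Work in the normal exponential coordinates $N(G\cdot x_0)$ with the metrics $g_{_r}\to g_{_0}$. By the mass bounds, a subsequence of $\bleta^G_{r_i\#}W$ converges to some $G$-invariant varifold $W_0$. Consider a $G$-equivariant local trivialization near $x_0$, i.e. a small neighborhood $B^{G\cdot x_0}_s(x_0)\times N_{x_0}$. In these coordinates $\bleta^G_{r}$ fixes the base and dilates the fiber, while $\bleta_{r}$ additionally dilates the base. Comparing the two on $T(x_0,s,\cdot)$, the fiber over $x_0$ of $W_0$ must agree with $\widehat C$, the fiber factor of $C$ from Step 1. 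Concretely, translation invariance of $C$ along the base means that $W_0$ restricted to the tube is locally the product of the base with $\widehat C$, to first order in $s$. This is the delicate point. I would try to prove it directly as follows. Apply the equivariance $h_\# W=W$ with $h\in G$ close to the identity, so that $W_0=h_\#W_0$. Then $W_0$ is determined by its restriction to a single fiber, and that restriction equals $\widehat C$ because the base directions carry no extra rescaling. Invariance under $G$ then forces $W_0=\mf v(G\cdot\widehat H,\theta_{G\cdot\widehat C})$ with $\theta_{G\cdot \widehat C}(g\cdot v)=\theta_{\widehat C}(v)$. The multiplicity is well defined precisely because $\widehat C$ is $G_{x_0}$-invariant. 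Since this description is independent of the subsequence, the full limit exists along $r_i$.

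\textbf{Main obstacle.} The hardest step is the identification in Step 2. The two blow-ups live in different ambient spaces: $\mathbb R^L$ versus the normal bundle with degenerating metrics $g_{_r}$. The orbit also has curvature, which $\bleta^G_r$ does not flatten. I would handle this with a disintegration (coarea) over the base via \eqref{Eq: area in slice}. Writing $\|W\|$ as $\mathcal H^{k_0}$ on $G\cdot x_0$ times fiber measures $\mu_{g\cdot x_0}$ reduces everything to fiber measures. These are related by $\mu_{g\cdot x_0}=dg_\#\mu_{x_0}$ by invariance, so both blow-ups reduce to the fiber blow-up of $\mu_{x_0}$. That fiber blow-up converges to $\widehat C$ by Step 1 combined with Fubini.
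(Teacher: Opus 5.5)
Your proposal is correct and follows the paper's proof. Step 1 is the same identity $\bleta_{r_i}\circ g(r_i)=\btau_{-A(r_i)\cdot x_0}\circ g(r_i)\circ\bleta_{r_i}$, which gives translation invariance of $C$ along $T_{x_0}(G\cdot x_0)$, and your disintegration/Fubini plan in Step 2 is the paper's argument: a coarea reduction of $\bleta^G_{r_i\#}W$ to the slice $N_{x_0}(G\cdot x_0)$, then comparison with $\bleta_{r_i\#}W$ on the tube $T(x_0,s,t)$ through a local trivialization (the fallback ``replace $H$ by the span'' is unnecessary, since the nonzero $(n-k_0)$-rectifiable $\widehat C$ lies in the $(n-k_0)$-plane $\widehat H$ and so its support automatically spans $\widehat H$).
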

    \begin{proof}[Proof of Claim \ref{Claim: tangent bundle}]
        Given $w\in T_{x_0}(G\cdot x_0)$, we have a curve $g(t)$ in $ G$ with $g(0)=e$ and $w=\frac{d}{dt}\big\vert_{t=0} g(t)\cdot x_0$.
        In addition, we have $A(t)$ in the space of $L\times L$-matrices with $A(0)\cdot x_0 =w$ so that $g(t)=I_L+tA(t)$, where $I_L$ is the identity. 
        Therefore
        \begin{eqnarray}\label{Eq: splitting blowup map}
        (\bleta_{r_i}\circ g(r_i))(y) 
            &=& \frac{g(r_i)\cdot y - g(r_i)\cdot x_0}{r_i}+\frac{g(r_i)\cdot x_0 -  x_0}{r_i} \nonumber
            \\
            &=& (\btau_{-A(r_i)\cdot x_0}\circ g(r_i)\circ \bleta_{r_i})(y).
        \end{eqnarray}
        Noting $W$ is $G$-invariant, $A(r_i)\cdot x_0\to w$, and $g(r_i)\to e$, we have $C=(\btau_{-w})_\# C$, which implies $C$ (as well as $H$) is invariant under the translation along $T_{x_0}(G\cdot x_0)$. 
        Since $T_{x_0}M=T_{x_0}(G\cdot x_0)\times N_{x_0}(G\cdot x_0)$ and $C$ is rectifiable, we obtain \eqref{Eq: splitting tangent}.  
        
        Next, to show \eqref{Eq: splitting tangent bundle}, we notice that $W$ is rectifiable and $G$-invariant, and thus can be written as $W\res B=\mf v(N,\theta)$ for some $G$-invariant $n$-rectifiable set $N\subset B$ and $G$-invariant integrable non-negative function $\theta$. 
        Let $\widehat{N}:=N\cap N_{x_0}(G\cdot x_0)$. 
        Then for any $f\in C^0_c(N(G\cdot x_0))$, let $f_G(v):=\int_G f(g\cdot v) d\mu(g)$ be the averaged $G$-invariant function. 
        Noting $G$ acts by isometries in $(N(G\cdot x_0), g_{_0})$, we see
        \begin{align*}
            \int_{\bleta^G_{r_i}(N)} f\cdot \theta\circ (\bleta^G_{r_i})^{-1}~d\mc H^n_0  
            &= \int_{\bleta^G_{r_i}(N)} f_G\cdot \theta\circ (\bleta^G_{r_i})^{-1}~d\mc H^n_0 
            \\
            &= \mc H^{k_0}_0(G\cdot x_0) \int_{\bleta^G_{r_i}(\widehat{N})} \hat{f}_G(v)\cdot \theta\circ (\bleta^G_{r_i})^{-1}(v)~d\mc H^{n-k_0}_0(v) 
            \\
            &= \int_{\bmu_{r_i}(\mb B^{k_0}_s(0))\times\bmu_{r_i}(\widehat{N}\cap \mb B_t(0))} \hat{f}_G(v)\cdot  h(u) \cdot \theta\circ \bmu_{r_i}^{-1}(0,v)~d\mc H^{n}_{\mb E}(u,v),
        \end{align*}
        where the co-area formula is used as in \eqref{Eq: area in slice}, $\hat{f}_G=f_G\res N_{x_0}(G\cdot x_0)$ is supported in $\mb B^{n+1-k_0}_t(0)$, $\mc H^*_0,\mc H^*_{\mb E}$ are the Hausdorff measures under $g_{_0},g_{_{Eucl}}$ respectively, and $h\in C^0_c(\mb B^{k_0}_s(0))$ satisfies $\int_{T_{x_0}(G\cdot x_0)} h = \mc H^{k_0}_0(G\cdot x_0)$.  
        Let $F:\mb B^{k_0}_s(0)\times N_{x_0}(G\cdot x_0)\to N(G\cdot x_0)$ be a local trivialization so that $dF(0)=id$ and $F(\mb B^{k_0}_s(0)\times \{0\})$ is the geodesic $s$-ball in $G\cdot x_0$. 
        Then 
        \begin{align*}
        		\bmu_{r_i}(\mb B^{k_0}_s(0))\times\bmu_{r_i}(\widehat{N}\cap \mb B_t(0)) &= \bmu_{r_i}(\mb B^{k_0}_s(0)\times(\widehat{N}\cap \mb B_t(0)))
        		\\&=\bmu_{r_i}\circ F^{-1}(N\cap T(x_0,s,t))
        		\\
        		&=\bmu_{r_i}\circ F^{-1}\circ\bleta_{r_i}^{-1}\circ\bleta_{r_i}(N\cap T(x_0,s,t)).
        \end{align*}
        Hence, for $F_i:=\bmu_{r_i}\circ F^{-1}\circ\bleta_{r_i}^{-1}$ with Jacobian $J_{F_i}$, we see 
        \begin{align*}
            \int_{\bleta^G_{r_i}(N)} f\cdot \theta\circ (\bleta^G_{r_i})^{-1}~d\mc H^n_0  
            &= \int_{\bleta_{r_i}(N\cap T(x_0,s,t))} J_{F_i}(x) \cdot (\hat{f}_G  h)(F_i(x)) \cdot \theta\circ F^{-1}\circ \bleta_{r_i}^{-1}(x)~d\mc H^{n}(x).
        \end{align*}
        By \eqref{Eq: uniform ambient blowup}, we can identify $\bleta_{r_i}(B)$, $\bleta_{r_i}(G\cdot x_0)$ and $\bleta_{r_i}(N_{x_0}(G\cdot x_0))$ with $T_{x_0}B$, $T_{x_0}(G\cdot x_0)$, and $N_{x_0}(G\cdot x_0)$ respectively for $i$ large enough, which further shows that $|J_{F_i}(x) \cdot (\hat{f}_G  h)(F_i(x))-(\hat{f}_G  h)(x)|\to 0 $ locally uniformly as $i\to\infty$. 
        Hence, 
        \begin{align*}
            \lim_{r_i\to 0}\int_{\bleta^G_{r_i}(N)} f\cdot \theta\circ (\bleta^G_{r_i})^{-1}~d\mc H^n_0  
            &= \lim_{r_i\to 0}\int_{\bleta_{r_i}(N\cap T(x_0,s,t))} (\hat{f}_G  h)(x) \cdot \theta\circ F^{-1}\circ\bleta_{r_i}^{-1}(x)~d\mc H^{n}(x) 
            \\&= \int_{T_{x_0}(G\cdot x_0)\times \widehat{H}} (\hat{f}_G  h)(x) \cdot \theta_{C}(x)~d\mc H^{n}_{\mb E}(x)
            \\&= \mc H^{k_0}(G\cdot x_0) \int_{\widehat{H}} \hat{f}_G   \cdot \theta_{\widehat{C}}~d\mc H^{n-k_0}_{\mb E}
            \\&= \int_{G\cdot \widehat{H}} f_G   \cdot \theta_{G\cdot \widehat{C}}~d\mc H^{n}_{0} = \int_{G\cdot \widehat{H}} f   \cdot \theta_{G\cdot \widehat{C}}~d\mc H^{n}_{0},
        \end{align*}
        where $\theta_{G\cdot \widehat{C}}(g\cdot v):=\theta_{\widehat C}(v)$ for $v\in N_{x_0}(G\cdot x_0)$ and $g\in G$. 
        By the $G_{x_0}$-invariance of $\theta_{\widehat C}(v)$, $\theta_{G\cdot \widehat{C}}$ is well defined. 
        Finally, the above equations imply that $\bleta^G_{r_i\#}W\to G\cdot \widehat{C}$ as $r_i\to 0$. 
    \end{proof}

    For simplicity, suppose that $x_0 = 0$ and $(0,\dots,0,1)$ is the unit normal of $\widehat{H}$ in $N_{x_0}(G\cdot x_0)\cong \mb R^{n+1-k_0}$, and write $r$ in place of $r_i$. 
    Since $\widehat{H}$ is a $G_{x_0}$-invariant codimension-one subspace in $N_{x_0}(G\cdot x_0)$, we see from Lemma \ref{Lem: hypersurface position} that $\widehat{H}$ is either orthogonal to $P(x_0):=T_{x_0} K_0\cap N_{x_0}(G\cdot x_0)$ (see \eqref{Eq: P}) or contains $P(x_0)$.  
    We now aim to show that $\Theta^n(\|W\|,x_0)$ is a positive integer, and the latter case cannot happen. 
    
    
	
	\medskip
    {\bf Case I: $\widehat{H}$ is orthogonal to $P:=P(x_0)=T_{x_0} K_0\cap N_{x_0}(G\cdot x_0)$.}
	
	Recall that $\dim(P(x_0))=2$ in this section. 
    Hence, we can take 
    \[P:=P(x_0)=\{(0,\dots,0,x_{n-k_0},x_{n+1-k_0})\}\subset\mb R^{n+1-k_0}=N_{x_0}(G\cdot x_0),\] 
    and use the set $\{x\in\mb R^{n+1-k_0}: x_{1}=\cdots=x_{n-2-k_0}=0, x_{n-1-k_0}\geq 0 \}\cong [0,\infty)\times P(x_0)$ as a fundamental domain of the $G_{x_0}$-action (resp. $G$-action) in $N_{x_0}(G\cdot x_0)$ (resp. $N(G\cdot x_0)$). 
    
    For any $\rho,\sigma,\tau>0$, set $r(\rho,\sigma):=\sqrt{\rho^2+\sigma^2}$ and define the following sets in Euclidean space $(N_{x_0}(G\cdot x_0) , g_{_0}= g(x_0))$:
    \begin{itemize}
        \item $\whcD_\rho:=\{x\in\whH:|x|\leq\rho\}$ is a closed $\rho$-ball in $\whH$;
        \item $\whcK_{\rho,\sigma}:= \{x\in (\whcD_\rho\setminus\bd\whcD_\rho) \times \mb R : |x| < \sqrt{\rho^2+\sigma^2}  \} \subset N_{x_0}(G\cdot x_0)$ is the $\rho$-radius $2\sigma$-height pillar region with two half-ball hats;
        \item $\bd_c\whcK_{\rho,\sigma} := \bd\whcD_\rho \times [-\sigma,\sigma]$ is the cylindrical boundary part; 
        \item $\bd_s\whcK_{\rho,\sigma} := (\whcD_\rho\times \mb R)\cap \bd\mb B_{r(\rho,\sigma)}(0)$ is the spherical boundary part;
        \item $\bd_e\whcK_{\rho,\sigma}:=\bd_c\whcK_{\rho,\sigma}\cap \bd_s\whcK_{\rho,\sigma}$ is the edge part; 
        \item $\whcS_{\rho,\sigma,\tau} := \bd_s\whcK_{\rho,\sigma} \setminus B_{\tau}(T_{x_0}K_1)=\{x\in \bd_s\whcK_{\rho,\sigma}: \dist(x,T_{x_0}K_1)\geq \tau\}$;
        \item $\whcC_{\rho,\sigma,\tau}:=\bd\whcK_{\rho,\sigma}\setminus \whcS_{\rho,\sigma,\tau}=\bd_c\whcK_{\rho,\sigma}\cup (\bd_s\whcK_{\rho,\sigma}\cap B_\tau(T_{x_0}K_1))$;
        \item $\whcB^{\pm}_{\rho,\sigma,\tau}:= (\whcD_{\rho}\times \{\pm\sigma\})\setminus B_\tau(T_{x_0}K_1)$; 
        \item $\whcA^{\pm}_{\rho,\sigma,\tau}:= \bd B_{\tau}(T_{x_0}K_1)\cap \{x\in \mb B_{r(\rho,\sigma)}(0): \pm x_{n+1-k_0}> \sigma\}$.
    \end{itemize}
    Note the above sets are $G_{x_0}$-invariant in $N_{x_0}(G\cdot x_0)$. 
    Hence, we can define the following $G$-sets in $N(G\cdot x_0)$ (where $K_1$ is identified with $(\exp^{\perp}_{G\cdot x_0})^{-1}(K_1)=G\cdot (T_{x_0}K_1\cap N_{x_0}(G\cdot x_0))$): 
    \begin{itemize}
        \item $\cDG_\rho:= G\cdot \whcD_\rho$ the bundle over $G\cdot x_0$ with fiber $\whcD_\rho$;
        \item $\cKG_{\rho,\sigma}:=G\cdot \whcK_{\rho,\sigma}$ the bundle over $G\cdot x_0$ with fiber $\whcK_{\rho,\sigma}$;
        \item $\bd_c \cKG_{\rho,\sigma}:=G\cdot \bd_c\whcK_{\rho,\sigma}$, $\bd_s\cKG_{\rho,\sigma}:=G\cdot \bd_s\whcK_{\rho,\sigma}$ the cylindrical/spherical part of $\bd \cKG_{\rho,\sigma}$;
        \item $\bd_e \cKG_{\rho,\sigma}:=G\cdot \bd_e\whcK_{\rho,\sigma}$ the edge part of $\bd \cKG_{\rho,\sigma}$;
        \item $\cSG_{\rho,\sigma,\tau}:= G\cdot \whcS_{\rho,\sigma,\tau} = \bd_s\cKG_{\rho,\sigma}\setminus B_\tau(K_1)$;
        \item $\cCG_{\rho,\sigma,\tau}:=G\cdot \whcC_{\rho,\sigma,\tau} =\bd \cKG_{\rho,\sigma}\setminus\cSG_{\rho,\sigma,\tau} = \bd_c \cKG_{\rho,\sigma}\cup (\bd_s\cKG_{\rho,\sigma}\cap B_\tau(K_1)) $;
        \item $\mc B^{G,\pm}_{\rho,\sigma,\tau}:= G\cdot \whcB^{\pm}_{\rho,\sigma,\tau}$;  
        \item $\mc A^{G,\pm}_{\rho,\sigma,\tau}:= G\cdot \whcA^{\pm}_{\rho,\sigma,\tau}$. 
    \end{itemize}
    One can see the shape of the above $G$-sets in the orbit space (as well as in the fundamental domain $P\times [0,\infty)$) by the following figures. 
    \begin{figure}[h]
        \centering
        \begin{subfigure}{0.22\linewidth}
            \centering
            \includegraphics[width=1.3in]{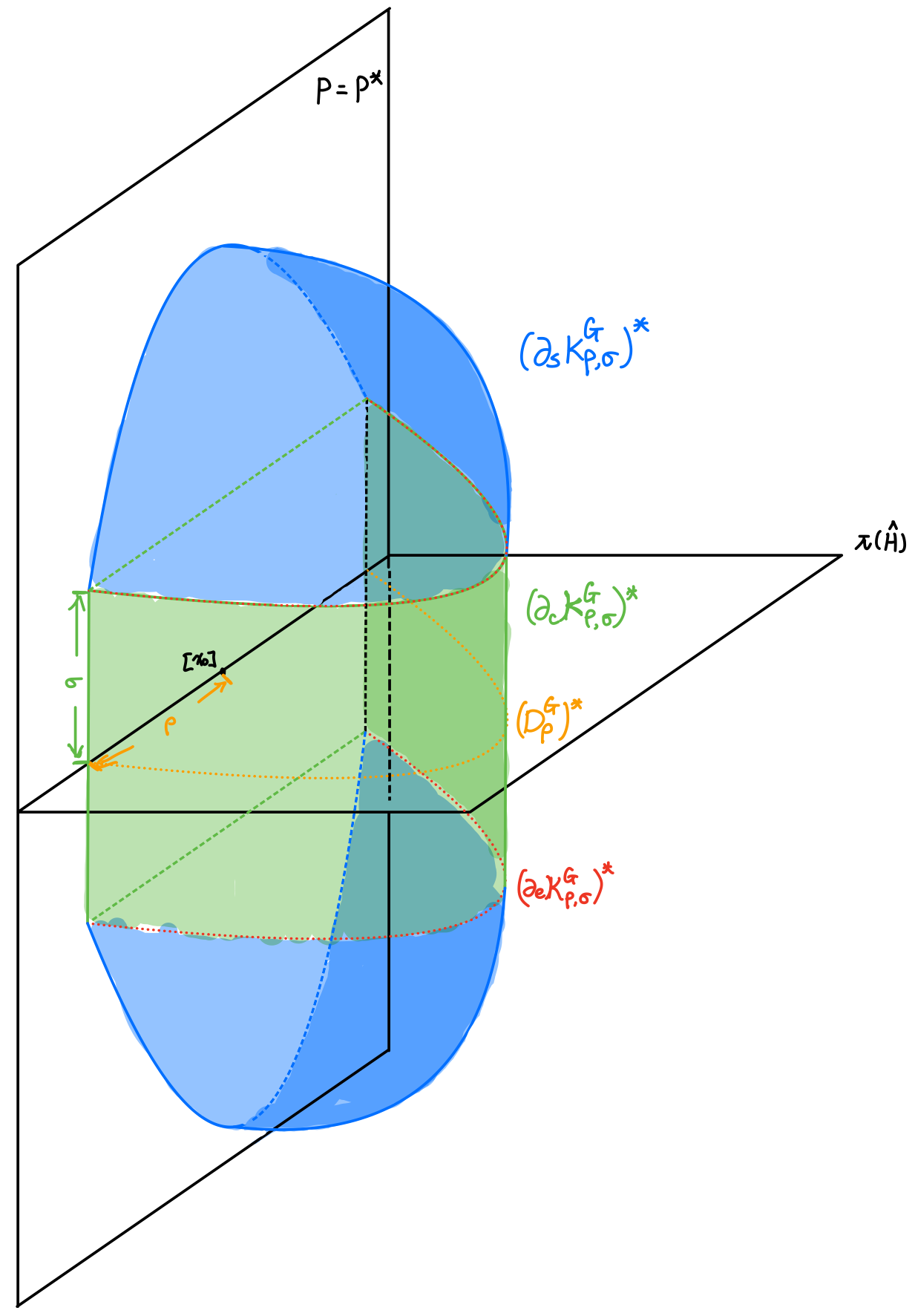} 
            \caption{$(\bd \cKG_{\rho,\sigma})^*$}
        \end{subfigure}
        \hspace{0.1cm}
        \begin{subfigure}{0.22\linewidth}
            \centering
            \includegraphics[width=1.3in]{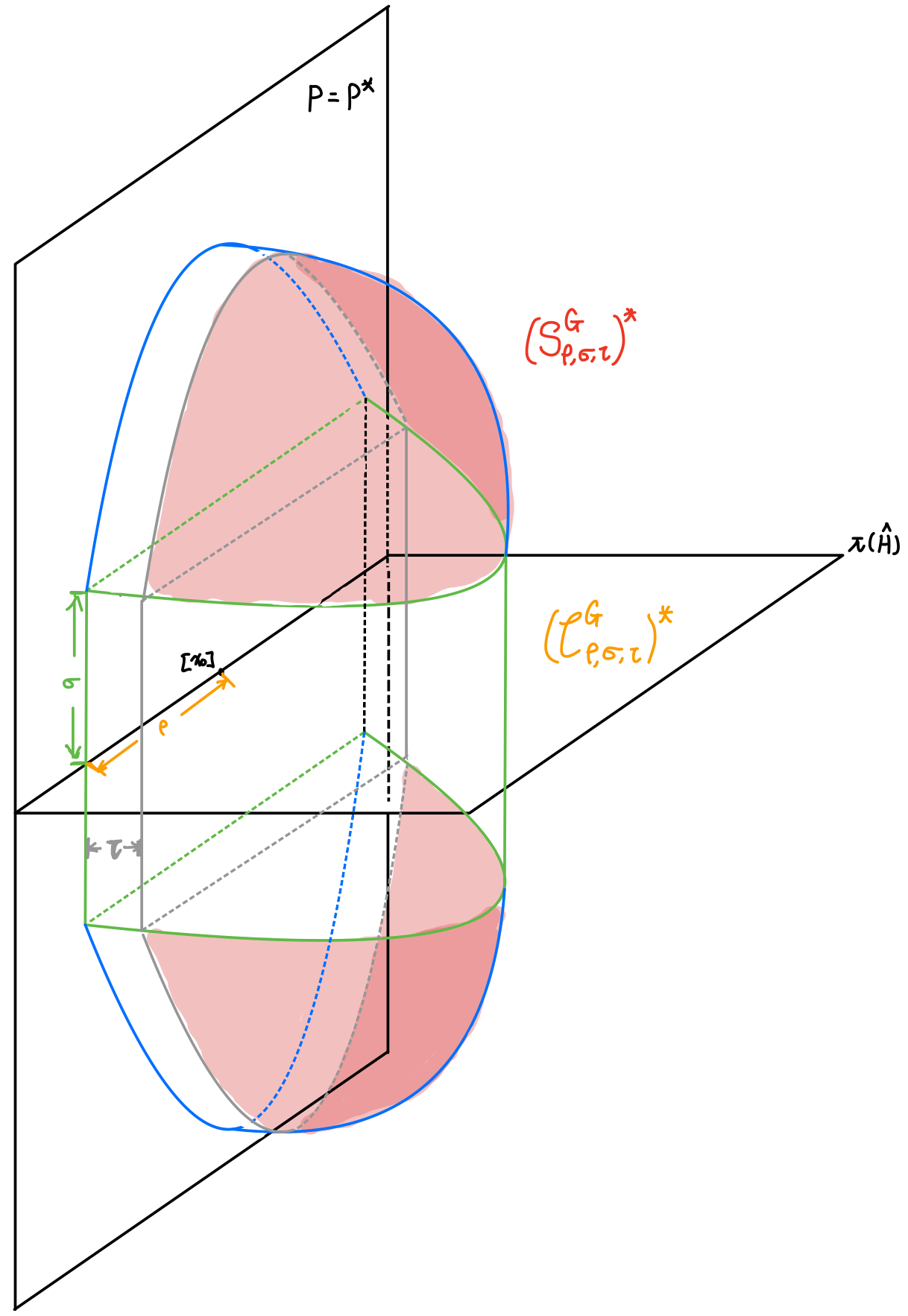}
            \caption{$(\cSG_{\rho,\sigma,\tau})^*$}
        \end{subfigure}
        \hspace{.1cm}
        \begin{subfigure}{0.22\linewidth}
            \centering
            \includegraphics[width=1.3in]{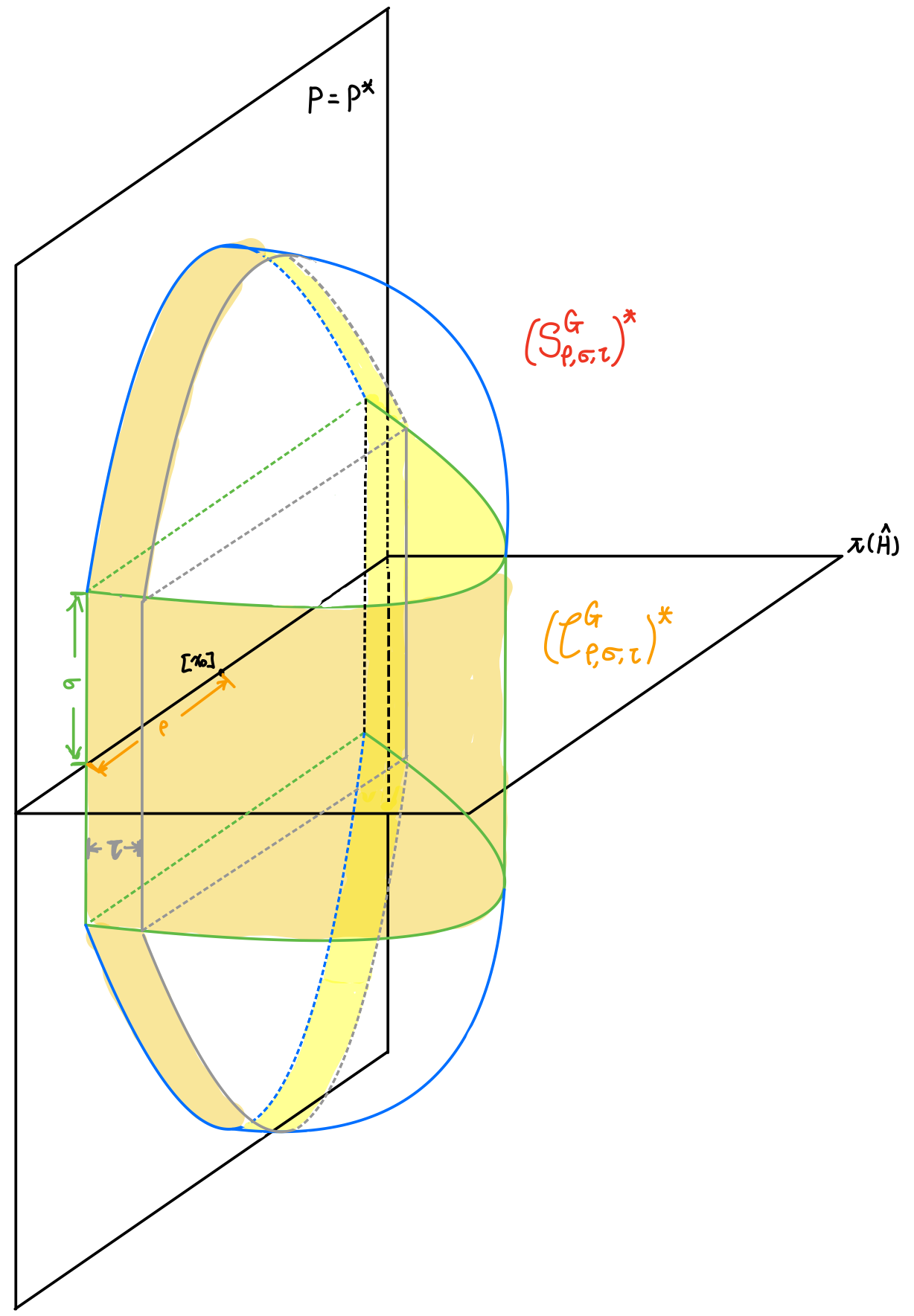}
            \caption{$(\cCG_{\rho,\sigma,\tau})^*$}
        \end{subfigure}
        \hspace{.1cm} 
        \begin{subfigure}{0.22\linewidth}
            \centering
            \includegraphics[width=1.3in]{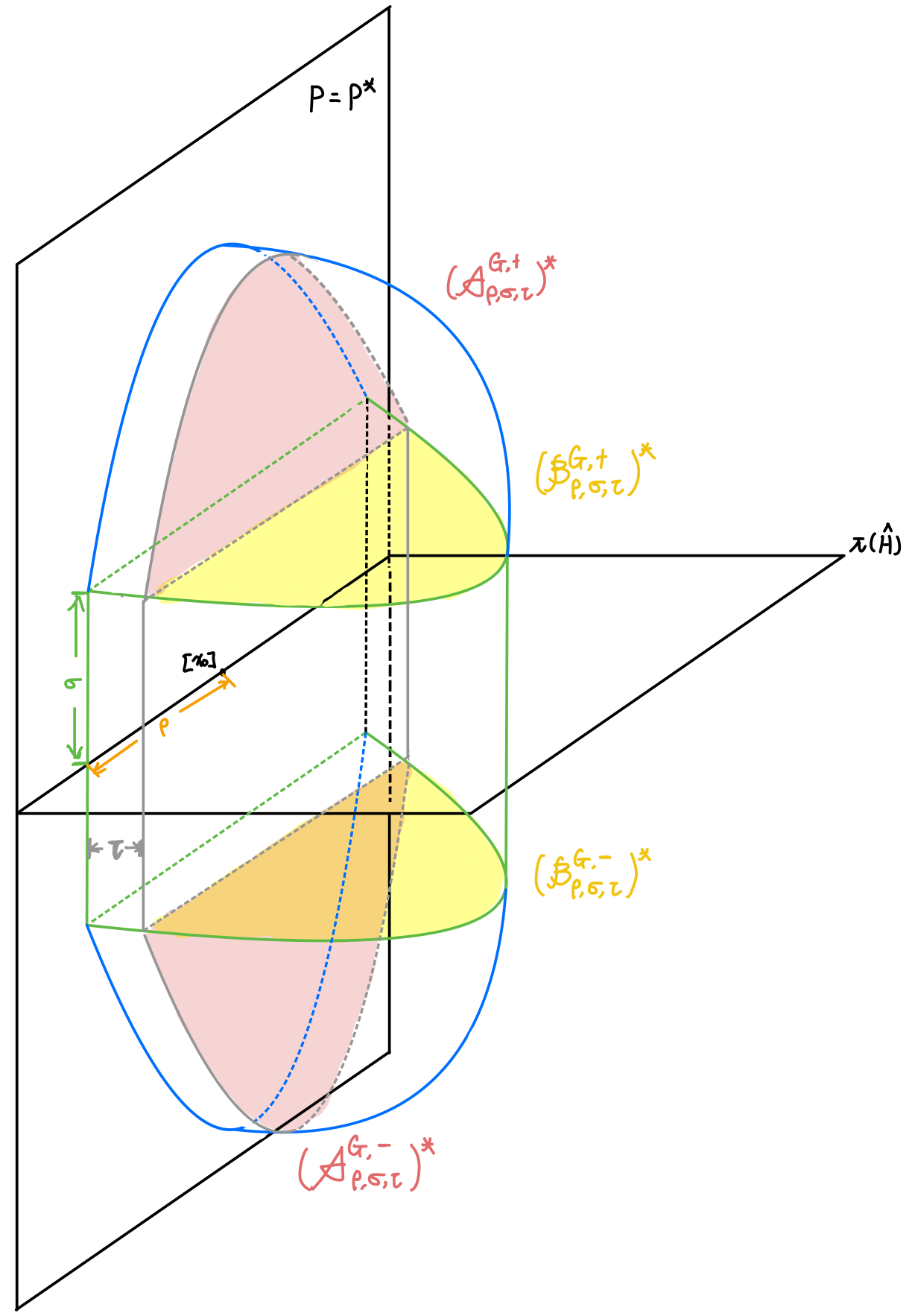}
            \caption{$(\mc A^{G,\pm}_{\rho,\sigma,\tau})^*,(\mc B^{G,\pm}_{\rho,\sigma,\tau})^*$}
        \end{subfigure}
        \caption{}
    \end{figure}
    
    Note that the metric $g_{_r}$ tends as $r\to 0$ to the metric $g_{_0}$ so that $N(G\cdot x_0)$ is locally isometric to the product of $(G\cdot x_0,g_{_M})$ with the fiber $(\mb R^{n+1-k_0},g_{_{Eucl}})$. 
    Thus, by slightly smoothing $\bd\cKG_{\rho,\sigma}$ near $\bd_e \cKG_{\rho,\sigma}$, we can assume that for all $\rho,\sigma\in (0,1]$, $\cKG_{\rho,\sigma}$ satisfies the assumptions imposed on $U$ in Lemma \ref{Lem: convex} with respect to the metric $g_{_r}$ for $r$ sufficiently small. 

    We now proceed as \cite{almgren79plateau}*{Page 464}. 
    Since $\spt(\|\bleta_{r_i\#}W\|)$ and $\spt(\|\bleta_{r_i\#}W\|)\cap N_{x_0}(G\cdot x_0)$ converge to $H$ and $\widehat{H}$ in the Hausdorff sense, we see from \eqref{Eq: splitting tangent bundle} that for any $\sigma_0\in (0,1/100)$, there exists $r>0$ small so that 
    \begin{align}\label{Eq: first regularity - 5.6}
        \cKG_{1,1}\cap \spt(\|\bleta^G_{r\#}W\|) \subset \cKG_{1,\sigma_0/2}.
    \end{align}
    We also choose $r>0$ so that 
    \begin{align}\label{Eq: first regularity - 5.7}
        \mc H^n\left( \spt(\|\bleta^G_{r\#}W\|)\cap (\bd \cDG_{\frac{1}{2}}\times \mb R) \right) = 0.
    \end{align}
    Henceforth, we will suppose that $r$ has been thus chosen, and compute under the $g_{_r}$-metric. 

    By the assumptions and Lemma \ref{Lem: uniform constants}, for any $\Gamma\in\mc M^G_0$ with $\bd\Gamma = \bd \bleta^G_r(\Sigma_k)$, we have 
    \begin{align}\label{Eq: first regularity - 5.8}
        \mc H^n(\bleta^G_r(\Sigma_k))\leq \mc H^n(\Gamma) + c\cdot r^{-(n-k_0)}\epsilon_k,
    \end{align}
    where $c>1$ is a uniform constant independent of $k$ and $r$. 
    Moreover,  we claim the following result similar to \cite{almgren79plateau}*{(5.9)(5.10)}.
    The difference is that we cannot avoid the whole edge $\bd_e\cKG$ since the rescaled metric \eqref{Eq: weighted metric in M/G} may degenerate at $\bd_0 (\cKG)^*\subset K_1^*$. 
    \begin{claim}\label{Claim: first regularity - not on edge}
        Fix any small $\tau>0$. We can take $\eta>0$ small enough so that for each sufficiently large $k$, there are $\sigma_k\in (\frac{3}{4}\sigma_0,\sigma_0)$, $\rho_k\in (\sqrt{1-\sigma_0^2}, 1)\subset (\frac{3}{4},1)$, and $\tau_k\in [\tau,2\tau]$ satisfying 
        \begin{align}\label{Eq: first regularity - 5.9}
            \mc H^{n-1}(\bleta^G_r(\Sigma_k)\cap \bd_s \cKG_{1,\sigma_k}) < \eta,  
         \end{align}
         and 
         \begin{align}\label{Eq: first regularity - 5.10}
            \bleta^G_r(\Sigma_k) \cap \bd \cSG_{\rho_k,\sigma_k',\tau_k} = \emptyset,\quad{\rm where}~ \sigma_k':=\sqrt{1+\sigma_k^2-\rho_k^2}< \sqrt{2}\sigma_0.
        \end{align}
        In particular, if $\codim(G\cdot x)=\cohom(G)=3$ for $x\in K_0$, then $\tau$ and $\tau_k$ can be set equal to $0$. 
    \end{claim}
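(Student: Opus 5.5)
We argue as in \cite{almgren79plateau}*{(5.9)(5.10)}, by averaging/co-area selection on the rescaled surfaces $\bleta^G_r(\Sigma_k)$. The two inputs are the Hausdorff localization \eqref{Eq: first regularity - 5.6} and the vanishing \eqref{Eq: first regularity - 5.7}, both inherited by $\bleta^G_r(\Sigma_k)$ for $k$ large through varifold convergence. Everything is computed in the slice $N_{x_0}(G\cdot x_0)$ via \eqref{Eq: area in slice}. This is legitimate because all the sets in play are $G$-bundles over $G\cdot x_0$, and $g_{_r}$ is uniformly close to the product metric $g_{_0}$.

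\textbf{Choosing $\sigma_k$ for \eqref{Eq: first regularity - 5.9}.} By \eqref{Eq: first regularity - 5.6} and varifold convergence, for $k$ large the mass of $\bleta^G_r(\Sigma_k)$ in the shell $\{r(1,3\sigma_0/4)<|\cdot|_{\rm fiber}<r(1,\sigma_0)\}\cap \cKG_{1,1}$ tends to $0$. Indeed, this region lies outside $\cKG_{1,\sigma_0/2}$ up to the spherical caps, and the limit $G\cdot\whC$ lives on $G\cdot\whH$. The co-area formula in the radial variable $s\mapsto \bd_s\cKG_{1,s}$ then gives
\[
\int_{3\sigma_0/4}^{\sigma_0}\mc H^{n-1}\bigl(\bleta^G_r(\Sigma_k)\cap\bd_s\cKG_{1,s}\bigr)\,ds\leq C\,\mc H^n(\text{shell part})\to 0 .
\]
Hence a set of $\sigma$ of positive measure satisfies \eqref{Eq: first regularity - 5.9} for any prescribed $\eta$ once $k\geq k(\eta)$. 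By Sard we also take $\sigma_k$ so that the intersection is transversal.

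\textbf{Choosing $\rho_k,\tau_k$ for \eqref{Eq: first regularity - 5.10}.} Note that $\bd\cSG_{\rho,\sigma',\tau}$ consists of $G\cdot(\bd\whcD_\rho\times\{\pm\sigma'\})$ with the $\tau$-neighborhood of $T_{x_0}K_1$ removed, together with its intersection with $\bd B_\tau(K_1)$. We have to make $\bleta^G_r(\Sigma_k)$ miss this set entirely. Since $\spt\|\bleta^G_{r\#}W\|$ lies within $\sigma_0/2$ of $G\cdot \whH$, the varifold $W$ charges nothing at height $\pm\sigma'>\sigma_0/2$, and I would transfer this to $\Sigma_k$ through the density lower bound \eqref{Eq: first regularity - 5.3}. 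The mechanism is as follows. At a principal point $y$ of the edge set at distance $\geq\tau$ from $K_1$, any point of $\Sigma_k$ near $y$ forces mass $\geq c\,s^{n-k_0}$ in $B_s(G\cdot y)$. The constant $c$ is uniform because $\dist(y,K_1)\geq\tau$, which keeps us in a compact part of the principal stratum where Lemma \ref{Lem: uniform constants} applies. Here we use the Filigree Lemma \ref{Lem: filigree} applied to $\Sigma_k$ with the almost-minimizing error $c\,r^{-(n-k_0)}\epsilon_k\to 0$ from \eqref{Eq: first regularity - 5.8}. Taking $s<\sigma_0/4$, this contradicts the vanishing of mass far from $G\cdot\whH$ for $k$ large. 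The edge sets form a family parametrized by $(\rho,\tau)$ in a compact range, and this argument gives emptiness for all of them simultaneously, hence for any choice of $\rho_k\in(\sqrt{1-\sigma_0^2},1)$ and $\tau_k\in[\tau,2\tau]$. The freedom in $\rho_k,\tau_k$ is then used only to impose transversality with $\bd\cKG$ and $\bd B_{\tau_k}(K_1)$, which holds generically by Sard. Finally, $\sigma_k'=\sqrt{1+\sigma_k^2-\rho_k^2}<\sqrt{2}\sigma_0$ is immediate from $\rho_k^2>1-\sigma_0^2$.

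\textbf{Main obstacle.} The key point is the lack of a uniform density lower bound near $K_1$. The weighted metric \eqref{Eq: weighted metric in M/G} degenerates on $K_1^*$, so the Filigree constants blow up as $\dist(\cdot,K_1)\to0$. This is exactly why the $\tau$-neighborhood is removed, and it is the step that needs care.

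When $\codim(G\cdot x)=3$ on $K_0$, the nearby non-principal orbits have the same dimension as principal ones (exceptional). Then the weight does not degenerate, and the uniform density bound of Claim \ref{Claim: first regularity - 5.2} holds up to $K_1$. In that case the Filigree Lemma applies with \eqref{Eq: compare to disk} available via Remark \ref{Rem: compare disk instead cylinder}, so the same argument runs with $\tau=\tau_k=0$.
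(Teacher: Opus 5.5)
\textbf{The choice of $\sigma_k$ is correct; the choice of $\rho_k,\tau_k$ has a genuine gap.} Your selection of $\sigma_k$ by co-area on the shell is fine and matches the paper. The failure is the claimed pointwise density bound for $\Sigma_k$. You assert that a point of $\bleta^G_r(\Sigma_k)$ near $y$ forces mass $\ge c\,s^{n-k_0}$ in $B_s(G\cdot y)$. This is false for almost-minimizing sequences. The Filigree Lemma \ref{Lem: filigree} only gives $\mc H^n(\Sigma\cap Y_t)\le 2\epsilon$, not $\Sigma\cap Y_t=\emptyset$. The lower bound \eqref{Eq: first regularity - 5.3} is derived only at points of $\spt\|W\|$, where it uses that the limit charges the point.

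An element of $\mc M^G_0$ can carry a thin genus-zero ``filigree'' tentacle of area $<\epsilon_k$ reaching any prescribed region. Such a tentacle is compatible with \eqref{Eq: first regularity - 5.6} and \eqref{Eq: first regularity - 5.8}. So no avoidance of the edge sets uniform in $(\rho,\tau)$ can hold, and $\rho_k,\tau_k$ genuinely must depend on $k$. A symptom: your argument never uses the smallness of $\eta$ to get \eqref{Eq: first regularity - 5.10}, yet the claim chooses $\eta$ depending on $\tau$ precisely for this.

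\textbf{The missing idea.} Since $\rho^2+\sigma'^2=1+\sigma_k^2$, the whole set $\bd\cSG_{\rho,\sigma',\tau}$ lies on the single spherical cap $\bd_s\cKG_{1,\sigma_k}$. This set consists of the edge $\{|y'|=\rho\}$ outside $B_\tau(K_1)$ together with its piece on $\bd B_\tau(K_1)$. On that cap, \eqref{Eq: first regularity - 5.9} already controls $\bleta^G_r(\Sigma_k)$. Set $\epsilon_\tau:=\inf\{\mc H^{n-2}(G\cdot x):\dist(x,K_1)\ge\tau\}>0$, as in \eqref{Eq: first regularity - orbit volume inf}. The area formula \eqref{Eq: area in orbit space} for the submersion away from $K_1$ then shows that the quotient curve $(\bleta^G_r(\Sigma_k)\cap\bd_s\cKG_{1,\sigma_k})^*$ has length $<\eta/\epsilon_\tau$ outside $B^*_\tau(K_1)$.

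The functions $|y'|$ and $\dist(\cdot,K_1)$ are Lipschitz with constant close to $1$. Hence their images along this short curve have measure $<\eta/\epsilon_\tau$, up to that constant. Choose $\eta$ with $\eta/\epsilon_\tau<\min\{1-\sqrt{1-\sigma_0^2},\tau\}$. Then some $\rho_k\in(\sqrt{1-\sigma_0^2},1)$ and some $\tau_k\in[\tau,2\tau]$ have level sets the curve misses, which gives \eqref{Eq: first regularity - 5.10}.

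The case $\tau=\tau_k=0$ when the codimension is $3$ likewise does not come from a density bound on $\Sigma_k$. It holds because the non-principal orbits near $K_1$ are then exceptional of dimension $n-2$, so $\epsilon_0>0$ and the same length argument applies.
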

    Note $\bd \cSG_{\rho_k,\sigma_k',\tau_k}=\bd \cCG_{\rho_k,\sigma_k',\tau_k}$, and the choice of $\sigma_k'$ makes $\cSG_{\rho_k,\sigma_k',\tau_k}\subset \bd_s\cKG_{\rho_k,\sigma_k'}\subset \bd_s\cKG_{1,\sigma_k}$. 
    Moreover, for $\tau=\tau_k=0$, we regard $\cSG_{\rho_k,\sigma_k',0}$ as $\bd_s\cKG_{\rho_k,\sigma_k'}$, and regard $\cCG_{\rho_k,\sigma_k',0}$ as $\bd_c\cKG_{\rho_k,\sigma_k'}$. 
    \begin{proof}[Proof of Claim \ref{Claim: first regularity - not on edge}]
        By \eqref{Eq: first regularity - 5.6} and the co-area formula, we see that for almost all $\sigma\in (\sigma_0/2,1)$
        \[\mc H^{n-1}(\bleta^G_r(\Sigma_k)\cap \bd_s \cKG_{1,\sigma}) \to 0 \qquad\mbox{as }k\to \infty . \]
        Therefore, for any given $\eta>0$ and each sufficiently large $k$, there exists $\sigma_k\in (\frac{3}{4}\sigma_0,\sigma_0)$ such that \eqref{Eq: first regularity - 5.9} holds. 
    
        Meanwhile, for any $\tau>0$ small enough, we have 
        \begin{align}\label{Eq: first regularity - orbit volume inf}
            0<\epsilon_\tau := \inf\{ \mc H^{n-2}(G\cdot x): x\in B, \dist(x,K_1)\geq \tau \}.
        \end{align}
        By the Riemannian submersion $\pi:B\setminus B_\tau(K_1) \subset M^{prin} \to (B\setminus B_\tau(K_1))^*$ and \eqref{Eq: area in orbit space}, we get 
        \[ \epsilon_\tau \cdot  \mc H^1\left( (\bleta^G_r(\Sigma_k)\cap \bd_s \cKG_{1,\sigma}))^* \setminus (B_\tau(K_1))^* \right) \leq \mc H^{n-1}\left( (\bleta^G_r(\Sigma_k)\cap \bd_s \cKG_{1,\sigma})\setminus B_\tau(K_1) \right) , \]
        	for almost all $\sigma\in (\sigma_0/2,1)$. 
        Hence, for any small $\tau>0$, we can take $\eta>0$ small enough (e.g., $\frac{\eta}{\epsilon_\tau}< 1-\sqrt{1-\sigma_0^2} < \frac{1}{4}$ and $\frac{\eta}{\epsilon_\tau}<\tau$) so that the above inequality together with \eqref{Eq: first regularity - 5.9} indicates 
        \[\bleta^G_r(\Sigma_k) \cap \left(\bd_e \cKG_{\rho_k,\sigma_k'} \setminus B_\tau(K_1) \right) = \emptyset \quad{\rm and}\quad \bleta^G_r(\Sigma_k) \cap \left(\bd_s\cKG_{\rho_k,\sigma_k'} \cap \bd B_{\tau_k}(K_1)\right) =\emptyset, \]
        for some $\sigma_k\in (\frac{3}{4}\sigma_0,\sigma_0)$, $\rho_k\in (\sqrt{1-\sigma_0^2}, 1)\subset (\frac{3}{4},1)$, $\sigma_k':=\sqrt{1+\sigma_k^2-\rho_k^2}< \sqrt{2}\sigma_0$, and $\tau_k\in [\tau,2\tau]$, which is exactly \eqref{Eq: first regularity - 5.10}.  

        Finally, if $\codim(G\cdot x)=\cohom(G)=3$ for $x\in K_1$, then \eqref{Eq: first regularity - orbit volume inf} also holds for $\tau=0$, and thus \eqref{Eq: first regularity - 5.10} is valid with $\tau=\tau_k=0$. 
    \end{proof}
    
    Combining the proof of Claim \ref{Claim: first regularity - not on edge} with Sard's theorem, we can further assume $\bleta^G_r(\Sigma_k)$ is transversal to both $\bd_s \cKG_{\rho_k,\sigma_k'}$ and $\bd \cDG_{\rho_k}\times [-1,1]$. 
    Moreover, after smoothing $\bd \cKG_{\rho_k,\sigma_k'}$ near $\bd_e\cKG_{\rho_k,\sigma_k'}$, we can further make $\cKG_{\rho_k,\sigma_k'}$ satisfy the assumptions for $U$ in Theorem \ref{Thm: replacement}. 

    After applying Theorem \ref{Thm: replacement} to $\bleta^G_r(\Sigma_k)$ in $\cKG_{\rho_k,\sigma_k'}$, we obtain $P_k^1,\dots,P_k^{R_k},P_k^{R_k+1},\dots,P_k^{R_k'}\in\mc M^G_0$ with $R_k,R_k'\in\mb N$ so that 
    \begin{itemize}
        \item[(1)] there are $\epsilon_{k,i}>0$, $i=1,\dots ,R_k'$, satisfying $\sum_{i=1}^{R_k'}\epsilon_{k,i}\leq c\cdot r^{-(n-k_0)}\epsilon_k$ and 
        \begin{align}\label{Eq: first regularity - 5.11}
            \mc H^n(P^i_k)\leq \mc H^n(P') + \epsilon_{k,i}, \qquad\forall P'\in\mc M^G_0 \mbox{ with }\bd P'=\bd P^i_k;
        \end{align}
        \item[(2)] by \eqref{Eq: first regularity - 5.7} and \cite{allard1972first}*{2.6(2) (d)},
        \begin{align}\label{Eq: first regularity - 5.12}
            \left(\bleta^G_{r\#}W \right)\res G_n(\cKG_{\frac{1}{2},1}) = \lim_{k\to\infty} \sum_{i=1}^{R_k'} |P_k^i\cap \cKG_{\frac{1}{2},1}|;
        \end{align}
        \item[(3)] for each $i\in\{1,\dots,R_k\}$, $\bd_1 P_k^{i*}$ is a curve that is non-trivial in the relative homotopy group $\pi_1((\cCG_{\rho_k,\sigma_k',\tau_k})^*, \bd_0(\cCG_{\rho_k,\sigma_k',\tau_k})^* )$, i.e. $\bd_1 P_k^{i*}$ cannot be homotopic in $(\cCG_{\rho_k,\sigma_k',\tau_k})^*$ to a curve in $\bd_0(\cCG_{\rho_k,\sigma_k',\tau_k})^* $. 
        \item[(4)] for each $i\in \{R_k+1,\dots,R_k'\}$, 
        the curve $\bd_1 P_k^{i*}$ is either 
        	\begin{itemize}
        		\item closed and null-homotopic in $(\cSG_{\rho_k,\sigma_k',\tau_k})^*$, or
        		\item homotopic (relative to its boundary) in $(\cSG_{\rho_k,\sigma_k',0})^*$ to a curve in $\partial_0(\cSG_{\rho_k,\sigma_k',0})^*$ (in this case $\tau=\tau_k=0$ and $\codim(G\cdot x)=3$ for $x\in K_1$), or
        		\item homotopic in $(\cCG_{\rho_k,\sigma_k',\tau_k})^*$ to a curve in $\bd_0(\cCG_{\rho_k,\sigma_k',\tau_k})^*$. 
        	\end{itemize}
    \end{itemize}
    We mention that \eqref{Eq: first regularity - 5.10} is used to get (3)(4). 

    \begin{claim}\label{Claim: discard trivial curve}
        $P^i_k$, $i=R_k+1,\dots,R_k'$, can be discarded without changing the varifold limit in \eqref{Eq: first regularity - 5.12}: 
        \begin{align}\label{Eq: first regularity - 5.13}
             \left(\bleta^G_{r\#}W \right)\res G_n(\cKG_{\frac{1}{2},1}) = \lim_{k\to\infty} \sum_{i=1}^{R_k} |P_k^i\cap \cKG_{\frac{1}{2},1}|.
        \end{align}
    \end{claim}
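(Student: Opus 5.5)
The plan is to follow the Almgren--Simon argument for \cite{almgren79plateau}*{(5.13)}: show that the total mass of the discarded pieces inside $\cKG_{\frac12,1}$ tends to $0$ as $k\to\infty$. Since mass-vanishing pieces contribute nothing to the varifold limit, \eqref{Eq: first regularity - 5.13} then follows from \eqref{Eq: first regularity - 5.12}. It suffices to prove
\[
\lim_{k\to\infty}\sum_{i=R_k+1}^{R_k'}\mc H^n\big(P_k^i\cap \cKG_{\frac12,1}\big)=0 .
\]

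\textbf{Step 1: bound each discarded piece by a competitor on the boundary.} For each $i\in\{R_k+1,\dots,R_k'\}$ we build a competitor $P'\in\mc M^G_0$ with $\bd P'=\bd P_k^i$ and compare through \eqref{Eq: first regularity - 5.11}. We treat the three alternatives in (4) separately.
\begin{itemize}
\item[(a)] If $\bd_1P_k^{i*}$ is closed and null-homotopic in $(\cSG_{\rho_k,\sigma_k',\tau_k})^*$, it bounds a disk $E^*$ there. Since $\cSG_{\rho_k,\sigma_k',\tau_k}$ avoids $B_\tau(K_1)$, we may pick the disk inside this spherical cap. Then $\pi^{-1}(E^*)$ is a disk-type competitor.
\item[(b)] If $\bd_1P_k^{i*}$ is homotopic rel boundary to a curve in $\bd_0(\cSG_{\rho_k,\sigma_k',0})^*$, we take the half-disk region it cuts off in the spherical part. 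This is the case $\tau=0$, $\codim=3$.
\item[(c)] If the curve is homotopic in $(\cCG_{\rho_k,\sigma_k',\tau_k})^*$ into $\bd_0$, we take the corresponding half-disk or disk region inside $\cCG_{\rho_k,\sigma_k',\tau_k}$.
\end{itemize}
In every case the isoperimetric inequality on the slice (Remark \ref{Rem: compare disk instead cylinder}, used via \eqref{Eq: area in slice}) gives a region $E_k^i$ with
\[
\mc H^n(E_k^i)\le c\,\big(\mc H^{n-1}(\bd P_k^i)\big)^{\frac{n-k_0}{n-k_0-1}},
\]
and \eqref{Eq: compare to disk} holds where it is required. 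Because $P_k^i$ is almost minimizing, we get $\mc H^n(P_k^i)\le \mc H^n(E_k^i)+\epsilon_{k,i}$.

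\textbf{Step 2: sum the bounds.} The boundaries $\bd P_k^i$ of the pieces in cases (a) and (b) lie in $\bleta^G_r(\Sigma_k)\cap\bd_s\cKG_{1,\sigma_k}$. By \eqref{Eq: first regularity - 5.9}, their total $(n-1)$-measure is less than $\eta$. Summing, and using superadditivity of $t\mapsto t^{\frac{n-k_0}{n-k_0-1}}$ together with $\sum_i\epsilon_{k,i}\le c r^{-(n-k_0)}\epsilon_k$, the total area of these pieces is at most $c\,\eta^{\frac{n-k_0}{n-k_0-1}}+o(1)$.

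For case (c) we argue differently, as in Almgren--Simon. The region $\cCG_{\rho_k,\sigma_k',\tau_k}$ lies at Euclidean distance at least roughly $\frac14$ from the cylinder $\cDG_{\frac12}\times\mb R$, except near $K_1$. By \eqref{Eq: first regularity - 5.6}, $\bleta^G_r(\Sigma_k)$ lies in the thin slab $\cKG_{1,\sigma_0/2}$ there. We then apply the Filigree Lemma \ref{Lem: filigree} with $Y_t$ a family of tubular neighbourhoods of $\cCG$ shrinking toward it. This shows that such pieces carry mass at most $2\epsilon_{k,i}$ inside $\cKG_{\frac12,1}$, provided their mass near $\cCG$ is small, which holds by \eqref{Eq: first regularity - 5.6} once $\sigma_0$ is small. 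The bound is consistent with the monotonicity lower bound \eqref{Eq: first regularity - 5.3} used in Claim \ref{Claim: first regularity - 5.2}: that bound rules out any such piece reaching deep into $\cKG_{\frac12,1}$.

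\textbf{Step 3: conclude.} Letting $k\to\infty$ and then $\eta\to0$, which is allowed since $\eta$ is chosen after $\tau$ and before $k$, gives the vanishing of the total mass of the discarded pieces.

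\textbf{Main obstacle.} The hardest part is the case (c) pieces near $K_1$. There the reduced metric degenerates, so the $\mk b$-constraint matters: the competitor region could be cylindrical with $\mk b=1$. We handle this as in Claim \ref{Claim: compare to disk}, either using \eqref{Eq: compare to disk} when $\codim=3$, or capping with $\bd B_{\tau'}(K_1)$, whose area tends to $0$ because $\dim K_1<n$ when $\codim>3$.
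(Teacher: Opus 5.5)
Your target is right: show that the discarded pieces carry vanishing mass in $\cKG_{\frac12,1}$. The gap is the step that turns smallness into vanishing.

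\textbf{Spherical pieces.} For the pieces with boundary on the spherical part you only get $\limsup_k\sum_i\mc H^n(P_k^i)\le c\,\eta^{\frac{n-k_0}{n-k_0-1}}$, and then you send $\eta\to0$. This is not allowed. The choice of $\eta$ fixes $\sigma_k,\rho_k,\tau_k$, hence the pillar $\cKG_{\rho_k,\sigma_k'}$ and the whole family $P_k^1,\dots,P_k^{R_k'}$. But \eqref{Eq: first regularity - 5.13} is an exact identity for one fixed choice of $(\sigma_0,\tau,\eta)$. It is also used exactly later, in \eqref{Eq: first regularity - 5.19} and the final maximum-principle step.

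\textbf{Cylindrical pieces.} For the pieces with boundary in $\cCG_{\rho_k,\sigma_k',\tau_k}$ the problem is worse. The $(n-1)$-measure of $\bd P_k^i$ on the cylindrical wall is not controlled, because the limit plane crosses that wall. So your isoperimetric bound gives nothing there. The only smallness available is the competitor bound via \eqref{Eq: first regularity - 5.11}: the area of the thin band $\cCG_{\rho_k,\sigma_k',\tau_k}$, plus a $\bd B_{\tau'}(K_1)$ cap when the curve is closed and homotopic into $\bd_0$. This bound depends on $\sigma_0,\tau$ but does not tend to $0$ as $k\to\infty$.

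\textbf{Your Filigree step.} It does not repair this, for two reasons.
\begin{itemize}
\item Lemma \ref{Lem: filigree} requires $\bd\Sigma\cap Y_t=\emptyset$ for $t<1$ (hypothesis (i) of Theorem \ref{Thm: replacement}), and it only controls mass in the \emph{inner} sets $Y_t$. A family of tubular neighbourhoods of $\cCG_{\rho_k,\sigma_k',\tau_k}$ contains $\bd P_k^i$, so it is inadmissible. It would also estimate mass near the wall rather than in $\cKG_{\frac12,1}$.
\item \eqref{Eq: first regularity - 5.6} is a support statement about the limit. It gives no smallness for the mass of $P_k^i$ near the wall.
\end{itemize}

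\textbf{The missing idea.} Use the smallness only as the entry condition of the Filigree Lemma, applied to \emph{each} discarded piece with the concentric family $Y_t=\{f<t\}$. Here $f$ is the $G$-invariant smoothed pillar gauge with $Y_1=\cKG_{\rho_k,\sigma_k'}$, so that $\bd P_k^i\subset\bd Y_1$. One has $c_1\le 3$, and $c_2$ is chosen so that \eqref{Eq: compare to disk} and \eqref{Eq: filigree - isoperimetric} hold.

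Shrink $\sigma_0,\tau,\eta$ successively so that, for every $i>R_k$ and large $k$, the competitors give $(\mc H^n(P_k^i))^{\frac{1}{n-k_0}}\le\frac{1}{4(n-k_0)}c_1^{-1}c_2^{-\frac{n-k_0-1}{n-k_0}}$. The lemma then yields $\mc H^n(P_k^i\cap\cKG_{\frac12,\frac{\sigma_0}{2}})\le\mc H^n(P_k^i\cap Y_{3/4})\le 2\epsilon_{k,i}$. Summing, $\sum_{i>R_k}2\epsilon_{k,i}\le 2c\,r^{-(n-k_0)}\epsilon_k\to0$. Combined with \eqref{Eq: first regularity - 5.6} and \eqref{Eq: first regularity - 5.12}, this gives \eqref{Eq: first regularity - 5.13} exactly, with all parameters fixed. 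The vanishing comes from $\epsilon_{k,i}\to0$, not from letting $\eta\to0$.
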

    \begin{proof}[Proof of Claim \ref{Claim: discard trivial curve}]
        Denote by $C>1$ a uniform constant depending only on $n,k_0,K_0,K_1$. 
        
        If $\bd P_k^{i} \subset \cSG_{\rho_k,\sigma_k',\tau_k} $, then $\bd_1 P_k^{i*}$ encloses a region $A_k^{i*}\subset (\cSG_{\rho_k,\sigma_k',\tau_k})^*$ so that
        \[ \mc H^n(A_k^i) \leq C \eta^{\frac{n-k_0}{n-1-k_0}}, \]
        where $A_k^i:=\pi^{-1}(A_k^{i*})\subset \cSG_{\rho_k,\sigma_k',\tau_k}$. Note that \eqref{Eq: first regularity - 5.9}, the isoperimetric inequality (in $\whcS_{\rho_k,\sigma_k',\tau_k}\subset \bd_s\whcK_{\rho_k,\sigma_k'}$), and \eqref{Eq: area in slice} are used in the above inequality. 
        In particular, $A_k^i\in\mc M^G_0$ and $\bd A_k^i=\bd P^i_k$. 

        If $\bd P_k^{i} \subset \cCG_{\rho_k,\sigma_k',\tau_k} $ and $i\geq R_k+1$, then $\bd_1 P_k^{i*}$ encloses a region $B_k^{i*}\subset (\cCG_{\rho_k,\sigma_k',\tau_k})^*$ so that $B_k^{i*}\cap \bd_1(\cCG_{\rho_k,\sigma_k',\tau_k})^*=\emptyset$ (i.e. $B_k^{i*}$ is also the region enclosed by $\bd_1 P_k^{i*}$ in $\bd_1(\cKG_{\rho_k,\sigma_k'})^*$) and 
        \begin{align*}
            \mc H^n(B_k^i) \leq \mc H^n(\cCG_{\rho_k,\sigma_k',\tau_k}) 
            &\leq \mc H^{n}(\bd_c \cKG_{\rho_k,\sigma_k'}) + \mc H^n(\bd \cKG_{\rho_k,\sigma_k'}\cap B_{\tau_k}(K_1)) \\&\leq  C \rho_k^{n-1-k_0}\cdot \sigma_0 + C(\rho_k^2+\sigma_k'^2)^\frac{1}{2} \cdot \tau^{n-k_0-1},  
        \end{align*}
        where $B_k^i:=\pi^{-1}(B_k^{i*})\subset \bd_c \cKG_{\rho_k,\sigma_k'}$. 
        If $\bd_1 P_k^{i*}$ has free boundary on $\partial_0(\cCG_{\rho_k,\sigma_k',\tau_k})^*$ or $\bd_1 P_k^{i*}$ is a closed curve null-homotopic in $(\cCG_{\rho_k,\sigma_k',\tau_k})^*$, then we can take $A_k^i=B_k^i$ so that $A_k^i\in\mc M^G_0$ and $\bd A_k^i=\bd P^i_k$. 
        	If $\bd_1 P_k^{i*}$ is closed and homotopic to $\bd_0(\cCG_{\rho_k,\sigma_k',\tau_k})^*$ in $(\cCG_{\rho_k,\sigma_k',\tau_k})^*$, then we must have $\tau_k>0$ and $\codim(G\cdot x)=n+1-k_0>3$ for $x\in K_1$. 
        	Thus, we can take $\tau'\in(0,\tau_k)$ small enough so that $(B_{\tau'}(K_1))^*\cap \bd_1 P_k^{i*} = \emptyset$, and take a $G$-subset $\tilde{B}_k^i= B_k^i\setminus B_{\tau'}(K_1)$. 
        	 Now, we set $A_k^i= \tilde{B}_k^{i} \cup (\bd B_{\tau'}(K_1)\cap \cKG_{\rho_k,\sigma_k'})$, which still satisfies $A_k^i\in\mc M^G_0$, $\bd A_k^i=\bd P^i_k$, and 
        	 \begin{align}\label{Eq: claim5}
        	 	\mc H^n(A_k^i) \leq \mc H^n(\tilde{B}_k^i) + \mc H^n(\bd B_{\tau'}(K_1)\cap \cKG_{\rho_k,\sigma_k'}) \leq \mc H^n(\tilde{B}_k^i) + C\tau^{n-k_0-2} .
        	 \end{align}
        	 
        	 Therefore, for each $i\geq R_k+1$, we have $A_k^i\in\mc M^G_0$ with $\bd A_k^i=\bd P^i_k$ and $\mc H^n(A_k^i)$ bounded from above by a constant depending on $\eta,\sigma_0,\tau$. 
		

        To apply the filigree lemma (Lemma \ref{Lem: filigree}), we consider the $G_{x_0}$-invariant function $\hat{f}\in C^0(N_{x_0}(G\cdot x_0))$ given by 
        \[
        \hat{f} (y) = 
        \left\{
            \begin{array}{ll}
                \frac{|y'|}{\rho_k} & {\rm if~}\frac{|y_{n+1-k_0}|}{|y'|}\leq \frac{\sigma_k'}{\rho_k}, \\
                \frac{|y|}{\sqrt{\rho_k^2+\sigma_k'^2}} & {\rm if~}\frac{|y_{n+1-k_0}|}{|y'|} > \frac{\sigma_k'}{\rho_k} ,
            \end{array}
        \right.
        \quad\forall y=(y',y_{n+1-k_0})\in\mb R^{n-k_0}\times\mb R\cong N_{x_0}(G\cdot x_0).
        \]
        The $G_{x_0}$-invariance of $\hat{f}$ indicates $v=g\cdot y\mapsto\hat{f}(y)$ is a well-defined $G$-function in $N(G\cdot x_0)$. 
        After equivariantly smoothing, we obtain a $G$-invariant function $f$ so that
        \begin{itemize}
            \item $\cKG_{\rho_k,\sigma_k'}=Y(1):=\{v: f(v)<1\}$; 
            \item $c_1:=\sup |\nabla f|\leq 1+ \max\{\frac{1}{\rho_k},\frac{1}{\sqrt{\rho_k^2+\sigma_k'^2}}\} \leq 3$;
            \item there exists $c_2>0$ so that \eqref{Eq: compare to disk} and \eqref{Eq: filigree - isoperimetric} are valid for $Y_t=\{v: f(v)<t\}$ with $t>0$ (by an argument similar to Remark \ref{Rem: compare disk instead cylinder}). 
        \end{itemize}
        Therefore, after shrinking $\sigma_0,\tau,\eta>0$ successively, we can use \eqref{Eq: first regularity - 5.11} to make
        \[(\mc H^n(P_k^i))^\frac{1}{n-k_0}\leq (\mc H^n(A_k^i) + \epsilon_{k,i})^\frac{1}{n-k_0} \leq \frac{1}{4}\cdot \frac{1}{(n-k_0)}c_1^{-1}c_2^{-\frac{n-k_0-1}{n-k_0}} \] 
        for $k$ large enough, and then apply the Filigree Lemma \ref{Lem: filigree} to show
        \[\mc H^n(P_k^i\cap \cKG_{\frac{1}{2},\frac{\sigma_0}{2}}) < \mc H^n(P^i_k\cap \cKG_{\frac{2}{3}\rho_k,\frac{2}{3}\sigma_k'} ) \leq \mc H^n(P_k^i\cap Y(\frac{3}{4})) \leq 2\epsilon_{k,i}, \]
        which together with \eqref{Eq: first regularity - 5.6}\eqref{Eq: first regularity - 5.12} indicates \eqref{Eq: first regularity - 5.13}. 
    \end{proof}
    
    Let $\omega_{k}:=\mc H^k_{\mb E}(\mb B^k_1(0))$, and let $C>1$ be a (varying) uniform constant depending only on $n,k_0,K_0,K_1$. 
    Then for each $i\in\{1,\dots,R_k\}$, we have $\mc H^n ( G\cdot (\widehat{H}\cap \mb B_{\rho}(0)) \setminus B_\tau(K_1) ) \leq \mc H^n (P_k^i\cap \cKG_{\rho,1})$ in the metric $g_{_0}$, and thus, in the metric $g_{_r}$ for $r>0$ small:
    \begin{align}\label{Eq: first regularity - 5.15}
        0<\mc H^n \left( G\cdot (\widehat{H}\cap \mb B_{\rho}(0))\right) - C\tau^{n-1-k_0} - \epsilon_1(r) ~\leq~ \mc H^n \left(P_k^i\cap \cKG_{\rho,1}\right), \qquad\forall \rho\in (\tau,\rho_k],
    \end{align}
    where $\epsilon_1(r)\geq 0$ and $\lim_{r\to 0}\epsilon_1(r)=0$. 
    Meanwhile, \eqref{Eq: first regularity - 5.11} indicates that for all $k$ large enough, 
    \begin{align}\label{Eq: first regularity - 5.16}
        \mc H^n \left(P_k^i\right) \leq \mc H^n \left( G\cdot (\widehat{H}\cap \mb B_{\rho_k}(0))\right) + C\sigma_0  + \epsilon_2(\tau) + \epsilon_1(r) + \epsilon_{k,i} , 
    \end{align}
    where $\epsilon_2(\tau)\geq 0$ with $\lim_{\tau\to 0}\epsilon_2(\tau)=0$. 
    Indeed, one notices that $\bd_1 P_k^{i*}$ separates $(\cCG_{\rho_k,\sigma_k',\tau_k})^*$ into two components $N_+^*,N_-^*$ containing the two components $\bd_1(\cCG_{\rho_k,\sigma_k',\tau_k})^*\cap (\whcD_1\times \{\pm x_{n+1-k_0}\geq 0\})^*$ respectively. 
    Take 
    \[N := \pi^{-1}(N_-^*) \cup \mc B^{G,-}_{\rho_k,\sigma_k',\tau_k} \cup \mc A^{G,-}_{\rho_k,\sigma_k',\tau_k}. \]
    Then $N\in \mc M^G_0$ with $\bd N = \bd P_k^i$ (see Figures \ref{fig: N-} and \ref{fig: A- and B-}). 
    \begin{figure}[h]
        \centering
        \begin{subfigure}{0.4\linewidth}
            \centering
            \includegraphics[width=1.6in]{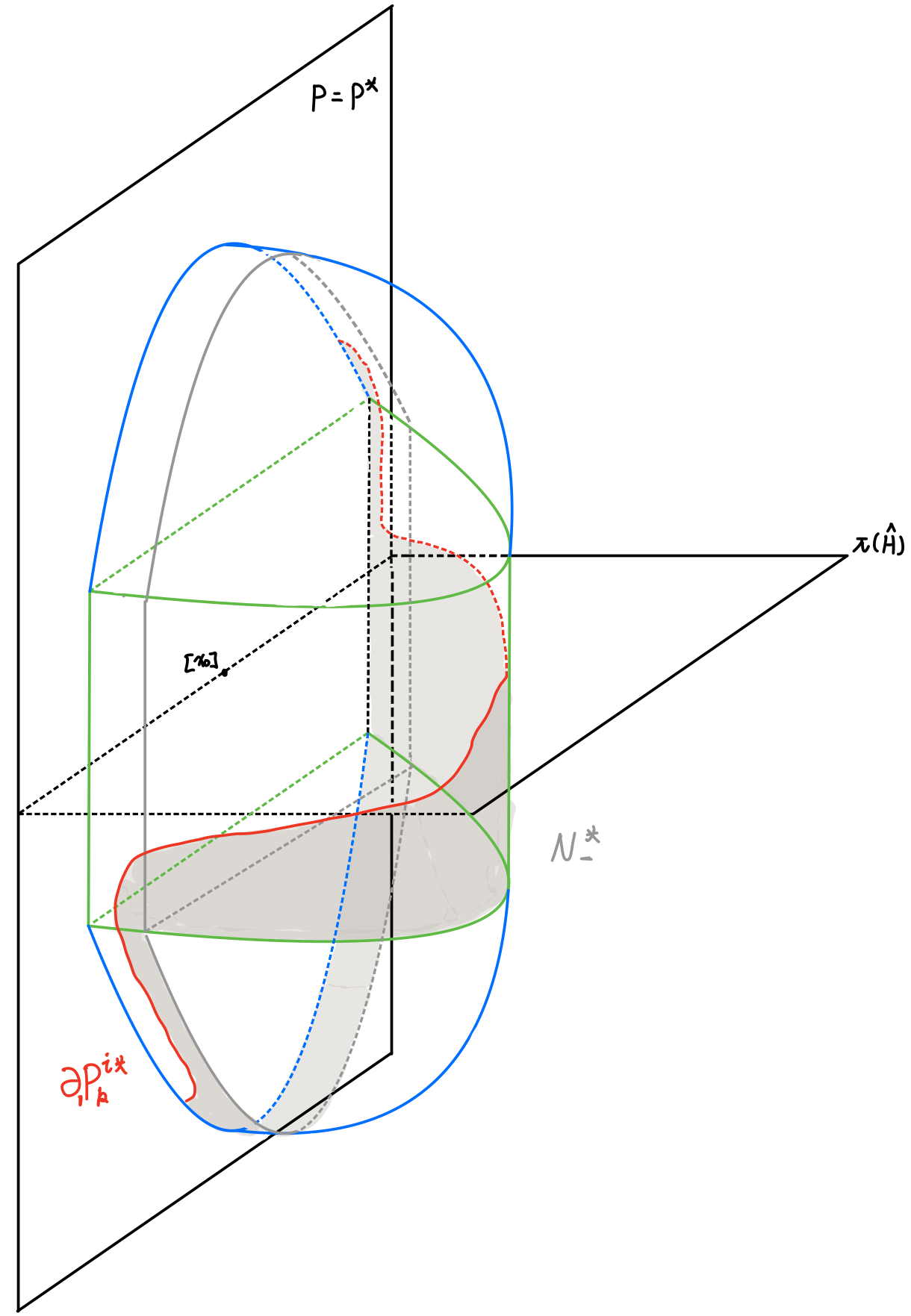} 
            \caption{$N_-^*$}\label{fig: N-}
        \end{subfigure}
        \hspace{1.5cm}
        \begin{subfigure}{0.4\linewidth}
            \centering
            \includegraphics[width=1.6in]{AB.png}
            \caption{$(\mc A^{G,\pm}_{\rho,\sigma,\tau})^*,(\mc B^{G,\pm}_{\rho,\sigma,\tau})^*$}\label{fig: A- and B-}
        \end{subfigure}
        	\caption{}
    \end{figure}
    Hence, we know 
    \begin{itemize}
        \item $\mc H^n(\pi^{-1}(N_-^*))\leq \mc H^n(\cCG_{\rho_k,\sigma_k',\tau_k})\leq C(\sigma_0+\tau^{n-k_0-1})$;
        \item $\mc H^n(\mc B^{G,-}_{\rho_k,\sigma_k',\tau_k})\leq \mc H^n ( G\cdot (\widehat{H}\cap \mb B_{\rho_k}(0))) + \epsilon_1(r)$; 
        \item $\mc H^n(\mc A^{G,-}_{\rho_k,\sigma_k',\tau_k})\leq \epsilon_2(\tau)$ if $\codim(G\cdot x)>\cohom(G)=3$ for $x\in K_1$ (since $\dim(K_1)<n$); 
        \item $\mc A^{G,-}_{\rho_k,\sigma_k',\tau_k}=\emptyset$ if $\codim(G\cdot x)=\cohom(G)=3$ for $x\in K_1$ (since $\tau=\tau_k=0$ by Claim \ref{Claim: first regularity - not on edge}).
    \end{itemize}
    Using \eqref{Eq: first regularity - 5.11}, we can compare the area of $P_k^i$ and $N$ to get \eqref{Eq: first regularity - 5.16}.

    Next, by \eqref{Eq: first regularity - 5.15} and \eqref{Eq: first regularity - 5.12}, we see $R_k$ is uniformly bounded. 
    Hence, we have a positive integer $m$ and a subsequence (without relabeling) so that $R_k\equiv m$, and 
    \begin{itemize}
        \item $\rho_k\to \rho_\infty\in [\sqrt{1-\sigma_0^2},1]\subset [\frac{3}{4},1]$ as $k\to\infty$;
        \item for $i\in\{1,\dots, m\equiv R_k\}$, $\left|\bleta^G_{\rho_k}(P_k^i)\right|$ converges to a $G$-varifold $W_i$ satisfying (by taking $\tau\to 0$ after $k\to\infty$ in \eqref{Eq: first regularity - 5.15}\eqref{Eq: first regularity - 5.16})
        \begin{align}
            \mc H^n ( G\cdot (\widehat{H}\cap \mb B_{\rho}(0))) - \epsilon_1(r)
            &\leq \|W_i\| \left(\cKG_{\rho,1}\right) \qquad\forall \rho\in (0,1] \label{Eq: first regularity - 5.17} 
            \\
            &\leq \|W_i\| \left(\cKG_{1,1}\right) \leq \mc H^n ( G\cdot (\widehat{H}\cap \mb B_{1}(0))) + C\sigma_0 + \epsilon_1(r) , \label{Eq: first regularity - 5.17.2} 
        \end{align}
        \begin{align}\label{Eq: first regularity - 5.18} 
            \spt(\|W_i\|)\subset \cKG_{1,3\sigma_0},
        \end{align}
        \begin{align}\label{Eq: first regularity - 5.19} 
            \left(\bleta^G_{r\rho_\infty\#}W \right)\res G_n(\cKG_{\frac{1}{2},1}) = \sum_{i=1}^{m} W_i\res G_n(\cKG_{\frac{1}{2},1}). 
        \end{align}
    \end{itemize}
    Additionally, by applying the proofs of Claim \ref{Claim: first regularity - 5.2} and Claim \ref{Claim: tangent bundle} with $\bleta^G_{\rho_k}(P^i_k)$ and $W_i$ in place of $\Sigma_k$ and $W$ respectively, each $W_i$ is a stationary rectifiable $G$-varifold in $\mc K^G_{1,3\sigma_0}$ written as $\mf v(G\cdot \widehat W_i, \theta)$ for some $(n-k_0)$-rectifiable $G_{x_0}$-set $\widehat  W_i\subset \whcK_{1,3\sigma_0}$ and $G$-function $\theta$ on $\cKG_{1,3\sigma_0}$. 
    
    The above constructions are analogous to those in \cite{almgren79plateau}*{Theorem 2}, but our pillar region $\whcK_{\rho,\sigma}$ has a spherical cap, whereas in \cite{almgren79plateau}*{p.464}, $K_{\rho,\sigma}$ has a flat cap. 
    This spherical cap is chosen so that $\cKG_{\rho,\sigma}$ is mean convex under the metric $g_{_r}$ for $r>0$ small enough, which is required in Lemma \ref{Lem: convex}. 
    Nevertheless, given $\sigma_0\in (0,\frac{1}{100})$, we can also use a shallower spherical cap by taking 
    \begin{itemize}
        \item $\widehat{K}_{\rho,\sigma}:= \left\{x\in (\whcD_\rho\setminus\bd\whcD_\rho) \times \mb R :  | x - (0,\cdots,0,\pm \frac{1}{\sigma_0}) |^2 < \rho^2+(\sigma+\frac{1}{\sigma_0})^2 \right\}$, for $\rho,\sigma\in (0,1]$.
    \end{itemize}
    Then for $\rho,\sigma\in (0,1]$, $K^G_{\rho,\sigma}:= G\cdot \widehat K_{\rho,\sigma}$ still satisfies the mean convex condition under the metrics $g_{_0}$ and $g_{_r}$ for $r>0$ small enough. 
    Hence, the previous constructions can be modified so that \eqref{Eq: first regularity - 5.17}-\eqref{Eq: first regularity - 5.19} remain valid with $\cKG_{\rho,\sigma}$ replaced by $K^G_{\rho,\sigma}$. 
    Let $B_s^{N(G\cdot x_0)}(G\cdot x_0)$ be the $s$-neighborhood of the zero section of $N(G\cdot x_0)$ for $s>0$. 
    Then, for $s\in [1/2,1]$, by \eqref{Eq: first regularity - 5.17}\eqref{Eq: first regularity - 5.18} (with $\cKG_{\rho,\sigma}$ replaced by $K^G_{\rho,\sigma}$) and the fact that $K^G_{s-\sigma_0,1}\cap K^G_{1,3\sigma_0}\subset K^G_{s-\sigma_0,6\sigma_0}\subset B_s^{N(G\cdot x_0)}(G\cdot x_0)\subset B_1^{N(G\cdot x_0)}(G\cdot x_0)\subset K^G_{1,1}$, we can take a sequence $r_l\to 0^+$ so that   
    \begin{align*}
        \mc H^{k_0}(G\cdot x_0)\omega_{n-k_0}(s-\sigma_0)^{n-k_0} 
        &\leq \lim_{r_l\to 0} \|W_i\|(K^G_{s-\sigma_0,1}) 
        \leq \lim_{r_l\to 0} \|W_i\|(B_s^{N(G\cdot x_0)}(G\cdot x_0))\\
        &\leq \lim_{r_l\to 0} \|W_i\|(B_1^{N(G\cdot x_0)}(G\cdot x_0))
        \leq \mc H^{k_0}(G\cdot x_0)\omega_{n-k_0} + C\sigma_0.
    \end{align*}
    Additionally, $\delta W_i(X^G)=\mc H^{k_0}(G\cdot x_0)\cdot \delta \mf v(\widehat W_i, \theta|_{N_{x_0}(G\cdot x_0)}) (X^G|_{N_{x_0}(G\cdot x_0)})$ under the local-product metric $g_{_0}$, for any $G$-invariant vector field $X^G$ in $B_1^{N(G\cdot x_0)}(G\cdot x_0)$. 
    Hence, by Lemma \ref{Lem: first variation and G-variation}, the monotonicity formula can be applied in the Euclidean slice $N_{x_0}(G\cdot x_0)$ to show the monotonicity of $\lim_{r_l\to 0} \|W_i\|(B_s^{N(G\cdot x_0)}(G\cdot x_0))\cdot s^{-(n-k_0)} $ on $s\in (0,1)$. 
    Combining these with the arbitrariness of $\sigma_0\in (0,\frac{1}{100})$, we see $\omega_{n-k_0} \mc H^{k_0}(G\cdot x_0)=\lim_{\sigma_0^k\to 0}\lim_{r_l\to 0} \frac{\|W_i\|(B^{N(G\cdot x_0)}_{1/2}(G\cdot x_0))}{(1/2)^{(n-k_0)}}$ for a sequence $\sigma_0^k\to 0^+$.  
    Thus, by \eqref{Eq: splitting tangent bundle}\eqref{Eq: first regularity - 5.19} and passing to a diagonal subsequence $\{(\sigma_0^j, r_j)\}_{j\in\mb N}$, 
    \begin{align}\label{Eq: first regularity - density}
        \Theta^n(\|W\|,x_0) &= \Theta^{n-k_0}(\|\widehat{C}\|,0) = 
        \frac{\|\widehat{C}\|(\mb B^{n+1-k_0}_{1/2}(0))}{\omega_{n-k_0}\cdot(1/2)^{n-k_0}} = \lim_{(\sigma_0^j,r_j)\to 0} \frac{\|\bleta_{r\rho_\infty\#}^GW\|(B^{N(G\cdot x_0)}_{1/2}(G\cdot x_0))}{ \omega_{n-k_0}(1/2)^{n-k_0} \mc H^{k_0}(G\cdot x_0)}    \nonumber
        \\
        &= \lim_{(\sigma_0^j,r_j)\to 0}  \sum_{i=1}^m \frac{\|W_i\|(B^{N(G\cdot x_0)}_{1/2}(G\cdot x_0))}{\omega_{n-k_0}(1/2)^{n-k_0} \mc H^{k_0}(G\cdot x_0)}  = m.    
    \end{align}

    {\bf Case II: $\widehat{H}$ contains $P:=T_{x_0} K_0\cap N_{x_0}(G\cdot x_0)$.}
	
	We shall show this case cannot happen as in \cite{jost1986existenceI}*{Theorem 5.1, (5.23)}. 
    To begin with, we notice the unit normal $\nu_{\widehat{H}}$ of $\widehat{H}$ in $N_{x_0}(G\cdot x_0)$ points into $M^{prin}$, which implies $\#(G_{x_0}\cdot p)=\#(G_{x_0}\cdot \nu_{\widehat{H}})=\#\{\pm\nu_{\widehat{H}}\}=2$ for all $p\in S_{\rho_0}(x_0)\cap M^{prin}$. 
    Hence, $G_{x_0}$ acts by the reflection across $P$ in $N_{x_0}(G\cdot x_0)$, $k_0=\dim(G\cdot x_0)= n+1-\cohom(G)=n-2$, and $K_0\cap B$ is a $G$-invariant minimal hypersurface in $B$ (cf. Proposition \ref{Prop: local structure of Sigma/G 1}, \cite{hsiang71cohom}*{Corollary 1.1}). 
    
    Next, we can proceed as in {\bf Case I} within the fundamental domain $\mb R^3_+:=\{x_1\geq 0\}\subset \mb R^3\cong N_{x_0}(G\cdot x_0)$ with $P=\widehat{H}=\{x_1=0\}$. Set $\tau=0$, 
    \begin{itemize}
    	\item $(\cKG_{\rho,\sigma})^*=\{x\in \mb R^3_+: x_2^2+x_3^2<\rho^2, |x|^2<\rho^2+\sigma^2\}$, 
    	\item $(\cSG_{\rho,\sigma,0})^*=(\bd_s\cKG_{\rho,\sigma})^*=\{x\in \mb R^3_+: x_2^2+x_3^2<\rho^2, |x|^2=\rho^2+\sigma^2\}$, 
    	\item $(\cCG_{\rho,\sigma,0})^*=(\bd_c\cKG_{\rho,\sigma})^*=\{x\in \mb R^3_+: x_2^2+x_3^2=\rho^2, |x|^2<\rho^2+\sigma^2\}$. 
    \end{itemize}
    Then we similarly obtain $\{P_k^{i}\}_{i=1}^{R_k'}\subset \mc M^G_0$ satisfying (1)-(4) before Claim \ref{Claim: discard trivial curve} in some $\cKG_{\rho,\sigma}$. 
    However, since the cylindrical region $(\cCG_{\rho,\sigma,0})^*$ deformation retracts onto $\bd_0 (\cCG_{\rho,\sigma,0})^* $, we know every $P_k^i$ satisfies the statement (4) before Claim \ref{Claim: discard trivial curve} (i.e. $R_k=0$). 
    Therefore, it is sufficient to show Claim \ref{Claim: discard trivial curve} in this case, which indicates $x_0\notin\spt(\|W\|)$ and leads to a contradiction. 
    
    If $\bd P_k^i\subset \mc S^G_{\rho,\sigma,0}$, or $\bd_1 P_k^{i*}\subset (\cCG_{\rho,\sigma,0})^*$ has free boundary on $\bd_0 (\cCG_{\rho,\sigma,0})^*$, or $\bd_1 P_k^{i*}$ is a null-homotopic closed curve in $(\cCG_{\rho,\sigma,0})^*$. 
    Then, we can proceed as in Claim \ref{Claim: discard trivial curve} to discard them by the filigree lemma. 
    If $\bd_1 P_k^{i*}\subset (\cCG_{\rho,\sigma,0})^*$ is a non-trivial closed curve in $(\cCG_{\rho,\sigma,0})^*$, then $A_k^{i*}=B_k^{i*}\subset (\cCG_{\rho,\sigma,0})^*$ is the cylindrical region bounded between $\bd_1 P_k^{i*}$ and $\bd_0  (\cCG_{\rho,\sigma,0})^*$. 
    Since $A_k^i\in\mc M^G_0$ and $\bd A_k^i=\bd P_k^i$, we can still apply \eqref{Eq: first regularity - 5.11} and the filigree lemma to discard $P_k^i$ (see \cite{jost1986existenceI}*{(5.23)-(5.26)}). 
    (This is where we require \eqref{Eq: first regularity - 5.11} to hold without the assumption on $\mk b$.) 
    This proves Claim \ref{Claim: discard trivial curve} and gives a contradiction to $x_0\in\spt(\|W\|)$. 
    
    \medskip
    Now, combining the regularity of $W$ in $B\setminus K_0$ with the results in {\bf Case I} and {\bf II}, we know that $\Theta^n(\|W\|,x_0)$ is a positive integer whenever $x_0\in\spt(\|W\|)\cap (B\setminus K_0) $ or $W$ has a tangent varifold at $x_0\in \spt(\|W\|)\cap B\cap K_0$ supported in an $n$-plane. 
    By the rectifiability of $W\res G_n(B)$, such a tangent varifold exists for almost all $x_0\in B$. 
    Thus, $W$ is an integral varifold in $B$. 

    Moreover, applying the above arguments with $\bleta^G_{\rho_k}(P^i_k)$ in place of $\Sigma_k$, we see each $W_i$ (in \eqref{Eq: first regularity - 5.17}-\eqref{Eq: first regularity - 5.19}) is a stationary integral $G$-varifold in $\cKG_{1,1}$. 
    Using \eqref{Eq: first regularity - 5.17.2} and a computation similar to \eqref{Eq: first regularity - density}, we know the Allard regularity theorem \cite{allard1972first}*{\S 8} is applicable for $\sigma_0,r>0$ small enough so that $W_i=|\Sigma|$ near $x_0$ for some smooth minimal hypersurface $\Sigma$. 
    Since $\{P^i_k\}_{i=1,\dots,m}$ are disjoint, the maximum principle of minimal hypersurfaces and \eqref{Eq: first regularity - 5.19} indicate $\bleta^G_{r\rho_\infty\#}W= m |\Sigma|$ near $x_0$. 
    Hence, we get \eqref{Eq: first regularity - regular} by the $G$-invariance of $W$. 
\end{proof}

\begin{remark}
	In the proof above, we intentionally used some subtle constructions, which ensure that, except for the discussion in {\bf Case II}, the rest of the proof remains valid under the weakened condition: $\mc H^n(\Sigma_k)\leq \mc H^n(\Gamma') + \epsilon_k$ for all $\Gamma'\in\mc M^G_0$ with $\bd\Gamma'=\bd\Sigma_k $ and $ \mk b(\Gamma')\leq \mk b(\Sigma_k)$. 
\end{remark}

To show the final regularity of $W$ given in Theorem \ref{Thm: first regularity}, we aim to show the tangent cone of $W$ at $x_0\in\spt(\|W\|)\cap B\cap K_0$ is supported in a plane. 
The proof will be separated into two cases depending on the dimension of $N_{x_0}(G\cdot x_0)$. 
We shall begin with the case that $\dim(N_{x_0}(G\cdot x_0))=3$ using an argument similar to \cite{almgren79plateau}*{Theorem 3, Corollary 2}.

\begin{theorem}\label{Thm: interior regularity k0=n-2}
	Let $K_0,K_1,k_0,\rho_0$ be defined as before, $p_0\in K_0$, and $B:= B_{2\rho_0}(G\cdot p_0)$. 
	Suppose 
	\begin{align}\label{Eq: k0=n-2}
		\cohom(G) = 3 = n+1-k_0 = \codim(G\cdot p_0) .
	\end{align}
    Suppose also $\{\Sigma_k\}_{k\in\mb N}$ and $W$ are as in Theorem \ref{Thm: first regularity} so that $W$ has a tangent varifold 
    \[C=T_{x_0} (G\cdot x_0) \times \widehat{C} \in \mc V^{G_{x_0}}_n(T_{x_0}B) \]
    at $x_0\in B\cap K_0$ with $\widehat{C}\in \mc V^{G_{x_0}}_{2}(N_{x_0}(G\cdot x_0))$ (by Claim \ref{Claim: tangent bundle}) satisfying
    \begin{itemize}
    	\item $\spt(\|\widehat{C}\|)\cap \mb B_{\rho}(0)\subset \cup_{j=1}^R D_j$, and 
    	\item $\spt(\|\widehat{C}\|)\cap  (D_1 \setminus L) \neq \emptyset$,
    \end{itemize}
    where $D_1,\dots, D_R\subset N_{x_0}(G\cdot x_0)$ are distinct open $2$-disks of radius $\rho\in (0,\rho_0)$ that intersect in a common diameter $L$ of the ball $\mb B_{\rho}(0)\subset N_{x_0}(G\cdot x_0)$ and $\cup_{j=1}^R D_j$ is $G_{x_0}$-invariant. 
    Then 
    \begin{align}\label{Eq: interior regularity 1 - classify tangent cone}
    	\whC \res G_2(\mb B_\rho(0)) = m|D_1| 
    \end{align}
    for some $m\in\mb N$, and $D_1$ is orthogonal to $P(x_0)$ (cf. \eqref{Eq: P}). 
    
    As a corollary, $W$ satisfies the regularity \eqref{Eq: first regularity - regular} near $x_0\in \spt(\|W\|)\cap B$ provided \eqref{Eq: k0=n-2}. 
\end{theorem}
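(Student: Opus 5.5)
Our plan is to adapt Almgren--Simon \cite{almgren79plateau}*{Theorem 3, Corollary 2} to the $3$-dimensional slice $N_{x_0}(G\cdot x_0)\cong\mb R^3$. Since \eqref{Eq: k0=n-2} gives $n-k_0=2$, the cone $\whC$ is a $2$-dimensional stationary $G_{x_0}$-invariant cone in $\mb R^3$ supported in the ``book'' $\cup_j D_j$. This follows from Claim \ref{Claim: tangent bundle}, since the splitting $C=T_{x_0}(G\cdot x_0)\times\whC$ reduces stationarity of $C$ to stationarity of $\whC$, using the first-variation identity under $g_{_0}$ and Lemma \ref{Lem: first variation and G-variation}. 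By the constancy theorem on each half-disk $D_j\setminus L$, $\whC\res G_2(\mb B_\rho(0))=\sum_j m_j^\pm|D_j^\pm|$, where $D_j^\pm$ are the two half-disks and the multiplicities $m_j^\pm\geq 0$ are real; integrality is not yet known off planar points. The first task is to show that only one plane $D_1$ survives, with $m_1^+=m_1^-$.

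We would first decide, via Lemma \ref{Lem: hypersurface position} and Proposition \ref{Prop: local structure of Sigma/G 1}, where $L$ sits relative to $P(x_0)$ (a $2$-plane, since $\dim P=2$ in this subsection). Every half-disk meeting $M^{prin}$ must be invariant under $G_{x_0}$, and $G_{x_0}$ acts on $N_{x_0}$ with fundamental domain $[0,\infty)\times P$. Hence either $L\subset P$ or $L\perp P$. Away from $L$, each half-disk is a point of planar tangent cone, so Theorem \ref{Thm: first regularity} applies at nearby points (after blow-up). It gives integer multiplicities on each $D_j^\pm\setminus L$, and it forces a half-disk touching $P$ away from $L$ to be orthogonal to $P$; Case II there excludes containment.

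The core step is the area-comparison argument of \cite{almgren79plateau}*{Theorem 3 Case III}, carried out in the orbit space. Suppose at least two distinct half-planes carry positive mass, so that $\whC$ is not a single plane with multiplicity. We would take the blow-up surfaces $\bleta^G_r(\Sigma_k)$ and use the Replacement Theorem \ref{Thm: replacement} in a $G$-pillar $\cKG_{\rho,\sigma}$ adapted to the book, with axis $L$, to get genus-zero almost-minimizing pieces $P_k^i$. Their boundary curves, viewed in $(\bd\cKG)^*$, lie near the arcs $\bd D_j^\pm$. Because the pieces have genus $0$ in the quotient, each boundary curve bounds a disk or half-disk. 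The classical cut-and-paste then produces a competitor that rounds off the edge $L$, with strictly smaller area, of order $-c\,\sigma$ per junction. This contradicts \eqref{Eq: first regularity - 5.11} unless all the mass lies on one flat disk. Hence the cone is $m|D_1|$ with $m_1^+=m_1^-$. Integrality of $m$ follows as in \eqref{Eq: first regularity - density}, and orthogonality of $D_1$ to $P(x_0)$ follows from Case II of Theorem \ref{Thm: first regularity}.

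The main obstacle is this comparison near $K_1$, where the Hsiang--Lawson weight $V$ may degenerate or where the book's edge $L$ lies inside $P$. Here the competitor must remain in $\mc M^G_0$: it must be locally $G$-boundary-type, and $\mk b$ is constrained. We would try the disk selection of Remark \ref{Rem: compare disk instead cylinder}, together with the fact that for $\codim=3$ the orbit volume is bounded below ($\tau=0$ in Claim \ref{Claim: first regularity - not on edge}). This makes the weighted metric uniformly comparable to Euclidean on the fundamental domain, so the Euclidean angle-cutting estimate transfers with a uniform constant. For the corollary, every tangent cone at $x_0$ is of the above book form, because the support of a stationary $2$-cone in $\mb R^3$ with the density bounds above is a union of half-planes through a line, after a further dimension-reduction blow-up along $L$. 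So the tangent cone is planar, and Theorem \ref{Thm: first regularity} yields \eqref{Eq: first regularity - regular}.
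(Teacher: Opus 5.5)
Your setup (the splitting $C=T_{x_0}(G\cdot x_0)\times\whC$, integer multiplicities on the half-disks away from $L$, orthogonality of the final plane $D_1$ to $P(x_0)$ via Case II of Theorem \ref{Thm: first regularity}) agrees with the paper. The core step, however, does not work as stated, and it is not what the paper does.

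\textbf{How the paper handles Case III.} The paper never builds an area-decreasing competitor near $L$.
\begin{itemize}
\item It applies Theorem \ref{Thm: replacement} in a convex $G$-set $U=G\cdot\widehat U$ with $\mc H^2(\partial\widehat U)=4\pi\rho^2-\alpha$, and discards small pieces by the filigree lemma.
\item It compares each remaining piece $P_k^i$ with the smaller region of $\partial U$ bounded by $\partial P_k^i$ (the half-disks $E_\pm$, or the disk/cylinder $E_s,E_c$).
\item This gives $\|\whC_i\|(\mb B_{\rho-\eta}(0))<2\pi(\rho-\eta)^2$, so each piece has total multiplicity at most $3$. This is exactly where $n-k_0=2$ enters, through $\mc H^2(\mb S^2_+)/\mc H^2(\mb D^2)=2$.
\item Multiplicity at most $3$ is then settled without area comparison. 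Multiplicity at most $2$ follows from stationarity, $G_{x_0}$-symmetry and Case II of Theorem \ref{Thm: first regularity}. Multiplicity exactly $3$ is excluded by the current (parity) argument of Almgren--Simon.
\item Finally, two pieces converging to distinct disks through $L$ would have to intersect (Almgren--Simon's argument around (6.8)).
\end{itemize}
When $L\not\subset P(x_0)$ none of this is needed: $L\setminus\{0\}$ lies in the principal part, where Remark \ref{Rem: regularity in Mprin} already forces a single plane.

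\textbf{Why your ``rounding off the edge'' cannot replace this.} First, it says nothing about odd configurations. Take $L\perp P(x_0)$ and three multiplicity-one half-disks with edge $L$ at mutual angles $2\pi/3$. This is a $G_{x_0}$-invariant stationary cone satisfying all the hypotheses. An odd number of sheets cannot be re-paired, and a $Y$-junction is area-minimizing as a soap film, so no cut-and-paste near $L$ gains $c\sigma$. What excludes it is topological: parity of the limit current, or regularity at the principal points of $L\setminus\{0\}$. Second, even for even configurations, the competitor requires several things varifold convergence to the book does not give:
\begin{itemize}
\item knowing how the sheets of $P_k^i$ near the half-disks connect inside a thin tube around $L$;
\item choosing a non-crossing re-pairing of those sheets;
\item matching the fixed boundary on $\partial U$ near the endpoints of $L$;
\item keeping the result embedded, of genus zero in the quotient, and locally $G$-boundary-type.
\end{itemize}
This is why Almgren--Simon, and the paper, go through the density bound instead.

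\textbf{Two smaller points.}
\begin{itemize}
\item The half-disks off $P(x_0)$ need not be individually $G_{x_0}$-invariant; when $L\subset P(x_0)$ they are swapped in mirror pairs.
\item In the corollary, $\whC$ itself need not be a book. Allard and Allard--Almgren only give that near each $v\neq0$ in $\spt(\|\whC\|)$ the cone is a union of half-disks sharing the line through $v$. You then apply the theorem at $v$ to a blow-up of $G\cdot\whC$, which is still a limit of almost-minimizers. This shows $\whC$ is smooth away from $0$, so its link is a single great circle and $\whC$ is a plane. No further blow-up along $L$ is involved.
\end{itemize}
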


\begin{proof}
	To begin with, it follows from \eqref{Eq: k0=n-2} that $N_{x_0}(G\cdot x_0)\cong \mb R^3$. 
	Hence, we can take $P=P(x_0)=\{v\in\mb R^3: v_1=0\}$ and $G_{x_0}\cdot v=\{(v_1,v_2,v_3),(-v_1,v_2,v_3)\}$ for all $v\in N_{x_0}(G\cdot x_0)$. 
	We also borrow the notations from the proof of Theorem \ref{Thm: first regularity} (see Claim \ref{Claim: tangent bundle} and the notations before it) and similarly pull back all the computations to the normal bundle $N(G\cdot x_0)$ by $\exp_{G\cdot x_0}^{\perp,-1}$. 
	Denote by $B^G_{\rho} =G\cdot \mb B_\rho(0)$ the $\rho$ neighborhood of the zero section in $N(G\cdot x_0)$ for $\rho>0$. 
	
	Let $C=\lim_{t_k\to 0} \bleta_{t_k\#}W = T_{x_0} (G\cdot x_0) \times \widehat{C} \in \mc V^{G_{x_0}}_n(T_{x_0}B) $ be given as in the theorem. 
	Then \eqref{Eq: splitting tangent bundle} in Claim \ref{Claim: tangent bundle} implies that (up to a subsequence) $\lim_{t_k\to 0} \bleta^G_{t_k\#}W= G\cdot \whC \in \mc V^{G}_n(N(G\cdot x_0))$. 
	Clearly, we can find a subsequence of $\{\Sigma_k\}_{k\in\mb N}$ (without relabeling) so that $\wti\Sigma_k := \bleta^G_{r_k}(\Sigma_k)$ with $r_k\to 0$ as $k\to\infty$ satisfies 
	\begin{align}\label{Eq: interior regularity 1 - blowup}
		\lim_{k\to\infty} |\wti\Sigma_k| = C^G:=G\cdot \whC \in \mc V^G_n(N(G\cdot x_0))
	\end{align}
	and 
	\begin{align}\label{Eq: interior regularity 1 - almost minimizing}
		\mc H^n(\wti\Sigma_k)\leq \mc H^n(\Gamma')+\tilde{\epsilon}_k,\qquad \forall\Gamma'\in\mc M^G_0 {\rm ~with~}\bd\Gamma'=\bd\wti\Sigma_k,
	\end{align}
	where $\tilde{\epsilon}_k\to 0$ as $k\to\infty$. 
	
	Denote by $D_i^\pm\subset D_i$, $i=1,\dots,R$, the two half-disks with common diameter $L$. 
	Then \eqref{Eq: interior regularity 1 - almost minimizing} indicates that Theorem \ref{Thm: first regularity} is applicable to $G\cdot \whC$ (in place of $W$), and thus 
	\begin{align}\label{Eq: interior regularity 1 - 6.1}
		\whC \res G_2(\mb B_\rho(0)) = \sum_{j=1}^R \left( m_j^+ |D_j^+| + m_j^- |D_j^-| \right),
	\end{align}
	where $m_j^\pm$ are non-negative integers. 
	In addition, by Remark \ref{Rem: regularity in Mprin}, we know $\whC$ is an embedded stable minimal cone in $N_{x_0}(G\cdot x_0)\setminus P\cong \mb R^3\setminus (\{0\}\times \mb R^2)$, which implies \eqref{Eq: interior regularity 1 - classify tangent cone} provided that $L$ is not contained in $P$. 
	Next, we assume $L$ lies in $P$, and consider three cases.

	{\bf Case I: $\sum_{j=1}^R(m_j^++m_j^-)\leq 2$.}
	
	Since $\whC$ is stationary in $N_{x_0}(G\cdot x_0)$, the case $\sum_{j=1}^R(m_j^++m_j^-)= 1$ cannot happen. 
	If $\sum_{j=1}^R(m_j^++m_j^-)= 2$, then the stationarity of $\whC$ together with the assumption $\spt(\|\whC\|)\cap (D_1\setminus L)\neq\emptyset$ indicates $m_1^+=m_1^-=1$ (and $m_j^+=m_j^-=0$ for $j\neq 1$). 
	Hence, \eqref{Eq: interior regularity 1 - classify tangent cone} holds with $m=1$. 
	Moreover, $D_1$ is either orthogonal to $P=P(x_0)$ (cf. \eqref{Eq: P}) or contained in $P\cap\mb B_\rho(0)$ by the $G_{x_0}$-invariance. 
	But the latter case cannot occur by the proof of Theorem \ref{Thm: first regularity} {\bf Case II}.

	{\bf Case II: $\sum_{j=1}^R(m_j^++m_j^-)= 3$.}
	
	In this case, we have $3$ half-disks $H_1,H_2,H_3$ with a common diameter $L\subset P$ so that $\whC\res G_2(\mb B_\rho(0)) = \sum_{i=1}^3|H_i|$. 
	Combining the stationarity of $\whC$ with the $G_{x_0}$-invariance, one can show that one of the half-disks (say $H_1$) must lie in $P$, the other two half-disks (say $H_2,H_3$) are mirror symmetric across $P$, and they meet along $L\subset P$ at angles of $2\pi/3$. 
	By considering the current limit of $\llbracket \wti\Sigma_k \rrbracket$ as in \cite[Remark 5.20]{almgren79plateau}, we have a rectifiable $G$-invariant current $T$ with $\bd T\res B_\rho^G = 0$ and $T\res B_\rho^G = \sum_{i=1}^3 \llbracket G\cdot H_i \rrbracket$, which contradicts $\bd (\sum_{i=1}^3 \llbracket G\cdot H_i\rrbracket)\neq\emptyset$.

	{\bf Case III: $\sum_{j=1}^R(m_j^++m_j^-)\geq 4$.}
	
	In this case, it is sufficient to show that there is $m\in\mb N$ so that 
	\begin{align}\label{Eq: interior regularity 1 - 6.2}
		\whC\res G_2(\mb B_{\rho-\eta}(0)) = m |D_1\cap \mb B_{\rho-\eta}(0)|
	\end{align}
	for some $\eta\in (0,\rho)$. Then the cone structure of $\whC$ gives \eqref{Eq: interior regularity 1 - classify tangent cone}. 
	
	Let $\whU$ be a $C^2$ convex open $G_{x_0}$-set so that $\cup_{j=1}^RD_j \subset \whU\subset \mb B_\rho(0)$, and $U:=G\cdot \whU$ satisfies
	\begin{align}\label{Eq: interior regularity 1 - 6.3}
		\mc H^n(\bd U) = \mc H^{n-2}(G\cdot x_0)\cdot \mc H^2(\bd \whU) = \mc H^{n-2}(G\cdot x_0)\cdot \left( 4\pi\rho^2 - \alpha \right)
	\end{align}
	for some $\alpha>0$, where we have used the co-area formula as in \eqref{Eq: area in slice} under the metric $g_{_0}$ (defined before Claim \ref{Claim: tangent bundle}). 
	After equivariantly modifying $U$ if necessary, we can also suppose $\wti\Sigma_k$ is transversal to $\bd U$. 
	Then we apply Theorem \ref{Thm: replacement} to $\wti\Sigma_k$ in $U$ and obtain pairwise disjoint $P_k^1,\dots,P_k^{T_k}\in\mc M^G_0$ (where $T_k\in\mb N$) so that $P_k^i\setminus\bd P_k^i\subset U$, $\bd P_k^i\subset \bd U$, and 
	\begin{align}\label{Eq: interior regularity 1 - 6.4}
		\mc H^n(P_k^i)\leq \mc H^n(P') + \tilde{\epsilon}_{k,i}, \qquad\forall P'\in\mc M^G_0 {\rm~with~} \bd P'= \bd P_k^i,
	\end{align}
	where $\sum_{i=1}^{T_k}\tilde{\epsilon}_{k,i}\leq \tilde{\epsilon}_k$. 
	Additionally, by Theorem \ref{Thm: replacement}(iv)(vi) and \cite{allard1972first}*{2.6(2)(d)}, we know 
	\begin{align}\label{Eq: interior regularity 1 - 6.5}
		C^G \res G_n(B_{\rho-\eta}^G) = \lim_{k\to\infty} \sum_{i=1}^{T_k} \left| P_k^i\cap B_{\rho-\eta}^G\right|,
	\end{align}
	for any pre-assigned $\eta \in (0,\rho)$. 
	
	By \eqref{Eq: interior regularity 1 - 6.4} and the filigree lemma (Lemma \ref{Lem: filigree}), there exists $\beta>0$ (depending on $\eta$ but independent of $k$) so that $\mc H^n(P_k^i\cap B^G_{\rho-2\eta}) \leq 2\tilde{\epsilon}_{k,i}$ whenever $\mc H^n(P_k^i\cap B^G_{\rho-\eta})<\beta$. 
	(Note that the filigree lemma can be applied to $B^G_{\rho-\eta}$ by an argument similar to Remark \ref{Rem: compare disk instead cylinder}.) 
	Hence, combining this with \eqref{Eq: interior regularity 1 - 6.5} and \cite{allard1972first}*{2.6(2)(d)}, we see 
	\[ \lim_{k\to\infty} \sum_{i\in \mc I_k}  \left| P_k^i\cap B_{\rho-2\eta}^G\right| = C^G \res G_n(B_{\rho-2\eta}^G),\]
	where $\mc I_k\subset \{1,\dots, T_k\}$ is the set of $i$ with $\mc H^n(P_k^i\cap B^G_{\rho-\eta})\geq\beta$. 
	Note $\# \mc I_k$ is uniformly bounded by \eqref{Eq: interior regularity 1 - 6.5}. 
	Hence, up to a subsequence and relabeling, we have $\# \mc I_k\equiv m\in\mb N$ and 
	\begin{align}\label{Eq: interior regularity 1 - 6.5'}
		C^G_i = \lim_{k\to\infty} |P_k^i|, \qquad i\in \{1,\dots,m\}= \mc I_k;
		\\
		C^G \res G_n(B_{\rho-2\eta}^G) = \sum_{i=1}^m C^G_i\res G_n(B_{\rho-2\eta}^G).\nonumber
	\end{align}
	Using \eqref{Eq: interior regularity 1 - 6.4} and Theorem \ref{Thm: first regularity}, one can see that $C^G_i$ is a stationary integral $G$-varifold in $U$ so that $\spt(\|C^G_i\|)\cap B^G_{\rho-\eta}\subset \spt(\|C^G\|)\cap B^G_{\rho-\eta}\subset G\cdot (\cup_{j=1}^RD_j)$. 
	Therefore, Theorem \ref{Thm: first regularity} and the $G$-invariance further indicate that there is an integral stationary $G_{x_0}$-varifold $\whC_i=\mf v(\cup_{j=1}^R D_j, \theta_{\whC_i})\in \mc V^{G_{x_0}}_2(N_{x_0}(G\cdot x_0))$ so that 
	\begin{itemize}
		\item $C^G_i=G\cdot \whC_i := \mf v(G\cdot \cup_{j=1}^R D_j, \theta_{C^G_i})$ with $ \theta_{C^G_i}(g\cdot v) = \theta_{\whC_i}(v)$ for $v\in N_{x_0}(G\cdot x_0), g\in G$;
		\item $\whC \res G_2(\mb B_{\rho-2\eta}(0)) = \sum_{i=1}^m \whC_i\res G_2(\mb B_{\rho-2\eta}(0))$, and 
	\end{itemize}
	\begin{align}\label{Eq: interior regularity 1 - 6.6}
		\whC_i\res G_2(\mb B_{\rho-\eta}(0)) = \sum_{j=1}^R\left(m_{ij}^+|D_j^+\cap \mb B_{\rho-\eta}(0)| + m_{ij}^-|D_j^-\cap \mb B_{\rho-\eta}(0)|\right),
	\end{align}
	where $i=1,\dots,m$, and $m_{ij}^\pm$ are non-negative integers. 
	
	By Lemma \ref{Lem: hypersurface position}, we can take an unrelabelled subsequence of $\{P_k^i\}_{k\in\mb N}$ for each $i$ so that either
		\begin{itemize}
			\item[(a)] $P_k^{i*} \subset K_1^*\cap U^*$ for every $k\in\mb N$; or
			\item[(b)] $\bd_1P_k^{i*}$ is a curve in $\closure(\bd_1U^*)\cong \mb S^2_+$ with free boundary on $\bd_0 U^*\cap\closure(\bd_1U^*)\cong \bd \mb S^2_+$ for every $k\in\mb N$, and thus $\bd_1P_k^{i*}$ separates $\bd_1U^*$ into two half-disks $E_+^*,E_-^*$; or
			\item[(c)] $\bd_1P_k^{i*}$ is a closed curve in $\interior (\bd_1U^*)\cong\mb S^2_+$ for every $k\in\mb N$, and thus $\bd_1P_k^{i*}$ separates $\bd_1U^*$ into a disk $E_s^*$ and a cylinder $E_c^*$.
		\end{itemize}
	
	\begin{claim}\label{Claim: from classical cone to plane}
		Whenever $\whC_i\res G_2(\mb B_{\rho-\eta}(0))\neq 0$, there exists $j_i\in\{1,\dots,R\}$ so that 
		$\whC_i \res G_2(\mb B_{\rho-\eta}(0)) = |D_{j_i}\cap\mb B_{\rho-\eta}(0)|$. 
	\end{claim}
	\begin{proof}[Proof of Claim \ref{Claim: from classical cone to plane}]
		Note that Case (a) cannot happen due to the locally $G$-boundary-type condition. 
		In Case (b), we  use \eqref{Eq: interior regularity 1 - 6.3}\eqref{Eq: interior regularity 1 - 6.4} to compare the area of $P_k^i$ and $E_\pm:=\pi^{-1}(E_\pm^*)$,   
		\begin{align*}
		 	\mc H^n(P_k^i) &\leq \min\{\mc H^n(E_+), \mc H^n(E_-)\} + \tilde{\epsilon}_{k,i} \leq \frac{1}{2} \mc H^n(\bd U) + \tilde{\epsilon}_{k,i}
		 	\\&\leq \mc H^{n-2}(G\cdot x_0)\cdot\left( 2\pi\rho^2 - \frac{\alpha}{2} \right) + \tilde{\epsilon}_{k,i} + \epsilon(r_k),
		\end{align*}
		where $\epsilon(r)\geq 0$ with $\lim_{r\to 0}\epsilon(r)=0$ measures the difference between $g_{_{r}}$ and $g_{_0}$ (see $\vartheta$ in \eqref{Eq: area in slice}). 
		Together with \eqref{Eq: area in slice} under the metric $g_{_0}$, we have 
		\begin{align}\label{Eq: interior regularity 1 - half-sphere half-disk}
			\|\whC_i\|(\mb B_{\rho-\eta}(0)) = \frac{\|C_i^G\|(B^G_{\rho-\eta})}{\mc H^{n-2}(G\cdot x_0)} < 2\pi (\rho-\eta)^2 
		\end{align}
		for $\eta\in (0,\rho)$ small enough, which implies $\sum_{j=1}^R(m_{ij}^++m_{ij}^-)\leq 3$ in \eqref{Eq: interior regularity 1 - 6.6}. 
		Hence, the arguments in {\bf Case I, II} show that $\whC_i \res G_2(\mb B_{\rho-\eta}(0)) = |D_{j_i}\cap\mb B_{\rho-\eta}(0)|$ for some $j_i\in\{1,\dots,R\}$ whenever $\whC_i\res G_2(\mb B_{\rho-\eta}(0))\neq 0$. 
		
		In case (c), noting $E_s,E_c\in \mc M^G_0$ with $\bd E_s=\bd E_c=\bd P_k^i$, we can proceed as in case (b) to show $\sum_{j=1}^R(m_{ij}^++m_{ij}^-)\leq 3$ in \eqref{Eq: interior regularity 1 - 6.6}. 
		Hence, the proof in {\bf Case I, II} gives the desired result.  
	\end{proof}
	
	Since $\spt(\|\whC\|)\cap (D_1\setminus L )\neq\emptyset$, Claim \ref{Claim: from classical cone to plane} implies that there is an $i$, say $i=1$, so that 
	$\whC_1 \res G_2(\mb B_{\rho-\eta}(0)) = |D_{1}\cap\mb B_{\rho-\eta}(0)|$. 
    Additionally, if there are $i_1,i_2$ with $D_{j_{i_1}}\neq D_{j_{i_2}}$, then the arguments in \cite{almgren79plateau}*{Page 472, (6.8)} would carry over to give a contradiction. 
    Hence, \eqref{Eq: interior regularity 1 - 6.2} is valid. 
	Therefore, we have shown \eqref{Eq: interior regularity 1 - classify tangent cone} in all {\bf Case I-III}. 
	
	\medskip
	Finally, for any tangent varifold $C=T_{x_0}(G\cdot x_0)\times \whC$ (cf. \eqref{Eq: splitting tangent bundle} in Claim \ref{Claim: tangent bundle}) of $W$ at $x_0\in \spt(\|W\|)\cap (B\cap K_0)$, we can use \eqref{Eq: interior regularity 1 - almost minimizing} and Theorem \ref{Thm: first regularity} to show that $\whC$ is an integral stationary $G_{x_0}$-varifold. 
	Then, similar to \cite{almgren79plateau}*{Corollary 2}, it follows from \cite{allard1972first}*{6.5}\cite{allard76OneDim} that for each $v\neq 0 \in N_{x_0}(G\cdot x_0)$ there are $\rho>0$ and distinct closed half-disks $H_1,\dots,H_R$ sharing a common diameter $L$ of $\mb B_\rho(v)$ so that 
	\[\spt(\|\whC \|)\cap \mb B_\rho(v) = \cup_{j=1}^R H_j,\]
	which implies  $\whC\res G_2(\mb B_\rho(0)) = m|D|$ for some disk $D$ and $m\in\mb N$ by using \eqref{Eq: interior regularity 1 - classify tangent cone} with $C$ in place of $W$. 
	Therefore, $\whC$ is supported in a plane, and thus $W$ is a smooth embedded minimal $G$-hypersurface in $B$ with integer multiplicity by Theorem \ref{Thm: first regularity}.  
\end{proof}

It remains to consider the case that $\dim(N_{x_0}(G\cdot x_0))\geq 4$. 
We mention that the proof of Theorem \ref{Thm: interior regularity k0=n-2} would {\em not} carry over if $\dim(N_{x_0}(G\cdot x_0))\geq 4$. 
For instance, in the above theorem, we have used the fact that $\mc H^2(\mb S^2_+)/\mc H^2(\mb D^2)=2$ to get  $\sum_{j=1}^R(m_{ij}^++m_{ij}^-)\leq 3$ from \eqref{Eq: interior regularity 1 - half-sphere half-disk}. 
However, for $n-k_0>2$, one verifies that $\mc H^{n-k_0}(\mb S^{n-k_0}_+)/\mc H^{n-k_0}(\mb D^{n-k_0})>2$, which is not enough to deduce {\bf Case III} into {\bf Case I, II} as in Claim \ref{Claim: from classical cone to plane}. 
Additionally, the result in \cite{allard76OneDim} used in the final step only holds for $1$-dimensional varifolds in $\bd \mb B_{|v|}^3(0)$. 

Nevertheless, we can combine Theorem \ref{Thm: first regularity} and Remark \ref{Rem: regularity in Mprin} with a dimension reduction argument to show the regularity in this situation. 

\begin{theorem}\label{Thm: interior regularity k_0<n-2}
	Let $K_0,k_0,\rho_0$ be defined as before, $p_0\in K_0$, and $B:= B_{2\rho_0}(G\cdot p_0)$. 
	Suppose 
	\begin{align}\label{Eq: k0<n-2}
		\cohom(G) = 3 < n+1- k_0 = \codim(G\cdot p_0) \leq 7.
	\end{align}
    Suppose also that $\{\Sigma_k\}_{k\in\mb N}$ and $W$ are given in Theorem \ref{Thm: first regularity}, and $W$ has a tangent varifold 
    \[C=T_{x_0} (G\cdot x_0) \times \widehat{C} \in \mc V^{G_{x_0}}_n(T_{x_0}B) \]
    at $x_0\in B\cap K_0$ with $\widehat{C}\in \mc V^{G_{x_0}}_{n-k_0}(N_{x_0}(G\cdot x_0))$ (by Claim \ref{Claim: tangent bundle}). 
    Then $\whC$ is a $G_{x_0}$-invariant $(n-k_0)$-plane (with integer multiplicity) that is orthogonal to $P(x_0)$ (cf. \eqref{Eq: P}). 
    
    In particular, $W$ satisfies the regularity \eqref{Eq: first regularity - regular} near $x_0\in \spt(\|W\|)\cap B$ provided \eqref{Eq: k0<n-2}. 
\end{theorem}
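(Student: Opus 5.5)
The plan is a dimension reduction on the tangent cone $\whC$, using that $\whC$ is regular away from the origin once we know the lower-dimensional cases. First, by \eqref{Eq: interior regularity 1 - almost minimizing} and the blow-up \eqref{Eq: interior regularity 1 - blowup} (obtained exactly as in the proof of Theorem \ref{Thm: interior regularity k0=n-2}), the varifold $C^G=G\cdot\whC$ is again a limit of almost-minimizing $G$-hypersurfaces in $\mc M^G_0$. Theorem \ref{Thm: first regularity} and Remark \ref{Rem: regularity in Mprin} therefore apply to it. Consequently $\whC$ is a stationary integral $G_{x_0}$-invariant cone in $N_{x_0}(G\cdot x_0)\cong\mb R^{n+1-k_0}$. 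Away from $P:=P(x_0)$ it is a \emph{stable} minimal hypersurface with integer multiplicity, because there it lies in the principal part and admits a $G$-invariant unit normal, so Lemma \ref{Lem: stability and G-stability} applies.

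Second, I would show that $\spt\|\whC\|\setminus\{0\}$ is smooth. Take $v\in\spt\|\whC\|\setminus\{0\}$. If $v\notin P$, we are in the principal stratum, where the almost-minimizing property and Almgren--Simon/Meeks--Simon--Yau give smoothness; alternatively, stability together with Schoen--Simon \cite{schoen1981regularity} gives it, since the codimension is at most $7$. If $v\in P\setminus\{0\}$, the orbit $G\cdot v$ in $N(G\cdot x_0)$ has isotropy of the same type as $G_{x_0}$ restricted to a smaller slice. I would induct on $\codim$: at $v$, the normal slice to $G\cdot v$ has dimension strictly less than $n+1-k_0$. Theorem \ref{Thm: interior regularity k0=n-2} covers the base case of slice dimension $3$, and the induction hypothesis (this theorem for smaller codimension) covers the rest. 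Together they show that tangent cones of $C^G$ at $v$ are planes, and Theorem \ref{Thm: first regularity} then gives smoothness near $G\cdot v$ together with orthogonal intersection with $P$. The cone structure is used here: by translation invariance along $\mb R v$, the tangent cone at $v$ splits off the line and reduces to a cone in one lower dimension. So $\whC$ is a smooth minimal cone away from $0$, meeting $P$ orthogonally.

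Third, I would upgrade to full stability of $\whC\setminus\{0\}$. This is the main obstacle. Near $P$ the $G_{x_0}$-invariant normal must still exist. I would argue as in Remark \ref{Rem: regularity in Mprin}, using orthogonality to $P$ and the fact that $(N_{x_0}\setminus\{0\})/G_{x_0}$ has simply connected pieces near the smooth cross-section. Each component of the link then separates locally, which gives a $G_{x_0}$-invariant unit normal, and Lemma \ref{Lem: stability and G-stability} makes $\whC\setminus\{0\}$ stable. (Lemma \ref{Lem: hypersurface position} combined with the locally $G$-boundary-type property of the approximants rules out $\whC$ containing $P$-type pieces, as in Case II of Theorem \ref{Thm: first regularity}.)

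Finally, $\whC$ is a stable minimal hypercone in $\mb R^{n+1-k_0}$, smooth away from $0$, with $n+1-k_0\le 7$. By Simons' classification, or equivalently the regularity theory of \cite{schoen1981regularity}, it is a hyperplane with integer multiplicity. It is $G_{x_0}$-invariant and orthogonal to $P$ by the second step; equivalently, Case II of Theorem \ref{Thm: first regularity} excludes the alternative that it contains $P$. Theorem \ref{Thm: first regularity} then yields \eqref{Eq: first regularity - regular} near $x_0$.
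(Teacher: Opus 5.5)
Your outline matches the paper's overall structure: blow up, get smoothness and stability off $P$, analyze tangent cones at $v\in P\setminus\{0\}$, then apply Simons. However, your second step does not work as written. For $v\in P\setminus\{0\}$ we have $dg(v)=v$ for all $g\in G_{x_0}$. Hence $G_v=G_{x_0}$ and $\dim(G\cdot v)=k_0$, so $v$ lies in the \emph{same} orbit-type stratum as $x_0$, and its normal slice still has dimension $n+1-k_0$. In the present setting $\dim P(x_0)=2$, and every non-principal point near $x_0$ has this orbit type, so there is no lower-codimension stratum to reduce to. Consequently the hypothesis ``this theorem for smaller codimension'' never applies at $v$, and the base case Theorem \ref{Thm: interior regularity k0=n-2} (slice dimension $3$) is never reached. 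Splitting off $L=\mb R v$ does lower the dimension of the cone. But the cross-section $\whD\cap P'$ sits in $P'\cong\mb R^{n-k_0}$ with a cohomogeneity-$2$ action, so it is not an instance of the theorem either. It could a priori still be singular along the line $P\cap P'$, and that is exactly what has to be ruled out.

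The missing ingredient is an argument that the tangent cone $\whD$ of $\whC$ at $v$ is a plane.
\begin{enumerate}
\item Do an iterated (diagonal) blow-up so that $G\cdot\whD$ is again a limit of almost-minimizers in $\mc M^G_0$ satisfying \eqref{Eq: interior regularity 1 - almost minimizing}. Then Theorem \ref{Thm: first regularity} and Remark \ref{Rem: regularity in Mprin} show that $\whD$ is a stationary integral cone, smooth and stable off $P$, with spine containing $L$.
\item Show $\spt(\|\whD\|)\cap P'\cap P=\{0\}$. Otherwise, blow up again at some $u\in (P\cap P')\setminus\{0\}$. The resulting cone has spine containing $P$, so its cross-section in $P^\perp$ is a $G_{x_0}$-invariant cone of dimension $n-k_0-2$. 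This is impossible: $P^\perp/G_{x_0}\cong[0,\infty)$, i.e.\ $G_{x_0}$ acts transitively on the unit sphere of $P^\perp$, so the only invariant cones there are $\{0\}$ and $P^\perp$.
\item It follows that $\whD\cap P'$ is a stable cone, smooth away from $0$, of dimension $n-k_0-1\in[2,5]$. By Simons it is a plane, and hence so is $\whD$.
\end{enumerate}
Only at this point does Theorem \ref{Thm: first regularity} give smoothness of $\whC$ near $v$ and orthogonality to $P$. From there, your third and final steps (stability of $\whC\setminus\{0\}$ via a $G_{x_0}$-invariant normal, then Simons in dimension $n-k_0\le 6$) agree with the paper.
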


\begin{proof}
	Borrow the notations from the proof of Theorem \ref{Thm: first regularity} (see Claim \ref{Claim: tangent bundle} and the notations before it) and similarly pull back all the computations to the normal bundle $N(G\cdot x_0)$ by $\exp_{G\cdot x_0}^{\perp,-1}$. 
	
	Firstly, note that \eqref{Eq: interior regularity 1 - blowup}\eqref{Eq: interior regularity 1 - almost minimizing} are also valid. 
	Together with Theorem \ref{Thm: first regularity} and Claim \ref{Claim: tangent bundle}, we see $C^G:=G\cdot \whC$ and $\whC$ are stationary integral varifolds in $N(G\cdot x_0)$ and $N_{x_0}(G\cdot x_0)$ respectively. 
	In addition, combining \cite{simon1983lectures}*{Theorem 19.3} with Remark \ref{Rem: regularity in Mprin} and the compactness theorem for stable minimal hypersurfaces, we know $\whC \subset N_{x_0}(G\cdot x_0)$ is a stationary cone that is smoothly embedded and stable in $N_{x_0}(G\cdot x_0)\setminus P$, where $P=P(x_0)$ is given in \eqref{Eq: P}. 
	Hence, if $\spt(\|\whC\|)\cap P=\{0\}$, then $\whC$ must be a hyperplane, which implies $\dim(\spt(\|\whC\|)\cap P)\geq 1$, yielding a contradiction. 
	
	Next, for any $v\neq 0 \in \spt(\|\whC\|)\cap P$, let $\whD$ be a tangent varifold of $\whC$ at $v$, i.e. $\whD=\lim_{j\to \infty}(\bleta_{v,\lambda_j})_\#\whC$ for some $\lambda_j\to 0$ as $j\to\infty$, and take $D^G:=\lim_{j\to\infty}(\bleta^G_{G\cdot v, \lambda_j})_\# C^G$. 
	Note that the definitions of $\bleta_{v,\lambda_j}, \bleta^G_{G\cdot v, \lambda_j}$ can be easily generalized from the original definitions of $\bleta_{r},\bleta^G_r$ before Claim \ref{Claim: tangent bundle}. 
	In addition, since $v\in P$ satisfies $G_{x_0}\cdot v=\{v\}$, one easily checks that the maps $\bleta_{v,\lambda_j}$ and $\bleta^G_{G\cdot v, \lambda_j}$ preserve the $G_{x_0}$-invariance and $G$-invariance, respectively. 
	
	Moreover, by an iterated construction, $D^G$ also satisfies \eqref{Eq: interior regularity 1 - blowup} and \eqref{Eq: interior regularity 1 - almost minimizing} for some sequence $\widetilde{\Sigma}_k= \bleta^G_{G\cdot v, r_k}(\Sigma_k)$ in $\mc M^G_0$. 
	Therefore, Theorem \ref{Thm: first regularity} indicates $D^G$ is an integral stationary $G$-varifold in $N(G\cdot v)$, which also implies $D^G=G\cdot \whD$ by Claim \ref{Claim: tangent bundle}. 
	Again, it follows from \cite{simon1983lectures}*{Theorem 19.3}, Remark \ref{Rem: regularity in Mprin}, and the compactness theorem for stable minimal hypersurfaces that $\whD$ is an $(n-k_0)$-dimensional stationary rectifiable cone in $N_{v}(G\cdot v)$ which is smoothly embedded and stable in $N_{v}(G\cdot v)\setminus P$ (note that $\bleta_{v,\lambda}(P)=P$). 
	
	One also notices that the cone $\whD$ has a spine $L=\lim_{j\to\infty} \bleta_{v,\lambda_j}(\{tv: t\in[0,\infty)\})\cong \mb R \subset P\subset  N_{v}(G\cdot v)$, i.e. $\whD$ is translation invariant along $L$. 
	Hence, for the codimension-one $G_{v}$-invariant $(n-k_0)$-plane $P'\subset N_{v}(G\cdot v)$ orthogonal to $L$ at $0$, we know $\spt(\|\whD\|)\cap P'$ is an $(n-k_0-1)$-dimensional stationary $G_v$-invariant cone. 
	Combined with the fact that $P^{\perp*}\cong [0,\infty)$, we see
	\begin{align}\label{Eq: interior regularity 2 - smooth cone}
		\spt(\|\whD\|)\cap P'\cap P = \{0\}.
	\end{align}
	Indeed, if this is not true, then $\spt(\|\whD\|)\cap P'\cap P$ contains at least a ray (say $\{tu: t\in [0,\infty)\}$). 
	Consider the tangent varifold $\whD'$ of $\whD$ at $u$. 
	One can repeat the above procedure to show $\whD'$ is a stationary cone in $N_u(G\cdot u)$ which is invariant under the $G_u=G_v=G_{x_0}$ action and also has a $2$-dimensional spine given by $P$. 
	Thus, $\spt(\|\whD'\|)\cap P^\perp$ is a $G_u$-invariant $(n-k_0-2)$-cone. 
	However, this contradicts the fact that the only two $G_u$-invariant cones in $P^\perp$ are $\{0\}$ and $P^\perp$ (of dimension $0$ and $(n-k_0-1)$ respectively), since any ray at $\{0\}$ in $P^\perp$ is a fundamental domain of $P^\perp$ under the $G_u$-action. 
	
	To proceed, we notice that $\whD \setminus P$ is a smooth stable minimal cone and $P'$ is orthogonal to the spine $L$ of $\whD$. 
	Hence, combined with \eqref{Eq: interior regularity 2 - smooth cone}, we conclude that $\whD\cap P'$ is a stable minimal cone of dimension $n-k_0-1 \in [2,5]$ (by \eqref{Eq: k0<n-2}), and $\whD\cap P'$ is smooth except possibly at the origin. 
	Therefore, both $\whD\cap P'$ and $\whD$ are planes. (Indeed, $(\whD)^* = L\times P^\perp$ by Lemma \ref{Lem: hypersurface position}.)
	
	By the definition of $\whD$, we can then apply Theorem \ref{Thm: first regularity} to $\whC$ at any $v\neq 0 \in \spt(\|\whC\|)\cap P$, and see $\whC$ is a smoothly embedded cone in $N_{x_0}(G\cdot x_0)$ except possibly at the origin.  
	In addition, the classification of $\whD$ implies $\whC$ meets $P\setminus\{0\}$ orthogonally, which indicates that $\whC$ is stable by an argument similar to Remark \ref{Rem: regularity in Mprin} using the simple connectedness of $A_{s,t}(G\cdot x_0)/G$. 
	Hence, as $\dim(\whC) = n-k_0\in [3,6]$ by \eqref{Eq: k0<n-2}, we conclude that the stable minimal cone $\whC$ is an $(n-k_0)$-plane in $N_{x_0}(G\cdot x_0)$, which gives the desired regularity of $W$ by Theorem \ref{Thm: first regularity}.  
\end{proof}

As a direct corollary from Theorem \ref{Thm: interior regularity k0=n-2} and \ref{Thm: interior regularity k_0<n-2}, we have the following result: 
\begin{theorem}\label{Thm: regularity part1}
	Suppose \eqref{Eq: cohomogeneity assumption} is satisfied.
	Let $\{\Sigma_k\}_{k\in\mb N}$ and $V\in\mc V^G_n(M)$ be given as in Theorem \ref{Thm: plateau problem}. 
	Then for any $p_0\in \spt(\|V\|)\setminus (M^{prin}\cup\Gamma)$ with $\dim(P(p_0))=2$ (defined in \eqref{Eq: P}), we have $V\llcorner B_{\rho}(G\cdot p_0)=m|\Sigma|$ for some embedded $G$-hypersurface $\Sigma$ with integer multiplicity $m\in\mb N$ and some $\rho>0$ so that $\Sigma$ meets $M_{(G_{p_0})}$ orthogonally at $G\cdot p_0$. 
\end{theorem}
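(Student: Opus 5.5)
The plan is to reduce Theorem \ref{Thm: regularity part1} to the local results already established near $K_0$ (Theorems \ref{Thm: first regularity}, \ref{Thm: interior regularity k0=n-2}, \ref{Thm: interior regularity k_0<n-2}). Fix $p_0\in\spt(\|V\|)\setminus(M^{prin}\cup\Gamma)$ with $\dim(P(p_0))=2$. Since $P(p_0)$ has dimension $2$, $p_0\notin\mc S$, and Proposition \ref{Prop: local structure of M/G}(1) applies: near $G\cdot p_0$ the stratum $M_{(G_{p_0})}$ is the full non-principal set and its quotient is the boundary of $M^*$. Choose a compact neighborhood $K_0\subset M_{(G_{p_0})}$ of $G\cdot p_0$ with $\dim(K_0^*)=2$, and let $\rho_0$ be given by Lemma \ref{Lem: uniform constants}. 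Shrink $\rho_0$ so that $B:=B_{2\rho_0}(G\cdot p_0)$ lies in $B_{r_0}(G\cdot a)$ and is disjoint from $\Gamma$; this is possible because $\Gamma$ is compact and $p_0\notin\Gamma$.

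The first step checks that the hypotheses of Theorem \ref{Thm: first regularity} hold for a localized sequence. By Sard's theorem, choose a radius so that $\Sigma_k$ is transversal to $\bd B$. Then apply the replacement Theorem \ref{Thm: replacement}, or simply restrict, to obtain surfaces $\Sigma_k\cap B$ in $\mc M^G_0$. These surfaces are almost minimizing among $\Gamma'\in\mc M^G_0$ with $\bd\Gamma'=\bd(\Sigma_k\cap B)$. The argument is that any competitor inside $B$ can be glued to $\Sigma_k\setminus B$. The glued surface stays in $\mc M^G_0$: its genus in the quotient does not increase, since we cut along curves on $\bd B^*$ and reattach genus-zero pieces, and the locally $G$-boundary-type property is preserved. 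Hence the gluing gives a competitor in $\mc M^G_{0,\Gamma}$, and \eqref{Eq: plateau minimizing sequence} yields the required inequality with errors $\epsilon_k\to0$.

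A subtle point arises here. Gluing a planar piece into a surface of genus zero could create genus in $\Sigma^*$ when $\Sigma_k\cap B$ has several components. I would handle this as in Almgren--Simon: apply Theorem \ref{Thm: replacement} in $B$ first, so that $\Sigma_k\cap\closure(B)$ becomes a disjoint union of genus-zero pieces $\Gamma_j$, each satisfying the estimate (ix) (the final "moreover" clause, without the $\mk b$ constraint). Conclusion (vi) of that theorem guarantees that the varifold limit in a slightly smaller tube is unchanged. This step is the main obstacle, since it is where genus bookkeeping in $M^*$ must be checked carefully.

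Given this, $W=V\llcorner B$ is stationary and integral, and $V$ has a tangent varifold at $p_0$ (take $x_0=p_0$). If $\codim(G\cdot p_0)=3$, Theorem \ref{Thm: interior regularity k0=n-2} shows that the tangent cone $\widehat C$ is a multiplicity-$m$ plane orthogonal to $P(p_0)$. If $3<\codim(G\cdot p_0)\le7$, Theorem \ref{Thm: interior regularity k_0<n-2} gives the same conclusion. In both cases Theorem \ref{Thm: first regularity} then yields $V\llcorner B_\rho(G\cdot p_0)=m|\Sigma|$, with $\Sigma$ an embedded minimal $G$-hypersurface through $G\cdot p_0$. Its tangent space at $p_0$ is $T_{p_0}(G\cdot p_0)\times\widehat H$ with $\widehat H\perp P(p_0)$. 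Since $T_{p_0}M_{(G_{p_0})}=T_{p_0}(G\cdot p_0)\oplus P(p_0)$, this means $\Sigma$ meets $M_{(G_{p_0})}$ orthogonally at $G\cdot p_0$, consistent with Lemma \ref{Lem: hypersurface position}.
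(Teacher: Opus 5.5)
Your reduction matches the paper's. The paper presents the statement as a direct corollary of Theorems \ref{Thm: interior regularity k0=n-2} and \ref{Thm: interior regularity k_0<n-2}, with orthogonality read off from Theorem \ref{Thm: first regularity} exactly as you do. However, the localization step you insert, which you yourself flag as the main obstacle, does not produce the hypotheses of those theorems.

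\textbf{The gap.} Plain restriction fails, as you note. After applying Theorem \ref{Thm: replacement} in $B$, each piece $\Gamma_j$ of $\wti\Sigma_k\cap\closure(B)$ is almost minimizing among $P\in\mc M^G_0$ with $\bd P=\bd\Gamma_j$. But Theorem \ref{Thm: first regularity}, and hence the two classification theorems, requires each surface of the sequence to be almost minimizing among all $\Gamma'\in\mc M^G_0$ with $\bd\Gamma'$ equal to its entire boundary. A disjoint union of individually minimizing pieces is not of this kind, because a genus-zero competitor may join different pieces. For example, a thin strip between two nearly parallel half-disks (a disk in $M^*$ with free boundary on $\bd_0B^*$), or a thin neck between two nearly parallel sheets, has much less area than the two sheets. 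These are exactly the configurations behind multiplicity $m\geq 2$, which the statement must allow. In addition, the number of pieces is not bounded in $k$. So you cannot feed $V\llcorner B$, viewed as the limit of $\wti\Sigma_k\cap\closure(B)$, into those theorems.

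\textbf{First way to close it: work piece by piece.}
\begin{enumerate}
\item Use Lemma \ref{Lem: filigree} to discard the pieces with small area in a slightly smaller tube.
\item Each surviving piece has area at least a fixed $\beta>0$ there, while the total area is bounded. Hence their number $l$ is bounded independently of $k$.
\item Apply Theorem \ref{Thm: first regularity}, together with Theorem \ref{Thm: interior regularity k0=n-2} or \ref{Thm: interior regularity k_0<n-2}, to each of the $l$ sequences separately.
\item The pieces are pairwise disjoint, so the strong maximum principle forces the smooth limits through $G\cdot p_0$ to coincide. This gives $V\llcorner B_\rho(G\cdot p_0)=m|\Sigma|$.
\end{enumerate}
This is the scheme of Step 1 in the proof of Theorem \ref{Thm: G-isotopy minimizer}.

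\textbf{Second way: skip the localization.} This is what the paper's one-line derivation implicitly does. The proofs of Theorems \ref{Thm: first regularity}, \ref{Thm: interior regularity k0=n-2} and \ref{Thm: interior regularity k_0<n-2} use the almost-minimizing hypothesis only through comparisons with surfaces modified inside small (rescaled) neighborhoods of $G\cdot x_0$. These comparisons go through Theorem \ref{Thm: replacement}, with its unrestricted final clause, and Lemma \ref{Lem: filigree}. Those proofs already split off, discard, and count the resulting pieces $P_k^i$ themselves. The Plateau minimizing sequence satisfies exactly this comparison property, with $\bd\Sigma_k=\Gamma$ far from $G\cdot p_0$ and no constraint on $\mk b$.
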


\subsection{Regularity near $p\in M\setminus M^{prin}$ with $\dim(P(p))=1$}\label{Subsec: plateau regularity 2}

Fix a non-principal orbit type stratum $M_{(H)}$ and a compact subset $K_0\subset M_{(H)}$ with 
\begin{align}\label{Eq: Case dimP=1 assumption}
	\dim(K_0^*)=1.
\end{align}
Set $k_0:=\dim(G\cdot p)$ for $p\in K_0$, and let $\rho_0>0$ be given as in Lemma \ref{Lem: uniform constants}.

\begin{theorem}\label{Thm: regularity part2}
	Suppose \eqref{Eq: cohomogeneity assumption} is satisfied. 
	Let $\{\Sigma_k\}_{k\in\mb N}$ and $V\in\mc V^G_n(M)$ be given as in Theorem \ref{Thm: plateau problem}. 
	Then for any $p_0\in \spt(\|V\|)\setminus (M^{prin}\cup\Gamma)$ with $\dim(P(p_0))=1$ (defined in \eqref{Eq: P}), we have $V\llcorner B_{\rho}(G\cdot p_0)=m|\Sigma|$ for some embedded $G$-hypersurface $\Sigma$ with integer multiplicity $m\in\mb N$ and some $\rho>0$ so that $\Sigma$ meets $M_{(G_{p_0})}$ orthogonally at $G\cdot p_0$. 
\end{theorem}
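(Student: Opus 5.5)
Near an orbit $G\cdot p_0$ with $\dim(P(p_0))=1$, Proposition \ref{Prop: local structure of M/G} gives two local models: (2.a), a wedge whose edge is $M_{(G_{p_0})}^*$ and whose faces are $M_\pm^*$; and (2.b), a cone around a one-dimensional spine with $\codim(G\cdot p_0)=3$. The plan is to prove, in each model, that every tangent varifold $C=T_{x_0}(G\cdot x_0)\times\whC$ at $x_0\in\spt(\|V\|)\cap M_{(G_{p_0})}$ is an integer multiple of a $G_{x_0}$-invariant plane orthogonal to $P(x_0)$. The interior argument of Theorem \ref{Thm: first regularity} would then give $V\llcorner B_\rho(G\cdot p_0)=m|\Sigma|$.

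First, points near $p_0$ but off $M_{(G_{p_0})}$ are already handled. Such points lie either in $M^{prin}$, covered by Remark \ref{Rem: regularity in Mprin}, or in the face strata $M_\pm$ with $\dim(P)=2$, covered by Theorem \ref{Thm: regularity part1}. There $V$ is smooth, meets $M_\pm$ orthogonally, and is locally $G$-boundary-type, so it is stable by Lemma \ref{Lem: stability and G-stability} (simple connectedness of the relevant local quotients away from the edge). Next I would rerun the proof of Theorem \ref{Thm: first regularity} with the same ingredients: the filigree lemma for the density lower bound and rectifiability, Claim \ref{Claim: tangent bundle} for the splitting, and the pillar construction. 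The construction must now be adapted to the fundamental domain $\mathbb R\times(\text{2-dim wedge or cone})$; the quotients of $\bd\cKG$ are again a half-ball or a ball, with $\bd_0$ a wedge boundary or empty. This shows that whenever some tangent cone is planar, $V$ is regular near $x_0$ with integer multiplicity. The planar cone cannot contain $P(x_0)$ in a bad way. In case (2.b), Proposition \ref{Prop: local structure of Sigma/G 2}(4) configurations are excluded by the locally $G$-boundary-type assumption, since containing the spine would force a reflection-type $G_q$. In case (2.a), a plane containing $P$ would be the face $\closure(M_-)$; this is ruled out exactly as in \textbf{Case II} of Theorem \ref{Thm: first regularity}, because the cylindrical region retracts onto $\bd_0$.

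The main step, and the main obstacle, is to show that the tangent cone is planar. I would argue by dimension reduction, as in Theorem \ref{Thm: interior regularity k_0<n-2}. The cone $\whC\subset N_{x_0}(G\cdot x_0)$ is stationary, integral, and $G_{x_0}$-invariant; by the first step it is smooth and stable away from the non-principal set, except possibly along the edge directions. At a point $v\neq0$ of $\spt\|\whC\|\cap P(x_0)$, where $G_v=G_{x_0}$, I would take a tangent cone $\whD$. This cone splits off the line $L\subset P$, so its cross-section $\whD\cap P'$ is a $G_{x_0}$-invariant cone in $P^\perp$, which is a cone over a cohomogeneity-two $G_{x_0}$-sphere with quotient an arc or a circle. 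Points of this cross-section off the origin are regular by the first step, and stability there follows from the local $G$-boundary property. If $\dim(\whD\cap P')\ge2$, Simons' classification in codimension $\le7$ (using \eqref{Eq: cohomogeneity assumption}) forces a plane. If $\codim=3$, the cross-section is a one-dimensional stationary $G_{x_0}$-invariant cone in $\mb R^2$, i.e. a union of rays. In the cyclic-rotation case (2.b), invariance plus stationarity together with the current argument of \textbf{Case II} of Theorem \ref{Thm: interior regularity k0=n-2} (odd junctions violate $\bd T=0$) should force a single line with multiplicity. In the wedge case, a reflection-group argument gives the same conclusion.

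Once $\whD$ is known to be planar, Theorem \ref{Thm: first regularity} shows that $\whC$ is smooth off the origin and meets $P$ orthogonally, hence is stable off $0$. Simons' theorem then makes it a plane when $\dim\ge2$. When $\dim\whC=2$, i.e. $\codim(G\cdot x_0)=3$, I would instead use \cite{allard76OneDim} on $\bd\mb B^3$ together with the half-disk counting of Theorem \ref{Thm: interior regularity k0=n-2}. In the (2.b) cyclic case, that counting argument must be replaced by invariance under the $\mb Z_k$ rotation about the spine, and this replacement is where I expect the most care to be needed.
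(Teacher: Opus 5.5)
\textbf{Gap 1: excluding a tangent plane that contains the spine in model (2.b).} You attribute this exclusion, in model (2.b) of Proposition~\ref{Prop: local structure of M/G}, to the locally $G$-boundary-type assumption. That property holds for the $\Sigma_k$, not for $V$ or its tangent cones, and it is not preserved under varifold limits; the paper says so after Definition~\ref{Def: local G-boundary}.

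Concretely, $H\supset P$ forces $G_{x_0}$ to act on $N_{x_0}(G\cdot x_0)\cong\mb R^3$ by $(x_1,x_2,x_3)\mapsto(-x_1,-x_2,x_3)$. The $G$-hypersurfaces $G\cdot\{x_1=\pm 1/k\}$ then avoid the spine and bound a $G$-invariant slab. They converge to $2\,|G\cdot\{x_1=0\}|$, a plane containing $P$. So this configuration must be excluded by minimality, and your proposal does not supply that argument.

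The paper's Step~2 does it by rerunning the \textbf{Case II} pillar argument of Theorem~\ref{Thm: first regularity} with $\whcK_{\rho,\sigma}=\{x_2^2+x_3^2<\rho^2,\ |x|^2<\rho^2+\sigma^2\}$. Each surviving replacement piece $\widehat P_k^i$ is a pair of disks swapped by $G_{x_0}$, whose boundary circles are essential in the cylindrical part $\whcC_{\rho,\sigma,0}$. The annulus $\widehat E_c\subset\whcC_{\rho,\sigma,0}$ between these circles is not a disk upstairs, but $\widehat E_c/G_{x_0}$ \emph{is} a disk, since the $\pi$-rotation has two fixed points on it. Hence $G\cdot\widehat E_c\in\mc D^G$ is an admissible competitor of area at most $\mc H^n(\cCG_{\rho,\sigma,0})=O(\sigma)$. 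Lemma~\ref{Lem: filigree} then discards every piece, contradicting $x_0\in\spt\|V\|$.

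\textbf{Gap 2: the codimension-three classification.} Invariance, stationarity and the mod~2 current argument do not force a single line in the cross-section of $\whD$, nor a single plane for $\whC$. Any $G_{x_0}$-invariant union of two or more lines through $0$ in $P^\perp\cong\mb R^2$ is stationary and a mod~2 cycle. Examples are $k$ equally spaced lines when $G_{x_0}\cong\mb Z_k$, or two lines placed symmetrically about the mirrors in the wedge case. So the current argument only rules out odd junctions.

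What is needed is the area-counting \textbf{Case III} of Theorem~\ref{Thm: interior regularity k0=n-2}. It has three ingredients:
\begin{enumerate}
\item replacement in a convex $U$;
\item the bound $\|\whC_i\|(\mb B_{\rho-\eta}(0))<2\pi(\rho-\eta)^2$, obtained by comparing each $P_k^i$ with the smaller of the two regions that $\bd_1P_k^{i*}$ cuts out of $\bd_1U^*$;
\item the argument of \cite{almgren79plateau}*{p.~472, (6.8)} showing that all pieces give the same disk.
\end{enumerate}
In (2.b) this counting should \emph{not} be replaced by $\mb Z_k$-invariance, which cannot work by the examples above. It goes through as it stands once you note that both regions are disks in the orbit space (one may lift to a cylinder), hence admissible in $\mc D^G$. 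This is the same observation as in Gap~1.

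\textbf{A correction to your last step.} Since $\dim P=1$, a planar tangent cone $\whD$ of $\whC$ at $v\in P\setminus\{0\}$ is invariant under translation along $v$, so it contains $P$. It is therefore itself the forbidden configuration. The correct conclusion of your dimension reduction is $\spt\|\whC\|\cap P=\{0\}$; a cone cannot meet the line $P$ orthogonally at $v\neq0$, so ``$\whC$ meets $P$ orthogonally'' is not the right outcome. Once $\spt\|\whC\|\cap P=\{0\}$, $\whC$ is smooth off $0$ and stable by two-sidedness of its link in the orbit space, hence a plane orthogonal to $P$.
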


\begin{proof}
	Since the proof for this case is similar to the previous subsection, we only point out the necessary modifications.

	By Proposition \ref{Prop: local structure of Sigma/G 2}, a $G$-hypersurface $\Sigma$ has four types of local structure near $G\cdot p\subset K_0$. 
	If $\Sigma$ is in Proposition \ref{Prop: local structure of Sigma/G 2}(1)(2), then $B_{\rho_0}(G\cdot p)$ is half-ball-type, and $(B_{\rho_0}(G\cdot p)\setminus M^{prin})^*$ represents the boundary $\bd_0B_{\rho_0}^*([p])$ of the relative open wedge $B_{\rho_0}^*([p])$.
	Hence, we can define 
	\[K_1:= (M\setminus M^{prin}) \cap B_{\rho_0}(K_0) \quad \mbox{and} \quad\mk b(\Sigma):=\#\{\mbox{$G$-connected components of $\Sigma\cap K_1$}\},\]
	and the proof in the previous subsection carries over almost verbatim. 
	Note that $K_1$ is a union of submanifolds (non-principal orbit type strata), and $T_pK_1$ is the union of some subspaces of $T_pM$ so that $T_pK_1\cap N_p(G\cdot p)$ is the union of non-principal $G_p$-orbits in $N_p(G\cdot p)$. 
	If $\Sigma$ is in Proposition \ref{Prop: local structure of Sigma/G 2}(3)(4), then $B_{\rho_0}(G\cdot p)$ is a ball-type $G$-neighborhood and $\Sigma$ cannot be contained in $B_{\rho_0}(G\cdot p)\setminus M^{prin}$, which implies some assumptions in the previous subsection will be unnecessary (cf. Remark \ref{Rem: filigree lemma statements}(iii) and $\mk b$ in \eqref{Eq: boundary components number}). 
	Hence, in this case, we take $K_1=\emptyset$.  
	
	\medskip
	{\bf Step 1.} {\it Generalize preliminary lemmas.}
	
	To be exact, we first see that Lemma \ref{Lem: switch lemma} remains valid near $G\cdot p_0\subset K_0$. 
	Indeed, due to the locally $G$-boundary-type assumption, $\Sigma_i\subset B$ in Lemma \ref{Lem: switch lemma} must satisfy either Proposition \ref{Prop: local structure of Sigma/G 2}(1) or (3). 
	Then, the proof can be repeated almost verbatim. 
	In particular, we mention that $(\bd\Sigma_i)^*$ can only be a closed curve in $\bd_1B^*$ if $\Sigma_i$ satisfies Proposition \ref{Prop: local structure of Sigma/G 2}(3). 

	Next, for $G$-hypersurfaces satisfying Proposition \ref{Prop: local structure of Sigma/G 2}(1), Theorem \ref{Thm: replacement} and Lemma \ref{Lem: filigree} remain valid by the same proof using the new definitions of $K_0, K_1$. 
	For $G$-hypersurfaces satisfying Proposition \ref{Prop: local structure of Sigma/G 2}(3), we remove all the terms of $\mk b$ and remove the assumption \eqref{Eq: compare to disk}; then the proof of Theorem \ref{Thm: replacement} and Lemma \ref{Lem: filigree} would also carry over with $\mc D^G$ in place of $\mc M^G_0$. 
	We mention that, in this case, $[p]$ is an interior point of the topological $3$-manifold $M^*\setminus \mc S^*$. 
	Additionally, due to the locally $G$-boundary-type assumption on $\Sigma\in\mc D^G$, $\Sigma$ cannot satisfy Proposition \ref{Prop: local structure of Sigma/G 2}(4). 
	Hence, for any ball-type $G$-neighborhood $U$ with $\bd U$ transversal to $\Sigma$, we know $(\Sigma\cap \bd U)^*$ must be a union of closed curves in $\bd_1 U^*$, each of which separates $\bd U^*$ into two disks. 
	This is important in the area comparison (e.g., to apply \eqref{Eq: filigree - 4.4}) because if $\Sigma$ satisfies Proposition \ref{Prop: local structure of Sigma/G 2}(4), then $\Sigma$ does not separate $\bd U$ into two $G$-components.

	\medskip
	{\bf Step 2.} {\it Generalize the first regularity theorem.}
	
	Consider the First Regularity Theorem \ref{Thm: first regularity} near $x_0\in K_0$ with the same notations. 

	For the case that $B_{2\rho_0}(G\cdot p)$ satisfies Proposition \ref{Prop: local structure of M/G}(2.a), either $H$ is orthogonal to $T_{x_0}K_0$ or $T_{x_0}K_0\subset H\subset T_{x_0}K_1$. 
	If $H$ is orthogonal to $T_{x_0}K_0$, then the proof of Theorem \ref{Thm: first regularity} {\bf Case I} carries over almost verbatim with our new notations of $K_0,K_1$. 
	If $T_{x_0}K_0\subset H$, then we know $H$ satisfies the same property as the tangent plane in Proposition \ref{Prop: local structure of Sigma/G 2}(2).  
    One can similarly obtain a contradiction as in Theorem \ref{Thm: first regularity} {\bf Case II}. 
	Specifically, let $\mb R^3_{++}:=\{x_1,x_2\geq 0\}\subset \mb R^3$ be the wedge $N_{x_0}(G\cdot x_0)/G_{x_0}$, take $P=\{x_1=x_2=0\}$, and modify the notations by 
    \begin{itemize}
    	\item $\widehat{H}^*=(H\cap N_{x_0}(G\cdot x_0))^*:=\{x\in \mb R^3_{++}: x_1=0\}$; 
    	\item $(\cKG_{\rho,\sigma})^*:=\{x\in \mb R^3_{++}: x_2^2+x_3^2<\rho^2, |x|^2<\rho^2+\sigma^2\}$; 
    	\item $\tau=0$ if $\dim(N_{x_0}(G\cdot x_0))=3$, otherwise $\tau\in (0,\sigma)$;  
    	\item $(\cSG_{\rho,\sigma,\tau})^* :=\{x\in \mb R^3_{++}: x_2^2+x_3^2<\rho^2, |x|^2=\rho^2+\sigma^2, x_2\geq \tau \}$ the spherical part of $(\bd \cKG_{\rho,\sigma})^*$ with $(B_\tau(T_{x_0}K_1))^*$ removed; 
    	\item $(\cCG_{\rho,\sigma,\tau})^*:=\{x\in \mb R^3_{++}: x_2^2+x_3^2=\rho^2, |x|^2<\rho^2+\sigma^2\}\cup \{x\in \mb R^3_{++}: x_2^2+x_3^2<\rho^2, |x|^2=\rho^2+\sigma^2, x_2< \tau\}$ the cylindrical part of $(\bd \cKG_{\rho,\sigma})^*$ with $(\bd \cKG_{\rho,\sigma})^*\cap(B_\tau(T_pK_1))^*$ added; 
    	\item $(\cBG_{\rho,\sigma,\tau})^*:=\{x\in \mb R^3_{++}: x_1=\sigma, x_2^2+x_3^2<\rho^2, x_2\geq \tau \}$;
    	\item $(\cAG_{\rho,\sigma,\tau})^*:=\{x\in \mb R^3_{++}: x_1>\sigma, |x|^2<\rho^2+\sigma^2, x_2= \tau \}$. 
    \end{itemize}
    Note that $(\cCG_{\rho,\sigma,\tau})^*$ deformation retracts onto $\bd_0(\cCG_{\rho,\sigma,\tau})^*$. 
    Hence, we proceed as in Theorem \ref{Thm: first regularity} {\bf Case I} to obtain $\{P_k^i\}_{i=1}^{R_k'}$ so that the statements (1)(2)(4) before Claim \ref{Claim: discard trivial curve} are satisfied with $R_k=0$, i.e. none of $P_k^i$ satisfies (3) before Claim \ref{Claim: discard trivial curve}. 
    As in Theorem \ref{Thm: first regularity} {\bf Case II}, the proof of Claim \ref{Claim: discard trivial curve} would carry over to discard all the $\{P_k^i\}_{i=1}^{R_k'}$, which contradicts $x_0\in\spt(\|W\|)$. 

	For the case that $B_{2\rho_0}(G\cdot p)$ satisfies Proposition \ref{Prop: local structure of M/G}(2.b), $B_{\rho}(G\cdot x)$ is a ball-type neighborhood near $x\in K_0$, and we only need to consider the closed curves on $\bd_1(\cKG_{\rho,\sigma})^*$ in the topological arguments (due to the locally $G$-boundary-type assumption). 
	If $H$ is orthogonal to $T_{x_0}K_0$, we can take $\tau=0$ and apply the proof of Theorem \ref{Thm: first regularity} {\bf Case I} directly with $\mc D^G$ in place of $\mc M^G_0$. 
	(The proof is even easier since $(\bd P_k^i)^*$ are all closed curves that do not intersect $K_0^*$.) 
	If $T_{x_0}K_0\subset H$, then by Proposition \ref{Prop: local structure of Sigma/G 2}(4), we know $\codim(G\cdot p)=3$ for $p\in K_0$, and the $G_p$-action on $N_p(G\cdot p)\cong \mb R^3$ satisfies 
	\[G_p\cdot (x_1,x_2,x_3)=\{(x_1,x_2,x_3), (-x_1,-x_2,x_3)\}.\]
	In particular, $(H\cap N_{x_0}(G\cdot x_0))^*$ is a half-plane with boundary $T_{x_0}K_0\cap N_{x_0}(G\cdot x_0)\cong \mb R$. 
	We can then exclude this case as in Theorem \ref{Thm: first regularity} {\bf Case II}. 
	Specifically, let $N_{x_0}(G\cdot x_0)=\mb R^3$ with $P=P(x_0)=\{x_1=x_2=0\}=T_{x_0}K_0\cap N_{x_0}(G\cdot x_0)$, and modify the notations by 
	\begin{itemize}
    	\item $\widehat{H}=H\cap N_{x_0}(G\cdot x_0):=\{x\in\mb R^3: x_1=0\}$; 
    	\item $\whcK_{\rho,\sigma}:=\{x\in \mb R^3: x_2^2+x_3^2<\rho^2, |x|^2<\rho^2+\sigma^2\}$ and $ \cKG_{\rho,\sigma}=G\cdot \whcK_{\rho,\sigma}$; 
    	\item $\tau=0$; 
    	\item $\whcS_{\rho,\sigma,0} :=\{x\in \mb R^3: x_2^2+x_3^2<\rho^2, |x|^2=\rho^2+\sigma^2\}$ the spherical part of $\bd \whcK_{\rho,\sigma}$; 
    	\item $\whcC_{\rho,\sigma,0}:=\{x\in \mb R^3: x_2^2+x_3^2=\rho^2, |x|^2<\rho^2+\sigma^2\}$ the cylindrical part of $\bd \whcK_{\rho,\sigma}$; 
    	\item $\whcB_{\rho,\sigma,0}:=\whcB_{\rho,\sigma,0}^+\cup \whcB_{\rho,\sigma,0}^-$ where $\whcB_{\rho,\sigma,0}^{\pm}:=\{x\in \mb R^3: x_1=\pm\sigma, x_2^2+x_3^2<\rho^2\}$;
    	\item $\widehat \Sigma:=\Sigma\cap N_{x_0}(G\cdot x_0)$ for any $G$-hypersurface $\Sigma\subset N(G\cdot x_0)$. 
	\end{itemize}
	Let $\cSG_{\rho,\sigma,0}:=G\cdot \whcS_{\rho,\sigma,0}$, $\cCG_{\rho,\sigma,0}:=G\cdot \whcC_{\rho,\sigma,0}$, and $\cBG_{\rho,\sigma,0}:=G\cdot \whcB_{\rho,\sigma,0}$, which are $G$-hypersurfaces with slices in $N_{x_0}(G\cdot x_0)$ given by $\whcS_{\rho,\sigma,0},\whcC_{\rho,\sigma,0},\whcB_{\rho,\sigma,0}$ respectively. 
	By arguments similar to those in Theorem \ref{Thm: first regularity} {\bf Case I}, we obtain $\{P_k^i\}_{i=1}^{R_k+R_k'}\subset \mc D^G$ satisfying (1)(2) before Claim \ref{Claim: discard trivial curve} with $\mc D^G$ in place of $\mc M^G_0$, and 
	\begin{itemize}
		\item[(3')] for each $i\in\{1,\dots,R_k\}$, the curve $\bd \widehat P_k^{i}$ is non-trivial in the homotopy group $\pi_1(\whcC_{\rho_k,\sigma_k',0} )$, i.e. every component of $\bd \widehat P_k^{i}$ cannot be null-homotopic in $\whcC_{\rho_k,\sigma_k',0}$; note also that $\bd \widehat P_k^{i}\subset (N_{x_0}(G\cdot x_0))^{prin}$ does not intersect $K_0$; 
       \item[(4')] for each $i\in \{R_k+1,\dots,R_k+R_k'\}$, every component of the curve $\bd \widehat P_k^{i}$ is either 
        		\begin{itemize}
        			\item closed and null-homotopic in $\whcS_{\rho_k,\sigma_k',0}$, or
        			\item closed and null-homotopic in $\whcC_{\rho_k,\sigma_k',0}$. 
        		\end{itemize}
	\end{itemize}
	Similar to Claim \ref{Claim: discard trivial curve}, we can discard $\{P_k^i\}_{i=R_k+1}^{R_k+R_k'}$ so that \eqref{Eq: first regularity - 5.13} remains valid. 
	Then for any $1\leq i\leq R_k$, we see from the representation of the $G_{x_0}$-action that $\bd \widehat P_k^i$ is the disjoint union of two non-trivial closed curves with $G_{x_0}$ permuting them. 
	Combined with the fact that $ P_k^i\in \mc D^G$, we also know that $\widehat P_k^i$ is the disjoint union of two disks $ \widehat P_k^{i,\pm}$ with $G_{x_0}$ permuting them. 
	In particular, $\whcC_{\rho_k,\sigma_k',0}\cup \whcB_{\rho_k,\sigma_k',0}\cong \mb S^2$, and $(\whcC_{\rho_k,\sigma_k',0}\cup \whcB_{\rho_k,\sigma_k',0})\setminus \bd \widehat P_k^i$ has two $G_{x_0}$-components $\widehat E_s, \widehat E_c$ so that $\widehat E_s$ is a union of two disks, and $ \widehat E_c$ is a cylinder contained in $\whcC_{\rho_k,\sigma_k',0}$. 
	Clearly, $E_s:=G\cdot \widehat E_s$ is a disk-type $G$-hypersurface. 
	Moreover, as an important observation, we mention that $ \whcC_{\rho_k,\sigma_k',0}/G_{x_0}$ and $ \widehat E_c/G_{x_0}$ are also disks in the orbit space. 
	Therefore, by \eqref{Eq: first regularity - 5.11}, we have 
	\[ \mc H^n( P_{k}^i )\leq \mc H^n(E_c)+\epsilon_{k,i}\leq \mc H^n(\mc C^G_{\rho_k,\sigma_k',0})+\epsilon_{k,i},\]
	where $E_c:=G\cdot \widehat E_c$. 
	Similar to Claim \ref{Claim: discard trivial curve}, we can apply the filigree lemma to further discard $\{P_k^i\}_{i=1}^{R_k}$ in \eqref{Eq: first regularity - 5.13}, which contradicts $x_0\in \spt(\|W\|)$. 
	This generalizes Theorem \ref{Thm: first regularity}. 

	\medskip
	{\bf Step 3.} {\it Generalize the main regularity theorem.}
	
	For the regularity theory (Theorem \ref{Thm: interior regularity k0=n-2}) near $G\cdot x_0$ with $\codim(G\cdot x_0)=3$, we first notice that the stationary rectifiable tangent cone $\whC\subset N_{x_0}(G\cdot x_0)$ is smooth away from the $1$-dimensional subspace $P=T_{x_0}K_0\cap N_{x_0}(G\cdot x_0)$ by the regularity results in the previous subsection. 
    Additionally, $\widehat C$ is $G_{x_0}$-stable by \eqref{Eq: interior regularity 1 - almost minimizing} and Claim \ref{Claim: tangent bundle}. 
    Noting that the cone $\spt(\|\widehat C\|)/G_{x_0}$ is $2$-sided in $(N_{x_0}(G\cdot x_0)\setminus P)/G_{x_0}$, we know $\spt(\|\widehat C\|)\setminus P$ admits a $G_{x_0}$-invariant unit normal, and is stable in $N_{x_0}(G\cdot x_0)\setminus P $ (Lemma \ref{Lem: stability and G-stability}). 
    If $\spt(\|\widehat C\|)\cap P=\{0\}$, then $\widehat C$ is a plane that is orthogonal to $P$, and the proof is finished. 
    If $\spt(\|\widehat C\|)\cap P\neq \{0\}$, then one can follow the proof of Theorem \ref{Thm: interior regularity k0=n-2} to show that $\spt(\|\widehat C\|)$ is a plane containing $P$, which contradicts the orthogonal intersection property in Theorem \ref{Thm: first regularity} {\bf Case II} with modifications in the previous step. 
    We point out some modifications to apply the arguments in Theorem \ref{Thm: interior regularity k0=n-2}. 
	If $B_{\rho_0}(G\cdot x_0)$ is in the case of Proposition \ref{Prop: local structure of M/G}(2.a), then the proof of Theorem \ref{Thm: interior regularity k0=n-2} would carry over. 
	If $B_{\rho_0}(G\cdot x_0)$ is in the case of Proposition \ref{Prop: local structure of M/G}(2.b), then each $\bd_1 P_k^{i*}=(\bd P_k^i)^*$ in the proof of Theorem \ref{Thm: interior regularity k0=n-2} {\bf Case III} is always a closed curve in $\bd_1 U^*\cong \mb S^2$ due to the locally $G$-boundary-type assumption. 
	Similar to Theorem \ref{Thm: interior regularity k0=n-2} {\bf Case III}, we have an unrelabelled subsequence $\{P_k^i\}_{k\in \mb N}$ for each $i$ so that for every $k\in\mb N$, $\bd_1 P_k^{i*}$ is a closed curve separating $\closure(\bd_1 U^*)\cong \mb S^2$ into two disks $E_\pm^*$. 
	We mention that if we lift $E_\pm^*$ to the slice $N_{x_0}(G\cdot x_0)$, one of  the lifts may be a cylinder (see {\bf Step 2}), although both of $E_\pm^*$ are disks in the orbit space. 
	Hence, one can easily modify the proof of Claim \ref{Claim: from classical cone to plane} and show that $\whC_i \res G_2(\mb B_{\rho-\eta}(0))=|D_{j_i}\cap\mb B_{\rho-\eta}(0)|$ remains valid. 
	The rest of Theorem \ref{Thm: interior regularity k0=n-2} would then carry over with these modifications. 

	Finally, for the regularity theory (Theorem \ref{Thm: interior regularity k_0<n-2}) near $G\cdot x_0$ with $\codim(G\cdot x_0)>3$, we can show as before that the stationary rectifiable tangent cone $\whC$ in $N_{x_0}(G\cdot x_0)$ is smooth and stable away from the $1$-dimensional subspace $P=P(x_0)$ (cf. \eqref{Eq: P}). 
	If $\spt(\|\whC\|)\cap P=\{0\}$, then $\whC$ is a hyperplane, and the proof is finished. 
	If $v\neq 0\in \spt(\|\whC\|)\cap P$, then there is a contradiction. 
    Specifically, consider the tangent varifold $\whD$ of $\whC$ at $v$, which is also a stationary cone that is smooth and stable away from its spine $P$ (see the proof of Theorem \ref{Thm: interior regularity k_0<n-2}). 
	In particular, $\spt(\|\whD\|)\cap P^\perp$ is a hyperplane in $P^\perp$,  and thus $\whC$ is smooth away from $\{0\}$ by the modified Theorem \ref{Thm: first regularity}.
	Since $P\cap \widehat C$ has codimension at least $2$ in $\whC$, the stability of $\whC$ extends to $N_{x_0}(G\cdot x_0)\setminus \{0\}$, which implies $\whC$ is a hyperplane in $N_{x_0}(G\cdot x_0)$. 
    Since $v\neq 0 \in \spt(\|\whC\|)\cap P$, we see from the proof of Lemma \ref{Lem: hypersurface position} that $P\subset \widehat C$. 
    However, by the proof of Theorem \ref{Thm: first regularity} {\bf Case II}, one can get a contradiction using modifications in {\bf Step 2}. 
\end{proof}

\subsection{Proof of Theorem \ref{Thm: plateau problem}}\label{Subsec: plateau}

\begin{proof}[Proof of Theorem \ref{Thm: plateau problem}]
    The proof is divided into three steps according to the orbit type of $G\cdot p$. 
    
    {\bf Step 1.} 
    The regularity near $p_0\in  M^{prin}$ follows directly from \cite{almgren79plateau} and \eqref{Eq: area in orbit space}. 
    Combining Theorem \ref{Thm: regularity part1} and \ref{Thm: regularity part2}, we obtain the regularity result and the orthogonal intersection in Theorem \ref{Thm: plateau problem} near any $p_0\in \spt(\|V\|)\setminus (M^{prin }\cup \mc S\cup\Gamma)$. 

    {\bf Step 2.} Consider $p_0\in \mc S\setminus (\mc S_{n.m.}\cup\Gamma)$. 
    
    Since $G\cdot p_0$ is an isolated non-principal orbit, there exists $\rho>0$ so that $B_\rho(G\cdot p_0)\cap \mc S = G\cdot p_0$, which implies that $\spt(\|V\|)$ is an embedded minimal $G$-hypersurface $\Sigma$ in $B_\rho(G\cdot p_0)\setminus G\cdot p_0$. 
    By Sard's theorem, we can set $0<s<t<\rho$ so that $\Sigma$ is transversal to $\bd A_{s,t}(G\cdot p_0)$. 
    Recall that in Proposition \ref{Prop: local structure of Sigma/G 2}(4), a surface $\wti\Sigma^*$ may have boundary points in the interior of $M^*$. 
    However, by the orthogonal intersection property in Theorem \ref{Thm: regularity part1} and \ref{Thm: regularity part2}, none of the cases in Proposition \ref{Prop: local structure of Sigma/G 1}(ii) or \ref{Prop: local structure of Sigma/G 2}(2)(4) can occur on $\Sigma$, and thus $\Sigma^*\llcorner (A_{s,t}(G\cdot p_0))^*$ is a surface with boundary on $ \bd (A_{s,t}(G\cdot p_0))^*$. 
    It follows from the $C^0$ $3$-manifold structure of $M^*$ at $[p_0]$ that $(A_{s,t}(G\cdot p_0))^*$ is simply connected, and every component $\Sigma_i^*$ of $\Sigma^*$ separates $(A_{s,t}(G\cdot p_0))^*$ into two components, which implies $ A_{s,t}(G\cdot p_0)\setminus \Sigma_i$ has two $G$-components. 
    Therefore, $\Sigma$ admits a $G$-invariant unit normal. 
    Since $\Sigma$ is $G$-stable by the area minimizing construction, we conclude from Lemma \ref{Lem: stability and G-stability} that $\Sigma$ is stable in $A_{s,t}(G\cdot p_0)$. 
    After taking $s\to 0$, we see $V$ is stable in $B_t(G\cdot p_0)\setminus G\cdot p_0$. 
    As $3\leq \codim(G\cdot p_0)\leq 7$ in $M$, the regularity of $V$ at $G\cdot p_0$ follows from \cite{schoen1981regularity}.

    {\bf Step 3.} Regularity at $p_0\in \mc S_{n.m.}\setminus\Gamma$ and the proof of Remark \ref{Rem: local G-boundary for plateau}. 

    The first statement in Remark \ref{Rem: local G-boundary for plateau} follows from the orthogonal intersection property in {\bf Step 1} and the arguments in {\bf Step 2} since $B_{\inj(G\cdot p)}^*([p])$ is simply connected for $p\in M\setminus\mc S_{n.m.}$. 

    Recall that $\Sigma_k\subset B_{r_0}(G\cdot a)$ and $A=B_{r}(G\cdot a)$ with $0<r<r_0$ satisfying Lemma \ref{Lem: uniform constants}. 
    Hence, $\spt(\|V\|)\subset \closure(B_{r_0}(G\cdot a))$. 
    Noting $(\closure(B_{r_0}(G\cdot a))\setminus G\cdot a )\cap \mc S=\emptyset$, we have $\spt(\|V\|)\subset \closure(A)$ by the regularity of $V$ in {\bf Step 1} and the maximum principle. 
    Additionally, using  \eqref{Eq: area in orbit space}, the boundary regularity of $V$ at $\Gamma\cap M^{prin}$ follows from Almgren-Simon \cite{almgren79plateau}*{Theorem 5}. 
    
    If $G\cdot a \not\subset \mc S_{n.m.}$, then $\spt(\|V\|)$ is smoothly embedded in $ A$ by {\bf Step 1, 2}. 
    If $G\cdot a \subset \mc S_{n.m.}$, then 
    %
    $\Sigma:=\spt(\|V\|)\setminus (\Gamma\cup G\cdot a)$ is an embedded $G$-stable minimal $G$-hypersurface. 
    We now extend the regularity of $\Sigma$ to $G\cdot a$. 
    
    By Proposition \ref{Prop: local structure of M/G}(3), $B_{r}^*([a])$ is a {\em closed} cone over $RP^2$. 
    Thus, whenever $G\cdot x\subset A\setminus(G\cdot a)$ is a non-principal orbit, $G\cdot x$ must satisfy Proposition \ref{Prop: local structure of M/G}(2.b). 
    Recall that $\Gamma\subset \bd A$ is an embedded $G$-connected $G$-submanifold that separates $\bd A$ into two $G$-components. 
    Thus, we have \[\Gamma\subset \bd A\cap M^{prin}.\] 
    Indeed, the $G$-separating property of $\Gamma\subset \bd A$ indicates that the $G$-hypersurface $C(\Gamma):=\{\exp_{G\cdot a}^\perp(tv): t\in (1/2,2), \exp_{G\cdot a}^\perp(v)\in\Gamma\}$ separates $A_{r/2,2r}(G\cdot a)$ into two $G$-components. 
    If $x=\exp_{G\cdot a}^\perp(v)\in \Gamma\cap(\bd A\setminus M^{prin})$, then $\exp_{G\cdot a}^\perp(tv)\in C(\Gamma)\cap M_{(G_x)}$ for all $t\in (1/2,2)$. 
    Hence, $C(\Gamma)$ satisfies Proposition \ref{Prop: local structure of Sigma/G 2}(4) at $G\cdot x$, which implies $(C(\Gamma))^*$ has a boundary point $[x]$ in $\interior(A_{r/2,2r}^*([a]))$ and thus cannot separate $A_{r/2,2r}^*([a])$, yielding a contradiction. 

    Now, we know the boundary regularity of $V$ is valid at $\Gamma\subset \bd A\cap M^{prin}$, i.e. $\bd \Sigma\llcorner \bd A=\Gamma$. 
    Take any small $s\in (0,r)$ so that $\bd B_s(G\cdot a)$ is transversal to $\Sigma$. 
    Then $\Sigma_s:=\Sigma\setminus B_s(G\cdot a)$ is an embedded $G$-hypersurface with smooth boundary. 
    In particular, for every $G$-component $\Sigma_s^i$ of $\Sigma_s$, either $\bd\Sigma_s^i\llcorner\bd A=\Gamma$, or $\bd\Sigma_s^i\cap \bd A=\emptyset$ and $\bd \Sigma_s^i\subset \bd B_s(G\cdot a)$. 
    In the latter case, we can shrink $r$ to some $t\in (s,r]$ so that $\bd B_{t}(G\cdot a)$ touches $\interior(\Sigma^i_s)$ for the first time, which contradicts the maximum principle. 
    Hence, $\Gamma=\bd\Sigma_s^i\llcorner \bd A$ is non-empty.
    Consider the mod $2$ $n$-current limit $T$ of the minimizing sequence $\Sigma_k$. 
    Since the current limit preserves the (non-trivial) boundary, we know $\Sigma_s^i$ is contained in $\spt(T)\subset \spt(\|V\|)$. 
    Additionally, the locally $G$-boundary-type condition on $\Sigma_k$ implies that $T$ is also induced by the boundary of a $G$-invariant Caccioppoli set, and thus $\Sigma_s^i$ admits a $G$-invariant unit normal. 
    By Lemma \ref{Lem: stability and G-stability}, $\Sigma_s$ is stable in $A\setminus B_s(G\cdot a)$. 
    Since $s>0$ can be arbitrarily small, the regularity of $\Sigma$ extends to $G\cdot a$ by \cite{schoen1981regularity} and \eqref{Eq: cohomogeneity assumption}. 
\end{proof}
\section{$G$-isotopy minimizing problem}\label{Sec: G-isotopy minimizing}

In this section, we show the existence and regularity of the $G$-isotopy area minimizing $G$-hypersurface, which generalizes \cite{meeks82exotic}*{Theorem 1}. 

\begin{definition}\label{Def: G-isotopy minimizing}
	Let $U\subset M$ be an open $G$-set, and $\Sigma\in  \mc {LB}^G$ be an embedded closed $G$-hypersurface (not necessarily $G$-connected) in $M$ intersecting $\bd U$ transversally. 
	For a sequence $\{\{\varphi^k_t\}_{t\in [0,1]} \}_{k\in\mb N}\subset \mk {Is}^G(U)$ of $G$-isotopies in $U$, we say $\Sigma_k:=\varphi^k_1(\Sigma)$ is a {\em minimizing sequence} for the minimization problem $(\Sigma, \mk {Is}^G(U))$ if 
	\[ \lim_{k\to\infty} \mc H^n(\varphi^k_1(\Sigma)) = \inf_{\{\phi_t\}_{t\in [0,1]}\in \mk {Is}^G(U)} \mc H^n(\phi_1(\Sigma)).  \]
\end{definition}

The main result of this section is the following theorem:
\begin{theorem}\label{Thm: G-isotopy minimizer}
    Let $U\subset M$ be an open $G$-set. 
	Suppose \eqref{Eq: cohomogeneity assumption} is satisfied, $M$ admits no special exceptional orbit, and $\{\Sigma_k\}_{k\in \mb N}$ is a minimizing sequence for the minimization problem $(\Sigma, \mk {Is}^G(U))$ (Definition \ref{Def: G-isotopy minimizing}). 
	Then, up to a subsequence, $\Sigma_k$ converges to a $G$-varifold $V\in \mc V^G_n(M)$ so that 
	\begin{align}\label{Eq: MSY - isotopy regularity - regularity}
		V\llcorner U=\sum_{j=1}^R m_j |\Sigma^{(j)}|, 
	\end{align}
	where $R, m_1,\dots,m_R\in\mb N$, and $\Sigma^{(j)}\subset U$ are pairwise disjoint smoothly embedded $G$-connected minimal $G$-hypersurfaces so that $\cup_{j=1}^R\Sigma^{(j)}$ is $G$-stable in $U$. 
	
	Additionally, if $\{\Sigma_k\}_{k\in \mb N}$ is a minimizing sequence for the minimization problem $(\Sigma, \mk {Is}^G(M))$ so that the initial $\Sigma^*$ and every $\Sigma_k^*$ are orientable, 
	then for all $k$ sufficiently large,
	\begin{align}\label{Eq: MSY - isotopy regularity - genus}
        \sum_{j\in\mc O} m_j \mk g(\Sigma^{(j)})  + \sum_{j\in\mc U} \frac{1}{2}m_j(\mk g(\Sigma^{(j)})-1)\leq \genus(\Sigma_k^*),
	\end{align}
	where $\mc O$ (resp. $\mc U$) is the set of $j\in \{1,\dots, R\}$ so that $(\Sigma^{(j)})^*$ is orientable (resp. non-orientable), and $\mk g(\Sigma^{(j)}):=\genus((\Sigma^{(j)}\cap M^{prin})^*)$. 
	Moreover, $m_j$ is even whenever $j\in \mc U$. 
\end{theorem}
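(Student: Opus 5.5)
We follow the Meeks--Simon--Yau scheme, working first in $U\setminus \mc S$ with the equivariant $\gamma$-reduction of Section \ref{Sec: gamma reduction}, and only afterwards extending across the finitely many isolated orbits in $\mc S$.

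\textbf{Step 1: reduce to strongly irreducible surfaces.} Fix $\epsilon>0$ and take $\delta,\rho_0,\gamma$ as in Lemma \ref{Lem: isoperimetric lemma in MSY}. By Lemma \ref{Lem: MSY 3.1 finite reduction}, we replace each $\Sigma_k$ by a strongly $(G,\gamma)$-irreducible $\wti\Sigma_k$ in $U\setminus \closure(B_\epsilon(\mc S))$. The number of reductions is bounded in terms of $\genus(\Sigma_k^*)$ and the area, so after passing to a subsequence this number is constant. Exactly as in \cite{meeks82exotic}*{(3.27)--(3.29)}, the varifold limit of $\wti\Sigma_k$ agrees with $V$ away from finitely many small $G$-tubes. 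Moreover $E(\wti\Sigma_k)\to 0$ by minimality.

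\textbf{Step 2: local regularity.} For $p\in U\setminus(\mc S\cup\bd U)$ and generic small $r$ (chosen by Sard's theorem and the coarea formula so that the hypotheses (i)--(iv) of Theorem \ref{Thm: MSY thm2 gamma reduction} hold), the admissible choice of $F_\Gamma$ comes from Remark \ref{Rem: compare disk instead cylinder}. Here the absence of special exceptional orbits lets us drop \eqref{Eq: MSY thm2 assumption - disk instead of cylinder} (Remark \ref{Rem: no special and approximation}). Theorem \ref{Thm: MSY thm2 gamma reduction} then writes $\wti\Sigma_k\cap B$ as genus-zero pieces $D_i$ that are almost area minimizing among $\mc M^G_0$ with the same boundary. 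To see this, note that any competitor can be realized, up to area $\alpha$, by a $G$-isotopy via Lemma \ref{Lem: G-isotopy approximation} together with part (3) of that theorem. The pieces therefore form a minimizing sequence for the equivariant Plateau problem, and Theorem \ref{Thm: plateau problem} together with Theorems \ref{Thm: regularity part1} and \ref{Thm: regularity part2} gives $V\llcorner B_\rho(G\cdot p)=m|\Sigma|$, with $\Sigma$ meeting non-principal strata orthogonally. $G$-stability in $U$ holds because $V$ is a limit of $G$-isotopy minimizers: a $G$-variation decreasing area could be transplanted to $\Sigma_k$.

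\textbf{Step 3: extension across $\mc S$.} This is the step I expect to be the main obstacle. Let $G\cdot p_0\subset\mc S$. Orthogonality shows that $\spt\|V\|$ is locally $G$-boundary type in $A_{s,t}(G\cdot p_0)$ when $[p_0]\notin\mc S^*_{n.m.}$, since $A^*_{s,t}$ is simply connected. Hence $\spt\|V\|$ has a $G$-invariant unit normal there, and Lemma \ref{Lem: stability and G-stability} upgrades $G$-stability to stability. Letting $s\to0$ and applying \cite{schoen1981regularity} with $\codim\le7$ gives regularity at $G\cdot p_0$.

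For $\mc S_{n.m.}$, and more generally to pass to an invariant normal, I would instead use the convergence statement \cite{meeks82exotic}*{Remark 3.27}. It says the surgered surfaces converge to the limit as boundaries of $G$-invariant Caccioppoli sets near $G\cdot p_0$ (locally $G$-boundary type is preserved by $G$-isotopy and surgery). This supplies a mod $2$ current limit and hence an invariant normal on the limit, after which stability and \cite{schoen1981regularity} apply as before. The absence of special exceptional orbits is what gives $\dim(M\setminus M^{prin})\le n-1$, so that removable-set arguments apply.

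\textbf{Step 4: genus bound.} Follow \cite{meeks82exotic}*{\S 3}. The $\gamma$-reductions are surgeries along disks or half-disks in $M^*$, and these do not increase the genus of the quotient surfaces. In the limit, each $(\Sigma^{(j)})^*\cap (M^{prin})^*$ is approximated by $m_j$ graphical sheets of $\wti\Sigma_k^*$ for orientable $(\Sigma^{(j)})^*$, and by $m_j/2$ sheets of the orientable double cover when it is non-orientable. This uses that $\wti\Sigma_k^*$ is orientable and embedded; in particular, it forces $m_j$ to be even for $j\in\mc U$. The double cover of a non-orientable surface of genus $\mk g$ has genus $\mk g-1$, and counting handles as in \cite{meeks82exotic}*{(3.29)} and \cite{ketover2019genus} gives \eqref{Eq: MSY - isotopy regularity - genus}. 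Boundary components on $\bd M^*$ do not affect the genus as defined.
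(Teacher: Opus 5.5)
Your Steps 2 and 4, and the $\mc S\setminus\mc S_{n.m.}$ part of Step 3, match the paper. The argument at orbits in $\mc S_{n.m.}$, however, has a genuine gap.

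Near such an orbit, $A^*_{s,t}(G\cdot p_0)\cong\mb{RP}^2\times(s,t)$ is not simply connected. So the limit $\wti\Sigma=\spt(\|V\|)\cap A_{s,t}(G\cdot p_0)$ can genuinely fail to have a $G$-invariant unit normal; already a plane through $0$ in $\mb R^3$ under the antipodal $\mb Z_2$-action has none. Your inference ``mod $2$ current limit, hence an invariant normal on the limit'' breaks down exactly in this case. If the limit $T$ of the $G$-boundaries $\Sigma_k$ were $\bd\Omega$ with $\Omega$ $G$-invariant and $T\neq0$ near $\wti\Sigma$, the normal of $\Omega$ would be a $G$-invariant normal. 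So when $\wti\Sigma$ has none, the current limit must vanish near it; the $\Sigma_k$ converge with even multiplicity, like boundaries of thin tubular neighbourhoods.

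In Section \ref{Subsec: plateau} and Proposition \ref{Prop: constrained minimizer}, the mod $2$ argument works only because a non-empty fixed boundary forces $T\neq0$. A closed isotopy-minimizing sequence has no such anchor. Nor does $G$-stability rescue you. The first Jacobi eigenfunction $u_1$ gives the variation $u_1\nu$, which is not $G$-invariant when $\nu$ is not, so $G$-stability never tests it.

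The missing idea is the paper's double-sheet argument.
\begin{enumerate}
\item After passing to an equivariant double cover if needed, fix a unit normal $\nu$ on $\wti\Sigma$ and the index-two subgroup $G^+=\{g\in G: dg(\nu)=\nu\}$.
\item Apply \cite{meeks82exotic}*{Remark 3.27} in $(\wti U^*,\wti g_{_{M^*}})$ for $\wti U\subset\subset A_{s,t}(G\cdot x_0)\cap M^{prin}$. There the surgered $\Sigma_k$ are $G$-isotopic to parallel sheets $\bd B_{r/k}(\wti\Sigma)$ plus negligible pieces.
\item Compare the $G$-invariant normals of these sheets with the non-invariant $\nu$. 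This forces $m=2l$ and splits $\Sigma_k\cap\wti U=\Sigma_k^+\sqcup\Sigma_k^-$ into $G^+$-invariant halves exchanged by $G\setminus G^+$.
\item Let $X_1$ be a $G^+$-invariant extension of $u_1\nu$. The $G$-invariant field equal to $X_1$ on $\Sigma_k^+$ and $-X_1$ on $\Sigma_k^-$ is admissible in \eqref{Eq: G-isotopy - minimizing}. This yields $\delta^2\wti\Sigma(X_1)\geq0$, hence stability in $\wti U$.
\item Exhaust $A_{s,t}(G\cdot x_0)\cap M^{prin}$ by such $\wti U$, and use a logarithmic cut-off across the finitely many non-principal orbits on $\wti\Sigma$. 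This gives stability in $A_{s,t}(G\cdot x_0)$, and only then does \cite{schoen1981regularity} apply.
\end{enumerate}

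A separate, smaller point concerns Step 1. You only claim that the surgered limit agrees with $V$ away from finitely many tubes, and you never return to $V$ inside those tubes. The paper instead asserts, via \cite{meeks82exotic}*{Remark 3.14, (3.22)--(3.26)}, that the surgeries leave the limit $V$ unchanged.
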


\begin{remark}
	If $\Sigma$ and every $\Sigma_k$ in the minimizing sequence all admit $G$-invariant unit normals, then $m_j$ is even whenever $\Sigma^{(j)}$ admits no $G$-invariant unit normal.  
\end{remark}

\begin{remark}\label{Rem: isotopy minimizing - locally boundary type}
    In the above theorem,  $\cup_{j=1}^R\Sigma^{(j)}$ can only meet $N$ orthogonally, where $N\subset M\setminus (M^{prin}\cup\mc S)$ is any $G$-connected component of a non-principal orbit type stratum. 
\end{remark}

We will first show the regularity result in $M\setminus \mc S$, where $\mc S$ is the union of isolated non-principal orbits (Definition \ref{Def: isolated orbits}). 
Then, using \cite{meeks82exotic}*{Remark 3.27}, we can show the $G$-isotopy minimizer $V$ is a stable minimal $G$-hypersurface near any $G\cdot p\subset \mc S$. 
Hence, the singularity at the center orbit $G\cdot p$ can be removed by the regularity theory for stable minimal hypersurfaces \cite{schoen1981regularity}. 

\begin{proof}[Proof of Theorem \ref{Thm: G-isotopy minimizer}]
	Let $\{\Sigma_k\}_{k\in \mb N}$ be the minimizing sequence for the minimization problem $(\Sigma, \mk {Is}^G(U))$ so that $V=\lim_{k\to\infty}|\Sigma_k|$. 
	By a similar construction in \cite{meeks82exotic}*{Remark 3.14 and P.634 (3.22)-(3.26)}, we can perform $G$-equivariant surgeries on $\Sigma_k$ (Definition \ref{Def: gamma reduction}, \ref{Def: gamma reduction strong}) without changing notations so that for all $k\in\mb N$, $(\Sigma_k)_0=\emptyset$, $V=\lim_{k\to\infty}|\Sigma_k|$, and $\Sigma_k$ is strongly $(G,\gamma)$-irreducible in $U$ for some $0<\gamma<(\delta\rho_0)^n/9$, where $(\Sigma_k)_0$ and $\delta,\rho_0\in (0,1)$ are given in \eqref{Eq: Sigma0} and Lemma \ref{Lem: isoperimetric lemma in MSY} respectively. 
	Additionally, we also generalize \cite{meeks82exotic}*{(3.26)} by
	\begin{align}\label{Eq: G-isotopy - minimizing}
		\mc H^n(\Sigma_k) \leq \inf_{\Sigma'\in J_U^G(\Sigma_k)} \mc H^n(\Sigma') + \epsilon_k,
	\end{align}
	where $J_U^G(\Sigma_k)$ is defined as in \eqref{Eq: area error}, and $\epsilon_k\to 0_+$ as $k\to\infty$. 
	By virtue of \eqref{Eq: G-isotopy - minimizing}, one easily checks that $V$ is $G$-stationary and $G$-stable in $U$, which implies $V$ is indeed stationary in $U$ by Lemma \ref{Lem: first variation and G-variation}. 
	We claim the following equivariantly reduced monotonicity formula for $V$. 
	\begin{claim}[Equivariantly reduced monotonicity formula]\label{Claim: reduced monotonicity formula}
		For a $G$-varifold $V\in \mc V^G_n(M)$ that is stationary in an open $G$-set $U\subset M$ and $x\in U$, we have
		\begin{align}\label{Eq: monotonicity in slice}
		\frac{\|V\|(B_\sigma(G\cdot x))}{\sigma^{n-\dim(G\cdot x)}} \leq C_{mono} \frac{\|V\|(B_\rho(G\cdot x))}{\rho^{n-\dim(G\cdot x)}}
	\end{align}
	for any $0<2\sigma\leq \rho\leq \rho_0$, where $\rho_0=\rho_0(M,U,G\cdot x)>0$ and $C_{mono}=C_{mono}(M,\dim(G\cdot x))>1$. 
	\end{claim}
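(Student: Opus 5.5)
We want the standard monotonicity formula for stationary varifolds, applied with a tube $B_\rho(G\cdot x)$ in place of a ball. The plan is to use the ordinary Euclidean-type monotonicity at the single point $x$, and to pass between tubes and balls using the $G$-invariance of $\|V\|$ together with the coarea decomposition \eqref{Eq: area in slice}. Set $k_0:=\dim(G\cdot x)$. Choose $\rho_0\le\frac14\inj(G\cdot x)$ small enough that the estimates of Lemma~\ref{Lem: uniform constants} hold on $B_{4\rho_0}(G\cdot x)\subset U$, and that the classical monotonicity inequality $e^{\Lambda s}s^{-n}\|V\|(B_s(y))$ nondecreasing for $s\le 4\rho_0$ holds at every $y\in G\cdot x$. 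Here $\Lambda=\Lambda(M)$, and the constants are uniform along the orbit since $G$ acts by isometries.

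\textbf{Tube $\to$ balls (upper bound on the left side).} Cover $G\cdot x$ by points $y_1,\dots,y_N\in G\cdot x$ that are $\sigma$-dense in the intrinsic orbit metric and maximal $\sigma/2$-separated. Volume comparison on the compact orbit gives $N\le C\,\mc H^{k_0}(G\cdot x)\,\sigma^{-k_0}$. Since $B_\sigma(G\cdot x)\subset\bigcup_i B_{2\sigma}(y_i)$, we have $\|V\|(B_\sigma(G\cdot x))\le N\,\|V\|(B_{2\sigma}(x))$, using that all the $y_i$ are $G$-translates of $x$ and $\|V\|$ is $G$-invariant. Monotonicity at $x$ then bounds $\|V\|(B_{2\sigma}(x))\le C(2\sigma)^n\rho^{-n}\|V\|(B_{\rho/2}(x))$, which is legitimate because $2\sigma\le\rho$.

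\textbf{Ball $\to$ tube (lower bound on the right side).} This step is the analogue of \eqref{Eq: first regularity - 5.4.2}. Take the points $g\cdot x$ in $T(x,\rho/2,\rho)$, the part of the tube over the intrinsic $\rho/2$-ball $B^{G\cdot x}_{\rho/2}(x)$. For $\rho\le\rho_0$ small, the nearest projection $\mf n$ is well defined and $B_{\rho/2}(x)\subset T(x,\rho,\rho)$, with comparable metrics. By $G$-invariance and Fubini over the orbit (averaging $\mathbf 1_{T(g\cdot x,\rho,\rho)}$ over $g\in G$ with respect to Haar measure, as in the averaging step of Claim~\ref{Claim: tangent bundle}), we get
\[
\|V\|(T(x,\rho,\rho))\le C\frac{\mc H^{k_0}(B^{G\cdot x}_{\rho}(x))}{\mc H^{k_0}(G\cdot x)}\|V\|(B_\rho(G\cdot x))\le C\rho^{k_0}\frac{\|V\|(B_\rho(G\cdot x))}{\mc H^{k_0}(G\cdot x)}.
\]
The last inequality uses the upper volume bound for small intrinsic balls in the orbit, valid for $\rho$ below a curvature scale of $G\cdot x$; this is where $\rho_0$ depends on $G\cdot x$.

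\textbf{Combining.} Putting the two steps together gives
\[
\|V\|(B_\sigma(G\cdot x))\le C\,\mc H^{k_0}\sigma^{-k_0}\sigma^n\rho^{-n}\rho^{k_0}\,\|V\|(B_\rho(G\cdot x))/\mc H^{k_0},
\]
which is \eqref{Eq: monotonicity in slice}.

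\textbf{Main obstacle.} The delicate point is making $C_{mono}$ independent of $x$ and of the orbit's geometry (apart from its dimension), while $\rho_0$ is allowed to depend on $G\cdot x$. The covering count and the small-ball volume bound must hold with dimensional constants at scales below $\rho_0$, so I would fix $\rho_0$ below the scale where intrinsic $s$-balls on $G\cdot x$ have volume in $[\frac12\omega_{k_0}s^{k_0},2\omega_{k_0}s^{k_0}]$ and $\mf n$ is $2$-Lipschitz. An alternative route I would fall back on is to push $V$ into the slice via \eqref{Eq: area in slice} and apply monotonicity for the weighted stationary varifold in $S_\rho(x)$.
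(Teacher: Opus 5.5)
Your argument is correct and follows the same lines as the paper's. Both use monotonicity at the single point $x$, the $G$-invariance identity $\|V\|(T(x,s,t))=\frac{\mc H^{k_0}(B^{G\cdot x}_s(x))}{\mc H^{k_0}(G\cdot x)}\|V\|(B_t(G\cdot x))$, and a choice of $\rho_0$ that keeps intrinsic $r$-balls on $G\cdot x$ within a factor $2$ of $\omega_{k_0}r^{k_0}$ in volume.

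The only difference is at the small scale. The paper applies the same identity at scale $\sigma$, together with $T(x,\sigma,\sigma)\subset B_{2\sigma}(x)$. This gives $\|V\|(B_\sigma(G\cdot x))\le\frac{\mc H^{k_0}(G\cdot x)}{\mc H^{k_0}(B^{G\cdot x}_\sigma(x))}\|V\|(B_{2\sigma}(x))$ in one line. You obtain the same bound by covering the orbit with $N\le C\,\mc H^{k_0}(G\cdot x)\sigma^{-k_0}$ translates of $B_{2\sigma}(x)$. That is a discretized form of the same averaging. It costs an extra packing count and gains nothing, since you need the identity at scale $\rho$ anyway.

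There is one small slip. Monotonicity bounds $\|V\|(B_{2\sigma}(x))$ by $C\sigma^n\rho^{-n}\|V\|(B_{\rho/2}(x))$ only when $2\sigma\le\rho/2$, not merely when $2\sigma\le\rho$. There are two easy fixes:
\begin{itemize}
\item Compare with $B_\rho(x)$ instead, as the paper does. Then use $B_\rho(x)\subset T(x,2\rho,\rho)$, which follows from your $2$-Lipschitz bound on $\mf n$ and only changes constants.
\item Treat $\rho/4<\sigma\le\rho/2$ separately. There $\|V\|(B_\sigma(G\cdot x))\le\|V\|(B_\rho(G\cdot x))$ and $\sigma^{-(n-k_0)}<4^{n-k_0}\rho^{-(n-k_0)}$ give the claim directly.
\end{itemize}
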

	\begin{proof}
		Firstly, the standard monotonicity formula can be applied to $V$, i.e. for any $x\in U$
		\[ \frac{\|V\|(B_\sigma(x))}{\sigma^n} \leq c_{mono} \frac{\|V\|(B_\rho(x))}{\rho^n}\] 
		for any $0<\sigma\leq\rho\leq\rho_0$, where $\rho_0=\rho_0(M,U,x)>0$ and $c_{mono}=c_{mono}(M)>1$. 
		Recall that $T(x,s,t)$ is the part of the tube $B_t(G\cdot x)$ centered at the (geodesic) $s$-neighborhood $B^{G\cdot x}_s(x)$ of $x$ in $G\cdot x$ (as in \eqref{Eq: part of tube}). 
	By shrinking $\rho_0>0$ (depending on $G\cdot x$), we have $B_r(x)\subset T(x,r,r) \subset B_{2r}(x)$ for all $0<r<\rho_0$. 
	Combined with the monotonicity formula, we see
	\[ \frac{\|V\|(T(x,\sigma,\sigma))}{\sigma^n} \leq \frac{\|V\|(B_{2\sigma}(x))}{\sigma^n} \leq 2^nc_{mono} \frac{\|V\|(B_\rho(x))}{\rho^n} \leq  2^nc_{mono} \frac{\|V\|(T(x,\rho,\rho))}{\rho^n}\]
	for any $0<2\sigma\leq \rho \leq \rho_0$. 
	By the $G$-invariance of $V$, we conclude that for any $s,t>0$, 
	\begin{align}\label{Eq: area on part of tube}
		\|V\|(T(x,s,t) ) = \frac{\mc H^{\dim(G\cdot x)}(B^{G\cdot x}_s(x)) }{\mc H^{\dim(G\cdot x)}(G\cdot x) } \|V\|(B_t(G\cdot x)).
	\end{align}
	Note that we can shrink $\rho_0$ even smaller so that $\frac{\mc H^{\dim(G\cdot x)}(B^{G\cdot x}_r(x)) }{\mc H^{\dim(G\cdot x)}(\mb B^{\dim(G\cdot x)}_r(0)) } \in [1/2,2]$ for all $r\in (0,\rho_0)$. 
	Combining these estimates, we obtain the desired inequality \eqref{Eq: monotonicity in slice}. 
	\end{proof}

	\medskip
	{\bf Step 1.} {\it Regularity in $U\setminus\mc S$.}
	
	For any $x_0\in\spt(\|V\|)\cap (U\setminus\mc S)$, take $\rho_0>0$ small enough to satisfy Lemma \ref{Lem: uniform constants}, Lemma \ref{Lem: isoperimetric lemma in MSY}, \eqref{Eq: monotonicity in slice}, and $B_{\rho_0}(G\cdot x_0)\subset\subset U$. 
	Denote by $k_0:=\dim(G\cdot x_0)$ for simplicity. 
	It follows from the co-area formula that 
	\[ \int_{\rho-\sigma}^\rho \mc H^{n-1}(\Sigma_k\cap \bd B_s(G\cdot x_0)) ds \leq \mc H^n(\Sigma_k\cap (\closure(B_\rho(G\cdot x_0)) \setminus B_{\rho-\sigma}(G\cdot x_0)) ) \]
	for almost all $\rho\in (0,\rho_0)$ and $\sigma\in (0,\rho)$. 
	Setting $\sigma=\rho/2$, the reduced monotonicity formula \eqref{Eq: monotonicity in slice} then implies that for all $k$ large enough,
	\[\int_{\rho/2}^\rho \mc H^{n-1}(\Sigma_k\cap \bd B_s(G\cdot x_0)) ds \leq c\rho^{n-k_0}, \]
	where $c>1$ depends only on $M, k_0$, and $\|V\|(B_{\rho_0}(G\cdot x_0))/\rho_0^{n-k_0}$. 
	Therefore, there is a sequence $\{\rho_k\}_{k\in\mb N}\subset (3\rho/4, \rho)$ so that $\Sigma_k$ is transversal to $\bd B_{\rho_k}(G\cdot x_0)$, and 
	\begin{align}\label{Eq: MSY - isotopy regularity - 5.3}
		\mc H^{n-1}(\Sigma_k\cap \bd B_{\rho_k}(G\cdot x_0)) \leq c\rho^{n-k_0-1} \leq c (\eta\rho_0)^{n-k_0-1}
	\end{align}
	for all $k$ large enough, provided $\rho<\eta\rho_0$, where for the moment $\eta\in (0,1)$ is arbitrary. 
	By taking $\eta>0$ small enough (depending on $\gamma,\rho_0,k_0$ and $\|V\|(M)$), we know Theorem \ref{Thm: MSY thm2 gamma reduction} is applicable for $\Sigma_k$ with $B_{\rho_k}(G\cdot x_0), B_{\rho_0}(G\cdot x_0)$ in place of $B,U$ respectively. 
	Indeed, by Remark \ref{Rem: compare disk instead cylinder}, we know \eqref{Eq: MSY thm2 assumption - isoperimetric} and \eqref{Eq: MSY thm2 assumption - disk instead of cylinder} can be simultaneously satisfied on $\bd B_{\rho_k}(G\cdot x_0)$, and thus Theorem \ref{Thm: MSY thm2 gamma reduction} (iii) and (iv) are satisfied by \eqref{Eq: MSY thm2 assumption - isoperimetric} and \eqref{Eq: MSY - isotopy regularity - 5.3} with $\eta$ small enough. 
	Therefore, for $k$ large enough, there are $G$-connected $G$-hypersurfaces $D_k^{(1)},\dots, D_k^{(q_k)}\in \mc M^G_0$ in $\Sigma_k$ so that $\bd D_k^{(j)}\subset \bd B_{\rho_k}(G\cdot x_0)$, 
	\begin{align}\label{Eq: MSY - isotopy regularity - 5.4 - 1}
		\Sigma_k\cap B_{\rho_k}(G\cdot x_0) = \left( \bigcup_{j=1}^{q_k} D_k^{(j)} \right) \cap B_{\rho_k}(G\cdot x_0),
	\end{align}
	and 
	\begin{align}\label{Eq: MSY - isotopy regularity - 5.4 - 2}
		\sum_{j=1}^{q_k}\mc H^n(  D_k^{(j)} ) \leq c\rho^{n-k_0 } \leq c(\eta\rho_0)^{n-k_0 } ,
	\end{align}
	where $c$ is independent of $k,\eta,\rho$. 
	Additionally, for any $\alpha>0$, there are $G$-isotopies $\{\varphi^{(k)}\}\subset \mk {Is}^G(B_{\rho_0}(G\cdot x_0))$ so that 
	\begin{align}\label{Eq: MSY - isotopy regularity - 5.4 - 3}
		\mc H^n\left( \left( \bigcup_{j=1}^{q_k} \left( \varphi_1^{(k)}(D_k^{(j)})\setminus \bd D_k^{(j)} \right) \right)  \setminus B_{\rho_k}(G\cdot x_0) \right) <\alpha. 
	\end{align}
	Since $\mc H^n(D_k^{(j)}) \leq c(\eta\rho_0)^{n-k_0 } $, we can shrink $\eta>0$ small enough and apply the replacement lemma (Theorem \ref{Thm: replacement}), which holds for all $x_0\in U\setminus \mc S$ by the modifications in Section \ref{Subsec: plateau regularity 2} and \cite{meeks82exotic}. 
	Then, we obtain $G$-connected $G$-hypersurfaces $\wti D_k^{(j)}\in \mc M^G_0$ with 
	\begin{align}\label{Eq: MSY - isotopy regularity - 5.5}
		\bd \wti D_k^{(j)} = \bd D_k^{(j)}, \qquad \wti D_k^{(j)}\setminus \bd \wti D_k^{(j)} \subset B_{\rho_k}(G\cdot x_0),
	\end{align}
	and 
	\begin{align}\label{Eq: MSY - isotopy regularity - 5.6}
		\mc H^n(\wti D_k^{(j)}) \leq \mc H^n( D_k^{(j)}). 
	\end{align}
	Combining \eqref{Eq: G-isotopy - minimizing} and \eqref{Eq: MSY - isotopy regularity - 5.6}, and using Lemma \ref{Lem: switch lemma}, \eqref{Eq: MSY - isotopy regularity - 5.4 - 1}, \eqref{Eq: MSY - isotopy regularity - 5.4 - 3}, and \eqref{Eq: MSY - isotopy regularity - 5.5}, we conclude that $D_k^{(j)}$ is a minimizing sequence among all $G$-hypersurfaces $\Gamma\in \mc M^G_0$ with $\bd\Gamma=\bd D_k^{(j)}$ (without any restriction on $\mk b(\Gamma)$ defined in \eqref{Eq: boundary components number}). 
    This is similar to \cite{meeks82exotic}*{(5.7)} and \cite{li2015general}*{P. 317}. 
    Note that the non-existence of special exceptional orbits is used to ensure that for any two $G$-hypersurfaces $\Gamma_1,\Gamma_2\in \mc M^G_0$ in $B_\rho(G\cdot x_0)$ with $\bd\Gamma_1=\bd\Gamma_2$, $\Gamma_2$ can be approximated by $\Gamma_1$ via $G$-isotopies (see the proof of Lemma \ref{Lem: G-isotopy approximation} and Remark \ref{Rem: no special and approximation}).

	By the modifications in Section \ref{Subsec: plateau regularity 2} and \cite{meeks82exotic}, we know the generalized filigree lemma (Lemma \ref{Lem: filigree}) is valid near any $x_0\in U\setminus \mc S$. 
	Hence, by \eqref{Eq: MSY - isotopy regularity - 5.4 - 1}-\eqref{Eq: MSY - isotopy regularity - 5.4 - 3} and the area-minimizing property of $D_k^{(j)}$, we can apply the filigree lemma (Lemma \ref{Lem: filigree}) for $k$ large enough to find a finite subset $\mc I_k\subset\{1,\dots, q_k\}$ of $l$ elements with $l$ independent of $k$ so that $\mc I_k=\{1,\dots, l\}$ by relabeling, and 
	\begin{align}\label{Eq: MSY - isotopy regularity - 5.9}
		\lim_{k\to\infty} \sum_{j=1}^l \left| D_k^{(j)} \right| \llcorner B_{\rho_0/2}(G\cdot x_0) = V\llcorner B_{\rho_0/2}(G\cdot x_0),
	\end{align}
	and $\lim_{k\to\infty} \sum_{j=l+1}^{q_k} | D_k^{(j)} | \llcorner B_{\rho_0/2}(G\cdot x_0) = 0$. 
	
	Now, up to a subsequence, we can apply the regularity result in Theorem \ref{Thm: plateau problem} to each $\{D^{(j)}_k\}_{k\in\mb N}$, $1\leq j\leq l$, in $B_{\rho_0/2}(G\cdot x_0)$. 
	Combined with the maximum principle and the arbitrariness of $x_0\in U\setminus \mc S$, we conclude the regularity of $V$ in $U\setminus\mc S$. 
	Moreover, since $\|V\|<\infty$, the monotonicity formula of $V$ indicates that the number of $G$-connected components of $\spt(\|V\|)$ is finite. 
	Therefore, \eqref{Eq: MSY - isotopy regularity - regularity} and Remark \ref{Rem: isotopy minimizing - locally boundary type} are valid in $U\setminus\mc S$.

    \medskip
 	{\bf Step 2.} {\it Regularity at $\mc S\setminus \mc S_{n.m.}$.}

    Take any $G\cdot x_0\subset \mc S\setminus \mc S_{n.m.}$ and $\rho>0$ small enough. 
    Since the regularity of $V$ in $ U\setminus \mc S$ follows from the regularity result of the equivariant Plateau problem (Theorem \ref{Thm: plateau problem}), we know $\spt(\|V\|)\cap (B_{\rho}(G\cdot x_0)\setminus G\cdot x_0)$ is orthogonal to $M\setminus M^{prin}$ (Remark \ref{Rem: local G-boundary for plateau}). 
    Hence, one can combine the simple connectivity of $B_\rho^*([x_0])\setminus \{[x_0]\}$ and the $G$-stability of $V$ to show that $V$ is stable in $B_\rho(G\cdot x_0)\setminus G\cdot x_0$ by the proof in Section \ref{Subsec: plateau} {\bf Step 2}. 
    Then the regularity of $\spt(\|V\|)$ extends to $G\cdot x_0$ by \eqref{Eq: cohomogeneity assumption} and the regularity theory for stable minimal hypersurfaces \cite{schoen1981regularity}. 

    \medskip
    {\bf Step 3.} {\it Regularity at $\mc S_{n.m.}$}.

    For $ x_0\in \spt(\|V\|)\cap U\cap \mc S_{n.m.}$ and $\rho>0$ small enough, $\wti\Sigma:= \spt(\|V\|)\cap (B_\rho(G\cdot x_0)\setminus G\cdot x_0)$ is an embedded minimal $G$-hypersurface. 
    For simplicity, we assume that $\wti\Sigma$ is $G$-connected. 
    Then by Sard's theorem, we can choose $0<s<t<\rho$ so that $\bd A_{s,t}(G\cdot x_0)$ is transversal to $\wti \Sigma$. 
    Note that $\wti\Sigma$ is $G$-stable in $A_{s,t}(G\cdot x_0)$. 
    Hence, if $\wti\Sigma$ admits a $G$-invariant unit normal, then $\wti\Sigma$ is stable in $A_{s,t}(G\cdot x_0)$, and thus we can remove the singularity at $G\cdot x_0$ by \cite{schoen1981regularity} as in {\bf Step 2}. 

    If $\wti\Sigma$ does not admit a $G$-invariant unit normal. 
    Then, by equivariantly lifting to double covers if necessary \cite{wang2026multiplicity}*{Proposition 2.4} (cf. \cite{bredon1972introduction}*{Chapter 1, Theorem 9.3}), we can assume without loss of generality that $\wti\Sigma\llcorner A_{s,t}(G\cdot x_0)$ admits a unit normal $\nu$. 
 	Thus, there is a $2$-index Lie subgroup
 	\[G^+:=\{g\in G: dg(\nu)=\nu\},\]
 	so that every $g\in G^-:=G\setminus G^+$ satisfies $dg(\nu)=-\nu$. 
    Let $\wti U\subset\subset A_{s,t}(G\cdot x_0)\cap M^{prin}$ be any smooth open $G$-set whose boundary is transversal to $\wti\Sigma$ and every $\Sigma_k$ in the minimizing sequence. 
    
    We claim that $\wti\Sigma$ is stable in $\wti U$, and thus stable in $A_{s,t}(G\cdot x_0)\cap M^{prin}$. 
 	Indeed, since $\wti U\subset\subset M^{prin}$, $\{\Sigma_k^*\}$ is a minimizing sequence that converges to $m|\wti\Sigma^*|$ in $\wti U^*$ under the rescaled metric $\wti g_{_{M^*}}$ by \eqref{Eq: area in orbit space}, where $m\in\mb N$. 
    Then, one can apply the proof of \cite{meeks82exotic}*{Remark 3.27} in $(\wti U^*,\wti g_{_{M^*}})$ to show that for every $k$ large enough, the surgery in $U^*$ can equivariantly deform $\Sigma_k$ to $\wti\Sigma_k\in\mc {LB}^G$ so that $V=\lim_{k\to\infty}|\wti\Sigma_k|$, \eqref{Eq: G-isotopy - minimizing} holds with $\wti\Sigma_k$ in place of $\Sigma_k$, and
    \[\wti\Sigma_k\llcorner \wti U = \varphi_1(S_k\cup S_k^{0})\cap \wti U\] 
    for some $\{\varphi_t\} \in \mk {Is}^G(\wti U)$, where $S_k$ is a $G$-hypersurface and $S_k^{0}$ is a $G$-set with $S_k^{0}\cap S_k=\emptyset$ so that for any compact $G$-set $K\subset\subset \wti U$, $\lim_{k\to\infty}\mc H^n(S_k^{0}\cap K)=0$, and 
    \begin{align}\label{Eq: MSY remark3.27 - 3.29}
 	  S_k = \left\{
 		\begin{array}{ll}
 			\bigcup_{r=1}^{l} \bd  B_{r/k}(\wti\Sigma) & \mbox{if $m=2l$ is even}
 			\\
 			\wti\Sigma\cup \left(\bigcup_{r=1}^{l} \bd  B_{r/k}(\wti\Sigma) \right) & \mbox{if $m=2l+1$ is odd}
 		\end{array}
 		\right. 
 		\qquad \mbox{in $K$}. 
 	\end{align} 
    Recall that $\nu$ is not $G$-invariant; conversely, $\wti\Sigma_k\llcorner (B_{\rho}(G\cdot x_0)\setminus G\cdot x_0)$ does have a $G$-invariant unit normal $\nu_k$ due to the locally $G$-boundary-type condition. 
 	Hence, we conclude from \eqref{Eq: MSY remark3.27 - 3.29} that the multiplicity $m=2l$ is even. 
 	Additionally, in a neighborhood of $\wti\Sigma$, let $d$ be the signed geodesic distance to $\wti\Sigma$ so that $\nabla d= \nu$ on $\wti\Sigma\llcorner \wti U$. 
 	Then by \eqref{Eq: MSY remark3.27 - 3.29}, for $k$ large enough, we can separate $S_k\cap \wti U$ into two parts $S_k^{+}\sqcup S_k^{-}$ by 
 	\[S_k^{\pm}:= \{x\in S_k\cap \wti U : \pm d(x)>0 \}\] 
 	so that $S_k^{\pm}$ are $G^+$-invariant and $G^-\cdot S_k^{\pm}=S_k^{\mp}$. 
    In particular, this implies that $\wti\Sigma_k\llcorner \wti U$ also has two disjoint $G^+$-components $\wti\Sigma_k^\pm$ with $G^-$ permuting them. 
    We next show the $G^+$-stability (and thus the stability) of $\wti\Sigma$ in $\wti U$ by considering the convergence $|\wti\Sigma_k^\pm|\to l|\wti\Sigma|$.

	Consider now the first eigenfunction $u_1$ of the Jacobi operator of $\wti\Sigma\llcorner \wti U$. 
	By Lemma \ref{Lem: stability and G-stability}, $u_1>0$ is a smooth $G^+$-invariant function on $\wti\Sigma\llcorner \wti U$, and $u_1\nu$ can be extended to a $G^+$-invariant vector field $X_1$ in $\wti U$ with $\langle X_1,\nabla d\rangle\geq 0$. 
	Since $\wti\Sigma_k^+\cap \wti\Sigma_k^-=\emptyset$, one can construct a $G$-invariant vector field $Y_k\in \mk X^{G}(\wti U)$ so that $Y_k=X_1$ (resp. $Y_k=-X_1$) on $\wti\Sigma_k^+$ (resp. $\wti\Sigma_k^-$). 
 	Noting that $\mc H^n(\wti\Sigma_k\cap \wti U)=2\mc H^n(\wti\Sigma_k^\pm)$ and $\delta^2\wti\Sigma_k(Y_k)=2\delta^2\wti\Sigma_k^\pm(X_1)$, it follows from \eqref{Eq: G-isotopy - minimizing} with $\wti\Sigma_k$ in place of $\Sigma_k$ that 
    $\mc H^n(\wti\Sigma_k^+) \leq \mc H^n(\varphi_t(\wti\Sigma_k^+)) +  \epsilon_k/2$, where $\{\varphi_t\}_{t\in [0, \epsilon)}$ is the flow generated by $X_1$. 
    Hence, taking $k\to\infty$, we get $\|V\|(M)\leq \|(\varphi_t)_\#V\|(M)$, and thus, $\wti\Sigma\llcorner \wti U$ is stable. 

    By the arbitrariness of $\wti U$, we conclude that $\wti\Sigma$ is stable in $M^{prin}\cap A_{s,t}(G\cdot x_0)$. 
    Moreover, we can further show that $\wti\Sigma$ is stable in $A_{s,t}(G\cdot x_0)$. 
    Indeed, since $x_0\in \mc S_{n.m.}$, we see from Proposition \ref{Prop: local structure of M/G}(3) that $B_\rho^*([x_0])$ is a closed cone over $\mb {RP}^2$. 
    Hence, there is no boundary point of the topological manifold $M^*\setminus \mc S_{n.m.}^*$ lying in $A_{s,t}^*([x_0])$.
    Namely, if $x\in A_{s,t}(G\cdot x_0)\setminus M^{prin}$, then $x$ satisfies Proposition \ref{Prop: local structure of M/G}(2.b). 
    As in {\bf Step 2}, we can then conclude from the orthogonal intersection property (the last statement in Theorem \ref{Thm: plateau problem}) that every $x\in \wti\Sigma\cap (A_{s,t}(G\cdot x_0)\setminus M^{prin})$ satisfies Proposition \ref{Prop: local structure of Sigma/G 2}(3), which implies $\wti\Sigma \cap (A_{s,t}(G\cdot x_0)\setminus M^{prin})$ is the union of finitely many orbits. 
    By the codimension assumption \eqref{Eq: cohomogeneity assumption}, the standard logarithmic cut-off trick extends the stability of $\wti\Sigma$ in $M^{prin}\cap A_{s,t}(G\cdot x_0)$ to the stability in $A_{s,t}(G\cdot x_0)$. 

    Finally, since $s>0$ can be arbitrarily small, we know $\wti\Sigma$ is stable in $B_t(G\cdot x_0)\setminus G\cdot x_0$, and the regularity of $\wti\Sigma$ extends to $G\cdot x_0$ by \cite{schoen1981regularity} and \eqref{Eq: cohomogeneity assumption}. 
    
 	\medskip
 	{\bf Step 4.} {\it The weighted genus upper bound.}

    Let $\{\Sigma_k\}_{k\in \mb N}$ be a minimizing sequence for the minimization problem $(\Sigma, \mk {Is}^G(M))$ so that $V=\lim_{k\to\infty}|\Sigma_k|$. 
    Let $\Sigma^{(j)}$ and $m_j$ be given as in \eqref{Eq: MSY - isotopy regularity - regularity} for $1\leq j\leq R$. 
    

    Take any smooth open $G$-set $U\subset\subset M^{prin}$ so that $\bd U$ is transversal to $\spt(\|V\|)$ and $\{\Sigma_k\}_{k\in\mb N}$. 
    It is sufficient to show the weighted genus upper bound \eqref{Eq: MSY - isotopy regularity - genus} with $\genus((\Sigma^{(j)}\cap U)^*)$ in place of $\mk g(\Sigma^{(j)})$. 
    By Lemma \ref{Lem: minimal in M^prin} and \eqref{Eq: area in orbit space}, $\cup_{j=1}^R(\Sigma^{(j)}\cap U)^*$ is an isotopy minimizer in $U^*$ under the rescaled metric $\wti g_{_{M^*}}$ (cf.  \eqref{Eq: weighted metric in M/G}). 
    Hence, one can follow the proof of \cite{meeks1982plateau}*{(1.4), Remark 3.27} in $(U^*, \wti g_{_{M^*}})$ to obtain the desired weighted genus upper bound.  
\end{proof}

Before we end this section, we also introduce the following definition and corollary for the stability of locally $G$-isotopy area minimizers, which will be used in Section \ref{Sec: G-min-max}. 

\begin{definition}\label{Def: simple G-set}
	We say an open $G$-set $U\subset M$ is {\em simply $G$-connected} if $U^*$ is simply connected. 
\end{definition}

\begin{remark}\label{Rem: simply G-connected}
    For any $G\cdot p\subset M\setminus \mc S_{n.m.}$ and $0<s<t<\inj(G\cdot p)$, we see from \eqref{Eq: cohomogeneity assumption} and the $C^0$ manifold structure of $M^*\setminus\mc S_{n.m.}^*$ that $B_t(G\cdot p)$ and $A_{s,t}(G\cdot p)$ are simply $G$-connected.
\end{remark}

\begin{corollary}\label{Cor: stability of minimizers}
	Let $U\subset M$ be an open $G$-set containing no special exceptional orbit. 
	Suppose $\Sigma\subset U$ with $\bd\Sigma\cap U=\emptyset$ is a $G$-stable minimal $G$-hypersurface that is locally contained in the solution of a $G$-isotopy area minimizing problem. 
	Namely, for any $p\in U$, there exist $B_r(G\cdot p)\subset\subset U$ and $\wti \Sigma\in \mc {LB}^G$ so that $\Sigma\llcorner B_r(G\cdot p)$ is contained in the support of the minimizer $V\in \mc V^G_n(M)$ given by Theorem \ref{Thm: G-isotopy minimizer} with respect to $(\wti \Sigma, \mk {Is}^G(B_r(G\cdot p)))$. 
    
	Then $\Sigma$ is stable in any simply $G$-connected open $G$-set $\wti U\subset\subset U$. 
\end{corollary}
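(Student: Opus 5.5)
The aim is to produce a $G$-invariant unit normal on $\Sigma\cap\wti U$ and then apply Lemma \ref{Lem: stability and G-stability}: $G$-stability in $\wti U$ together with a $G$-invariant unit normal gives stability in $\wti U$. The structure of $\Sigma$ near each orbit comes from Theorem \ref{Thm: G-isotopy minimizer} and Remark \ref{Rem: isotopy minimizing - locally boundary type}. A covering argument does not work here, because stability is not local. Instead we obtain a global $G$-invariant normal on the simply $G$-connected set $\wti U$ by a monodromy argument in $\wti U^*$, and handle the exceptional points separately by capacity.

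\textbf{Step 1 (local structure).} Fix $p\in\wti U$ and let $V$ be the minimizer attached to $B_r(G\cdot p)$. By Theorem \ref{Thm: G-isotopy minimizer} and Remark \ref{Rem: isotopy minimizing - locally boundary type}, $\spt\|V\|$ is smooth and meets each non-principal stratum $N\subset M\setminus(M^{prin}\cup\mc S)$ orthogonally. Since $U$ contains no special exceptional orbit, the cases in Proposition \ref{Prop: local structure of Sigma/G 1}(ii) and Proposition \ref{Prop: local structure of Sigma/G 2}(2)(4) cannot occur, because each of them forces a special exceptional orbit of $M$ or a reflection-type action on the normal line. Consequently, away from the finite set $\mc S$, near every orbit the quotient $\Sigma^*$ is a surface properly embedded in the $3$-manifold-with-boundary $M^*$. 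It meets $\bd M^*$ only along its own boundary, and it meets the spine in case (2.b) transversally at isolated points, as in Proposition \ref{Prop: local structure of Sigma/G 2}(3).

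\textbf{Step 2 (invariant normal on $\wti U\setminus\mc S$).} Since no special exceptional orbit exists, $\dim(M\setminus M^{prin})\le n-1$. Therefore $\Sigma\setminus M^{prin}$ has codimension $\ge 1$ in $\Sigma$, and the normal behavior near it is determined by $\Sigma^*$ inside $M^*$. Locally, $\Sigma$ separates a small tube $B_\epsilon(G\cdot q)$ into two $G$-components: this is visible in the quotient, where the half-disk or disk $\Sigma^*$ splits the half-ball or ball $B^*_\epsilon([q])$. Choosing the normal that points into one of these components gives a local $G$-invariant unit normal. To make the choice global, consider the two-sheeted cover of $\Sigma^*\cap(\wti U^*\setminus\mc S^*)$ consisting of local sides of $\Sigma^*$. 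Transporting a side along a loop in $\wti U^*$ defines a monodromy, and I will argue that it is trivial.

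\textbf{Step 3 (removing $\mc S$ and the main obstacle).} Near an orbit $G\cdot x_0\subset\mc S\cap\wti U$, we have $\codim(G\cdot x_0)\ge3$, so $\Sigma$ near $G\cdot x_0$ is a smooth hypersurface through a set of codimension $\ge 2$ in $\Sigma$. A normal defined on the complement then extends across it, either by continuity or via a logarithmic cut-off in the stability inequality, as in Theorem \ref{Thm: G-isotopy minimizer} Step 3.

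The main obstacle is the monodromy claim of Step 2. A loop in $\wti U^*$ that crosses $\Sigma^*$ an odd number of times would obstruct the choice of side, and ruling this out needs $\mathbb Z_2$-intersection theory in a space that is only a $C^0$ manifold with boundary, possibly with cone points over $\mb{RP}^2$. My first attempt is to work in the manifold part $\wti U^*\setminus\mc S_{n.m.}^*$, which stays simply connected because codimension-$3$ points do not affect $\pi_1$. There I would double along $\bd M^*$ to obtain a simply connected $3$-manifold in which the doubled $\Sigma^*$ is a properly embedded closed surface, and hence two-sided and separating.

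If that fails because $\Sigma$ is only locally minimizing and may have multiplicity, I will fall back on the argument of Theorem \ref{Thm: G-isotopy minimizer} Step 3. The multiplicity structure from \cite{meeks82exotic}*{Remark 3.27} on a $G$-isotopy minimizer lets one pass to the index-two subgroup $G^+$ and build $G$-invariant test fields $Y_k$ that show $G^+$-stability directly, which yields stability.
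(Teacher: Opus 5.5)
Your main line is the paper's proof. Orthogonality to the non-principal strata (Remark \ref{Rem: isotopy minimizing - locally boundary type}) shows that $\Sigma^*$ has no boundary in $\interior(\wti U^*)$. Each component of $\Sigma^*\cap\wti U^*$ then separates the simply connected $\wti U^*$, so $\wti U\setminus\Sigma_i$ has two $G$-components; this gives a $G$-invariant unit normal, and Lemma \ref{Lem: stability and G-stability} concludes. Your ``monodromy'' step is exactly this separation, and it needs no doubling. Where $\wti U^*$ is a simply connected topological $3$-manifold with boundary, a properly embedded surface has zero mod $2$ intersection with every loop: push a null-homotopy into the interior along a collar and make it transverse. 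Hence the surface is two-sided and separating. Doubling along a disconnected $\bd_0\wti U^*$ can actually create fundamental group; the double of $S^2\times[0,1]$ is $S^2\times S^1$. A smaller point: case (4) of Proposition \ref{Prop: local structure of Sigma/G 2} is not excluded by the absence of special exceptional orbits of $M$, since that orbit is special exceptional only inside $\Sigma$. It is excluded by the orthogonality you also cite.

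The genuine gap is your treatment of $\mc S$.
\begin{itemize}
\item Deleting $\mc S^*_{n.m.}$ does not preserve simple connectivity, because a punctured cone over $\mb{RP}^2$ is homotopy equivalent to $\mb{RP}^2$. Cone points not on $\Sigma^*$ should be kept instead: a null-homotopy may pass through them without meeting $\Sigma^*$.
\item More seriously, Step 3 extends a normal that does not exist. Suppose $G\cdot x_0\subset\mc S\cap\Sigma$. Every $g\in G_{x_0}$ preserves $T_{x_0}\Sigma$, so $dg(\nu)=\pm\nu$ for the unit normal $\nu\in N_{x_0}(G\cdot x_0)$. Since $P(x_0)=\{0\}$, \eqref{Eq: P is fixed vectors} forces $dg(\nu)=-\nu$ for some $g\in G_{x_0}$.
\item Hence there is no $G$-invariant unit normal near $G\cdot x_0$, not even on the punctured neighborhood: such a normal would extend continuously across the codimension-$\geq 2$ set $G\cdot x_0$. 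The model is $\{\pm1\}$ acting on $\mb R^3$ with $\Sigma=\{x_3=0\}$, where $\Sigma^*$ is a boundaryless surface in the contractible cone over $\mb{RP}^2$ that does not separate it.
\item Your fallback does not repair this. The $G^+$ argument with the fields $Y_k$ in Step 3 of Theorem \ref{Thm: G-isotopy minimizer} uses the minimizing sequence of one local problem, so it gives stability only near one orbit. Stability in $\wti U$ is a global condition on the Jacobi operator.
\end{itemize}

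So the invariant-normal route needs $\Sigma\cap\wti U\cap\mc S=\emptyset$.
\begin{itemize}
\item For $\mc S\setminus\mc S_{n.m.}$ this follows from orthogonality. The link of $\Sigma^*$ at $[x_0]$ would be a properly embedded curve in the link $S^2$ or $\mb D^2$ of $M^*$, hence separating. But $G_{x_0}$ interchanges the two sides of $T_{x_0}\Sigma$, so the complement of that curve in the link is connected.
\item For $\mc S_{n.m.}$ your proposal gives no argument. The paper's one-line separation claim tacitly needs the same avoidance.
\item The avoidance is automatic in the only place the paper invokes the corollary, the Claim in the proof of Proposition \ref{Prop: constrained minimizer}. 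There $\wti U$ lies in an annulus $A_{s,t}(G\cdot p)$ containing no isolated non-principal orbit, so $\wti U^*$ is a topological $3$-manifold with boundary and the separation argument above is complete.
\end{itemize}
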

\begin{proof}
	By slightly shrinking $\wti U$, we assume $\Sigma$ is transversal to $\bd \wti U$. 
	Then $\wti U^*\subset M^*$ is a simply connected (relative) open set. 
    By Remark \ref{Rem: isotopy minimizing - locally boundary type}, we know $\Sigma\llcorner (\wti U\setminus\mc S)$ can only meet $\wti U\setminus M^{prin}$ orthogonally. 
	Combined with Proposition \ref{Prop: local structure of M/G}(3), we know the topological surface $\Sigma^*$ has no boundary point in $\interior(\wti U^*)$.
	Hence, every component $\Sigma^*_i$ of $\Sigma^*\cap \wti U^*$ separates the simply connected $ \wti U^*$ into two components, which implies $\wti U\setminus\Sigma_i$ has two $G$-components. 
	Thus, $\Sigma \cap \wti U$ has a $G$-invariant unit normal and is stable by Lemma \ref{Lem: stability and G-stability}.
\end{proof}

\section{Equivariant min-max theory}\label{Sec: G-min-max}

In this section, we establish an equivariant min-max theory for minimal $G$-hypersurfaces in the generalized setting of Simon-Smith (cf. \cite{colding2002minmax}).

To begin with, we introduce the generalized family of $G$-hypersurfaces with mild singularities. 
Let $I(1,k)$ be the cubical complex on $I=[0,1]$ with $0$-cells $\{[0],[3^{-k}],\cdots,[1-3^{-k}],[1]\}$ and $1$-cells $\{[0,3^{-k}],[3^{-k}, 2\cdot 3^{-k}],\cdots, [1-3^{-k},1] \}$. 
Then for $m,k\in\mb N$, $I(m,k)=I(1,k)\otimes \cdots\otimes I(1,k)$ ($m$-times) is the $m$-dimensional cubical complex with $p$-cells ($0\leq p\leq m$) written as $\sigma_1\otimes\cdots\otimes\sigma_m$, where each $\sigma_i$ is a cell of $I(1,k)$ with $\sum_{i=1}^m\dim(\sigma_i)=p$ (see \cite{marques2017existence}*{\S 2.1}). 
In what follows, we always assume that the parameter space $X$ is a cubical subcomplex of some $I(m,k)$. 
\begin{definition}\label{Def: G-sweepout}
	Let $X$ be a cubical complex, and $Z\subset X$ be a subcomplex. 
	A family $\{\Sigma_x\}_{x\in X}$ of compact $G$-hypersurfaces in $M$ is said to be a {\em generalized $(X,Z)$-family of $G$-hypersurfaces} if there exist a finite set $T\subset X$ and a finite union of orbits $P\subset M$ so that 
	\begin{enumerate}
		\item for $t\notin T\cup Z$, $\Sigma_t\in \mc {LB}^G$ is a smooth embedded closed $G$-hypersurface;
		\item for $t\in T\setminus Z$, $\Sigma_t\setminus P\in \mc {LB}^G$ is a smooth embedded $G$-hypersurface;
		\item $x\mapsto \Sigma_x$ is continuous in the varifold (weak) topology and the Hausdorff topology.
	\end{enumerate}
	If, in addition to (1)-(3) above, 
	\begin{itemize}
		\item[(4)] $x\mapsto \Sigma_x$ is continuous in the smooth topology for $x\in X\setminus (T\cup Z)$;
		\item[(5)] for $x\in T\setminus Z$, $\Sigma_y\to\Sigma_x$ smoothly in $M\setminus P$ as $y\to x$;
	\end{itemize}
	then we say the generalized $(X,Z)$-family is {\em smooth}. 
	For simplicity, we also denote by 
	\[ \mf \Sigma:=\{\Sigma_x\}_{x\in X}, \qquad \mf \Sigma|_Z:=\{\Sigma_x\}_{x\in Z}, \]
	and a smooth generalized $(X,Z)$-family $\mf \Sigma$ is also called a {\em ($G$-equivariant) $(X,Z)$-sweepout}.
\end{definition}

\begin{remark}\label{Rem: relax finiteness of T}
    The above definition is generalized from \cite{colding2002minmax}*{Definition 1.1, 1.2}. 
    Note that the finiteness condition on $T$ can be relaxed by assuming that every $\Sigma_t$, $t\in X\setminus Z$, has a (possibly empty) singular set $P_t=\cup_{i=1}^{l_t}G\cdot p_i^t$ so that $\sup_{t\in X\setminus Z} l_t<\infty$. 
    We can also define $\genus(\Sigma_t^*):=\lim_{r_i\to 0} \genus((\Sigma_t\setminus B_{r_i}(P_t))^*)\in \mb N\cup\{\infty\}$ for some $r_i\to 0$ with $\bd B_{r_i}(P_t)$ transversal to $\Sigma_t$. 
\end{remark}

Given a $G$-equivariant $(X,Z)$-sweepout $\{\Sigma_x\}_{x\in X}$, we can construct new $G$-equivariant $(X,Z)$-sweepouts by the following procedure. 
Let $\psi:X\times M\to M$ be a smooth map so that 
\begin{itemize}
	\item for any $x\in X$, $\psi(x,\cdot)=\varphi_1^{(x)}$ for some $\{\varphi_t^{(x)}\}_{t\in [0,1]}\in\mk {Is}^G(M)$;
	\item for any $x\in Z$, $\psi(x,\cdot )= id_M$.  
\end{itemize}
Then the new family $\{\Sigma_x'\}_{x\in X}$ given by $\Sigma_x':=\psi(x,\Sigma_x)$ is also a $G$-equivariant $(X,Z)$-sweepout so that $\Sigma_x'=\Sigma_x$ for all $x\in Z$. 
In particular, the map $x\mapsto \Sigma_x'$ is homotopic to $ x\mapsto \Sigma_x$ relative to $x\in Z\mapsto \Sigma_x$. 
Moreover, the singular set $P'$ of $\{\Sigma_x'\}$ in Definition \ref{Def: G-sweepout} has a uniform upper bound on $\# (P'/G)$ independent of $\psi$, which ensures a technical assumption in \cite{colding2002minmax}*{Remark 1.3}. 

\begin{definition}\label{Def: G-sweepout homotopy class}
	Given a $G$-equivariant $(X,Z)$-sweepout $\mf \Sigma=\{\Sigma_x\}_{x\in X}$, the set $\Pi$ containing all the $G$-equivariant $(X,Z)$-sweepouts obtained by the above constructions is called the ($G$-equivariant) {\em $(X,Z)$-homotopy class of $\mf \Sigma$}. 
	We also say that $\Pi$ is relative to $\mf \Sigma|_Z$. 
\end{definition}

Note that an $(X,Z)$-homotopy class $\Pi$ is {\em saturated} in the sense that it is closed under the above relative $G$-equivariant deformations of sweepouts. 

We now set up the relative equivariant min-max theory, which extends \cite{colding2002minmax}. 
\begin{definition}
	Given a $G$-equivariant $(X,Z)$-homotopy class $\Pi$, define the {\em width} of $\Pi$ by
	\[\mf L(\Pi) := \inf_{\mf \Sigma \in \Pi} \sup_{x\in X} \mc H^n(\Sigma_x). \]
	A sequence $\{\mf \Sigma^i:=\{\Sigma_x^i\}_{x\in X}\}_{i\in\mb N}\subset \Pi$ is called a {\em minimizing sequence} if 
	\[ \mf L(\mf \Sigma^i) := \sup_{x\in X} \mc H^n(\Sigma_x^i) ~\to~  \mf L(\Pi)\qquad \mbox{as $i\to\infty$}.  \]
	Given a minimizing sequence $\{\mf \Sigma^i\}_{i\in\mb N}\subset \Pi$, a subsequence $\{\Sigma_{x_j}^{i_j}\}_{j\in\mb N}$ with $x_j\in X$ is called a {\em min-max sequence} if 
	\[ \mc H^n(\Sigma_{x_j}^{i_j}) ~\to~ \mf L(\Pi) \qquad \mbox{as $j\to\infty$} .\]
	The critical set of a minimizing sequence $\{\mf \Sigma^i\}_{i\in\mb N}\subset \Pi$ is defined by 
	\[
	\mathbf{C}(\left\{\mf \Sigma^i\right\}_{i\in \mb N})=\left\{
	\begin{array}{l|l} 
		V \in \mc V^G_n(M) & 
		\begin{array}{l}
			\exists \text { a min-max subsequence }\{\Sigma_{x_j}^{i_j}\}_{j\in\mb N} \\ \text {such that } \mf F(|\Sigma_{x_j}^{i_j}|, V) \rightarrow 0 \text { as } j \rightarrow \infty
		\end{array}
	\end{array}\right\} .
	\]
\end{definition}

Note that the existence of a minimizing sequence $\{\mf \Sigma^i\}_{i\in\mb N}\subset \Pi$ follows easily from a diagonal argument.
The main results of this section are the following theorems. 
\begin{theorem}[Equivariant Min-max Theorem]\label{Thm: G-min-max}
	Let $\Pi$ be a $G$-equivariant $(X,Z)$-homotopy class relative to $\mf\Sigma^0|_Z$.  
	Suppose \eqref{Eq: cohomogeneity assumption} is satisfied, $M$ has no special exceptional orbit, and
	\begin{align}\label{Eq: G-min-max - nontrivial width}
		\mf L(\Pi) > \max\left\{ \sup_{x\in Z}\mc H^n(\Sigma^0_x), 0 \right\}.
	\end{align}
	Then there exist a minimizing sequence $\{\mf \Sigma^i\}_{i\in\mb N}\subset \Pi$ and a sequence $\{x_i\}_{i\in \mb N}\subset X\setminus Z$ so that $\{\Sigma^i_{x_i}\}_{i\in\mb N}$ is a min-max sequence converging in the sense of varifolds to $V\in \mc V^G_n(M)$ satisfying
	\begin{itemize}
		\item[(i)] $\|V\|(M)= \mf L(\Pi)$;
		\item[(ii)] there are disjoint, smoothly embedded, closed, $G$-connected, minimal $G$-hypersurfaces $\{\Gamma_j\}_{j=1}^R$ in $M$ 
        so that 
		\[V=\sum_{j=1}^R m_j|\Gamma_j|,\]
	\end{itemize}
	where $R,m_1,\dots,m_R\in\mb N$. 
\end{theorem}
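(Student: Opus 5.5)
We follow the Colding--De Lellis scheme for Simon--Smith min-max and replace each ingredient by the equivariant version established above.

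\textbf{Step 1: pull-tight.} We construct a $G$-equivariant tightening map. Lemma \ref{Lem: first variation and G-variation} shows that a $G$-varifold is stationary if and only if it is $G$-stationary. Hence the usual continuous assignment $V\mapsto X_V$ of area-decreasing vector fields on non-stationary varifolds can be taken $G$-invariant: we average $X_V$ over $G$ with respect to the Haar measure $\mu$, and $\delta V(X_V)=\delta V((X_V)_G)$ keeps the decrease. The flows of these fields are $G$-isotopies, so composing a minimizing sequence with them stays inside $\Pi$. This yields a minimizing sequence $\{\mf\Sigma^i\}$ whose critical set $\mathbf C(\{\mf\Sigma^i\})$ consists only of stationary $G$-varifolds. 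Assumption \eqref{Eq: G-min-max - nontrivial width} ensures that min-max sequences stay away from $Z$ and have positive mass, so $x_i\in X\setminus Z$ and (i) holds.

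\textbf{Step 2: almost minimizing.} Following \cite{colding2002minmax} (and \cite{wangzc2023four-sphere}*{Definition 3.6}), we define $G$-almost minimizing in small annuli $A_{s,t}(G\cdot p)$ using $G$-isotopies. We then run the combinatorial argument: if no min-max sequence were $G$-almost minimizing, we could locally deform the sweepouts by $G$-isotopies and lower $\mf L$. The covering arguments are done with the tubes $B_r(G\cdot p)$ and annuli $A_{s,t}(G\cdot p)$ in place of balls. Because $\inj(G\cdot p)$ has no uniform positive lower bound, we control radii stratum by stratum via Lemma \ref{Lem: uniform constants} and the reduced monotonicity formula \eqref{Eq: monotonicity in slice}. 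The bound on $\#(P/G)$ for singular orbits handles the finitely many bad orbits as in \cite{colding2002minmax}*{Remark 1.3}. The outcome is a min-max sequence $\Sigma^i_{x_i}\to V$ that is $G$-almost minimizing in every sufficiently small $G$-annulus around every orbit, with the radius possibly depending on the orbit.

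\textbf{Step 3: $G$-replacements and regularity.} In each such annulus $A$, we minimize area of $\Sigma^i_{x_i}$ under $G$-isotopies that are constrained not to increase area by more than $\epsilon_i$. Theorem \ref{Thm: G-isotopy minimizer}, in its constrained version with boundary handled via \cite{delellis2010genus}*{Lemma 8.1}, shows that the limit is a smooth $G$-stable minimal $G$-hypersurface in $A$. By Remark \ref{Rem: isotopy minimizing - locally boundary type} it meets the non-principal strata orthogonally. Away from $\mc S_{n.m.}$ the annuli are simply $G$-connected (Remark \ref{Rem: simply G-connected}), so Corollary \ref{Cor: stability of minimizers} upgrades $G$-stability to genuine stability. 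This gives $G$-replacements satisfying the $G$-replacement chain property, together with uniform Schoen--Simon curvature estimates \cite{schoen1981regularity}, which use $\codim\le 7$. Using these estimates, we glue replacements on overlapping annuli via the maximum principle and unique continuation, as in \cite{colding2002minmax}. This gives smoothness of $\spt\|V\|$ away from finitely many orbits. Each removable orbit then lies in a punctured tube where $V$ is stable, again by the simply-connectedness and unit-normal argument; at $\mc S_{n.m.}$ we use the double-cover argument of Theorem \ref{Thm: G-isotopy minimizer} Step 3. The singularities are then removed by \cite{schoen1981regularity}, giving (ii).

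\textbf{Main obstacle.} The hardest step is Step 3: upgrading $G$-stability to stability for constrained minimizers near non-principal orbits and near $\mc S_{n.m.}$. Without this upgrade there is no compactness for the replacements. Our first approach is to show that each replacement is locally $G$-boundary-type away from $\mc S_{n.m.}$, which gives an invariant unit normal, and to treat $\mc S_{n.m.}$ via the multiplicity-parity argument based on \cite{meeks82exotic}*{Remark 3.27}.
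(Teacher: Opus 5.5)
There is a genuine gap at the orbits in $\mc S_{n.m.}$. There the annuli $A_{s,t}(G\cdot p)$ are not simply $G$-connected, since $A_{s,t}^*$ is homeomorphic to $RP^2\times(s,t)$. So Corollary \ref{Cor: stability of minimizers} gives stability of your replacements only in simply $G$-connected subsets. To compensate, you propose to make $V$ itself stable in a punctured tube using the double-cover argument of Step 3 of Theorem \ref{Thm: G-isotopy minimizer}. That argument does not transfer to a min-max varifold, for two reasons:
\begin{itemize}
\item[(a)] It needs the description \cite{meeks82exotic}*{Remark 3.27} of the surgically modified approximating surfaces as the parallel sheets \eqref{Eq: MSY remark3.27 - 3.29}. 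This structure is what splits them into $G^+$-components $\wti\Sigma_k^\pm$ interchanged by $G^-$.
\item[(b)] It needs the minimizing inequality \eqref{Eq: G-isotopy - minimizing} on a fixed region $\wti U$, in order to test the flow of the $G$-invariant field $Y_k$ built from the first eigenfunction.
\end{itemize}
Your min-max sequence converges only as varifolds and is only $(G,\epsilon_i,\delta_i)$-almost minimizing in small annuli. Neither ingredient is available, so near $G\cdot p$ you get no more than $G$-stability of $V$. Moreover, stability of $V$ in a punctured tube is not what the replacement-based regularity at the center uses. As Remark \ref{Rem: improved G-replacement} explains, you need replacements that are stable on the full annuli $A_{s,t}(G\cdot p)$ centered at the $\mc S_{n.m.}$ orbit, so that compactness of the replacements controls the tangent cones at $G\cdot p$.

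The missing idea is Step 3 of Proposition \ref{Prop: constrained minimizer}, which runs as follows.
\begin{itemize}
\item[(1)] For $G\cdot p\subset\mc S_{n.m.}$ and $t$ small, every non-principal orbit in $U=A_{s,t}(G\cdot p)$ is of type (2.b). Hence the locally $G$-boundary-type surface $\Sigma$ meets $\bd U$ in a set $\Gamma\subset M^{prin}$.
\item[(2)] The boundary regularity \cite{delellis2010genus}*{Lemma 8.1} at the convex outer boundary gives $\bd\wti\Sigma\llcorner\bd B_t(G\cdot p)=\Gamma$.
\item[(3)] Sliding $\bd B_{t'}(G\cdot p)$ inward and applying the maximum principle shows that every $G$-component of the constrained minimizer in $A_{s',t}(G\cdot p)$ has boundary on $\Gamma$.
\item[(4)] Each such component therefore lies in the support of the mod $2$ current limit of $\Sigma_k\llcorner U$. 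That limit bounds a $G$-invariant Caccioppoli set, which yields a $G$-invariant unit normal on all of $U$, and hence stability in $U$ by Lemma \ref{Lem: stability and G-stability}.
\end{itemize}
This is exactly the improved chain property of Proposition \ref{Prop: amv and replacement}, after which \cite{wang2024index}*{Theorem 4.18} gives regularity everywhere, including at $\mc S_{n.m.}$.

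A related point concerns orthogonality. Your appeal to Remark \ref{Rem: isotopy minimizing - locally boundary type} for orthogonality of the replacements is not literally available, since that remark concerns limits of unconstrained minimizing sequences with a fixed initial surface. The paper proves orthogonality of the constrained minimizer directly:
\begin{itemize}
\item[(a)] At strata of types (1) and (2.a) of Proposition \ref{Prop: local structure of M/G}, it uses the absence of special exceptional orbits.
\item[(b)] At (2.b) strata, it uses the same outer-boundary argument. Each stratum component $N$ reaches $\bd B_t(G\cdot p)$, so $N\subset\wti\Sigma$ would put non-principal points into $\bd\wti\Sigma$.
\end{itemize}
Alternatively, you would need to justify your claim via Lemma \ref{Lem: squeezing G-isotopy}, showing that the constrained sequence satisfies \eqref{Eq: G-isotopy - minimizing} locally.
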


The proof of our equivariant min-max theory contains three parts: a pull-tight procedure, the existence of almost minimizing varifolds, and the regularity theory. 
Moreover, we also have the reduced topological control for the min-max $G$-hypersurface in Theorem \ref{Thm: G-min-max}. 

\begin{theorem}\label{Thm: G-min-max - topological control}
	Let $\Pi, \{\mf \Sigma^i\}_{i\in\mb N}, \{\Sigma^i_{x_i}\}_{i\in\mb N}$ and $V=\sum_{j=1}^R m_j\Gamma_j$ be given as in Theorem \ref{Thm: G-min-max}. 
	Suppose further that every $(\Sigma^i_x)^*$, ($i\in\mb N, x\in X\setminus Z$), is orientable. 
	Then, 
	\begin{align}\label{Eq: G-min-max - genus}
        \sum_{j\in\mc O}  m_j\mk g(\Gamma_j) + \sum_{j\in\mc U} \frac{1}{2}m_j(\mk g(\Gamma_j)-1)   \leq \liminf_{i\to\infty}\liminf_{x\to x_i}\genus((\Sigma^i_x)^*),
	\end{align}
	where $\mc O$ (resp. $\mc U$) is the set of $j\in \{1,\dots, R\}$ so that $\Gamma_j^*$ is orientable (resp. non-orientable), and $\mk g(\Gamma_j):= \genus((\Gamma_j\cap M^{prin})^*)$. 
\end{theorem}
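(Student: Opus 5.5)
The plan follows the Simon--Smith/De Lellis--Pellandini strategy for genus bounds (cf. \cite{delellis2010genus}, \cite{ketover2019genus}), carried out in the orbit space.

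\textbf{Step 1: reduce to a local statement in $M^{prin}$.} By Remark \ref{Rem: main thm - min-max}(4), deleting $\bd_0\Gamma_j^*$ and the finitely many points of $\mc S^*$ does not change the genus. So $\mk g(\Gamma_j)$ is the supremum of $\genus((\Gamma_j\cap U)^*)$ over smooth open $G$-sets $U\subset\subset M^{prin}$ with $U^*$ a compact handlebody-like domain whose boundary is transversal to $\spt(\|V\|)$. Fix such a $U$ and exhaust. Inside $U$, $\pi$ is a Riemannian submersion. By \eqref{Eq: area in orbit space} and Lemma \ref{Lem: minimal in M^prin}, areas of $G$-hypersurfaces equal $\wti g_{_{M^*}}$-areas of their quotients, and $G$-isotopies in $U$ correspond to isotopies of $U^*$ by \cite{schwarz1980lifting}. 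Thus the whole problem in $U$ becomes a classical $3$-dimensional Simon--Smith problem in $(U^*,\wti g_{_{M^*}})$. The min-max sequence $\Sigma^i_{x_i}$ then becomes a sequence of surfaces $(\Sigma^i_{x_i})^*$ that are orientable by hypothesis.

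\textbf{Step 2: extract the almost minimizing structure.} I will use the properties already produced in the proof of Theorem \ref{Thm: G-min-max}: the min-max sequence is $G$-almost minimizing in small annuli, and it has the $G$-replacement chain property (Definition \ref{Def: G-replacement chain}). Restricted to orbits in $U$, these give constrained $G$-isotopy minimizers (Proposition \ref{Prop: constrained minimizer}). I will then work in $U^*$ and follow \cite{delellis2010genus}*{\S 2--\S 8}, or the replacement-chain version in \cite{wangzc2023four-sphere}. For $i$ large, in a covering of $\overline U$ by small balls, I perform $\gamma$-reductions (Section \ref{Sec: gamma reduction}) on $\Sigma^i_{x}$ for $x$ near $x_i$. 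These are neck-pinching surgeries, which do not increase the genus of orientable quotients. After them the surfaces become strongly irreducible, and they converge locally, in the sense of \eqref{Eq: MSY remark3.27 - 3.29}, to stacked parallel copies of $\Gamma_j^*$. The $\liminf_{x\to x_i}$ in \eqref{Eq: G-min-max - genus} is handled as in Definition \ref{Def: G-sweepout}(5). Since $x_i\in X\setminus Z$ may lie in $T$, I replace $\Sigma^i_{x_i}$ by a nearby smooth $\Sigma^i_x$ that has the same varifold limit and the same almost minimizing property.

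\textbf{Step 3: the weighted genus count.} This follows \cite{ketover2019genus} and \cite{meeks82exotic}*{Remark 3.27}. Near each $\Gamma_j^*\cap U^*$, the surgered surfaces are graphical over $\Gamma_j^*$ outside small disks, each with multiplicity $m_j$. Handles of $\Gamma_j^*$ are then carried $m_j$ times, which gives the term $m_j\mk g(\Gamma_j)$. When $\Gamma_j^*$ is one-sided, the orientable approximants are boundaries of tubular neighbourhoods, so $m_j$ is even. A double cover $\wti\Gamma_j^*$ has genus $\mk g(\Gamma_j)-1$ (non-orientable genus convention), and it appears $m_j/2$ times; this gives $\frac12 m_j(\mk g(\Gamma_j)-1)$. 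Summing the contributions and using that surgery does not increase genus yields the bound for $\genus((\Gamma_j\cap U)^*)$. Exhausting $U$ then finishes the proof.

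\textbf{Main obstacle.} The main obstacle is Step 2: making the lifting argument of \cite{delellis2010genus}*{\S 8}, which shows that loops on the limit lift to the approximants, work uniformly when $U$ approaches $\bd M^*$. Even so, I plan only to apply it on compact $U\subset\subset M^{prin}$, where the weighted metric is smooth and nondegenerate, so that the classical argument applies verbatim. The exhaustion then requires only the monotonicity of genus under enlarging $U$.
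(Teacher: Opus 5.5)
Your proposal follows the paper's proof: take a smooth open $G$-set $U\subset\subset M^{prin}$ transversal to $\cup_j\Gamma_j$ and the min-max sequence, use the Riemannian submersion, \eqref{Eq: area in orbit space} and Lemma \ref{Lem: minimal in M^prin} to turn every equivariant object into a classical one in $(U^*,\wti g_{_{M^*}})$, run the argument of \cite{ketover2019genus} there, and exhaust. One caution: the stacked-sheet convergence in your Step 2 does not follow from $\gamma$-reduction alone, since Theorem \ref{Thm: MSY thm2 gamma reduction} requires $E(\Sigma)\leq\gamma/4$, which a min-max sequence need not satisfy; that step is exactly what Ketover's argument supplies, so you should close it by citing \cite{ketover2019genus}, as the paper does.
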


Since the boundary of $M^*$ may have a degenerate metric, we do not have any control over the number of boundary components of $\Gamma_j^*$.  
We next prove Theorem \ref{Thm: G-min-max} and \ref{Thm: G-min-max - topological control}.

\subsection{Pull-tight procedure}

In this section, we will find a nice minimizing sequence in $\Pi$ such that every $G$-varifold in its critical set is stationary in $M$ by a pull-tight procedure. 
The construction is similar to \cite{colding2002minmax}*{\S 4} (see also \cite{wang2023free}*{Proposition 4.15} for an equivariant version). 

\begin{proposition}[Pull-tight]\label{Prop: pull-tight}
	Let $\Pi$ be a $G$-equivariant $(X,Z)$-homotopy class relative to $\mf \Sigma^0|_Z$. 
	Given a minimizing sequence $\{\mf {\wti \Sigma}^i\}\subset \Pi$,  there exists another minimizing sequence $\{\mf \Sigma^i\}_{i\in\mb N}\subset \Pi$ so that $\mf C(\{\mf \Sigma^i\}) \subset \mf C(\{\mf {\wti\Sigma}^i\})$, and every $V\in \mf C(\{\mf \Sigma^i\})$ is either stationary in $M$ or belongs to $B:=\{|\Sigma_x^0|: x\in Z\}$. 
\end{proposition}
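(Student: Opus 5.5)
The plan is to adapt the Colding--De Lellis pull-tight construction \cite{colding2002minmax}*{\S 4} to the equivariant setting, as in \cite{wang2023free}*{Proposition 4.15}. Fix $L:=\sup_i \mf L(\mf{\wti\Sigma}^i)+1$ and consider the compact metric space $A:=\{V\in\mc V^G_n(M): \|V\|(M)\leq L\}$ with the $\mf F$-metric. Let $A_\infty:=\{V\in A: V \mbox{ stationary}\}\cup B$; since $Z$ is a compact subcomplex and $x\mapsto|\Sigma^0_x|$ is varifold-continuous, $B$ is compact, so $A_\infty$ is closed. For $j\in\mb N$, set $A_j:=\{V\in A: 2^{-j}\leq \mf F(V,A_\infty)\leq 2^{-j+1}\}$.

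\textbf{Step 1.} For every $V\in A_j$ I choose a $G$-invariant vector field with a definite negative first variation. By Lemma \ref{Lem: first variation and G-variation}, $V$ non-stationary gives some $X$ with $\delta V(X)<0$, and then $\delta V(X_G)=\delta V(X)<0$ with $X_G\in\mk X^G(M)$ and $\|X_G\|_{C^1}\leq\|X\|_{C^1}$. Using compactness of $A_j$ and continuity of $V\mapsto\delta V(X)$, I find $c_j>0$ and finitely many $G$-fields $X_{j,1},\dots,X_{j,N_j}$ with $\|X_{j,l}\|_{C^1}\leq 1$ such that each $V\in A_j$ satisfies $\delta V(X_{j,l})\leq -c_j$ for some $l$. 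Then, exactly as in \cite{colding2002minmax}, I use a partition of unity on $A$ subordinate to balls in $\mf F$ to build a continuous map $V\mapsto X_V\in\mk X^G(M)$ such that $X_V=0$ on $A_\infty$, $\|X_V\|_{C^1}$ is bounded uniformly on $\{\mf F(V,A_\infty)\geq 2^{-j}\}$, and $\delta V(X_V)\leq -c(\mf F(V,A_\infty))<0$ off $A_\infty$. Here $c(\cdot)$ is continuous and increasing with $c(0)=0$. Convex combinations of $G$-fields are $G$-fields, so $G$-invariance is preserved.

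\textbf{Step 2.} Let $\Phi_V(t,\cdot)$ be the flow of $X_V$ and choose a continuous time function $T(V)>0$ off $A_\infty$, with $T=0$ on $A_\infty$, such that $\|(\Phi_V(t))_\#V\|(M)$ decreases by at least $\tfrac12 T(V)c(\mf F(V,A_\infty))$ by time $T(V)$. Since each $X_V$ is $G$-invariant, each $\Phi_V(t)$ commutes with $G$. For a sweepout $\mf{\wti\Sigma}^i$, define $\psi^i(x,\cdot):=\Phi_{|\wti\Sigma^i_x|}(T(|\wti\Sigma^i_x|),\cdot)$. This is an element of $\mk{Is}^G(M)$, depends continuously on $x$, and is the identity for $x\in Z$ because $|\wti\Sigma^i_x|\in B\subset A_\infty$ there. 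Smoothing in $x$ if necessary, $\Sigma^i_x:=\psi^i(x,\wti\Sigma^i_x)$ lies in $\Pi$ by Definition \ref{Def: G-sweepout homotopy class}. Since $\mc H^n(\Sigma^i_x)\leq\mc H^n(\wti\Sigma^i_x)$, the sequence $\{\mf\Sigma^i\}$ is still minimizing.

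\textbf{Step 3.} I verify the conclusions on critical sets. If $|\Sigma^{i_j}_{x_j}|\to V$ with $\mc H^n\to\mf L(\Pi)$, then $\mc H^n(\wti\Sigma^{i_j}_{x_j})\to\mf L(\Pi)$ as well, because areas only decrease and both sup-values tend to $\mf L(\Pi)$. Passing to a subsequence, $|\wti\Sigma^{i_j}_{x_j}|\to W\in\mf C(\{\mf{\wti\Sigma}^i\})$. If $W\notin A_\infty$, the uniform area drop near $W$ contradicts the equality of the area limits. Hence $W\in A_\infty$, so the isotopies converge to the identity, giving $V=W\in A_\infty$; this also yields the inclusion $\mf C(\{\mf\Sigma^i\})\subset\mf C(\{\mf{\wti\Sigma}^i\})$.

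The main obstacle is the continuity and uniformity bookkeeping in Steps 1--2, namely the continuous choice of $X_V$ and $T(V)$ with quantitative area decrease. Equivariance itself is cheap thanks to the averaging in Lemma \ref{Lem: first variation and G-variation}. One subtle point is that the singular set $P$ of the sweepout is moved by $\psi^i$, but its number of orbits is preserved, which is consistent with the remark after Definition \ref{Def: G-sweepout}.
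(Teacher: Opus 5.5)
Your proposal is correct and follows essentially the same route as the paper: an equivariant Colding--De Lellis pull-tight. There, Lemma \ref{Lem: first variation and G-variation} reduces everything to $G$-invariant fields, a continuous field map and time function vanish on $A^L_0\cup B$, and an equivariant smoothing in $x$ (the paper takes $\mc Y_i$ with $\mc Y_i|_Z=0$ and $\|\mc X_i-\mc Y_i\|_{C^1}\leq 1/i$) puts the deformed family in $\Pi$. The only bookkeeping correction is that after this smoothing the area inequality holds up to an error $C'/i$, not exactly as you state in Step 2; since that error tends to zero, you still get a minimizing sequence and your Step 3 argument is unaffected.
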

\begin{proof}
	Set $L:= \mf L(\Pi)+1$, and define the following compact subsets in $\mc V^G_n(M)$:
    \[A^L:=\{V\in\mc V^G_n(M): \|V\|(M)\leq L\}, \quad \mbox{and}\quad A^L_0:=\{V\in A^L: V \mbox{ is stationary in $M$}\}.\]
	Consider the concentric annuli around $A_0:=A^L_0\cup B$ for $j\in\{2,3,\dots\}$,
	\[ A_1:=\{V\in A^L: \mf F(V,A_0)\geq \frac{1}{2}\},\quad A_j:=\{V\in A^L: \frac{1}{2^j}\leq \mf F(V,A_0)\leq \frac{1}{2^{j-1}} \}.  \]
	By Lemma \ref{Lem: first variation and G-variation}, we only need to consider $Y\in \mk X^G(M)$ in the first variation of $V\in \mc V^G_n(M)$. 
	Therefore, using the above modifications, we can follow the constructions in \cite{colding2002minmax}*{Proposition 4.1} to get a continuous map $\mc X: A^L\to \mk X^G(M)$ in the $C^1$-topology, and continuous functions $T: A^L\to [0,\infty)$, $\mc L: (0,\infty )\to (0,\infty)$ with $\lim_{t\to 0}\mc L(t)=0$, so that  
	\begin{itemize}
		\item $\mc X(V)=0$ and $T(V)=0$ if $V\in A_0=A^L_0\cup B$;
		\item $\delta V(\mc X(V))<0$ and $T(V)>0$ if $V\in A^L\setminus A_0$;
		\item for any $V\in A^L\setminus A_0$, $\| (f_{T(V)}^{\mc X(V)})_\# V  \|(M) \leq  \| V\|(M) - \mc L(\mf F(V,A_0)) $, 
	\end{itemize}
	where $\{f_{t}^{\mc X(V)}\}\in \mk {Is}^G(M)$ is the flow generated by $\mc X(V)$. 
	
	Without loss of generality, we assume $\mf L(\{\mf {\wti\Sigma}^i\}) \leq L$ for all $i\in\mb N$. 
	Then each $\mf {\wti\Sigma}^i$ is associated with a family of $G$-invariant vector fields: 
	\[ \mc X_i:X\to \mk X^G(M) , \qquad \mc X_i(x):= \mc X(|\wti\Sigma^i_x|) ,\]
	which is continuous in the $C^1$-topology, and $\mc X_i|_Z=0$. 
	Hence, $\widehat\Sigma^i_x:=f_{T(|\wti\Sigma^i_x|)}^{\mc X(|\wti\Sigma^i_x|)}(\wti\Sigma^i_x)$ 
	satisfies $\widehat\Sigma^i_x=\wti\Sigma^i_x$ for all $x\in Z$, and $\mc H^n(\widehat\Sigma^i_x ) - \mc H^n (\wti\Sigma^i_x ) \leq -\mc L(\mf F(|\wti\Sigma^i_x|, A_0)) $ for all $x\in X$. 
	However, since $\mc X_i$ is only continuous in the $C^1$-topology, we cannot directly say $\{\widehat\Sigma^i_x \}_{x\in X}\in \Pi$. 
	
	Nevertheless, for each $i\in\mb N$, we can smooth out $\mc X_i$ equivariantly to some $\mc Y_i: X\to \mk X^G(M)$ that is continuous in the smooth topology so that $\mc Y_i|_Z=0$ and $\|\mc X_i-\mc Y_i\|_{C^1}\leq 1/i$. 
	Note that 
	\[ \left | \delta V(\mc X) - \delta V(\mc Y)   \right| \leq C\|V\|(M)\cdot \|\mc X-\mc Y\|_{C^1} \]
	for some constant $C>0$ independent of $V\in A^L$. 
	Hence, $\psi^i_x(t,\cdot ):=\{ f^{\mc Y_i(x)}_{T(|\wti\Sigma^i_x|)t} \}_{t\in [0,1]} \in \mk {Is}^G(M)$ is a smooth $X$-family of $G$-isotopies with $\psi^i_x(t,\cdot)\equiv id$ for all $t\in [0,1]$ and $x\in Z$. 
	Let $\mf \Sigma^i=\{ \Sigma^i_x:= \psi^i_x(1,\wti\Sigma^i_x ) : x\in X\}$ for each $i\in\mb N$. 
	Then $\mf \Sigma^i\in \Pi$ and 
	\[ \mc H^n(\Sigma^i_x ) -  \mc H^n( \wti \Sigma^i_x) \leq -\mc L(\mf F(|\wti \Sigma^i_x|,A_0)) + \frac{C'}{i},  \]
	for some universal constant $C'>0$. 
	The desired result then follows easily. 
\end{proof}

\subsection{Existence of $G$-almost minimizing varifolds}

In this subsection, we introduce the $G$-almost minimizing varifolds and show the existence result. 
To begin with, the following definition is adapted from \cite{colding2002minmax}*{Definition 3.2}. 

\begin{definition}\label{Def: G-almost minimizing}
	Given $\epsilon,\delta>0$, an open $G$-set $U\subset M$, and a $G$-varifold $V\in\mc V^G_n(M)$, we say $V$ is {\em $(G,\epsilon,\delta)$-almost minimizing in $U$} if there is no $G$-isotopy $\{\varphi_t\}_{t\in[0,1]}\in\mk {Is}^G(U)$ satisfying
	\begin{itemize}
		\item[(i)] $\|(\varphi_t)_\#V\|(M) \leq \|V\|(M) +\delta$ for all $t\in [0,1]$, and 
		\item[(ii)] $\|(\varphi_1)_\#V\|(M) \leq \|V\|(M) -\epsilon$. 
	\end{itemize}
	Additionally, given a sequence of closed $G$-hypersurfaces $\{\Sigma_j\}_{j\in\mb N}\subset \mc {LB}^G$, we say $V\in \mc V^G_n(M)$ is {\em $G$-almost minimizing in $U$} (w.r.t. $\{\Sigma_j\}_{j\in\mb N}$) if there exist $\epsilon_j\to 0$ and $\delta_j\to 0$ so that $\lim_{j\to\infty} \mf F(|\Sigma_j|,V)=0$ and $|\Sigma_j|$ is $(G,\epsilon_j,\delta_j)$-almost minimizing in $U$. 
\end{definition}

We mention that $\Sigma_j$ is assumed to be locally $G$-boundary-type in the above definition, which is required in the regularity theory. 

\begin{lemma}\label{Lem: amv implies stationary}
	If $V\in\mc V^G_n(M)$ is $G$-almost minimizing in $U$, then $V$ is stationary and $G$-stable in $U$.
\end{lemma}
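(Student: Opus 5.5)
The plan is to adapt the classical argument for almost minimizing varifolds (cf. \cite{colding2002minmax}*{Lemma 3.5} and \cite{pitts2014existence}), using $G$-invariant vector fields throughout. Lemma \ref{Lem: first variation and G-variation} reduces stationarity to $G$-stationarity, so it suffices to test against $X\in\mk X^G(U)$, whose flows $\{\varphi_t\}$ lie in $\mk {Is}^G(U)$.

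\textbf{Stationarity.} We argue by contradiction. Suppose $\delta V(X)<0$ for some $X\in\mk X^G(U)$; replacing $X$ by $-X$ if necessary covers the case $\delta V(X)\neq0$. By continuity of the first variation in the varifold topology, there are $c>0$, $\tau>0$ and a neighborhood $\mc N$ of $V$ in the $\mf F$-metric such that $\frac{d}{dt}\|(\varphi_t)_\#W\|(M)\leq -c$ for all $W\in\mc N$ and $t\in[0,\tau]$. Take the sequence $\Sigma_j$ and the constants $\epsilon_j,\delta_j\to0$ from Definition \ref{Def: G-almost minimizing}, so that $|\Sigma_j|\to V$. For $j$ large, the flow $\varphi_t$ keeps $(\varphi_t)_\#|\Sigma_j|$ inside $\mc N$ for $t\leq\tau$. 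Along this flow the mass is nonincreasing, so condition (i) holds with any $\delta_j$, and the mass drops by at least $c\tau>\epsilon_j$, so condition (ii) holds. This contradicts the $(G,\epsilon_j,\delta_j)$-almost minimizing property. Hence $\delta V(X)=0$ for every $X\in\mk X^G(U)$, and $V$ is stationary in $U$.

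\textbf{$G$-stability.} We again argue by contradiction, this time with the second variation. Suppose some $X\in\mk X^G(U)$ satisfies $\frac{d^2}{dt^2}\big|_{t=0}\|(\varphi_t)_\#V\|(M)<0$. Since $\delta V(X)=0$, for small $t_0$ we get $\|(\varphi_{t})_\#V\|(M)\leq\|V\|(M)-c t^2$ for $t\in[0,t_0]$. By continuity of mass along the smooth flow, the same bound with $c/2$ holds for $|\Sigma_j|$ when $j$ is large. Concretely, we need $\sup_{t\leq t_0}\big|\|(\varphi_t)_\#|\Sigma_j|\|(M)-\|(\varphi_t)_\#V\|(M)\big|\to0$, which follows from weak convergence because $\varphi_t$ is smooth and uniform in $t$. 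This gives an isotopy violating almost minimality once $\epsilon_j<ct_0^2/2$ and $\delta_j>0$.

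For $V$ a general varifold, the second variation is understood as $\frac{d^2}{dt^2}$ of the mass along the flow. $G$-stability of $V$ in $U$ is meant in exactly this sense, namely the nonnegativity of this quantity for all $G$-invariant fields.

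\textbf{Main obstacle.} The key point is the uniformity in $j$ of the mass estimates along the flow, and I expect this to be the delicate step. I will handle it by expanding $\|(\varphi_t)_\#W\|(M)=\int J_t\,dW$ with a smooth integrand on the Grassmann bundle, and then using weak convergence together with Taylor expansion in $t$ with uniformly bounded remainder.
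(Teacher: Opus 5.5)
Your stationarity argument is fine, and it is essentially the paper's route: flows of fields in $\mk X^G(U)$ lie in $\mk{Is}^G(U)$, and Lemma \ref{Lem: first variation and G-variation} then upgrades $G$-stationarity to stationarity.

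\textbf{The gap is in the $G$-stability step.} You claim that $\|(\varphi_t)_\#|\Sigma_j|\|(M)\le\|\Sigma_j\|(M)-\frac c2t^2$ holds on all of $[0,t_0]$, and this does not follow. Uniform closeness of the mass profiles only gives $\|(\varphi_t)_\#|\Sigma_j|\|(M)\le\|\Sigma_j\|(M)-ct^2+2\eta_j$ with $\eta_j\to0$. Condition (i) of Definition \ref{Def: G-almost minimizing} would then need $2\eta_j\le\delta_j$. Nothing relates the rate of $\eta_j$ to that of $\delta_j$, and you cannot enlarge $\delta_j$.

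The Taylor expansion from your last paragraph does not fix this by itself. Write $f_j(t):=\|(\varphi_t)_\#|\Sigma_j|\|(M)-\|\Sigma_j\|(M)$. You get $f_j(t)\le t\,\delta|\Sigma_j|(X)-\frac c2t^2$. But $\delta|\Sigma_j|(X)\to\delta V(X)=0$ may be positive for every $j$. In that case $f_j$ first rises to about $(\delta|\Sigma_j|(X))^2/(2c)$, which again need not be $\le\delta_j$.

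\textbf{The missing idea is to choose the direction of the flow according to the sign of $\delta|\Sigma_j|(X)$.}
\begin{itemize}
\item Run the flow for $t\in[-t_0,t_0]$. Since $\|(\varphi_t)_\#W\|(M)=\int J^S\varphi_t(x)\,dW(x,S)$ has a smooth integrand on the Grassmann bundle, weak convergence gives $f_j\to f_V$ in $C^2([-t_0,t_0])$.
\item Choose $t_0$ so that $f_V''\le-2c$ on $[-t_0,t_0]$. Then $f_j''\le-c$ there for $j$ large.
\item If $\delta|\Sigma_j|(X)\le0$, flow along $X$. Otherwise flow along $-X$, i.e.\ use $\varphi_{-t}$, which is still in $\mk{Is}^G(U)$.
\item By concavity, the mass is nonincreasing along the chosen flow, so (i) holds for every $\delta_j>0$.
\item At time $t_0$ the mass has dropped by at least $\frac c2t_0^2$, which exceeds $\epsilon_j$ for $j$ large, so (ii) holds. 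This gives the desired contradiction.
\end{itemize}
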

\begin{proof}
	Using $G$-isotopies, the proof in \cite{li2015general}*{Proposition 3.8} and \cite{wangzc2023four-sphere}*{Lemma 3.3} can be easily adapted to our $G$-equivariant setting. 
	Then by Lemma \ref{Lem: first variation and G-variation}, $V$ is also stationary in $U$. 
\end{proof}

We also adapt the following definition from \cite{colding2018heegaard}*{Appendix}. 
\begin{definition}\label{Def: admissible G-annuli}
	Given $L\in \mb N$ and $p\in M$, a collection of $G$-annuli centered at $G\cdot p$
	\[ \mc A:=\{A_i:=A_{r_i,s_i}(G\cdot p):  1\leq i \leq L, 0<r_i<s_i\} \]
	is said to be {\em $L$-admissible} if $2s_{i}< r_{i+1}$ for all $i=1,\dots, L-1$. 
	
	Additionally, $V\in \mc V^G_n(M)$ is said to be {\em $G$-almost minimizing in $\mc A$ (w.r.t. $\{\Sigma_j\}\subset \mc {LB}^G$)} if there exist $\epsilon_j\to 0$ and $\delta_j\to 0$ so that 
	\begin{itemize}
		\item $\lim_{j\to\infty} \mf F(V,|\Sigma_j|)=0$;
		\item for each $j\in\mb N$, $|\Sigma_j|$ is $(G,\epsilon_j,\delta_j)$-almost minimizing in at least one $A_{i_j}\in \mc A$. 
	\end{itemize}
\end{definition}

\begin{lemma}\label{Lem: existence of amv in L-admissible annuli}
	Let $\{\mf \Sigma^i\}_{i\in\mb N}=\{\{\Sigma^i_x\}_{x\in X}\}_{i\in\mb N}$ be the pull-tight minimizing sequence in Proposition \ref{Prop: pull-tight}, and suppose $\Pi$ satisfies \eqref{Eq: G-min-max - nontrivial width}. 
	Then there exist an integer $L=L(m)>0$ (depending on the dimension of $I(m,k)$ into which the parameter cubical complex $X$ is embedded), and a min-max sequence $\{\Sigma^{i_j}_{x_j}\}_{j\in\mb N}$ converging to a stationary $G$-varifold $V\in \mf C(\{\mf \Sigma^i\})$ so that $V$ is $G$-almost minimizing in every $L$-admissible family of $G$-annuli w.r.t. $\{\Sigma^{i_j}_{x_j}\}$.
\end{lemma}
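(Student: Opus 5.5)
We follow the combinatorial argument of Pitts (\cite{pitts2014existence}*{Theorem 4.10}) in the Simon--Smith form of \cite{colding2002minmax}*{Proposition 5.1} and \cite{colding2018heegaard}*{Appendix}, replacing balls and annuli by $G$-tubes $B_r(G\cdot p)$ and $G$-annuli $A_{r,s}(G\cdot p)$, and isotopies by $G$-isotopies. The proof is by contradiction.

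\textbf{Step 1: extract the annuli.} We take $L=L(m)$ to be the constant from the Almgren--Pitts combinatorics, e.g.\ $L=3^{m}$ as in \cite{colding2018heegaard}. Suppose the statement fails for the pull-tight sequence of Proposition \ref{Prop: pull-tight}. Then for every $V\in\mf C(\{\mf\Sigma^i\})$ that is not in $B$, there is some $G\cdot p_V$ and an $L$-admissible family $\mc A_V$ of $G$-annuli centered at $G\cdot p_V$ with the following property. There are $\epsilon_V,\delta_V>0$ and an $\mf F$-neighborhood $\mc U_V$ of $V$ such that every $|\Sigma^i_x|\in\mc U_V$, for $i$ large, fails to be $(G,\epsilon_V,\delta_V)$-almost minimizing in each $A\in\mc A_V$. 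So in each such annulus there is a $G$-isotopy decreasing area by at least $\epsilon_V$ while increasing it by at most $\delta_V$ along the way.

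\textbf{Step 2: handle the boundary part and compactness.} By \eqref{Eq: G-min-max - nontrivial width}, elements of $B$ have mass strictly below $\mf L(\Pi)$. Hence, away from a neighborhood of $Z$, min-max slices are $\mf F$-far from $B$. The set $\mf C\setminus\mc U_B$ is compact, so finitely many $\mc U_{V_1},\dots,\mc U_{V_N}$ cover it, and we get uniform constants $\epsilon,\delta$.

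\textbf{Step 3: deform the sweepout.} We refine the cubical structure of $X$ to some $I(m,k)$ so that on each cell all slices lie in a common $\mc U_{V_l}$ or have mass $<\mf L(\Pi)-\epsilon/2$. The admissibility condition $2s_i<r_{i+1}$ lets us choose, on each cell, one annulus from each family for the at most $3^m$ cells meeting at a vertex, so that the chosen $G$-annuli for adjacent cells are pairwise disjoint. This is the step that fixes $L(m)$. The deforming $G$-isotopies are then interpolated over $X$ with cutoffs in the parameter. On overlaps they act in disjoint $G$-sets and therefore commute. The vector fields are averaged via $X_G$ (Lemma \ref{Lem: first variation and G-variation}) to keep everything $G$-equivariant and smooth in $x$, and they are set to the identity near $Z$. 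The new family lies in $\Pi$ (Definition \ref{Def: G-sweepout homotopy class}) and satisfies $\sup_x\mc H^n<\mf L(\Pi)$, which is a contradiction. Finally, $V$ is stationary by Proposition \ref{Prop: pull-tight}, since $V\notin B$ by the mass bound.

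\textbf{Main obstacle.} The hardest part is Step 3: building the $G$-isotopies continuously in the parameter, uniformly, while keeping the mass increase $\le\delta$ during the deformation. The singular set $P$ of Definition \ref{Def: G-sweepout} is harmless, because isotopies act ambiently. The tubes have no uniform injectivity radius, but that does not matter, since only finitely many centers $G\cdot p_{V_l}$ occur. We would follow \cite{colding2002minmax}*{\S5} verbatim on this point.
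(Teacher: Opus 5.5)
There is a genuine gap in your Step 1: the contradiction hypothesis you write down is not the negation of the lemma. The lemma asks for \emph{one} min-max sequence $\{\Sigma^{i_j}_{x_j}\}$ such that, for \emph{every} $L$-admissible family, each $|\Sigma^{i_j}_{x_j}|$ is $(G,\epsilon_j,\delta_j)$-almost minimizing in at least one annulus of that family. If this fails, you only learn that each candidate sequence has \emph{some} bad family, and that family depends on the sequence. In the uniform form that the paper negates, you learn that each large-area slice $\Sigma^i_x$ has a bad family $\mc A_{i,x}$. Nothing gives you a family $\mc A_V$ attached to a critical varifold $V$ that is bad for \emph{all} slices in an $\mf F$-neighbourhood of $V$. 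Families of annuli have no compactness, since the radii can degenerate to $0$, so you cannot trade slice-dependent families for $V$-dependent ones. Consequently, even if Steps 2--3 went through, your contradiction would only show that some $V\in\mf C$ admits, for each family separately, approximating slices that are almost minimizing in one of its annuli. That is a Pitts-type statement and is strictly weaker than the lemma. It is not enough for Theorem \ref{Thm: existence amv in annuli}, whose proof via \cite{colding2018heegaard}*{Lemma A.3} uses a single fixed sequence for all annuli; the genus bound is also stated for that single sequence.

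The paper instead proves the uniform statement. For every $\epsilon>0$ there are $\delta>0$, $i$ large, and $x$ with $\mc H^n(\Sigma^i_x)\ge\mf L(\Pi)-\epsilon$ such that $|\Sigma^i_x|$ is $(G,\epsilon,\delta)$-almost minimizing in at least one annulus of every $L$-admissible family. Taking $\epsilon=1/j$ gives the sequence. Its limit has mass $\mf L(\Pi)>\sup_{x\in Z}\mc H^n(\Sigma^0_x)$, so it is not in $B$ and is stationary by Proposition \ref{Prop: pull-tight}.

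The negation of the uniform statement produces $\epsilon_0$ and, for every $\delta$, $i$ and $x\in Y_i=\{x:\mc H^n(\Sigma^i_x)\ge\mf L(\Pi)-\epsilon_0\}$, a family $\mc A_{i,x}$. By continuity, the corresponding isotopies work on an open set $U_{i,x}\ni x$. The finite cover is then taken in the compact set $Y_i\subset X$ for each fixed $i$, not in varifold space.

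The order of constants also matters. The same $\epsilon_0$ is both the threshold defining $Y_i$ and the area drop, and $\delta$ is chosen afterwards with $2(d(m)+1)\delta\ll\epsilon_0$. In your version $\delta_V$ is frozen together with $\epsilon_V$. Your claim in Step 3 that slices outside $\bigcup_l\mc U_{V_l}$ have mass below $\mf L(\Pi)-\epsilon/2$ is also unjustified. Compactness only forces slices whose mass tends to $\mf L(\Pi)$ to approach $\mf C$. So you get some threshold $\eta$ depending on the cover, and you would need $(d+1)\delta<\eta$, which a fixed $\delta$ cannot guarantee.

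Two minor points. First, a single thick annulus at one centre can meet every annulus of an admissible family at another centre, so the disjoint selection has to be nested. This is why Pitts' count, used in the paper, is $L=(3^m)^{3^m}$, and $3^m$ is not justified. Second, no averaging of vector fields is needed, and averaging would not preserve the area bounds: failure of $(G,\epsilon,\delta)$-almost minimality already provides $G$-isotopies.
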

\begin{proof}
	The proof is essentially the same as in \cite{colding2018heegaard}*{Lemma A.1}, and we only introduce some necessary details. 
	Let $L:=(3^m)^{3^m}$. 
	We only need to show that for any $\epsilon>0$, there exist $\delta>0$, $i>1$, and $x\in X$ with $\mc H^n(\Sigma^i_x)\geq \mf L(\Pi)-\epsilon$, so that for any $L$-admissible set of $G$-annuli $\mc A$, $|\Sigma^i_x|$ is $(G,\epsilon,\delta)$-almost minimizing in at least one $G$-annulus in $\mc A$. 
	Suppose it fails. Then we have $\epsilon_0>0$ so that for any $\delta>0$, $i>1$, and $x\in Y_i:=\{x\in X: \mc H^n(\Sigma^i_x)\geq \mf L(\Pi)-\epsilon_0\}$, there exists an $L$-admissible set of $G$-annuli $\mc A_{i,x}$ so that $\Sigma^i_x$ is not $(G,\epsilon_0,\delta)$-almost minimizing in any $A^l_{i,x}\in\mc A_{i,x}$, $1\leq l\leq L$. 
	In the following, we fix $\delta>0$ small enough, and fix an $i>1$. 
	
	By Definition \ref{Def: G-almost minimizing} and the continuity of $x\mapsto |\Sigma^i_x|$, we can associate any $x\in Y_i$ with an open set $U_{i,x}\subset X$ and a $G$-isotopy $\{\psi^l_{i,x}(t,\cdot)\}_{t\in [0,1]}\in \mk {Is}^G(A^l_{i,x})$ for each $A^l_{i,x}\in\mc A_{i,x}$ so that 
	\begin{itemize}
		\item[(1)] $\mc H^n(\psi^l_{i,x}(t, \Sigma^i_y)) \leq \mc H^n(\Sigma^i_y) + 2\delta$ for all $t\in [0,1]$ and $y\in U_{i,x}$;
		\item[(2)] $\mc H^n(\psi^l_{i,x}(1, \Sigma^i_y)) \leq \mc H^n(\Sigma^i_y) - \epsilon_0/2$ for all $y\in U_{i,x}$. 
	\end{itemize}
	Thus, $\{U_{i,x}\}_{x\in Y_i}$ is an open cover of the compact set $Y_i$. 
	By adapting the proof in \cite{colding2018heegaard}*{Lemma A.1} (\cite{pitts2014existence}*{Chapter 4}) to $G$-annuli, we obtain a finite open cover $\{\mc U_i^j\}$ of $Y_i$ so that 
	\begin{itemize}
		\item each $\mc U_i^j$ is contained in some $U_{i,x}$ and is associated with a $G$-annulus $A_i^j:=A_{i,x}^{l_j}\in \mc A_{i,x}$, $1\leq l_j\leq L$; hence, (1) and (2) are satisfied for $l:=l_j$, $\psi_i^j:=\psi_{i,x}^{l_j}$, and $y\in \mc U_i^j$;
		\item each $\mc U_i^j$ can intersect at most $d(m)\in\mb N$ other elements in $\{\mc U_i^j\}$, and $A_i^{j_1}\cap A_i^{j_2}=\emptyset$ whenever $\mc U_i^{j_1}\cap\mc U_i^{j_2}\neq \emptyset$. 
	\end{itemize}
	Next, we can take smooth functions $\phi_i^j:X\to [0,1]$ with $\spt(\phi_i^j)\subset \mc U_i^j$ for each $j$ so that for any $x\in Y_i$, there exists $\mc U_i^j$ with $\phi_i^j(x)=1$. 
	Now, for any $x\in X$, we define a $G$-equivariant diffeomorphism $\Phi_{i,x}$ by 
    $\Phi_{i,x}(p):=\psi_{i}^j(\phi_i^j(x), p)$ if $p\in A_i^j$ for some $j$ with $x\in \mc U_i^j$, otherwise $\Phi_{i,x}(p):=p$, which is well-defined. 
	By \eqref{Eq: G-min-max - nontrivial width}, we can further make $\mc U_i^j\cap Z=\emptyset$. 
	Therefore, we obtain $\{\wti\Sigma^i_x:=\Phi_{i,x}(\Sigma^i_x)\}_{x\in X}\in \Pi$. 
	Note that for any $x\in Y_i$, $\{\phi_i^j(x)\}_{j}$ contains at most $d(m)+1$ non-zero elements, and at least one of them is $1$. 
	Therefore, for $x\in Y_i$,
	\[ \mc H^n(\wti\Sigma^i_x) \leq \mc H^n(\Sigma^i_x) - \frac{\epsilon_0}{2} + d(m)\cdot 2\delta, \]
	and for $x\notin Y_i$, $\mc H^n(\wti\Sigma^i_x) \leq \mc H^n(\Sigma^i_x) + (d(m)+1)\cdot 2\delta< \mf L(\Pi) - \epsilon_0+(d(m)+1)\cdot 2\delta$. 
	For $\delta>0$ small enough and $i>1$ large enough, we obtain $\sup_{x\in X}\mc H^n(\wti\Sigma^i_x ) < \mf L(\Pi)$, a contradiction. 
\end{proof}

By \cite{colding2018heegaard}*{Lemma A.3} (\cite{wangzc2023four-sphere}*{Appendix D}), the above lemma gives us the following theorem. 

\begin{theorem}[Existence of $G$-almost minimizing $G$-varifolds]\label{Thm: existence amv in annuli}
	Suppose $\Pi$ is given in Theorem \ref{Thm: G-min-max} satisfying \eqref{Eq: G-min-max - nontrivial width}. 
	Let $\{\mf \Sigma^i\}_{i\in\mb N}\subset\Pi$ be the pull-tight minimizing sequence in Proposition \ref{Prop: pull-tight}. 
	Then there is a min-max sequence $\{\Sigma^{i_j}_{x_j}\}_{j\in\mb N}$ converging to a stationary $G$-varifold $V\in \mf C(\{\mf \Sigma^i\})$ so that $V$ is {\em $G$-almost minimizing in small $G$-annuli w.r.t. $\{\Sigma^{i_j}_{x_j}\}$} in the following sense: for any $p\in M$, there exists $r_{am}(G\cdot p)>0$ so that $V$ is $G$-almost minimizing w.r.t. $\{\Sigma^{i_j}_{x_j}\}$ in $A_{s,t}(G\cdot p)$ for all $0<s<t<r_{am}(G\cdot p)$.
\end{theorem}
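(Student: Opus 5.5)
Theorem \ref{Thm: existence amv in annuli} should follow from Lemma \ref{Lem: existence of amv in L-admissible annuli} by a purely combinatorial argument, in the spirit of \cite{colding2018heegaard}*{Lemma A.3}, carried out with $G$-annuli centred at orbits in place of round annuli. Let $\{\Sigma^{i_j}_{x_j}\}$ and $V$ be given by Lemma \ref{Lem: existence of amv in L-admissible annuli}, with $V$ stationary and $G$-almost minimizing in every $L$-admissible family of $G$-annuli. Fix $G\cdot p$. We argue by contradiction: suppose that for every $r>0$ there are $0<s<t<r$ such that $V$ fails to be $G$-almost minimizing with respect to $\{\Sigma^{i_j}_{x_j}\}$ in $A_{s,t}(G\cdot p)$.

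The first step is to build annuli inductively. Choose $r_1>0$ below $\inj(G\cdot p)$ and obtain a bad annulus $A_1=A_{s_1,t_1}(G\cdot p)$ with $t_1<r_1$. Then take $r_2<s_1/2$ and obtain a bad annulus $A_2=A_{s_2,t_2}(G\cdot p)$ with $t_2<r_2$. Continue until $L$ annuli have been constructed. After reversing the indices, we have $2t_i<s_{i+1}$, so $\{A_i\}_{i=1}^L$ is $L$-admissible in the sense of Definition \ref{Def: admissible G-annuli}.

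The second step is to say precisely what failure in $A_i$ means. It should be the negation of Definition \ref{Def: G-almost minimizing} for the fixed min-max sequence: there exist $\epsilon_i>0$ and an infinite set of indices $j$ along which $|\Sigma^{i_j}_{x_j}|$ is not $(G,\epsilon_i,\delta)$-almost minimizing in $A_i$, for all $\delta$ smaller than some $\delta_i$. The main technical point is to pass to one common subsequence for all $L$ annuli. To do this, I will either use the standard equivalent formulation, in which $V$ fails to be $G$-almost minimizing in $A$ exactly when there is $\epsilon>0$ such that for all $\delta>0$ the surfaces $|\Sigma^{i_j}_{x_j}|$ are not $(G,\epsilon,\delta)$-almost minimizing in $A$ for all large $j$, or diagonalize over $i=1,\dots,L$, which is possible because $L$ is finite. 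With $\epsilon=\min_i\epsilon_i$ and $\delta_j\to0$, for $j$ large the surface $|\Sigma^{i_j}_{x_j}|$ is $(G,\epsilon,\delta_j)$-almost minimizing in none of $A_1,\dots,A_L$. This contradicts the property from Lemma \ref{Lem: existence of amv in L-admissible annuli} that $|\Sigma^{i_j}_{x_j}|$ is almost minimizing in at least one annulus of the family.

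I expect the main obstacle to be exactly this bookkeeping of quantifiers. It may be cleaner to strengthen Lemma \ref{Lem: existence of amv in L-admissible annuli} slightly, as its proof actually yields: for each $\epsilon$ there are $\delta$, $i$ and $x$ such that the surface is $(G,\epsilon,\delta)$-almost minimizing in some annulus of every $L$-admissible family. The contradiction then goes through at each stage $\epsilon=1/k$, and a diagonal min-max sequence gives the claim with $r_{am}(G\cdot p)$ chosen uniformly. Apart from this, no equivariant issue should arise, since the $G$-annuli $A_{s,t}(G\cdot p)$ are nested and the admissibility condition involves only the radii.
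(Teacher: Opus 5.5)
Nesting bad $G$-annuli around a fixed orbit and appealing to Lemma~\ref{Lem: existence of amv in L-admissible annuli} is a natural reduction. However, the step you call bookkeeping is a genuine gap, and neither of your fixes closes it.

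Definition~\ref{Def: G-almost minimizing} is relative to the fixed sequence $\{\Sigma_j\}$. So ``$V$ is not $G$-almost minimizing in $A_i$'' only gives $\epsilon_i>0$ and an \emph{infinite} index set $J_i$ such that, for $j\in J_i$, $|\Sigma_j|$ is not $(G,\epsilon,\delta)$-almost minimizing in $A_i$ for any $\epsilon<\epsilon_i$, $\delta>0$. Your ``for all large $j$'' reformulation would hold for a Pitts-type definition, where failure is witnessed by a whole $\mathbf F$-neighbourhood of $V$. It is false for the sequence-based notion used here.

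Diagonalizing over $i=1,\dots,L$ cannot produce indices in $J_1\cap\dots\cap J_L$. These sets may be pairwise disjoint (e.g.\ $A_1$ bad only for even $j$, $A_2$ only for odd $j$). That is fully compatible with the conclusion of Lemma~\ref{Lem: existence of amv in L-admissible annuli}, which only asks each $|\Sigma_j|$ to be good in \emph{one} annulus of the family. So annuli chosen independently of each other give no contradiction. Nothing guarantees the conclusion for the very sequence the Lemma produces, and a subsequence has to be extracted, which the statement permits.

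For a single orbit you can repair this by choosing the annuli adaptively. Take $A_1$ bad for the current sequence and restrict to $J_1$. Then look for $A_2\subset B_{s_1/2}(G\cdot p)$ bad for $\{\Sigma_j\}_{j\in J_1}$, and continue. Since the $L$-admissible property passes to subsequences, this stops after at most $L-1$ extractions, with a subsequence $S(p)$ and a radius $r_{am}(G\cdot p)>0$.

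But $S(p)$ depends on $p$, and there are uncountably many orbits. Diagonalizing over a countable dense set does not help, because the radii obtained need not be bounded below near the remaining orbits. Your final suggestion has the same defect: the single-surface contradiction at level $\epsilon=1/k$ only yields radii $r_k(G\cdot p)>0$, which may tend to $0$, and their uniformity is exactly what must be proved.

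You need an extra ingredient that selects one subsequence for all orbits at once. Here is one way, using the uniform form of the Lemma that you correctly read off from its proof.
\begin{enumerate}
\item Let $\mathcal B_j$ be the set of triples $(q,s,t)$ such that $|\Sigma_j|$ is not $(G,\epsilon_j,\delta_j)$-almost minimizing in $A_{s,t}(G\cdot q)$.
\item Each $\mathcal B_j$ is upward closed under inclusion of annuli and under the shift $A_{s,t}(G\cdot q)\subset A_{s-d,t+d}(G\cdot q')$, where $d=\operatorname{dist}(G\cdot q,G\cdot q')$.
\item Pass to a subsequence along which the closures $\overline{\mathcal B_j}$ converge in the Hausdorff sense.
\item If the limit contained triples $(q,s,t)$ with $s>0$ and $t$ arbitrarily small, you could pick $L$ admissible ones, approximate them by points of some $\mathcal B_j$, and shift them to the common center $q$. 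The strict admissibility inequalities survive, contradicting the Lemma. So for each $q$ there is $r(q)>0$ such that the limit contains no $(q,s,t)$ with $0<s<t<r(q)$.
\item Each such triple then has a neighbourhood missing $\mathcal B_j$ for all large $j$ in the subsequence, which gives the theorem along that subsequence.
\end{enumerate}
This passage from $L$-admissible families to small annuli is precisely the step the paper delegates to the argument of Colding--Gabai--Ketover (Lemma A.3).
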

\begin{proof}
	Using $G$-annuli $A_{s,t}(G\cdot p)$ in place of annuli centered at a point, the proof of \cite{colding2018heegaard}*{Lemma A.3} (\cite{wangzc2023four-sphere}*{Appendix D}) can be taken verbatim. 
	Then, the theorem follows from Lemma \ref{Lem: existence of amv in L-admissible annuli}. 
\end{proof}

\subsection{Regularity for equivariant min-max varifolds}\label{Subsec: regularity min-max}

In this subsection, we introduce the $G$-replacement of a stationary $G$-varifold and show the regularity of the min-max $G$-varifolds.

\subsubsection{$G$-replacement chain property}
We begin with the definition of $G$-replacement. 
\begin{definition}\label{Def: G-replacement}
	Let $V\in \mc V^G_n(M)$ be a stationary $G$-varifold and $U\subset M$ be an open $G$-set. 
	Then $V'\in\mc V^G_n(M)$ is said to be a {\em $G$-replacement of $V$ in $U$} if 
	\begin{enumerate}
		\item $V'$ is stationary in $U$;
		\item $V'=V$ outside $\closure(U)$, and $\|V'\|(M)=\|V\|(M)$;
		\item $V'\llcorner U$ is an integer multiple of a smooth embedded minimal $G$-hypersurface that is $G$-stable in $U$ and stable in any simply $G$-connected (Definition \ref{Def: simple G-set}) open $G$-set $U'\subset\subset U$. 
	\end{enumerate}
	We also say $V$ has {\em (weak) good $G$-replacement property in $U$} if for any $p\in U$, there is $r_{gr}=r_{gr}(G\cdot p)>0$ so that $V$ has a $G$-replacement in any $G$-annulus $A_{s,t}(G\cdot p)$ with $0<s<t<r_{gr}$. 
\end{definition}

\begin{remark}\label{Rem: G-replacement}
    The above definition mainly differs from \cite{wang2024index}*{Definition 4.14} in the simply $G$-connected assumption in (3). 
    In particular, by Remark \ref{Rem: simply G-connected}, the above definition coincides with \cite{wang2024index}*{Definition 4.14} if $U=A_{s,t}(G\cdot p)$ for $0<s<t<\inj(G\cdot p)$ and $p\in M\setminus \mc S_{n.m.}$. 
\end{remark}

We emphasize that the stability of the $G$-replacement $V'$ in $U'$ is necessary since we do not have curvature estimates or regularity theory (\cite{schoen1981regularity}) for a general $G$-stable minimal $G$-hypersurface. 
We also need the following stronger $G$-replacement property, which is analogous to \cite{wang2024index}*{Definition 1.4} and \cite{wangzc2023four-sphere}*{Definition 3.6}.

\begin{definition}[$G$-replacement chain property]\label{Def: G-replacement chain}
	Let $U\subset M$ be an open $G$-set, and $V\in\mc V^G_n(M)$ be stationary in $U$. 
	We say $V$ has {\em $G$-replacement chain property in $U$} if for any finite sequence of open $G$-sets $\{B_j\}_{j=1}^k\subset\subset U$, there exists a sequence $V=V_0,V_1,\dots,V_k\in \mc V^G_n(M)$ of $G$-varifolds so that $V_j$ is stationary in $U$, and is a $G$-replacement of $V_{j-1}$ in $B_j$ for $j=1,\dots, k$. 
    %
\end{definition}

One verifies that the $G$-replacement chain property (in $G$-annuli) implies the (weak) good $G$-replacement property. 
Next, we show the rectifiability of a stationary $G$-varifold with (weak) good $G$-replacement property, and the classification of tangent cones away from $\mc S_{n.m.}$. 
\begin{proposition}\label{Prop: rectifiability and tangent cone for amv}
	Let $V\in\mc V^G_n(M)$ be a stationary varifold, and $U\subset M$ be an open $G$-set. 
	Suppose $V$ has (weak) good $G$-replacement property in $U$. 
	Then $V$ is integer rectifiable in $U$, and for any $p\in U\setminus \mc S_{n.m.}$, the tangent varifold $C$ of $V$ at $p$ is an integer multiple of a $G_p$-invariant hyperplane $T_p(G\cdot p)\times \widehat H$ in $T_pM=T_p(G\cdot p)\times N_p(G\cdot p)$ for some hyperplane $\widehat H\subset N_p(G\cdot p)$. 
\end{proposition}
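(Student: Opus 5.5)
The plan follows the classical scheme of Colding--De Lellis and Pitts: first obtain uniform density bounds from replacements, then rectifiability, then classify tangent cones by showing that replacements of blow-ups are stable minimal cones away from the orbit. We work throughout in the slice picture of Claim \ref{Claim: tangent bundle}.

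\emph{Density and rectifiability.} Fix $p\in U$ and $0<s<t<r_{gr}(G\cdot p)$, and let $V'$ be a $G$-replacement of $V$ in $A_{s,t}(G\cdot p)$. Inside the annulus, $V'$ is an integer multiple of a smooth minimal $G$-hypersurface, so its density is $\geq1$ at points of its support there. Monotonicity for the stationary varifold $V'$ gives $\|V'\|(B_\rho(x))\geq c\rho^n$ for $x$ in the annulus. Since $V'=V$ outside $\closure(A_{s,t})$ and the masses agree, a standard argument shows $\|V\|(B_{2\rho}(x))\geq c\rho^n$ near $\spt\|V\|$. This argument appears in Colding--De Lellis Proposition 6.3 and in the equivariant version \cite{wang2024index}; the reduced monotonicity formula \eqref{Eq: monotonicity in slice} replaces the pointwise monotonicity where needed. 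It yields $\Theta^n(\|V\|,x)\geq c>0$ everywhere on $\spt\|V\|\cap U$, and Allard's rectifiability theorem then applies. Integrality comes later: once tangent cones are known to be integer multiples of planes a.e., the density is an integer a.e.

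\emph{Tangent cones.} Let $p\in U\setminus\mc S_{n.m.}$ and $C=\lim\bleta_{r_i\#}V$. As in Claim \ref{Claim: tangent bundle}, the $G$-invariance forces $C=T_p(G\cdot p)\times\whC$ with $\whC$ a stationary $G_p$-invariant cone in $N_p(G\cdot p)$, and $\bleta^G_{r_i\#}V\to G\cdot\whC$. Take replacements $V_i'$ of $V$ in $A_{r_i s,r_i t}(G\cdot p)$. Since $p\notin\mc S_{n.m.}$, Remark \ref{Rem: simply G-connected} says these annuli are simply $G$-connected, so by Definition \ref{Def: G-replacement}(3) the $V_i'$ are stable minimal hypersurfaces inside (slightly shrunken) annuli. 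The codimension bound $\codim(G\cdot p)\le7$ lets us invoke the Schoen--Simon compactness \cite{schoen1981regularity}. Rescaled, $\bleta^G_{r_i\#}V_i'$ converges smoothly away from a small singular set to a stable minimal $V'_\infty$ on $G\cdot\{s<|v|<t\}$. The standard Colding--De Lellis argument (mass equality and comparison of densities at the two radii via monotonicity) shows $V'_\infty=G\cdot\whC$ in that annulus. Hence $\whC$ is smooth and stable on $\{s<|v|<t\}$ for all $0<s<t$, i.e. on $N_p(G\cdot p)\setminus\{0\}$. Since $\dim\whC=n-k_0\le 6$, Simons' classification of stable cones forces $\whC$ to be an integer multiple of a hyperplane $\widehat H$. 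The plane $\widehat H$ is $G_p$-invariant because $\whC$ is.

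\emph{Main obstacle.} The delicate step is the stability transfer. Stability of $V_i'$ is only asserted on simply $G$-connected subsets, and the limit hypersurface must be stable with respect to all (non-equivariant) variations in $N_p(G\cdot p)\setminus\{0\}$. I would handle this as follows. The annulus $A_{s,t}(G\cdot p)$ is simply $G$-connected and separates properly, so each component of $\whC\setminus\{0\}$ carries a $G_p$-invariant normal. Lemma \ref{Lem: stability and G-stability} then upgrades $G_p$-stability to stability. The low-dimensional case $n-k_0=2$ needs separate care: Simons still gives planes, but one must rule out, via integrality and the $2$-dimensional link classification \cite{allard76OneDim}, a cone made of several half-planes meeting along $P(p)$.
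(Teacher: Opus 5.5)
Your proposal is correct and follows essentially the same route as the paper. Rectifiability comes from the density lower bound supplied by replacements. The tangent-cone classification is the equivariant version of the Colding--De Lellis argument in \cite{colding2002minmax}*{Lemma 6.4}. The key input is exactly the paper's: for $p\notin\mc S_{n.m.}$ the $G$-annuli $A_{s,t}(G\cdot p)$ are simply $G$-connected (Remark \ref{Rem: simply G-connected}), so Definition \ref{Def: G-replacement}(3) makes the replacements genuinely stable.

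Your closing caveats are unnecessary. Stability is already given directly by Definition \ref{Def: G-replacement}(3), so the invariant-normal argument is not needed. The replacements are smooth across the non-principal orbits in the annulus, and any Schoen--Simon singular set is excluded by $G$-invariance because $k_0\ge n-6$. Hence $\whC$ is already smooth on $N_p(G\cdot p)\setminus\{0\}$, and no configuration of half-planes along $P(p)$ can occur when $n-k_0=2$.
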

\begin{proof}
    The rectifiability of $V$ in $U$ follows from \cite{wang2023free}*{Lemma 5.6} (see also \cite{wang2022min}*{Lemma 11}). 
	If $p\in U\setminus\mc S_{n.m.}$, then the classification of the tangent cone $C$ is similar to \cite{colding2002minmax}*{Lemma 6.4}. 
	Indeed, by the simple $G$-connectedness of $A_{s,t}(G\cdot p)$ for $0<s<t<\inj(G\cdot p)$ (Remark \ref{Rem: simply G-connected}), any $G$-replacement $V'$ of $V$ in $A_{s,t}(G\cdot p)$ is stable in $A_{s,t}(G\cdot p)$ by Definition \ref{Def: G-replacement}(3).  Hence, one can directly apply the proof of \cite{wang2023free}*{Lemma 5.7, Proposition 5.10} (or \cite{wang2022min}*{Proposition 5}). 
\end{proof}

Using the above result, we further have the following regularity theorem for $G$-varifolds with $G$-replacement chain property. 
One can also directly refer to the regularity result \cite{wang2024index}*{Theorem 4.18}, where the $G$-replacement chain property is defined similarly in \cite{wang2024index}*{Definition 4.14}. 
\begin{theorem}\label{Thm: regularity for V of G-replacement property}
	Suppose \eqref{Eq: cohomogeneity assumption} is satisfied, and $V\in \mc V^G_n(M)$ is a stationary $G$-varifold in $M$ with $G$-replacement chain property in an open $G$-set $U\subset M$. 
	Then $V\llcorner (U\setminus \mc S_{n.m.})$ is induced by a smoothly embedded minimal $G$-hypersurface with integer multiplicity. 
\end{theorem}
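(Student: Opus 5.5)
The plan is to follow the classical Schoen--Simon/Colding--De Lellis scheme, adapted to tubes around orbits: show the varifold is a stable smooth hypersurface in small $G$-annuli, glue overlapping replacements, and then remove the center orbit by stability.

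Fix $p\in U\setminus\mc S_{n.m.}$ and choose $0<\rho<\min\{r_{gr}(G\cdot p),\inj(G\cdot p)\}$ small enough that $B_{2\rho}(G\cdot p)\subset\subset U\setminus\mc S_{n.m.}$. By Proposition \ref{Prop: rectifiability and tangent cone for amv}, $V$ is integer rectifiable in $U$, and every tangent cone at points of $U\setminus\mc S_{n.m.}$ is an integer multiple of a $G_p$-invariant hyperplane of the form $T_p(G\cdot p)\times\widehat H$.

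\textbf{Step 1 (first replacement).} Take a $G$-annulus $A_{s,t}(G\cdot p)$ with $0<s<t<\rho$. The chain property applied to $B_1=A_{s,t}(G\cdot p)$ gives a replacement $V_1$. In this annulus $V_1$ is a smooth minimal $G$-hypersurface $\Sigma_1$ with integer multiplicity. By Remark \ref{Rem: simply G-connected}, the annulus is simply $G$-connected, so Definition \ref{Def: G-replacement}(3) makes $\Sigma_1$ stable. Stability combined with \eqref{Eq: cohomogeneity assumption} and Schoen--Simon \cite{schoen1981regularity} then gives curvature estimates and compactness on compact subsets.

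\textbf{Step 2 (second replacement and gluing).} Choose $s'\in(s,t)$ such that $\bd B_{s'}(G\cdot p)$ meets $\Sigma_1$ transversally. Replace $V_1$ a second time on $A_{s_2,s'}(G\cdot p)$ with $s_2<s$, getting $V_2$ with stable part $\Sigma_2$. On the overlap $A_{s,s'}$ both $\Sigma_1$ and $\Sigma_2$ are smooth stable hypersurfaces. Their boundary traces on $\bd B_{s'}$ agree, because $V_2=V_1$ outside $\closure(A_{s_2,s'})$ and the tangent cones of $V_2$ at $\bd B_{s'}$ are planes. Unique continuation, together with the Colding--De Lellis gluing argument (equivalently the argument of \cite{wang2024index}*{Theorem 4.18}), then shows that $\Sigma_2$ extends $\Sigma_1$ smoothly across $\bd B_{s'}$. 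Letting $s_2\to0$ yields a stable minimal $G$-hypersurface in $B_t(G\cdot p)\setminus G\cdot p$, with multiplicities that are constant on components.

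\textbf{Step 3 (identifying $V$).} Following \cite{colding2002minmax}*{Section 6}, I would show that $V$ agrees with these replacements. Monotonicity gives equal densities, and since every tangent cone of $V$ is planar, Allard regularity combined with the maximum principle identifies $V\llcorner(B_t\setminus G\cdot p)$ with the union of the replacements, taken with integer multiplicity.

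\textbf{Step 4 (removing the center orbit).} The varifold $V$ is stable in the punctured tube $B_t(G\cdot p)\setminus G\cdot p$. The orbit $G\cdot p$ has codimension between $3$ and $7$, so its capacity is zero and stability extends across it. Schoen--Simon \cite{schoen1981regularity} then gives smoothness across $G\cdot p$.

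The main obstacle is Step 2: getting the two replacements to match across $\bd B_{s'}$. The trouble is near non-principal strata, where the degenerate reduced metric prevents a direct reduction to the orbit space. The plan there is to use $G$-invariant tubes together with planar tangent cones, rather than working in orbit-space geometry.
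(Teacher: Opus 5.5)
Your proposal is correct and follows essentially the same route as the paper. The paper's proof is a direct appeal to the Colding--De Lellis regularity scheme, in the form of \cite{wang2024index}*{Theorem 4.18}, and its one new input is exactly your Step 1 observation: by Remark \ref{Rem: simply G-connected}, the $G$-annuli $A_{s,t}(G\cdot p)$ with $G\cdot p\not\subset\mc S_{n.m.}$ are simply $G$-connected, so Definition \ref{Def: G-replacement}(3) makes the $G$-replacements there genuinely stable. Your Steps 2--4 spell out the gluing, identification and removable-singularity arguments that the paper leaves to that citation. This includes the planarity of the tangent cones of the second replacement on $\partial B_{s'}$, which in the Colding--De Lellis scheme comes from extending the replacement chain by one more level.
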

\begin{proof}
	The above theorem follows from \cite{colding2002minmax}*{\S 6} with some equivariant modifications as in \cite{wang2022min}\cite{wang2023free}. 
    Indeed, by Remark \ref{Rem: simply G-connected}, for any $G\cdot p\subset U\setminus \mc S_{n.m.}$ and $0<s<t<\inj(G\cdot p)$, the $G$-annulus $A_{s,t}(G\cdot p)$ is simply $G$-connected.  
	Thus, by Remark \ref{Rem: G-replacement}, the proof of the regularity result \cite{wang2024index}*{Theorem 4.18} is also valid near any $G\cdot p\subset U\setminus \mc S_{n.m.}$. 
\end{proof}

\begin{remark}\label{Rem: improved G-replacement}
    In the proof of Proposition \ref{Prop: rectifiability and tangent cone for amv} and Theorem \ref{Thm: regularity for V of G-replacement property}, one only needs to use the $G$-replacement chain property of $V$ in small $G$-annuli $A_{s,t}(G\cdot p)\subset\subset U$, and use Definition \ref{Def: G-replacement chain} to take the $G$-replacement chain $\{V_i\}$ in concentric $G$-annuli $\{A_{s_i,t_i}(G\cdot p)\subset\subset A_{s,t}(G\cdot p)\}$. 
    Note also that the results in Proposition \ref{Prop: rectifiability and tangent cone for amv} and Theorem \ref{Thm: regularity for V of G-replacement property} are valid only near $p\in U\setminus\mc S_{n.m.}$ because the stability of our $G$-replacement is valid only in simply $G$-connected open $G$-sets (cf. Definition \ref{Def: G-replacement}(3)). 
    Nevertheless, if one can improve the stability of $G$-replacements in Definition \ref{Def: G-replacement}(3) by removing the simply $G$-connected condition, then the above results are also valid at $U\cap \mc S_{n.m.}$ by directly applying the proof of \cite{wang2024index}*{Theorem 4.18}. 
\end{remark}

\subsubsection{Existence of $G$-replacements}

To show the regularity of our $G$-equivariant min-max varifold $V$ in Theorem \ref{Thm: existence amv in annuli}, we still need to prove an improved $G$-replacement chain property of $V$ in small $G$-annuli $A_{s,t}(G\cdot p)$ (cf. Remark \ref{Rem: improved G-replacement}). 
For this purpose, we first show a regularity result for the constrained $G$-isotopy area minimizing problem.

Given any open $G$-set $U$, a closed $G$-hypersurface $\Sigma\in\mc {LB}^G$, and $\epsilon, \delta>0$, define 
	\[ \mk {Is}^G_\delta(U):=\left \{ \{\varphi_t\}_{t\in [0,1]}\in \mk {Is}^G(U): ~\mc H^n(\varphi_t(\Sigma))\leq \mc H^n(\Sigma)+\delta,~\forall t\in [0,1] \right\}.  \]
	Set 
	\[ m_\delta:=\inf_{\{\psi_t\}_{t\in[0,1]}\in\mk {Is}^G_\delta(U)} \mc H^n(\psi_1(\Sigma)) . \]
	For a sequence $\{\{\varphi^k_t\}_{t\in [0,1]}\}_{k\in\mb N}\subset \mk {Is}^G_\delta(U)$ of $G$-isotopies in $U$, we say $\Sigma_k:=\varphi^k_1(\Sigma)$ is a {\em minimizing sequence} for the constrained area minimization problem $(\Sigma, \mk {Is}^G_\delta(U))$ if 
	\[ \mc H^n(\Sigma)\geq\mc H^n(\Sigma_k)\to m_\delta \qquad\mbox{as $k\to\infty$}. \]
	Then, we have the following result generalizing \cite{colding2002minmax}*{Lemma 7.6}.
	\begin{lemma}\label{Lem: squeezing G-isotopy}
		Suppose $\{\Sigma_k\}_{k\in\mb N}$ is a minimizing sequence for the constrained area minimization problem $(\Sigma, \mk {Is}^G_\delta(U))$. 
		Then for any $p\in U'\subset\subset U$, there exists $\rho=\rho(\mc H^n(\Sigma),U',M,\delta,G\cdot p)>0$ with $B_{2\rho}(G\cdot p)\subset U'$ so that for any $k$ sufficiently large and $\{\varphi_t\}_{t\in [0,1]}\in \mk {Is}^G(B_\rho(G\cdot p))$ with $\mc H^n(\varphi_1(\Sigma_k))\leq \mc H^n(\Sigma_k)$, there exists $\{\psi_t\}_{t\in [0,1]}\in \mk {Is}^G(B_{\rho}(G\cdot p))$ satisfying $\varphi_1=\psi_1$, and 
		\[ \mc H^n(\psi_t(\Sigma_k)) \leq \mc H^n(\Sigma_k) + \delta,\qquad \forall t\in [0,1]. \]
	\end{lemma}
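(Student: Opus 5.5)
This is the equivariant analogue of \cite{colding2002minmax}*{Lemma 7.6}, and I plan to follow that argument: small tubes around $G\cdot p$ carry little area, and this is what lets us replace $\{\varphi_t\}$ by a $G$-isotopy with the same endpoint and controlled area.

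\textbf{Step 1 (choice of $\rho$).} Every $\Sigma_k$ satisfies $\mc H^n(\Sigma_k)\leq\mc H^n(\Sigma)$. Since $\{\varphi^k_t\}\in\mk{Is}^G_\delta(U)$ for every $k$, the whole sequence lies in a mass-bounded set. By the reduced monotonicity formula, Claim \ref{Claim: reduced monotonicity formula}, applied to a subsequential varifold limit, I expect a bound $\mc H^n(\Sigma_k\cap B_{2\rho}(G\cdot p))\leq \delta/4$ for $k$ large once $\rho$ is small.

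This needs care, because the limit is not yet known to be stationary. My first attempt is to pass to a subsequence with $|\Sigma_k|\to V$ and use that $\|V\|$ has no atom on the orbit $G\cdot p$: an $n$-varifold cannot concentrate positive mass on a set of dimension $k_0<n$ unless it has infinite density. If that fails, I would use the $G$-invariant coarea decomposition \eqref{Eq: area in slice} to choose $\rho$ so that $\|V\|(\closure B_{2\rho}(G\cdot p))<\delta/8$. That choice is possible since $\|V\|(G\cdot p)=0$. Varifold convergence then gives the bound for all large $k$. Here $\rho$ depends on the sequence through $V$; to get a uniform statement I would argue by contradiction over subsequences, as in \cite{colding2002minmax}.

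\textbf{Step 2 (minimality prevents large intermediate gains).} Fix $k$ large and $\{\varphi_t\}\in\mk{Is}^G(B_\rho(G\cdot p))$ with $\mc H^n(\varphi_1(\Sigma_k))\leq\mc H^n(\Sigma_k)$. Concatenating $\varphi^k$ with $\varphi$ keeps us in $\mk{Is}^G_\delta(U)$ as long as the intermediate areas stay below $\mc H^n(\Sigma)+\delta$. The slack available is $\mc H^n(\Sigma)-\mc H^n(\Sigma_k)+\delta\geq\delta$.

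\textbf{Step 3 (constructing $\psi_t$).} I will build $\psi_t$ in two stages.
\begin{itemize}
\item[(a)] Apply a $G$-equivariant radial squeeze $\Phi_s$ toward $G\cdot p$ inside $B_\rho(G\cdot p)$, fixing the complement. This is done in the normal bundle through $\exp^\perp_{G\cdot p}$, contracting the fibres $N(G\cdot p)$, which commutes with $G$. It pushes $\Sigma_k\cap B_\rho$ into a thin tube while increasing area by at most a factor $2$ by Lemma \ref{Lem: uniform constants}(iii). Then run the conjugated isotopy $\Phi^{-1}\circ\varphi_t\circ\Phi$, whose action on the small piece has area change controlled by scaling.
\item[(b)] Unsqueeze, so that the endpoint equals $\varphi_1$.
\end{itemize}
Colding--De Lellis use exactly this mechanism: areas of the pieces inside the tube scale like $\lambda^{n-k_0}$ under fibre dilation. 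Throughout, the total area inside $B_{2\rho}$ stays below $2\cdot\delta/4\cdot C+\mc H^n(\varphi_1(\Sigma_k)\cap B_\rho)$.

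\textbf{Main obstacle.} The hardest step is (a)/(b). The orbit $G\cdot p$ does not shrink, so dilation only scales the fibre directions. The metric is also not exactly a product, which means both the equivariant squeeze and the bound $\mc H^n(\varphi_1(\Sigma_k)\cap B_\rho)\leq \mc H^n(\Sigma_k\cap B_\rho)$ need to be verified using the uniform constants of Lemma \ref{Lem: uniform constants}. I would handle this by working in the slice and multiplying by orbit volume via \eqref{Eq: area in slice}, exactly as in the Euclidean proof with $n$ replaced by $n-k_0$.
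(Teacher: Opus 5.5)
Your overall strategy is the paper's: run the argument of \cite{colding2002minmax}*{\S 7.4} with $G$-tubes $B_r(G\cdot p)$ and $\exp^\perp_{G\cdot p}$ in place of balls and $\exp_p$. However, Step 1 has a genuine gap.

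\textbf{Step 1.} You leave open whether the limit $V$ is stationary. In its place you assert $\|V\|(G\cdot p)=0$, on the grounds that an $n$-varifold cannot put mass on a set of dimension $k_0<n$. This is false, because general varifolds may have infinite density. For instance, take $N_j\sim r_j^{-(n-k_0)}$ disjoint tubes $\partial B_r(G\cdot p)$ with $r\in[r_j,2r_j]$ and $r_j\to 0$. These have bounded total area and converge to a $G$-varifold whose weight is a positive multiple of $\mc H^{k_0}\llcorner G\cdot p$.

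Your fallback presupposes the same fact. Your proposed contradiction argument, meant to make $\rho$ independent of the sequence as required by $\rho=\rho(\mc H^n(\Sigma),U',M,\delta,G\cdot p)$, also has no uniform density bound to work with.

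The missing idea is that $V$ is stationary in $U$. Suppose not: then $\delta V(X)<0$ for some $X$, which by Lemma \ref{Lem: first variation and G-variation} may be taken in $\mk X^G(U)$. For $k$ large, the flow of $X$ lowers $\mc H^n(\Sigma_k)$ monotonically by a fixed amount on a fixed time interval. Concatenating it with $\{\varphi^k_t\}$ stays in $\mk{Is}^G_\delta(U)$, which contradicts $\mc H^n(\Sigma_k)\to m_\delta$.

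Claim \ref{Claim: reduced monotonicity formula} then gives $\|V\|(B_{2\rho}(G\cdot p))\le C_{mono}\,\mc H^n(\Sigma)\,(2\rho/\rho_1)^{n-k_0}$ for every subsequential limit, with $\rho_1$ depending only on $U',M,G\cdot p$. Hence, for all large $k$, $\mc H^n(\Sigma_k\cap B_{2\rho}(G\cdot p))$ is as small as you like relative to $\delta$, with $\rho$ depending only on the allowed data. This is exactly the role the reduced monotonicity formula plays in the paper's proof.

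\textbf{Step 3.} This is the right mechanism, but it needs repair as written.
\begin{itemize}
\item The conjugate should be $\Phi_\lambda\circ\varphi_t\circ\Phi_\lambda^{-1}$, not $\Phi^{-1}\circ\varphi_t\circ\Phi$. The concatenated path is then $\Phi_u$ for $u\colon 1\to\lambda$, followed by $\Phi_\lambda\circ\varphi_t$, followed by $\Phi_u\circ\varphi_1$ for $u\colon\lambda\to 1$, and it ends at $\varphi_1$.
\item The middle phase is harmless once $\lambda$ is chosen small depending on $\varphi$. This is legitimate, since $\psi$ may depend on $\varphi$, and it works because area of a $G$-hypersurface in the tube scales like $\lambda^{n-k_0}$ by \eqref{Eq: area in slice}.
\item The last phase only needs $\mc H^n(\varphi_1(\Sigma_k)\cap B_\rho(G\cdot p))\le\mc H^n(\Sigma_k\cap B_\rho(G\cdot p))$. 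This is immediate because $\varphi_1=\mathrm{id}$ off $B_\rho(G\cdot p)$; no uniform constants are needed there.
\item The uniform Jacobian bound for $\Phi_u$ does not follow from Lemma \ref{Lem: uniform constants}(iii), which only controls the distortion of $\exp^\perp_{G\cdot p}$. A squeeze that is the identity outside a tube cannot compress the whole tube. For a radial fibre profile compressing a tube of radius $a$ inside one of radius $b$, the stretching in the transition collar is bounded by $b/(b-a)$. This is about $2$ only if the compressed region containing $\spt(\varphi)$ is separated from the boundary of the squeeze's support by a collar of width comparable to the radius. You must build in that room and control the area of $\Sigma_k$ in the collar.
\end{itemize}
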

	\begin{proof}
		Note that the monotonicity formula for stationary $G$-varifolds also holds in $G$-tubes by Claim \ref{Claim: reduced monotonicity formula}. 
		Hence, using $A_{s,t}(G\cdot p), B_r(G\cdot p)$ and $\exp_{G\cdot p}^\perp$ in place of ${\rm An}(p,s,t), B_r(p)$ and $\exp_p$ respectively, the rescaling arguments in \cite{colding2002minmax}*{\S 7.4} can be taken almost verbatim. 
	\end{proof}
	
Using the above lemma, we show the regularity of the constrained area minimizer.
\begin{proposition}[Regularity of constrained minimizers]\label{Prop: constrained minimizer}
	Suppose \eqref{Eq: cohomogeneity assumption} is satisfied, and $M$ contains no special exceptional orbit. 
    Given a $G$-annulus $U:= A_{s,t}(G\cdot p)$ with $G\cdot p\subset M$, $0<s<t<\inj(G\cdot p)$, and $t>0$ sufficiently small, let $\Sigma\in\mc {LB}^G$ be a closed $G$-hypersurface that is $(G,\epsilon,\delta)$-almost minimizing in $U$ and transversal to $\bd U$. 
	Let $\{\Sigma_k\}_{k\in\mb N}\subset \mc {LB}^G$ be the minimizing sequence in the constrained area minimization problem $(\Sigma, \mk {Is}^G_\delta(U))$. 
	Then, $\Sigma_k$ converges (up to a subsequence) to some $V\in \mc V^G_n(M)$ so that 
    \begin{itemize}
        \item $V\llcorner U$ is induced by an integer multiple of a minimal $G$-hypersurface $\wti\Sigma$;
        \item $\wti\Sigma$ is not only $G$-stable, but also stable in $U$; and 
    \end{itemize}
	\begin{align}\label{Eq: constrained minimizer - area bound}
		\mc H^n(\Sigma)-\epsilon \leq \|V\|(M) \leq \mc H^n(\Sigma). 
	\end{align}
\end{proposition}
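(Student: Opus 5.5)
We follow the strategy of \cite{colding2002minmax}*{Proposition 7.4} together with \cite{delellis2010genus}*{\S 8}, reducing locally to the unconstrained $G$-isotopy minimization problem of Theorem \ref{Thm: G-isotopy minimizer}.

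\textbf{Step 1: area bound and local unconstrained minimization.} After passing to a subsequence, $|\Sigma_k|\to V$. The upper bound in \eqref{Eq: constrained minimizer - area bound} holds because $\mc H^n(\Sigma_k)\leq \mc H^n(\Sigma)$. The lower bound $\|V\|(M)\geq \mc H^n(\Sigma)-\epsilon$ comes from the $(G,\epsilon,\delta)$-almost minimizing property: the defining isotopies of $\Sigma_k$ lie in $\mk {Is}^G_\delta(U)$, so no $\Sigma_k$ can have area below $\mc H^n(\Sigma)-\epsilon$.

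Next fix $G\cdot q\subset U$ and let $\rho=\rho(G\cdot q)$ be given by Lemma \ref{Lem: squeezing G-isotopy}. Any $G$-isotopy in $B_\rho(G\cdot q)$ that does not increase the area of $\Sigma_k$ can be replaced by one respecting the $\delta$-constraint. Hence $\Sigma_k$ is, up to errors tending to $0$, also minimizing for the unconstrained problem $(\Sigma_k,\mk{Is}^G(B_\rho(G\cdot q)))$. Following \cite{colding2002minmax}*{Lemma 7.7}, for each $k$ we take a minimizing sequence $\{\Sigma_{k,j}\}_j$ of this local problem, with limit $V_k$. By Theorem \ref{Thm: G-isotopy minimizer} (which uses the absence of special exceptional orbits), $V_k\llcorner B_\rho(G\cdot q)$ is a smooth $G$-stable minimal $G$-hypersurface with integer multiplicity. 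Remark \ref{Rem: isotopy minimizing - locally boundary type} shows that it meets the non-principal strata orthogonally.

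\textbf{Step 2: upgrading $G$-stability to stability.} On a possibly smaller simply $G$-connected ball $B_{\rho'}(G\cdot q)$, Corollary \ref{Cor: stability of minimizers} gives that $\spt\|V_k\|$ is genuinely stable; Remark \ref{Rem: simply G-connected} supplies the simple connectivity away from $\mc S_{n.m.}$. Near a possible orbit of $\mc S_{n.m.}$ we instead use the argument of Step 3 of Theorem \ref{Thm: G-isotopy minimizer}: pass to a double cover and use \eqref{Eq: MSY remark3.27 - 3.29}. Since $t$ is small, $U$ meets at most the single orbit $G\cdot p$ of $\mc S$, and $G\cdot p\not\subset U$, so this case is benign.

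With genuine stability, the Schoen--Simon curvature estimates \cite{schoen1981regularity} (valid because of the codimension bound \eqref{Eq: cohomogeneity assumption}) give compactness. Letting $k\to\infty$, $V_k\to V$ locally smoothly, so $V\llcorner B_{\rho'}(G\cdot q)$ is an integer multiple of a stable minimal $G$-hypersurface. Applying \cite{colding2002minmax}*{Lemma 7.7} again identifies $V$ with $\lim_k V_k$ on smaller tubes.

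\textbf{Step 3: gluing.} Cover $U$ by such tubes and use the unique continuation and maximum principle for minimal hypersurfaces to glue the local pieces into one $\wti\Sigma$. $G$-stability of $\wti\Sigma$ in $U$ follows from the local minimizing property, exactly as in Lemma \ref{Lem: amv implies stationary}.

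\textbf{Main obstacle.} Global stability in $U$ is the hard part, because $A_{s,t}(G\cdot p)/G$ need not be simply connected when $G\cdot p\subset\mc S_{n.m.}$. In that case a $G$-invariant unit normal may fail to exist, and Lemma \ref{Lem: stability and G-stability} cannot be applied directly.

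The plan for this case is as follows. First, use the locally $G$-boundary-type property of $\Sigma_k$: by \cite{delellis2010genus}*{Lemma 8.1} at $\bd U$, the limit sheets are boundaries of $G$-invariant Caccioppoli sets in $U$, so each sheet carries a $G$-invariant normal unless its multiplicity is even. Second, in the even-multiplicity case, split the approximating surfaces into two $G^+$-components as in Step 3 of Theorem \ref{Thm: G-isotopy minimizer}, and test the minimizing property with the lifted first eigenfunction to conclude stability.
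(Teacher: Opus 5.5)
There are two genuine gaps in your proposal.

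\textbf{Regularity of $V$.} Your Step 2 asserts that $V_k\to V$ locally smoothly, so that $V$ coincides with $\lim_k V_k$ on smaller tubes. That identification is not justified. Lemma \ref{Lem: squeezing G-isotopy} gives only that a diagonal sequence $\Sigma_{k,j_k}$ is again minimizing for the constrained problem $(\Sigma,\mk{Is}^G_\delta(U))$. Hence $V':=\lim_k V_k$ is stationary in $U$, agrees with $V$ outside $\closure(B_\rho(G\cdot q))$, and satisfies $\|V'\|(M)=\|V\|(M)$. This makes $V'$ a $G$-replacement of $V$ in the tube, not $V$ itself. The local minimizing sequences $\Sigma_{k,j}$ are arbitrary and may drift to a different minimizer with the same area and the same data outside the tube, so $V'\res B_\rho(G\cdot q)\neq V\res B_\rho(G\cdot q)$ cannot be excluded. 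The paper uses your construction in exactly this weaker form. It replaces $(\{\Sigma_k\},V)$ by $(\{\Sigma_{k,j_k}\},V')$ and repeats, which gives the $G$-replacement chain property of $V$ in small tubes. The replacements are stable in simply $G$-connected subsets by Corollary \ref{Cor: stability of minimizers} and compactness. Theorem \ref{Thm: regularity for V of G-replacement property}, applicable because $U$ meets no orbit of $\mc S$, then shows that $V\res U$ is a smooth minimal $G$-hypersurface with multiplicity. Without this step, Step 3 has nothing to glue.

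\textbf{Stability in $U$.} Your stability argument is incomplete even away from $\mc S_{n.m.}$. To apply Lemma \ref{Lem: stability and G-stability} on a simply $G$-connected $U'$, you need a $G$-invariant unit normal on $\wti\Sigma$ itself. That requires $\wti\Sigma$ to meet every non-principal stratum $N$ orthogonally. Remark \ref{Rem: isotopy minimizing - locally boundary type} gives this only for the local minimizers $\spt\|V_k\|$, and orthogonality is not preserved under limits. Near an orbit of type Proposition \ref{Prop: local structure of M/G}(2.b), two $G$-permuted sheets disjoint from $N$ can converge with multiplicity two to a hypersurface containing $N$. That is the configuration of Proposition \ref{Prop: local structure of Sigma/G 2}(4), whose quotient has boundary in the interior of $U'^*$.

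The paper excludes this by a global argument (Step 2, case (C)):
\begin{itemize}
\item transversality and the locally $G$-boundary-type condition force $\Gamma=\Sigma\cap\bd U$ to avoid $N$;
\item the boundary regularity \cite{delellis2010genus}*{Lemma 8.1} at the convex outer boundary $\bd B_t(G\cdot p)$ gives $\bd\wti\Sigma=\Gamma\subset M^{prin}$ there;
\item orbit type is constant along normal rays, so $N$ reaches $\bd B_t(G\cdot p)$, and $N\subset\wti\Sigma$ would put non-principal points into $\bd\wti\Sigma$;
\item cases (A) and (B) are excluded by the absence of special exceptional orbits.
\end{itemize}

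For $G\cdot p\subset\mc S_{n.m.}$, the same boundary regularity plus the maximum principle (sliding the mean convex $\bd B_{t'}(G\cdot p)$ inward) shows that every $G$-component of $\wti\Sigma\res A_{s',t}(G\cdot p)$ has nonempty boundary in $\Gamma$. Each such component therefore lies in the support of the mod $2$ limit of the $G$-boundaries $\Sigma_k\res U$ and carries a $G$-invariant normal. So your clause ``unless its multiplicity is even'' never arises.

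Your fallback of splitting into $G^+$-components is not available here. The structure \eqref{Eq: MSY remark3.27 - 3.29} comes from \cite{meeks82exotic}*{Remark 3.27} for sequences that are isotopy-minimizing on all of $\wti U$. A constrained minimizing sequence is almost minimizing only in the small tubes of Lemma \ref{Lem: squeezing G-isotopy}, while the obstruction to a $G$-invariant normal lives on a non-contractible loop of $U^*$.
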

\begin{proof}
		By Definition \ref{Def: G-almost minimizing}, we directly obtain \eqref{Eq: constrained minimizer - area bound}. 
		Additionally, the minimizer $V$ is clearly stationary (by Lemma \ref{Lem: first variation and G-variation}) and $G$-stable in $U$. 

    \medskip
    {\bf Step 1.} We make the following claim for the regularity of constrained area minimizers. 
		
	\begin{claim}\label{Claim: constrained minimizer - replacement chain}
			For any $x\in U$, there exists $r>0$ so that $V$ has the $G$-replacement chain property in $B_r(G\cdot x)\subset \subset U$. 
	\end{claim}
	\begin{proof}[Proof of Claim \ref{Claim: constrained minimizer - replacement chain}]
			For any $x\in U$, take $0<r< \min \{\rho, \dist(G\cdot x,\bd U)/4\}$, where $\rho$ is given by Lemma \ref{Lem: squeezing G-isotopy} with respect to some $G$-neighborhood $U'$ of $G\cdot x$ in $U$. 
			Then for any open $G$-set $B\subset\subset B_{r}(G\cdot x)$, let $\{\Sigma_{k,l}\}_{l\in\mb N}\subset\mc {LB}^G$ be minimizing in the $G$-isotopy area minimization problem $(\Sigma_k, \mk {Is}^G(B))$. 
			By Theorem \ref{Thm: G-isotopy minimizer} and the constructions, as $l\to\infty$, $|\Sigma_{k,l}|$ converges (up to a subsequence) to a $G$-varifold $V_k'\in \mc V^G_n(M)$ with $V=V_k'$ outside $\closure(B)$ so that $V_k'\llcorner B$ is induced by a $G$-stable minimal $G$-hypersurface. 
			Up to a subsequence, there also exist $V'\in \mc V^G_n(M)$ and $\{l_k\}_{k\in\mb N}$ so that $V'=\lim_{k\to\infty} V_k'=\lim_{k\to\infty}|\Sigma_{k,l_k}|$. 
			By Corollary \ref{Cor: stability of minimizers} and the compactness theorem for stable minimal hypersurfaces (see \cite{schoen1981regularity}), we obtain the regularity of $V'$ and also the stability of $V'$ in any simply $G$-connected open $G$-set contained in $B$. 
			
			Using Lemma \ref{Lem: squeezing G-isotopy}, one easily verifies that $\{\Sigma_{k,l_k}\}_{k\in \mb N}$ is also a minimizing sequence for the constrained area minimization problem $(\Sigma, \mk {Is}^G_\delta(U))$.  
			Hence, $V'$ is also stationary in $U$ and $\|V\|(M)=\|V'\|(M)$. 
			Together, we see $V'$ is a $G$-replacement of $V$ in $B$. 
			Using $\{\Sigma_{k,l_k}\},V'$ in place of $\{\Sigma_k\},V$ respectively, we can repeat this procedure, and thus finish the proof.
	\end{proof}
    Since $U=A_{s,t}(G\cdot p)$ contains no isolated non-principal orbit (Definition \ref{Def: isolated orbits}), we see from the above claim and Theorem \ref{Thm: regularity for V of G-replacement property} that $V\llcorner U$ is induced by a minimal $G$-hypersurface $\wti\Sigma$. 

    \medskip
    {\bf Step 2.} We now show the stability of $V$ in simply $G$-connected $U'\subset\subset U$.

    Recall that $U= A_{s,t}(G\cdot p)$ with $0<s<t<\inj(G\cdot p)$. 
    Let $N$ be a $G$-component of any non-principal orbit type stratum $M_{(G_x)}\cap \closure(A_{s,t}(G\cdot p))\subset M\setminus M^{prin}$. 
    Note that $\exp_{G\cdot p}^\perp(p,v)$ and $\exp_{G\cdot p}^\perp(p, \lambda v)$ have the same orbit type for $|v|,\lambda|v|\in (0,\inj(G\cdot p))$. Hence, $\emptyset\neq  N\cap \bd B_t(G\cdot p)$.

    We will next show that $\wti\Sigma:=\spt(\|V\|)\cap U$ can only intersect $N$ orthogonally, which implies that $\wti\Sigma\llcorner U'$ admits a $G$-invariant unit normal for any simply $G$-connected open $G$-set $U'\subset\subset U$ (cf. the proof of Corollary \ref{Cor: stability of minimizers}). Thus, $\wti\Sigma$ is stable in $U'$ by Lemma \ref{Lem: stability and G-stability}. 

    Indeed, take any $x\in N\cap \wti\Sigma$. 
    Since $U=A_{s,t}(G\cdot p)$, there is no isolated non-principal orbit (Definition \ref{Def: isolated orbits}) in $U$. 
    Hence, there are three possibilities at $G\cdot x$:
    \begin{itemize}
        \item[(A)] $M$ satisfies Proposition \ref{Prop: local structure of M/G}(1), and $\wti\Sigma$ satisfies Proposition \ref{Prop: local structure of Sigma/G 1}(i) or (ii); 
        \item[(B)] $M$ satisfies Proposition \ref{Prop: local structure of M/G}(2.a), and $\wti\Sigma$ satisfies Proposition \ref{Prop: local structure of Sigma/G 2}(1) or (2); 
        \item[(C)] $M$ satisfies Proposition \ref{Prop: local structure of M/G}(2.b), and $\wti\Sigma$ satisfies Proposition \ref{Prop: local structure of Sigma/G 2}(3) or (4).
    \end{itemize}
    Since there is no special exceptional orbit in $M$, we know $\dim(M\setminus M^{prin})\leq n-1$, which implies Proposition \ref{Prop: local structure of Sigma/G 1}(ii) and Proposition \ref{Prop: local structure of Sigma/G 2}(2) cannot occur at $G\cdot x$. 
    Hence, in Case (A) and (B), $G\cdot x$ must be a singular orbit (i.e. $\dim(G\cdot x)<n+1-\cohom(G)$), and $\wti\Sigma$ must be orthogonal to $N$ at $G\cdot x$ by Proposition \ref{Prop: local structure of Sigma/G 1}(i), \ref{Prop: local structure of Sigma/G 2}(1) and Lemma \ref{Lem: hypersurface position}.  

    
    In Case (C), it follows from Proposition \ref{Prop: local structure of M/G}(2.b) that $G\cdot x$ is an exceptional orbit, and 
    there exists $r>0$ so that $(B_r(N)\setminus N) \cap U \subset M^{prin}$. 
    Additionally, combining this with the locally $G$-boundary-type assumption on the initial $G$-hypersurface $\Sigma$, we know the transversal intersection $\Gamma=\Sigma\cap\bd U$ satisfies $\Gamma\cap N = \emptyset$. 
    (This is because if $y\in \Gamma\cap N \neq \emptyset$, then $\Sigma$ satisfies Proposition \ref{Prop: local structure of Sigma/G 2}(4) at $G\cdot y$, which contradicts the locally $G$-boundary-type condition.)
    In particular, we have $\Gamma\llcorner B \subset M^{prin}\cap \bd U$, where $B:=B_r(N\cap \bd U)$. 
    Denote by $S_t^G:=\bd B_t(G\cdot p)$ the outer boundary of $U$. 
    For $t>0$ small enough, we have the convexity of $B_t(G\cdot p)$ (Lemma \ref{Lem: uniform constants}(i)). 
    Hence, using \eqref{Eq: area in orbit space} and Lemma \ref{Lem: minimal in M^prin}, one can apply the boundary regularity of De Lellis-Pellandini \cite{delellis2010genus}*{Lemma 8.1} in $U^*$ near $(\Gamma\llcorner(B\cap S_t^G))^*$ to show that $\bd \wti\Sigma\llcorner ( B \cap S_t^G) = \Gamma  \llcorner ( B \cap S_t^G)\subset M^{prin}$. 
    This implies that Proposition \ref{Prop: local structure of Sigma/G 2}(4) cannot occur on $\wti\Sigma$ at $G\cdot x$.
    (Otherwise, $N\subset \wti \Sigma$, and $  \bd\wti\Sigma\llcorner ( B \cap S_t^G)$ contains $\emptyset\neq N\cap S_t^G \subset M\setminus M^{prin}$ yielding a contradiction.)
    Therefore, at $G\cdot x$, $\wti\Sigma$ satisfies Proposition \ref{Prop: local structure of Sigma/G 2}(3) and is orthogonal to $N$. 

    {\bf Step 3.} Stability of $V$ in $U=A_{s,t}(G\cdot p)$. 
    
    Firstly, we conclude from Remark \ref{Rem: simply G-connected} that $U= A_{s,t}(G\cdot p)$ is simply $G$-connected whenever $p\in M\setminus \mc S_{n.m.}$, which implies the stability of $\wti \Sigma:=\spt(\|V\|)\cap U$ in $U$ by {\bf Step 2}. 
    
    Suppose next $G\cdot p\subset \mc S_{n.m.}$. 
    Then by Proposition \ref{Prop: local structure of M/G}(3), $B_t^*([p])$ is a closed cone over $RP^2$, and $\bd B_t(G\cdot p)\cap \mc S=\emptyset$. 
    Since $G\cdot x$ in Proposition \ref{Prop: local structure of M/G}(1) or (2.a) corresponds to a boundary point $[x]$ in $M^*$, we know every non-principal orbit $G\cdot x\subset U$ satisfies Proposition \ref{Prop: local structure of M/G}(2.b). 
    Hence, as in {\bf Step 2}, using the locally $G$-boundary-type condition of $\Sigma$, we know $\Gamma:=\Sigma\cap\bd U$ is an $(n-1)$-dimensional $G$-submanifold with $\Gamma\subset M^{prin}$. 
    One can then apply the boundary regularity of De Lellis-Pellandini \cite{delellis2010genus}*{Lemma 8.1} in $U^*$ near the outer boundary $(\Gamma\llcorner \bd B_t(G\cdot p))^*$ to show that $\bd \wti\Sigma \llcorner \bd B_t(G\cdot p)= \Gamma \llcorner \bd B_t(G\cdot p) \subset M^{prin}$. 
    Take $s'\in (s,t)$ close to $s$ so that $\bd B_{s'}(G\cdot p)$ is transversal to $\wti\Sigma$. 
    Then $\wti\Sigma_{s'}:= \wti\Sigma\llcorner A_{s',t}(G\cdot p)$ is an embedded $G$-hypersurface with smooth boundary. 
    As in \cite{delellis2010genus}*{Corollary 8.2}, every $G$-component $\wti\Sigma^i_{s'}$ of $\wti\Sigma_{s'}$ either 
    \begin{itemize}
        \item has non-empty boundary on $\bd B_t(G\cdot p)$ (i.e. $\emptyset\neq \bd \wti\Sigma^i_{s'}\cap \bd B_t(G\cdot p) \subset \Gamma$); or 
        \item has (possibly empty) boundary $\bd \wti\Sigma^i_{s'} $ contained in $ \bd B_{s'}(G\cdot p)$ (i.e. $ \bd \wti\Sigma^i_{s'} \subset \bd B_{s'}(G\cdot p)$). 
    \end{itemize}
    In the latter case, we can shrink $t$ to $t'\in (s',t)$ so that $\bd B_{t'}(G\cdot p)$ touches $\interior(\wti\Sigma^i_{s'})$ for the first time, which contradicts the maximum principle. 
    Next, consider $\wti\Sigma^i_{s'}$ with $\Gamma_i:=\bd \wti\Sigma^i_{s'}\cap \bd B_t(G\cdot p) \neq \emptyset $. 
    The non-trivial fixed boundary $\Gamma_i\subset \Gamma$ implies that $\wti\Sigma^i_{s'}$ is also contained in $\spt (T)$, where $T$ is the limit of $\Sigma_k$ as mod $2$ $n$-currents. 
    By the locally $G$-boundary-type condition, $\Sigma\llcorner U$ and $\Sigma_k\llcorner U$ are {\em $G$-boundaries}. Namely, they are contained in the boundary of some $G$-invariant Caccioppoli sets. 
    Hence, the mod $2$ $n$-current limit $T\llcorner U$ is also induced by the boundary of a certain $G$-invariant Caccioppoli set, which implies $\wti\Sigma^i_{s'}\subset \spt(T)\cap A_{s',t}(G\cdot p)$ admits a $G$-invariant unit normal and is stable by Lemma \ref{Lem: stability and G-stability}. 
    By taking $s'\to s$, we see $\wti\Sigma$ is stable in $U=A_{s,t}(G\cdot p)$.
\end{proof}

Now, we can show the {\em improved} $G$-replacement chain property of min-max $G$-varifolds.


\begin{proposition}\label{Prop: amv and replacement}
	Suppose \eqref{Eq: cohomogeneity assumption} is satisfied, and $M$ contains no special exceptional orbit. 
    Let $U:=A_{s,t}(G\cdot p)$ be a $G$-annulus with $G\cdot p\subset M$, $0<s<t<\inj(G\cdot p)$, and $t>0$ small enough. 
    If $V\in \mc V^G_n(M)$ is $G$-almost minimizing in $U$, 
    then for any finite sequence of concentric $G$-annuli $\{A_{s_j,t_j}(G\cdot p)\}_{j=1}^k\subset\subset U$, there exists a sequence $V=V_0,V_1,\dots,V_k\in \mc V^G_n(M)$ of $G$-varifolds so that for each $j=1,\dots, k$, 
    \begin{itemize}
        \item $V_j$ is stationary in $U$, and is a $G$-replacement of $V_{j-1}$ in $A_{s_j,t_j}(G\cdot p)$;
        \item $V_j\llcorner A_{s_j,t_j}(G\cdot p)$ is not only $G$-stable, but also stable in $A_{s_j,t_j}(G\cdot p)$.  
    \end{itemize} 
\end{proposition}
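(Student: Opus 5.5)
We follow the Colding--De Lellis construction of replacements for almost minimizing varifolds (\cite{colding2002minmax}*{Proposition 6.3 / \S 7}), carried out inductively along the concentric annuli. The engine is Proposition \ref{Prop: constrained minimizer}. The only extra point is that it must be applied to the annuli $A_{s_j,t_j}(G\cdot p)$, which are themselves $G$-annuli centered at $G\cdot p$ with $t_j<t$ small. So the stability conclusion of Proposition \ref{Prop: constrained minimizer} (Step 3 there) yields genuine stability in all of $A_{s_j,t_j}(G\cdot p)$, including when $G\cdot p\subset\mc S_{n.m.}$.

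\textbf{Step 1 (first replacement).} Let $\{\Sigma_i\}\subset\mc{LB}^G$ be the sequence with $|\Sigma_i|\to V$ that is $(G,\epsilon_i,\delta_i)$-almost minimizing in $U$, with $\epsilon_i,\delta_i\to0$. After a small $G$-isotopy (Sard) we may take $\Sigma_i$ transversal to $\bd A_{s_1,t_1}(G\cdot p)$. Since $A_{s_1,t_1}\subset U$, each $\Sigma_i$ is also $(G,\epsilon_i,\delta_i)$-almost minimizing in $A_{s_1,t_1}$. Take a minimizing sequence $\{\Sigma_{i,k}\}_k$ for $(\Sigma_i,\mk{Is}^G_{\delta_i}(A_{s_1,t_1}))$. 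By Proposition \ref{Prop: constrained minimizer}, $|\Sigma_{i,k}|\to V_{1,i}$, and $V_{1,i}\llcorner A_{s_1,t_1}$ is an integer multiple of a minimal $G$-hypersurface that is stable in $A_{s_1,t_1}$. We also have $\mc H^n(\Sigma_i)-\epsilon_i\le\|V_{1,i}\|(M)\le\mc H^n(\Sigma_i)$ and $V_{1,i}=|\Sigma_i|$ outside $\closure(A_{s_1,t_1})$.

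Choose a diagonal sequence $\Sigma_i':=\Sigma_{i,k(i)}$ and let $V_1=\lim|\Sigma_i'|=\lim V_{1,i}$. Then:
\begin{itemize}
\item $\|V_1\|(M)=\|V\|(M)$ and $V_1=V$ outside $\closure(A_{s_1,t_1})$.
\item By the Schoen--Simon compactness \cite{schoen1981regularity}, which applies because the stability is genuine rather than merely $G$-stability and because of the codimension bound \eqref{Eq: cohomogeneity assumption}, $V_1\llcorner A_{s_1,t_1}$ is an integer multiple of a smooth minimal $G$-hypersurface that is stable in $A_{s_1,t_1}$.
\item $V_1$ is stationary in $U$ by the Colding--De Lellis argument. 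The $\Sigma_i'$ remain almost minimizing in $U$ with slightly worse constants, namely $(G,\epsilon_i',\delta_i')$ with $\epsilon_i',\delta_i'\to0$, since $\Sigma_i'$ is obtained from $\Sigma_i$ by an isotopy that raises area by at most $\delta_i$ and lowers it by at most $\epsilon_i+o(1)$. Lemma \ref{Lem: amv implies stationary} then gives stationarity (and $G$-stability) of $V_1$ in $U$.
\end{itemize}
Hence $V_1$ is a $G$-replacement of $V$ in $A_{s_1,t_1}$ in the sense of Definition \ref{Def: G-replacement}, with the improved stability.

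\textbf{Step 2 (induction).} Since $V_1$ is again $G$-almost minimizing in $U$ with respect to $\{\Sigma_i'\}\subset\mc{LB}^G$, we repeat Step 1 on $A_{s_2,t_2}$, and so on. This produces $V_2,\dots,V_k$, completing the chain.

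\textbf{Main obstacle.} The delicate step is keeping the almost minimizing property, together with the locally $G$-boundary-type condition, along the diagonal sequence.
\begin{itemize}
\item \emph{Locally $G$-boundary-type.} This holds because each $\Sigma_{i,k}$ is a $G$-isotopic image of $\Sigma_i$, and $G$-isotopies preserve the property.
\item \emph{Almost minimizing with controlled constants.} This requires an equivariant version of the cut-and-paste estimate in \cite{colding2002minmax}*{Lemma 7.4}. Given a $G$-isotopy in $U$ that decreases the area of $\Sigma_i'$, we concatenate the inverse of the constrained isotopy with it. Lemma \ref{Lem: squeezing G-isotopy} is then used to keep the area bound along the concatenated path.
\end{itemize}
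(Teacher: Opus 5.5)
Your proposal is correct and follows essentially the paper's argument: constrained $G$-isotopy minimization in each annulus via Proposition \ref{Prop: constrained minimizer}, diagonal limits using compactness of stable minimal hypersurfaces, stationarity in $U$ from the almost minimizing property, and iteration. The only real difference is that you obtain transversality by slightly perturbing $\Sigma_i$, which changes its almost minimizing constants by an arbitrarily small amount, whereas the paper shrinks the annuli $A_1^l\nearrow A_1$ and passes to one more limit. You also do not need the squeezing lemma, and you lose no constants in the concatenation step. Concatenate the constrained isotopy itself (not its inverse) with any area-decreasing $G$-isotopy of $\Sigma_i'$ in $U$. Since $\mc H^n(\Sigma_i')\le\mc H^n(\Sigma_i)$, this directly contradicts the $(G,\epsilon_i,\delta_i)$-almost minimizing property of $\Sigma_i$, so $\Sigma_i'$ keeps the same constants.
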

\begin{proof}	
	By Lemma \ref{Lem: amv implies stationary}, $V$ is stationary in $U=A_{s,t}(G\cdot p)$. 
	Let $\{\Sigma_j\}_{j\in\mb N}\subset \mc {LB}^G$ be a sequence of closed $G$-hypersurfaces that are $(G,\epsilon_j,\delta_j)$-almost minimizing in $U$ with $V=\lim_{j\to\infty}|\Sigma_j|$, where $\epsilon_j,\delta_j\to 0$ as $j\to\infty$. 
	For any concentric $G$-annulus $A_1:=A_{s_1,t_1}(G\cdot p)\subset \subset U$, we can slightly shrink $A_1$ to $A_1^l:=A_{s_1^l,t_1^l}(G\cdot p)\subset A_1$ so that $\bd A_1^l$ is transversal to every $\Sigma_j$ and $s_1^l\to s_1^+, t_1^l\to t_1^-$ as $l\to\infty$. 
    Consider the minimizing sequence $\{\Sigma_{j,k}^l\}_{k=1}^{\infty}$ for the constrained area minimization problem $(\Sigma_j, \mk {Is}^G_{\delta_j}(A_1^l))$ so that $W_j^l=\lim_{k\to\infty} |\Sigma_{j,k}^l|\in \mc V^G_n(M)$. 
    By \eqref{Eq: constrained minimizer - area bound}, there exist $W_j\in \mc V^G_n(M)$, an unrelabelled subsequence of $\{W_j^l\}_{l\in\mb N}$, and $\{k_l\}_{l\in\mb N}$ so that $W_j=\lim_{l\to\infty}W_j^l=\lim_{l\to\infty}|\Sigma_{j,k_l}^l|$. 
    By Proposition \ref{Prop: constrained minimizer} and the compactness theorem for stable minimal hypersurfaces, we know $W_j\llcorner A_1$ is an integer multiple of a smoothly embedded stable minimal $G$-hypersurface in $A_1$ so that $\mc H^n(\Sigma_j)-\epsilon_j\leq \|W_j\|(M)\leq \mc H^n(\Sigma_j)$. 
    Hence, we also have subsequences $\{W_j\}_{j\in\mb N}$ and $\{l_j\}_{j\in\mb N}$ so that there exists $V_1\in\mc V^G_n(M)$ with $V_1=\lim_{j\to\infty}W_j=\lim_{j\to\infty}|\Sigma_{j,k_{l_j}}^{l_j}|$. 
    One easily verifies that $V_1$ is a $G$-replacement of $V$ in $A_1$ that is stable in $A_1$. 
    %
    %
	
	Note that $\Sigma_{j,k_{l_j}}^{l_j}$ is $(G,\epsilon_{j},\delta_{j})$-almost minimizing in $U$ by the constrained area minimizing construction. 
	Thus, we can repeat this procedure to show the desired result. 
\end{proof}

Finally, we combine the above results and prove the $G$-equivariant min-max theorem \ref{Thm: G-min-max}. 

\begin{proof}[Proof of Theorem \ref{Thm: G-min-max}]
	Let $V\in \mf C(\{\mf \Sigma^i\})$ be the stationary min-max $G$-varifold given in Theorem \ref{Thm: existence amv in annuli}.
    For any $p\in M$, it now follows from Theorem \ref{Thm: existence amv in annuli} and Proposition \ref{Prop: amv and replacement} that there exists $r=r(G\cdot p)>0$ so that $V$ has the {\em improved} $G$-replacement chain property (as in Proposition \ref{Prop: amv and replacement}) within every $G$-annulus $U:=A_{s,t}(G\cdot p)$ with $0<s<t<r$. 
    Then, as we mentioned in Remark \ref{Rem: improved G-replacement}, one can apply \cite{wang2024index}*{Theorem 4.18} to obtain the desired regularity result for $V$. 
\end{proof}

\subsection{Genus upper bound}


\begin{proof}[Proof of Theorem \ref{Thm: G-min-max - topological control}]
    Take any smooth open $G$-set $U\subset\subset M^{prin}$ so that $\bd U$ is transversal to $\cup_{j=1}^R\Gamma_j$ and every $\Sigma^i_{x_i}$ in the min-max sequence. 
    Then, it is sufficient to show the weighted genus upper bound \eqref{Eq: G-min-max - genus} with $\genus((\Gamma_j\cap U)^*)$ in place of $\mk g(\Gamma_j)$. 
    By Lemma \ref{Lem: minimal in M^prin} and \eqref{Eq: area in orbit space}, all the $G$-equivariant objects (e.g., minimal $G$-hypersurface, $G$-almost minimizing varifolds, $G$-replacement, ....) in $U$ can be reduced via the submersion $\pi$ to the non-equivariant classical ones in $(U^*, \wti g_{_{M^*}})$. 
	Hence, the proof in \cite{ketover2019genus} would carry over for $\Gamma_j^*$ in $(U^*, \wti g_{_{M^*}})$. 
\end{proof}

\section{Spherical Bernstein problem in $\mb S^4$}\label{Sec: spherical Bernstein problem}

In this section, we apply the equivariant min-max theory to provide a new resolution of Chern's spherical Bernstein problem in $\mb S^4$. 
We begin with our equivariant setup. 

\subsection{$S^1$-action on $\mb S^4$}
Let $M=\mb S^4= \mb S^4_1(0)$ be the unit sphere in $\mb R^5$ with coordinates $(x, z_1, z_2)$, where $x\in \mb R$, $z_1, z_2\in \mb C$ satisfy $x^2+|z_1|^2+|z_2|^2=1$. 
Then, $e^{i\theta}\in S^1$ acts on $\mb S^4$ by 
\begin{align}\label{Eq: suspended S1-action}
    e^{i\theta}\cdot (x,z_1,z_2)= R_\theta(x,z_1,z_2):= (x, e^{i\theta} z_1, e^{i\theta}z_2). 
\end{align}
In particular, we can regard $M=\mb S^4$ as $\{(\cos(\alpha), p): p\in \mb S^3_{\sin(\alpha)}(0), \alpha\in [0,\pi]\}$ the suspension of $\mb S^3_1(0)$, and $S^1$ acts on each slice $\mb S^3_{\sin(\alpha)}(0)$ by the standard Hopf action so that $\{(\pm 1, 0,0,0,0)\}= M\setminus M^{prin}$ are the $S^1$-fixed points. 
Hence, the orbit space $M/S^1$ is a suspension of $S^2$, and thus is a topological $S^3$ with two singular points $\{(\pm 1,0,0,0)\}$. 

The round metric on $M=\mb S^4$ takes the standard warped-product form: 
\[ g_{_M}:=d\alpha^2 + \sin^2(\alpha)g_{_{\mb S^3_1}},\]
where $g_{_{\mb S^3_1}}$ denotes the round metric on $\mb S^3_1(0)$. 
Note that the Hopf fibration provides a Riemannian submersion $h: \mb S^3_1 \to \mb S^2_{1/2}(0)$ under the corresponding round metrics. 
Hence, the quotient map $\pi : M\to M/S^1$ induces a Riemannian metric $g_{_{M/S^1}}$ on $M^{prin}/S^1$: 
\begin{align}\label{Eq: suspension orbit space}
    M^{prin}/S^1:= \left\{ (\cos(\alpha), p): p\in \mb S^2_{\frac{\sin(\alpha)}{2}}, \alpha\in (0,\pi) \right\}\quad {\rm and}  \quad  g_{_{M/S^1}} := d\alpha^2+ \frac{\sin^2(\alpha)}{4} g_{_{\mb S^2_1}}, 
\end{align}
so that $\pi\llcorner M^{prin}= \pi\llcorner (\mb S^4_1(0)\setminus\{(\pm 1,0,0,0,0)\})$ is a Riemannian submersion. 
In \eqref{Eq: suspension orbit space}, although $M/S^1$ is represented by an ellipsoid in $\mb R^4$, the metric $g_{_{M/S^1}}$ is not induced from $\mb R^4$. 
For any $(\cos(\alpha), p)\in M^{prin}/S^1$, $\pi ^{-1}((\cos(\alpha), p))$ is a great circle in $\{\cos(\alpha)\}\times \mb S^3_{\sin(\alpha)}$, and thus
\[ V((\cos(\alpha), p)) := \mc H^1(\pi^{-1}((\cos(\alpha), p)))=2\pi\sin(\alpha). \]
Continuously extend $V$ by $0$ to the singular set $(\pm 1,(0,0,0,0))\in M/S^1$ as in \eqref{Eq: orbits volume}, and define the weighted metric $\wti g_{_{M/S^1}}$ as in \eqref{Eq: weighted metric in M/G} by 
\[ \wti g_{_{M/S^1}} := V\cdot g_{_{M/S^1}}. \]

\begin{lemma}\label{Lem: unique equivariant equator}
	Let the $S^1$-action on $\mb S^4$ be given as above. 
	Then, there exists only one equatorial hypersphere $E_0=\{0\}\times \mb S^3$ in $\mb S^4$ that is $S^1$-invariant. 
\end{lemma}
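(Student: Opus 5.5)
The argument is pure linear algebra: identify equators with unit normal vectors and use the fact that the $S^1$-action is linear. An equatorial hypersphere of $\mb S^4$ has the form $E_v=\{q\in \mb S^4: \langle q,v\rangle=0\}$ for a unit vector $v\in\mb R^5$, and $E_v=E_w$ if and only if $w=\pm v$. The action \eqref{Eq: suspended S1-action} is the restriction of the orthogonal linear map $R_\theta$ on $\mb R^5=\mb R\times\mb C^2$. Hence $R_\theta(E_v)=E_{R_\theta v}$, and $E_v$ is $S^1$-invariant if and only if $R_\theta v=\pm v$ for every $\theta$.

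The first step is to remove the sign. The map $\theta\mapsto R_\theta v$ is continuous from the connected group $S^1$ into $\{v,-v\}$, and its value at $\theta=0$ is $v$. Therefore $R_\theta v=v$ for all $\theta$, so $v$ is a fixed vector of the linear action.

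The second step is to compute the fixed vectors. Write $v=(x,z_1,z_2)$. Then $R_\theta v=v$ means $e^{i\theta}z_j=z_j$ for every $\theta$, which forces $z_1=z_2=0$. So $v=(\pm1,0,0,0,0)$, and the only candidate is $E_0=\{x=0\}=\{0\}\times\mb S^3$.

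Conversely, $E_0$ is clearly invariant, since $R_\theta$ preserves the $x$-coordinate. This gives existence and uniqueness. There is no real obstacle here; the only point needing care is the connectedness argument that excludes $R_\theta v=-v$.
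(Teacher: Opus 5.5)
Your proof is correct and follows the same route as the paper's: both identify an equator with its normal line $\{tv\}$, observe that $S^1$-invariance of $S_v$ forces that line to be $S^1$-invariant, and conclude it must be the fixed axis $\mathbb{R}\times\{(0,0,0,0)\}$. Your connectedness argument ruling out $R_\theta v=-v$ and your explicit computation of the fixed vectors simply spell out what the paper leaves implicit in ``the only possible choice.''
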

\begin{proof}
	Every equatorial hypersphere can be written as $S_v:=\{p\in\mb S^4: p\cdot v=0\}$ for some $v\in \mb S^4$. 
	If $S_v$ is $S^1$-invariant, then the line $\{tv:t\in\mb R\}$ is also $S^1$-invariant. 
	Thus, the only possible choice of $\{tv:t\in\mb R\}$ is the line $\mb R\times \{(0,0,0,0)\}$ passing through the fixed points.  
\end{proof}

\begin{lemma}\label{Lem: positive Ricci M/S1}
	Using the above notations, $(M^{prin}/S^1, \wti g_{_{M/S^1}})$ has positive Ricci curvature.
\end{lemma}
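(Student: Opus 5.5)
We will compute the Ricci curvature of the explicit three-dimensional warped metric directly. From \eqref{Eq: suspension orbit space} and the definition of $V$, on $M^{prin}/S^1\cong (0,\pi)\times S^2$ we have
\[
\wti g_{_{M/S^1}} = 2\pi\sin\alpha\, d\alpha^2 + \frac{\pi}{2}\sin^3\alpha\, g_{_{\mb S^2_1}}.
\]
The constant $2\pi$ only rescales the metric, and rescaling by a positive constant multiplies $\Ric$ by a positive factor relative to the metric. So it suffices to study $\hat g := \sin\alpha\, d\alpha^2 + \tfrac14\sin^3\alpha\, g_{_{\mb S^2_1}}$.

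First we put $\hat g$ in standard warped form. Introduce the arclength parameter $s=\int_0^\alpha \sqrt{\sin t}\,dt$, so $ds=\sqrt{\sin\alpha}\,d\alpha$ and $\hat g = ds^2 + f(s)^2 g_{_{\mb S^2_1}}$ with $f=\tfrac12\sin^{3/2}\alpha$.

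Next we recall the Ricci curvature of $ds^2+f^2 g_{\mb S^2_1}$:
\[
\Ric(\partial_s,\partial_s) = -2\frac{f''}{f},
\qquad
\Ric(e,e) = -\frac{f''}{f} + \frac{1-(f')^2}{f^2}
\]
for unit tangential $e$, with mixed terms vanishing. Thus positivity reduces to two scalar inequalities: $f''<0$ and $f^2(1-(f')^2) > f f''$ on $(0,\pi)$.

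We then compute in the variable $\alpha$, writing $d/ds=(\sin\alpha)^{-1/2}d/d\alpha$. This gives
\[
f' = \tfrac34\cos\alpha,
\qquad
f'' = -\tfrac34 \sin^{1/2}\alpha .
\]
Hence $f''<0$ on $(0,\pi)$, and the radial Ricci term equals $3\sin^{-1}\alpha>0$. For the tangential term, $-f''/f = \tfrac32\sin^{-1}\alpha>0$. Moreover $1-(f')^2 = 1-\tfrac{9}{16}\cos^2\alpha \ge \tfrac{7}{16}>0$, so $(1-(f')^2)/f^2>0$. Both tangential contributions are therefore positive, and $\Ric>0$ everywhere on $M^{prin}/S^1$.

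The main point requiring care is the routine but error-prone chain-rule computation of $f''$ in the $s$-variable, together with checking that the Ricci formula applies with the sphere of curvature one (the $\tfrac14$ absorbed into $f$). We would double-check the computation by verifying the behaviour near $\alpha\to0$, where $s\sim \tfrac23\alpha^{3/2}$ and $f\sim \tfrac12\alpha^{3/2}\sim \tfrac34 s$, consistent with $f'(0)=\tfrac34$ (a cone of angle less than $2\pi$, as expected). No global argument is needed beyond this pointwise computation.
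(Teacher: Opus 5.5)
Your proof is correct, and your numbers agree with the paper's. The route differs from the paper's in how the conformal weight $V=2\pi\sin\alpha$ is handled.

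\textbf{The paper's route.} It keeps the variable $\alpha$. It first computes the Ricci tensor of the unweighted warped product $g_{_{M/S^1}}=d\alpha^2+\frac{\sin^2\alpha}{4}g_{_{\mb S^2_1}}$. It then applies the three-dimensional conformal change formula with $e^{2u}=2\pi\sin\alpha$. This gives $\wti\Ric(\bd_\alpha,\bd_\alpha)=3$ and $\wti\Ric(e,e)=\frac{22-15\cos^2\alpha}{4\sin^2\alpha}$ for $g_{_{M/S^1}}$-unit $e$.

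\textbf{Your route.} You absorb the conformal factor into the radial coordinate via $ds=\sqrt{\sin\alpha}\,d\alpha$. Then a single warped-product formula with $f=\frac12\sin^{3/2}\alpha$ suffices.

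\textbf{Comparison.} Your approach avoids computing $\nabla^2u$, $\Delta u$ and $|\nabla u|^2$. It also makes positivity visible term by term: $f''<0$ and $|f'|\le\frac34<1$ separately. The paper's approach instead reports the curvature directly in the original $\alpha$-frame.

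\textbf{Cross-check.} Your results translate exactly into the paper's. Since $\bd_\alpha=\sqrt{\sin\alpha}\,\partial_s$, your $\Ric(\partial_s,\partial_s)=3/\sin\alpha$ gives $\Ric(\bd_\alpha,\bd_\alpha)=3$. Since $\hat g=\sin\alpha\, g_{_{M/S^1}}$, a $g_{_{M/S^1}}$-unit vector $e$ has $\hat g(e,e)=\sin\alpha$. Your tangential value $\frac{3}{2\sin\alpha}+\frac{16-9\cos^2\alpha}{4\sin^3\alpha}=\frac{22-15\cos^2\alpha}{4\sin^3\alpha}$ therefore becomes the paper's $\frac{22-15\cos^2\alpha}{4\sin^2\alpha}$. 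Here the Ricci tensor is unchanged by the constant factor $2\pi$.

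\textbf{A minor point.} Your sanity check near $\alpha\to0$ is fine. In dimension three, however, the model there is a cone over a round $2$-sphere of radius $\frac34$, so it has solid angle less than $4\pi$ rather than angle less than $2\pi$. This is irrelevant to the lemma, which concerns only $M^{prin}/S^1$.
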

\begin{proof}
	Under the parametrization $(\cos(\alpha), \mb S^2_{\sin(\alpha)/2})$ of $M^{prin}/S^1$ with $\alpha\in (0,\pi)$, we have that $\wti g_{_{M/S^1}}$ is a conformal warped-product metric:
	\[ \wti g_{_{M/S^1}}= (2\pi\sin(\alpha))d\alpha^2 + \frac{\pi}{2}\sin^3(\alpha) \cdot g_{_{\mb S^2_1}}.  \]
	For the Ricci curvature $\Ric(\cdot,\cdot)$ under the warped metric $g_{_{M/S^1}}=d\alpha^2+ f^2(\alpha)g_{_{\mb S^2_{1}}}$ with $f(\alpha)=\sin(\alpha)/2$, the standard formula shows
	\[ \Ric(\bd_\alpha,\bd_\alpha)=-2\frac{f''}{f}=2, \quad\Ric(\bd_\alpha,e)=0,\quad \Ric(e,e)=\frac{1}{f^2} -\left(\frac{f''}{f} + \frac{(f')^2}{f^2}\right)=\frac{5-2\cos^2(\alpha)}{\sin^2(\alpha)}, \]
	where $e$ is any $g_{_{M/S^1}}$-unit vector orthogonal to $\bd_\alpha$. 
	For the conformal factor $2\pi\sin(\alpha)= e^{2u}$ with $u=\frac{1}{2}\ln(2\pi\sin(\alpha))$, we have $u'=\cot(\alpha)/2$, $u''=-1/(2\sin^2(\alpha))$, and 
	\[\Delta u= u''+2\frac{f'}{f}u'=\frac{1}{2\sin^2(\alpha)} -1. \]
	Hence, under the conformal change, the Ricci curvature $\wti\Ric(\cdot,\cdot)$ in $\wti g_{_{M/S^1}}$ is expressed by $\wti \Ric = \Ric -\nabla^2 u + du\otimes du -(\Delta u+|\nabla u|^2)g_{_{M/S^1}}$, which implies $\wti \Ric(\bd_\alpha, e)=0$,
	\[ \wti \Ric(\bd_\alpha,\bd_\alpha) = \Ric(\bd_\alpha,\bd_\alpha)- u''+ (u')^2 - (\frac{1}{2\sin^2(\alpha)} -1+ \frac{\cot^2(\alpha)}{4} ) = 3\]
	and 
	\begin{align*}
		\wti \Ric(e,e) &= \Ric(e,e)-\nabla^2u(e,e) - (\Delta u+|\nabla u|^2) \\
		&= \Ric(e,e)-  \frac{f'}{f} u' - \left(\frac{1}{2\sin^2(\alpha)} -1+ \frac{\cot^2(\alpha)}{4} \right)
		\\
		&= \frac{22-15\cos^2(\alpha)}{4\sin^2(\alpha)}.
	\end{align*}
	After normalization by $1/(2\pi \sin(\alpha))$, we know $\wti\Ric$ is positive  since $\alpha\in (0,\pi)$.  
\end{proof}

In the following lemma, we show that any embedded $S^1$-hypersurface (e.g., equivariant min-max minimal $S^1$-hypersurfaces) contains no fixed point $\{(\pm 1, 0,0,0,0)\}$. 
\begin{lemma}\label{Lem: tangent cone at fixed point}
	Using the above notations, let $x_0\in \{(\pm 1, 0,0,0,0)\}$, and $C$ be an $S^1$-invariant regular minimal cone in $ N_{x_0}(S^1\cdot x_0)=T_{x_0}\mb S^4$ with vertex at the origin. 
	Then the link of $C$ is a Clifford torus, and thus $C$ is unstable. 
    Moreover, there is no $S^1$-invariant hyperplane in $T_{x_0}\mb S^4$. 
    
    In particular, every embedded $S^1$-invariant hypersurface is contained in $M^{prin}$. 
\end{lemma}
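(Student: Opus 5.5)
At a fixed point $x_0=(\pm1,0,0,0,0)$ the isotropy group is all of $S^1$, and it acts on $T_{x_0}\mb S^4\cong\mb C^2$ by the Hopf action $e^{i\theta}\cdot(z_1,z_2)=(e^{i\theta}z_1,e^{i\theta}z_2)$. The plan has four steps.

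First, reduce to the link. A regular minimal hypercone $C$ with vertex at $0$ has link $\Sigma:=C\cap\mb S^3_1$, which is a smooth closed embedded minimal surface in $\mb S^3_1$. Since the action preserves the unit sphere, $\Sigma$ is $S^1$-invariant. The action on $\mb S^3_1$ is free, so $\Sigma$ is a union of Hopf fibres, namely $\Sigma=h^{-1}(\gamma)$ for a closed curve $\gamma\subset\mb S^2_{1/2}$. Such a preimage is always a torus, since it is a circle bundle over a circle and $\Sigma$ is orientable in $\mb S^3$.

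Second, identify $\gamma$. By the Hsiang--Lawson reduction (Lemma \ref{Lem: minimal in M^prin} together with \eqref{Eq: area in orbit space}), $\Sigma$ is minimal if and only if $\gamma$ is a geodesic for the weighted metric $V\,g_{\mb S^2_{1/2}}$. Every Hopf fibre in $\mb S^3_1$ is a great circle of length $2\pi$, so $V$ is constant and $\gamma$ is a great circle of $\mb S^2_{1/2}$. Its preimage is a Clifford torus, for instance $\{|z_1|=|z_2|=1/\sqrt2\}$ after a unitary change of coordinates. This part is routine.

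Third, establish instability. The cone over the Clifford torus in $\mb R^4$ is unstable, which is Simons' criterion. Concretely, $|A|^2=2$ on the link, so the first eigenvalue of $-\Delta_\Sigma-|A|^2$ is $-2$. This lies below the threshold $-\tfrac{(n-2)^2}{4}=-\tfrac14$ for $n=3$, so a log cut-off radial test function $f(r)\,1$ gives negative second variation.

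Fourth, rule out invariant hyperplanes and conclude. An $S^1$-invariant hyperplane would have an invariant normal line, but the action on $\mb C^2$ has no fixed nonzero vector. This also follows from the third step, since the link of a hyperplane is a great sphere and not a torus.

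For the final statement, suppose an embedded $S^1$-hypersurface $\Sigma$ contains $x_0$. I expect this step to be the main obstacle, because the tangent space of $\Sigma$ at $x_0$ would be an $S^1$-invariant hyperplane in $T_{x_0}\mb S^4$. Smooth embeddedness gives a tangent hyperplane at $x_0$, and invariance of $\Sigma$ together with the fact that $S^1$ fixes $x_0$ makes $T_{x_0}\Sigma$ invariant under $dR_\theta$. This contradicts the fourth step. Hence $\Sigma\cap\{(\pm1,0,0,0,0)\}=\emptyset$, that is, $\Sigma\subset M^{prin}$.
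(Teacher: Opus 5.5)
Your proposal is correct and follows essentially the same route as the paper. Both arguments reduce the link to an $S^1$-invariant minimal surface in $\mb S^3_1$, use the constant Hopf-fibre length together with the Hsiang--Lawson reduction to see that its quotient is a great circle in $\mb S^2_{1/2}$ (hence the link is a Clifford torus), and deduce the final statement from the $S^1$-invariance of $T_{x_0}\Sigma$.

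The differences are minor. You justify instability explicitly through Simons' eigenvalue criterion ($-2<-\tfrac14$), which the paper only asserts. You exclude invariant hyperplanes by noting that an invariant normal line would give a nonzero fixed vector (by connectedness of $S^1$, or directly since $iv\notin\mb R v$). The paper instead argues that the link $\mb S^2$ would carry a free $S^1$-action and hence a nonvanishing vector field. Your alternative remark that this "follows from the third step" should refer to your first two steps.
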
 
\begin{proof}
	In $T_{x_0}S^4=\mb R^4$, $G_{x_0}=S^1$ acts on each $\mb S^3_r(0)$, $r>0$, so that the $S^1$-orbits are the fibers in the Hopf fibration of $\mb S^3_r(0)$. 
	Hence, the link $C\cap \mb S^3_1(0)$ is a minimal $S^1$-surface in $\mb S^3_1(0)$. 
	Since every $S^1$-orbit in $\mb S^3_1(0)$ has the same length and $\mb S^3_1(0)/ S^1 = \mb S^2_{1/2}(0)$, we see from Lemma \ref{Lem: minimal in M^prin} that $(C\cap \mb S^3_1(0))/ S^1$ is a closed geodesic in a Riemannian $S^2$ with constant curvature, which must be a great circle. 
	Hence, $C\cap \mb S^3_1(0)$ is a Clifford torus, and $C$ is unstable.
    
    Moreover, every hyperplane in $T_{x_0}\mb S^4$ has a link $\mb S^2\subset \mb S^3_1(0)$, which cannot be $S^1$-invariant. 
    Otherwise, the link $\mb S^2$ admits a free $S^1$-action, and then $X(p):=\frac{d}{dt}\vert_{t=0} e^{i t}\cdot p$, ($p\in \mb S^2$), gives a non-vanishing smooth vector field on $\mb S^2$, which is a contradiction. 
    
    Finally, if $\Sigma\subset M$ is an embedded $S^1$-hypersurface containing $x_0$,  
    then $T_{x_0}\Sigma$ is an $S^1$-invariant hyperplane in $N_{x_0}(S^1\cdot x_0)=T_{x_0}M$, which is a contradiction. 
\end{proof}

\subsection{Equivariant min-max for minimal hyperspheres}\label{Subsec: minimal hypersphere}
Next, we notice that the orbit space $M/S^1 \overset{\rm homeo}{\cong} S^3$ admits an isometric antipodal map, i.e. a $\mb Z_2$-action. 
The following lemma indicates that the $S^1$-action can be extended by this $\mb Z_2$-action. 
Namely, the $\mb Z_2$-action on $M/S^1$ can be lifted to the ambient manifold $M$. 
\begin{lemma}\label{Lem: extend action}
	There exists an extension $G=(S^1 \rtimes \mb Z_4)/\mb Z_2$ of $\mb Z_2$ by $S^1$ acting on $M=\mb S^4$ by isometries so that some $\tilde\sigma\in G$ induces the antipodal map $A:(\cos(\alpha),p)\mapsto (-\cos(\alpha), -p)$ on $\pi(M)= M/S^1= \{ (\cos(\alpha), p): p\in \mb S^2_{\sin(\alpha)/2}, \alpha\in [0,\pi] \}$, i.e. $\pi\circ \tilde \sigma =A\circ \pi$. 
\end{lemma}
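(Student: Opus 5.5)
We will construct an explicit isometry $\tilde\sigma$ of $\mb S^4$ that normalizes the $S^1$-action and covers $A$, and then take $G$ to be the group generated by $S^1$ and $\tilde\sigma$. We identify $\mb S^3_{\sin\alpha}\subset\mb C^2$ with a sphere in the quaternions $\mb H=\mb C\oplus\mb C j$, writing $q=z_1+z_2 j$. The Hopf action $e^{i\theta}\cdot(z_1,z_2)$ is then left multiplication $q\mapsto e^{i\theta}q$, and the Hopf map is $h(q)=\bar q\, i\, q$, up to the normalization that lands in $\mb S^2_{|q|^2/2}$. On orbits we want $p\mapsto -p$, i.e. $\bar q i q\mapsto -\bar q i q$.

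The natural candidate is left multiplication by $j$. Since $\overline{jq}\,i\,(jq)=\bar q\,(\bar j i j)\,q=-\bar q i q$, this map negates the Hopf image. We therefore define
\[
\tilde\sigma(x,q):=(-x,\,jq).
\]
This is an isometry of $\mb S^4$, being orthogonal on $\mb R\oplus\mb H$. It sends $S^1$-orbits to $S^1$-orbits, because $j e^{i\theta}=e^{-i\theta}j$ gives
\[
\tilde\sigma\circ R_\theta=R_{-\theta}\circ\tilde\sigma .
\]
Hence $\tilde\sigma$ descends to $M/S^1$, and there it is exactly $(\cos\alpha,p)\mapsto(-\cos\alpha,-p)=A$. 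It remains to check this formula against the coordinates of \eqref{Eq: suspension orbit space}, in particular that the identification of the $\mb S^2$ factor is via $h$ up to scaling.

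Next we identify the group. We set $G:=\langle S^1,\tilde\sigma\rangle\subset O(5)$. We have $\tilde\sigma^2(x,q)=(x,j^2q)=(x,-q)=R_\pi(x,q)$, so $\tilde\sigma^2\in S^1$, and conjugation by $\tilde\sigma$ inverts $S^1$. Consequently $G=S^1\cup\tilde\sigma S^1$ is compact with identity component $S^1$ and $G/S^1\cong\mb Z_2$, so $G$ is an extension of $\mb Z_2$ by $S^1$. To match the stated form, we let $\mb Z_4=\langle\tilde\sigma\rangle$ act on $S^1$ through the inversion and form $S^1\rtimes\mb Z_4$. The map $S^1\rtimes\mb Z_4\to G$, $(e^{i\theta},\tilde\sigma^k)\mapsto R_\theta\tilde\sigma^k$, is a surjective homomorphism. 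Its kernel is $\{(1,1),(R_\pi^{-1},\tilde\sigma^2)\}=\{(1,1),(-1,\tilde\sigma^2)\}\cong\mb Z_2$, which gives $G\cong(S^1\rtimes\mb Z_4)/\mb Z_2$. The group $G$ is non-split, since every element of $\tilde\sigma S^1$ squares to $R_\pi\neq\mathrm{id}$, so no lift of the generator of $\mb Z_2$ is an involution. This also explains why $G$ is not a semidirect product with $\mb Z_2$.

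Finally, we verify $\pi\circ\tilde\sigma=A\circ\pi$ on the fixed points $(\pm1,0)$, where it holds trivially. We expect the main obstacle to be bookkeeping in this verification: the sign and quaternion conventions must make the Hopf map intertwine correctly, $q\mapsto jq$ versus $q\mapsto qj$. If left multiplication commutes with the action in the chosen convention, we would switch to $q\mapsto qj$ combined with complex conjugation, i.e. $(z_1,z_2)\mapsto(-\bar z_2,\bar z_1)$. That map is antiunitary, so it conjugates $e^{i\theta}$ to $e^{-i\theta}$ and sends the Hopf image $p$ to $-p$.
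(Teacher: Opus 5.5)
Your proposal is correct and is essentially the paper's proof: your $\tilde\sigma(x,q)=(-x,jq)$ is exactly the paper's $\sigma(x,z_1,z_2)=(-x,-\bar z_2,\bar z_1)$ (the paper records this quaternionic form in its remark on ${\rm Pin}_-(2)$), and the relations $\tilde\sigma^2=R_\pi$ and $\tilde\sigma\circ R_\theta=R_{-\theta}\circ\tilde\sigma$, together with the quotient of $S^1\rtimes\mb Z_4$ by the ineffective kernel $\{(1,1),(-1,\tilde\sigma^2)\}$, are the same steps. Your closing contingency is unnecessary, since in your convention $q\mapsto jq$ already inverts the action and equals $(z_1,z_2)\mapsto(-\bar z_2,\bar z_1)$; the remaining coordinate check is immediate because the map in \eqref{Eq: Hopf map} is $\tfrac12$ times a coordinate permutation of $\bar q\,i\,q$ (equivalently, $h(-\bar z_2,\bar z_1)=-h(z_1,z_2)$ directly).
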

\begin{proof}
	Recall that $R_\theta: (x,z_1,z_2)\mapsto (x, e^{i\theta} z_1, e^{i\theta}z_2)$ is the $S^1=\{e^{i\theta}\}$ action on $M=\{(x,z_1,z_2)\in \mb R\times \mb C^2:|x|^2+|z_1|^2+|z_2|^2=1\}$. 
	Define then an isometry $\sigma: M\to M$ by 
    \begin{equation}\label{eq:sigma}
        \sigma(x,z_1,z_2):= (-x, -\bar{z}_2, \bar{z}_1), \qquad\forall (x,z_1,z_2)\in\mb S^4,
    \end{equation}
	where $\bar{z}$ is the conjugate complex of $z\in\mb C$. 
	Since the Hopf map $h: \mb S^3_1\to \mb S^2_{1/2}$ is given by 
	\begin{align}\label{Eq: Hopf map}
        (z_1,z_2)\in\mb S^3_1 ~\mapsto~ ({\rm Re}(z_1\bar{z}_2), {\rm Im}(z_1\bar{z}_2), (|z_1|^2-|z_2|^2)/2)\in\mb S^2_{1/2},
    \end{align}
	one easily verifies that $\pi\circ \sigma = A\circ \pi$, where $\pi: M\to M/S^1$ is the quotient map, and $A$ is the antipodal map on $M/S^1$. 
	Note that we cannot take $G=S^1\times \mb Z_2$ since $\sigma$ generates a $\mb Z_4$-action on $M$ that does not commute with the $S^1$-action. 
	Indeed, we have 
	\[\sigma^2= R_\pi \qquad {\rm  and}\qquad \sigma\circ R_\theta= R_{-\theta}\circ \sigma,\quad\forall \theta\in [0,2\pi].\] 
	Let $\varphi: \mb Z_4\to {\rm Aut}(S^1)$ be given by $\varphi(id)=\varphi(\sigma^2) := id$ and $\varphi(\sigma)=\varphi(\sigma^3):= \iota$, where $\iota: S^1\to S^1$ is given by $\iota(e^{i\theta})=e^{-i\theta}$. 
	Define then the semidirect product $\wti G$ by 
	\[ \wti G:= S^1 \rtimes_\varphi \mb Z_4 \]
	with the group operation 
	\[(e^{i\theta_1},\sigma^{k_1})\cdot (e^{i\theta_2},\sigma^{k_2}) := (e^{i\theta_1}\cdot \varphi(\sigma^{k_1})(e^{i\theta_2}), \sigma^{k_1}\cdot \sigma^{k_2})=(e^{i(\theta_1+(-1)^{k_1}\theta_2)}, \sigma^{k_1+k_2\mod 4}).\] 
	Now, $\wti G$ acts isometrically on $M=\mb S^4$ by 
	\[(e^{i\theta},\sigma^k)\cdot (x,z_1,z_2):= R_\theta\circ \sigma^k(x,z_1,z_2).\] 
	This group action is well-defined since for any $(e^{i\theta_1},\sigma^{k_1}), (e^{i\theta_2},\sigma^{k_2})\in \wti G$,
	\begin{align*}
		(e^{i\theta_1},\sigma^{k_1})\cdot \left( (e^{i\theta_2},\sigma^{k_2})\cdot (x,z_1,z_2)\right) &= R_{\theta_1}\circ \sigma^{k_1}\circ R_{\theta_2}\circ \sigma^{k_2}(x,z_1,z_2)
	\\&= R_{\theta_1}\circ R_{(-1)^{k_1}\theta_2} \circ \sigma^{k_1+k_2}(x,z_1,z_2) 
	\\&= \left( (e^{i\theta_1},\sigma^{k_1})\cdot (e^{i\theta_2},\sigma^{k_2})\right) \cdot (x,z_1,z_2). 
	\end{align*}
	Additionally, note that $(e^{i\pi}, \sigma^2)\in \wti G$ also acts by the identity on $M$, i.e. the $\wti G$-action is not effective. 
	One verifies that $H:=\{(e^{i0}, \sigma^0 ), (e^{i\pi}, \sigma^2)\}\cong \mb Z_2$ is a normal subgroup of $\wti G$. 
	Hence, the $\wti G$-action on $M$ descends to an action of the quotient group $G:= \wti G/H$, which has $S^1$ as its $2$-index subgroup. 
    In particular, $\tilde\sigma:=[(e^{i0},\sigma)]\in G$ induces the antipodal map on $M/S^1$. 
\end{proof}

\begin{remark}\label{Rem: pin group} 
    By identifying $(z_1,z_2)\in \mb C^2$ with the quaternion $z_1+z_2j\in \mb H = \{a + bi + cj + dk: a, b, c, d\in \mb R, i^2 = j^2 = k^2 =-1, ij = -ji = k, jk = -kj = i, ki = -ik = j\}$, the suspended Hopf $S^1$-action on $\mb S^4=\{(x,q)\in\mb R\times \mb H: x^2+|q|^2=1\}$ can be expressed as $e^{i\theta}\cdot (x,q)= (x, e^{i\theta}\cdot q)$ for any $e^{i\theta}\in S^1\subset \mb C$, and the lift $\sigma$ of the antipodal map is defined by $\sigma(x,q)=(-x,jq)$.   
    In particular, the group $G=(S^1\rtimes\mb Z_4)/\mb Z_2$ is isomorphic to the pin group ${\rm Pin}_-(2):=S^1\cup S^1\cdot j$ via the map $[(e^{i\theta}, \sigma^k)]\in G \mapsto e^{i\theta}j^k \in {\rm Pin}_-(2)$, where $S^1:=\{e^{i\theta}\in \mb C\}\subset \mb H$ and $\sigma$ is given by \eqref{eq:sigma};
    see, for instance, \cite{manolescu2016pin}*{Section 2.1}.
\end{remark}

Note that the group $G$ in Lemma \ref{Lem: extend action} is not the direct product $S^1\times \mb Z_2$, but we still have that $M/G$ is homeomorphic to $S^3/\mb Z_2=RP^3$, where $\mb Z_2=\{id, A\}$ and $A$ is the antipodal map on $M/S^1$. 
In particular, $M/S^1$ is an antipodally symmetric $S^3$ with two singular points and positive Ricci curvature (Lemma \ref{Lem: positive Ricci M/S1}) in its regular part $M^{prin}/S^1$. 
Additionally, $M$ has no exceptional $G$-orbit, $\mc S_{n.m.}=\emptyset$, and every embedded $G$-hypersurface in $M$ is locally $G$-boundary-type since $M/G$ is a closed topological manifold. 

Consider the following space of $G$-hypersurfaces with smooth topology:
\begin{align}\label{Eq: space of RP2}
	\mc X^G:=\{\wti\Sigma \subset M^{prin} \mbox{ is a closed embedded $G$-hypersurface with $\wti\Sigma/G \cong RP^2$} \},
\end{align}
and take
\[\mc Y^G:=\{E_0\cup\gamma: \gamma/G \mbox{ is a curve or a point}\},\]
where $E_0=\{0\}\times \mb S^3$ is the only $G$-invariant hypersphere (Lemma \ref{Lem: unique equivariant equator}). 
Let 
\[\overline {\mc X}^G:=\mc X^G\cup\mc Y^G.\]
Note that the equatorial hypersphere $E_0$ has the least area among all the minimal hypersurfaces in $\mb S^4$.
Hence, for any $G$-isotopy class within $\mc X^G$, Theorem \ref{Thm: G-isotopy minimizer} gives a $G$-isotopy area minimizing minimal $G$-hypersurface with area at least $2\pi^2=\mc H^3(E_0)$, which implies  
\begin{align}\label{Eq: infimum area of equator}
	\inf_{\Sigma\in \overline{\mc X}^G } \mc H^3(\Sigma)=\mc H^3(E_0) = 2\pi^2 .
\end{align}

Next, by Lemma \ref{Lem: positive Ricci M/S1}, we can follow the constructions in \cite{haslhofer2019sphere}*{Theorem 10.1} to find a non-trivial family of antipodally symmetric $2$-spheres $\{\Sigma_t\}_{t\in [0,1]}$ in $M/S^1$ with $\wti\Sigma_t:=\pi^{-1}(\Sigma_t)$ so that
\begin{enumerate}
	\item $\wti\Sigma_0=E_0$, $\wti\Sigma_1\in\mc Y^G $, and $\wti \Sigma_t\in \mc X^G$ for $t\in (0,1)$;
	\item[(2)] $\sup_{t\in [0,1]} \mc H^3_{g_{_M}}(\wti\Sigma_t)=\sup_{t\in [0,1]} \mc H^2_{\wti g_{_{M/S^1}}}(\Sigma_t) < 3 \mc H^2_{\wti g_{_{M/S^1}}}(E_0/S^1)=3\mc H^3_{g_{_{M}}}(E_0)=6\pi^2$; 
	\item[(3)] $\{\wti\Sigma_t\}_{t\in [0,1]}$ is a $G$-equivariant $(X,Z)$-sweepout (Definition \ref{Def: G-sweepout}) with $X=[0,1]$, $Z=\{0,1\}$;
	\item[(4)] $\Phi:[0,1]\to \mc Z_3(M;\mb Z_2) $ given by $\Phi(t):=\llbracket \wti\Sigma_t\rrbracket$ is an Almgren-Pitts $1$-sweepout (cf. \cite{marques2017existence}); namely, there exists a continuous family of Caccioppoli sets $\{\Omega_t\}_{t\in [0,1]}$ with $\bd\Omega_t=\wti\Sigma_t$ so that $\Omega_1=M\setminus\Omega_0$. 
\end{enumerate}
Indeed, for $t\in (0,1)$, let 
\[\wti\Gamma_t:=\left\{x=(x_1,\dots,x_5)\in\mb S^4_1: x_1=\cos((1\pm t)\frac{\pi}{2})\right\}\] 
form a foliation of $M^{prin}\setminus E_0$ so that $\{\Gamma_t:=\wti\Gamma_t/S^1\}_{t\in (0,1)}$ is a foliation of the (punctured) $3$-sphere $(M^{prin}\setminus E_0)/S^1$ by antipodally symmetric $2$-spheres. 
Then for each $t$, we can antipodal-symmetrically connect $\Gamma_t$ and $E_0/S^1$ by a sufficiently thin tube $T_t$ with $T_t$ tending (as $t\to 1$) to two curves $\gamma_0/S^1$ that connect the two singular points $(M\setminus M^{prin})/S^1$ to the equator $E_0/S^1$. 
In particular, $\lim_{t\to 1} \mc H^2_{\wti g_{M/S^1}}(T_t)=0$. 
Hence, the connected sum $\Sigma_t:=(E_0/S^1) \#_{T_t}\Gamma_t$, $t\in (0,1)$, is a family of antipodally symmetric $2$-spheres in $M^{prin}/S^1$, and $\wti\Sigma_t=\pi^{-1}(\Sigma_t)\in \mc X^G$. 
As $t\to 1$, $\wti\Sigma_t$ approaches $E_0\cup\gamma_0\in \mc Y^G$ in the Hausdorff topology. 
Additionally, for $t$ near $0$, it follows from the catenoid estimates that (\cite{haslhofer2019sphere}*{(10.4)})
\begin{align}\label{Eq: catenoid estimates for one neck}
	\mc H^3(\wti\Sigma_t)<3\mc H^3(E_0)-Ct^2.
\end{align}
Suppose the above connected sum is performed antipodal-symmetrically via necks over $\pm p\in E_0/S^1$ respectively. 
Then there exists an antipodal equivariant retraction $R$ that retracts $(E_0/S^1)\setminus \{\pm p\}$ onto a circle $\beta$, which can be extended to a neighborhood of $E_0/S^1$. 
Specifically, we can write $E_0/S^1 $ as $\mb S^2$ with a neighborhood $B_a(E_0/S^1)=(-a,a)\times \mb S^2$ so that 
\begin{itemize}
    \item $\pm p=(\pm 1, 0,0)\in E_0/S^1=\mb S^2$, and $\beta=\{(0,x_2,x_3)\in\mb S^2\}$;
    \item $R(s,(r,x_1,x_2,x_3)):= (r, \frac{((1-s)x_1,x_2,x_3)}{|((1-s)x_1,x_2,x_3)|})$ for all $s\in [0,1]$, $r\in (-a,a)$, and $(x_1,x_2,x_3)\in \mb S^2\setminus\{\pm p\}$. 
\end{itemize}
For $t$ near $0$, we use $R$ to open the neck $T_t$ so that $\Sigma_t$ becomes the connected union of two half-spheres in $\Gamma_t$ and a cylinder over $\beta$. 
Then, as $t\to 0$, the amended $\Sigma_t$ tends to $E_0/S^1$ smoothly through a compression. 
As in \cite{haslhofer2019sphere}*{(10.5)}, the amended family $\{\Sigma_t\}_{t\in [0,1]}$ satisfies (1)-(4).

Consider the $G$-equivariant $(X,Z)$-homotopy class $\Pi$ of $\{\wti\Sigma_t\}_{t\in [0,1]}$ in $(\overline{\mc X}^G,\mc Y^G )$. 
Then every $\{\Gamma_t\}_{t\in [0,1]}\in \Pi$ satisfies $\Gamma_t\in \mc X^G$ for $t\in (0,1)$ and $\{\Gamma_0=\wti\Sigma_0,\Gamma_1=\wti\Sigma_1\}\subset\mc Y^G$.  

\begin{lemma}\label{Lem: width lower bound}
	Using the above notations, $\mf L(\Pi)>\mc H^3(E_0)>0$, and $\mf L(\Pi)$ is realized by the area of a $G$-invariant minimal hypersphere $\Gamma$ in $M=\mb S^4$ with integer multiplicity $m=1$ or $2$. 
\end{lemma}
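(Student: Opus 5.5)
The plan has two parts: first show $\mf L(\Pi)>\mc H^3(E_0)$ by a topological/Almgren--Pitts argument, then apply Theorem \ref{Thm: G-min-max} together with the genus bound of Theorem \ref{Thm: G-min-max - topological control} and the upper bound $\mf L(\Pi)<6\pi^2$ to pin down the min-max limit.

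\emph{Lower bound.} Every $\{\Gamma_t\}\in\Pi$ induces, exactly as in property (4), an Almgren--Pitts $1$-sweepout $t\mapsto\llbracket\Gamma_t\rrbracket$ of $\mb S^4$ in $\mc Z_3(\mb S^4;\mb Z_2)$. Indeed, the relative homotopy preserves the endpoints and deforms by ambient isotopies, so the Caccioppoli sets $\Omega_t$ deform continuously and still satisfy $\Omega_1=M\setminus\Omega_0$. Hence $\sup_t\mc H^3(\Gamma_t)\ge\omega_1(\mb S^4)=2\pi^2=\mc H^3(E_0)$. It follows that $\mf L(\Pi)\ge\mc H^3(E_0)$, and also that \eqref{Eq: G-min-max - nontrivial width} holds once strictness is established, since $\sup_{Z}\mc H^3=\mc H^3(E_0)$.

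\emph{Strictness.} This is the step I expect to be the main obstacle. Suppose $\mf L(\Pi)=\mc H^3(E_0)$. I would argue by contradiction in the spirit of \cite{haslhofer2019sphere}, \cite{marques2014min}. Take a pulled-tight minimizing sequence (Proposition \ref{Prop: pull-tight}). The min-max varifolds are stationary of mass $2\pi^2$. Since the equator is the unique area minimizer among minimal hypersurfaces of $\mb S^4$, with every other one having area $\ge$ that of the Clifford-type products, Almgren--Pitts rigidity (the width is attained only by equators) forces every critical varifold to be an equatorial sphere. By $G$-invariance and Lemma \ref{Lem: unique equivariant equator}, it must be $E_0$. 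So for $i$ large, every slice with area close to $2\pi^2$ is $\mf F$-close to $|E_0|$. Near $t=0$ and $t=1$ the slices equal $E_0$ and $E_0\cup\gamma$, and $\Omega_t$ must pass from $\Omega_0$ to its complement. The intermediate slices lie in $\mc X^G$, so their quotients are $RP^2$'s in the regular part, which are one-sided relative to the separation. A slice $\mf F$-close to $E_0$ with enclosed region close to $\Omega_0$ or to $M\setminus\Omega_0$ cannot jump between the two, so by continuity of $\Omega_t$ some intermediate slice must have mass bounded away from $2\pi^2$, giving an area gap. Concretely, I would use the Morse index one of $E_0$ (its only negative direction, the constant normal, changes the enclosed volume) together with a Marques--Neves type deformation near $E_0$. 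This yields a sweepout in $\Pi$ with $\sup<2\pi^2$, a contradiction.

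\emph{Identification of $\Gamma$.} Apply Theorem \ref{Thm: G-min-max}; its hypotheses hold since there is no exceptional orbit and $\mc S_{n.m.}=\emptyset$. This gives $V=\sum m_j|\Gamma_j|$ with $\|V\|(M)=\mf L(\Pi)\in(2\pi^2,6\pi^2)$. By Lemma \ref{Lem: tangent cone at fixed point}, each $\Gamma_j\subset M^{prin}$, so $\Gamma_j/G$ is a closed surface in $RP^3$. Since the sweepout quotients are $RP^2$'s (genus $1$ as non-orientable surfaces), I would run Theorem \ref{Thm: G-min-max - topological control} on the orientable double cover $M/S^1$, where the slices are spheres. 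This gives genus $0$ for each $\Gamma_j/S^1$, so each is a sphere and $\Gamma_j/G$ is $S^2$ or $RP^2$. The possibility $\Gamma_j/G\cong S^2$ lifts to two disjoint spheres in the positively Ricci-curved $M^{prin}/S^1$ (Lemma \ref{Lem: positive Ricci M/S1}); Frankel's theorem excludes this, and likewise excludes more than one component. So $V=m|\Gamma|$ with $\Gamma/G\cong RP^2$, and by \cite{laudenbach1974topologie} $\Gamma$ is a hypersphere. Every minimal hypersurface has area $\ge2\pi^2$ and $\mf L<6\pi^2$, so $m\le2$. Finally, $\Gamma\ne E_0$: if $\Gamma=E_0$ then $m\cdot2\pi^2=\mf L>2\pi^2$ would force $m=2$. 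I would exclude this case via the genus bound, since the unweighted multiplicity count for $\mc U$-type components must respect $\liminf\genus\le1$, or else accept it and note that the statement permits $m=2$ while the non-equatorial claim is deferred. Thus $\Gamma$ is realized with $m\in\{1,2\}$.
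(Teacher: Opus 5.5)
Your overall route matches the paper's: a lower bound, strictness by contradiction with the $1$-sweepout property, then Theorem \ref{Thm: G-min-max}, the $\mb Z_2$-symmetric genus bound in $M/S^1$, Frankel, Laudenbach, and $m\mc H^3(\Gamma)<6\pi^2$. The identification part is fine. The gap is in the strictness step.

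\textbf{The inference that all critical varifolds equal $|E_0|$ fails.} You say that for a pulled-tight sequence ``Almgren--Pitts rigidity'' forces \emph{every} critical varifold to be $|E_0|$. Elements of the critical set are only known to be stationary, and a $G$-invariant stationary varifold of mass $2\pi^2$ need not be $|E_0|$. For instance, averaging $|S|$ over $G$ for a non-$G$-invariant great sphere $S$ gives a $G$-invariant stationary varifold of mass $2\pi^2$ that is not $|E_0|$. Rigidity statements about the width concern the \emph{regular} min-max hypersurface, which Theorem \ref{Thm: existence amv in annuli} supplies for one min-max sequence only, not uniformly over all slices.

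\textbf{You also lack a per-slice lower bound.} To conclude that \emph{all} slices of a near-optimal sweepout are close to $E_0$, every slice sequence must be a min-max sequence. That requires $\mc H^3(\Gamma_t)\ge 2\pi^2$ for every slice in $\mc X^G$. Your bound $\sup_t\mc H^3(\Gamma_t)\ge\omega_1(\mb S^4)$ does not give this. So the slice that your continuity argument pushes $\mf F$-far from $E_0$ could a priori have \emph{small} area, which contradicts nothing.

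\textbf{The Morse-index deformation cannot work.} Every sweepout in $\Pi$ has endpoints of area exactly $2\pi^2$, so no element of $\Pi$ can have $\sup<2\pi^2$.

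\textbf{The missing idea.} By \eqref{Eq: infimum area of equator}, $\inf_{\overline{\mc X}^G}\mc H^3=\mc H^3(E_0)$. So if $\mf L(\Pi)=\mc H^3(E_0)$, then for \emph{any} choice of $t_i$ the slices $\Gamma^i_{t_i}$ form a minimizing sequence for the $G$-isotopy problem. The argument then runs as follows, with no pull-tight needed.
\begin{enumerate}
\item Theorem \ref{Thm: G-isotopy minimizer} gives a smooth limit of mass $2\pi^2$, which must be $|E_0|$ by Lemma \ref{Lem: unique equivariant equator}.
\item The mod $2$ current limit is nonzero because $\Gamma^i_{t_i}/G\cong RP^2$ is non-orientable in $RP^3$. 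This follows from the isoperimetric lemma; equivalently, $\tilde\sigma$ swaps the two sides, so each side has half volume.
\item By the constancy theorem the current limit is $\llbracket E_0\rrbracket$.
\item A contradiction argument then gives $\mf F(\llbracket\Gamma^i_t\rrbracket,\llbracket E_0\rrbracket)<\epsilon$ uniformly in $t$ for large $i$.
\item Only now does your continuity argument on $\Omega_t$ apply, in the form of \cite{marques2017existence}*{Corollary 3.4}: a nontrivial $1$-sweepout cannot stay in a small $\mf F$-neighbourhood of $\llbracket E_0\rrbracket$.
\end{enumerate}

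\textbf{On your last paragraph.} The lemma does not assert $\Gamma\neq E_0$, so that part is not needed. Your genus route could not show it anyway: $E_0/S^1$ is a sphere, so the weighted bound holds trivially with $m=2$. The paper excludes $m=2$ later via the odd-multiplicity parity argument of \cite{zhou2015positiveRic}.
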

\begin{proof}
	Since $\inf_{\Sigma\in \overline{\mc X}^G } \mc H^3(\Sigma)=\mc H^3(E_0)$, we have $\mf L(\Pi)\geq \mc H^3(E_0)$. 
	Suppose $\mf L(\Pi)= \mc H^3(E_0)$. Then there exists a minimizing sequence $\{\{\Gamma^i_t\}_{t\in [0,1]}\}_{i\in\mb N}\subset \Pi$ with 
	\[ \mc H^3(E_0)\leq  \sup_{t\in [0,1]} \mc H^3(\Gamma^i_t)\leq \mf L(\Pi) + \frac{1}{i} = \mc H^3(E_0) + \frac{1}{i}.  \]
	In particular, for any sequence $t_i\in [0,1]$, $\Gamma^i_{t_i}$ is an area minimizing sequence in $\overline{\mc X}^G$. 
	By Lemma \ref{Lem: unique equivariant equator} and Theorem \ref{Thm: G-isotopy minimizer}, we know $|\Gamma^i_{t_i}|$ converges to $|E_0|$ up to a subsequence. 
    Consider also the mod $2$ current limit $T_0\in\mc Z_3(M;\mb Z_2)$ of $\Gamma^i_{t_i}$, which is supported in $E_0$ with mass $\mf M(T_0)\leq \mc H^3(E_0)$. 
    Since $\Gamma^i_{t_i}/G\cong RP^2$ is non-orientable in $M/G\cong RP^3$, we have $T_0\neq 0$ by the isoperimetric lemma \cite{almgren1962homotopy}*{Corollary 1.14}. 
	Thus, the Constancy Theorem implies $T_0=\llbracket E_0 \rrbracket$, and the convergence of $\Gamma^i_{t_i}$ to $E_0$ is also in the mod $2$ current sense.  
	Hence, it follows from a simple contradiction argument that for any $\epsilon>0$, there exists $i_0>1$ so that $\mf F(\llbracket \Gamma^i_t\rrbracket, \llbracket E_0\rrbracket)<\epsilon$ for all $i\geq i_0$ and $t\in [0,1]$, where $\mf F$ is the metric on the mod $2$ cycle space defined in \cite{pitts2014existence}. 
    However, by the statement (4) before this lemma and \cite{marques2017existence}*{Proposition 3.3}, $\Phi_{i_0}(t):=\llbracket \Gamma^{i_0}_t \rrbracket$, ($t\in [0,1]$), is a non-trivial Almgren-Pitts $1$-sweepout, which cannot be contained in an $\epsilon$-neighborhood of $\llbracket E_0\rrbracket$ for $\epsilon>0$ small enough (\cite{marques2017existence}*{Corollary 3.4}).
    This gives a contradiction. 
	
	Hence, $\mf L(\Pi)>\mc H^3(E_0)$, and Theorem \ref{Thm: G-min-max} gives an embedded minimal $G$-hypersurface $\Gamma$ with integer multiplicity $m$. 
	By Lemma \ref{Lem: tangent cone at fixed point} and the embeddedness of $\Gamma$, we know $\Gamma\subset M^{prin}$. 
	Additionally,  $\Gamma$ is connected by the Frankel property as $\mb S^4$ has positive Ricci curvature. 
    
    Next, we claim that $\Gamma\in \mc X^G$ is a hypersphere. 
    Indeed, although $\wti \Sigma_t/G$ are non-orientable $RP^2$ in $M/G=RP^3$, we can use the technique in \cite{li2025RP2}*{Theorem 4.3} and apply the proof of the reduced genus upper bound (Theorem \ref{Thm: G-min-max - topological control}) in $M/S^1\cong S^3$ under $\mb Z_2$-symmetric constraints to show that $\genus(\Gamma/ S^1)=0$. 
    Thus, $\Gamma/ S^1$ is a connected antipodally symmetric $S^2$. 
	The antipodal symmetry of $\Gamma/ S^1$ implies it is homotopic to $E_0/S^1$ in $M^{prin}/S^1\cong S^3\setminus\{(\pm 1,0,0,0)\} \cong S^2\times \mb R$. 
	Thus, by \cite{laudenbach1974topologie}*{P.64, Theorem 1.3}, $\Gamma/S^1$ is also isotopic to $E_0/S^1$ in $M^{prin}/S^1$, which can be lifted to an $S^1$-isotopy in $M^{prin}$ by \cite{schwarz1980lifting}. 
	In particular, $\Gamma$ is a hypersphere in $M= \mb S^4$. 
	
	Finally, since $m\mc H^3(\Gamma)=\mf L(\Pi)\leq \sup_{t\in [0,1]}\mc H^3(\wti\Sigma_t) < 3\mc H^3(E_0)$ and $ \mc H^3(E_0) \leq \mc H^3(\Gamma)$, we conclude that the multiplicity $1\leq m< 3$. 
\end{proof}

Now, we are ready to solve the spherical Bernstein problem in $M=\mb S^4$.
\begin{theorem}\label{Thm: spherical Bernstein in S4}
	There exists an embedded non-equatorial minimal hypersphere in $\mb S^4$. 
\end{theorem}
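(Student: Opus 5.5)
By Lemma \ref{Lem: width lower bound}, the width $\mf L(\Pi)$ is realized as $m\,\mc H^3(\Gamma)$, where $\Gamma\subset M^{prin}$ is an embedded $G$-invariant minimal hypersphere and $m\in\{1,2\}$. The plan is to show that $\Gamma\neq E_0$. Since $\Gamma$ is a minimal hypersphere, it is then non-equatorial by Lemma \ref{Lem: unique equivariant equator}: an equatorial $\Gamma$ would be $S^1$-invariant, hence equal to $E_0$. So everything reduces to excluding $\Gamma=E_0$. We split according to the multiplicity.

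\emph{Case $m=1$.} Here $\mc H^3(\Gamma)=\mf L(\Pi)>\mc H^3(E_0)$ by the strict inequality in Lemma \ref{Lem: width lower bound}. Hence $\Gamma\neq E_0$ immediately.

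\emph{Case $m=2$.} If $\Gamma=E_0$, then $\mf L(\Pi)=2\mc H^3(E_0)=4\pi^2$. This is not excluded by the area bound $\mf L(\Pi)<6\pi^2$, so we need another argument. I would use the reduced genus estimate of Theorem \ref{Thm: G-min-max - topological control}, in the $\mb Z_2$-symmetric form used in the proof of Lemma \ref{Lem: width lower bound}, where one works in $M/S^1\cong S^3$ with antipodal symmetry as in \cite{li2025RP2}. In that framework the sweepout surfaces are antipodally symmetric spheres $\Sigma_t\subset M/S^1$. A limit $2|E_0/S^1|$ means the min-max sequence degenerates, as in \cite{haslhofer2019sphere}, \cite{li2025RP2}, to a doubled equatorial sphere $\mb S^2$ joined by necks. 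The plan is to mirror the argument of \cite{haslhofer2019sphere}*{Theorem 10.1} for minimal $RP^2$ in $RP^3$. There the doubled-multiplicity case for the lift of an $RP^2$ is ruled out because the quotient $E_0/G\cong RP^2$ would be one-sided in $M/G\cong RP^3$, and multiplicity $2$ with orientable approximating surfaces forces a specific genus accounting. In our case, however, $E_0/G\cong RP^2$ is one-sided, so an even multiplicity is allowed by the bound $\frac12 m(\mk g-1)\le \mk g_0$. Therefore the topology alone is not enough.

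I would therefore add a width comparison. Using the catenoid estimate \eqref{Eq: catenoid estimates for one neck} together with the construction of \cite{haslhofer2019sphere} (positive Ricci curvature by Lemma \ref{Lem: positive Ricci M/S1}), I would aim to show $\mf L(\Pi)<2\mc H^3(E_0)$, i.e.\ that the sweepout can be modified so its maximal area stays strictly below $4\pi^2$. A one-neck sweepout has area near $3\mc H^3(E_0)$, so this bound must come from choosing a better family. The natural choice is the family of $G$-equivariant sweepouts by level sets $\{x_1=\pm c\}$ glued across $E_0$. The alternative is to argue directly that the case $m=2$, $\Gamma=E_0$ contradicts the Almgren--Pitts $1$-sweepout property (4). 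A mod $2$ limit of a $1$-sweepout with varifold limit $2|E_0|$ would have to be $0$ as a current, and I would then try to combine this with Lemma \ref{Lem: tangent cone at fixed point} and the Frankel property to reach a contradiction.

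The main obstacle is this multiplicity-two case. I expect to resolve it by reproducing the \cite{haslhofer2019sphere}/\cite{li2025RP2} argument in $M/S^1$ under the weighted metric $\wti g_{_{M/S^1}}$: the doubled equator is excluded because the approximating antipodal spheres would have to converge with multiplicity two to a sphere through an equivariant neck degeneration, and the neck contributes the extra area $3\mc H^3(E_0)$-type bound only in a way incompatible with $\mf L(\Pi)=2\mc H^3(E_0)$.
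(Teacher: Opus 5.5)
Your reduction matches the paper's: Lemma \ref{Lem: width lower bound} and Lemma \ref{Lem: unique equivariant equator} leave only the case $\Gamma=E_0$ with $m=2$ to exclude, and your treatment of $m=1$ is fine. The gap is that you never actually exclude $m=2$.

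Your first route, proving $\mf L(\Pi)<2\mc H^3(E_0)$, is unsupported. The family you call natural, the two spheres $\{x_1=\pm c\}$ glued across $E_0$, is exactly the one-neck sweepout whose maximal area is close to $3\mc H^3(E_0)$ (cf. \eqref{Eq: catenoid estimates for one neck}). Neither you nor the paper produces an admissible family with maximal area below $4\pi^2$. Your closing paragraph about neck degeneration is not an argument either.

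Your second route contains the right principle. For this min-max sequence, the mod $2$ current limit and the varifold multiplicity have the same parity. This is not a general fact about arbitrary sequences; it comes from the proof of Proposition 6.1 in \cite{zhou2015positiveRic}, with the replacement and regularity input supplied by \cite{wang2024index}. But Lemma \ref{Lem: tangent cone at fixed point} and the Frankel property are not what turns this principle into a contradiction.

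The missing idea is that the mod $2$ limit is nonzero, and this comes from the antipodal symmetry. Each $\Gamma^i_{t_i}/S^1$ is an antipodally symmetric sphere in $M/S^1\cong S^3$. Hence $M\setminus\Gamma^i_{t_i}=\Omega^+_i\sqcup\Omega^-_i$, where $\Omega^\pm_i$ are $S^1$-invariant open sets with $\tilde\sigma(\Omega^\pm_i)=\Omega^\mp_i$.

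Pass to Caccioppoli limits $\Omega^\pm$. These are still interchanged by the isometry $\tilde\sigma$, so each has exactly half the volume of $M$. Since $M$ is connected, this forces $\partial\Omega^\pm\neq 0$. Now $\partial\Omega^\pm$ is supported in $\spt\|V\|=E_0$ and has mass at most $2\mc H^3(E_0)$. The Constancy Theorem with $\mb Z_2$ coefficients then gives $\partial\Omega^\pm=\llbracket E_0\rrbracket$, so the mod $2$ limit of $\llbracket\Gamma^i_{t_i}\rrbracket$ has multiplicity one.

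By the parity principle, the varifold multiplicity $m$ must be odd, which contradicts $m=2$. Equivalently: if the mod $2$ limit were $0$, the isoperimetric lemma would give regions of small volume bounded by $\Gamma^i_{t_i}$. That is impossible for surfaces splitting $M$ into two halves of equal volume.
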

\begin{proof}
	Suppose for contradiction that every minimal hypersphere in $\mb S^4$ is equatorial. 
	Then the $G$-invariant minimal hypersphere $\Gamma$ in Lemma \ref{Lem: width lower bound} must coincide with the $G$-invariant equator $E_0$ by Lemma \ref{Lem: unique equivariant equator}. 
	Since $\mf L(\Pi)=m\mc H^3(\Gamma)=m\mc H^3(E_0)> \mc H^3(E_0)$ and $m<3$ in Lemma \ref{Lem: width lower bound}, we must have $m=2$. 
	Consider the min-max sequence $\{\Gamma^i_{t_i}\}_{i\in\mb N}$ of $\Pi$ converging to $2|E_0|\in \mc V^G_3(M)$. 
	Note that each $\Gamma^i_{t_i}/S^1$ separates $M/S^1$ into two regions interchanged under the antipodal map. 
	Hence, $M\setminus \Gamma^i_{t_i}=\Omega_i^+\sqcup\Omega_i^-$ for two $S^1$-invariant open sets $\Omega_i^\pm$ that are interchanged under the action of $\tilde\sigma=[(e^{i0},\sigma)]\in G$. 
	Up to a subsequence, we also have that $\Omega_i^\pm$ converge to $\Omega^\pm$ as $S^1$-invariant Caccioppoli sets so that $\tilde{\sigma}(\Omega^\pm)=\Omega^\mp$. 
    In particular, $\Omega^\pm$ are half-volume subsets of $M$ so that $\Gamma^i_{t_i}=\bd \Omega_i^\pm$ converges to $\bd\Omega^\pm\neq 0$ in the sense of mod $2$ currents. 
    Therefore, $\bd\Omega^\pm$ is supported in $\Gamma=E_0$ with mass $\mf M(\bd\Omega^\pm)\leq 2\mc H^3(E_0)$, which implies $\bd\Omega^\pm = \llbracket E_0 \rrbracket$ by the Constancy Theorem and the mod $2$ coefficients. 
    However, since the mod $2$ current limit $\lim_{i\to\infty}\llbracket \Gamma^i_{t_i} \rrbracket = \llbracket \Gamma \rrbracket $ has multiplicity one, it now follows from the proof of \cite{zhou2015positiveRic}*{Proposition 6.1} (with \cite{wang2024index}*{Proposition 4.19} in place of \cite{zhou2015positiveRic}*{Lemma 6.11}) that the multiplicity $m$ of the varifold convergence $|\Gamma^i_{t_i}|\to m|\Gamma|$ must be odd, which contradicts $m=2$.
\end{proof}

\subsection{Minimal hypertori in $\mb S^4$}

Let $M=\mb S^4_1(0)$ and the $G$-action be given as before. 
Fix any embedded $G$-hypersurface $\Sigma_0\subset M$ so that $\Sigma_0/G$ is separating in $M/G$. 
Let  
\[\ms X^G(\Sigma_0):=\{\Sigma=\varphi_1(\Sigma_0): \{\varphi_t\}_{t\in[0,1]}\in \mk {Is}^G(M)\}\]
be the collection of embedded $G$-hypersurfaces with fixed topological type as $\Sigma_0$. 
Define 
\[\ms Y^G(\Sigma_0):=\{\phi(\Sigma_0): \phi:\Sigma_0\to M \mbox{ is any $G$-equivariant smooth map with } \mc H^n(\phi(\Sigma_0))=0 \},\]
and $\overline{\ms X}^G(\Sigma_0):= \ms X^G(\Sigma_0)\cup\ms Y^G(\Sigma_0)$. 

For any cubical complex $X$ with a subcomplex $Z$, consider a $G$-equivariant $(X,Z)$-sweepout $\{\Sigma_x\}_{x\in X}$ (Definition \ref{Def: G-sweepout}) with $\Sigma_x\in {\ms X}^G(\Sigma_0)$ for all $x\in X\setminus Z$, and $\Sigma_x\in {\ms Y}^G(\Sigma_0)$ for all $x\in Z$. 
Let $\Pi$ be the $(X,Z)$-homotopy class of $\{\Sigma_x\}_{x\in X}$. 

Suppose $\mf L(\Pi)> 0$. 
Then by our equivariant min-max theorem (Theorem \ref{Thm: G-min-max}), there exists $V\in \mc V^G_n(M)$ with $\|V\|(M)=\mf L(\Pi)$ that is induced by an integer multiple of a minimal $G$-hypersurface in $M$. 
By the last statement in Lemma \ref{Lem: tangent cone at fixed point}, we see that $\spt(\|V\|)\subset\subset M^{prin}$. 
It then follows from the embedded Frankel property and Lemma \ref{Lem: positive Ricci M/S1} that $V=m|\Gamma|$ for some connected minimal $G$-hypersurface $\Gamma\subset M$ and $m\in\mb N$. 
Hence, for $\epsilon>0$ small enough, \[\Gamma\subset\subset U:=M\setminus B_\epsilon((\pm 1,0,0,0,0))\subset M^{prin}.\] 
In addition, by the compactness theorem for the $G$-equivariant min-max hypersurfaces (\cite{wang2024index}*{Theorem 5.3}), we can shrink $\epsilon>0$ so that every minimal $G$-hypersurface produced by Theorem \ref{Thm: G-min-max} with respect to $\Pi$ is contained in $U$. 

Moreover, the multiplicity one result (\cite{wangzc2023four-sphere}\cite{li2025RP2}) 
is valid for $\Gamma/S^1$, and thus  
\begin{align}\label{Eq: multi 1 and weighted genus in S4}
    m=1\qquad{\rm and}\qquad \genus(\Gamma/S^1)\leq  \genus(\Sigma_0/S^1). 
\end{align}
Indeed, for the multiplicity one result, we can use \eqref{Eq: area in orbit space} to reduce our $G$-equivariant min-max in $M$ to a $\mb Z_2$-equivariant min-max in $M/S^1$. 
Since $\Gamma\subset\subset U\subset M^{prin}$, we can restrict the proof of \cite{wangzc2023four-sphere}*{Theorem 7.3} (see also \cite{li2025RP2} for a $\mb Z_2$-equivariant version) in $U/S^1$ under the weighted metric $\wti g_{_{M/S^1}}$. 
Namely, the solution of PMC min-max given by \cite{wangzc2023four-sphere}*{Theorem 2.4} has the regularity and compactness results valid in $U/S^1$. 
Hence, the approximation construction in \cite{wangzc2023four-sphere}*{\S 6, \S 7} would carry over in $U/S^1$ to show $m=1$ provided that $\Gamma/S^1$ is $2$-sided (ensured by $M/S^1=S^3$) and unstable (ensured by Lemma \ref{Lem: positive Ricci M/S1}). 
Besides, note that one only needs $\Ric_M>0$ (instead of Lemma \ref{Lem: positive Ricci M/S1}) to show $\Gamma/S^1$ is unstable. This is because $\Gamma/S^1$ must enclose a region in the simply connected $M/S^1=S^3$, and thus $\Gamma$ admits an $S^1$-invariant unit normal, which implies that the stability of $\Gamma$  is equivalent to the stability of $\Gamma/S^1$ in $(M/S^1, \wti g_{M/S^1})$ by Lemma \ref{Lem: stability and G-stability} and \eqref{Eq: area in orbit space}. 
Finally, the (weighted) genus upper bound in \eqref{Eq: multi 1 and weighted genus in S4} follows from the proof of Theorem \ref{Thm: G-min-max - topological control}.


Now, we can show the existence of a minimal hypertorus in $\mb S^4$. 
The same result was also obtained in \cite{carlotto2023hypertori} using an equivariant dynamic method similar to Hsiang's method \cite{hsiang1983sphericalI}. 

\begin{theorem}\label{Thm: minimal hypertori in S4}
    There exists an embedded minimal $T^3\cong S^1\times S^1\times S^1$ in $\mb S^4_1(0)$. 
\end{theorem}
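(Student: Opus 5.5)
We plan to run the framework just set up with $\Sigma_0$ chosen so that $\Sigma_0/S^1$ is an antipodally symmetric torus in $M/S^1\cong S^3$ that avoids the two singular points and separates them. Concretely, take the Clifford-type torus $\Sigma_0^*:=\{(\cos\alpha,p): \alpha=\pi/2,\ p\in\mb S^2_{1/2}\}$ and add a symmetric handle, or more simply take the boundary of a tubular neighborhood of the antipodally invariant great circle $\{(0,p): p\in \text{equator of }\mb S^2_{1/2}\}$ inside $E_0/S^1$. Its preimage $\Sigma_0=\pi^{-1}(\Sigma_0^*)$ is the preimage under the Hopf map (in the slice $x=0$, thickened in $x$) of a solid torus boundary. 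Since the preimage of a circle under the Hopf map is a torus and the preimage of an annulus is $T^2\times I$, $\Sigma_0\cong S^1\times S^1\times S^1$. We also check that $\Sigma_0$ is $G$-invariant, using $\pi\circ\tilde\sigma=A\circ\pi$, and that $\Sigma_0/G$ separates.

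For the sweepout, we will use the standard one-parameter family of tori $\{\Sigma_t^*\}$ given by boundaries of tubes of radius $t$ around that symmetric great circle $\beta\subset E_0/S^1$. The family degenerates at $t=0$ to $\beta$ and at $t=1$ to the dual symmetric circle $\beta'$ (the set through the singular points), both of which have zero area after lifting. This gives $X=[0,1]$ and $Z=\{0,1\}$ with $\Sigma_0,\Sigma_1\in\ms Y^G(\Sigma_0)$. Positivity $\mf L(\Pi)>0$ follows as in Lemma \ref{Lem: width lower bound}: the associated family $\llbracket\pi^{-1}(\Sigma_t^*)\rrbracket$ bounds the solid tori $\Omega_t$, sweeps out $M$ ($\Omega_1=M\setminus\Omega_0$ up to null sets), and so is a nontrivial Almgren--Pitts $1$-sweepout. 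By \cite{marques2017existence} and the isoperimetric lemma, its width is then positive.

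Theorem \ref{Thm: G-min-max} and \eqref{Eq: multi 1 and weighted genus in S4} then give a connected minimal $G$-hypersurface $\Gamma\subset\subset M^{prin}$ with multiplicity one and $\genus(\Gamma/S^1)\le 1$.

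The main obstacle is identifying the topology of $\Gamma$. Here $\Gamma^*=\Gamma/S^1$ is a closed connected surface in $M^{prin}/S^1\cong S^2\times\mb R$, and it is orientable since $S^3$ is. It has genus $0$ or $1$, and by Lemma \ref{Lem: tangent cone at fixed point} the $S^1$-bundle over it is trivial or determined by whether $\Gamma^*$ separates the two singular points. If $\Gamma^*$ is a torus, then $\Gamma$ is a circle bundle over $T^2$. Its Euler number equals the Hopf degree of the class of $\Gamma^*$ in $H_2(S^2\times\mb R)$, and this is $0$ because a torus is compressible there. Arguing as with \cite{laudenbach1974topologie}, the torus is null-homologous, so the bundle is trivial and $\Gamma\cong T^3$.

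The genus-zero case must be excluded. If $\Gamma^*\cong S^2$, then $\Gamma$ is either $S^3$ (when $\Gamma^*$ separates the singular points) or $S^2\times S^1$. To exclude this we plan to use the relative homotopy class: the sweepout lies in $\ms X^G(\Sigma_0)$, and a $\mb Z_2$-equivariant (antipodal) genus argument as in \cite{li2025RP2} forces the min-max surface to carry genus. For the first attempt, we would compare widths instead. A genus-zero outcome would have area equal to that of the sphere-type min-max levels, namely $\mc H^3(E_0)$ or the width from Lemma \ref{Lem: width lower bound}. We would then show that the torus sweepout has width strictly exceeding these, via the catenoid estimate \eqref{Eq: catenoid estimates for one neck} in reverse or via Lusternik--Schnirelmann type comparisons. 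We expect this exclusion to be the delicate step.
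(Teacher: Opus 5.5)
Your sweepout (the tori bounding tubes around $\beta\subset E_0/S^1$, degenerating to $\beta$ and to the circle through the two singular points), the positivity of $\mf L(\Pi)$, and the output $\Gamma\subset\subset M^{prin}$ with $m=1$ and $\genus(\Gamma/S^1)\le 1$ all agree with the paper. The genuine gap is the exclusion of genus zero, and neither route you sketch works. A genus-zero minimal $G$-hypersurface has no a priori reason to have area $\mc H^3(E_0)$, or area equal to the width from Lemma~\ref{Lem: width lower bound}. You also give no mechanism showing that the torus width exceeds the area of every such hypersurface. Separately, min-max genus arguments in the spirit of \cite{li2025RP2} give upper bounds, not lower bounds, so nothing there ``forces genus.''

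The missing idea is a parity argument that uses $m=1$. Since $\Gamma$ is $G$-invariant, $\Gamma/S^1$ is antipodally symmetric. If it were a sphere, the antipodal map would act freely on it, so $\Gamma/G\cong RP^2$ would be one-sided in $M/G\cong RP^3$. The sweepout surfaces $\wti\Sigma_t/G$, by contrast, are separating tori in $RP^3$. By \cite{zhou2015positiveRic}*{Proposition 6.1} (with \cite{wang2024index}*{Proposition 4.19} in place of \cite{zhou2015positiveRic}*{Lemma 6.11}), a one-sided limit must then carry even multiplicity. This contradicts $m=1$ in \eqref{Eq: multi 1 and weighted genus in S4}.

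The torus case also has a gap. It is false that $[\Gamma/S^1]=0$ in $H_2(S^2\times\mb R)$ ``because a torus is compressible there.'' Take the level sphere $S^2\times\{0\}$ with a trivial handle attached on one side. This is a compressible torus representing $[S^2]$, and it separates the singular points. The Hopf bundle restricted to it has Euler number $\pm1$, so its preimage is a nilmanifold, not $T^3$. Laudenbach's theorem concerns spheres and does not help here.

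What you actually need is that $\Gamma/S^1$ does not separate the singular points, and this again comes from the $\mb Z_2$-symmetry. The same parity argument shows $\Gamma/G$ is two-sided in $RP^3$ (alternatively, the Klein bottle does not embed in $RP^3$, by Bredon--Wood), hence a torus. So the antipodal map preserves each component of $(M/S^1)\setminus(\Gamma/S^1)$. The antipodal pair of singular points therefore lies in one component, and the other component $\Omega_1$ sits inside $M^{prin}/S^1$. Since $H^2(\Omega_1;\mb Z)=0$ (for instance, $\Omega_1$ is a solid torus retracting onto a circle, as in the paper), the principal $S^1$-bundle over $\Omega_1$ is trivial. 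Hence so is its restriction to $\bd\Omega_1=\Gamma/S^1$, and $\Gamma\cong T^3$.

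The same obstruction shows your opening requirement cannot be met: an antipodally symmetric torus $\Sigma_0/S^1$ that separates the singular points would have its sides swapped, making $\Sigma_0/G$ an embedded Klein bottle in $RP^3$. Also, $\{\alpha=\pi/2\}$ is the sphere $E_0/S^1$, not a torus. Only your tube-around-$\beta$ choice, which does not separate the singular points, is consistent.
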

\begin{proof}
    By the suspension construction \eqref{Eq: suspension orbit space}, the coordinates of $M/S^1$ can be changed to 
    \[ M/S^1=\{(x_1,x_2,x_3,x_4)\in\mb R^4: x_1^2+(x_2^2+x_3^2+x_4^2)=1 \} = \mb S^3_1 \]
    so that the two singular points $(M\setminus M^{prin})/S^1$ are $(\pm 1,0,0,0)$. 
    Let $T_0\subset M^{prin}$ be a $G$-hypersurface so that $T_0/S^1=\{x\in M/S^1: x_1^2+x_2^2=x_3^2+x_4^2\}$ is an antipodally symmetric (Clifford) torus in $M/S^1$. 
    Consider the family of $G$-hypersurfaces $\{\wti\Sigma_t\}_{t\in [0,\pi/2]}$ with 
    \[ \wti\Sigma_t/S^1=\{x\in M/S^1: x_1^2+x_2^2=\cos^2(t) \}, \]
    which degenerates to a curve at $t\in\{0,\pi/2\}$, and only $\wti\Sigma_0$ contains the singular points $M\setminus M^{prin}$. 
    In particular, this gives a $G$-equivariant $(X,Z)$-sweepout with $X=[0,\pi/2], Z=\bd X$ so that $\{\wti\Sigma_t/G\}$ is a family of tori in $RP^3$. 
    Then the above constructions provide a connected $G$-invariant minimal hypersurface $\Gamma$ with multiplicity $m=1$ so that $\genus(\Gamma/S^1)\leq 1$ (see \eqref{Eq: multi 1 and weighted genus in S4}). 
    If $\genus(\Gamma/S^1)=0$, then $\Gamma/S^1$ is an antipodally symmetric $2$-sphere, and thus $\Gamma/G$ is a non-orientable $RP^2$. 
    However, since $\Sigma_t/G$ are orientable, it follows from  \cite{zhou2015positiveRic}*{Proposition 6.1} (with \cite{wang2024index}*{Proposition 4.19} in place of \cite{zhou2015positiveRic}*{Lemma 6.11}) that the multiplicity $m$ must be even, which contradicts $m=1$ in \eqref{Eq: multi 1 and weighted genus in S4}. 
    Therefore, we know $\Gamma/S^1$ and $\Gamma/G$ are both tori. 

    Finally, we claim that $\Gamma$ is diffeomorphic to $T^3$. 
    Indeed, $\Gamma/S^1$ separates the $3$-sphere $M/S^1$ into two $\mb Z_2$-invariant solid tori $\Omega_1,\Omega_2$ so that the two singular points $(M\setminus M^{prin})/S^1 $ form a $\mb Z_2$-orbit contained in one of $\{\Omega_i\}$ (say $ \Omega_2$). 
    In particular, $\Omega_1$ is a solid torus in $ M^{prin}/S^1$ with $\bd\Omega_1=\Gamma/S^1$. 
    Then, $\Gamma/S^1$ can be deformation retracted in $\Omega_1$ into a circle $\gamma$. 
    Note that $\pi^{-1}(\gamma)\subset M^{prin}$ is a principal $S^1$-bundle over $\gamma\cong S^1$, which must be a trivial $S^1$-bundle, i.e. $\pi^{-1}(\gamma)\cong S^1\times S^1$. 
    Therefore, $\Gamma\cong S^1\times S^1\times S^1$ is an embedded minimal hypertorus. 
\end{proof}

\begin{remark}\label{Rem: valid for other metric}
    Although we considered separating $\Sigma_0/G$ in this subsection, the multiplicity one result \eqref{Eq: multi 1 and weighted genus in S4} is also true for the relative min-max in $(\overline{\mc X}^G, \mc Y^G)$ (cf. \eqref{Eq: space of RP2}) by the technique in \cite{li2025RP2}*{Theorem 1.3}. 
    Note also that the last statement in Lemma \ref{Lem: tangent cone at fixed point} always holds independently of the metrics since there is no free $S^1$-action on $S^2$. 
    Hence, the above existence results remain valid for any other $G$-invariant metric $g_{_M}$ on $M=S^4$ as long as $(S^4, g_{_M})$ has positive Ricci curvature, which gives two distinct minimal hyperspheres and a minimal hypertorus by \eqref{Eq: multi 1 and weighted genus in S4}. 
    Moreover, using Remark \ref{Rem: relax finiteness of T} and \eqref{Eq: multi 1 and weighted genus in S4}, one can indeed apply the proof of \cite{li2025RP2}*{Theorem 1.2, 1.6} to obtain $4$ distinct minimal hyperspheres and $4$ distinct minimal hypertori in any $G$-invariant Riemannian $S^4$ of positive Ricci curvature. 
    However, in the round $\mb S^4$, some of these minimal hypersurfaces may be congruent. 
\end{remark}

\subsection{Minimal $S^1\times S^2$ in $\mb S^4$}

By the above constructions, we see that the topology of an embedded $S^1$-hypersurface $\Sigma$ not only depends on $\genus(\Sigma/S^1)$, but also relies on whether $\Sigma/S^1$ separates the two fixed points $(\pm 1, 0,0,0)\in \mb S^4/S^1$. 
Hence, the $\mb Z_2$-action on $\mb S^4/S^1$ generated by the antipodal map plays an important role in determining the topology of the min-max minimal $S^1$-hypersurfaces.
In this subsection, we apply another $\mb Z_2$-action on $\mb S^4/S^1$ generated by a $\pi$-rotation to construct an embedded minimal $S^1\times S^2$ in $\mb S^4$. 

Specifically, recall that $\mb S^4/S^1 $ is parameterized by the ellipsoid $\{(x,p_1,p_2,p_3)\in \mb R^4: x^2+4(p_1^2+p_2^2+p_3^2)=1\}$ with the metrics $g_{M/S^1}$ and $\wti g_{M/S^1}$ (see \eqref{Eq: suspension orbit space}).  
Then, one can define a $\pi$-rotation map on $\mb S^4/S^1$ by
\begin{align}\label{Eq: double reflection}
	s: \mb S^4/S^1 \to \mb S^4/S^1 ,\qquad s(x,p_1,p_2,p_3) := (-x, p_1, -p_2, p_3), 
\end{align}
which generates a $\mb Z_2:=\langle s\rangle$ action on $\mb S^4/S^1$ by orientation-preserving isometries (under the metrics $g_{M/S^1}$ and $\wti g_{M/S^1}$). 
Let $\tilde s$ be an isometry on $\mb S^4=\{(x,z_1,z_2)\in \mb R\times \mb C\times \mb C: x^2+|z_1|^2+|z_2|^2=1\}$ defined by 
\begin{align}\label{Eq: lifted double reflection}
	\tilde s(x,z_1,z_2) := (-x, \bar z_1, \bar z_2 ). 
\end{align}
Using the Hopf map $h$ in \eqref{Eq: Hopf map} on every slice $\{(x,z_1,z_2)\in\mb S^4: x=\cos(\alpha)\}$, one can easily verify that $\tilde s$ is a lift of $s$, i.e. $ s\circ \pi = \pi\circ \tilde s$. 
Additionally, note that 
\[ \tilde s^2=id\qquad {\rm and}\qquad \tilde s\circ R_\theta = R_{-\theta}\circ \tilde s, \quad \forall \theta\in [0,2\pi],\]
where $\{R_{\theta}\}$ is the $S^1$-action on $\mb S^4$ defined in \eqref{Eq: suspended S1-action}.  
Hence, by taking $\varphi: \mb Z_2= \langle\tilde s\rangle \to {\rm Aut}(S^1)$ as $\varphi(id)(e^{i\theta}):=e^{i\theta}$ and $\varphi(\tilde s) (e^{i\theta}):= e^{-i\theta}$, we have a semidirect product group $G'$ given by 
\begin{align}\label{Eq: extension of S1 by double reflection}
	G':= S^1\rtimes_\varphi \mb Z_2
\end{align}
with the group operation
\[ (e^{i\theta_1}, \tilde s^{k_1})\cdot (e^{i\theta_2}, \tilde s^{k_2}):= (e^{i\theta_1}\cdot \varphi(\tilde s^{k_1})(e^{i\theta_2}), \tilde s^{(k_1+k_2~~ {\rm mod~}2)}) .\]
Similar to Lemma \ref{Lem: extend action}, $G'$ acts isometrically on $\mb S^4$ by 
\[ (e^{i\theta}, \tilde s^k)\cdot (x,z_1,z_2):= R_\theta\circ \tilde s^k (x,z_1,z_2).  \]
In particular, $\mb S^4/G'= (\mb S^4/S^1)/\mb Z_2$ is topologically a $3$-sphere without boundary, and thus there is no special exceptional $G'$-orbit on $\mb S^4$. 

Next, we will construct a family of (possibly degenerate) $G'$-equivariant hypersurfaces $\{\wti\Sigma_t\}_{t\in [0,1]}$ in $\mb S^4$ with $\Sigma_t:=\wti\Sigma_t/S^1$ so that 
\begin{itemize}
	\item[(a)] $\Sigma_0= (0,0,0,1/2)\in \mb S^4/S^1$ and $\Sigma_1=\{(\cos(\alpha),0,0,-\sin(\alpha)/2): \alpha\in [0,\pi]\}$;
	\item[(b)] for $t\in (0,1)$, $\Sigma_t\subset (\mb S^4/S^1)\setminus\{(\pm1, 0,0,0)\}$ is an embedded $\mb Z_2$-invariant $2$-sphere; 
	\item[(c)] $\{\wti\Sigma_t\}_{t\in [0,1]}$ is a $G'$-equivariant $([0,1],\{0,1\})$-sweepout (Definition \ref{Def: G-sweepout}); 
	\item[(d)] there is a continuous family of $\mb Z_2$-invariant Caccioppoli sets $\{\Omega_t\}_{t\in [0,1]}$ in $\mb S^4/S^1$ so that $\bd\Omega_t=\Sigma_t$ for $t\in (0,1)$, $\Omega_0=\emptyset$, and $\Omega_1=\mb S^4/S^1$. 
\end{itemize}
Indeed, let $\{\gamma_t\}_{t\in [0,1]}$ be the continuous family of latitude circles on $\mb S^2_{1/2}$ given by 
\[ \gamma_t:=\{(p_1, p_2 ,\cos(t\pi)/2) \in \mb R^3: p_1^2+p_2^2=\sin^2(t\pi)/4 \} \subset \mb S^2_{1/2}, \]
each of which encloses a closed disk in $\mb S^2_{1/2}$:
\[D_t:=\{(p_1, p_2 ,p_3) \in \mb S^2_{1/2}: p_3\geq \cos(t\pi)/2 \}. \]
Using the suspension structure of $\mb S^4/S^1$, we define $\Gamma_t=\Sigma \gamma_t$ and $\Sigma D_t$, $t\in [0,1]$, as the suspensions of $\gamma_t$ and $D_t$, respectively, i.e.
\[ \Gamma_t:= \{(\cos(\alpha), \sin(\alpha)p)\in \mb S^4/S^1:  p\in \gamma_t, \alpha\in [0,\pi]\}, \]
and 
\[\Sigma D_t:= \{(\cos(\alpha), \sin(\alpha)p)\in \mb S^4/S^1:  p\in D_t, \alpha\in [0,\pi]\}. \]
In particular, for any $t\in (0,1)$, $\Sigma D_t$ is a $\mb Z_2$-invariant closed $3$-ball, and $\Gamma_t=\bd \Sigma D_t$ is a $\mb Z_2$-invariant $2$-sphere with two singular points at the north/south poles $(\pm1,0,0,0)$. 
Additionally, $\Gamma_0, \Gamma_1$ are two curves connecting $(\pm1,0,0,0)$. 
Denote by 
\[B_t:= \{ (\cos(\alpha), p)\in \mb S^4/S^1: |\cos(\alpha)|> t \}\]
the $\mb Z_2$-invariant union of two open $3$-balls in $\mb S^4/S^1$ containing $(\pm 1,0,0,0)$. 
By the constructions, $\bd B_t$ is transversal to $\bd \Sigma D_t = \Gamma_t$ for $t\in (0,1)$. 
Then, we can remove $(\pm 1,0,0,0)$ from $\Sigma D_t$ by deleting $B_t$ to have a continuous family of $\mb Z_2$-invariant $3$-balls:
\[ \Omega_t:= \Sigma D_t\setminus B_t \subset \mb S^4/S^1. \]
Define then $\Sigma_0= \Gamma_0\setminus B_0 = (0,0,0, 1/2)$, $\Sigma_1=\Gamma_1\setminus B_1=\Gamma_1$, and 
\[\Sigma_t:=\bd\Omega_t \qquad\forall t\in (0,1) .\]
By smoothing and taking $\wti\Sigma_t:=\pi^{-1}(\Sigma_t)$, we obtain the desired $G'$-equivariant sweepout in $\mb S^4$. 
Note that $\wti\Sigma_t$ is of (locally) $G'$-boundary-type since $\wti\Sigma_t=\bd \pi^{-1}(\Omega_t)$.

\begin{theorem}\label{Thm: minimal S1 times S2}
    Using the above notations, let $\Pi$ be the $G'$-equivariant $([0,1],\{0,1\})$-homotopy class of $\{\wti\Sigma_t\}_{t\in [0,1]}$ in $\mb S^4$. 
    Then there exists an embedded $G'$-invariant minimal hypersurface $\Gamma\subset \mb S^4$ so that $\mc H^3(\Gamma) = \mf L(\Pi)>0$ and $\Gamma$ is diffeomorphic to $S^1\times S^2$. 
\end{theorem}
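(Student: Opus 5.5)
We will mirror the hypersphere and hypertorus constructions, with $G'$ in place of $G$.

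\textbf{Step 1: positive width.} We will show $\mf L(\Pi)>0$ by an isoperimetric argument on the Caccioppoli sets of (d). The family $\{\pi^{-1}(\Omega_t)\}$ runs from $\emptyset$ to $\mb S^4$, so $\Phi(t)=\llbracket\wti\Sigma_t\rrbracket$ is an Almgren--Pitts $1$-sweepout. Any sweepout in $\Pi$ is obtained by $G'$-isotopies fixed on $Z$, and hence is still a $1$-sweepout. By \cite{marques2017existence}*{Corollary 3.4}, or directly by the isoperimetric inequality applied at a half-volume parameter, its maximal mass is bounded below by a positive constant. This gives \eqref{Eq: G-min-max - nontrivial width}, since the areas on $Z$ vanish.

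\textbf{Step 2: a connected minimal hypersurface with multiplicity one.} We apply Theorem \ref{Thm: G-min-max} for $G'$. Its hypotheses hold: $\mb S^4/G'$ is a topological $S^3$ without boundary, so there is no special exceptional orbit, and each $\wti\Sigma_t=\bd\pi^{-1}(\Omega_t)$ is locally $G'$-boundary-type. This gives $V=\sum m_j|\Gamma_j|$ with $\|V\|(M)=\mf L(\Pi)$. Lemma \ref{Lem: tangent cone at fixed point} puts $\spt\|V\|$ inside $M^{prin}$. The Frankel property in positive Ricci curvature makes $V=m|\Gamma|$ with $\Gamma$ connected. We then argue as for \eqref{Eq: multi 1 and weighted genus in S4}:
\begin{itemize}
\item reduce to a $\mb Z_2=\langle s\rangle$-equivariant min-max in $U/S^1$ with the weighted metric $\wti g_{_{M/S^1}}$;
\item note that $\Gamma/S^1$ is two-sided in $S^3$ and unstable, since $\Ric_M>0$ and $\Gamma$ has an $S^1$-invariant normal;
\item apply the multiplicity-one theory of \cite{wangzc2023four-sphere} in its equivariant form \cite{li2025RP2}.
\end{itemize}
This yields $m=1$ and $\genus(\Gamma/S^1)\le\genus(\Sigma_t/S^1)=0$. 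The orientation-preserving $\mb Z_2$-action keeps the quotient surfaces orientable in $\mb S^4/G'$, so no non-orientable quotient arises here.

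\textbf{Step 3: topology of $\Gamma$.} $\Gamma/S^1$ is an embedded $2$-sphere in $M^{prin}/S^1\cong S^2\times\mb R$. We expect the main obstacle to be deciding whether it separates the two singular points $(\pm1,0,0,0)$.
\begin{itemize}
\item If it separates them, then $\Gamma/S^1$ is isotopic to $E_0/S^1$ by \cite{laudenbach1974topologie}, and $\Gamma$ is a hypersphere.
\item If it does not, then $\Gamma/S^1$ bounds a $3$-ball $\Omega\subset M^{prin}/S^1$. Over the contractible $\Omega$ the principal $S^1$-bundle is trivial, so $\Gamma\cong S^2\times S^1$.
\end{itemize}
To exclude the first case we will use the $s$-invariance. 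The map $s$ swaps the two poles and preserves $\Gamma/S^1$, and it is orientation-preserving on $S^3$. If $\Gamma/S^1$ separated the poles, $s$ would interchange the two complementary components of $\Gamma/S^1$, because each contains exactly one pole.

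\begin{itemize}
\item First attempt: use the mod $2$ current/Caccioppoli structure of the min-max sequence. The sets $\pi^{-1}(\Omega_t)$ are $s$-invariant and the boundary limit has multiplicity one, so $\Gamma/S^1$ bounds an $s$-invariant region. A separating sphere bounds no such region, since $s$ swaps its two sides.
\item Fallback: a homological argument. Every $\Sigma_t/S^1$ lies in the class of $S^2\times\mb R$ that is trivial relative to the poles, and the equivariant isotopies together with multiplicity-one convergence preserve that class. Hence $\Gamma/S^1$ is null-homologous in $M^{prin}/S^1$.
\end{itemize}
Either argument rules out separation, so $\Gamma\cong S^1\times S^2$ with $\mc H^3(\Gamma)=\mf L(\Pi)>0$.
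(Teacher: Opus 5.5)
Your overall route is the paper's: a $1$-sweepout for $\mf L(\Pi)>0$, then Theorem \ref{Thm: G-min-max}, Frankel, $m=1$, $\Gamma$ bounding a $G'$-invariant set, $s$ forcing both poles to one side, and triviality of the bundle over a ball. But Step 2 misses the one point where the $G'$ case differs from \eqref{Eq: multi 1 and weighted genus in S4}: $s$ is not free on $U/S^1$. It fixes the circle $\xi=\{x=0,\ p_2=0\}\subset E_0/S^1\subset U/S^1$, and the sweepout spheres $\Sigma_t$ cross it. The multiplicity-one argument of \cite{wangzc2023four-sphere}, and its $\mb Z_2$-version in \cite{li2025RP2}, are for a free action (the antipodal map, with smooth quotient $RP^3$). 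The regularity of the $\epsilon_k h$-PMC min-max solutions they rely on is not available across the fixed circle of a non-free action, so citing them does not give $m=1$. The paper's extra idea is to choose the $\mb Z_2$-invariant prescribing function $h$ with $\spt(h)\subset\subset (U/S^1)\setminus\xi$, so the action is free where the PMC term lives. It then uses Theorem \ref{Thm: G-min-max} for regularity near $\xi$. Only then do two-sidedness and instability give $m=1$, as you say.

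Your genus claim in Step 2 has the same blind spot, and it comes too early. Near $\pi^{-1}(\xi)$ an invariant hypersurface can be of type (4) in Proposition \ref{Prop: local structure of Sigma/G 2}: $\Gamma/S^1$ contains arcs of $\xi$, $s$ acts on it as a reflection, and $\Gamma/G'$ has boundary. This really happens for the $G'$-invariant minimal hypersurface $E_0$: $\xi\subset E_0/S^1$ and $s$ swaps its sides. So your remark that an orientation-preserving involution keeps quotients orientable is false for general invariant surfaces. The paper proceeds in this order:
\begin{itemize}
\item It first shows $\Gamma=\bd\wti\Omega$ with $\wti\Omega$ $G'$-invariant.
\item Hence $\Gamma/G'=\bd(\wti\Omega/G')$ is closed, and $\Gamma/S^1$ meets $\xi$ orthogonally at finitely many points.
\item Only then does it run the genus argument of Theorem \ref{Thm: G-min-max - topological control} to get $\genus(\Gamma/S^1)=0$.
\end{itemize}
Your Step 3 ``first attempt'' is exactly this bounding-set step, so it must come before the genus bound, and it needs a justification you omitted. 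The Caccioppoli limit satisfies $\bd\wti\Omega\in\{0,\llbracket\Gamma\rrbracket\}$ by the Constancy Theorem. The case $\bd\wti\Omega=0$ is excluded by the parity argument of Proposition 6.1 of \cite{zhou2015positiveRic}, since a vanishing current limit forces $m$ even; varifold multiplicity one alone does not exclude it. Your homological fallback is not a proof as stated, because which poles a region contains is not stable under $L^1$ convergence. What passes to the limit is only the $s$-invariance of $\wti\Omega$, which is your first argument.
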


\begin{remark}\label{Rem: valid of S1 times S2 for other metrics}
    The proof of Theorem \ref{Thm: minimal S1 times S2} only requires the $G'$-invariance of $\mb S^4$ and $\Ric_{\mb S^4}>0$. 
    Hence, there always exists an embedded minimal $S^1\times S^2$ in any $S^4$ with a $G'$-invariant metric and positive Ricci curvature. 
\end{remark}

\begin{proof}
    By (d), $\{\wti\Sigma_t\}_{t\in [0,1]}$ induces a $1$-sweepout in the sense of Almgren-Pitts, and thus $\mf L(\Pi)>0$. 
    By Theorem \ref{Thm: G-min-max}, we have a $G'$-invariant minimal hypersurface $\Gamma\subset \mb S^4$ with $m\in\mb N$ so that $\mf L(\Pi)=m \mc H^3(\Gamma)$. 

    Firstly, we claim that $m=1$. 
    Indeed, the proof is very similar to the argument leading to \eqref{Eq: multi 1 and weighted genus in S4} except that the action of $\mb Z_2=\langle s\rangle $ is not free on $U/S^1$, but has a $1$-dimensional fixed-point set $\xi\subset U/S^1$, where $U:=M\setminus B_\epsilon((\pm 1,0,0,0,0))\subset M^{prin}$. 
    Nevertheless, following the proof of \cite{wangzc2023four-sphere}*{Theorem 7.3}, we can take the prescribed mean curvature function $h\in C^\infty(U/S^1)$ to be $\mb Z_2$-invariant with $\spt(h)\subset\subset (U/S^1)\setminus\xi$ (since $\dim(\xi)<2$). 
    Because the $\mb Z_2$-action is free on $\spt(h)$, one can combine \cite{wangzc2023four-sphere}*{Theorem 2.4} and Theorem \ref{Thm: G-min-max} to obtain the regularity of the solutions of $\epsilon_k h$-PMC min-max in $U/S^1$, where $\epsilon_k\to 0$. 
    Then, by the approximation construction in \cite{wangzc2023four-sphere}*{\S 6, \S 7}, we have $m=1$ provided that $\Gamma/S^1$ is $2$-sided and unstable in $U/S^1$. 
    Since $M/S^1=S^3$ is simply connected, $\Gamma/S^1$ is $2$-sided and separating. 
    Hence, $\Gamma$ admits an $S^1$-invariant unit normal, and is unstable under $S^1$-equivariant variations by $\Ric_M>0$ and Lemma \ref{Lem: stability and G-stability}. 
    Using \eqref{Eq: area in orbit space}, we see $\Gamma/S^1$ is unstable, and thus $m=1$. 

    Next, we show that there is a $G'$-invariant open set $\wti\Omega$ with $\Gamma=\bd \wti \Omega$. 
    To be exact, let $\{\Gamma^i_{t_i}\}_{i\in\mb N}$ be the min-max sequence converging to $\Gamma$ (with multiplicity $m=1$) in the sense of varifolds. 
    Then, by (d), there are open $G'$-sets $\wti\Omega_i\subset \mb S^4$ so that $\Gamma^i_{t_i}=\bd\wti\Omega_i$ for all $i\in\mb N$. 
    Up to a subsequence, we have a limit Caccioppoli $G'$-set $\wti\Omega=\lim_{i\to\infty}\wti\Omega_i$ so that $\bd\wti\Omega$ is supported in $\Gamma$ with mass $\leq \mc H^3(\Gamma)$. 
    This implies that $\bd\wti\Omega$ is either $0$ or $\llbracket \Gamma\rrbracket$ due to the Constancy Theorem. 
    It then follows from the proof of \cite{zhou2015positiveRic}*{Proposition 6.1} that $\bd\wti\Omega=\llbracket \Gamma\rrbracket$ since otherwise, $\bd\wti\Omega=0$ and the convergence $|\Gamma^i_{t_i}|\to m|\Gamma|$ has even multiplicity $m$, which contradicts $m=1$.  
    
    Note that $\mb S^4/G'$ is homeomorphic to a $3$-sphere with an isolated singular point $\{(\pm 1,0,0,0)\}/\mb Z_2$ and a $1$-dimensional singular curve $\xi/\mb Z_2$, where $\xi\subset M/S^1$ is the fixed-point set of the $\mb Z_2$-action. 
    Since $\Gamma$ encloses an open $G'$-set $\wti\Omega\subset \mb S^4$, we have $\Gamma/G' = \bd (\wti\Omega/G')$, which implies that $\Gamma/G'$ is a closed surface without boundary. 
    In particular, by Lemma \ref{Lem: hypersurface position} and Proposition \ref{Prop: local structure of Sigma/G 2}(3)(4), $\Gamma/S^1$ can only meet $\xi\subset M/S^1$ orthogonally at a finite number of points. 
    Therefore, one can follow the proof of Theorem \ref{Thm: G-min-max - topological control} to show that $\genus(\Gamma/S^1)=0$ since $\genus(\wti\Sigma_t/S^1) =0$ for all $t\in (0,1)$ by (b). 

    Finally, since $\wti\Omega$ is $G'$-invariant, we may assume that the two singular points $(\pm 1,0,0,0)\in M/S^1$ are both contained in $ (M\setminus\wti\Omega)/S^1$. 
    Hence, $\wti\Omega/S^1$ is a $3$-ball enclosed by the $2$-sphere $\Gamma/S^1$ and is contractible in $(M/S^1)\setminus\{(\pm 1, 0,0,0)\}\cong  S^2\times (-1,1)$. 
    In particular, $H^2(\wti\Omega/S^1;\mb Z)=0$ and the principal $S^1$-bundle $\wti\Omega\to\wti\Omega/S^1$ has Euler class $0$, which implies that the principal $S^1$-bundle is trivial. 
    Therefore, $\Gamma=\partial(S^1\times (\wti\Omega/S^1))\cong S^1\times\bd B^3=S^1\times S^2$. 
\end{proof}

\bibliographystyle{abbrv}

\bibliography{reference.bib}   
\end{document}